%
%

\documentclass{memo-l}



\usepackage{amssymb,amsthm,amsmath}

\usepackage{graphicx,tikz-cd}
\usetikzlibrary{arrows,patterns,calc,backgrounds}

\usepackage[percent]{overpic}
\usepackage[shortlabels]{enumitem}
\usepackage{mathtools}
\usepackage{verbatim}

\usepackage{hyperref}

\usepackage{enumitem}

\usepackage{cite}

\makeatletter
\newcommand*{\mint}[1]{%
  \mint@l{#1}{}%
}
\newcommand*{\mint@l}[2]{%
  \@ifnextchar\limits{%
    \mint@l{#1}%
  }{%
    \@ifnextchar\nolimits{%
      \mint@l{#1}%
    }{%
      \@ifnextchar\displaylimits{%
        \mint@l{#1}%
      }{%
        \mint@s{#2}{#1}%
      }%
    }%
  }%
}
\newcommand*{\mint@s}[2]{%
  \@ifnextchar_{%
    \mint@sub{#1}{#2}%
  }{%
    \@ifnextchar^{%
      \mint@sup{#1}{#2}%
    }{%
      \mint@{#1}{#2}{}{}%
    }%
  }%
}
\def\mint@sub#1#2_#3{%
  \@ifnextchar^{%
    \mint@sub@sup{#1}{#2}{#3}%
  }{%
    \mint@{#1}{#2}{#3}{}%
  }%
}
\def\mint@sup#1#2^#3{%
  \@ifnextchar_{%
    \mint@sup@sub{#1}{#2}{#3}%
  }{%
    \mint@{#1}{#2}{}{#3}%
  }%
}
\def\mint@sub@sup#1#2#3^#4{%
  \mint@{#1}{#2}{#3}{#4}%
}
\def\mint@sup@sub#1#2#3_#4{%
  \mint@{#1}{#2}{#4}{#3}%
}
\newcommand*{\mint@}[4]{%
  \mathop{}%
  \mkern-\thinmuskip
  \mathchoice{%
    \mint@@{#1}{#2}{#3}{#4}%
        \displaystyle\textstyle\scriptstyle
  }{%
    \mint@@{#1}{#2}{#3}{#4}%
        \textstyle\scriptstyle\scriptstyle
  }{%
    \mint@@{#1}{#2}{#3}{#4}%
        \scriptstyle\scriptscriptstyle\scriptscriptstyle
  }{%
    \mint@@{#1}{#2}{#3}{#4}%
        \scriptscriptstyle\scriptscriptstyle\scriptscriptstyle
  }%
  \mkern-\thinmuskip
  \int#1%
  \ifx\\#3\\\else_{#3}\fi
  \ifx\\#4\\\else^{#4}\fi  
}
\newcommand*{\mint@@}[7]{%
  \begingroup
    \sbox0{$#5\int\m@th$}%
    \sbox2{$#5\int_{}\m@th$}%
    \dimen2=\wd0 %
    \let\mint@limits=#1\relax
    \ifx\mint@limits\relax
      \sbox4{$#5\int_{\kern1sp}^{\kern1sp}\m@th$}%
      \ifdim\wd4>\wd2 %
        \let\mint@limits=\nolimits
      \else
        \let\mint@limits=\limits
      \fi
    \fi
    \ifx\mint@limits\displaylimits
      \ifx#5\displaystyle
        \let\mint@limits=\limits
      \fi
    \fi
    \ifx\mint@limits\limits
      \sbox0{$#7#3\m@th$}%
      \sbox2{$#7#4\m@th$}%
      \ifdim\wd0>\dimen2 %
        \dimen2=\wd0 %
      \fi
      \ifdim\wd2>\dimen2 %
        \dimen2=\wd2 %
      \fi
    \fi
    \rlap{%
      $#5%
        \vcenter{%
          \hbox to\dimen2{%
            \hss
            $#6{#2}\m@th$%
            \hss
          }%
        }%
      $%
    }%
  \endgroup
}

\newcommand{\x}{\scalebox{1.2}{$\chi$} } 

\newcommand{\br}{\overline}
\newcommand{\R}{\mathbb R}
\newcommand{\C}{\mathbb C}
\newcommand{\D}{\mathbb D}
\newcommand{\Z}{\mathbb Z}
\newcommand{\N}{\mathbb N}
\newcommand{\Q}{\mathbb Q}

\newcommand{\UHP}{\mathbb H}

\theoremstyle{plain}
\newtheorem{theorem}{Theorem}
\newtheorem{lemma}[theorem]{Lemma}
\newtheorem{conjecture}{Conjecture}
\newtheorem{prop}[theorem]{Proposition}
\newtheorem{corollary}[theorem]{Corollary}

\newenvironment{customthm}[1]
  {\innercustomthm}
  {\endinnercustomthm}
  
\newenvironment{customprop}[1]
  {\innercustomprop}
  {\endinnercustomprop} 

\theoremstyle{definition}
\newtheorem{definition}[theorem]{Definition}
\newtheorem*{claim}{Claim}

\theoremstyle{remark}
\newtheorem{remark}[theorem]{Remark}

\newtheorem{example}[theorem]{Example}

\DeclareMathOperator{\dist}{\textup{\text{dist}}}
\DeclareMathOperator{\diam}{\textup{\text{diam}}}

\DeclareMathOperator{\inter}{\textup{\text{int}}}

\DeclareMathOperator{\re}{\textup{\text{Re}}}
\DeclareMathOperator{\im}{\textup{\text{Im}}}

\DeclareMathOperator{\md}{\textup{mod}}

\DeclareMathOperator{\loc}{\textup{loc}}
\DeclareMathOperator*{\osc}{\textup{osc}}

\numberwithin{equation}{chapter}
\numberwithin{theorem}{chapter}
\numberwithin{figure}{chapter}


\usepackage{imakeidx}
\makeindex

\begin{document}

\frontmatter

\title{Potential theory on Sierpi\'nski carpets with applications to uniformization}


\author{Dimitrios Ntalampekos}
\address{Department of Mathematics, University of California, Los Angeles, CA 90095, USA}
\email{dimitrisnt@math.ucla.edu}
\thanks{The author was partially supported by NSF grant DMS-1506099.}

\date{\today}

\subjclass[2010]{Primary 30L10, 31C45, 46E35; Secondary 28A75, 30C62, 30C65.}

\keywords{Sierpi\'nski carpets, harmonic functions, Sobolev spaces, square carpets, uniformization, quasisymmetry, quasiconformal}

\dedicatory{To my parents...\\[2pt] who taught me how to love what I do.}

\begin{abstract}
This research is motivated by the study of the geometry of fractal sets and is focused on uniformization problems: transformation of sets to canonical sets, using maps that preserve the geometry in some sense. More specifically, the main question addressed is the uniformization of planar Sierpi\'nski carpets by square Sierpi\'nski carpets, using methods of potential theory on carpets.

We first develop a potential theory and study harmonic functions on planar Sierpi\'nski carpets. We introduce a discrete notion of Sobolev spaces on Sierpi{\'n}ski carpets and use this to define harmonic functions. Our approach differs from the classical approach of potential theory in metric spaces discussed in \cite{HeinonenKoskelaShanmugalingamTyson:Sobolev} because it takes the ambient space that contains the carpet into account. We prove basic properties such as the existence and uniqueness of the solution to the Dirichlet problem, Liouville's theorem, Harnack's inequality, strong maximum principle, and equicontinuity of harmonic functions. 

Then we utilize this notion of harmonic functions to prove a uniformization result for Sierpi\'nski carpets. Namely, it is proved that every planar Sierpi\'nski carpet whose peripheral disks are uniformly fat, uniform quasiballs can be mapped to a square Sierpi\'nski carpet with a map that preserves carpet modulus. If the assumptions on the peripheral circles are strengthened to uniformly relatively separated, uniform quasicircles, then the map is a quasisymmetry. The real part of the uniformizing map is the solution of a certain Dirichlet-type problem. Then a harmonic conjugate of that map is constructed using the methods of Rajala \cite{Rajala:uniformization}.
\end{abstract}

\maketitle

\tableofcontents


\mainmatter

\chapter{Introduction}\label{Chapter:Intro}

One of the main problems in the field of Analysis on Metric Spaces is to find geometric conditions on a given metric space, under which the space can be transformed to a ``canonical" space with a map that preserves the geometry. In other words, we wish to \textit{uniformize} the metric space by a canonical space. For example, the Riemann mapping theorem gives a conformal map from any simply connected proper subregion of the plane onto the unit disk, which is the canonical space in this case.

In the setting of metric spaces we search instead for other types of maps, such as bi-Lipschitz, quasiconformal or quasisymmetric maps. One method for obtaining such a map is by solving \textit{minimization problems}, such as the problem of minimizing the \textit{Dirichlet energy} $\int_{\Omega} |\nabla u|^2$ in an open set $\Omega\subset \C$ among Sobolev functions $u\in W^{1,2}(\Omega)$ that have some certain boundary data. 

To illustrate the method, we give an informal example. Let $\Omega \subset \C$ be a quadrilateral, i.e., a Jordan region with four marked points on $\partial \Omega$ that define a topological rectangle. Consider two opposite sides $\Theta_1,\Theta_3 \subset \partial \Omega$ of this topological rectangle. We study the following minimization problem:
\begin{align}\label{Intro:MinimizationProblem}
\inf \biggl\{ \int_{\Omega} |\nabla u|^2 : u\in W^{1,2}(\Omega),\, u\big|_{\Theta_1}=0,\, u\big|_{\Theta_3}=1 \biggr\}.
\end{align}
One can show that a minimizer $u$ with the right boundary values exists and is harmonic on $\Omega$. Let $D(u)\coloneqq \int_\Omega |\nabla u|^2$ be the Dirichlet energy of $u$, and $\Theta_2,\Theta_4\subset \partial \Omega$ be the other opposite sides of the quadrilateral $\partial \Omega$, numbered in a counter-clockwise fashion. Now, we consider the ``dual" problem
\begin{align*}
\inf \biggl\{ \int_{\Omega} |\nabla v|^2 : v\in W^{1,2}(\Omega), v\big|_{\Theta_2}=0, v\big|_{\Theta_4}=D(u) \biggr\}.
\end{align*}
Again, it turns out that a minimizer $v$ with the right boundary values exists and is harmonic. In fact, $v$ is the harmonic conjugate of $u$. Then the pair $f\coloneqq (u,v)$ yields a conformal map from $\Omega$ onto the rectangle $(0,1)\times (0,D(u))$; see \cite{Courant:Dirichlet} for background on classical potential theory and construction of conformal maps.

This example shows that in the plane  harmonic functions that minimize the Dirichlet energy and solve certain boundary value problems can be very handy in uniformization theory. Namely, there exist more minimization problems whose solution $u$ can be paired with a harmonic conjugate $v$ as above to yield a conformal map $(u,v)$ that transforms a given region to a {canonical} region. For example, one can prove in this way the Riemann mapping theorem, the uniformization of annuli by round annuli, and the uniformization of planar domains by slit domains; see \cite{Courant:Dirichlet}.

A natural question is whether such methods can be used in the abstract metric space setting in order to obtain uniformization results. Hence, one would first need to develop a harmonic function theory. Harmonic functions have been studied in depth in the abstract metric space setting. Their definition was based on a suitable  notion of Sobolev spaces in metric measure spaces. The usual assumptions on the intrinsic geometry of the metric measure space  is that it is doubling and supports a Poincar\'e inequality. Then, harmonic functions are defined as local energy minimizers, among Sobolev functions with the same boundary data. We direct the reader to \cite{Shanmugalingam:harmonic} and the references therein for more background. However, to the best of our knowledge, this general theory has not been utilized yet towards a uniformization result. As we see from the planar examples mentioned previously, a crucial ingredient in order to obtain such a result is the existence of a harmonic conjugate in the 2-dimensional setting. Constructing harmonic conjugates turns out to be an extremely challenging task in the metric space setting.

Very recently this was achieved by K.~Rajala \cite{Rajala:uniformization}, who solved a minimization problem  on metric spaces $X$ homeomorphic to $\R^2$, under some geometric assumptions. The minimization procedure yielded a ``harmonic" function $u$, which was then paired with a ``harmonic conjugate" $v$ to provide a quasiconformal homeomorphism $(u,v)$ from $X$ to $\R^2$. The construction of a harmonic conjugate, which is one of the most technical parts of Rajala's work, is very powerful. As a corollary, he obtained the Bonk-Kleiner theorem \cite{BonkKleiner:quasisphere}, which asserts that a metric sphere that is \textit{Ahlfors 2-regular} and \textit{linearly locally contractible} is quasisymmetrically equivalent to the standard sphere.

Another minimization problem in similar spirit is Plateau's problem; see the book of Courant \cite{Courant:Dirichlet}. This has also recently been extended to the metric space setting \cite{LytchakWenger:parametrizations} and its solution provides canonical quasisymmetric embeddings of a metric space $X$ into $\R^2$, under some geometric assumptions. Thus, \cite{LytchakWenger:parametrizations} provides  an alternative proof of the Bonk-Kleiner theorem.

The development of uniformization  results for metric spaces homeomorphic to $\R^2$ or to the sphere would provide some insight towards the better understanding of hyperbolic groups, whose boundary at infinity is a 2-sphere. A basic problem in geometric group theory is  finding relationships between the algebraic properties of a finitely generated group $G$  and the geometric properties of its Cayley graph. For each \textit{Gromov hyperbolic} group there is a natural metric space, called \textit{boundary at infinity}, and denoted by $\partial_\infty G$. This metric space is equiped with a family of \textit{visual metrics}. The geometry of $\partial_\infty G$ is very closely related to the asymptotic geometry of the group $G$. A major conjecture by Cannon \cite{Cannon:Conjecture}  is the following: when $\partial_\infty G$ is homeomorphic to the 2-sphere, then $G$ admits a discrete, cocompact, and isometric action on the hyperbolic 3-space $\UHP^3$. By a theorem of Sullivan \cite{Sullivan:Cannon}, this conjecture is equivalent to the following conjecture:
\begin{conjecture}\label{Conjecture:Cannon}
If $G$ is a Gromov hyperbolic group and $\partial_\infty G$ is homeomorphic to the $2$-sphere, then $\partial_\infty G$, equipped with a visual metric, is quasisymmetric to the $2$-sphere.
\end{conjecture}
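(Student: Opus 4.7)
The statement in question is Cannon's Conjecture, one of the most famous open problems in geometric group theory. I cannot honestly propose a complete proof; what I can outline is the analytic reduction that has guided essentially all progress on the problem and that aligns with the philosophy of this monograph.

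The plan is to verify the two hypotheses of the Bonk-Kleiner quasisymmetric uniformization theorem cited in the introduction: a metric $2$-sphere that is Ahlfors $2$-regular and linearly locally contractible is quasisymmetric to the standard sphere. Thus it suffices to produce, within the quasisymmetric gauge of a visual metric $d$ on $\partial_\infty G$, a representative that is both Ahlfors $2$-regular and linearly locally contractible. The local contractibility hypothesis is tractable: the cocompact action of $G$ on itself, combined with the approximate self-similarity of visual metrics across all scales and the absence of local cut points in an $S^2$-boundary (Bowditch-Swarup), yields linear local contractibility of the visual metric itself.

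The main obstacle is Ahlfors $2$-regularity. By a theorem of Coornaert, $(\partial_\infty G, d)$ is Ahlfors $Q$-regular for some $Q \geq 2$ determined by the visual parameter, and the Ahlfors regular conformal dimension $\Cdim(\partial_\infty G)$ is the infimum of $Q$ over all Ahlfors $Q$-regular metrics in the quasisymmetric gauge of $d$. The conjecture is then equivalent to the statement that this infimum equals $2$ and is attained. By the Keith-Laakso criterion and its sharpening by Bonk-Kleiner, attainment at $Q = 2$ is equivalent to a combinatorial Loewner-type condition: the discrete $2$-modulus of a suitable family of curves joining disjoint continua must stay uniformly bounded away from zero across all dyadic scales. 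Establishing such a scale-uniform modulus lower bound, which would prohibit the quasisymmetric ``snowflake'' deformations that raise dimension, is precisely where the conjecture remains stuck.

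The conceptual bridge to the present monograph is as follows. Once the combinatorial Loewner condition is secured, one expects a harmonic-function approach to complete the uniformization: solve a Dirichlet-type minimization problem on $(\partial_\infty G, d)$ in the spirit of Rajala's work and of the Dirichlet problem developed here for Sierpi\'nski carpets, and pair the energy minimizer with a harmonic conjugate to obtain a quasiconformal, hence quasisymmetric, map to $S^2$. In summary, the proposal reduces Cannon's Conjecture to a scale-uniform combinatorial $2$-modulus estimate on $\partial_\infty G$, and this modulus estimate is the main, and currently insurmountable, obstacle.
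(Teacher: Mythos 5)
The statement you were asked about is Conjecture \ref{Conjecture:Cannon}, Cannon's conjecture: the paper states it as a conjecture, offers no proof, and uses it only as motivation for studying uniformization problems. There is therefore no argument in the paper to compare yours against, and your refusal to manufacture a proof of an open problem is the correct response. Your outline of the standard reduction --- pass through the Bonk--Kleiner theorem, observe that linear local contractibility of a visual metric is available while Ahlfors $2$-regularity is not, and reformulate the remaining obstacle as the attainment of Ahlfors regular conformal dimension $2$ via a scale-uniform combinatorial modulus bound in the spirit of Keith--Laakso and Bonk--Kleiner --- is an accurate description of the state of the art, but it is a survey of a strategy, not a proof: the scale-uniform lower modulus bound you isolate as ``the main obstacle'' is exactly the open content of the conjecture, so nothing has been proved. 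For the purposes of this review, the verdict is simply that the statement is open, the paper proves nothing here, and your proposal correctly identifies it as such.
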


We now continue our discussion on applications of potential theory and minimization problems to uniformization. We provide an example from the discrete world. In \cite{{Schramm:Tiling}}, using again an energy minimization procedure, Schramm proved the following fact. Let $\Omega$ be a quadrilateral, and $T$ a finite triangulation of $\Omega$ with vertex set $\{v\}_{v\in I}$. Then there exists a square tiling $\{Z_v\}_{v\in I}$ of a rectangle $R$ such that each vertex $v$ corresponds to a square $Z_v$, and two squares $Z_u,Z_v$ are in contact whenever the vertices $u,v$ are adjacent in the triangulation. In addition, the vertices corresponding to squares at corners of $R$ are at the corners of the quadrilateral $\Omega$. In other words, triangulations of quadrilaterals can be transformed to square tilings of rectangles. Of course, we are not expecting any metric properties for the correspondence between vertices and squares, since we are not endowing the triangulation with a metric and we are only taking into account the adjacency of vertices.

Hence, it is evident that potential theory is a precious tool that is also available in metric spaces and can be used to solve uniformization problems. Furthermore, from the aforementioned results we see that harmonic functions and energy minimizers interact with quasiconformal and quasisymmetric maps in metric spaces. We now switch our discussion to Sierpi\'nski carpets and related uniformization problems.

A planar \textit{Sierpi\'nski carpet} $S\subset \C$ is a locally connected continuum with empty interior that arises from a closed Jordan region $\br{\Omega}$ by removing countably many Jordan regions $Q_i$, $i\in \N$, from $\br \Omega$ such that the closures $\br Q_i $ are disjoint with each other and with $\partial \Omega$. The local connectedness assumption can be replaced with the assumption that $\diam(Q_i)\to 0$ as $i\to \infty$. The sets $\partial \Omega$ and $\partial Q_i$ for $i\in \N$ are called the \textit{peripheral circles} of the carpet $S$ and the Jordan regions $Q_i$, $i\in \N$, are called the \textit{peripheral disks}. According to a theorem of Whyburn \cite{Whyburn:theorem} all such continua are homeomorphic to each other and, in particular, to the \textit{standard Sierpi\'nski carpet}, which is formed by removing the middle square of side-length $1/3$ from the unit square $[0,1]^2$ and then proceeding inductively in each of the remaining eight squares.

The study of uniformization problems on carpets was initiated by Bonk in \cite{Bonk:uniformization}, where he proved that every Sierpi\'nski carpet in the sphere $\widehat \C$ whose peripheral circles are uniform quasicircles and they are also uniformly relatively separated  is quasisymmetrically equivalent to a \textit{round} Sierpi\'nski carpet, i.e., a carpet all of whose peripheral circles are geometric circles. The method that he used does not rely on any minimization procedure, but it uses results from complex analysis, and, in particular, Koebe's theorem that allows one to map conformally a finitely connected domain in the plane to a circle domain. 

A partial motivation for the development of uniformization results for carpets is another conjecture from geometric group theory, known as the Kapovich-Kleiner conjecture. The conjecture asserts that if a Gromov hyperbolic group $G$ has a boundary at infinity $\partial_\infty G$ that is homeomorphic to a Sierpi\'nski carpet, then $G$ admits a properly discontinuous, cocompact, and isometric action on a convex subset of the hyperbolic 3-space $\UHP^3$ with non-empty totally geodesic boundary. The Kapovich-Kleiner conjecture \cite{KapovichKleiner:Conjecture}  is equivalent to the following uniformization problem, similar in spirit to Conjecture \ref{Conjecture:Cannon}:
\begin{conjecture}\label{Conjecture:KK}
If $G$ is a Gromov hyperbolic group and $\partial_\infty G$ is a Sierpi\'nski carpet, then $\partial_\infty G$ can be quasisymmetrically embedded into the $2$-sphere.
\end{conjecture}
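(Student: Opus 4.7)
The plan is to combine the quasisymmetric uniformization theorem for planar Sierpi\'nski carpets announced in this memoir's abstract with a topological embedding of $\partial_\infty G$ into $S^2$. Since $\partial_\infty G$ is assumed to be homeomorphic to a Sierpi\'nski carpet, Whyburn's theorem furnishes a topological embedding $\iota\colon \partial_\infty G \hookrightarrow S^2$, with image a planar carpet $S\subset S^2$. If one can choose $\iota$ so that the peripheral circles of $S$, when measured with the pushforward of the visual metric $\rho$ on $\partial_\infty G$, are uniform quasicircles and are uniformly relatively separated, then the memoir's uniformization theorem produces a quasisymmetry from $(S,\iota_*\rho)$ onto a square carpet $S'\subset \widehat{\C}$, and post-composing with the inclusion $S'\hookrightarrow \widehat{\C} \cong S^2$ yields the desired quasisymmetric embedding of $(\partial_\infty G,\rho)$ into $S^2$. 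The whole conjecture is thus funneled through the task of finding a topological planar embedding whose peripheral circles satisfy these two geometric conditions.

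First, I would catalog the intrinsic metric properties of $(\partial_\infty G,\rho)$: classical work of Coornaert gives that it is Ahlfors $Q$-regular and doubling, and being a carpet it is also linearly connected. The stabilizer $H\leq G$ of any peripheral component is itself hyperbolic with a topological-circle boundary at infinity, hence virtually Fuchsian by the theorems of Tukia, Gabai, and Casson-Jungreis, so each individual peripheral circle of $\partial_\infty G$ is a quasicircle in the visual metric. What has to be shown is the uniformity of the quasicircle and relative-separation constants across all peripheral circles simultaneously.

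Second, and this is the crux, I would attempt to leverage the cocompactness of the action of $G$ on itself to extract the required uniform constants. Peripheral stabilizers fall into finitely many conjugacy classes in $G$, so up to the action of $G$ there are only finitely many ``model'' peripheral circles, and group elements mapping one peripheral circle to an equivalent one induce uniformly quasisymmetric self-maps of $(\partial_\infty G,\rho)$. Translating this finitary information, via the shadow lemma and quasiconvexity estimates, into uniform quasicircle constants and a uniform lower bound on Hausdorff separation of distinct peripheral circles is the purely geometric-group-theoretic content of the problem.

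The main obstacle is precisely this uniformity step, combined with the existence of the appropriate planar embedding. Together, these are equivalent, via Bonk's round uniformization and this memoir's square uniformization, to the full Kapovich-Kleiner conjecture, which remains open. The potential-theoretic machinery developed here therefore does not in itself resolve Conjecture \ref{Conjecture:KK}; rather, it converts the conjecture into the equivalent geometric statement that peripheral circles of a carpet boundary of a hyperbolic group admit a planar realization in which they are uniform quasicircles and are uniformly relatively separated. Any progress on this last statement would, through the memoir's machinery, immediately yield Conjecture \ref{Conjecture:KK}.
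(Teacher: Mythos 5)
The statement you are addressing is Conjecture~\ref{Conjecture:KK}, the Kapovich--Kleiner conjecture. The paper does not prove it; it is stated purely as motivation, and it remains open. Your proposal is therefore not, and honestly does not claim to be, a proof: you correctly reduce the conjecture to the problem of producing a planar realization of $\partial_\infty G$ in which the peripheral circles are uniform quasicircles and uniformly relatively separated, and you correctly identify that this uniformity step is precisely the open content. The individual-peripheral-circle observations you cite (virtually Fuchsian stabilizers, finitely many conjugacy classes) are standard and sound, but they do not yield the simultaneous uniform constants, and no argument in this memoir supplies them.

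One further gap in the funnel itself deserves emphasis. Theorem~\ref{unif:Main theorem-quasisymmetric} (and likewise Bonk's Theorem~\ref{unif:Theorem Bonk}) is a statement about carpets $S\subset\C$ carrying the \emph{Euclidean} metric, with the quasicircle and relative-separation hypotheses measured Euclideanly. Your plan applies it to $(S,\iota_*\rho)$, where $\iota$ is merely a topological embedding furnished by Whyburn's theorem and $\iota_*\rho$ is the pushforward of the visual metric; nothing guarantees that $\iota_*\rho$ is quasisymmetrically comparable to the restriction of the Euclidean metric, so the uniformization machinery does not apply as stated. The paper itself only expresses the hope that the methods ``can be extended to some non-planar carpets,'' i.e., to abstract metric carpets --- that extension is not carried out here. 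So even granting your uniformity step, you would still need either an intrinsic (metric-space) version of the square uniformization theorem or an a priori quasisymmetric planar embedding, which is again essentially the conjecture. In short: the approach is the natural one and the reduction is correctly described, but the statement is open and your write-up should be presented as a reduction, not a proof.
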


The main focus in this work is to prove a uniformization result for planar Sierpi\'nski carpets by using an energy minimization method. We believe that these methods can be extended to some non-planar carpets and therefore provide some insight to the problem of embedding these carpets into the plane. The canonical spaces in our setting are square carpets, which arise naturally as the extremal spaces of a minimization problem. A \textit{square carpet} here is a planar carpet all of whose peripheral circles are squares, except for the one that separates the rest of the carpet from $\infty$, which could be a rectangle. Also, the sides of the squares and the rectangle are required to be parallel to the coordinate axes.

Under some geometric assumptions, we obtain the following main result:
\begin{customthm}{1}\label{Thm:Main}\index{quasiconformal!discrete}
Let $S\subset \C$ be a Sierpi\'nski carpet of measure zero. Assume that the peripheral disks of $S$ are uniformly fat, uniform quasiballs. Then there exists a ``quasiconformal" map (in a discrete sense) from $S$ onto a square carpet.
\end{customthm}

The precise definitions of the geometric assumptions and of the notion of quasiconformality that we are employing are given in Chapter \ref{Chapter:Uniformization}, Section \ref{unif:Section Introduction}; see Theorem \ref{unif:Main theorem}. Roughly speaking, fatness prevents outward pointing cusps in a uniform way. The quasiball assumption says that in large scale the peripheral disks  $Q_i$ look like balls, in the sense that for each $Q_i$ there exist two concentric balls, one contained in $Q_i$ and one containing $Q_i$, with uniformly bounded ratio of radii. For example, if the peripheral disks are John domains with uniform constants, then they satisfy the assumptions; see \cite{SmithStegenga:HolderPoincare} for the definition of a John domain. The uniformizing map is ``quasiconformal" in the sense that it almost preserves carpet-modulus, a discrete notion of modulus suitable for Sierpi\'nski carpets. 

If one strengthens the assumptions, then one obtains a quasisymmetry:

\begin{customthm}{2}[Theorem \ref{unif:Main theorem-quasisymmetric}]
Let $S\subset \C$ be a Sierpi\'nski carpet of measure zero. Assume that the peripheral circles of $S$ are uniformly relatively separated, uniform quasicircles. Then there exists a quasisymmetry from $S$ onto a square carpet.
\end{customthm} 

These are the same assumptions as the ones used in \cite{Bonk:uniformization}, except for the measure zero assumption, which is essential for our method. The assumption of uniform quasicircles is necessary both in our result and in the uniformization by round carpets result of \cite{Bonk:uniformization}, because this property is preserved under quasisymmetries, and squares and circles share it. The uniform relative separation condition prevents large peripheral circles to be too close to each other. This is essentially the best possible condition one could hope for:

\begin{customprop}{1}[Proposition \ref{unif:Proposition Equivalence of square carpet}]
A round carpet is quasisymmetrically equivalent to a square carpet if and only if the uniform relative separation condition holds.
\end{customprop}
 
The map in Theorem \ref{Thm:Main} is the pair of a certain  \textit{carpet-harmonic} function $u$ with its ``harmonic conjugate" $v$. Recall that the carpet $S$ is equal to $\br \Omega\setminus \bigcup_{i\in \N} Q_i$, where $\Omega$ is a Jordan region. We wish  to view $\partial \Omega$ as a topological rectangle with sides $\Theta_1,\dots,\Theta_4$ and consider a discrete analog of the minimization problem \eqref{Intro:MinimizationProblem}. This is the problem that will provide us with the real part $u$ of the uniformizing map. Then, adapting the methods of \cite{Rajala:uniformization} we construct a harmonic conjugate $v$ of $u$. This is discussed in Chapter \ref{Chapter:Uniformization}.

Hence, in order to proceed, we need to make sense of a Sobolev space $\mathcal W^{1,2}(S)$ and of carpet-harmonic functions. This is the content of Chapter \ref{Chapter:Harmonic}.

Before providing a sketch of our definition of Sobolev spaces and carpet-harmon\-ic functions, we recall the definition of Sobolev spaces---also called Newtonian spaces---and harmonic functions on metric spaces, following \cite{Shanmugalingam:newtonian} and \cite{Shanmugalingam:harmonic}. Roughly speaking, a function $u\colon X\to \R$ lies in the Newtonian space $N^{1,p}(X)$ if $u\in L^p(X)$, and there exists a function $g\in L^p(X)$ with the property that
\begin{align*}
|u(x)-u(y)|\leq \int_\gamma g \, ds
\end{align*} 
for \textit{almost every} path $\gamma$ and all points $x,y\in \gamma$. Here, ``almost every" means that a family of paths with $p$-modulus zero has to be excluded; see sections \ref{harmonic:2-Carpet modulus} and \ref{harmonic:Section Existence of paths} for a discussion on modulus and non-exceptional paths. The function $g$ is called a \textit{weak upper gradient} of $u$. Let $I(u)=\inf_g \|g\|_{p}$ where the infimum is taken over all weak upper gradients of $u$. A $p$-harmonic function in an open set $\Omega\subset X$ with boundary data $f\in N^{1,p}(X)$ is a function  that minimizes the \textit{energy functional} $I(u)$ over functions $u\in N^{1,p}(X)$ with $u\big|_{X\setminus \Omega}\equiv f \big|_{X\setminus \Omega}$. As already remarked, the usual assumptions on the space $X$ for this theory to go through is that it is doubling and supports a Poincar\'e inequality.

In our setting, we follow a slightly different approach and we do not use measure and integration in the carpet $S$ to study Sobolev functions, but we rather put the focus on studying the ``holes" $Q_i$ of the carpet. Hence, we do not make any assumptions on the intrinsic geometry of the carpet $S$, other than it has Lebesgue measure zero, but we require that the holes $Q_i$ satisfy some uniform geometric conditions; see Section \ref{harmonic:1-Basic Assumptions}. In particular, we do not assume that the carpet $S$ supports a Poincar\'e inequality or a doubling measure. What is special about the theory that we develop is that Sobolev funcions and harmonic functions will acknowledge in some sense the existence of the ambient space, where the carpet lives.

The precise definitions will be given later in Sections \ref{harmonic:3-Sobolev} and \ref{harmonic:Section Carpet Harmonic} but here we give a rough sketch. A function $u\colon S\to \R$ lies in the Sobolev space $\mathcal W^{1,2}(S)$ if it satisfies a certain $L^2$-integrability condition and it has  an \textit{upper gradient} $\{\rho(Q_i)\}_{i\in \N}$, which is a square-summable sequence with the property that 
\begin{align*}
|u(x)-u(y)|\leq \sum_{i:Q_i\cap \gamma\neq \emptyset}\rho(Q_i)
\end{align*} 
for \textit{almost every} path $\gamma \subset \Omega$ and points $x,y\in \gamma\cap S$. We remark here that the path $\gamma$ will also travel through the ambient space $\Omega$, and does not stay entirely in the carpet $S$. Here, ``almost every" means that we exclude a family of ``pathological" paths of \textit{carpet modulus} equal to zero; see Section \ref{harmonic:2-Carpet modulus} for the definition. This is necessary, because there exist (a lot of) paths $\gamma$ that are entirely contained in the carpet $S$ without intersecting any peripheral disk $\br Q_i$. For such paths the sum $\sum_{i:Q_i\cap \gamma\neq \emptyset} \rho(Q_i)$ would be $0$, and thus a function $u$ satisfying the upper gradient inequality for \textit{all} paths would be constant.

In order to define a \textit{carpet-harmonic} function, one then minimizes the \textit{energy functional} $\sum_{i\in \N}\rho(Q_i)^2$ over all Sobolev functions that have given boundary data. This energy functional corresponds to the classical Dirichlet energy $\int |\nabla u|^2$ of a classical Sobolev function in the plane.

We will develop this theory for a generalization of Sierpi\'nski carpets called \textit{relative Sierpi\'nski carpets}. The difference to a Sierpi\'nski carpet is that here we will actually allow the set $\Omega$ to be an arbitrary (connected) open set in the plane, and not necessarily a Jordan region. So, we start with an open set $\Omega\subset \C$ and we  remove the  countably many {peripheral disks} $Q_i$ from $\Omega$ as in the definition of a Sierpi\'nski carpet; see Section \ref{harmonic:1-Basic Assumptions} for definition. This should be regarded as a generalization of relative Schottky sets studied in \cite{Merenkov:relativeSchottky}, where all peripheral disks $Q_i$ are round disks. This generalization allows us, for example, to set $\Omega=\C$ and obtain an analog of Liouville's theorem, that bounded carpet-harmonic functions are constant	.

Under certain assumptions on the geometry of the peripheral disks $Q_i$ (see Section \ref{harmonic:1-Basic Assumptions}) we obtain the following results (or rather discrete versions of them) for carpet-harmonic functions:
\begin{itemize}
\item Solution to the Dirichlet problem; see Section \ref{harmonic:Section Carpet Harmonic}.
\item Continuity, maximum principle, uniqueness of the solution to the Dirichlet problem, comparison principle; see Section \ref{harmonic:Section Properties of Harmonic}.
\item Caccioppoli inequality; see Section \ref{harmonic:Section Caccioppoli}.
\item Harnack's inequality, Liouville's theorem, strong maximum principle; see Section \ref{harmonic:Section Harnack}.
\item Local equicontinuity and compactness of harmonic functions; see Section \ref{harmonic:Section Equicontinuity}.
\end{itemize}

\section*{Acknowledgments}

This work is part of my PhD thesis at UCLA. I would like to thank my advisor, Mario Bonk, for suggesting the study of potential theory on carpets and guiding me throughout this research at UCLA. He has been a true teacher, sharing his expertise in the field and constantly providing deep insight to my questions, while being always patient and supportive. Moreover, I thank him for his thorough reading of this work and for his comments and corrections, which substantially improved the presentation.


\chapter{Harmonic functions on Sierpi\'nski carpets}\label{Chapter:Harmonic}

\section{Introduction}
In this chapter we introduce and study notions of Sobolev spaces and harmonic functions on Sierpi\'nski carpets. We briefly describe here some of the applications of carpet-harmonic functions, and then the organization of the current chapter.

In Chapter \ref{Chapter:Uniformization}, carpet-harmonic functions are applied towards a uniformization result. In particular, it is proved there that Sierpi\'nski carpets, under the geometric assumptions  described in Section \ref{harmonic:1-Basic Assumptions}, can be uniformized by \textit{square} carpets. This is done by constructing a ``harmonic conjugate" of a certain carpet-harmonic function, and modifying the methods used in \cite{Rajala:uniformization}. The uniformizing map is not quasisymmetric, in general, but it is quasiconformal in a discrete sense. If the assumptions on the {peripheral circles} are strengthened to \textit{uniformly relatively separated} (see Remark \ref{harmonic:Uniform relative separation} for the definition), \textit{uniform quasicircles}, then the map is actually a quasisymmetry. 

Carpet-harmonic functions also seem to be useful in the study of rigidity problems for quasisymmetric or bi-Lipschitz maps between square Sierpi\'nski carpets. The reason is that the real and imaginary parts of such functions are  carpet-harmonic, under some conditions; see Corollary \ref{harmonic:Pullback square carpets}. Such a rigidity problem is studied in \cite{BonkMerenkov:rigidity}, where it is shown that the only quasisymmetric self-maps of the standard Sierpi\'nski carpet are Euclidean isometries. In Theorem \ref{harmonic:Rigidity} we use the theory of carpet-harmonic functions to show an elementary rigidity result, which was already established in \cite[Theorem 1.4]{BonkMerenkov:rigidity}, for mappings between square Sierpi\'nski carpets that preserve the sides of the unbounded peripheral disk. It would be very interesting to find a proof of the main result in \cite{BonkMerenkov:rigidity} using carpet-harmonic functions.

The sections of the chapter are organized as follows. In Section \ref{harmonic:1-Basic Assumptions} we introduce our notation and our basic assumptions on the geometry of the peripheral disks.

In Section \ref{harmonic:2-Carpet modulus} we discuss notions of \textit{carpet modulus} that will be useful in studying path families in Sierpi\'nski carpets, and, in particular, in defining families of modulus zero which contain ``pathological" paths that we wish to exclude from our study. In Section \ref{harmonic:Section Existence of paths} we prove the existence of paths with certain properties that avoid the ``pathological" families of modulus zero. 

In Section \ref{harmonic:3-Sobolev} we finally introduce Sobolev spaces, starting first with a preliminary notion of a discrete Sobolev function, and then deducing the definition of a Sobolev function. We also study several properties of these functions and give examples.

Section \ref{harmonic:Section Carpet Harmonic} discusses the solution to the Dirichlet problem on carpets. Then in Section \ref{harmonic:Section Properties of Harmonic} we establish several classical properties of harmonic functions, including the continuity, the maximum principle, the uniqueness of the solution to the Dirichlet problem, and the comparison principle. We also prove a discrete analog of the Caccioppoli inequality in Section \ref{harmonic:Section Caccioppoli}.

Some more fine properties of carpet-harmonic functions are discussed Section \ref{harmonic:Section Harnack}, where we show Harnack's inequality, the analog of Liouville's theorem, and the strong maximum principle. We finish this chapter with Section \ref{harmonic:Section Equicontinuity}, where we study equicontinuity and convergence properties of carpet-harmonic functions.

\section{Basic assumptions and notation}\label{harmonic:1-Basic Assumptions}
We denote $\widehat \R=\R \cup \{-\infty,+\infty\}$, and $\widehat \C= \C \cup \{\infty\}$. A function that attains values in $\widehat \R$ is called an extended function. We use the standard open ball notation $B(x,r)=\{y\in \R^2: |x-y|<r\}$ and $\br B(x,r)$ is the closed ball. If $B=B(x,r)$ then $cB=B(x,cr)$. Also, $A(x;r,R)$ denotes the annulus $B(x,R)\setminus \br B(x,r)$, for $0<r<R$. All the distances will be in the Euclidean distance of $\C\simeq \R^2$. A point $x$ will denote most of the times a point of $\R^2$ and rarely we will use the notation $(x,y)$ for coordinates of a point in $\R^2$, in which case $x,y\in \R$. Each case will be clear from the context.

Let $\Omega  \subset \C$ be a connected open set, and let $\{Q_i\}_{i\in \N}$ be a collection of (open) Jordan regions compactly contained in $\Omega$, with disjoint closures, such that the set $S\coloneqq \Omega \setminus \bigcup_{i\in \N} Q_i$ has empty interior and is locally connected. The latter will be true if and only if for every ball $B(x,r)$ that is compactly contained in $\Omega$ the Jordan regions $Q_i$ with $Q_i\cap B(x,r)\neq \emptyset $ have diameters shrinking to $0$. We call the pair $(S,\Omega)$ a \textit{relative Sierpi\'nski carpet}\index{Sierpi\'nski carpet!relative}. We will often drop $\Omega$ from the notation, and just call $S$ a relative Sierpi\'nski carpet. The Jordan regions $Q_i$ are called the \textit{peripheral disks}\index{peripheral disk} of $S$, and the boundaries $\partial Q_i$ are the \textit{peripheral circles}\index{peripheral circle}. Note here that $\partial \Omega \cap S=\emptyset$. The definition of a relative Sierpi\'nski carpet is motivated by the fact that if $\Omega$ is a Jordan region, then $\br S$ is a Sierpi\'nski carpet in the usual sense, as defined in the Introduction. See Figure \ref{harmonic:fig:boundary} for a Sierpi\'nski carpet, and Figure \ref{harmonic:fig:square} for a relative Sierpi\'nski carpet, in which $\Omega$ has two boundary components.

We will impose some further assumptions on the geometry of the peripheral disks $\{Q_i\}_{i\in \N}$. First, we assume that they are \textit{uniform quasiballs}\index{quasiball}, i.e., there exists a uniform constant $K_0\geq 1$ such that for each $Q_i$ there exist concentric balls 
\begin{align}\label{harmonic:1-Quasi-balls}
B(x,r) \subset Q_i \subset B(x,R),
\end{align}
with $R/r\leq K_0$. 

Second, we assume that the peripheral disks are \textit{uniformly fat sets}\index{fat set}, i.e., there exists a uniform constant $K_1>0$ such that for every $Q_i$ and for every ball $B(x,r)$ centered at some $x\in Q_i$ with $r<\diam (Q_i)$ we have 
\begin{align}\label{harmonic:1-Fat sets}
\mathcal H^2( B(x,r)\cap Q_i) \geq K_1 r^2,
\end{align}
where by $\mathcal H^m$ we denote the $m$-dimensional Hausdorff measure, normalized so that it agrees with the $m$-dimensional Lebesgue measure, whenever $m\in \N$. 

A Jordan curve $J\subset \R^2$ is a $K$-quasicircle\index{quasicircle} for some $K>0$, if for any two points $x,y\in J$ there exists an arc $\gamma \subset J$ with endpoints $x,y$ such that $|x-y|\leq K \diam (\gamma)$. Note that if the peripheral circles $\partial Q_i$ are uniform quasicircles (i.e., $K$-quasicircles with the same constant $K$), then they are both uniform quasiballs and uniformly fat sets. The first claim is proved in \cite[Proposition 4.3]{Bonk:uniformization} and the second in \cite[Corollary 2.3]{Schramm:transboundary}, where the notion of a fat set appeared for the first time in the study of conformal maps. Another example of Jordan regions being quasiballs and fat sets are \textit{John domains}; see \cite{SmithStegenga:HolderPoincare} for the definition and properties of John domains. We remark that the boundary of such a domain has strictly weaker properties than those of a quasicircle.

Finally, we assume that $\mathcal H^2(S)=0$. In the following, a relative Sierpi\'nski carpet $(S,\Omega)$ (which will also be denoted by $S$ if $\Omega$ is implicitly understood) will always be assumed to have area zero and peripheral disks that are uniform quasiballs and uniformly fat sets. These will also be referred to as the \textit{standard assumptions}\index{standard assumptions}. We say that a constant $c>0$ \textit{depends on the data of the carpet $S$}, if it depends only on the quasiball and fatness constants $K_0$ and $K_1$, respectively.

The notation $V\subset \subset \Omega$ means that $\br V$ is compact and is contained in $\Omega$. Alternatively, we say that $V$ \textit{is compactly contained in} $\Omega$. For a set $E\subset \R^2$ and $\delta>0$ we denote by $N_\delta(E)$ the open  $\delta$-neighborhood of $E$
\begin{align*}
\{x\in \R^2 : \dist(x,E)<\delta\}.
\end{align*}
A \textit{continuum} $E\subset \R^2$ is a compact and connected set. A continuum $E$ is \textit{non-trivial} if it contains at least two points. Making slight abuse of notation and for visual purposes, we use $\br Q_i$ to denote the closure of $Q_i$, instead of using $\br{Q_i}$.

A \textit{path} or \textit{curve}\index{path}\index{curve} $\gamma$ is a continuous function $\gamma \colon I\to \R^2$, where $I\subset \R$ is a bounded interval, such that $\gamma$ has a continuous extension $\br \gamma:\br I\to \R^2$, i.e., $\gamma$ has endpoints. A \textit{closed} path $\gamma$ is a path with $I=[0,1]$ and an \textit{open} path $\gamma$ is a path with $I=(0,1)$. We will also use the notation $\gamma \subset \R^2$ for the image of the path as a set. A \textit{subpath} or \textit{subcurve} of a path $\gamma\colon I\to \R^2$ is the restriction of $\gamma$ to a subinterval of $I$. A Jordan curve is a homeomorphic image of the unit circle $S^1$, and a Jordan arc is homeomorphic to $[0,1]$. 

We denote by $S^\circ$ the points of the relative Sierpi\'nski carpet $S$ that do not lie on any peripheral circle $\partial Q_i$. For an open set $V\subset \Omega$ define $\partial_*V= S\cap \partial V$; see Figure \ref{harmonic:fig:boundary}. For a set $V$ that intersects the relative Sierpi\'nski carpet $S$  we define the index set $I_V=\{i\in \N: Q_i\cap V\neq \emptyset\}$.

In the proofs we will denote constants by $C,C',C'',\dots$, where the same symbol can denote a different constant if there is no ambiguity. 

\section{Notions of carpet modulus}\label{harmonic:2-Carpet modulus}
The carpet modulus is a generalization of the transboundary modulus introduced by Schramm in \cite{Schramm:transboundary}. Several properties of the carpet modulus were studied in \cite[Section 2]{BonkMerenkov:rigidity}.

Let $(S,\Omega)$ be a relative Sierpi\'nski carpet with the standard assumptions, and let $\Gamma$ be a family of paths in $\Omega$.

Let us recall first the definition of \textit{conformal modulus} or \textit{$2$-modulus}\index{modulus!conformal modulus}\index{modulus!$2$-modulus} of a path family $\Gamma$ in $\Omega$. A non-negative Borel function $\lambda$ on $\Omega$ is \textit{admissible} for the conformal modulus $\md_2(\Gamma)$ if 
\begin{align*}
\int_\gamma \lambda \,ds\geq 1
\end{align*} 
for all locally rectifiable paths $\gamma\in \Gamma$. If a path $\gamma$ is not locally rectifiable, we define $\int_\gamma \lambda \,ds=\infty$, even when $\lambda\equiv 0$. Hence, we may require that the above inequality holds for all $\gamma\in \Gamma$. Then $\md_2(\Gamma)= \inf  \int \lambda^2 \,d\mathcal H^2$ where the infimum is taken over all admissible functions.

A sequence of non-negative numbers $\{\rho(Q_i)\}_{i\in \N}$ is admissible for the \textit{weak (carpet) modulus}\index{modulus!weak carpet modulus} $\md_w(\Gamma)$ if there exists a path family $\Gamma_0 \subset \Gamma$ with $\md_2(\Gamma_0)=0$ such that
\begin{align}\label{harmonic:2-weak carpet modulus}
\sum_{i:Q_i\cap \gamma\neq \emptyset} \rho(Q_i)\geq 1
\end{align} 
for all $\gamma \in \Gamma\setminus \Gamma_0$. Note that in the sum each peripheral disk is counted once, and we only include the peripheral disks whose interior is intersected by $\gamma$, and not just the boundary. Then we define $\md_w(\Gamma)= \inf \sum_{i\in \N} \rho(Q_i)^2$ where the infimum is taken over all admissible weights $\rho$. 

Similarly we define the notion of \textit{strong (carpet) modulus}\index{modulus!strong carpet modulus}. A sequence of non-negative numbers $\{\rho(Q_i)\}_{i\in \N}$ is admissible for the \textit{strong carpet modulus} $\md_s(\Gamma)$ if 
\begin{align}\label{harmonic:2-strong carpet modulus}
\sum_{i:Q_i\cap \gamma\neq \emptyset} \rho(Q_i)\geq 1
\end{align}
for all $\gamma \in \Gamma$ that satisfy $\mathcal H^1(\gamma\cap S)=0$. Note that the path $\gamma$ could be non-rectifiable inside some $Q_i$. Then $\md_s(\Gamma)\coloneqq\inf \sum_{i\in \N} \rho(Q_i)^2$ where the infimum is taken over all admissible weights $\rho$.

For properties of the conformal modulus see \cite[Section 4.2, p.~133]{LehtoVirtanen:quasiconformal}. It can be shown as in the conformal case that both notions of carpet modulus satisfy monotonicity and countable subadditivity, i.e., if $\Gamma_1\subset \Gamma_2$ then $\md (\Gamma_1)\leq \md(\Gamma_2)$ and 
$$\md\left(\bigcup_{i\in \N}\Gamma_i \right)\leq \sum_{i\in \N} \md(\Gamma_i),$$
where $\md$ is either $\md_w$ or $\md_s$. 

The following lemma provides some insight for the relation between the two notions of carpet modulus.

\begin{lemma}\label{harmonic:2-weak modulus less than strong}
For any path family $\Gamma$ in $\Omega$ we have
\begin{align*}
\md_w(\Gamma)\leq \md_s(\Gamma).
\end{align*}
\end{lemma}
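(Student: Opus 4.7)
\medskip

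The plan is the standard trick for comparing admissibility classes: show that any weight admissible for $\md_s(\Gamma)$ is also admissible for $\md_w(\Gamma)$, with the exceptional family being exactly those paths that spend positive 1-dimensional Hausdorff measure inside the carpet.

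More precisely, let $\rho=\{\rho(Q_i)\}_{i\in\N}$ be admissible for $\md_s(\Gamma)$, and set
\[
\Gamma_0 \coloneqq \{\gamma\in\Gamma : \mathcal{H}^1(\gamma\cap S) > 0\}.
\]
For every $\gamma\in\Gamma\setminus\Gamma_0$ we have $\mathcal{H}^1(\gamma\cap S)=0$, so the admissibility of $\rho$ for $\md_s(\Gamma)$ gives $\sum_{i:Q_i\cap\gamma\neq\emptyset}\rho(Q_i)\geq 1$. Hence once I show $\md_2(\Gamma_0)=0$, it will follow that $\rho$ is admissible for $\md_w(\Gamma)$, and taking the infimum over $\rho$ yields $\md_w(\Gamma)\leq \md_s(\Gamma)$.

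The one substantive point is therefore $\md_2(\Gamma_0)=0$, and this is where the standing assumption $\mathcal{H}^2(S)=0$ enters. I would decompose $\Gamma_0 = \bigcup_{k=1}^{\infty}\Gamma_{0,k}$ where
\[
\Gamma_{0,k} \coloneqq \{\gamma\in\Gamma : \mathcal{H}^1(\gamma\cap S)\geq 1/k\},
\]
and for each $k$ take the Borel function $\lambda_k \coloneqq k\,\mathbf{1}_S$. For any locally rectifiable $\gamma\in\Gamma_{0,k}$,
\[
\int_\gamma \lambda_k\, ds \;\geq\; k\,\mathcal{H}^1(\gamma\cap S)\;\geq\; 1,
\]
while for non-locally-rectifiable $\gamma$ the integral is $\infty$ by convention, so $\lambda_k$ is admissible for $\md_2(\Gamma_{0,k})$. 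Its energy is
\[
\int_\Omega \lambda_k^2\, d\mathcal{H}^2 \;=\; k^2\,\mathcal{H}^2(S)\;=\;0,
\]
so $\md_2(\Gamma_{0,k})=0$. Countable subadditivity of $\md_2$ then yields $\md_2(\Gamma_0)=0$.

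I expect no real obstacle; the only things to be a bit careful about are that $S$ is Borel (it is the intersection of the open set $\Omega$ with the closed complement of the union of the open $Q_i$) so $\mathbf{1}_S$ is a legitimate admissible weight, and that the elementary bound $\int_\gamma \mathbf{1}_S\, ds\geq \mathcal{H}^1(\gamma\cap S)$ holds for locally rectifiable $\gamma$ (in fact with equality when $\gamma$ is a Jordan arc). Both are standard, and with this the proof is complete.
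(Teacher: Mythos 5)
Your proposal is correct and follows essentially the same route as the paper: any strong-admissible weight is weak-admissible, with exceptional family $\Gamma_0=\{\gamma\in\Gamma:\mathcal H^1(\gamma\cap S)>0\}$, which has conformal modulus zero because $\mathcal H^2(S)=0$. The paper simply uses the single admissible function $\infty\cdot\x_S$ for $\md_2(\Gamma_0)$ where you decompose into the families $\Gamma_{0,k}$ and use $k\,\x_S$; this is only a cosmetic difference.
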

\begin{proof} Let $\{\rho(Q_i)\}_{i\in \N}$ be admissible for $\md_s(\Gamma)$, so $\sum_{i:Q_i\cap \gamma\neq \emptyset} \rho(Q_i)\geq 1$ for all $\gamma \in \Gamma$ with $\mathcal H^1(\gamma\cap S)=0$. Define $\Gamma_0=\{\gamma\in \Gamma: \mathcal H^1(\gamma\cap S)>0\}$. Then the function $\lambda= \infty \cdot \x_{S}$ is admissible for $\md_2(\Gamma_0)$. Since $\mathcal H^2(S)=0$, it follows that $\md_2(\Gamma_0)=0$. Hence, $\sum_{i:Q_i\cap \gamma\neq \emptyset}\rho(Q_i)\geq 1$ for all $\gamma \in \Gamma\setminus \Gamma_0$, which shows the admissibility of $\{\rho(Q_i)\}_{i\in \N}$ for the weak carpet modulus $\md_w(\Gamma)$.
\end{proof}

A version of the next lemma can be found in \cite[Lemma 2.2]{BonkMerenkov:rigidity} and \cite{Bojarski:inequality}.

\begin{lemma}\label{harmonic:Bojarski}
Let $\kappa\geq 1$ and $I$ be a countable index set. Suppose that $\{B_i\}_{i\in I}$ is a collection of balls in $\R^2$, and $a_i$, $i\in I$, are non-negative real numbers. Then there exists a constant $C>0$ depending only on $\kappa$ such that
\begin{align*}
\biggl \| \sum_{i\in I} a_i\x_{\kappa B_i} \biggr\|_2 \leq C \biggl\| \sum_{i\in I} a_i \x_{B_i} \biggr\|_2.
\end{align*}
\end{lemma}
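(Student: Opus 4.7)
The plan is to prove this by duality together with the $L^{2}$-boundedness of the Hardy--Littlewood maximal operator $M$ on $\R^{2}$. Write $F=\sum_{i\in I} a_i \chi_{\kappa B_i}$ and $G=\sum_{i\in I} a_i \chi_{B_i}$. By $L^{2}$-duality it suffices to show that
\begin{align*}
\int_{\R^2} F(x) f(x)\, dx \leq C(\kappa)\, \|G\|_{2}\, \|f\|_{2}
\end{align*}
for every non-negative $f\in L^{2}(\R^{2})$.

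The key geometric observation is the following: if $r_i$ denotes the radius of $B_i$, then for every $x\in B_i$ one has $\kappa B_i \subset B(x,2\kappa r_i)$, and so
\begin{align*}
\int_{\kappa B_i} f\, dy \leq \int_{B(x,2\kappa r_i)} f\, dy \leq |B(x,2\kappa r_i)|\, Mf(x) = 4\kappa^{2}|B_i|\, Mf(x).
\end{align*}
Averaging this pointwise bound over $x\in B_i$ gives
\begin{align*}
\int_{\kappa B_i} f\, dy \leq 4\kappa^{2} \int_{B_i} Mf(x)\, dx.
\end{align*}

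Now I would interchange sum and integral, apply the above estimate termwise, and then Cauchy--Schwarz:
\begin{align*}
\int F f\, dx = \sum_{i\in I} a_i \int_{\kappa B_i} f\, dx \leq 4\kappa^{2} \sum_{i\in I} a_i \int_{B_i} Mf\, dx = 4\kappa^{2} \int G\cdot Mf\, dx \leq 4\kappa^{2} \|G\|_{2}\, \|Mf\|_{2}.
\end{align*}
Finally the Hardy--Littlewood maximal theorem on $\R^{2}$ gives $\|Mf\|_{2}\leq C_0 \|f\|_{2}$ with an absolute constant $C_0$, and taking the supremum over $f$ with $\|f\|_{2}\leq 1$ yields the claim with $C= 4C_0\kappa^{2}$.

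The argument is essentially routine; there is no real obstacle beyond verifying the geometric inclusion $\kappa B_i\subset B(x,2\kappa r_i)$ for $x\in B_i$ (which is immediate from the triangle inequality) and recording the correct dependence of the constant $C$ on $\kappa$. If one wanted the inequality for general $\R^{n}$ rather than $\R^{2}$ it would suffice to replace the exponent $2$ by $n$ in the volume comparison, so the proof is dimension-free apart from the constant.
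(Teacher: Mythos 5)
Your proof is correct: the paper states this lemma without proof, citing \cite{BonkMerenkov:rigidity} and \cite{Bojarski:inequality}, and your duality argument combined with the pointwise bound $\int_{\kappa B_i} f \leq 4\kappa^2 \int_{B_i} Mf$ and the $L^2$-boundedness of the Hardy--Littlewood maximal operator is exactly the standard proof given in those references. The interchange of sum and integral is justified by Tonelli since all terms are non-negative, and the constant $C = 4C_0\kappa^2$ is recorded correctly.
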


Here $\|\cdot\|_2$ denotes the $L^2$-norm with respect to planar Lebesgue measure. We will also need the next lemma.

\begin{lemma}\label{harmonic:2-weak modulus zero implies two modulus zero}
For a path family $\Gamma$ in $\Omega$ we have the equivalence
\begin{align*}
\md_w(\Gamma)=0 \quad\textrm{if and only if}\quad \md_2(\Gamma)=0.
\end{align*}
\end{lemma}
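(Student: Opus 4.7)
The forward direction, $\md_2(\Gamma)=0 \Rightarrow \md_w(\Gamma)=0$, is trivial: the weight $\rho\equiv 0$ is admissible for $\md_w$ if we declare the exceptional family to be all of $\Gamma$, which has conformal modulus zero by hypothesis.

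For the converse, the plan is to turn an almost-vanishing weight for $\md_w(\Gamma)$ into an almost-vanishing density for $\md_2(\Gamma)$ using Bojarski's inequality. Fix $\epsilon>0$ and let $\{\rho(Q_i)\}$ be admissible for $\md_w(\Gamma)$ with $\sum_i\rho(Q_i)^2<\epsilon$, with exceptional family $\Gamma_0$ of conformal modulus zero. Using the quasiball assumption, pick $B_i=B(x_i,r_i)\subset Q_i\subset K_0 B_i$; since the $Q_i$ have disjoint closures, the balls $B_i$ are pairwise disjoint. I would then define the Borel density
$$\lambda=\sum_i\frac{\rho(Q_i)}{r_i}\,\chi_{2K_0 B_i}.$$
Applying Lemma~\ref{harmonic:Bojarski} with dilation $\kappa=2K_0$ and coefficients $a_i=\rho(Q_i)/r_i$, together with disjointness of the $B_i$, yields $\|\lambda\|_2^2\leq C\,\epsilon$ for a constant $C$ depending only on $K_0$.

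To check admissibility of a fixed multiple of $\lambda$ for $\md_2(\Gamma\setminus\Gamma_0)$, fix a locally rectifiable $\gamma\in\Gamma\setminus\Gamma_0$ of positive diameter $\delta$; degenerate paths (single points and non-locally-rectifiable curves) are absorbed into $\Gamma_0$ at no cost, because the former contribute nothing to $\md_2$ and the latter satisfy $\int_\gamma\lambda\,ds=\infty$ by the author's convention. Partition the indices meeting $\gamma$ into
$$\mathcal{A}_\gamma=\{i:Q_i\cap\gamma\neq\emptyset,\ \gamma\not\subset 2K_0 B_i\},\qquad \mathcal{B}_\gamma=\{i:Q_i\cap\gamma\neq\emptyset,\ \gamma\subset 2K_0 B_i\}.$$
For $i\in\mathcal{A}_\gamma$ a subpath of $\gamma$ starting at a point of $Q_i\subset K_0 B_i$ and terminating at the first exit of $2K_0 B_i$ has Euclidean length at least $(2K_0-K_0)r_i=K_0 r_i$, which yields $\int_\gamma\lambda\,ds\geq K_0\sum_{i\in\mathcal{A}_\gamma}\rho(Q_i)$.

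The crux of the proof is a uniform finiteness bound $\#\mathcal{B}_\gamma\leq N_0$ depending only on $K_0$ and $K_1$. For $i\in\mathcal{B}_\gamma$ the containment $\gamma\subset 2K_0 B_i$ forces $r_i\geq \delta/(4K_0)$, so $\diam(Q_i)\geq\delta/(2K_0)$; applying the fatness hypothesis at a point $p_i\in Q_i\cap\gamma$ with radius $\delta/(4K_0)<\diam(Q_i)$ produces disjoint pieces of the $Q_i$ of area at least $K_1(\delta/(4K_0))^2$, all contained in the ball $B(q,2\delta)$ around any fixed $q\in\gamma$; an area comparison then gives the claimed bound on $\#\mathcal{B}_\gamma$. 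Cauchy--Schwarz with $\sum_i\rho(Q_i)^2<\epsilon$ then gives $\sum_{\mathcal{B}_\gamma}\rho(Q_i)\leq\sqrt{N_0\epsilon}$, and the weak-modulus admissibility $\sum_{i:Q_i\cap\gamma\neq\emptyset}\rho(Q_i)\geq 1$ forces $\sum_{\mathcal{A}_\gamma}\rho(Q_i)\geq 1-\sqrt{N_0\epsilon}\geq 1/2$ once $\epsilon\leq 1/(4N_0)$. Consequently $\int_\gamma\lambda\,ds\geq K_0/2$, so $(2/K_0)\lambda$ is admissible for $\md_2(\Gamma\setminus\Gamma_0)$, which gives $\md_2(\Gamma\setminus\Gamma_0)\leq (4/K_0^2)\|\lambda\|_2^2\leq (4C/K_0^2)\epsilon$. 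Letting $\epsilon\to 0$ and applying countable subadditivity yields $\md_2(\Gamma)\leq \md_2(\Gamma_0)+\md_2(\Gamma\setminus\Gamma_0)=0$. The main obstacle is the uniform bound $\#\mathcal{B}_\gamma\leq N_0$: without the fatness hypothesis, the indices that ``swallow'' the whole path $\gamma$ inside one dilated quasiball could contribute an uncontrolled amount to $\sum_i\rho(Q_i)$, spoiling the lower bound on $\int_\gamma\lambda\,ds$.
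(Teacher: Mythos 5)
Your construction of the density $\lambda$ from the quasiball decomposition, the use of Bojarski's inequality, and the fatness-based area comparison controlling the indices that swallow $\gamma$ are all sound, and this is essentially the route the paper takes. Your quantitative bookkeeping is a legitimate variant: you keep $\epsilon$ finite and combine the uniform bound $\#\mathcal{B}_\gamma\leq N_0$ with Cauchy--Schwarz to get $\int_\gamma\lambda\,ds\geq K_0/2$, whereas the paper sums the weights over $\epsilon=2^{-n}$ to force $\sum_{i:Q_i\cap\gamma\neq\emptyset}\rho(Q_i)=\infty$ and hence $\int_\gamma\lambda\,ds=\infty$. Your version even spares you the paper's preliminary reduction to the subfamilies $\Gamma_\delta$ of paths with diameter bounded below, since $N_0$ does not depend on $\delta$.

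There is, however, one genuine error: the claim that constant paths ``contribute nothing to $\md_2$'' and can be absorbed into the exceptional family at no cost. The opposite is true: for a constant path $\gamma\equiv x_0$ one has $\int_\gamma\lambda\,ds=0$ for every non-negative Borel $\lambda$, so no function is admissible and $\md_2(\{\gamma\})=+\infty$. If $\Gamma$ contained a constant path, your enlarged $\Gamma_0$ would have infinite conformal modulus and the final subadditivity step would collapse. The repair --- and the reason the lemma is nonetheless true --- is that $\md_w(\Gamma)=0$ already forbids constant paths in $\Gamma$: a constant path cannot lie in the exceptional family of an admissible weight (that would force the exceptional family to have infinite $2$-modulus), so the inequality $\sum_{i:x_0\in Q_i}\rho(Q_i)\geq 1$ must hold for it; this is impossible when $x_0\in S$ (the sum is empty), and when $x_0\in Q_{i_0}$ it forces $\rho(Q_{i_0})\geq 1$ for every admissible weight, hence $\md_w(\Gamma)\geq 1$, a contradiction. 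The paper makes exactly this observation at the outset of its proof; with it inserted, your argument is complete.
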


Before starting the proof, we require the following consequence of the fatness assumption.

\begin{lemma}\label{harmonic:Fatness consequence}
Let $E$ be a compact subset of $\R^2$. Then for each $\varepsilon>0$ there exist at most finitely many  peripheral disks $Q_i$ intersecting $E$ with diameter larger than $\varepsilon$. Moreover, the spherical diameters of the peripheral disks $Q_i$ converge to $0$.
\end{lemma}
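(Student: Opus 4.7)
For the first statement, I would argue by contradiction using the fatness assumption and an area-packing argument. Suppose there are infinitely many peripheral disks $Q_{i_k}$ that intersect $E$ and satisfy $\diam(Q_{i_k})>\varepsilon$. Pick $x_k\in Q_{i_k}\cap E$, and set $r=\varepsilon/2$, which is strictly less than $\diam(Q_{i_k})$. The fatness inequality \eqref{harmonic:1-Fat sets} then yields
\begin{align*}
\mathcal H^2(B(x_k,\varepsilon/2)\cap Q_{i_k})\geq K_1\,\varepsilon^2/4.
\end{align*}
All these sets lie in the bounded set $N_{\varepsilon/2}(E)$ (since $E$ is compact), and because the peripheral disks have pairwise disjoint closures, the sets $B(x_k,\varepsilon/2)\cap Q_{i_k}$ are pairwise disjoint. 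Summing over $k$ contradicts $\mathcal H^2(N_{\varepsilon/2}(E))<\infty$.

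For the second statement I would again argue by contradiction: assume the spherical diameters do not converge to $0$, so after passing to a subsequence we have $\diam_{\mathrm{sph}}(Q_{i_k})\geq\varepsilon$ for some fixed $\varepsilon>0$. I would then split into two cases according to whether the sequence stays bounded in $\C$ or escapes to $\infty$. Concretely, for each $k$ pick points $z_k,w_k\in Q_{i_k}$ realizing (approximately) the spherical diameter, and let $R_k=\min(|z_k|,|w_k|)$.

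In the first case, a subsequence of $(R_k)$ is bounded by some $R$, so $Q_{i_k}\cap \br B(0,R)\neq\emptyset$. Using the comparability of the chordal and Euclidean metrics on the compact set $\br B(0,2R)$, together with the trivial bound $|z-w|\geq |w|-R\geq R$ when $|w|>2R$, one shows that $\diam(Q_{i_k})\geq \varepsilon'$ for some $\varepsilon'=\varepsilon'(R,\varepsilon)>0$. This reduces to the first part of the lemma applied with $E=\br B(0,R)$, giving a contradiction.

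In the second case, $R_k\to\infty$, so $Q_{i_k}\subset\{|z|>R_k-o(1)\}$. Using the standard chordal estimate $d_{\mathrm{sph}}(z,\infty)\leq 2/\sqrt{1+|z|^2}$, the spherical diameter of $Q_{i_k}$ is bounded above by $4/\sqrt{1+R_k^2}\to 0$, contradicting $\diam_{\mathrm{sph}}(Q_{i_k})\geq\varepsilon$. The main technical point, and the only mildly delicate step, is bookkeeping in the first case to extract a uniform lower bound on the Euclidean diameter from the spherical lower bound; the rest is a clean application of fatness and compactness.
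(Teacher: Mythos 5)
Your proof is correct, and for the first assertion it takes a genuinely more direct route than the paper's. The paper argues in two steps: first, using $\mathcal H^2(Q_i)\geq K_1\diam(Q_i)^2$ and disjointness, it shows that a fixed ball $B(0,R)$ can \emph{contain} only finitely many peripheral disks of diameter $>\varepsilon$; it must then separately handle large disks that meet $E$ without being contained in any ball, by observing that infinitely many such disks would cross the annulus $B(0,2R)\setminus B(0,R)$ and invoking Remark \ref{harmonic:Remark:Fatness implication} to place area $\geq C R^2$ of each in that annulus. Your argument collapses this into one step: applying the fatness inequality at a point $x_k\in Q_{i_k}\cap E$ with radius $\varepsilon/2<\diam(Q_{i_k})$ pins a definite amount of area of each disk inside the bounded set $N_{\varepsilon/2}(E)$, so the size and location of the rest of the disk are irrelevant and disjointness finishes the packing argument. (The paper's two-step version is not wasted effort in context, since its first step is reused in Corollary \ref{harmonic:Fatness corollary}, but for the lemma itself your route is cleaner.) For the second assertion both arguments reduce the spherical claim to the Euclidean one by comparing the two metrics away from $\infty$; your case split on $R_k$ is an explicit version of the paper's observation that disks of spherical diameter $\geq\varepsilon$ must meet a fixed compact subset of $\R^2$ with Euclidean diameter bounded below. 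One cosmetic point: in your Case 2 the containment $Q_{i_k}\subset\{|z|>R_k-o(1)\}$ need not hold (only the two chosen points are known to have large modulus), but this is harmless, since all you actually use is $d_{\mathrm{sph}}(z_k,w_k)\leq 4/\sqrt{1+R_k^2}\to 0$, which already contradicts $d_{\mathrm{sph}}(z_k,w_k)\geq\varepsilon/2$.
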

\begin{proof}
Note first that by an area argument no ball $B(0,R)$ can \textit{contain} infinitely many peripheral disks $Q_i$ with $\diam(Q_i)>\varepsilon>0$. Indeed, the fatness condition implies that 
\begin{align*}
\mathcal H^2(Q_i) \geq K_1 \diam(Q_i)^2.
\end{align*}
Since the peripheral disks $Q_i$ are disjoint we have 
\begin{align*}
\mathcal H^2(B(0,R)) \geq \sum_{i:Q_i\subset B(0,R)} \mathcal H^2(Q_i) \geq K_1 \sum_{i:Q_i\subset B(0,R)} \diam(Q_i)^2.
\end{align*}
Hence, we see that only finitely many of them can satisfy $\diam(Q_i)>\varepsilon$.

If there exist infinitely many $Q_i$ intersecting $E$ with $\diam(Q_i)>\varepsilon$ then we necessarily have $Q_i \to \infty$. In particular there exists a ball $B(0,R)\supset E$ such that there are infinitely many $Q_i$ intersecting both $\partial B(0,R)$ and $\partial B(0,2R)$. The fatness assumption implies now that for all such $Q_i$ we have 
\begin{align*}
\mathcal H^2(Q_i \cap (B(0,2R)\setminus B(0,R))) \geq CR^2 \geq C\varepsilon^2
\end{align*}
for a constant $C>0$ depending only on $K_1$; see Remark \ref{harmonic:Remark:Fatness implication} below. Hence, an area argument as before yields the conclusion.

For the final claim, suppose that there exist infinitely many peripheral disks with spherical diameters bounded below. Then there exists a compact set $E\subset \R^2$ intersecting infinitely many of these peripheral disks. In a neighborhood of $E$ the spherical metric is comparable to the Euclidean metric, hence there are infinitely many peripheral disks intersecting $E$ with Euclidean diameters bounded below. This contradicts the previous part of the lemma.
\end{proof}

\begin{remark}\label{harmonic:Remark:Fatness implication}
In the preceding proof we used the fact that if $\br Q_i$ intersects two circles $\partial B(x,r)$ and $\partial B(x,R)$ with $0<r<R$, then
\begin{align*}
\mathcal H^2(Q_i \cap (B(x,R)\setminus B(x,r))) \geq C (R-r)^2
\end{align*}
for a constant $C>0$ depending only on $K_1$. To see that, by the connectedness of $Q_i$ there exists a point $y\in Q_i \cap \partial B(x, (r+R)/2)$. Then $B(y, (R-r)/2) \subset B(x,R)\setminus B(x,r)$, so
\begin{align*}
\mathcal H^2(Q_i \cap (B(x,R)\setminus B(x,r)))\geq \mathcal H^2(Q_i\cap B(y,(R-r)/2)) \geq K_1 \frac{(R-r)^2}{4},
\end{align*}
by the fatness condition \eqref{harmonic:1-Fat sets}. 
\end{remark}

\begin{corollary}\label{harmonic:Fatness corollary}
Let $E$ be a compact subset of $\R^2$. Then 
\begin{align*}
\sum_{i\in I_E}\diam(Q_i)^2<\infty.
\end{align*}
\end{corollary}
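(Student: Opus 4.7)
The plan is to split the index set $I_E$ according to the size of the peripheral disks and then run a standard area-packing argument powered by the fatness condition. Fix any $R>0$ large enough that $N_1(E)\subset B(0,R)$; this is possible since $E$ is compact.

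First I would separate out the ``large'' peripheral disks. Set $I_E^{\text{big}}=\{i\in I_E:\diam(Q_i)\geq 1\}$. By the previous Lemma (the first assertion of Lemma \ref{harmonic:Fatness consequence} applied with $\varepsilon=1$), the set $I_E^{\text{big}}$ is finite, so it contributes only finitely much to $\sum_{i\in I_E}\diam(Q_i)^2$ and can be discarded.

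For the remaining index set $I_E^{\text{small}}=I_E\setminus I_E^{\text{big}}$, the key observation is a containment: if $Q_i\cap E\neq\emptyset$ and $\diam(Q_i)<1$, then $Q_i\subset N_{\diam(Q_i)}(E)\subset N_1(E)\subset B(0,R)$. Now I would invoke the fatness condition in the form $\mathcal H^2(Q_i)\geq K_1\diam(Q_i)^2$, which holds for each peripheral disk by choosing a ball centered at a point of $Q_i$ with radius $\diam(Q_i)/2$, say (this was essentially used already in the proof of Lemma \ref{harmonic:Fatness consequence}). Since the peripheral disks are pairwise disjoint and all contained in $B(0,R)$, their areas add up to at most $\mathcal H^2(B(0,R))=\pi R^2$, giving
\begin{align*}
K_1 \sum_{i\in I_E^{\text{small}}} \diam(Q_i)^2 \leq \sum_{i\in I_E^{\text{small}}} \mathcal H^2(Q_i) \leq \mathcal H^2(B(0,R)) = \pi R^2 < \infty.
\end{align*}
Combining with the finite contribution from $I_E^{\text{big}}$ yields the desired summability.

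There is no real obstacle here; the whole content of the corollary is the packing inequality enabled by fatness together with the geometric observation that small disks intersecting a compact set must lie in a bounded neighborhood of it. The only thing one needs to be careful about is isolating the finitely many large disks first, so that the ``$Q_i\subset N_1(E)$'' containment can be applied uniformly to the rest.
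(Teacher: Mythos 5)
Your proof is correct and follows essentially the same strategy as the paper's: discard the finitely many large disks via Lemma \ref{harmonic:Fatness consequence}, note the remaining disks intersecting $E$ lie in a fixed bounded set, and conclude by the fatness packing inequality $K_1\diam(Q_i)^2\leq \mathcal H^2(Q_i)$ together with disjointness of the $Q_i$. The only differences (threshold $1$ versus $R/2$, and the ball $B(0,R)\supset N_1(E)$ versus $B(0,2R)$) are cosmetic.
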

Recall that $I_E=\{i\in \N: Q_i\cap E\neq \emptyset\}$.

\begin{proof}
Let $B(0,R)$ be a large ball that contains $E$. Then there are finitely many peripheral disks intersecting $B(0,R)$ and having diameter greater than $R/2$, by Lemma \ref{harmonic:Fatness consequence}. Hence, it suffices to show 
\begin{align*}
\sum_{i:Q_i\subset B(0,2R)} \diam(Q_i)^2<\infty.
\end{align*}
Using the fatness, one can see that this sum is bounded above by a multiple of $\mathcal H^2(B(0,2R))$, as in the proof of Lemma \ref{harmonic:Fatness consequence}.
\end{proof}

\begin{proof}[Proof of Lemma \ref{harmonic:2-weak modulus zero implies two modulus zero}] One direction is trivial, namely if $\md_2(\Gamma)=0$ then $\md_w(\Gamma)=0$, since the weight $\rho(Q_i)\equiv 0$ is admissible.

For the converse, note first that $\Gamma$ cannot contain constant paths if $\md_w(\Gamma)=0$. Indeed, assume that $\Gamma$ contains a constant path $\gamma =x_0 \in \Omega$, and $\{\rho(Q_i)\}_{i\in \N}$ is an admissible weight for $\md_w(\Gamma)$. Then there exists an exceptional family $\Gamma_0\subset \Gamma$ with $\md_2(\Gamma_0)=0$ such that 
\begin{align*}
\sum_{i:Q_i\cap \gamma\neq \emptyset} \rho(Q_i)\geq 1
\end{align*}
for all $\gamma\in \Gamma\setminus \Gamma_0$. The constant path $\gamma=x_0$ cannot lie in $\Gamma_0$, otherwise we would have $\md_2(\Gamma_0)=\infty$ because no function would be admissible for $\md_2(\Gamma_0)$. Hence, we must have
\begin{align*}
\sum_{i:x_0\in Q_i} \rho(Q_i)\geq 1.
\end{align*}
If $x_0\in S$, then this cannot happen since the sum is empty, so the only possibility is that $x_0\in Q_{i_0}$ for some $i\in \N$. In this case we have $\rho(Q_{i_0})\geq 1$. Hence, $\sum_{i\in \N} \rho(Q_i)^2\geq 1$, which implies that $\md_w(\Gamma)\geq 1$, a contradiction.

We now proceed to showing the implication. By the subadditivity of $2$-modulus, it suffices to show that the family $\Gamma_\delta$ of paths in $\Gamma$ that have diameter bounded below by $\delta>0$ has conformal modulus zero. Indeed, this will exhaust all paths of $\Gamma$, since $\Gamma$ contains no constant paths. For simplicity we denote  $\Gamma_\delta=\Gamma$, and note that we have $\md_w(\Gamma)=0$, using the monotonicity of modulus.

For $\varepsilon >0$ let $\{\rho(Q_i)\}_{i\in \N}$ be a weight such that $\sum_{i\in \N}\rho(Q_i)^2<\varepsilon$ and 
$$\sum_{i:Q_i\cap \gamma \neq \emptyset}\rho(Q_i)\geq 1$$
for $\gamma\in \Gamma\setminus \Gamma_0$, where $\Gamma_0$ is a path family with $\md_2(\Gamma_0)=0$. Using $\varepsilon =1/2^n$ and summing the corresponding weights $\rho$, as well as, taking the union of the exceptional families $\Gamma_0$, we might as well obtain a weight $\{\rho(Q_i)\}_{i\in \N}$ and an exceptional family $\Gamma_0$ such that $\sum_{i\in \N}\rho(Q_i)^2 <\infty$ and 
\begin{align}\label{harmonic:2-weak modulus Proof}
\sum_{i:Q_i\cap \gamma \neq \emptyset}\rho(Q_i)=\infty
\end{align}
for all $\gamma\in \Gamma\setminus \Gamma_0$, where $\md_2(\Gamma_0)=0$. 

We construct an admissible function $\lambda \colon\C \to [0,\infty]$ for $\md_2(\Gamma)$ as follows. Since the peripheral disks $Q_i$ are uniform quasiballs, there exist balls $B(x_i,r_i)\subset Q_i\subset B(x_i,R_i)$ with $R_i/r_i\leq K_0$. We define
\begin{align*}
\lambda= \sum_{i\in \N} \frac{\rho(Q_i)}{R_i}\x_{B(x_i,2R_i)}.
\end{align*}
Note that if $\gamma$ intersects some $Q_i$ with $4R_i<\delta$, then $\gamma$ must exit $B(x_i,2R_i)$, so $\int_\gamma \x_{B(x_i,2R_i)} \, ds\\ \geq R_i$. If $\gamma$ is a bounded path, i.e., it is contained in a ball $B(0,R)$, then there are only finitely many peripheral disks $Q_i$ intersecting $\gamma$ and satisfying $4R_i\geq \delta$. This follows from Lemma \ref{harmonic:Fatness consequence} and the fact that $\diam(Q_i)\geq r_i\geq R_i/K_0$ from the quasiballs assumption. Thus, we have
$$\sum_{\substack{i: Q_i\cap \gamma\neq \emptyset\\ 4R_i\geq \delta}} \rho(Q_i)<\infty,$$
since it is a finite sum. This implies that
\begin{align*}
\int_\gamma \lambda \,ds\geq \sum_{\substack{i:Q_i\cap \gamma\neq \emptyset \\ 4R_i<\delta}} \frac{\rho(Q_i)}{R_i}\int_\gamma \x_{B(x_i,2R_i)} \,ds\geq \sum_{\substack{i:Q_i\cap \gamma\neq \emptyset \\ 4R_i<\delta}} \rho(Q_i) =\infty
\end{align*}
by \eqref{harmonic:2-weak modulus Proof}, whenever $\gamma \in \Gamma\setminus \Gamma_0$. Now, if $\gamma \in \Gamma\setminus \Gamma_0$ is an unbounded path, then $\gamma$ always exits $B(x_i,2R_i)$ whenever $Q_i\cap \gamma\neq \emptyset$, so in this case we also have
\begin{align*}
\int_\gamma \lambda \,ds =\infty.
\end{align*}

Using Lemma \ref{harmonic:Bojarski} we obtain
\begin{align*}
\|\lambda\|_2 &\leq C \biggl\|\sum_{i\in \N} \frac{\rho(Q_i)}{R_i}\x_{B(x_i,R_i/K_0)} \biggr\|_2 \leq C \biggl\|\sum_{i\in \N} \frac{\rho(Q_i)}{R_i}\x_{B(x_i,r_i)} \biggr\|_2 \\
&\leq C'\left(\sum_{i\in \N} \rho(Q_i)^2 \right)^{1/2}<\infty.
\end{align*}
since the balls $B(x_i,r_i)$ are disjoint. This implies that $\md_2(\Gamma\setminus \Gamma_0)=0$. Thus, 
\[
\md_2(\Gamma)\leq \md_2(\Gamma\setminus \Gamma_0)+\md_2(\Gamma_0)=0. \qedhere
\]
\end{proof}

\begin{remark}Observe that families of paths passing through a single point $p\in Q_i$ would have conformal modulus and thus weak carpet modulus equal to zero, but this is not the case when we use the strong modulus. Thus, the notion of strong modulus is more natural for carpets, rather than the weak. In what follows we will study in parallel the two notions, pointing out the differences whenever they occur.
\end{remark}

Finally, we recall Fuglede's lemma\index{Fuglede's lemma} in this setting:

\begin{lemma}\label{harmonic:Fuglede}
Let $\{\rho(Q_i)\}_{i\in \N}$ and $\{\rho_n(Q_i)\}_{i\in \N}$ for $n\in \N$ be non-negative weights in $\ell^2(\N)$ such that $\rho_n \to \rho$ in $\ell^2(\N)$, i.e.
\begin{align*}
\sum_{i\in \N}|\rho_n(Q_i)-\rho(Q_i)|^2 \to 0
\end{align*}
as $n\to\infty$. Then there exists a subsequence $\{\rho_{k_n}(Q_i)\}_{i\in \N}$, $n\in \N$, and an exceptional family $\Gamma_0$ with $\md_s(\Gamma_0)=0$ such that for all paths $\gamma\subset \Omega$ with $\gamma\notin \Gamma_0$ we have
\begin{align*}
\sum_{i:Q_i\cap \gamma\neq \emptyset} |\rho_{k_n}(Q_i)-\rho(Q_i)|\to 0
\end{align*}
as $n\to\infty$.
\end{lemma}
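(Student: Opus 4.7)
The plan is to follow the standard Borel--Cantelli-style argument that underlies the classical Fuglede lemma, adapted to the strong carpet modulus. Since $\rho_n\to\rho$ in $\ell^2(\N)$, I will first pass to a fast subsequence, say $\rho_{k_n}$, chosen so that $\|\rho_{k_n}-\rho\|_{\ell^2}\le 2^{-n}$. Writing $\sigma_n(Q_i)=|\rho_{k_n}(Q_i)-\rho(Q_i)|$, the task is to find a single exceptional family $\Gamma_0$ with $\md_s(\Gamma_0)=0$ outside of which $\sum_{i:Q_i\cap\gamma\neq\emptyset}\sigma_n(Q_i)\to 0$.

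The natural candidate is the lim-sup family. I would set
\[
\Gamma_n=\Bigl\{\gamma\subset\Omega : \sum_{i:Q_i\cap\gamma\neq\emptyset}\sigma_n(Q_i)\ge 2^{-n/2}\Bigr\},
\]
and observe that the rescaled weight $2^{n/2}\sigma_n$ is, by construction, admissible for $\md_s(\Gamma_n)$: the admissibility inequality \eqref{harmonic:2-strong carpet modulus} is required only for those $\gamma$ in $\Gamma_n$, and all such $\gamma$ (including the ones with $\mathcal H^1(\gamma\cap S)=0$) satisfy it by definition of $\Gamma_n$. Consequently
\[
\md_s(\Gamma_n)\le \|2^{n/2}\sigma_n\|_{\ell^2}^{2}=2^n\|\sigma_n\|_{\ell^2}^{2}\le 2^{-n}.
\]

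Next I would define $\Gamma_0^{(m)}=\bigcup_{n\ge m}\Gamma_n$ and $\Gamma_0=\bigcap_{m}\Gamma_0^{(m)}$. Countable subadditivity of $\md_s$ (noted after the definitions in Section \ref{harmonic:2-Carpet modulus}) gives $\md_s(\Gamma_0^{(m)})\le\sum_{n\ge m}2^{-n}=2^{-m+1}$, and monotonicity then forces $\md_s(\Gamma_0)=0$. For any $\gamma\notin\Gamma_0$ there is some $m$ with $\gamma\notin\Gamma_0^{(m)}$, so $\gamma\notin\Gamma_n$ for every $n\ge m$, which is precisely the statement that $\sum_{i:Q_i\cap\gamma\neq\emptyset}\sigma_n(Q_i)<2^{-n/2}\to 0$. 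That is the conclusion sought.

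I do not expect a serious obstacle: the main thing to verify carefully is that the admissibility condition for the strong modulus is correctly formulated on the restricted class of paths with $\mathcal H^1(\gamma\cap S)=0$, so that the weight $2^{n/2}\sigma_n$ really is admissible for the full family $\Gamma_n$ in the sense of \eqref{harmonic:2-strong carpet modulus}. Everything else is bookkeeping with the geometric-series choice of subsequence and the two standing properties of $\md_s$ (monotonicity and countable subadditivity).
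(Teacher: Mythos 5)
Your argument is correct. It is the Borel--Cantelli version of the classical Fuglede proof, whereas the paper uses the other standard presentation: after the same reduction to a fast subsequence, the paper fixes $\delta>0$, looks directly at the limsup family $\Gamma_0=\{\gamma:\limsup_n\sum_{i:Q_i\cap\gamma\neq\emptyset}\rho_{k_n}(Q_i)>\delta\}$, and observes that the single weight $\lambda=\sum_n\rho_{k_n}$ has finite $\ell^2$ norm while $\sum_{i:Q_i\cap\gamma\neq\emptyset}\lambda(Q_i)=\infty$ for every $\gamma\in\Gamma_0$, so that $\varepsilon\lambda$ is admissible for every $\varepsilon>0$ and $\md_s(\Gamma_0)=0$. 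You instead bound $\md_s(\Gamma_n)$ for each sublevel family via the Chebyshev-type estimate $\md_s(\Gamma_n)\le 2^n\|\sigma_n\|_{\ell^2}^2\le 2^{-n}$ and then apply countable subadditivity to $\limsup_n\Gamma_n$. Both routes rest on exactly the same two facts (monotonicity and countable subadditivity of $\md_s$, plus the observation that admissibility for $\md_s$ need only be checked on paths with $\mathcal H^1(\gamma\cap S)=0$, which your sublevel-set definition handles automatically); yours is slightly more quantitative at each stage, the paper's is slightly shorter because one admissible weight serves all scales at once. Your verification of admissibility of $2^{n/2}\sigma_n$ for $\Gamma_n$ and the bookkeeping with $\Gamma_0=\bigcap_m\bigcup_{n\ge m}\Gamma_n$ are both sound, so there is no gap.
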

The proof is a simple adaptation of the conformal modulus proof but we include it here for the sake of completeness. The argument is essentially contained in the proof of \cite[Proposition 2.4, pp.~604--605]{BonkMerenkov:rigidity}.

\begin{proof}
Without loss of generality, we may assume that $\rho_n\geq 0$ and $\rho_n\to 0 $ in $\ell^2(\N)$. We consider a subsequence $\rho_{k_n}$ such that
\begin{align*}
\sum_{i\in \N} \rho_{k_n}(Q_i)^2< \frac{1}{2^n}
\end{align*}
for all $n\in \N$. By the subadditivity of strong modulus, it suffices to show that for each $\delta>0$ the path family
\begin{align*}
\Gamma_0\coloneqq \{ \gamma \subset \Omega: \limsup_{n\to \infty}\sum_{i:Q_i\cap \gamma\neq \emptyset} \rho_{k_n}(Q_i) > \delta \}
\end{align*}
has strong modulus zero. 

Let 
\begin{align*}
\lambda\coloneqq \sum_{n=1}^\infty \rho_{k_n}
\end{align*}
and note that
\begin{align*}
\sum_{i:Q_i\cap \gamma\neq \emptyset}\lambda(Q_i) = \sum_{n=1}^\infty \sum_{i:Q_i\cap \gamma \neq \emptyset} \rho_{k_n}(Q_i) =\infty
\end{align*}
for all $\gamma\in \Gamma_0$. On the other hand,
\begin{align*}
\sum_{i\in \N} \lambda(Q_i)^2 &=\|\lambda(\cdot) \|_{\ell^2( \{Q_i:i\in \N\})}= \biggl \|\sum_{n=1}^\infty \rho_{k_n}(\cdot) \biggr\|_{\ell^2( \{Q_i:i\in \N\})}\\
&\leq \sum_{n=1}^\infty \| \rho_{k_n}(\cdot) \|_{\ell^2( \{Q_i:i\in \N\})}\leq \sum_{n=1}^\infty  \frac{1}{2^n}<\infty.
\end{align*}
Since $\varepsilon\cdot \lambda$ is admissible for $\md_s(\Gamma_0)$ for all $\varepsilon>0$, it follows that $\md_s(\Gamma_0)=0$.
\end{proof}

\section{Existence of paths}\label{harmonic:Section Existence of paths}
In this section we will show the existence of paths that avoid given families of (weak, strong, conformal) modulus  equal to $0$. These paths will therefore be ``good" paths for which we can apply, e.g., Fuglede's lemma. We will use these good paths later to prove qualitative estimates, such as continuity of carpet-harmonic functions.

First we recall some facts. The \textit{co-area formula}\index{co-area formula} and \textit{area formula}\index{area formula} in the next proposition are contained in \cite[Theorem 3.2.12]{Federer:gmt} and \cite[Theorem 3.2.3]{Federer:gmt}. 

\begin{prop}\label{harmonic:Coarea}
Let $T\colon\R^2\to \R$ be an $L$-Lipschitz function and $g$ be a non-negative measurable function on $\R^2$. Then the function $x\mapsto  \int_{T^{-1}(x)} g(y) \, d\mathcal H^1(y)$ is measurable, and there is a constant $C>0$ depending only on $L$ such that:
\begin{align*}
\tag{Co-area formula} \int_\R \left(\int_{T^{-1}(x)} g(z) \,d\mathcal H^1(z) \right) dx \leq  C \int_{\R^2} g(z)\,d\mathcal H^2(z),
\end{align*}
and
\begin{align*}
\tag{Area formula} \int_\R \sum_{z\in T^{-1}(x)}g(z) \, dx\leq C \int_{\R^2}g(z) \,d\mathcal H^1(z).
\end{align*}
\end{prop}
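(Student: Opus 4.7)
The plan is to derive both inequalities from the equality-version area and co-area identities of geometric measure theory, exploiting the fact that we only need an upper bound with a multiplicative constant rather than the sharp identity. This is why the constant $C$ in the statement depends only on $L$ (in fact $C = L$ suffices), with no pointwise Jacobian factor appearing on the right.

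For the co-area inequality, the key input is Rademacher's theorem: since $T$ is $L$-Lipschitz on $\R^2$, it is differentiable at $\mathcal H^2$-almost every point, and $|\nabla T| \leq L$ almost everywhere. The classical co-area equality for real-valued Lipschitz functions on $\R^2$ then asserts
\begin{align*}
\int_{\R^2} g(z) \, |\nabla T(z)| \, d\mathcal H^2(z) = \int_\R \int_{T^{-1}(x)} g(z) \, d\mathcal H^1(z) \, dx
\end{align*}
for every non-negative measurable $g$, together with the measurability of the inner integral as a function of $x$. Combining this identity with the pointwise bound $|\nabla T| \leq L$ gives the desired inequality with $C = L$.

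For the area inequality, the statement is only informative when $g$ is supported on a set of $\sigma$-finite $\mathcal H^1$-measure; otherwise the right-hand side is $+\infty$ and there is nothing to prove. Under that hypothesis one reduces, by standard monotone-class arguments, to $g = \mathbf{1}_E$ for a $1$-rectifiable set $E \subset \R^2$. The restriction $T|_E$ is then $L$-Lipschitz with respect to the ambient Euclidean metric, and the area formula for Lipschitz maps between $1$-rectifiable sets yields
\begin{align*}
\int_E J_{T|_E}(z) \, d\mathcal H^1(z) = \int_\R \mathcal H^0 \! \left( E \cap T^{-1}(x) \right) dx.
\end{align*}
The tangential $1$-Jacobian $J_{T|_E}$ is bounded by $L$ at $\mathcal H^1$-a.e.\ point of $E$, simply because $T|_E$ is $L$-Lipschitz. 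Approximating a general non-negative measurable $g$ by an increasing sequence of simple functions supported on $1$-rectifiable sets and invoking monotone convergence extends the inequality to all admissible $g$, again with constant $C = L$.

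The main technical obstacle lies not in the analytic manipulations above but in the underlying measure-theoretic machinery: the careful definition and measurability of the tangential Jacobian $J_{T|_E}$, and the structure theorems that let one decompose a general $1$-rectifiable subset of $\R^2$ into countably many Lipschitz pieces on which the area formula applies. These are precisely the points handled uniformly by Federer's framework of rectifiable sets, so the cleanest implementation, and the one the text follows, is to invoke the relevant results from \cite{Federer:gmt} directly and deduce each of our inequalities as the one-sided consequence of the corresponding equality.
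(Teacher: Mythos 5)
The paper offers no proof of this proposition beyond the citation to Federer, so the only question is whether your derivation is sound. The co-area half is: the classical co-area equality for a Lipschitz $T\colon \R^2\to\R$ together with Rademacher's theorem and the pointwise bound $|\nabla T|\leq L$ gives exactly the stated inequality with $C=L$, and measurability of the inner integral is part of that theorem. This is the intended reading of the citation.

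The area half has a genuine gap. A set $E\subset\R^2$ with $\mathcal H^1(E)<\infty$ need \emph{not} be $1$-rectifiable — the four-corner Cantor set is purely $1$-unrectifiable with positive finite $\mathcal H^1$-measure — so your reduction "to $g=\mathbf 1_E$ for a $1$-rectifiable set $E$" does not cover all indicators with finite right-hand side, and the area formula for Lipschitz maps on rectifiable sets says nothing about the purely unrectifiable piece of the decomposition. The correct (and more elementary) tool is Eilenberg's inequality (Federer 2.10.25), which holds for \emph{arbitrary} sets: cover $E$ by sets $S_i$ of diameter at most $\delta$ with $\sum_i\diam(S_i)$ close to $\mathcal H^1_\delta(E)$, observe that $\mathcal L^1(T(S_i))\leq L\diam(S_i)$ and that
\begin{align*}
\mathcal H^0_\delta\bigl(E\cap T^{-1}(x)\bigr)\leq \sum_i \mathbf 1_{T(S_i)}(x),
\end{align*}
then integrate in $x$, take the infimum over covers, and let $\delta\to 0$ using Fatou. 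No rectifiability enters. (The same covering argument with $\mathcal H^1_\delta$ in place of $\mathcal H^0_\delta$ also yields the co-area inequality, so both halves of the proposition follow from one elementary lemma; this is worth noting since only inequalities, not the sharp identities, are claimed.) For the applications in the paper the set $E$ is $\mathcal H^1$-null, where your argument happens to suffice, but as a proof of the proposition as stated the rectifiability step must be replaced.
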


We say that a path $\alpha$ \textit{joins} or \textit{connects} two sets $E,F$ if $\br \alpha$ intersects both $E$ and $F$. The following proposition asserts that perturbing a curve yields several nearby curves; see \cite[Theorem 3]{Brown:distancesets}.

\begin{prop}\label{harmonic:Paths-distance function}
Let $\alpha \subset \R^2$ be a closed path that joins two non-trivial, disjoint continua $E,F \subset \R^2$. Consider the distance function $\psi(x)=\dist(x,\alpha )$. Then there exists $\delta>0$ such that for a.e.\ $s\in (0,\delta)$ there exists a simple path $\alpha_s\subset \psi^{-1}(s)$ joining $E$ and $F$.  
\end{prop}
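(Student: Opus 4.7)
The plan is to apply Brown's regularity theorem for level sets of distance functions, and then extract the desired simple path from an outer Jordan curve of $\psi^{-1}(s)$. Set $U_s \coloneqq \{\psi < s\}$, a bounded open neighborhood of the compact connected set $\alpha$.

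First I would invoke \cite[Theorem 3]{Brown:distancesets} in the following form: for a.e.\ $s > 0$, the level set $\psi^{-1}(s)$ is a compact topological $1$-manifold in $\R^2$, and is therefore a finite disjoint union of Jordan curves. Next, let $W_s$ denote the unbounded component of $\R^2 \setminus \bar U_s$. Since $\bar U_s$ is a compact connected set, standard plane topology gives that $\partial W_s$ is a connected continuum; being contained in $\psi^{-1}(s)$, which is a disjoint union of Jordan curves, $\partial W_s$ must coincide with exactly one of them---the outer boundary of $U_s$, which I shall call $J_s$. The proof then reduces to finding points $p \in E \cap J_s$ and $q \in F \cap J_s$, because either of the two subarcs of $J_s$ cut out by $p$ and $q$ is a simple path in $\psi^{-1}(s)$ whose closure meets both $E$ and $F$.

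To produce such $p$ and $q$, use that $E$ is connected and non-trivial and meets $\alpha$ at the endpoint of $\alpha$ lying in $E$; for sufficiently small $s$, $E$ is not entirely contained in $\bar U_s$ (otherwise, sending $s \to 0$, one would get $E \subseteq \alpha$, an edge case which we exclude under the natural interpretation of the hypotheses), so by connectedness $E$ crosses $\partial U_s$. To force the crossing to take place on the outer Jordan curve $J_s$ rather than on some inner Jordan curve component of $\partial U_s$ (such components may arise if $\alpha$ self-intersects and thereby encloses bounded regions), I would choose $\delta > 0$ small enough that every bounded component of $\R^2 \setminus \bar U_s$ is disjoint from $E \cup F$ for all $s \in (0, \delta)$. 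This guarantees that the part of $E$ outside $\bar U_s$ lies in $W_s$, so $E \cap J_s \neq \emptyset$, and symmetrically $F \cap J_s \neq \emptyset$.

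The main obstacle will be the explicit choice of $\delta$: it must be made so that the bounded ``holes'' of the tubular neighborhood $\bar U_s$ created by possible self-intersections of $\alpha$ are all small enough to stay away from $E$ and $F$, forcing the escape of both continua to be through the outer curve $J_s$. This is essentially a compactness argument using that $E$ and $F$ are disjoint continua each meeting $\alpha$ only along a fixed closed subset. The remaining ingredients---Brown's theorem and the Jordan-curve structure of $\partial W_s$---are then standard plane-topology bookkeeping.
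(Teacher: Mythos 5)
First, note that the paper does not prove this proposition: it is quoted verbatim from \cite[Theorem 3]{Brown:distancesets}, so there is no internal proof to measure your argument against. Your strategy --- Brown's theorem that $\psi^{-1}(s)$ is a compact $1$-manifold (hence a finite disjoint union of Jordan curves) for a.e.\ $s$, the connectedness of $\partial W_s$ for the unbounded complementary component $W_s$ of $\br U_s$, and the extraction of a subarc of the resulting Jordan curve $J_s$ between a point of $E\cap J_s$ and a point of $F\cap J_s$ --- is the natural route, and the plane-topology bookkeeping in your first two paragraphs is fine.

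The genuine gap is exactly the step you defer as ``the main obstacle'', and it cannot be carried out as you describe: the bounded components of $\R^2\setminus\br U_s$ do \emph{not} shrink as $s\to 0$; since $\bigcap_{s>0}\br U_s=\alpha$, they \emph{increase} to the bounded components of $\R^2\setminus\alpha$. So if the closed path $\alpha$ winds around part of $E$, no choice of $\delta$ keeps the holes away from $E$. Worse, in that situation the statement itself fails: let $\alpha$ traverse the unit circle starting and ending at $(1,0)$ and then run along the segment from $(1,0)$ to $(10,0)$, with $E=[0,1]\times\{0\}$ and $F=[10,11]\times\{0\}$. One checks that $\psi(x)=1-|x|$ on the closed unit disk, so for every small $s$ the only component of $\psi^{-1}(s)$ meeting $E$ is the circle $\{|x|=1-s\}$, while the only component meeting $F$ is the outer curve; since any path in $\psi^{-1}(s)$ has connected closure, no simple path in $\psi^{-1}(s)$ joins $E$ and $F$. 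The proposition therefore tacitly requires that $E$ and $F$ each contain a point of the unbounded component of $\R^2\setminus\alpha$ (automatic when $\alpha$ is an arc, and arrangeable in all of the paper's applications, where the path is chosen by the user). Under that hypothesis your argument closes, but via a different mechanism than ``all holes miss $E\cup F$'': pick $x_0\in E$ with $\psi(x_0)>0$ lying in the unbounded component of $\R^2\setminus\alpha$, join it to $\infty$ by a path $\gamma$ with $\dist(\gamma,\alpha)>0$, and take $\delta<\min\{\psi(x_0),\dist(\gamma,\alpha)\}$ (similarly for $F$); then $x_0\in W_s$ while $E\cap\alpha\subset U_s\subset\R^2\setminus W_s$, so the connected set $E$ meets $\partial W_s\subset J_s$. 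Your observation that $E\not\subset\alpha$ and $F\not\subset\alpha$ must also be assumed is correct and should be recorded as a hypothesis rather than waved away, since otherwise $\br{\alpha_s}\cap E\subset\psi^{-1}(s)\cap\alpha=\emptyset$ for every candidate path $\alpha_s$.
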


Using that, we show:

\begin{lemma}\label{harmonic:Paths joining continua}
Let $\alpha\subset \Omega$ be a closed path joining two non-trivial, disjoint continua $E,F\subset\subset  \Omega$, and let $\Gamma$ be a given family of (weak, strong, conformal) modulus zero. Then, there exists $\delta>0$ such that for a.e.\ $s\in (0,\delta)$ there exists a simple path $\alpha_s\subset \psi^{-1}(s)$ that lies in $\Omega$, joins the continua $E$ and $F$, and lies outside the family $\Gamma$. Furthermore, if $A\subset \Omega$ is a given set with $\mathcal H^1(A)=0$, then for a.e.\ $s\in (0,\delta)$ the path $\alpha_s$ does not intersect $A$.
\end{lemma}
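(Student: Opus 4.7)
The strategy is to start from the paths produced by Proposition \ref{harmonic:Paths-distance function} and, by shrinking the parameter interval and excluding $s$-values in a set of measure zero, enforce the three additional requirements one at a time. First I would reduce the three modulus notions to conformal modulus: if $\md_s(\Gamma)=0$ then $\md_w(\Gamma)=0$ by Lemma \ref{harmonic:2-weak modulus less than strong}, and $\md_w(\Gamma)=0$ implies $\md_2(\Gamma)=0$ by Lemma \ref{harmonic:2-weak modulus zero implies two modulus zero}, so it suffices to treat the case $\md_2(\Gamma)=0$. Next, since $\br\alpha\cup E\cup F$ is a compact subset of the open set $\Omega$, I would shrink the $\delta$ coming from Proposition \ref{harmonic:Paths-distance function} so that the closed $\delta$-neighborhood of $\alpha$ is compactly contained in $\Omega$; this automatically forces $\alpha_s\subset \Omega$ for every $s\in(0,\delta)$.

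To exclude $\Gamma$, I would run the classical Fuglede construction: pick Borel admissible weights $\lambda_k$ for $\md_2(\Gamma)$ with $\|\lambda_k\|_2^2<2^{-k}$ and set $\lambda=\sum_k\lambda_k\in L^2(\R^2)$, so that $\int_\gamma \lambda\,ds=\infty$ for every $\gamma\in \Gamma$. Since $N_\delta(\alpha)$ is bounded, Cauchy--Schwarz gives $\lambda\in L^1(N_\delta(\alpha))$. Applying the co-area inequality of Proposition \ref{harmonic:Coarea} to the $1$-Lipschitz distance function $\psi$ and to the function $(\lambda+1)\cdot \chi_{N_\delta(\alpha)}$, I obtain
\[
\int_0^\delta \int_{\psi^{-1}(s)\cap N_\delta(\alpha)} (\lambda+1)\,d\mathcal H^1\,ds \;\le\; C \int_{N_\delta(\alpha)} (\lambda+1)\,d\mathcal H^2 \;<\;\infty.
\]
Hence for a.e.\ $s\in(0,\delta)$ the set $\psi^{-1}(s)\cap N_\delta(\alpha)$ has both finite $\mathcal H^1$-measure and finite $\lambda$-integral. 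Any simple subpath $\alpha_s$ of such a level set is then rectifiable and satisfies $\int_{\alpha_s}\lambda\,ds<\infty$, which forces $\alpha_s\notin \Gamma$.

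Finally, to avoid $A$, I would apply the area formula of Proposition \ref{harmonic:Coarea} to $\psi$ and $g=\chi_A$, giving
\[
\int_\R \card(\psi^{-1}(s)\cap A)\,ds \;\le\; C\,\mathcal H^1(A)\;=\;0,
\]
so $\psi^{-1}(s)\cap A=\emptyset$, and thus $\alpha_s\cap A=\emptyset$, for a.e.\ $s\in \R$. Intersecting the three full-measure subsets of $(0,\delta)$ produced by Proposition \ref{harmonic:Paths-distance function}, the co-area bound, and the area-formula bound yields the conclusion. The step I expect to require the most care is the bookkeeping surrounding rectifiability of a simple subpath of a level set: it is crucial to use the co-area formula to bound the $\mathcal H^1$-measure of $\psi^{-1}(s)\cap N_\delta(\alpha)$, since otherwise one cannot guarantee that $\alpha_s$ is rectifiable nor that $\int_{\alpha_s}\lambda\,ds$ is controlled by $\int_{\psi^{-1}(s)\cap N_\delta(\alpha)}\lambda\,d\mathcal H^1$, which is precisely what rules out $\alpha_s\in \Gamma$.
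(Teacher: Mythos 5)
Your proposal is correct and follows essentially the same route as the paper: reduce to conformal modulus via Lemmas \ref{harmonic:2-weak modulus less than strong} and \ref{harmonic:2-weak modulus zero implies two modulus zero}, then apply the co-area inequality of Proposition \ref{harmonic:Coarea} to the $1$-Lipschitz distance function to exclude $\Gamma$, and the area formula to avoid $A$. The only (cosmetic) difference is that you package the admissibility via a single Fuglede-type function $\lambda$ with $\int_\gamma\lambda\,ds=\infty$ on $\Gamma$, whereas the paper uses admissible functions with $\|\lambda\|_2<\varepsilon$ and lets $\varepsilon\to 0$; your extra control of $\mathcal H^1(\psi^{-1}(s)\cap N_\delta(\alpha))$, which guarantees rectifiability of $\alpha_s$, is a careful touch the paper leaves implicit.
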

\begin{proof}
Note that if $\Gamma$ has strong or weak modulus zero, then it actually has conformal modulus zero, by Lemma \ref{harmonic:2-weak modulus less than strong} and Lemma \ref{harmonic:2-weak modulus zero implies two modulus zero}. Hence, it suffices to assume that $\md_2(\Gamma)=0$.

For $\varepsilon>0$ there exists an admissible function $\lambda$ such that $\int_\gamma \lambda \,ds\geq 1$ for all $\gamma\in \Gamma$, and $\|\lambda\|_2<\varepsilon$. Consider a small $\delta>0$ such that $N_\delta(\alpha)\subset \subset \Omega$ and the conclusion of Proposition \ref{harmonic:Paths-distance function} is true. Let $J$ be the set of $s\in (0,\delta)$ such that $\alpha_s\in \Gamma$, and $J'$ be the set of $s\in (0,\delta)$ such that $\int_{\psi^{-1}(s)}\lambda\, d\mathcal H^1 \geq 1$. It is clear that $J\subset J'$, and $J'$ is measurable by Proposition \ref{harmonic:Coarea}, since the function $\psi$ is $1$-Lipschitz. Thus, applying the co-area formula in Proposition \ref{harmonic:Coarea} and the Cauchy-Schwarz inequality, we have
\begin{align*}
\mathcal H^1(J) & \leq \mathcal H^1(J')= \int_{J'} \,d\mathcal H^1(s) \leq \int_{J'} \left(\int_{\psi^{-1}(s)} \lambda \,d\mathcal H^1 \right)d\mathcal H^1(s) \leq C \int_{N_\delta(\alpha)} \lambda\, d\mathcal H^2 \\
&\leq C\mathcal H^2(N_\delta(\alpha))^{1/2} \|\lambda \|_2  <C'\varepsilon.
\end{align*}
Letting $\varepsilon\to 0$, we obtain $\mathcal H^1(J)=0$, and this completes the proof.

Finally, we show the latter claim. Here we will use the area formula in Proposition \ref{harmonic:Coarea}. For $g(z)= \x_{A}\x_{N_\delta(\alpha)}$ we have
\begin{align*}
\int_0^\delta \#\{\psi^{-1}(s)\cap A\} \,ds \leq C \int_{N_\delta(\alpha)} \x_A\, d\mathcal H^1 =0.
\end{align*}
Here, $\#$ is the counting measure. Hence, $\#\{ \psi^{-1}(s)\cap A\} =0$ for a.e.\ $s\in (0,\delta)$, and the conclusion for $\alpha_s \subset \psi^{-1}(s)$ follows immediately.
\end{proof}

We also need a ``boundary version" of the above lemma:

\begin{lemma}\label{harmonic:Paths boundary}
Let $\alpha \subset \Omega$ be an open path with $\br \alpha \cap \partial \Omega\neq \emptyset$, and let $\Gamma$ be a given family of (weak, strong, conformal) modulus zero. Assume that $x \in \br \alpha \cap \partial \Omega$ lies in a non-trivial component of $\partial \Omega$. Then, for every $\varepsilon>0$ there exists a $\delta>0$ such that for a.e.\ $s\in (0,\delta)$ there exists an open path $\alpha_s\subset \psi^{-1}(s)$ that lies in $\Omega$, lands at a point $x_s\in B(x,\varepsilon)\cap \partial \Omega$, and avoids the path family $\Gamma$. Furthermore, if $A\subset \Omega$ is a given set with $\mathcal H^1(A)=0$, then for a.e.\ $s\in (0,\delta)$ the path $\alpha_s$ does not intersect $A$.
\end{lemma}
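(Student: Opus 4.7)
The plan is to reduce the statement to Proposition~\ref{harmonic:Paths-distance function} applied to a suitable closed subpath of $\alpha$ together with a non-trivial subcontinuum of $\partial\Omega$ containing $x$, and then to truncate the resulting level-set paths where they first meet $\partial\Omega$.

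First I would parameterize $\alpha\colon(0,1)\to\Omega$ with $\alpha(t)\to x$ as $t\to 1^{-}$ and pick $t_{0}$ close enough to $1$ that $\alpha((t_{0},1))\subset B(x,\varepsilon/4)$; set $p=\alpha(t_{0})\in\Omega$ and let $\beta\colon[t_{0},1]\to\br B(x,\varepsilon/4)$ be the closed path that agrees with $\alpha|_{[t_{0},1)}$ and satisfies $\beta(1)=x$. Next, since $x$ lies in a non-trivial component $C$ of $\partial\Omega$, for any $r\leq\min(\varepsilon/4,\diam C/2)$ the component $F$ of $C\cap\br B(x,r)$ containing $x$ meets the sphere $\partial B(x,r)$ by the standard boundary-bumping property of continua, hence $F$ is a non-trivial subcontinuum of $\partial\Omega$ with $x\in F\subset\br B(x,\varepsilon/4)$. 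Choosing $\rho>0$ small enough that $E:=\br B(p,\rho)\subset\Omega$ and $E\cap F=\emptyset$, the closed path $\beta$ joins the disjoint non-trivial continua $E$ and $F$ in $\R^{2}$.

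I would then apply the proof of Lemma~\ref{harmonic:Paths joining continua} verbatim to the triple $(\beta,E,F)$. Proposition~\ref{harmonic:Paths-distance function} yields $\delta>0$ such that for a.e.\ $s\in(0,\delta)$ there exists a simple path $\widetilde{\alpha}_{s}\subset\{y:\dist(y,\beta)=s\}$ joining $E$ and $F$; the co-area and Cauchy--Schwarz step of Lemma~\ref{harmonic:Paths joining continua} rules out those $s$ for which $\widetilde{\alpha}_{s}\in\Gamma$ (after replacing $\Gamma$ by its enlargement under super-paths, which has the same modulus since any admissible weight for $\Gamma$ is automatically admissible for this enlargement), and the area-formula step rules out those $s$ for which $\widetilde{\alpha}_{s}\cap A\neq\emptyset$. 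Since $\widetilde{\alpha}_{s}$ begins in $E\subset\Omega$ and terminates on $F\subset\partial\Omega$, I would set $t^{*}=\inf\{t:\widetilde{\alpha}_{s}(t)\in\partial\Omega\}$, let $x_{s}=\widetilde{\alpha}_{s}(t^{*})\in\partial\Omega$, and define $\alpha_{s}:=\widetilde{\alpha}_{s}|_{[0,t^{*})}$; this $\alpha_{s}$ is an open path in $\Omega$ landing at $x_{s}$ and, as a sub-path, inherits disjointness from $\Gamma$ and $A$. Because $\widetilde{\alpha}_{s}\subset N_{s}(\beta)\subset B(x,\varepsilon/4+s)$, for every $s<3\varepsilon/4$ we obtain $x_{s}\in B(x,\varepsilon)\cap\partial\Omega$, as required.

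The main obstacle I expect is the bookkeeping point that the level-set function produced by the construction is the distance to the subpath $\beta$, while the lemma statement refers to $\psi^{-1}(s)$ with $\psi$ understood from the preceding results as the distance to the ambient path. In keeping with Proposition~\ref{harmonic:Paths-distance function} and Lemma~\ref{harmonic:Paths joining continua}, $\psi$ here should be read as the distance to the ``path under consideration'', namely the subpath $\beta$; alternatively, one checks that by taking $t_{0}$ close enough to $1$, $\widetilde{\alpha}_{s}$ lies in the region where $\dist(\cdot,\alpha)=\dist(\cdot,\beta)$, so that the two interpretations coincide on $\widetilde{\alpha}_{s}$ for small $s$. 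A secondary delicate point is the non-triviality of $F$ when $\varepsilon$ is forced to be very small: one shrinks $r$ below $\diam C/2$, which is the step where the hypothesis that $x$ lies in a non-trivial component of $\partial\Omega$ is genuinely used.
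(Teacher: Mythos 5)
Your proof is correct and follows essentially the same route as the paper's (sketched) argument: apply Proposition \ref{harmonic:Paths-distance function} with one of the continua being a non-trivial piece of $\partial \Omega$ near $x$ (this is where the non-trivial component hypothesis enters), run the co-area/area-formula estimates of Lemma \ref{harmonic:Paths joining continua} to discard the exceptional parameters, and truncate the level-set paths at their first hitting time of $\partial \Omega$. Your version is in fact somewhat more careful than the paper's sketch, since working with a short terminal subpath $\beta\subset \br B(x,\varepsilon/4)$ guarantees directly that the truncation point lies in $B(x,\varepsilon)$.
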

\begin{proof}
We only sketch the part of the proof related to the landing point of $\alpha_s$, since the rest is the same as the proof of Lemma \ref{harmonic:Paths joining continua}.

Note that for small $\varepsilon>0$ there exists a connected subset $E$ of $\partial \Omega$ that connects $x$ to $\partial B(x,\varepsilon)$. Hence, if we apply Lemma \ref{harmonic:Paths-distance function} to the path $\br \alpha$ we can obtain paths in $\psi^{-1}(s)\subset \R^2$ that land at $E$; here $F\subset \Omega$ can be any continuum that intersects $\br \alpha$. However, these paths do not lie necessarily in $\Omega$, so in this case we have to truncate them at the ``first time" that they meet $\partial \Omega$. If $\delta>0$ is chosen sufficiently small, then for a.e.\ $s\in (0,\delta)$ these paths will land at a point $x_s\in B(x,\varepsilon) \cap \partial \Omega$.
\end{proof}

Next, we switch to a special type of curves $\alpha_s$, namely circular arcs. A sequence of weights $\{h(Q_i)\}_{i\in \N}$ is \textit{locally square-summable} if for each $x\in S$ there exists a ball $B(x,r)\subset \Omega$ such that 
\begin{align*}
\sum_{i\in I_B} h(Q_i)^2<\infty.
\end{align*}

\begin{remark}\label{harmonic:Remark:summable}
Let $\{h(Q_i)\}_{i\in \N}$ be a sequence of non-negative weights with $\sum_{i\in \N} h(Q_i) <\infty$, and let $x\in S^\circ\cup \partial \Omega$, i.e., $x$ does not lie on the boundary of any peripheral disk. Then 
\begin{align*}
\sum_{i\in I_{B(x,r)}} h(Q_i) \to 0
\end{align*} 
as $r\to 0$. This is because the ball $B(x,r)$ cannot intersect any given peripheral disk $Q_i$ for arbitrarily small $r>0$.
\end{remark}

\begin{remark}\label{harmonic:3-remark-subpaths modulus zero}
Let $\Gamma$ be a path family in $\R^2$ with (weak, strong, conformal) modulus zero. Then the family $\Gamma_0$ of paths in $\R^2$ that contain a subpath lying in $\Gamma$ also has (weak, strong, conformal) modulus zero.
\end{remark}

\begin{lemma}\label{harmonic:3-curves gamma_r}
Let $\{h(Q_i)\}_{i\in \N}$ be a locally square-summable sequence. Consider a set $A\subset \Omega $ with $\mathcal H^1(A)=0$, and a path family $\Gamma$ in $\Omega$ that has (weak, strong) modulus equal to zero. 
\begin{enumerate}[\upshape (a)]
\item If $x\in S^\circ\cup \partial \Omega$ then for each $\varepsilon>0$ we can find an arbitrarily small $r>0$ such that the circular path $\gamma_r(t)=x+re^{it}$, as well as all of its subpaths, does not lie in $\Gamma$, it does not intersect $A$, and
\begin{align*}
\sum_{i:Q_i\cap \gamma_r\neq \emptyset} h(Q_i) <\varepsilon.
\end{align*} 
If $x\in \partial Q_{i_0}$ for some $i_0\in \N$, then the same conclusion is true, if we exclude the peripheral disk $Q_{i_0}$ from the above sum. 

\item If $x,y\in \partial Q_{i_0}$, then for each $\varepsilon>0$ we can find an arbitrarily small $r>0$ and a path $\gamma_0\subset Q_{i_0}$ that joins the circular paths $\gamma_{r}^{x}(t)=x+re^{it}$, $ \gamma_{r}^{y}(t)=y+re^{it}$ with the following property: any simple path $\gamma$ contained in the concatenation of the paths $\gamma_0,\gamma_{r}^{x},\gamma_{r}^{y}$ does not lie in $\Gamma$, $\gamma$ does not intersect $A$, and 
\begin{align*}
\sum_{\substack{i:Q_i\cap \gamma\neq \emptyset\\ i\neq i_0}} h(Q_i)<\varepsilon.
\end{align*}
\end{enumerate}
\end{lemma}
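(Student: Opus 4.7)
The plan is to apply the co-area and area formulas of Proposition \ref{harmonic:Coarea} to the distance function $\psi(z)=|z-x|$, whose level sets are the circles $\gamma_r$, and to show that within any interval $(0,r_*)$ the subset of $r$ violating any of the three conditions has measure strictly less than $r_*$, which yields good $r$ arbitrarily close to $0$. For part (a) in the main case $x\in S^\circ$: since $x$ lies on no peripheral circle, $d_i:=\dist(x,\br Q_i)>0$ for every $i$; choose a ball $B=B(x,r_0)\subset\Omega$ with $\sum_{i\in I_B} h(Q_i)^2<\infty$. By Lemmas \ref{harmonic:2-weak modulus less than strong} and \ref{harmonic:2-weak modulus zero implies two modulus zero} one may assume $\md_2(\Gamma)=0$, and by Remark \ref{harmonic:3-remark-subpaths modulus zero} one may enlarge $\Gamma$ to include all paths containing a subpath in $\Gamma$, reducing the subpath clause to $\gamma_r\notin\Gamma$.

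The three bad sets would then be controlled separately. For \emph{avoiding $A$}: the area formula with $T=\psi$ and $g=\chi_A\chi_B$ gives $\int_0^{r_0}\#(\gamma_r\cap A)\,dr\leq C\,\mathcal H^1(A\cap B)=0$, so $\gamma_r\cap A=\emptyset$ for a.e.\ $r$. For \emph{avoiding $\Gamma$}: given $\delta>0$ choose admissible $\lambda$ for $\md_2(\Gamma)$ with $\|\lambda\|_2<\delta$; the co-area formula and Cauchy--Schwarz yield $\int_0^{r_0}\int_{\gamma_r}\lambda\,d\mathcal H^1\,dr\leq C\delta r_0$, so by Markov the bad set has measure $\leq C\delta r_0$, arbitrarily small. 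For the \emph{sum condition}: pick a finite $F\subset I_B$ with $\sum_{i\in I_B\setminus F}h(Q_i)^2$ as small as desired, and set $r_1=\min_{i\in F}d_i>0$; for $r<r_1$ no $Q_i\in F$ meets $\gamma_r$, and since $\{r:Q_i\cap\gamma_r\neq\emptyset\}$ has length at most $\diam(Q_i)$, Fubini and Cauchy--Schwarz give
\[\int_0^{r_1}\sum_{\substack{i\in I_B\setminus F\\ Q_i\cap\gamma_r\neq\emptyset}}h(Q_i)\,dr \leq \Bigl(\sum_{i\in I_B\setminus F}h(Q_i)^2\Bigr)^{1/2}\Bigl(\sum_{i\in I_B}\diam(Q_i)^2\Bigr)^{1/2},\]
the second factor being finite by Corollary \ref{harmonic:Fatness corollary}, so Markov's inequality again makes this bad set small. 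Coordinating the three estimates within $(0,\min(r_*,r_1))$ yields good $r$ arbitrarily close to $0$. The cases $x\in\partial Q_{i_0}$ and $x\in\partial\Omega$ require only minor changes: in the first one excludes $Q_{i_0}$ from the sum (noting $d_i>0$ still holds for $i\neq i_0$); in the second one truncates $\gamma_r$ at $\partial\Omega$ in the spirit of Lemma \ref{harmonic:Paths boundary}.

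Part (b) follows by applying (a) at both $x$ and $y$ (with $Q_{i_0}$ excluded and $\Gamma$ enlarged as above) to obtain good circles $\gamma_r^x,\gamma_r^y$, and by invoking Lemma \ref{harmonic:Paths joining continua} inside the Jordan region $Q_{i_0}$ to produce a connecting simple path $\gamma_0\subset Q_{i_0}$ joining $\gamma_r^x$ and $\gamma_r^y$ that likewise avoids the enlarged $\Gamma$ and $A$. Since $\gamma_0\subset Q_{i_0}$ it contributes only the excluded disk to any sum. The main obstacle, and the most delicate point of the argument, is to control \emph{every} simple path $\gamma$ contained in the figure $\Phi_r=\gamma_r^x\cup\gamma_0\cup\gamma_r^y$: beyond subpaths lying entirely in one of the three pieces, $\gamma$ may stitch together an arc of $\gamma_r^x$ ending at the attachment point $v_x\in\gamma_r^x\cap\gamma_0$, a subpath of $\gamma_0$, and possibly an arc of $\gamma_r^y$ starting at $v_y\in\gamma_r^y\cap\gamma_0$. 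The plan to handle these mixed paths is to further enlarge the exceptional family $\Gamma$ so that it remains modulus zero yet is closed under concatenation with arcs of the two circles, thereby reducing to the previously treated case; avoidance of $A$ and the sum bound pass automatically to subpaths and finite concatenations. The remaining technical issue to coordinate is that the three bad-set estimates depend on thresholds ($\|\lambda\|_2$, the finite set $F$, and $r_1$) that must be chosen compatibly, which I would address by fixing $F$ first, which fixes $r_1$, then selecting $\delta=\|\lambda\|_2$ small enough to keep the $\Gamma$-bad set below, say, $r_1/4$.
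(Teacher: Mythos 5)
Your overall strategy---integrate the three failure conditions over the radius and apply a Markov/Fubini argument---is the same as the paper's, and your treatment of ``avoid $A$'' and ``avoid $\Gamma$'' (area formula, and co-area plus Cauchy--Schwarz, exactly as in Lemma \ref{harmonic:Paths joining continua}) is fine; indeed those two bad sets are null, so no threshold coordination is needed for them. The genuine gap is in the sum condition, which is the heart of the lemma. Your bound for the bad set $\{r:\sum_{i:Q_i\cap\gamma_r\neq\emptyset}h(Q_i)\geq\varepsilon\}$ is $\varepsilon^{-1}\bigl(\sum_{i\in I_B\setminus F}h(Q_i)^2\bigr)^{1/2}\bigl(\sum_{i\in I_B}\diam(Q_i)^2\bigr)^{1/2}$, a quantity depending on the fixed ball $B$ and on $F$ but \emph{not} on the length of the interval of radii under inspection. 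The lemma demands good radii \emph{arbitrarily close to} $0$, i.e.\ a good $r$ in $(0,r_*)$ for every $r_*$; your bad set could a priori cover an entire initial interval $(0,c)$ with $c$ independent of $r_*$. Nor is this repaired by letting $F$ depend on $r_*$: with $r_1(F)=\min_{i\in F}\dist(x,\br Q_i)$ you would need the tail $\bigl(\sum_{i\notin F}h(Q_i)^2\bigr)^{1/2}$ to beat a constant times $\varepsilon\, r_1(F)$, which fails, for instance, when the disks accumulating at $x$ satisfy $\dist(x,Q_{i_k})\approx 2^{-k}$ and $h(Q_{i_k})=2^{-k/4}$ --- a legitimate locally square-summable weight for which the lemma's conclusion nevertheless holds.

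What is missing is the \emph{local} area estimate coming from fatness. Working on $(\delta/2,\delta)$ and setting $d(Q_i)=\mathcal H^1(\{r\in[0,\delta]:Q_i\cap\gamma_r\neq\emptyset\})$, Fubini gives $\varepsilon\delta/2\leq\sum_{i\in I_{B(x,\delta)}}h(Q_i)\,d(Q_i)$ if a.e.\ $r\in(\delta/2,\delta)$ were bad; Remark \ref{harmonic:Remark:Fatness implication} gives $d(Q_i)^2\leq K\,\mathcal H^2(Q_i\cap B(x,\delta))$, so by disjointness $\sum_{i\in I_{B(x,\delta)}}d(Q_i)^2\leq K\pi\delta^2$, and Cauchy--Schwarz yields $\varepsilon^2\delta^2/4\leq C\delta^2\sum_{i\in I_{B(x,\delta)}}h(Q_i)^2$ --- a contradiction for small $\delta$ since $\sum_{i\in I_{B(x,\delta)}}h(Q_i)^2\to0$ when $x\in S^\circ\cup\partial\Omega$ (Remark \ref{harmonic:Remark:summable}). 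With this the finite set $F$ is unnecessary. The paper explicitly remarks that the lemma fails without uniform fatness, so no argument using only the global finiteness $\sum_{i\in I_B}\diam(Q_i)^2<\infty$ can close this step. Separately, in part (b) your plan to ``enlarge $\Gamma$ so that it is closed under concatenation with circular arcs'' while keeping it of modulus zero is not something you can arrange in advance; the paper instead runs a two-parameter co-area argument over pairs $(r,s)$, where $s$ indexes the level sets of $\dist(\cdot,\gamma_0)$ inside $Q_{i_0}$, and bounds the measure of the set of bad pairs in $J\times[0,s_0]$ directly.
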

\begin{proof} 
We may assume that $\md_2(\Gamma)=0$, by lemmas \ref{harmonic:2-weak modulus less than strong} and \ref{harmonic:2-weak modulus zero implies two modulus zero}.

(a) Note that the circular path $\gamma_r$ centered at $x$ lies in the set $\psi^{-1}(r)$, where $\psi(z)=|z-x|$ is a $1$-Lipschitz function. As in the proof of Lemma \ref{harmonic:Paths joining continua} one can show that there exists $\delta>0$ such that for a.e.\ $r\in (0,\delta)$ the path $\gamma_r$ avoids $\Gamma$ and the set $A$. Remark \ref{harmonic:3-remark-subpaths modulus zero} implies that all subpaths of $\gamma_r$ also avoid $\Gamma$. Assume that $x\in S^\circ\cup \partial \Omega$ and fix $\varepsilon>0$. In order to show the statement, it suffices to show that for arbitrarily small $\delta>0$, there exists a set $J\subset (\delta/2,\delta)$ of positive measure such that 
\begin{align*}
\sum_{i:Q_i\cap \gamma_r\neq \emptyset}h(Q_i) <\varepsilon
\end{align*}
for all $r\in J$. Assume that this fails, so there exists a small $\delta>0$ such that the reverse inequality holds for a.e.\ $r\in (\delta/2,\delta)$. Noting that the function $r\mapsto \x_{Q_i\cap \gamma_r}$ is measurable and integrating over $r\in (\delta/2,\delta)$ we obtain
\begin{align}
\label{harmonic:3-curves gamma_r proof}\varepsilon \delta/2 &\leq \int_{\delta/2}^\delta\sum_{i:Q_i\cap \gamma_r\neq \emptyset}h(Q_i) \,dr \leq \sum_{i\in I_{B(x,\delta)}} h(Q_i)\int_0^\delta \x_{Q_i\cap \gamma_r} \,dr\\
\notag &= \sum_{i\in I_{B(x,\delta)}} h(Q_i)d(Q_i),
\end{align}
where $d(Q_i)\coloneqq \mathcal H^1( \{r \in [0,\delta]:   Q_i\cap \gamma_r\neq \emptyset \})$. The fatness of the peripheral disks implies that there exists some uniform constant $K>0$ such that $d(Q_i)^2 \leq K\mathcal H^2(Q_i\cap B(x,\delta))$; see Remark \ref{harmonic:Remark:Fatness implication}. Using the Cauchy-Schwarz inequality and this fact in \eqref{harmonic:3-curves gamma_r proof} we obtain
\begin{align*}
\varepsilon^2 \delta^2/4 &\leq \sum_{i\in I_{B(x,\delta)}}h(Q_i)^2 \sum_{i\in I_{B(x,\delta)}}d(Q_i)^2\\
&\leq K\sum_{i\in I_{B(x,\delta)}}h(Q_i)^2\sum_{i\in \N} \mathcal H^2(Q_i\cap B(x,\delta))\\
&=K\sum_{i\in I_{B(x,\delta)}}h(Q_i)^2 \cdot \mathcal H^2(B(x,\delta))\\
&=C\delta^2  \sum_{i\in I_{B(x,\delta)}}h(Q_i)^2.
\end{align*}
Hence, if $\delta$ is sufficiently small so that $\sum_{i\in I_{B(x,\delta)}}h(Q_i)^2 <\varepsilon^2/4C$  (see Remark \ref{harmonic:Remark:summable}) we obtain a contradiction. 

In the case that $x\in \partial Q_{i_0}$ the same computations work if we exclude the index $i_0$ from the sums, since eventually we want to make $\sum_{i\in I_{B(x,\delta)}\setminus \{i_0\}} h(Q_i)^2$ arbitrarily small.

(b) Arguing as in part (a) we can find a small $\delta>0$ such that the balls $B(x,\delta)$, $B(y,\delta)$ are disjoint, they are contained in $\Omega$, and there exists a set $J\subset (\delta/2,\delta)$ of positive $1$-measure such that 
\begin{align*}
\sum_{\substack{ i:Q_i\cap (\gamma_r^x \cup \gamma_r^y) \neq \emptyset \\ i\neq i_0}} h(Q_i)<\varepsilon
\end{align*}
for all $r\in J$. We may also assume that for all $r\in J$ the paths $\gamma_r^x,\gamma_r^y$ avoid the given set $A$ with $\mathcal H^1(A)=0$. 

Let $\gamma_0\subset \subset Q_{i_0}$  be a path that connects $\partial B(x,\delta/2) \cap Q_{i_0}$ to $\partial B(y,\delta/2) \cap Q_{i_0}$. Consider the function $\psi(\cdot)=\dist(\cdot,\gamma_0)$. Since $\dist(\gamma_0,\partial Q_{i_0})>0$, by Proposition \ref{harmonic:Paths-distance function} there exists $s_0>0$ such that for a.e.\ $s\in [0,s_0]$ there exists a path $\gamma_s\subset \psi^{-1}(s)\cap Q_{i_0}$ connecting $\partial B(x,\delta/2) \cap Q_{i_0}$ to $\partial B(y,\delta/2) \cap Q_{i_0}$. Then for all $r\in J$ and a.e.\ $s\in [0,s_0]$ the path $\gamma_s$ connects the circular paths $\gamma_r^x$ and $\gamma_r^y$. We claim that for a.e.\ $(r,s)\in J\times [0,s_0]$ all simple paths contained in the concatenation $\gamma_{r,s}$ of $\gamma_r^x$, $\gamma_s$, $\gamma_r^y$ avoid a given path family $\Gamma$ with $\md_2(\Gamma)=0$. Note here that $J\times [0,s_0]$ has positive $2$-measure.

For each $\eta>0$ we can find a function $\lambda$ that is admissible  for $\Gamma$ with $\|\lambda\|_2<\eta$. Let $T \subset J\times [0,s_0]$ be the set of $(r,s)$ for which $\gamma_{r,s}$ has  a simple subpath lying in $\Gamma$. Using the co-area formula in Proposition \ref{harmonic:Coarea} we have
\begin{align*}
\mathcal H^2(T) &=\int_T 1\,d\mathcal H^2 \leq \int_J \int_0^{s_0} \left( \int_{\gamma_r^x} \lambda \,d\mathcal H^1 + \int_{\gamma_r^y} \lambda \,d\mathcal H^1 + \int_{\gamma_{s}} \lambda \,d\mathcal H^1 \right)dsdr\\
&\leq C \|\lambda\|_2 \leq C\eta.
\end{align*}
Letting $\eta\to 0$ we obtain that $\mathcal H^2(T)=0$, as desired. This completes the proof of part (b).
\end{proof}

\begin{remark}\label{harmonic:Circular paths remark}
The proof of part (a) shows the following stronger conclusion: there exists a constant $C>0$ such that if $x\in S^\circ $, then there exists some $r\in [\delta/2,\delta]$ such that $\gamma_r(t)=x+re^{it}$ has the desired properties and 
\begin{align*}
\sum_{i:Q_i\cap \gamma_r\neq \emptyset}h(Q_i) \leq C \left( \sum_{i\in I_{B(x,\delta)}} h(Q_i)^2\right)^{1/2}.
\end{align*}
\end{remark}
\begin{remark}
The uniform fatness of the peripheral disks $Q_i$ was crucial in the proof. In fact, without the assumption of uniform fatness, one can construct a relative Sierpi\'nski carpet for which the conclusion of the lemma fails.
\end{remark}

We also include a topological lemma:

\begin{lemma}\label{harmonic:Paths in S^o}
The following statements are true:
\begin{enumerate}[\upshape(a)]
\item For each  peripheral disk $Q_{i_0}$, there exists a Jordan curve $\gamma \subset S^\circ$ that contains $Q_{i_0}$ in its interior and lies arbitrarily close to $Q_{i_0}$. In particular, $S^\circ$ is dense in $S$.
\item For any $x,y\in S$ there exists an open path $\gamma\subset S^\circ$ that joins $x,y$. Moreover, for each $r>0$, if $y$ is sufficiently close to $x$, the path $\gamma$ can be taken so that $\gamma\subset B(x,r)$.
\end{enumerate}
\end{lemma}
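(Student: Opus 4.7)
The plan for (a) is, for fixed $\varepsilon > 0$, to produce a Jordan curve $\gamma \subset S^\circ$ enclosing $Q_{i_0}$ inside $N_\varepsilon(\br Q_{i_0})$ as the Hausdorff limit of an iteratively modified sequence. Start from any Jordan curve $\gamma_0 \subset \Omega \setminus \br Q_{i_0}$ enclosing $\br Q_{i_0}$ inside $N_\varepsilon(\br Q_{i_0})$, for instance the boundary of a thin tubular neighborhood of $\br Q_{i_0}$. The compact set $\gamma_0$ meets at most countably many peripheral disks, and by Lemma \ref{harmonic:Fatness consequence} together with $\br Q_j \cap \br Q_{i_0} = \emptyset$, only finitely many of them exceed any prescribed diameter threshold.

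Modify $\gamma_0$ scale by scale (diameters in $[2^{-n-1}, 2^{-n})$): at each scale, cut out the intersection of the current curve with each such $\br Q_j$ and reroute along a nearby arc in $\Omega \setminus \br Q_j$ lying in a thin one-sided neighborhood of $\partial Q_j$ on the outside. The total length introduced at scale $n$ is controlled by a constant multiple of $\sum_{j : \diam Q_j \in [2^{-n-1}, 2^{-n})} \diam(Q_j)$; the square-summability $\sum_{j} \diam(Q_j)^2 < \infty$ of Corollary \ref{harmonic:Fatness corollary}, combined with the quasiball and fatness bounds, lets me arrange these detours to be summably short, so that the iterates converge in Hausdorff distance to a Jordan curve $\gamma \subset N_\varepsilon(\br Q_{i_0})$ that still encloses $Q_{i_0}$ and meets no $Q_j$, hence lies in $S$.

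The main obstacle is to upgrade $\gamma \subset S$ to $\gamma \subset S^\circ$, i.e.\ to avoid the union of peripheral circles as well. I prove independently that $S^\circ$ is dense in $S$ by a Baire argument in the complete metric space $S$: each $\partial Q_i$ is closed and nowhere dense there, since for any $x \in \partial Q_i$ and any $r>0$, a slight perturbation away from $\br Q_i$ lands in a point that is either in $S^\circ$ or in some $Q_j$ with $j \neq i$, and in the latter case a further tiny move onto $\partial Q_j$ (possible because $\diam Q_j \to 0$ for the relevant $j$ as one takes the perturbation closer to $x$) lands in $S \setminus \partial Q_i$. At each detour step in the construction of $\gamma$ there is strictly positive ``sideways'' freedom in choosing the replacement arc, so density of $S^\circ$ in $S$, combined with a diagonalization through the countably many circles $\partial Q_j$ one must miss, refines the previous construction so that $\gamma \subset S^\circ$. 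The density statement in the ``in particular'' then also follows, since for any $x \in \partial Q_{i_0}$ the curves $\gamma$ shrinking to $\partial Q_{i_0}$ yield points of $S^\circ$ arbitrarily close to $x$, and points of $S^\circ$ need no approximation.

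For (b), first consider $x, y \in S^\circ$. Local connectivity of the Peano continuum $S$ yields a path in $S$ joining $x$ to $y$ covered by finitely many small open balls in which $S$ is path-connected; on each piece the same detour/density procedure used in (a) upgrades the local path into one in $S^\circ$, and concatenation yields the desired open path. When $x \in \partial Q_{i_0}$ (and symmetrically for $y$), apply (a) with a small $\varepsilon > 0$ to obtain a Jordan curve $\gamma_{i_0} \subset S^\circ$ enclosing $Q_{i_0}$ inside $B(x, \varepsilon)$, choose $x' \in \gamma_{i_0}$ close to $x$, and prepend to the previously constructed path a short open arc in $S^\circ$ accumulating at $x$; the point $x$ itself is not in the open arc, only in its closure, so the condition $\gamma \subset S^\circ$ is preserved. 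The local containment $\gamma \subset B(x, r)$ for $y$ sufficiently close to $x$ is arranged by carrying out all constructions inside $B(x, r)$, which becomes possible once $|x - y|$ is small relative to $r$ and to the distances from $x$ and $y$ to the peripheral circles they possibly lie on.
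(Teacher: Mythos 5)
Your overall strategy for part (a) — iterative surgery on a Jordan curve, rerouting around peripheral disks scale by scale and passing to a Hausdorff limit — contains a genuine gap at its central step. First, a minor point: the total length of the detours is \emph{not} controlled by square-summability, since $\sum_j \diam(Q_j)$ can diverge even when $\sum_j \diam(Q_j)^2<\infty$ (this already happens for the standard carpet); length is not needed for Hausdorff convergence, but it signals that the limit is not tame. The real problem is that a Hausdorff limit of Jordan curves need not be a Jordan curve, nor even an injective curve: detours around nearby disks (recall no uniform relative separation is assumed, so two disks can be arbitrarily close relative to their sizes) can force later iterates to pinch together, and you give no uniform local injectivity estimate that survives the limit. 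Worse, the set $\bigcup_j \partial Q_j$ is \emph{dense} in $S$ (since $S$ has empty interior), so the upgrade from $\gamma\subset S$ to $\gamma\subset S^\circ$ requires the limit curve to miss a dense $F_\sigma$ set; pointwise density of $S^\circ$ (your Baire argument, which is fine on its own) plus ``sideways freedom and diagonalization'' does not produce a \emph{curve} with this property — the freedom used to dodge $\partial Q_j$ at one stage can be consumed by the infinitely many later constraints, and you never verify that the limit parametrization is continuous and injective while avoiding every circle. This is precisely the point-set-topological difficulty that the paper sidesteps by invoking Moore's decomposition theorem (Theorem \ref{harmonic:Decomposition}): collapsing all $\br Q_i$ to points gives a continuous map $f\colon S^2\to S^2$ under which round circles avoiding countably many points pull back, by injectivity of $f$ off the disks, to honest Jordan curves in $S^\circ$.

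Part (b) inherits the same gap (the ``detour/density procedure'' used to push a local path from $S$ into $S^\circ$), and has an additional one: when $x\in\partial Q_{i_0}$ you assert the existence of ``a short open arc in $S^\circ$ accumulating at $x$,'' but this is exactly the nontrivial claim to be proved. Part (a) only gives Jordan curves in $S^\circ$ that encircle $Q_{i_0}$ at small positive distance; it does not produce an arc in $S^\circ$ landing at a \emph{prescribed} boundary point $x$. The paper handles this by running the decomposition argument again \emph{without} collapsing $Q_{i_0}$ (so that $\tilde x=f(x)$ is a genuine point of the quotient sphere that a path can reach), which is a different construction, not a corollary of (a).
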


The proof is an application of Moore's theorem \cite{Moore:theorem} and can be found in \cite[Proof of Theorem 5.2, p.~4331]{Merenkov:localrigiditySchottky}, in case $\Omega$ is a Jordan region. We include a proof of this more general statement here. We will use the following decomposition theorem, which is slightly stronger than Moore's theorem:

\begin{theorem}[Corollary 6A, p.\ 56, \cite{Daverman:decompositions}]\label{harmonic:Decomposition}\index{Moore's theorem}
Let $\{Q_i\}_{i\in \N}$ be a sequence of Jordan regions in the sphere $S^2$ with disjoint closures and diameters converging to $0$, and consider an open set $U \supset \bigcup_{i\in \N} \br Q_i$. Then there exists a continuous, surjective map $f:S^2 \to S^2$ that is the identity outside $U$, and it induces the decomposition of $S^2$ into the sets $\{\br Q_i\}_{i\in \N}$ and points. In other words, there are countably many distinct points $p_i$, $i\in \N$, such that $f^{-1}(p_i)= \br Q_i$ for $i\in \N$, and $f$ is injective on $S^2\setminus \bigcup_{i\in \N}\br Q_i$ with  $f( S^2\setminus \bigcup_{i\in \N} \br Q_i)= S^2\setminus \{p_i:i\in \N\}$.
\end{theorem}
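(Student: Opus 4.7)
The plan is to realize $f$ as the quotient map associated with a suitably chosen upper semicontinuous decomposition of $S^2$, combining Moore's decomposition theorem with Bing's shrinkability criterion to obtain the additional property that $f$ is the identity off $U$. Let $G$ denote the partition of $S^2$ whose non-degenerate elements are the closed Jordan regions $\br Q_i$, $i\in \N$, and whose remaining members are the singletons $\{x\}$ for $x\in S^2\setminus \bigcup_{i\in \N}\br Q_i$. Pairwise disjointness of the $\br Q_i$ guarantees that $G$ is a well-defined partition. Each non-degenerate element is a closed topological disk in $S^2$, hence cellular, and its complement in $S^2$ is connected, so no element of $G$ separates $S^2$.

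The first substantive step is to verify that $G$ is upper semicontinuous; this is where $\diam(\br Q_i)\to 0$ is needed. For a singleton $\{x\}\in G$ contained in an open set $V\subset S^2$, set $d=\dist(x,\partial V)>0$. Only finitely many $\br Q_i$ have diameter at least $d/4$, and each such $\br Q_i$ has positive distance from $x$; choosing $r>0$ small enough to avoid these finitely many obstructions and to satisfy $2r<d$ produces a ball $B(x,r)$ whose $G$-saturation lies inside $V$. The argument for a non-degenerate element $\br Q_i\subset V$ is analogous, with $\dist(\br Q_i,\partial V)$ playing the role of $d$ and a thin tube $N_\epsilon(\br Q_i)$ playing the role of the ball; again only finitely many $\br Q_j$ are large enough to threaten to exit $V$, and disjointness pushes them to positive distance from $\br Q_i$.

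With upper semicontinuity established, I would apply Bing's shrinkability criterion inside $U$. Given $\epsilon>0$, only finitely many $\br Q_i$ have diameter at least $\epsilon$; each of these finitely many disks admits a Jordan neighborhood compactly contained in $U$, chosen pairwise disjoint, and on each such neighborhood one can select a self-homeomorphism fixing the boundary that compresses $\br Q_i$ to diameter less than $\epsilon$. Piecing these together with the identity off the neighborhoods yields a homeomorphism of $S^2$, equal to the identity outside $U$, which shrinks every element of $G$ to diameter less than $\epsilon$. Bing's theorem then produces a continuous surjection $f\colon S^2\to S^2$ whose point-preimages are precisely the elements of $G$ and which is the identity outside $U$; setting $p_i\coloneqq f(\br Q_i)$ gives the remaining assertions about distinctness and injectivity on the complement of $\bigcup_i\br Q_i$. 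I expect the shrinkability step to be the main obstacle, since a naive application of Moore's theorem provides only an abstract homeomorphism $S^2/G\to S^2$ with no control off $U$, and arranging $f$ to be literally the identity there requires the explicit shrinking procedure above.
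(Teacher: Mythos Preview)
The paper does not prove this theorem at all: it is quoted verbatim as Corollary~6A from Daverman's \emph{Decompositions of Manifolds} and used as a black box to establish Lemma~\ref{harmonic:Paths in S^o}. So there is nothing to compare against within the paper itself.

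Your outline is essentially the standard route to results of this type, and the first two steps are sound. The partition $G$ is indeed upper semicontinuous because the nondegenerate elements form a null sequence of disjoint closed disks, and your argument for this is correct. The shrinkability step is also correct in spirit: given $\varepsilon>0$, only finitely many $\br Q_i$ exceed diameter $\varepsilon$, each is cellular, and one can choose pairwise disjoint disk neighborhoods in $U$ and radially squeeze each $\br Q_i$ to small diameter by a homeomorphism supported in its neighborhood.

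The one place that deserves more care is the final sentence. Bing's shrinkability criterion, in its usual form, tells you that the quotient map $\pi\colon S^2\to S^2/G$ is a near-homeomorphism (a uniform limit of homeomorphisms), and hence $S^2/G\cong S^2$. To extract a map $f\colon S^2\to S^2$ that is \emph{literally} the identity outside $U$, you need the controlled version: because every shrinking homeomorphism you produce is the identity off $U$, the approximating homeomorphisms in Bing's criterion can be taken to fix $S^2\setminus U$ pointwise, and the limit $f$ inherits this property. You gesture at this (``shrinkability step inside $U$''), but you should state explicitly which controlled form of the criterion you are invoking and why the identity-off-$U$ condition survives the limiting process. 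This is exactly the content of Daverman's Corollary~6A, so citing it directly, as the paper does, is the cleanest route.
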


\begin{proof}[Proof of Lemma \ref{harmonic:Paths in S^o}]
By Lemma \ref{harmonic:Fatness consequence} the spherical diameters of the peripheral disks $Q_i$ converge to $0$. Hence, we may apply the decomposition theorem with $U=\Omega$ and obtain the collapsing map $f:S^2\to S^2$. A given peripheral disk $\br Q_{i_0}$ is mapped to a point $p_{i_0}\in \Omega$. Arbitrarily close to $p_{i_0}$ we can find round circles that avoid the countably many points that correspond to the collapsing of the other peripheral disks. The preimages of these round circles under $f$ are Jordan curves $\gamma \subset {S^2}$ that are contained in $\Omega$, lie in $S^\circ$, and are contained in small neighborhoods of $\br Q_{i_0}$. This completes the proof of part (a). For part (b), we consider three cases:

\textbf{Case 1:} Suppose first that $x,y\in S^\circ$. Using the decomposition theorem with $U=\Omega$, we obtain points $\tilde x=f(x)$ and $\tilde y=f(y)$ in $\Omega \setminus \{p_i:i\in \N\}$. We connect $\tilde x$ and $\tilde y$ with a path $\tilde \alpha \subset \Omega$ (recall that $\Omega$ is connected). By perturbing this path, we may obtain another path, still denoted by $\tilde \alpha$, that connects $\tilde x$ and $\tilde y$ in $\Omega$ but avoids the points $\{p_i:i\in \N\}$. Then $f^{-1}(\tilde \alpha)$, by injectivity, yields the desired path in $S^\circ$ that connects $x$ and $y$. In fact, if $y$ is sufficiently close to $x$, the path $f^{-1}(\tilde \alpha)$ can be taken to lie in a small neighborhood of $x$. To see this, first note that if $y$ is close to $x$ then $\tilde y$ is close to $\tilde x$ by the continuity of $f$. We can then find a path $\tilde \alpha \subset \Omega \setminus \{p_i: i\in \N\}$ connecting $\tilde x$ and $\tilde y$, and lying arbitrarily close to the line segment $[\tilde x,\tilde y]$. The lift $f^{-1}(\tilde \alpha)$, by continuity, has to be contained in a small neighborhood of $x$.

If $x$ or $y$ lies on a peripheral circle, then we need to modify the preceding argument to obtain an open path $\gamma\subset S^\circ$ with endpoints $x$ and $y$:

\textbf{Case 2:} Suppose that $x\in \partial Q_{i_0}$ and $y\in S^\circ$. Then in the application of the decomposition theorem we do not collapse the peripheral disk $\br Q_{i_0}$. We set $U= \Omega\cup Q_{i_0}$ and we collapse $\{\br Q_i: i\in \N \setminus \{i_0\} \}$ to points with a map $f:S^2\to S^2$ that is the identity outside $U$, as in the statement of the decomposition theorem. Then we consider an open path $\tilde \alpha\subset \Omega \setminus \br Q_{i_0}$ connecting $\tilde x$ and $\tilde y$; in order to find such a path one can assume that $\br Q_{i_0}$ is a round disk by using a homeomorphism of $S^2$.  Now, the path $\tilde \alpha$ can be modified to avoid the countably many points corresponding to the collapsed peripheral disks. This modified path lifts under $f$ to the desired path, and as before, it can be taken to lie arbitrarily close to $x$, provided that $y\in S^\circ$ is sufficiently close to $x$. This proves (b) in this case.

\textbf{Case 3:} Finally, suppose that $x\in \partial Q_{i_0}$ and $y\in \partial Q_{i}$ for some $i_0,i\in \N$. Then we can connect the points $x,y$ to points $x',y'\in S^\circ$ with open paths $\gamma_x,\gamma_y\subset S^\circ$ by Case 2. Case 1 implies that the points $x',y'$ can be connected with a path $\gamma\subset S^\circ$. Concatenating $\gamma_x,\gamma$, and $\gamma_y$ yields an open path in $S^\circ$ that connects $x$ and $y$.

It remains to show that if $y$ is sufficiently close to $x$, then there exists an open path $\gamma\subset S^\circ$ connecting $x$ and $y$ that lies near $x$. By the density of $S^\circ$ in $S$ (from part (a)) we may find arbitrarily close to $y$ points $z\in S^\circ$. By Case 2, for each $r>0$ there exists $\delta>0$ such that if $|z-x|<\delta$ and $z\in S^\circ$, then there exists an open path $\gamma \subset B(x,r)\cap S^\circ$ that connects $x$ and $z$. We assume that $|y-x|<\delta/2$ and $y\in \partial Q_i$ for some $i\in \N$. Then using the conclusion of Case 2 we consider a point $z\in S^\circ \in B(x,\delta)$ close to $y$ and an open path $\gamma_1 \subset B(x,r)\cap S^\circ$ that connects $y$ to $z$. Since $z\in B(x,\delta)$, there exists another open path $\gamma_2\subset B(x,r)\cap S^\circ$  connecting $z$ to $x$. Concatenating $\gamma_1$ and $\gamma_2$ provides the desired path.
\end{proof}

Finally, we include a technical lemma:

\begin{lemma}\label{harmonic:lemma:paths zero hausdorff}
Suppose that $\gamma \subset \C$ is a non-constant path with $\mathcal H^1(\gamma\cap S)=0$. 
\begin{enumerate}[\upshape(a)]
\item If $x\in \gamma\cap S^\circ$, then arbitrarily close to $x$ we can find peripheral disks $Q_i$ with $Q_i\cap \gamma\neq \emptyset$. 
\item If $\gamma$ is an open path that does not intersect a peripheral disk $ Q_{i_0}$, $i_0\in \N$, and $x\in \br \gamma\cap \partial Q_{i_0}$, then arbitrarily close to $x$ we can find peripheral disks $Q_i$, $i\neq i_0$, with $Q_i\cap \gamma\neq \emptyset$. 
\end{enumerate}
\end{lemma}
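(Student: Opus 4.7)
The plan is to leverage the hypothesis $\mathcal H^1(\gamma\cap S)=0$ via the elementary fact that any connected set $A\subset \R^2$ containing two distinct points satisfies $\mathcal H^1(A)\geq \diam(A)$: project $A$ onto the line through two nearly diameter-realizing points; the projection is $1$-Lipschitz with connected image in $\R$, hence an interval whose length bounds $\mathcal H^1(A)$ from below. The overall strategy is to extract near $x$ a non-trivial subpath $\eta$ of $\gamma$ and then show that it must meet some peripheral disk, because otherwise $\eta\subset S$ would violate the $\mathcal H^1$ hypothesis.

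For part (a), fix $r>0$. Since $\gamma$ is a non-constant path, its image is connected with at least two points, so every point of the image---in particular $x$---is a limit point of $\gamma$. Hence there exist parameters $t_n$ in the parameter interval $I$ with $\gamma(t_n)\neq x$ and $\gamma(t_n)\to x$. By boundedness of $I$, a subsequence satisfies $t_n\to t_\infty\in \br I$, and by continuity either $\gamma(t_\infty)=x$ (if $t_\infty\in I$) or $\br\gamma(t_\infty)=x$ (if $t_\infty\in \partial I$). Choosing a sufficiently short one-sided subinterval with $t_\infty$ as an endpoint, continuity delivers a subpath $\eta$ of $\gamma$ whose image lies in $B(x,r)$, contains a point different from $x$, and has $x$ in its closure. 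The image of $\eta$ is connected and has positive diameter, so by the auxiliary fact has positive $\mathcal H^1$-measure. Since $\eta\subset \gamma$ and $\mathcal H^1(\gamma\cap S)=0$, the set $\eta$ cannot be entirely contained in $S$, so there is some $z\in \eta$ lying in an open peripheral disk $Q_i$. Then $Q_i\cap \gamma\neq \emptyset$ and $Q_i\ni z\in B(x,r)$.

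To upgrade ``$Q_i$ meets $B(x,r)$'' to ``$Q_i$ lies in an arbitrarily small neighborhood of $x$'', I apply Lemma \ref{harmonic:Fatness consequence}: in any fixed compact neighborhood of $x$ only finitely many peripheral disks have diameter exceeding a given $\varepsilon>0$, and since $x\in S^\circ$ implies $x\notin \br Q_i$ for any $i$, each such ``large'' disk has positive distance from $x$. For $r$ small enough, those finitely many disks do not meet $B(x,r)$, so the $Q_i$ produced above has $\diam(Q_i)<\varepsilon$, hence $Q_i\subset B(x,r+\varepsilon)$. Sending $r,\varepsilon\to 0$ produces peripheral disks contained in arbitrarily small neighborhoods of $x$ and meeting $\gamma$, finishing (a).

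Part (b) is nearly identical. The extraction of the subpath $\eta$ uses only $x\in \br \gamma$ and proceeds exactly as above; in the case $x\notin \gamma$ we are necessarily in the endpoint situation $t_\infty\in \partial I$. The same $\mathcal H^1$-argument produces $z\in \eta$ in some open peripheral disk $Q_i$; since $\gamma\cap Q_{i_0}=\emptyset$ and $z\in \gamma$, we have $z\notin Q_{i_0}$, hence $i\neq i_0$. The proximity upgrade again uses Lemma \ref{harmonic:Fatness consequence}, this time noting that for $i\neq i_0$ the disjoint-closures condition $\br Q_i\cap \br Q_{i_0}=\emptyset$ (built into the definition of a relative Sierpi\'nski carpet) ensures $x\in \partial Q_{i_0}$ has positive distance from each such $Q_i$, so the same argument yields $Q_i\subset B(x,r+\varepsilon)$. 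The main obstacle of the argument is essentially bookkeeping: uniformly handling the case where $x$ is the image of an interior parameter and the case where $x$ is only attained as a boundary limit of the continuous extension of $\gamma$, and making sure the ``non-triviality'' of $\eta$ survives the shrinking of $r$.
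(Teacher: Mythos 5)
Your proof is correct and uses the same core argument as the paper: a connected piece of $\gamma$ near $x$ has $\mathcal H^1$-measure at least its diameter, so it cannot lie entirely in $S$ and must enter some peripheral disk (the paper phrases this as a contradiction with the assumption that $\gamma\cap B(x,\varepsilon)\subset S$, you phrase it directly). Your extra "upgrade" step via Lemma \ref{harmonic:Fatness consequence}, showing the disks found are themselves contained in small neighborhoods of $x$, is a correct and harmless strengthening that the paper leaves implicit.
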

\begin{proof}
If either of the two statements failed, then there would exist a small ball $B(x,\varepsilon)$, not containing $\gamma$, such that all points $y\in \gamma\cap B(x,\varepsilon)$ lie in $S$. Since $\gamma$ is connected and it exits $B(x,\varepsilon)$, there exists a continuum $\beta\subset \gamma \cap B(x,\varepsilon) \cap S$ with $\diam(\beta)\geq \varepsilon/2$. Then $\mathcal H^1(\gamma\cap S) \geq \diam(\beta)>0$, a contradiction.
\end{proof}

\section{Sobolev spaces on relative Sierpi\'nski carpets}\label{harmonic:3-Sobolev}
In this section we treat one of the main objects of the chapter, the Sobolev spaces on relative Sierpi\'nski carpets. This is the class of maps among which we would like to minimize a type of \textit{Dirichlet energy}, in order to define carpet-harmonic functions.
 
We will start with a preliminary version of our Sobolev functions, namely with \textit{discrete Sobolev functions}. Using them, we define a continuous analog, and, as it turns out, these are just two sides of the same coin, since the two spaces of functions will be isomorphic. The reason for introducing the discrete Sobolev functions is because, in practice, it is much easier to check, as well as, limiting theorems are proved with less effort.

\subsection{Discrete Sobolev spaces}\label{harmonic:Subsection-Discrete Sobolev}\index{Sobolev space!discrete Sobolev space}
Let $\hat f \colon\{Q_i\}_{i\in \N} \to \R$ be a map defined on the set of peripheral disks $\{Q_i\}_{i\in \N}$. We say that the sequence $\{\rho(Q_i)\}_{i\in \N}$ is a \textit{weak (strong) upper gradient}\index{upper gradient}\index{upper gradient!weak upper gradient}\index{upper gradient!strong upper gradient} for $\hat f$ if there exists an exceptional family $\Gamma_0$ of paths in $\Omega$ with $\md_w(\Gamma_0)=0$ $(\md_s(\Gamma_0)=0)$ such that for all paths $\gamma\subset \Omega$ with $\gamma \notin \Gamma_0$ and all peripheral disks $Q_{i_j}$ with $Q_{i_j}\cap \gamma\neq \emptyset$, $j=1,2$, we have
\begin{align}\label{harmonic:3-Weak upper gradient}
|\hat f(Q_{i_1}) -\hat f(Q_{i_2})| \leq \sum_{i:Q_i\cap \gamma\neq \emptyset} \rho(Q_i).
\end{align}

Using the upper gradients we define:

\begin{definition}\label{harmonic:3-Discrete Sobolev Definition}Let $\hat f \colon\{Q_i\}_{i\in \N} \to \R$ be a map defined on the set of peripheral disks $\{Q_i\}_{i\in \N}$. We say that $\hat f$ lies in the \textit{local weak (strong) Sobolev space}\index{Sobolev space} $\widehat{\mathcal W}_{w,\loc}^{1,2} (S)$ $(\widehat{\mathcal W}_{s,\loc}^{1,2}(S))$ if there exists a {weak (strong) upper gradient} $\{\rho(Q_i)\}_{i\in \N}$ for $\hat f$ such that for every ball $B\subset\subset  \Omega $ we have 
\begin{align}
\label{harmonic:3-Sobolev L2}&\sum_{i\in I_B} \hat f(Q_i)^2 \diam(Q_i)^2<\infty, \quad \textrm{and}\\
\label{harmonic:3-Sobolev L2 gradient}& \sum_{i\in I_B} \rho(Q_i)^2<\infty.
\end{align}
Furthermore, if these conditions hold for the full sums over $i\in \N$, then we say that $\hat f$ lies in the \textit{weak (strong) Sobolev space}  $\widehat{\mathcal W}_{w}^{1,2}(S)$  $(\widehat{\mathcal W}_{s}^{1,2}(S))$.
\end{definition}

\begin{remark}
Conditions \eqref{harmonic:3-Sobolev L2} and \eqref{harmonic:3-Sobolev L2 gradient} are equivalent to saying that the sequences $\{\hat f(Q_i)\diam(Q_i)\}_{i\in \N}$ and $\{\rho(Q_i)\}_{i\in \N}$ are locally square-summable. Also, if, e.g., $\hat f(Q_i)$ is bounded then \eqref{harmonic:3-Sobolev L2} holds since we have $\sum_{i\in I_B} \diam(Q_i)^2<\infty$ for all $B\subset\subset \Omega$, by Corollary \ref{harmonic:Fatness corollary}.
\end{remark}

\begin{remark}
This definition resembles the definition of the classical Sobolev spaces $W_{\loc}^{1,2}( \R^n)$ and $W ^{1,2}(\R^n)$, using weak upper gradients. In fact, this definition is motivated by the \textit{Newtonian spaces} $N^{1,p}(\R^n)$, which contain \textit{good} representatives of Sobolev functions, rather than equivalence classes of functions; see \cite{HeinonenKoskelaShanmugalingamTyson:Sobolev} for background on weak upper gradients and Newtonian spaces.
\end{remark}

If $\md_s(\Gamma_0)=0$, then $\md_w(\Gamma_0)=0$ by Lemma \ref{harmonic:2-weak modulus less than strong}. This shows that if $\{\rho(Q_i)\}_{i\in \N}$ is a strong upper gradient for $\hat f$, then it is also a weak upper gradient for $\hat f$. Thus,
\begin{align}\label{harmonic:Inclusion of hat Sobolev spaces}
\widehat{\mathcal W}_{s,\loc}^{1,2}(S) \subset \widehat{\mathcal W}_{w,\loc}^{1,2}(S).
\end{align}

\begin{remark}\label{harmonic:Uniform relative separation}We have not been able to show the reverse inclusion, which probably depends on the geometry of the peripheral disks and their separation. A conjecture could be that the two spaces agree if the peripheral circles of the carpet are uniform quasicircles and they are \textit{uniformly relatively separated}\index{uniform relative separation}, i.e., there exists a uniform constant $\delta>0$ such that the \textit{relative distance}
\begin{align*}
\Delta(Q_i,Q_j) = \frac{\dist(Q_i,Q_j)}{\min \{ \diam(Q_i),\diam (Q_j) \}}
\end{align*}
satisfies $\Delta(Q_i,Q_j)\geq \delta$ for all $i\neq j$.
\end{remark}

For the rest of the section we fix a function $\hat f\in \widehat{\mathcal W}_{w,\loc}^{1,2}(S)$  $(\widehat{\mathcal W}_{s,\loc}^{1,2}(S))$. Our goal is to construct a function $f$ which is defined on ``most" of the points of the carpet $S$ and is a ``continuous" version of $\hat f$. Let $\mathcal {G}$ be the family of \textit{good paths}\index{good paths} $\gamma$ in $\Omega$ with
\begin{enumerate}[\upshape (1)]
\item $\mathcal H^1(\gamma\cap S)=0$,
\item the upper gradient inequality \eqref{harmonic:3-Weak upper gradient} is satisfied for all subpaths of $\gamma$,
\item $\sum_{i:Q_i\cap \gamma'\neq \emptyset}\rho(Q_i)<\infty$ for all subpaths $\gamma'$ of $\gamma$ which are compactly contained in $\Omega$,
\item[(3*)] in case $\sum_{i\in \N} \rho(Q_i)^2 <\infty$ we require $\sum_{i:Q_i\cap \gamma\neq \emptyset}\rho(Q_i)<\infty$.
\end{enumerate}
It is immediate to see that all subpaths of a path $\gamma\in \mathcal G$ also lie in $\mathcal G$. Note that $\mathcal G$ depends both on $\{\hat f(Q_i)\}_{i\in \N}$ and on $\{\rho(Q_i)\}_{i\in \N}$. Property (1) is crucial and will allow us to apply Lemma \ref{harmonic:lemma:paths zero hausdorff}.

\begin{lemma}\label{harmonic:Good family properties}
Suppose that $\hat f\in \widehat{\mathcal W}_{w,\loc}^{1,2}(S)$ $(\widehat{\mathcal W}_{s,\loc}^{1,2}(S))$. Then the complement of $\mathcal G$ (i.e., all curves in $\Omega$ that do not lie in $\mathcal G$) has weak (strong) modulus equal to $0$. 
\end{lemma}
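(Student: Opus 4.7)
The plan is to exhibit the complement of $\mathcal G$ as a countable union of path families in $\Omega$, each corresponding to the failure of exactly one of the conditions defining $\mathcal G$, and verify that each family has weak (resp.\ strong) carpet modulus zero; countable subadditivity of the modulus then closes the argument. Concretely, let $\Gamma_1 := \{\gamma : \mathcal H^1(\gamma \cap S) > 0\}$ (failure of (1)); let $\Gamma_2$ be the family of paths containing a subpath on which the upper gradient inequality \eqref{harmonic:3-Weak upper gradient} fails (failure of (2)); let $\Gamma_3$ be the family of paths containing a compactly contained subpath $\gamma'$ with $\sum_{i:Q_i\cap\gamma'\neq\emptyset}\rho(Q_i) = \infty$ (failure of (3)); and, in the non-local case $\hat f \in \widehat{\mathcal W}^{1,2}(S)$, also $\Gamma_3^\ast := \{\gamma : \sum_{i:Q_i\cap\gamma\neq\emptyset}\rho(Q_i) = \infty\}$ (failure of (3*)).

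For $\Gamma_1$, the strong case is automatic: $\rho \equiv 0$ is vacuously admissible for $\md_s(\Gamma_1)$ since no $\gamma \in \Gamma_1$ satisfies the defining hypothesis $\mathcal H^1(\gamma \cap S) = 0$ of admissibility. For the weak case, the Borel function $\lambda = \infty \cdot \chi_S$ is admissible for $\md_2(\Gamma_1)$, and $\|\lambda\|_2 = 0$ because $\mathcal H^2(S) = 0$; hence $\md_2(\Gamma_1) = 0$, and choosing $\rho \equiv 0$ together with exceptional family $\Gamma_0 = \Gamma_1$ shows $\md_w(\Gamma_1) = 0$. For $\Gamma_2$, the hypothesis that $\{\rho(Q_i)\}$ is a weak (resp.\ strong) upper gradient already supplies an exceptional family $\Gamma_0^{\mathrm{ug}}$ of weak (resp.\ strong) modulus zero consisting of the paths on which the inequality itself fails; Remark~\ref{harmonic:3-remark-subpaths modulus zero} then yields $\md(\Gamma_2) = 0$.

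The only step requiring real work is $\Gamma_3$. Fix an exhaustion $\Omega = \bigcup_{n \in \N} B_n$ by open balls $B_n$ with $B_n \subset\subset \Omega$ and define
\begin{align*}
\Gamma_{3,n} := \biggl\{\gamma \subset B_n : \sum_{i:Q_i\cap\gamma\neq\emptyset}\rho(Q_i) = \infty \biggr\}.
\end{align*}
Local square-summability \eqref{harmonic:3-Sobolev L2 gradient} gives $M_n := \sum_{i \in I_{B_n}}\rho(Q_i)^2 < \infty$, and for every $\varepsilon > 0$ the restricted weight $\varepsilon \rho \cdot \mathbf{1}_{I_{B_n}}$ is admissible for both $\md_w(\Gamma_{3,n})$ (with empty exceptional family) and $\md_s(\Gamma_{3,n})$, because any $\gamma \in \Gamma_{3,n}$ is contained in $B_n$ and already makes the sum infinite. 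Hence $\md(\Gamma_{3,n}) \leq \varepsilon^2 M_n$; letting $\varepsilon \to 0$ gives $\md(\Gamma_{3,n}) = 0$. Any $\gamma \in \Gamma_3$ contains a compactly contained subpath, which lies in some $B_n$ and therefore in $\Gamma_{3,n}$, so Remark~\ref{harmonic:3-remark-subpaths modulus zero} together with countable subadditivity yields $\md(\Gamma_3) = 0$. The family $\Gamma_3^\ast$, when needed, is handled identically with the unrestricted weight $\varepsilon \rho$ and the global summability $\sum_{i \in \N}\rho(Q_i)^2 < \infty$; no exhaustion is necessary.

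The principal difficulty in the argument is purely organisational: one must verify that $\mathcal G^c = \Gamma_1 \cup \Gamma_2 \cup \Gamma_3\ (\cup\ \Gamma_3^\ast)$, paying attention to the ``all subpaths'' clauses in (2) and (3), and must correctly apply the two distinct admissibility conventions for weak versus strong modulus. Beyond $\mathcal H^2(S) = 0$, the locally square-summable hypothesis on $\rho$, and the two modulus remarks already in the paper, no new analytic input is required.
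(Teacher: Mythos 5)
Your proposal is correct and follows essentially the same route as the paper's proof: decompose the complement of $\mathcal G$ by which condition fails, use subadditivity, handle (1) via $\mathcal H^2(S)=0$, handle (2) via the exceptional family in the upper-gradient definition together with Remark~\ref{harmonic:3-remark-subpaths modulus zero}, and handle (3)/(3*) via the local (resp.\ global) square-summability of $\rho$ by scaling the weight by $\varepsilon$. You merely make explicit (exhaustion by balls, the admissible weight $\varepsilon\rho\,\mathbf 1_{I_{B_n}}$) what the paper leaves implicit, and your only slip — tying (3*) to the non-local space rather than to the hypothesis $\sum_{i\in\N}\rho(Q_i)^2<\infty$ — is cosmetic and does not affect the argument.
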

In other words, $\mathcal G$ contains ``almost every" path $\gamma\subset \Omega$.

\begin{proof}
By the subadditivity of modulus, it suffices to show that the family of curves for which one of the conditions (1),(2),(3), or (3*) is violated has weak (strong) modulus equal to $0$.

We first note that $\mathcal H^1(\gamma\cap S)=0$ holds for all paths outside a family of strong carpet modulus equal to $0$, so the weak carpet modulus is also equal to $0$ by Lemma \ref{harmonic:2-weak modulus less than strong}. 

Moreover, Remark \ref{harmonic:3-remark-subpaths modulus zero} implies that the family of paths that have a subpath lying in a family of weak (strong) carpet modulus equal to $0$ has itself weak (strong) carpet modulus zero. This justifies that the upper gradient inequality holds for \textit{all} subpaths of paths $\gamma$ that lie outside an exceptional family of weak (strong) modulus zero.

Finally, since for any set $V\subset\subset \Omega$ we have $\sum_{i\in I_V} \rho(Q_i)^2<\infty$ by \eqref{harmonic:3-Sobolev L2 gradient}, it follows that the family of paths in $\Omega$ that have a subpath $\gamma'$, compactly contained in $\Omega$, with $\sum_{i: Q_i\cap {\gamma'}\neq \emptyset} \rho(Q_i)=\infty$, has strong (and thus weak) carpet modulus equal to zero. The case (3*) has the same proof, since $\sum_{i\in \N} \rho(Q_i)^2<\infty$ there.
\end{proof}

We say that a point $x\in S$ is \textit{``accessible" by a curve $\gamma_0 \in \mathcal G$}\index{``accessible'' point}, if $x\in \gamma_0\cap S^\circ$, or if there exists an open subcurve $\gamma$ of $\gamma_0$ with $x\notin \gamma$, $x\in \br\gamma$, and $\gamma$ does not meet the (interior of the) peripheral disk $Q_{i_0}$ whenever $x\in \partial Q_{i_0}$; see Figure \ref{harmonic:fig:accessible}. In the first case we set $\gamma=\gamma_0$. By Lemma \ref{harmonic:lemma:paths zero hausdorff}, if $x$ is ``accessible" by $\gamma_0$, then arbitrarily close to $x$ we can find peripheral disks $Q_i$ with $Q_i\cap \gamma\neq \emptyset$. We say that a point is \textit{``accessible"} if the family of paths we are using is implicitly understood. For a point $x\in S$  that is ``accessible" by $\gamma_0$ we define 
\begin{align}\label{harmonic:3-definition of f}
f(x)\coloneqq \liminf_{\substack{Q_i\to x\\Q_i\cap \gamma \neq \emptyset }} \hat f(Q_i),
\end{align}
where $\gamma$ is a subpath of $\gamma_0$ as above. Often, we will abuse terminology and we will be using the above definition if $x$ is ``accessible" by $\gamma$, without mentioning that $\gamma$ is a subpath of $\gamma_0$; see e.g.\ the statement of Lemma \ref{harmonic:3-well-defined}. Using Lemma \ref{harmonic:Paths joining continua} it is easy to construct non-exceptional paths in $\mathcal G$ passing through a given continuum. In fact, it can be shown that for each peripheral circle $\partial Q_{i_0}$ there is a dense set of points which are ``accessible'' by good paths $\gamma\in \mathcal G$. For ``non-accessible" points $x\in \partial Q_{i_0}$ we define $f(x)$ as $\liminf f(y)$, as $y$ approaches $x$ through ``accessible" points $y\in \partial Q_{i_0}$. For the other points of the carpet $S$ (which must belong to $S^\circ$) we define $f(x)$ as $\liminf f(y)$, as $y$ approaches $x$ through ``accessible" points.

\begin{figure}
  \centering
  \begin{tikzpicture}

			//accessible point
			\begin{scope}[shift={(-2,0)}, rotate=30,scale=1.5]

			//draw big peripheral circle	
			
			\def\a{1.25};\def\A{1};
			\def\b{0.05};\def\B{20};
			\def\c{1};\def\C{1};
			\def\d{0.1};\def\D{34};
            	 	\draw [fill=black!5,smooth,domain=0:360] plot 
            		(
            		{\a*cos( \A*\x )+\b*sin(\B*\x)}, 
            		{\c*sin( \C*\x ) + \d*cos(\D*\x )}
            		) ;
            \end{scope}
			\node[draw, shape=circle, fill=black, scale=.15, label=$x$] (x) at (-2,1.62){};

           	//control points
            \node[] (a1) at (0,3){};
            \node[label=$\gamma_0$] at (a1){};
            \node[] (a2) at (-1.5,2.5){};
            \node[] (a3) at (-2,1){};
            \node[] (a4) at (-2.5,2){};
            \node[] (a5) at (-2.5,1){};
            \node[] (a6) at (-3,1.5){};
            \node[] (a7) at (-3,0.5){};
            \node[label=$Q_{i_0}$] (Q) at (-2,0){};
            
            //connections
            \draw[thick,color=purple] (a1.center)..controls (-1,4) and (-1.5,2)..(x.center);
            \draw[thick,color=purple] (x.center)..controls (-2.4,1) and (-2.5,3) ..(-4,1);

		//non-accessible point
		\begin{scope}[scale=1, shift={(5,0)}]
		
				//grid  
			//grid	

			\begin{scope}[shift={(-2,-0.045)}, rotate=21,scale=1.5]

			//draw big peripheral circle	
			
			\def\a{1.25};\def\A{1};
			\def\b{0.05};\def\B{20};
			\def\c{1};\def\C{1};
			\def\d{0.1};\def\D{34};
            	 	\draw [fill=black!5,smooth,domain=0:360] plot 
            		(
            		{\a*cos( \A*\x )+\b*sin(\B*\x)}, 
            		{\c*sin( \C*\x ) + \d*cos(\D*\x )}
            		) ;
            \end{scope}
			\node[draw, shape=circle, fill=black, scale=.15, label=$x$] (x) at (-2,1.62){};

           	//control points
            \node[] (a1) at (0,3){};
            \node[label=$\gamma_0$] at (-0.8,1.8){};
            \node[] (a2) at (-1.5,2.5){};
            \node[] (a3) at (-2,1){};
            \node[] (a4) at (-2.5,2){};
            \node[] (a5) at (-2.5,1){};
            \node[] (a6) at (-3,1.5){};
            \node[] (a7) at (-3,0.5){};
            \node[label=$Q_{i_0}$] (Q) at (-2,0){};
            
            //connections
            \draw[thick,color=purple] (x.center)..controls (-2.4,1) and (-2.5,3) ..(-4,1);
			\begin{scope}[shift={(x)}, rotate=-30]            
            \draw[color=purple,thick, smooth,domain=0.01:1] plot (\x,{\x*sin(1/(\x/2) r)});
            \end{scope}
		\end{scope}
\end{tikzpicture}
  \caption{A point $x\in \partial Q_{i_0}$ that is ``accessible" by $\gamma_0$ (left) and ``non-accessible" (right).}
  \label{harmonic:fig:accessible}
\end{figure}
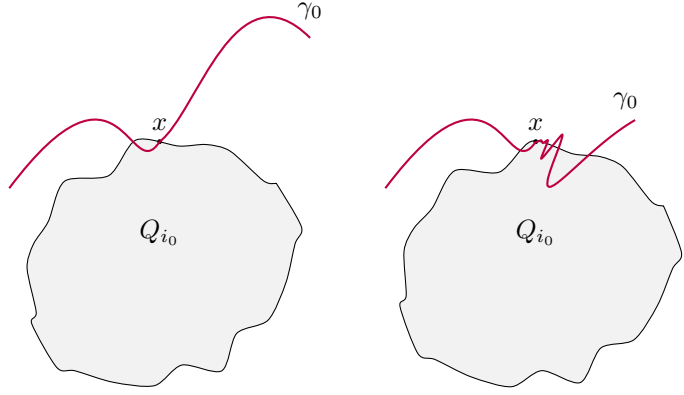

First we show that the map $f\colon S \to \widehat \R$ is well-defined, i.e., the definition does not depend on the curve $\gamma$ from which the point $x$ is ``accessible''.

\begin{lemma}\label{harmonic:3-well-defined}
If $\gamma_1,\gamma_2\in \mathcal G$ are two paths from which the point $x\in S$ is ``accessible'', then
\begin{align*}
\liminf_{\substack{Q_i\to x\\Q_i\cap \gamma_1 \neq \emptyset }} \hat f(Q_i)= \liminf_{\substack{Q_i\to x\\Q_i\cap \gamma_2 \neq \emptyset }} \hat f(Q_i).
\end{align*} 
\end{lemma}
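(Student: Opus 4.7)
The plan is to prove $\liminf_{Q_i\to x,\,Q_i\cap\gamma_2\neq\emptyset}\hat f(Q_i)\leq L_1$, where $L_1$ denotes the analogous liminf along $\gamma_1$; the reverse inequality then follows by symmetry. Let $\{\rho(Q_i)\}_{i\in\N}$ be the upper gradient of $\hat f$, with exceptional family $\Gamma_0$ (which has conformal modulus zero by Lemmas \ref{harmonic:2-weak modulus less than strong} and \ref{harmonic:2-weak modulus zero implies two modulus zero}). Fix $\varepsilon>0$ and pick $Q^{(1)}$ arbitrarily close to $x$ with $Q^{(1)}\cap\gamma_1\neq\emptyset$ and $\hat f(Q^{(1)})\leq L_1+\varepsilon$; the task is to produce $Q^{(2)}$ on $\gamma_2$ close to $x$ with $|\hat f(Q^{(1)})-\hat f(Q^{(2)})|$ of order $\varepsilon$.

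The main tool will be a small circle around $x$ that links $\gamma_1$ to $\gamma_2$ at small modulus cost. I would apply Lemma \ref{harmonic:3-curves gamma_r}(a) to the weight $\rho$, to the exceptional family $\Gamma_0$, and to the set $A\coloneqq(\gamma_1\cup\gamma_2)\cap S$, which satisfies $\mathcal H^1(A)=0$ by property (1) of $\mathcal G$. This produces arbitrarily small $r>0$ such that the circle $\gamma_r(t)=x+re^{it}$ and all of its subpaths avoid $\Gamma_0$, miss $A$, and such that
\[
\sum_{\substack{i:\,Q_i\cap\gamma_r\neq\emptyset\\ i\neq i_0}}\rho(Q_i)<\varepsilon,
\]
the index $i_0$ being excluded only when $x\in\partial Q_{i_0}$. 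Since $\gamma_1$ and $\gamma_2$ access $x$, for $r$ sufficiently small each of them crosses $\partial B(x,r)=\gamma_r$ at some point $p_j$; as $\gamma_r$ avoids $A$, each $p_j$ lies in a peripheral disk $Q_{k_j}$ that is therefore intersected by both $\gamma_r$ and $\gamma_j$.

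Next, I would assemble a three-term triangle inequality. Let $\alpha_1\subset\gamma_1$ be a subpath from $Q^{(1)}$ to $Q_{k_1}$, let $\alpha_2\subset\gamma_2$ be a subpath from $Q_{k_2}$ to some $Q^{(2)}$ on $\gamma_2$ close to $x$, and let $\eta\subset\gamma_r$ be a subarc from $Q_{k_1}$ to $Q_{k_2}$. All three paths lie outside $\Gamma_0$ (by property (2) of $\mathcal G$ for $\alpha_1,\alpha_2$, and by Lemma \ref{harmonic:3-curves gamma_r}(a) for $\eta$), so the upper gradient inequality applies to each, and summing yields
\[
|\hat f(Q^{(1)})-\hat f(Q^{(2)})|\leq\sum_{i:\,Q_i\cap\alpha_1\neq\emptyset}\rho(Q_i)+\sum_{i:\,Q_i\cap\eta\neq\emptyset}\rho(Q_i)+\sum_{i:\,Q_i\cap\alpha_2\neq\emptyset}\rho(Q_i).
\]
The middle sum is at most $\varepsilon$ by construction. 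Each outer sum is a tail of the convergent series $\sum_{i:\,Q_i\cap\gamma_j'\neq\emptyset}\rho(Q_i)$ guaranteed by property (3) of $\mathcal G$ on a compact subpath $\gamma_j'$: only finitely many terms carry most of the mass, and each of the corresponding $Q_i$ sits at a positive distance from $x$ (using $x\in S^\circ$ in the first case, and that $\gamma_j$ never meets $Q_{i_0}$ in the second, so $i_0$ does not enter these sums). Hence the outer sums tend to $0$ as $r\to 0$. Letting first $r\to 0$ and then $\varepsilon\to 0$ gives the claim.

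The main obstacle will be the case $x\in\partial Q_{i_0}$: the circle $\gamma_r$ unavoidably meets $Q_{i_0}$, yet $\eta$ must avoid $Q_{i_0}$ so that the index $i_0$ can be legitimately omitted from the gradient sum on $\eta$. The quasiball hypothesis on $Q_{i_0}$ is what saves the day: for $r$ small, $\gamma_r\cap Q_{i_0}$ is a single short arc near the foot of $x$ on $\partial Q_{i_0}$, while $p_1$ and $p_2$ lie on the complementary long arc because $\gamma_1,\gamma_2$ do not enter $Q_{i_0}$. Taking $\eta$ along that complementary arc yields the desired avoidance and completes the argument.
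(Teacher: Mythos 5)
Your overall strategy is the same as the paper's: choose $Q^{(1)},Q^{(2)}$ close to $x$ so that the truncated subpaths of $\gamma_1,\gamma_2$ joining them to $x$ have small $\rho$-length, bridge the two paths by a circular arc $\gamma_r$ of small $\rho$-length supplied by Lemma \ref{harmonic:3-curves gamma_r}(a), and conclude by the triangle inequality together with the upper gradient inequality along the three pieces. In the case $x\in S^\circ$ this is exactly the paper's argument and it is correct (your phrase ``the outer sums tend to $0$ as $r\to 0$'' has the quantifiers slightly out of order --- for fixed $Q^{(1)}$ the sum over $\alpha_1$ \emph{grows} as $r\to 0$; what makes it small is choosing $Q^{(1)},Q^{(2)}$ close to $x$ so that the entire truncated paths have $\rho$-length $<\varepsilon$, and then taking $r$ smaller than $\dist(x,Q^{(j)})$ --- but the needed ingredients are all present).

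The genuine gap is in your last paragraph, the case $x\in\partial Q_{i_0}$. You claim that the quasiball hypothesis forces $\gamma_r\cap Q_{i_0}$ to be a single short arc, so that the ``complementary long arc'' of $\gamma_r$ avoids $Q_{i_0}$ and contains both crossing points $p_1,p_2$. This is false: the quasiball condition \eqref{harmonic:1-Quasi-balls} (even combined with fatness) says nothing about the local structure of $\partial Q_{i_0}$ near $x$ --- the boundary need not be a quasicircle or even locally connected in any quantitative sense (the paper explicitly allows John domains, whose boundaries are much rougher than quasicircles), and a circle $\partial B(x,r)$ centered at a boundary point can enter and exit $Q_{i_0}$ arbitrarily many times. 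Consequently $\gamma_r\setminus Q_{i_0}$ may have many components, and you must explain why $p_1$ and $p_2$ (or rather, why suitable crossing points of $\gamma_1$ and $\gamma_2$ with $\gamma_r$) lie in the \emph{same} component. The paper's resolution is topological rather than metric: among the components of $\gamma_r\setminus Q_{i_0}$ there is a subarc $\gamma_r'$ with endpoints on $\partial Q_{i_0}$ that is a crosscut of $\R^2\setminus Q_{i_0}$ separating $x$ from $\infty$; since $\gamma_1$ and $\gamma_2$ land at $x$ while staying in $\R^2\setminus Q_{i_0}$ and start outside $B(x,r)$, each must cross this particular subarc, which therefore meets peripheral disks intersected by both truncated paths while contributing at most $\varepsilon$ to the gradient sum (the index $i_0$ being legitimately absent since $\gamma_r'\cap Q_{i_0}=\emptyset$). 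Replacing your ``single arc'' claim by this crosscut/separation argument closes the gap.
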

As remarked before, here ``accessible" by $\gamma_1$ means that there exists an open subpath of $\gamma_1$ that we still denote by $\gamma_1$ such that $x\in \br \gamma_1\setminus \gamma_1$, and $\gamma_1$ does not intersect the peripheral disk $Q_{i_0}$, in case $x\in \partial Q_{i_0}$.
\begin{proof}
We may assume that $\gamma_1,\gamma_2$ are compactly contained in $\Omega$, otherwise we consider  subpaths of them. We fix $\varepsilon>0$ and consider peripheral disks $Q_{i_j}$ very close to $x$ such that $Q_{i_j}\cap \gamma_j\neq \emptyset$ (recall that $\mathcal H^1(\gamma_j\cap S)=0$ and see Lemma \ref{harmonic:lemma:paths zero hausdorff}) and such that the truncated paths $\gamma_j'\subset \gamma_j$ that join $Q_{i_j}$ to $x$ have short $\rho$-length, i.e., 
\begin{align}\label{harmonic:3-well-defined-short rho length}
\sum_{i: Q_i\cap \gamma_j'\neq \emptyset} \rho(Q_i)<\varepsilon
\end{align}
for $j=1,2$. This can be done since $\gamma_j$ is compactly contained in $\Omega$ and $\gamma_j\in \mathcal G$ which implies that $\sum_{i:Q_i\cap \gamma_j\neq \emptyset}\rho(Q_i)<{\infty}$ for  $j=1,2$. 

We first assume that $x\in S^\circ$. Using Lemma \ref{harmonic:3-curves gamma_r}(a), we can find a curve $\gamma_r$ lying in $\mathcal G$ with $r$ smaller than the distance of $Q_{i_j}$ and $x$ for $j=1,2$, such that $\gamma_r$ avoids the sets $\gamma_j'\cap S$ which have Hausdorff $1$-measure zero, and 
\begin{align}\label{harmonic:3-well-defined-gamma r}
\sum_{i:Q_i\cap \gamma_r\neq \emptyset}\rho(Q_i)<\varepsilon.
\end{align}
It follows that $\gamma_r$ has to meet some $Q_{i_j}'$ that is intersected by $\gamma_j'$, $j=1,2$, so by the triangle inequality and the upper gradient inequality we have
\begin{align*}
|\hat f(Q_{i_1})-\hat f(Q_{i_2})|&\leq |\hat f(Q_{i_1})-\hat f(Q_{i_1}')| +|\hat f(Q_{i_1}')-\hat f(Q_{i_2}')|+|\hat f(Q_{i_2}')-\hat f(Q_{i_2})|\\
&\leq \sum_{i:Q_i\cap \gamma_1'\neq \emptyset} \rho(Q_i)+ \sum_{i:Q_i\cap \gamma_r\neq \emptyset}\rho(Q_i)+\sum_{i:Q_i\cap \gamma_2'\neq \emptyset} \rho(Q_i)\\
&< 3\varepsilon
\end{align*}
by \eqref{harmonic:3-well-defined-short rho length} and \eqref{harmonic:3-well-defined-gamma r}. The conclusion follows in this case.

\begin{figure}
	\centering
	\begin{overpic}[width=.75\linewidth]{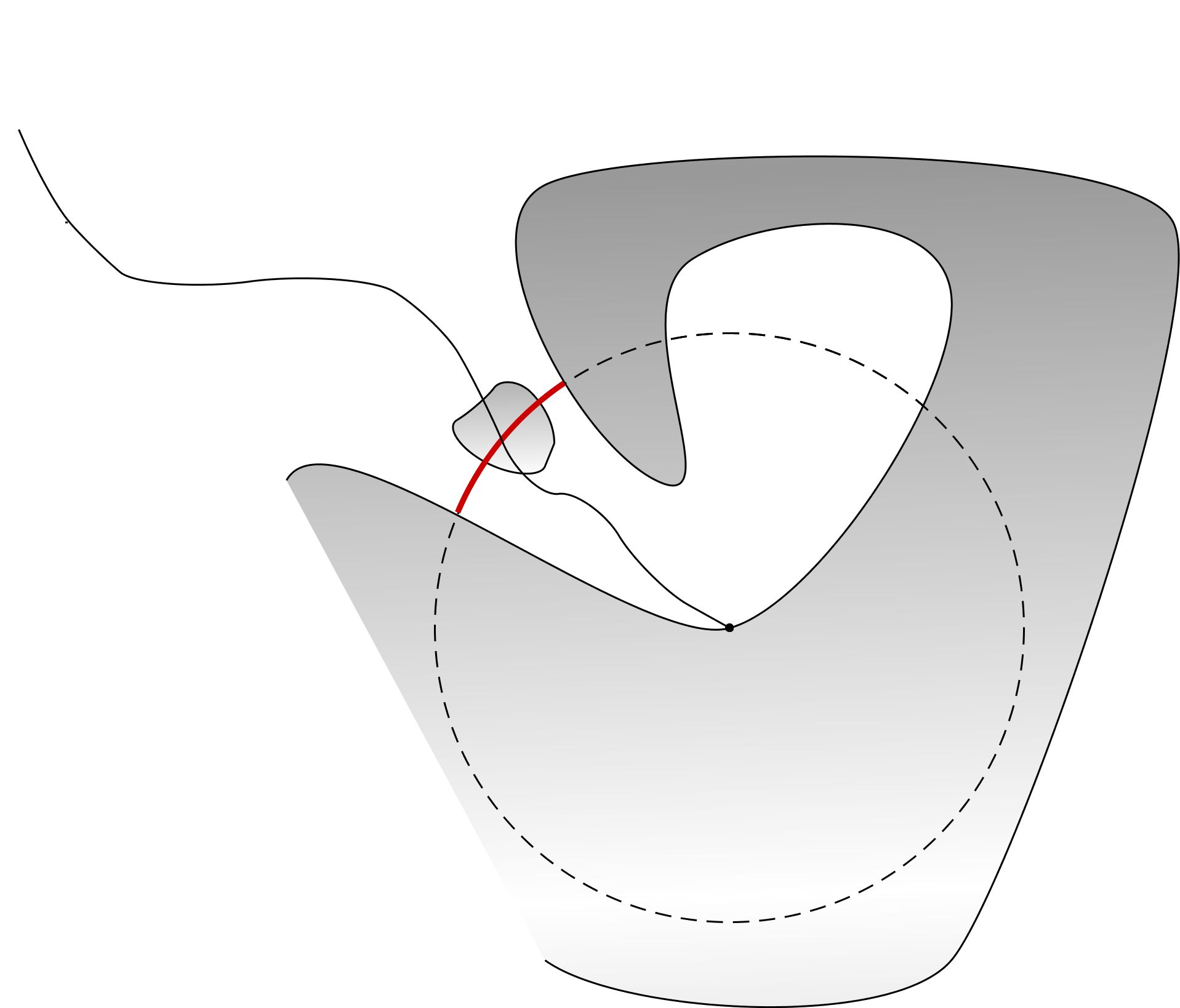}
 	\put (62,30) {$x$}
 	\put (85,50) {$Q_{i_0}$}
 	\put (25,65) {$\gamma_j'$}
 	\put (33,50) {$Q_{i_j}'$}
 	\put (40,42) {$\gamma_r'$}
  \end{overpic}
  \caption{The crosscut defined by $\gamma_r$.}
  \label{harmonic:fig:crosscut}
\end{figure}

In the case that $x\in \partial Q_{i_{0}}$ for some $i_0$, the only modification we have to make is in the application of Lemma \ref{harmonic:3-curves gamma_r}(a), which now yields a path $\gamma_r$ for sufficiently small $r$ such that
\begin{align*}
\sum_{\substack{i:Q_i\cap \gamma_r\neq \emptyset \\ i\neq i_0}} \rho(Q_i) <\varepsilon.
\end{align*}
Since the curves $\gamma_1,\gamma_2$ have $x$ as an endpoint but they avoid $Q_{i_0}$, there exists a subarc $\gamma_r'$ of $\gamma_r$ that defines a \textit{crosscut}\index{crosscut} separating $\infty$ from $x$ in $\R^2\setminus  Q_{i_0}$ and having the following properties (see Figure \ref{harmonic:fig:crosscut}): $\gamma_r'$ meets peripheral disks $Q_{i_j}'$ which are also intersected by $\gamma_j'$, $j=1,2$, $\gamma_r'\cap Q_{i_0}=\emptyset$ but the endpoints of $\gamma_r'$ lie on $\partial Q_{i_0}$, and 
\begin{align*}
\sum_{i:Q_i\cap \gamma_r'\neq \emptyset} \rho(Q_i)<\varepsilon.
\end{align*}
We direct the reader to \cite[Chapter 2.4]{Pommerenke:conformal} for a discussion on crosscuts. If we use this in the place of \eqref{harmonic:3-well-defined-gamma r}, the argument in the previous paragraph yields the conclusion. Here, we remark that $\gamma_r'$ is a good path lying in $\mathcal G$, since it is a subpath of $\gamma_r\in \mathcal G$; see Lemma \ref{harmonic:Good family properties}.
\end{proof}

Next, we prove that we have $|f(x)|<\infty$ for all points $x\in S$ that lie on a peripheral circle $\partial Q_{i_0}$. 

Let $x,y\in \partial Q_{i_0}$ be points that are ``accessible" by $\gamma_x,\gamma_y$, respectively, and let $Q_{i_x},Q_{i_y}$, $i_x,i_y\neq i_0$, be peripheral disks intersected by $\gamma_x,\gamma_y$, close to $x,y$, respectively. Applying Lemma \ref{harmonic:3-curves gamma_r}(b) for small $\varepsilon>0$, we obtain that there exists a small $r<\min\{\dist(x,Q_{i_x}),\\ \dist(y,Q_{i_y})\}$ such that the concatenation $\gamma$ of the circular paths $\gamma_{r}^x,\gamma_{r}^y$ with a path $\gamma_0\subset Q_{i_0}$ intersects peripheral disks $Q_{i_x}',Q_{i_y}'$ on $\gamma_x,\gamma_y$, respectively, and
\begin{align*}
\sum_{i:Q_i\cap \gamma\neq \emptyset} \rho(Q_i)= \sum_{\substack{i:Q_i\cap \gamma\neq \emptyset \\ i\neq i_0}} \rho(Q_i)+\rho(Q_{i_0})<\varepsilon+\rho(Q_{i_0}).
\end{align*}
We replace $\gamma$ by a simple subpath of it connecting $Q_{i_x}'$ with $Q_{i_y}'$, and we still denote this  subpath by $\gamma$. By Lemma \ref{harmonic:3-curves gamma_r}(b) $\gamma$ is a good path in $\mathcal G$, so by the upper gradient inequality we have
\begin{align*}
|\hat f(Q_{i_x})-\hat f(Q_{i_y}) | &\leq |\hat f(Q_{i_x})-\hat f(Q_{i_x}')|+|\hat f(Q_{i_x}')-\hat f(Q_{i_y}')|+ |\hat f(Q_{i_y}')-\hat f(Q_{i_y})|\\
&\leq \sum_{i:Q_i\cap \gamma_x\neq \emptyset}\rho(Q_i)+\sum_{i:Q_i\cap \gamma\neq \emptyset} \rho(Q_i) + \sum_{i:Q_i\cap \gamma_y\neq \emptyset}\rho(Q_i)\\
&\leq \sum_{i:Q_i\cap \gamma_x\neq \emptyset}\rho(Q_i)+\rho(Q_{i_0})+\varepsilon + \sum_{i:Q_i\cap \gamma_y\neq \emptyset}\rho(Q_i).
\end{align*}  
By taking $Q_{i_x},Q_{i_y}$  to be sufficiently close to $x,y$ and  also, by truncating $\gamma_x,\gamma_y$ to smaller subpaths $\gamma_x',\gamma_y'$ that join $Q_{i_x},Q_{i_y}$ to $x,y$, respectively, we may assume that the right hand side is smaller than $2\varepsilon+\rho(Q_{i_0})$. Thus, taking limits as $Q_{i_x} \to x$, $Q_{i_y}\to y$, we obtain
\begin{align}\label{harmonic:3-bound for f(x)-f(y)}
|f(x)- f(y)| \leq \rho(Q_{i_0}).
\end{align}
One can use Lemma \ref{harmonic:3-curves gamma_r}(a) and argue in the same way to derive that for all ``accessible" points $x\in \partial Q_{i_0}$ we have
\begin{align}\label{harmonic:3-bound for f(x)}
|f(x)-\hat f(Q_{i_0})|\leq \rho(Q_{i_0}).
\end{align}
This shows that in fact $f(x)$ is uniformly bounded for ``accessible" points $x\in \partial Q_{i_0}$. Since ``accessible" points are dense in $\partial Q_{i_0}$, the bound in \eqref{harmonic:3-bound for f(x)} also holds on ``non-accessible" points $x$ of the peripheral circle $\partial Q_{i_0}$, by the definition of $f$. We have proved the following: 
\begin{corollary}\label{harmonic:3-Finiteness of f}
For each peripheral disk $Q_{i}$ the quantities
\begin{align*}
&M_{Q_{i}}(f)\coloneqq \sup_{x\in \partial Q_i} f(x),\quad  m_{Q_{i}}(f)\coloneqq \inf_{x\in \partial Q_i} f(x), \quad \textrm{and}\\
& \osc_{Q_{i}} (f)\coloneqq  M_{Q_{i}}(f)-m_{Q_{i}}(f)
\end{align*}
are finite, and in particular, 
$$\osc_{Q_i}(f)\leq \rho(Q_i)\quad \textrm{and} \quad |M_{Q_i}(f)- \hat f(Q_i)| \leq \rho(Q_i).$$
\end{corollary}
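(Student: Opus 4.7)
The corollary is essentially a packaging of the two estimates \eqref{harmonic:3-bound for f(x)-f(y)} and \eqref{harmonic:3-bound for f(x)} established in the paragraphs immediately preceding the statement, combined with the extension of $f$ from ``accessible'' to ``non-accessible'' points of $\partial Q_i$. My plan is to first read off the bound $|M_{Q_i}(f)-\hat f(Q_i)|\leq \rho(Q_i)$ for accessible points directly from \eqref{harmonic:3-bound for f(x)}, which gives
\begin{align*}
\hat f(Q_i)-\rho(Q_i) \leq f(x) \leq \hat f(Q_i)+\rho(Q_i)
\end{align*}
for every ``accessible'' $x\in \partial Q_i$. In particular, the set of values $f(x)$ over ``accessible'' $x\in \partial Q_i$ is bounded, so its supremum and infimum are finite.

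Next, I would extend the bound to arbitrary $x\in\partial Q_i$. By construction, $f(x)$ at a ``non-accessible'' point is the liminf of $f(y)$ as $y$ approaches $x$ through ``accessible'' points of $\partial Q_i$; and by Lemma \ref{harmonic:Paths joining continua} together with the discussion preceding Lemma \ref{harmonic:3-well-defined}, there is a dense set of ``accessible'' points on $\partial Q_i$, so the liminf is taken over a non-empty set of values lying in the closed interval $[\hat f(Q_i)-\rho(Q_i),\hat f(Q_i)+\rho(Q_i)]$. The liminf of numbers in a closed interval remains in the same interval, and this gives $|f(x)-\hat f(Q_i)|\leq \rho(Q_i)$ for every $x\in \partial Q_i$, hence
\begin{align*}
|M_{Q_i}(f)-\hat f(Q_i)|\leq \rho(Q_i) \quad \text{and} \quad |m_{Q_i}(f)-\hat f(Q_i)|\leq \rho(Q_i).
\end{align*}

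Finally, for the sharper oscillation bound $\osc_{Q_i}(f)\leq \rho(Q_i)$, I would not go through the triangle inequality applied to the two estimates above (which would only give $2\rho(Q_i)$), but rather appeal directly to \eqref{harmonic:3-bound for f(x)-f(y)}: for any two ``accessible'' $x,y\in \partial Q_i$, $|f(x)-f(y)|\leq \rho(Q_i)$. Taking the supremum over such $x$ and the infimum over such $y$ gives the oscillation bound on the dense subset of ``accessible'' points, and then the liminf definition of $f$ at ``non-accessible'' points propagates this bound to all of $\partial Q_i$, since the sup and inf can only decrease or stay the same when restricted to a dense subset of a set on which $f$ is defined by liminf.

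There is no real obstacle here; all the technical work was done in establishing \eqref{harmonic:3-bound for f(x)-f(y)} and \eqref{harmonic:3-bound for f(x)} via the construction of good ``crosscut'' paths using Lemma \ref{harmonic:3-curves gamma_r}. The only point requiring slight care is checking that the liminf extension preserves both the pointwise inequality with $\hat f(Q_i)$ and the oscillation bound, but both follow immediately from the elementary fact that liminf/limsup of bounded sequences in a closed interval remain in that interval.
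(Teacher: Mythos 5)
Your proposal is correct and follows essentially the same route as the paper: the corollary is indeed just the two estimates \eqref{harmonic:3-bound for f(x)-f(y)} and \eqref{harmonic:3-bound for f(x)} for ``accessible'' points, extended to all of $\partial Q_i$ by density of ``accessible'' points and the liminf definition of $f$ at ``non-accessible'' ones. Your remark that the oscillation bound should come from \eqref{harmonic:3-bound for f(x)-f(y)} directly (rather than a triangle inequality through $\hat f(Q_i)$, which would lose a factor of $2$) is exactly the right reading of the paper's argument.
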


Before proceeding we define:

\begin{definition}\label{harmonic:3-Upper gradient Definition}
Let $g\colon S\to \widehat \R$ be an extended function. We say that $\{\rho(Q_i)\}_{i\in \N}$ is a \textit{weak (strong) upper gradient}\index{upper gradient}\index{upper gradient!weak upper gradient}\index{upper gradient!strong upper gradient} for $g$ if there exists an exceptional family $\Gamma_0$ of paths in $\Omega$ with $\md_w (\Gamma_0)=0$ ($\md_s(\Gamma_0)=0$) such that for all paths $\gamma\subset \Omega$ with $\gamma\notin \Gamma_0$ and all points $x,y\in \gamma\cap S$ we have  $g(x),g(y)\neq \pm\infty$ and
\begin{align*}
|g(x)-g(y)|\leq \sum_{i: Q_i\cap \gamma\neq \emptyset} \rho(Q_i).
\end{align*}
\end{definition}

The map $f$ that we constructed inherits the properties of $\hat f$ in the weak (strong) Sobolev space definition. Namely:

\begin{prop}\label{harmonic:3-f is Sobolev}
The sequence $\{\osc_{Q_i}(f)\}_{i\in \N}$ is a weak (strong) upper gradient for $f$. Moreover, for every ball $B\subset \subset \Omega$ we have
\begin{align*}
&\sum_{i\in I_B} M_{Q_i}(f)^2 \diam(Q_i)^2 <\infty, \textrm{ and }\\
&\sum_{i\in I_B}\osc_{Q_i}(f)^2<\infty.
\end{align*}
\end{prop}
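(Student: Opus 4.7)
The plan is to prove the upper gradient inequality first and then the two $\ell^2$ summability conditions.

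For the upper gradient inequality, fix a good path $\gamma \in \mathcal G$, parameterized as $\gamma\colon I \to \Omega$, and two points $x, y \in \gamma\cap S$ that are ``accessible'' by $\gamma$. The key structural observation is that the condition $\mathcal H^1(\gamma \cap S) = 0$ forces every connected subset of $\gamma \cap S$ to be a single point (any connected planar set of vanishing $1$-Hausdorff measure is trivial by projection onto a line). Setting $K = \gamma^{-1}(S)$, the path $\gamma$ is therefore constant on every connected component of $K$, while the complement decomposes as a countable disjoint union of open intervals $(a_k, b_k)$, $k \in J$, on each of which $\gamma$ lies in a single peripheral disk $Q_{i_k}$ with entry/exit points $\gamma(a_k), \gamma(b_k) \in \partial Q_{i_k}$.

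For any finite subcollection $J' \subset J$, the corresponding excursions split the segment of $\gamma$ from $x$ to $y$ into finitely many alternating pieces: ``large'' excursions into $\{Q_{i_k}: k \in J'\}$ and ``bridge'' pieces containing only smaller excursions and constant-in-$K$ components. The change of $f$ across each large excursion is at most $\osc_{Q_{i_k}}(f)$, since both endpoints lie on $\partial Q_{i_k}$. Across each bridge piece I would estimate the change of $f$ by the weaker auxiliary inequality $|f(p) - f(q)| \leq \sum_{Q_i\cap \gamma'\neq \emptyset} \rho(Q_i)$, obtained from the upper gradient inequality for $\hat f$ applied to the bridge subpath $\gamma'$ after choosing peripheral disks $Q_{i_p}, Q_{i_q}$ near the bridge endpoints along subsequences realizing the liminf definition of $f$, and invoking $|f-\hat f|\leq \rho$ on peripheral circles (Corollary \ref{harmonic:3-Finiteness of f}). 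Telescoping the large excursions and letting $J'$ exhaust $J$, the bridge contributions form tail sums of $\sum_{i:Q_i\cap \gamma\neq \emptyset}\rho(Q_i)<\infty$ (property (3) of $\mathcal G$) and therefore vanish, yielding
\begin{align*}
|f(x)-f(y)| \leq \sum_{k\in J}\osc_{Q_{i_k}}(f) = \sum_{i:Q_i\cap \gamma\neq \emptyset}\osc_{Q_i}(f).
\end{align*}
Non-accessible points $x \in \partial Q_{i_0}\cap \gamma\cap S$ are handled by approximating $x$ through accessible points $y_n \in \partial Q_{i_0}$, which are dense by Lemma \ref{harmonic:3-curves gamma_r}(a), and invoking the liminf definition $f(x)=\liminf f(y_n)$ together with the already-established inequality at accessible points.

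The two summability conditions then follow from Corollary \ref{harmonic:3-Finiteness of f} combined with the Sobolev hypotheses on $\hat f$: $\osc_{Q_i}(f)\leq \rho(Q_i)$ gives $\sum_{i\in I_B}\osc_{Q_i}(f)^2 \leq \sum_{i\in I_B}\rho(Q_i)^2<\infty$ immediately from \eqref{harmonic:3-Sobolev L2 gradient}, while $|M_{Q_i}(f)-\hat f(Q_i)|\leq \rho(Q_i)$ gives $M_{Q_i}(f)^2 \leq 2\hat f(Q_i)^2 + 2\rho(Q_i)^2$, so
\begin{align*}
\sum_{i\in I_B}M_{Q_i}(f)^2\diam(Q_i)^2 \leq 2\sum_{i\in I_B}\hat f(Q_i)^2\diam(Q_i)^2 + 2\sum_{i\in I_B}\rho(Q_i)^2\diam(Q_i)^2.
\end{align*}
The first sum is finite by \eqref{harmonic:3-Sobolev L2}; in the second, Lemma \ref{harmonic:Fatness consequence} guarantees that only finitely many $i\in I_B$ satisfy $\diam(Q_i)\geq 1$, so $\diam(Q_i)^2$ is uniformly bounded on $I_B$ and the sum is controlled by a constant multiple of $\sum_{i\in I_B}\rho(Q_i)^2 <\infty$. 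The main obstacle is organizing the telescoping in the upper gradient argument when countably many excursions accumulate; the bridge-and-tail approach above circumvents the need for a separate continuity proof for $f\circ \gamma$ on $K$ by absorbing bridge variations into vanishing tails of $\sum \rho(Q_i)$.
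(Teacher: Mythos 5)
Your treatment of the two summability conditions is correct and matches the paper's (Corollary \ref{harmonic:3-Finiteness of f} plus boundedness of $\diam(Q_i)$ on $I_B$), and your overall strategy for the upper gradient inequality — excursions into peripheral disks bounded by oscillations, the rest absorbed into a small tail of $\sum\rho(Q_i)$ — is the right one. But the argument has a genuine gap at its combinatorial core: nothing prevents the path from entering the \emph{same} peripheral disk through several disjoint parameter intervals. Your index set $J$ enumerates excursions, not peripheral disks, so the map $k\mapsto i_k$ need not be injective; consequently $\sum_{k\in J}\osc_{Q_{i_k}}(f)$ counts $\osc_{Q_i}(f)$ with multiplicity equal to the number of visits, and your final claimed equality $\sum_{k\in J}\osc_{Q_{i_k}}(f)=\sum_{i:Q_i\cap\gamma\neq\emptyset}\osc_{Q_i}(f)$ is false in general. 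Since the definition of an upper gradient requires each peripheral disk to be counted \emph{once}, the multiplicity-counted bound does not establish the inequality. The same defect infects the bridges: two different bridge subpaths can both intersect a peripheral disk $Q_i$ with $i\notin J'$, so the sum of your bridge estimates is not a tail of $\sum_{i}\rho(Q_i)$ but a sum with uncontrolled multiplicities, and it need not vanish as $J'$ exhausts $J$.

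This is precisely the difficulty the paper's Lemma \ref{harmonic:3-Split path} is built to overcome: by an iterative truncation (first-entry/last-exit points for the disks in $J$, followed by a pairwise ``testing and truncating'' algorithm) it produces finitely many subpaths $\gamma_1,\dots,\gamma_m$ that intersect \emph{pairwise disjoint} sets of peripheral disks, avoid all $Q_j$, $j\in J$, and are linked through \emph{distinct} peripheral disks $Q_{j_i}$ — and only then does the telescoping yield a sum in which every disk appears at most once. That truncation is the technical heart of the proposition and is absent from your proposal; it is not a routine repair, since the intervals between first entry and last exit for different disks can overlap and nest. Separately, your handling of non-accessible endpoints via density of accessible points on $\partial Q_{i_0}$ is both off target (the approximating points need not lie on $\gamma$, so your established inequality does not apply to them) and unnecessary: for $x,x'\in\partial Q_{i_x}$ one has $|f(x)-f(x')|\le\osc_{Q_{i_x}}(f)$ with no accessibility needed, which is how the paper reduces to the last exit point.
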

\begin{proof}
Note that the latter two claims follow immediately from  Corollary \ref{harmonic:3-Finiteness of f}, if we observe that $\diam(Q_i)$ is bounded for $i\in I_B$, where $B$ is a ball compactly contained in $\Omega$. The latter is incorporated in the definition of a relative Sierpi\'nski carpet; see Section \ref{harmonic:1-Basic Assumptions}.

For the first claim, we will show that for every $\gamma \in \mathcal G$ and  points $x,y\in \gamma\cap S$ we have
\begin{align*}
|f(x)-f(y)|\leq \sum_{i:Q_i\cap \gamma\neq \emptyset } \osc_{Q_i}(f).
\end{align*}
Fix a good path $\gamma\in \mathcal G$ that has $x,y$ as its endpoints, and assume it is parametrized so that it travels from $x$ to $y$. If $x \in \partial Q_{i_x}$, let $x'\in \partial Q_{i_x}$ be the point of last exit of $\gamma$ from $Q_{i_x}$. Similarly, consider the point $y' \in \partial Q_{i_y}$ of first entry of $\gamma$ in $Q_{i_y}$ (after $x'$), in the case $y\in \partial Q_{i_y}$. Observe that $x'$ and $y'$ would be ``accessible" points by $\gamma$. Since $|f(x)-f(x')|\leq \osc_{Q_{i_x}}(f)\leq \rho(Q_{i_x})$ and $|f(y)-f(y')|\leq \osc_{Q_{i_y}}(f)\leq \rho(Q_{i_y})$, it suffices to prove the statement for $x',y'$ and the subpath of $\gamma$ connecting them (as described), instead. 

In particular, we assume that $\gamma\in \mathcal G$ is a good path that connects $x,y$ but it does not hit $\partial Q_{i_x}$ or $\partial Q_{i_y}$ in case $x\in \partial Q_{i_x}$ or $y\in \partial Q_{i_y}$. Hence, the path $\gamma$ can be used to define both $f(x)$ and $f(y)$ by \eqref{harmonic:3-definition of f}. If $Q_{i_1},Q_{i_2}$ are peripheral disks intersected by $\gamma$ close to $x,y$, respectively, then by the upper gradient inequality for $\hat f$ we have
\begin{align*}
|\hat f(Q_{i_1})-\hat f(Q_{i_2})| \leq \sum_{i: Q_{i}\cap \gamma\neq \emptyset} \rho(Q_i).
\end{align*} 
Taking limits as $Q_{i_1}\to x$ and $Q_{i_2}\to y$ along $\gamma$ (which also shows that $f(x),f(y)\neq \pm\infty$), we have
\begin{align}\label{harmonic:3-intermediate upper gradient}
|f(x)-f(y)|\leq \sum_{i:Q_i\cap \gamma\neq \emptyset} \rho(Q_i).
\end{align}
By our preceding remarks, this also holds if $\gamma$ intersects the peripheral disks that possibly contain $x$ and $y$ in their boundary. However, we would like to prove this statement with $\osc_{Q_i}(f)$ in the place of $\rho(Q_i)$. Before doing so, we need the next topological lemma that we prove later.

\begin{lemma}\label{harmonic:3-Split path}
Let $\gamma\subset \Omega$ be a path in $\mathcal G$, and let $J\subset \N$ be a finite index set. Assume that $\gamma$ has endpoints $x,y\in S$, but $\gamma$ does not intersect the peripheral disks that possibly contain $x$ or $y$ on their boundary. Then there exist finitely many  subpaths $\gamma_1,\dots,\gamma_m$ of $\gamma$ having endpoints in $S$ with the following properties:
\begin{enumerate}[\upshape(i)]
\item $\gamma_i\in \mathcal G$ for all $i\in \{1,\dots,m\}$,
\item $\gamma_i$ intersects only peripheral disks that are intersected by $\gamma$, for all $i\in \{1,\dots,m\}$,
\item $\gamma_i$ and $\gamma_j$ intersect disjoint sets of peripheral disks for $i\neq j$,
\item $\gamma_i$ does not intersect peripheral disks $ Q_j$, $j\in J$, for all $i\in \{1,\dots,m\}$,
\item $\gamma_1$ starts at $x_1=x$, $\gamma_m$ terminates at $y_m=y$, and in general the path $\gamma_i$ starts at $x_i$ and terminates at $y_i$ such that for each $i\in \{1,\dots,m-1\}$ we either have
\begin{itemize}
\item $y_i=x_{i+1}$, i.e., $\gamma_i$ and $\gamma_{i+1}$ have a common endpoint, or
\item $y_i,x_{i+1}\in \partial Q_{j_i}$ for some $j_i\in \N$, i.e., $\gamma_i$ and $\gamma_{i+1}$ have an endpoint on some peripheral circle $\partial Q_{j_i}$. 
\end{itemize}
The peripheral disks $Q_{j_i}$ that arise from the second case are distinct and they are all intersected by the original curve $\gamma$.
\end{enumerate}
\end{lemma}
Note that properties (i) and (ii) hold automatically for subpaths of $\gamma$, so (iii),(iv), and (v) are the most crucial properties.

Since $\gamma\in \mathcal G$ and it is compactly contained in $\Omega$, we have $\sum_{i:Q_i\cap \gamma\neq \emptyset} \rho(Q_i)<\infty$ so for fixed $\varepsilon>0$ there exists a finite index set $J\subset \N$ such that 
\begin{align}\label{harmonic:3-J epsilon bound}
\sum_{\substack{i:Q_i\cap \gamma\neq \emptyset \\ i\in \N\setminus J }} \rho(Q_i)<\varepsilon.
\end{align}

We consider curves $\gamma_1,\dots,\gamma_m$ as in the lemma with their endpoints, as denoted in the lemma. If the points $y_{k},x_{k+1}$ lie on the same peripheral circle $\partial Q_{j_k}$, we have $|f(y_{k})-f(x_{k+1})|\leq \osc_{Q_{j_{k}}} (f)$. Otherwise, by the first alternative in (v), we have $y_k=x_{k+1}$, so $|f(y_{k})-f(x_{k+1})|=0$. Also, note that the curves $\gamma_k$ lie in $\mathcal G$ (by (i)) and \eqref{harmonic:3-intermediate upper gradient} holds for their endpoints.  Therefore,
\begin{align*}
|f(x)-f(y)|& \leq \sum_{k=1}^m |f(y_k)-f(x_k)|+ \sum_{k=1}^{m-1} |f(y_{k})-f(x_{k+1})|\\
&\leq \sum_{k=1}^m \sum_{i: Q_{i}\cap \gamma_k\neq \emptyset} \rho(Q_i) + \sum_{j_k} \osc_{Q_{j_{k}}}(f).
\end{align*}
Using (iii) and (iv), we see that the curves $\gamma_k$ intersect disjoint sets of peripheral disks $Q_j$, $j\notin J$, which are all intersected by $\gamma$ (property (ii)). Thus, the first term can be bounded by the expression in \eqref{harmonic:3-J epsilon bound}, and hence by $\varepsilon$. The second term is just bounded by the full sum of $\osc_{Q_i}(f)$ over $\gamma$ (since the peripheral disks $Q_{j_k}$ are distinct and $\gamma$ intersects them by (v)), hence we obtain
\begin{align*}
|f(x)-f(y)|\leq \varepsilon + \sum_{i:Q_i\cap \gamma\neq \emptyset}\osc_{Q_{i}}(f).
\end{align*}
Letting $\varepsilon\to 0$ yields the result.
\end{proof}

Now, we move to the proof of Lemma \ref{harmonic:3-Split path}.

\begin{proof}[Proof of Lemma \ref{harmonic:3-Split path}]
By ignoring some indices of $J$, we may assume that $Q_j\cap\gamma\neq \emptyset$ for all $j\in J$. The idea is to consider subpaths of $\gamma$ joining peripheral circles $\partial Q_j$, $j\in J$, without intersecting $Q_{j}$, $j\in J$, and truncate them whenever they intersect some common peripheral disk.

More precisely, we assume that $\gamma$ is parametrized as it runs from $x$ to $y$ we let $\tilde \gamma_1$ be the subpath of $\gamma$ from $x=x_1$ until the first entry point of $\gamma$ into the first peripheral disk $Q_{i_1}$, $i_1\in J$, that $\gamma$ meets, among the peripheral disks $Q_j$, $j\in J$. We let $x_2\in \partial Q_{i_1}$ be the point of last exit of $\gamma$ from $\partial Q_{i_1}$ as it travels towards $y$. Now, we repeat the procedure with $x_1$ replaced by $x_2$, $\gamma$ replaced by the subpath of $\gamma$ from $x_2$ to $y$, and $J$ replaced by $J\setminus \{i_1\}$. The procedure will terminate in the $m$-th step if the subpath of $\gamma$ from $x_m$ to $y$ does not intersect any  peripheral disk $Q_j$, $j\in J\setminus \{i_1,\dots,i_{m-1}\}$. This will be the path $\tilde \gamma_m$. Note that the indices $i_1,\dots,i_{m-1}$ are distinct.

By construction, the paths $\tilde \gamma_1,\dots,\tilde \gamma_m$ do not intersect peripheral disks $Q_j$, $j\in J$, but they might still intersect common peripheral disks. Thus, we might need to truncate some of them in order to obtain paths with the desired properties. We do this using the following general claim that we prove later:

\begin{claim}
There exist closed paths $\gamma_1',\dots,\gamma_{m'}'$, where $m'\leq m$, such that $\gamma'_i$ is a subpath of some $\tilde \gamma_j$ and the following hold:
\begin{enumerate}[\upshape(a)]
\item $\gamma_1'$ starts at the starting point $x_1=x$ of $\gamma_1$, $\gamma_{m'}'$ terminates at the terminating point $y_m=y$ of $\gamma_m$, and 
\item for each $i\in \{1,\dots,m'-1\}$ the path $\gamma_i'$ starts at $x_i'$ and terminates at $y_i'$ such that we either have
\begin{enumerate}[\upshape(b1)]
\item $y_i'=x_{i+1}'$, i.e., $\gamma_i'$ and $\gamma_{i+1}'$ have a common endpoint, or
\item there exists $j_i\in \{1,\dots,m-1\}$ such that $y_i'=y_{j_i}$ and $x_{i+1}'=x_{j_i+1}$, i.e., $\gamma_i'$ and $\gamma_{i+1}'$ have a common endpoint with $\tilde \gamma_{j_i}$ and $\tilde \gamma_{j_i+1}$, respectively.  
\end{enumerate}
The indices $j_i$ arising from the second case are distinct. 
\end{enumerate}
Moreover, the closed paths $ {\gamma_{i}'}$ are disjoint, with the exceptions of some constant paths and of the case (b1) in which ``consecutive" paths can share only one endpoint.
\end{claim}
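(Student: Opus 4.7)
The plan is to build the paths $\gamma_1', \gamma_2', \ldots$ by a greedy algorithm that traverses the $\tilde\gamma_a$'s in index order, going as far along each one as possible before switching to a later one either at a crossing point or through one of the available gap-jumps. Concretely, I maintain at stage $i$ an index $a_i \in \{1, \ldots, m\}$ and a current position $x_i' \in \tilde\gamma_{a_i}$, starting from $a_1 = 1$ and $x_1' = x$. I let $\sigma_i$ denote the subpath of $\tilde\gamma_{a_i}$ from $x_i'$ to its endpoint $y_{a_i}$, and examine the finite set $B_i = \{b > a_i : \tilde\gamma_b \cap \sigma_i \neq \emptyset\}$.

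If $B_i = \emptyset$ (Case A), I set $\gamma_i' = \sigma_i$; provided $a_i < m$ this produces a (b2) transition with $j_i = a_i$, $a_{i+1} = a_i + 1$, $x_{i+1}' = x_{a_i + 1}$, and if $a_i = m$ the algorithm terminates. If $B_i \neq \emptyset$ (Case B), I let $y_i'$ be the \emph{first} point of $\sigma_i$ (in the parametrization of $\tilde\gamma_{a_i}$) that lies in $\bigcup_{b \in B_i} \tilde\gamma_b$, which exists by compactness of that finite union, and let $b_i^*$ be the \emph{largest} $b \in B_i$ with $y_i' \in \tilde\gamma_b$; I take $\gamma_i'$ to be the portion of $\tilde\gamma_{a_i}$ from $x_i'$ to $y_i'$ (possibly a constant path if $y_i' = x_i'$) and produce a (b1) transition by setting $a_{i+1} = b_i^*$, $x_{i+1}' = y_i'$. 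Because $a_i$ strictly increases and is bounded by $m$, the process stops after some $m' \leq m$ steps, necessarily in Case A with $a_{m'} = m$, so $\gamma_{m'}'$ ends at $y_m = y$. Property (a) and the (b1)/(b2) structure of (b) read off the construction, and the indices $j_i$ are distinct because they equal the strictly increasing values $a_i$.

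The only real point to verify is the disjointness clause, and this is precisely what dictates the two extremal choices made above. For $i < j$, any $z \in \gamma_i' \cap \gamma_j'$ lies in $\tilde\gamma_{a_i} \cap \tilde\gamma_{a_j}$, which rules out Case A at step $i$ (otherwise $a_j$ would belong to $B_i$, contradicting $B_i = \emptyset$). So step $i$ was Case B, and the minimality in the choice of $y_i'$ forces $z = y_i'$; the maximality in the choice of $b_i^* = a_{i+1}$ combined with strict monotonicity of the $a$'s then forces $a_j = a_{i+1}$, i.e., $j = i+1$. Hence the only intersections between distinct $\gamma_i'$'s occur at the shared endpoint $y_i' = x_{i+1}'$ of consecutive paths produced by a (b1) transition, and the degenerate subcase where $\gamma_i'$ collapses to the single point $y_i'$ is exactly the ``constant paths'' exception in the claim. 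I do not expect any step to pose a genuine obstacle once these two extremal choices are in place; the remainder is bookkeeping.
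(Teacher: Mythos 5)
Your proof is correct and is essentially the paper's argument: a greedy truncation scheme in which the current path is cut at a first-hitting point of the later paths and the jump target is chosen so that first-hitting minimality plus an index-maximality rule force all intersections to occur only at the shared endpoints of consecutive pieces. The only difference is cosmetic — the paper (which only writes out $m=3$) tests the later paths $\tilde\gamma_b$ in decreasing order of $b$ and jumps to the highest-indexed one that intersects, whereas you truncate at the earliest point of $\sigma_i$ meeting $\bigcup_{b\in B_i}\tilde\gamma_b$ and then take the largest admissible index through that point; both rules support the same disjointness verification, and your formulation has the advantage of handling general $m$ with a clean termination argument.
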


Essentially, the conclusion of the claim is that we can obtain a family of disjoint (except at their endpoints) subpaths of $\tilde \gamma_1, \dots,\tilde \gamma_m$ that have the same properties as $\tilde \gamma_1,\dots,\tilde \gamma_m$.

Note that the paths $\gamma_i'$ do not intersect peripheral disks $Q_j$, $j\in J$, since they are subpaths of paths $\tilde \gamma_j$. Also, whenever (b2) occurs, the paths $\gamma_i'$ and $\gamma_{i+1}'$ have an endpoint on some peripheral circle $\partial Q_{j_i}$, $j_i\in J$, and the indices $j_i$ are distinct, by the construction of the curves $\tilde \gamma_i$. We discard the constant paths from the collection $\gamma_i'$. Then, after re-enumerating, we still have the preceding statement. It only remains to shrink the paths $\gamma_i'$ such that property (iii) in the statement of Lemma \ref{harmonic:3-Split path} holds. For simplicity we  denote the endpoints of the paths $\gamma_i'$ by $x_i$ and $y_i$. By replacing $ \gamma_i'$ with a subpath, we may assume that $ \gamma_i'$ does not return to $x_i$ or $y_i$ twice; in other words, if we parametrize $ \gamma_i':[0,1]\to \C$, then $ \gamma_i'(t) \neq x_i,y_i$ for all $t\in (0,1)$.

By the definition of a relative Sierpi\'nski carpet, the peripheral disks staying in a compact subset of $\Omega$ have diameters shrinking to zero. Hence, any point of ${ \gamma_i'}$ that has positive distance from a curve $ \gamma_j'$, $j\neq i$, has an open  neighborhood with the property that it only intersects finitely many peripheral disks among the peripheral disks that intersect both $ \gamma_i'$ and $ \gamma_j'$.

This observation implies that if we have two paths $\gamma_i'$ and $\gamma_j'$, $j> i$, that intersect some common peripheral disks, then we can talk about the ``first" such peripheral disk $Q_{i_0}$ that $\gamma_i'$ meets as in travels from $x_i$ to $y_i$. It is crucial here that $ {\gamma_i'}$ is disjoint from $ { \gamma_j'}$, except possibly for the endpoints $y_i$ and $x_j$ which could agree if $j=i+1$ and we are in the case (a) of the Claim; in particular, $x_i$ has positive distance from $ \gamma_j'$ and $y_j$ has positive distance from $ \gamma_i'$.

Another important observation from our Claim is that for each $i$, each of the endpoints of $\gamma_i'$ is either an endpoint of some $\tilde \gamma_{j_i}$, and therefore lies on  the boundary of some peripheral disk $Q_{j_i}$ that is intersected by $\gamma$, or it is a common endpoint of $\gamma_i'$ and $\gamma_{i+1}'$. In the latter case, if this endpoint does not lie in $S$, then it lies in some peripheral disk $Q_{i_0}$ that is intersected by both curves $\gamma_i'$ and $\gamma_{i+1}'$. The truncating procedure explained below  ensures that the paths $\gamma_i'$ are truncated suitably so that their endpoints lie in $S$, as required in the statement of Lemma \ref{harmonic:3-Split path}.

We now explain the algorithm that will yield the desired paths. We first test if $ \gamma_1'$ intersects  some common peripheral disk with $ \gamma_{m'}'$. If this is the case, then we consider the ``first" such peripheral disk $Q_{i_0}$, and truncate the paths, so that the two resulting paths, denoted by  $ \gamma_1$ and $\gamma_2$, have an endpoint on $\partial Q_{i_0}$, but otherwise intersect disjoint sets of peripheral disks and $\gamma_1$ does not intersect $Q_{i_0}$. Then the statement of the lemma is proved with $m=2$. The truncation is done in such a way that the left endpoint of $ \gamma_1'$ is the left endpoint of $\gamma_1$ and the right endpoint of $\gamma_{m'}'$ is the right endpoint of $\gamma_2$. Note here that subpaths of paths in $\mathcal G$ are also in $\mathcal G$.

If the above does not occur, then we test $ \gamma_1'$ against $ \gamma_{m-1}'$. If they intersect some common peripheral disk, then we truncate them as above to obtain paths $\gamma_1$ and $\gamma_2$. Then we test $\gamma_2$ against $ \gamma_m'$ in the same way. 

If the procedure does not stop we keep testing $\gamma_1'$ against all paths, up to $ \gamma_2'$. If the procedure still does not stop, we set $\gamma_1= \gamma_1'$, and we start testing $\gamma_2'$ against the other paths $ \gamma_i'$, $i=m,m-1,\dots,3$, etc.

To finish the proof, one has to observe that the implemented truncation does not destroy the properties (i),(ii),(iv), and (v) in the statement of Lemma \ref{harmonic:3-Split path} with were true for the paths $ \gamma_i'$. In particular, note that the peripheral disks in the second case of (v) have to be distinct by our algorithm.
\end{proof}

\begin{proof}[Proof of Claim]
Suppose we are given closed paths $\tilde \gamma_1,\dots,\tilde \gamma_m$ in the plane such that $\tilde \gamma_i$ starts at $x_i$ and terminates at $y_i$, with $x_1=x$ and $y_m=y$. The algorithm that we will use is very similar to the one used in the preceding proof of Lemma \ref{harmonic:3-Split path}.

To illustrate the algorithm we assume that $m=3$. We now check whether $\tilde \gamma_3$ intersects $\tilde \gamma_1$ or not. If $\tilde \gamma_3$ intersects $\tilde \gamma_1$, then we consider the first point $y_1' \in \tilde \gamma_3$ that $\tilde \gamma_1$ meets as it travels from $x=x_1$ to $y_1$. We call $\gamma_1'$ the subpath of $\tilde \gamma_1$ from $x_1$ to $y_1'$ (it could be that $\gamma_1'$ is a constant path if $x_1\in \tilde \gamma_3$). Then, we let $\gamma_2'$ be the subpath of $\tilde \gamma_3$ from $y_2'\coloneqq y_3=y$ to $x_2'\coloneqq y_1'$ (assuming that $\tilde \gamma_3$ is parametrized to travel from $y_3$ to $x_3$). The paths $\gamma_1'$ and $\gamma_2'$ share an endpoint but otherwise are disjoint, and they are the desired paths. Note that the alternative (b1) holds here.

If $\tilde \gamma_3$ does not intersect $\tilde \gamma_1$, then we check whether $\tilde \gamma_2$ intersects $\tilde \gamma_1$. If not, we set $\gamma_1'=\tilde \gamma_1$ and we also test if $\tilde \gamma_2$ intersects $\tilde \gamma_3$. Again, if this is not the case, then all three paths are disjoint, and thus we may set $\gamma_i'=\tilde \gamma_i$ for $i=2,3$ and the alternative (b2) holds. If $\tilde \gamma_2$ does intersect $\tilde \gamma_3$, we truncate $\tilde \gamma_2$ and $\tilde \gamma_3$ with the procedure we described in the previous paragraph, to obtain paths $\gamma_2'$ and $\gamma_3'$. This procedure keeps the left endpoint of $\tilde \gamma_2$ as a left endpoint of $\gamma_2'$ so the alternative (b2) holds between the paths $\gamma_1'$ and $\gamma_2'$. For the paths $\gamma_2'$ and $\gamma_3'$ we have the alternative (b1).

Now, suppose that $\tilde \gamma_2$ intersects $\tilde \gamma_1$. We perform the truncating procedure as before, to obtain paths $\gamma_1'$ and $\gamma_2'$ that share an endpoint so the alternative (b1) holds here. The truncating procedure keeps the right endpoint of $\tilde \gamma_2$. Then we also need to test $\gamma_2'$ against $\tilde \gamma_3$. If $\gamma_2'$ does not intersect $\tilde \gamma_3$, then we set $\gamma_3'=\tilde \gamma_3$ and (b2) holds between $\gamma_2'$ and $\gamma_3'$. Finally, if $\gamma_2'$ intersects $\tilde \gamma_3$ then we truncate them and and the alternative (b1) holds.
 
This completes all possible cases, to obtain the desired paths that are disjoint, with the exceptions of some constant paths and of ``consecutive" paths having a common endpoint. 
\end{proof}

\begin{remark}\label{harmonic:remark:split path}
The proof of Lemma \ref{harmonic:3-Split path} also yields the following variant of Lemma \ref{harmonic:3-Split path}:

Let $J\subset \N$ be a finite index set and $\gamma\subset \Omega$ be an \textit{open} path with endpoints $x,y\in \br S$ such that $\gamma$ does not intersect the \textit{closures} of the peripheral disks that possibly contain $x$ or $y$ on their boundary. Then there exist finitely many \textit{open} subpaths $\gamma_1,\dots,\gamma_m$ of $\gamma$ having endpoints in $\br S$ with the following properties:
\begin{enumerate}
\item[(iii)$'$] $\gamma_i$ and $\gamma_j$ intersect disjoint sets of \textit{closed} peripheral disks for $i\neq j$,
\item[(iv)$'$] $\gamma_i$ does not intersect peripheral disks $\br Q_j$, $j\in J$, for all $i\in \{1,\dots,m\}$,
\item[(v)$'$] $\gamma_1$ starts at $x_1=x$, $\gamma_m$ terminates at $y_m=y$, and in general the path $\gamma_i$ starts at $x_i$ and terminates at $y_i$ such that for each $i\in \{1,\dots,m-1\}$ we either have
\begin{itemize}
\item $y_i=x_{i+1}$, i.e., $\gamma_i$ and $\gamma_{i+1}$ have a common endpoint, or
\item $y_i,x_{i+1}\in \partial Q_{j_i}$ for some $j_i\in \N$, i.e., $\gamma_i$ and $\gamma_{i+1}$ have an endpoint on some peripheral circle $\partial Q_{j_i}$. 
\end{itemize}
The peripheral disks $\br Q_{j_i}$ that arise from the second case are distinct and they are all intersected by the original curve $\gamma$.
\end{enumerate} 
\end{remark}

Summarizing, in this section, we used a discrete Sobolev function $\hat f$ to construct a function $f\colon S \to \widehat \R$, defined on the entire carpet $S$, that also satisfies an upper gradient inequality.

\subsection{Sobolev spaces}\label{harmonic:Subsection Sobolev spaces}
Now, we proceed with the definition of the actual Sobolev spaces we will be using. Recall the Definition \ref{harmonic:3-Upper gradient Definition} of an upper gradient and also the definition of $M_{Q_i}$ from Corollary \ref{harmonic:3-Finiteness of f}.

\begin{definition}\label{harmonic:3-Def Sobolev space}
Let $g\colon S \to \widehat\R$ be an extended function. We say that $g$ lies in the \textit{local weak (strong) Sobolev space}\index{Sobolev space} $\mathcal W_{w,\loc}^{1,2}(S)$ $(\mathcal W_{s,\loc}^{1,2}(S))$ if for every ball $B\subset \subset  \Omega$ we have
\begin{align}
\label{harmonic:3-Def Sobolev spaces L2}&\sum_{i\in I_B} M_{Q_i}(g)^2\diam(Q_i)^2<\infty, \\
\label{harmonic:3-Def Sobolev spaces L2gradient}&\sum_{i\in I_B} \osc_{Q_i}(g)^2<\infty,
\end{align}
and $\{\osc_{Q_i}(g)\}_{i\in \N}$ is a weak (strong) upper gradient for $g$. Furthermore, if the above conditions hold for the full sums over $i\in \N$, then we say that $f$ lies in the \textit{weak (strong) Sobolev space} $\mathcal W_{w}^{1,2}(S)$ $ (\mathcal W_{s}^{1,2}(S))$.
\end{definition}
Note that part of the definition is that $|M_{Q_i}(g)|<\infty$ and $\osc_{Q_i}(g)<\infty$ for every peripheral disk $Q_i$, $i\in \N$. Also, each such function $g$ comes with a family of good paths $\mathcal G_g$ defined as in the previous section, with the following properties: $\mathcal H^1(\gamma\cap S)=0$, the upper gradient inequality for $g$ holds along all subpaths of $\gamma$, and $\sum_{i:Q_i\cap \gamma'\neq \emptyset}\osc_{Q_i}(g)<\infty$ for subpaths $\gamma'$ of $\gamma$ compactly contained in $\Omega$. Besides, in case $\sum_{i\in \N}\osc_{Q_i}(g)^2<\infty$ we require that $\sum_{i:Q_i\cap \gamma\neq \emptyset} \osc_{Q_i}(\gamma)<\infty$ for all $\gamma\in \mathcal G_g$.  

\begin{remark}\label{harmonic:3-Definition Remark}
Observe that \eqref{harmonic:3-Def Sobolev spaces L2} is equivalent to 
$$\sum_{i\in I_B} M_{Q_i}(|g|)^2\diam(Q_i)^2<\infty$$
if we assume \eqref{harmonic:3-Def Sobolev spaces L2gradient}. This is because $M_{Q_i}(|g|)\leq |M_{Q_i}(g)|+\osc_{Q_i}(g)$ and 
$$\sum_{i\in I_B} \osc_{Q_i}(g)^2\diam(Q_i)^2 <\infty.$$
\end{remark}

\begin{remark}\label{harmonic:3-Definition Remark rho-osc}
The proof of Proposition \ref{harmonic:3-f is Sobolev} shows that if there exists a locally square-summable sequence $\{\rho(Q_i)\}_{i\in \N}$ that is a weak (strong) upper gradient for $g$, then $\{\osc_{Q_i}(g)\}_{i\in \N}$ is also a weak (strong) upper gradient for $g$.
\end{remark}

\begin{remark}\label{harmonic:3-Weak contains strong}
As for the discrete Sobolev spaces we also have here that $\mathcal W^{1,2}_{s,\loc} (S)\subset \mathcal W^{1,2}_{w,\loc}(S)$; see \eqref{harmonic:Inclusion of hat Sobolev spaces}.
\end{remark}

Our discussion in Section \ref{harmonic:Subsection-Discrete Sobolev} shows that each function $\hat f\in \widehat{\mathcal W}_{w,\loc}^{1,2}(S)$ $(\widehat{\mathcal  W}_{s,\loc}^{1,2}(S))$ yields a  function $f\in \mathcal  W_{w,\loc}^{1,2}(S)$ $(\mathcal  W_{s,\loc}^{1,2}(S))$ in a canonical way. Conversely, for any $g\in \mathcal  W_{w,\loc}^{1,2}(S)$ $(\mathcal  W_{s,\loc}^{1,2}(S))$ one can construct a discrete function $\hat g\colon \{Q_{i}\}_{i\in \N } \to \R$ by setting $\hat g(Q_i) \coloneqq M_{Q_i}(g)$ for $i\in \N$. It is easy to check that $\hat g$ inherits the upper gradient inequality of $g$. Indeed, for a non-exceptional curve $\gamma\in \mathcal G_g$ that intersects $Q_{i_1},Q_{i_2}$, let $x\in \partial Q_{i_1}$ be the point of last exit of $\gamma$ from $Q_{i_1}$, and $y\in \partial Q_{i_2}$ be the point of first entry of $\gamma$ in $Q_{i_2}$. Then the subpath $\gamma'$ of $\gamma$ from $x$ to $y$ does not intersect $Q_{i_1}$ or $Q_{i_2}$, so
\begin{align}\label{harmonic:3-Oscillation of g hat}
\begin{aligned}
|\hat g(Q_{i_1})-\hat g (Q_{i_2})| &\leq |M_{Q_{i_1}}(g)-g(x)|+|g(x)-g(y)|+|g(y)-M_{Q_{i_2}}(g)|\\
 &\leq \osc_{Q_{i_1}}(g) +\sum_{i:Q_i\cap \gamma'\neq \emptyset}\osc_{Q_i}(g)+ \osc_{Q_{i_2}}(g)\\
 &\leq \sum_{i:Q_i\cap \gamma\neq \emptyset}\osc_{Q_i}(g).
\end{aligned}
\end{align}
Thus, $\hat g\in \widehat{\mathcal W}_{w,\loc}^{1,2}(S)$ $(\widehat{\mathcal W}_{s,\loc}^{1,2}(S))$ with upper gradient $\{\osc_{Q_i}(g)\}_{i\in \N}$, and there is a corresponding family of good paths $\mathcal G_{\hat g} \supset \mathcal G_g$. 

\begin{remark}
Note that equality $\mathcal G_{\hat g} =\mathcal G_g$ does not hold in general, since changing the definition of $g$ at one point $x\in S^\circ$ might destroy the upper gradient inequality of $g$ for curves passing through that point. It turns out that these curves have weak and strong modulus zero. However, $\hat g$ is not affected by this change.
\end{remark}

A question that arises is whether we can recover $g$ from $\hat g$ using our previous construction in Section \ref{harmonic:Subsection-Discrete Sobolev}. As a matter of fact, this is the case at least for points ``accessible" by good paths $\gamma\in \mathcal G_g$. To prove this, let $x$ be a point ``accessible" by a good path $\gamma$, and $Q_{i_0}$ be a peripheral disk with $Q_{i_0}\cap \gamma\neq \emptyset$; recall at this point Lemma \ref{harmonic:lemma:paths zero hausdorff} that allows us to approximate $x$ by peripheral disks intersecting $\gamma$. Consider $y\in \partial Q_{i_0}$ to be the point of first entry of $\gamma$ in $Q_{i_0}$, as $\gamma$ travels from $x$ to $Q_{i_0}$, and $\gamma'$ be the subpath of $\gamma$ from $x$ to $y$. Then
\begin{align*}
|g(x)-\hat g(Q_{i_0})|&=|g(x)-M_{Q_{i_0}}(g)|\leq |g(x)-g(y)|+|g(y)-M_{Q_{i_0}}(g)| \\
&\leq \sum_{i:Q_i\cap \gamma'\neq \emptyset}\osc_{Q_i}(g) + \osc_{Q_{i_0}}(g).
\end{align*}
As $Q_{i_0}\to x$ along $\gamma$, the right hand side converges to $0$. This is because 
$$\sum_{i:Q_i\cap \gamma\neq \emptyset}\osc_{Q_i}(g)<\infty,$$
and the subpath $\gamma'$ of $\gamma$ intersects fewer and fewer peripheral disks as $Q_{i_0}\to x$. Thus, 
\begin{align}\label{harmonic:3-g agrees with tilde g}
g(x)=\liminf_{\substack{Q_i\to x\\Q_i\cap \gamma\neq \emptyset}}\hat g(Q_i).
\end{align}
For ``non-accessible" points of $\partial Q_i$ we do not expect this, since $g$ could have any value there between $m_{Q_i}(g)$ and $M_{Q_i}(g)$. 

We now define a \textit{normalized version}\index{Sobolev space!normalized version} $\tilde g$ of $g$ by using the construction in Section \ref{harmonic:Subsection-Discrete Sobolev}. More precisely, we define 
\begin{align}\label{harmonic:Normalized definition}
\tilde g(x)= \liminf_{\substack{Q_i\to x\\ Q_i\cap \gamma\neq \emptyset}} \hat g(Q_i),
\end{align}
whenever $x\in S$ is ``accessible" by a path $\gamma\in \mathcal G_{\hat g}$, as in \eqref{harmonic:3-definition of f}. For the other points of $S$ we define $\tilde g$ as in the paragraph following \eqref{harmonic:3-definition of f}. Note that the definition of $\tilde g$ depends on the good family $\mathcal G_{\hat g}$, which in turn depends on $\mathcal G_g$. By the discussion in Section \ref{harmonic:Subsection-Discrete Sobolev}, we have $\tilde g \in \mathcal W^{1,2}_{w,\loc}(S)$ $( \mathcal W^{1,2}_{s,\loc}(S))$ with upper gradient $\{\osc_{Q_i}(\tilde g)\}_{i\in \N}$, and the upper gradient inequality holds along paths of $\mathcal G_{\hat g}\supset \mathcal G_g$.

The function $\tilde g$ agrees with $g$ for all points ``accessible" by paths $\gamma\in \mathcal G_g$, as \eqref{harmonic:3-g agrees with tilde g} shows. We remark that by \eqref{harmonic:3-Oscillation of g hat} $\{\osc_{Q_i}(g)\}_{i\in \N}$ is an upper gradient of $\hat g$, and by Corollary \ref{harmonic:3-Finiteness of f} we obtain that for the normalized version $\tilde g$ of $g$ we always have
\begin{align}\label{harmonic:3-Normalized oscillation}
\osc_{Q_i}(\tilde g)\leq \osc_{Q_i}(g)
\end{align}
for all $i\in \N$. The intuitive explanation is that the ``jumps" at ``non-accessible" points are precisely what makes a function non-normalized. Thus the process of normalization cuts these ``jumps" and reduces the oscillation of the function. We summarize the above discussion in the following lemma:

\begin{lemma}\label{harmonic:Normalized main lemma}
For each function $g\in \mathcal W^{1,2}_{w,\loc}(S)$ $( \mathcal W^{1,2}_{s,\loc}(S))$ there exists a function $\tilde g \in \mathcal W^{1,2}_{w,\loc}(S)$ $( \mathcal W^{1,2}_{s,\loc}(S))$ such that
\begin{enumerate}[\upshape(i)]
\item $\tilde g$ is defined by \eqref{harmonic:Normalized definition}, where $\hat g(Q_i)\coloneqq M_{Q_i}(g)$ for $i\in \N$,  
\item $\tilde g$ agrees with $g$ on all points that are ``accessible" by paths of $\mathcal G_g$, and
\item $\osc_{Q_i}(\tilde g)\leq \osc_{Q_i}(g)$ for all $i\in \N$.
\end{enumerate}
Moreover, the upper gradient inequality of $\tilde g$ holds along paths of $\mathcal G_{\hat g} \supset \mathcal G_g$.
\end{lemma}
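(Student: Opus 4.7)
The plan is to extract the lemma directly from the discussion in the paragraphs immediately preceding the statement, which essentially constructs $\tilde g$ and verifies its properties in situ. First I would set $\hat g \colon \{Q_i\}_{i\in\N} \to \R$ by $\hat g(Q_i) \coloneqq M_{Q_i}(g)$ and invoke the computation in \eqref{harmonic:3-Oscillation of g hat}, which shows that $\{\osc_{Q_i}(g)\}_{i\in\N}$ is a weak (strong) upper gradient for $\hat g$ along every path of $\mathcal G_g$. Combined with the square-summability conditions \eqref{harmonic:3-Def Sobolev spaces L2}--\eqref{harmonic:3-Def Sobolev spaces L2gradient} for $g$ (and the observation that $|\hat g(Q_i)| \leq M_{Q_i}(|g|)$ so that Remark \ref{harmonic:3-Definition Remark} applies), this places $\hat g \in \widehat{\mathcal W}^{1,2}_{w,\loc}(S)$ (resp. strong). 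The construction of Subsection \ref{harmonic:Subsection-Discrete Sobolev}, applied to $\hat g$, then produces the function $\tilde g$ described by \eqref{harmonic:Normalized definition}, which gives item (i). Proposition \ref{harmonic:3-f is Sobolev} applied to $\hat g$ further places $\tilde g$ in $\mathcal W^{1,2}_{w,\loc}(S)$ (resp. strong), with upper gradient $\{\osc_{Q_i}(\tilde g)\}_{i\in\N}$ along paths in the good family $\mathcal G_{\hat g}$.

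The inclusion $\mathcal G_g \subset \mathcal G_{\hat g}$ needed for the ``moreover'' statement is straightforward: conditions (1) and (3) in the definition of a good family depend only on $\gamma$ and the upper gradient weight, which is $\{\osc_{Q_i}(g)\}$ in both cases; and condition (2) for $\hat g$ along subpaths of $\gamma \in \mathcal G_g$ follows by the same argument as \eqref{harmonic:3-Oscillation of g hat} applied to arbitrary subpaths. For item (ii), fix $x \in S$ accessible by $\gamma \in \mathcal G_g$ and let $Q_{i_0}$ be a peripheral disk with $Q_{i_0} \cap \gamma \neq \emptyset$ close to $x$. Taking $y \in \partial Q_{i_0}$ to be the point of first entry of $\gamma$ into $Q_{i_0}$ (as $\gamma$ runs toward $x$) and $\gamma'$ the corresponding subpath, the upper gradient inequality for $g$ along $\gamma' \in \mathcal G_g$ together with $|g(y) - M_{Q_{i_0}}(g)| \leq \osc_{Q_{i_0}}(g)$ yields
\[
|g(x) - \hat g(Q_{i_0})| \leq \sum_{i: Q_i \cap \gamma' \neq \emptyset} \osc_{Q_i}(g) + \osc_{Q_{i_0}}(g).
\]
Since $\sum_{i: Q_i \cap \gamma \neq \emptyset} \osc_{Q_i}(g) < \infty$ and the set of peripheral disks intersected by $\gamma'$ shrinks as $Q_{i_0} \to x$ along $\gamma$ (cf. Lemma \ref{harmonic:lemma:paths zero hausdorff}), the right side tends to $0$. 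Hence $g(x)$ equals the $\liminf$ defining $\tilde g(x)$, and Lemma \ref{harmonic:3-well-defined} shows the value is independent of the accessing path.

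For item (iii), I would apply Corollary \ref{harmonic:3-Finiteness of f} to $\tilde g$ itself: since $\tilde g$ arises from $\hat g$ via the construction of Subsection \ref{harmonic:Subsection-Discrete Sobolev} and $\{\osc_{Q_i}(g)\}_{i\in\N}$ is an upper gradient of $\hat g$, the corollary gives $\osc_{Q_i}(\tilde g) \leq \osc_{Q_i}(g)$ for every $i \in \N$. The main delicate point I expect is bookkeeping between the two good path families $\mathcal G_g$ and $\mathcal G_{\hat g}$: one must argue property (ii) with respect to accessibility in the smaller family $\mathcal G_g$ (which is what the lemma states), while the definition \eqref{harmonic:Normalized definition} of $\tilde g$ uses accessibility in the larger family $\mathcal G_{\hat g}$. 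The inclusion $\mathcal G_g \subset \mathcal G_{\hat g}$ noted above reconciles this: accessibility through $\mathcal G_g$ automatically gives accessibility through $\mathcal G_{\hat g}$, so the defining $\liminf$ of $\tilde g(x)$ can be realized along the same $\gamma$ used to identify $g(x)$, and Lemma \ref{harmonic:3-well-defined} prevents the choice from mattering.
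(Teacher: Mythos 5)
Your proposal is correct and follows essentially the same route as the paper, which obtains the lemma as a summary of the discussion in Section \ref{harmonic:Subsection Sobolev spaces}: define $\hat g(Q_i)=M_{Q_i}(g)$, use \eqref{harmonic:3-Oscillation of g hat} to see that $\{\osc_{Q_i}(g)\}_{i\in\N}$ is an upper gradient for $\hat g$, run the construction of Section \ref{harmonic:Subsection-Discrete Sobolev} to get $\tilde g$, verify agreement at ``accessible'' points via the first-entry-point estimate \eqref{harmonic:3-g agrees with tilde g}, and deduce (iii) from Corollary \ref{harmonic:3-Finiteness of f}. Your extra care with the inclusion $\mathcal G_g\subset\mathcal G_{\hat g}$ and the reconciliation of the two accessibility notions is a correct elaboration of a point the paper treats more briefly.
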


The most important property of the normalized version $\tilde g$ of $g$ is the following continuity property:

\begin{lemma}\label{harmonic:3-Normalized Property}
Let $x\in S$ be a point (not necessarily ``accessible"). Then there exist peripheral disks $Q_i$ contained in arbitrarily small neighborhoods of $x$ such that $M_{Q_i}(\tilde g)$ approximates $\tilde g(x)$.
\end{lemma}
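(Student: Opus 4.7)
The plan is to split into two cases based on whether $x$ is accessible by a path in $\mathcal G_{\hat g}$ or not. The key quantitative tool is the bound
\[ |M_{Q_i}(\tilde g) - M_{Q_i}(g)| \leq \osc_{Q_i}(g), \]
which follows by applying Corollary \ref{harmonic:3-Finiteness of f} to the construction of $\tilde g$ from $\hat g$: recall that $\hat g(Q_i) = M_{Q_i}(g)$ and that $\{\osc_{Q_i}(g)\}_{i\in\N}$ is an upper gradient for $\hat g$ by \eqref{harmonic:3-Oscillation of g hat}. Thus the task reduces to producing peripheral disks $Q_i$ arbitrarily near $x$ for which $\hat g(Q_i)$ is close to $\tilde g(x)$ and $\osc_{Q_i}(g)$ is small.

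If $x$ is accessible by $\gamma\in \mathcal G_{\hat g}$, the liminf definition \eqref{harmonic:Normalized definition} supplies a sequence $Q_{i_k}$ with $Q_{i_k}\cap \gamma \neq \emptyset$ and $\hat g(Q_{i_k})\to \tilde g(x)$; here Lemma \ref{harmonic:lemma:paths zero hausdorff} guarantees such $Q_{i_k}$ exist arbitrarily close to $x$, and Lemma \ref{harmonic:Fatness consequence} forces $\diam(Q_{i_k})\to 0$ so that we may in fact arrange $Q_{i_k}\subset B(x,1/k)$. To obtain $\osc_{Q_{i_k}}(g)\to 0$ I would invoke the local square-summability \eqref{harmonic:3-Def Sobolev spaces L2gradient} together with Remark \ref{harmonic:Remark:summable}: when $x\in S^\circ$, the remark gives $\sum_{i\in I_{B(x,r)}}\osc_{Q_i}(g)^2\to 0$ as $r\to 0$, and since $i_k\in I_{B(x,r)}$ eventually for every fixed $r>0$, the individual terms must vanish.

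The step I expect to be the main obstacle is the case $x\in \partial Q_{i_0}$: the naive sum $\sum_{i\in I_{B(x,r)}}\osc_{Q_i}(g)^2$ does not decay with $r$, since $i_0 \in I_{B(x,r)}$ for every $r>0$. The resolution is to observe that any good path $\gamma$ witnessing accessibility of $x$ must avoid $Q_{i_0}$, so the indices produced by the liminf satisfy $i_k\neq i_0$; Remark \ref{harmonic:Remark:summable} then applies to $\sum_{i\in I_{B(x,r)}\setminus\{i_0\}}\osc_{Q_i}(g)^2$, which does tend to zero---the same maneuver used in Lemma \ref{harmonic:3-curves gamma_r}(a). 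Combined with the bound of the first paragraph, this yields $M_{Q_{i_k}}(\tilde g)\to \tilde g(x)$ in the accessible case.

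Finally, when $x$ is not accessible, $\tilde g(x)$ is by definition a liminf of $\tilde g(y)$ over accessible $y$ approaching $x$ (restricted to $\partial Q_{i_0}$ when $x\in \partial Q_{i_0}$, and through arbitrary accessible points otherwise). I would pick accessible $y_n\to x$ with $\tilde g(y_n)\to \tilde g(x)$, apply the already-established accessible case to each $y_n$ to select $Q_{i_n}\subset B(y_n,1/n)$ with $|M_{Q_{i_n}}(\tilde g)-\tilde g(y_n)|<1/n$, and then the triangle inequality shows that the disks $Q_{i_n}$ lie in arbitrarily small neighborhoods of $x$ and satisfy $M_{Q_{i_n}}(\tilde g)\to \tilde g(x)$. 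This closing diagonal argument is routine once the accessible case has been handled.
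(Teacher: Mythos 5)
Your proposal is correct and follows essentially the same route as the paper: reduce to showing that $M_{Q_i}(\tilde g)$ may replace $\hat g(Q_i)=M_{Q_i}(g)$ in the liminf defining $\tilde g$, observe that $\osc_{Q_i}(g)\to 0$ along the approximating disks (excluding $i_0$ when $x\in\partial Q_{i_0}$), and treat ``non-accessible'' points by the diagonal argument built into the definition of $\tilde g$. The only difference is cosmetic: you obtain the key bound $|M_{Q_i}(\tilde g)-M_{Q_i}(g)|\leq \osc_{Q_i}(g)$ directly from Corollary \ref{harmonic:3-Finiteness of f}, whereas the paper re-derives the (slightly weaker) bound $2\osc_{Q_i}(g)$ by picking an ``accessible'' point $y\in\partial Q_i$ where $\tilde g(y)=g(y)$.
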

\begin{proof}
It suffices to show that we can replace $\hat g(Q_i)=M_{Q_i}(g)$ in the definition \eqref{harmonic:Normalized definition} of $\tilde g$ by $M_{Q_i}(\tilde g)$. Indeed, if $x$ is an ``accessible" point by $\gamma\in \mathcal G_{\hat g}$, then there exist arbitrarily small peripheral disks near $x$, intersecting $\gamma$, such that $\hat g(Q_i)$ approximates $\tilde g(x)$, by the definition of $\tilde g$. If $x$ is ``non-accessible" then $\tilde g(x)$ is defined through approximating $x$ by ``accessible" points, and hence we can find again small peripheral disks near $x$ with the desired property.

Now we prove our claim. Let $x$ be a point that is ``accessible" by $\gamma\in \mathcal G_{\hat g}$, so
\begin{align*}
\tilde g(x)=\liminf_{\substack{Q_i\to x\\ Q_i\cap \gamma \neq \emptyset}} M_{Q_i}(g).
\end{align*}
We fix $i\in \N$. By an application of Lemma \ref{harmonic:Paths joining continua}, we can find a point $y\in \partial Q_i$ that is ``accessible" by a curve lying in $\mathcal G_g$. By Lemma \ref{harmonic:Normalized main lemma}(ii) we have $\tilde g(y)=g(y)$. Hence, using  Lemma \ref{harmonic:Normalized main lemma}(iii) we have
\begin{align*}
|M_{Q_i}(g)-M_{Q_i}(\tilde g)|&\leq |M_{Q_i}(g)-g(y)|+|\tilde g(y)-M_{Q_i}(\tilde g)| \\
&\leq \osc_{Q_i}(g)+ \osc_{Q_i}(\tilde g)\leq 2\osc_{Q_i}(g).
\end{align*}
Since $\{\osc_{Q_i}(g)\}_{i\in \N}$ is locally square-summable near $x$, and the peripheral disks $Q_i$ become arbitrarily small as $Q_i\to x$ and $Q_i\cap \gamma\neq \emptyset$, it follows that
\begin{align*}
\limsup_{\substack{Q_i\to x\\ Q_i\cap \gamma \neq \emptyset}}|M_{Q_i}(g)-M_{Q_i}(\tilde g)|\leq \limsup_{\substack{Q_i\to x\\ Q_i\cap \gamma \neq \emptyset}}2\osc_{Q_i}(g)=0.
\end{align*}
This shows that we can indeed define
\[
\tilde g(x)= \liminf_{\substack{Q_i\to x\\ Q_i\cap \gamma \neq \emptyset}} M_{Q_i}(\tilde g). \qedhere
\]
\end{proof}

This discussion suggests that we identify the functions $g$ of the space $\mathcal  W_{w,\loc}^{1,2}(S)$ $(\mathcal  W_{s,\loc}^{1,2}(S))$ that have the ``same" normalized version $\tilde g$. This will be made more precise with the following lemma:

\begin{lemma}\label{harmonic:3-Identification lemma}
Let $f,g\in \mathcal  W_{w,\loc}^{1,2}(S)$ $(\mathcal  W_{s,\loc}^{1,2}(S))$, each of them coming with a family of good paths $\mathcal G_f$ and $\mathcal G_g$, respectively. The following are equivalent:
\begin{enumerate}[\upshape (a)]
\item There exists a family $\mathcal G_0$ that contains almost every path in $\Omega$ (i.e., the complement of $\mathcal G_0$ has weak (strong) modulus zero) such that $f(x)=g(x)$ for all points $x$ that are ``accessible" by paths $\gamma \in \mathcal G_0$.
\item For the normalized versions $\tilde f$ and $\tilde g$ and for all $i\in \N$ we have
\begin{align*}
M_{Q_i}(\tilde f)&=M_{Q_i}(\tilde g), \quad m_{Q_i}(\tilde f)=m_{Q_i}(\tilde g), \quad \textrm{and} \quad \osc_{Q_i}(\tilde f)= \osc_{Q_i}(\tilde g).
\end{align*}
\end{enumerate}
\end{lemma}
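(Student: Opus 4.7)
The plan is to prove both implications separately, with (b) $\Rightarrow$ (a) being much more direct than the reverse.

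For \textbf{(b) $\Rightarrow$ (a)}, I set $\mathcal G_0 := \mathcal G_f \cap \mathcal G_g \cap \mathcal G_{\hat f}\cap \mathcal G_{\hat g}$. By Lemma \ref{harmonic:Good family properties} and the countable subadditivity of modulus, the complement of $\mathcal G_0$ still has weak (strong) modulus zero, so $\mathcal G_0$ contains almost every path. For any $x$ accessible by some $\gamma \in \mathcal G_0$, the defining formula \eqref{harmonic:Normalized definition} gives $\tilde f(x)=\liminf_{Q_j\to x,\, Q_j\cap \gamma\neq \emptyset} M_{Q_j}(\tilde f)$ and, along the \emph{same} $\gamma$, $\tilde g(x)=\liminf_{Q_j\to x,\, Q_j\cap \gamma\neq \emptyset} M_{Q_j}(\tilde g)$. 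Hypothesis (b) makes the two liminf expressions identical term by term, so $\tilde f(x)=\tilde g(x)$. Applying Lemma \ref{harmonic:Normalized main lemma}(ii), on such points $f(x)=\tilde f(x)=\tilde g(x)=g(x)$, which is exactly (a).

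For \textbf{(a) $\Rightarrow$ (b)}, I first replace $\mathcal G_0$ by $\mathcal G_0\cap \mathcal G_f\cap \mathcal G_g$ so that $\mathcal G_0\subset \mathcal G_f\cap \mathcal G_g$; this is still a family containing almost every path. By Lemma \ref{harmonic:Normalized main lemma}(ii), $\tilde f=f=g=\tilde g$ on every $\mathcal G_0$-accessible point. By Lemma \ref{harmonic:Paths joining continua} applied to paths emanating from $\partial Q_i$ (see also the remark preceding \eqref{harmonic:3-definition of f}), the set $A_i$ of $\mathcal G_0$-accessible points is dense in each $\partial Q_i$. Since equality of $\osc_{Q_i}$ follows from equality of $M_{Q_i}$ and $m_{Q_i}$, and the two are symmetric, it is enough to prove $M_{Q_i}(\tilde f)\leq M_{Q_i}(\tilde g)$ for every $i$.

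Fix $i$ and $\varepsilon>0$, and pick $x\in \partial Q_i$ with $\tilde f(x)\geq M_{Q_i}(\tilde f)-\varepsilon$. The strategy is to produce a point $y\in \partial Q_i$ in $A_i$ with $\tilde f(y)\geq \tilde f(x)-O(\varepsilon)$; since $\tilde f(y)=\tilde g(y)$ on $A_i$, this forces $M_{Q_i}(\tilde g)\geq \tilde f(x)-O(\varepsilon)$, completing the argument. To build $y$, I apply Lemma \ref{harmonic:3-Normalized Property} at $x$ to obtain a peripheral disk $Q_j$ contained in a small neighborhood of $x$ with $M_{Q_j}(\tilde f)\geq \tilde f(x)-\varepsilon$. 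Using the density of $A_j$ in $\partial Q_j$ together with the liminf definition of $\tilde f$ at non-accessible points of $\partial Q_j$ (which yields $M_{Q_j}(\tilde f)=\sup_{y\in \partial Q_j\cap (\mathcal G_{\hat f}\text{-acc.})}\tilde f(y)$), I select an accessible $z\in \partial Q_j$ close to $x$ with $\tilde f(z)\geq M_{Q_j}(\tilde f)-\varepsilon$, and then by Lemma \ref{harmonic:3-curves gamma_r} I can perturb the path witnessing the accessibility of $z$ to lie in $\mathcal G_0$, placing $z\in A_j$.  Finally, the local square-summability of $\{\osc_{Q_j}(\tilde f)\}$ and the fact that the perturbation stays in a small neighborhood yield $|\tilde f(z)-\tilde f(y)|\leq \sum_{k:Q_k\cap \gamma'\neq \emptyset}\osc_{Q_k}(\tilde f)<\varepsilon$ for some $y\in A_i\cap \partial Q_i$ (obtained by transferring across the small path connecting $\partial Q_i$ and $\partial Q_j$).

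The main obstacle is the last paragraph: passing from equality on the dense set $A_i$ to equality of the suprema $M_{Q_i}$. The subtlety is that $\tilde f$ and $\tilde g$ need not be continuous on $\partial Q_i$, and their liminf definitions at non-accessible points use the possibly larger families $\mathcal G_{\hat f},\mathcal G_{\hat g}$, not $\mathcal G_0$. Lemma \ref{harmonic:3-Normalized Property} is the essential tool that bridges this gap: it converts information about values of $\tilde f$ at single points into information about $M_{Q_j}(\tilde f)$ for nearby peripheral disks, where $A_j$ is once again dense and the hypothesis (a) can be invoked.
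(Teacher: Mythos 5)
Your proof is correct and follows essentially the same route as the paper: (b)\,$\Rightarrow$\,(a) via the $\liminf$ characterization of the normalized versions (where, strictly, you need Lemma \ref{harmonic:3-Normalized Property} to replace $M_{Q_j}(f)$ by $M_{Q_j}(\tilde f)$ in \eqref{harmonic:Normalized definition}), and (a)\,$\Rightarrow$\,(b) by approximating $M_{Q_i}(\tilde f)$ by values at $\mathcal G_0$-accessible points of $\partial Q_i$ produced with the crosscuts of Lemma \ref{harmonic:3-curves gamma_r}(a). The paper's version of the second direction obtains the accessible point $y$ directly as an endpoint of a circular crosscut around an accessible point $x'\in \partial Q_i$ near $x$, which streamlines your detour through the auxiliary disk $Q_j$, but the estimates are the same.
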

\begin{proof}
First, we assume that there exists a family $\mathcal G_0$ such that $f(x)=g(x)$ for all points $x$ ``accessible" by $\gamma\in \mathcal G_0$. Then for points $x$ ``accessible" by paths in $\mathcal G\coloneqq  \mathcal G_f\cap \mathcal G_g\cap \mathcal G_0$ we have $\tilde f(x)=f(x)=g(x)=\tilde g(x)$ by Lemma \ref{harmonic:Normalized main lemma}(ii). Note that $\mathcal G$ contains almost every path, by the subadditivity of modulus. Fix a peripheral disk $Q_{i_0}$, $\varepsilon>0$, and a point $x\in \partial Q_{i_0}$ with $|M_{Q_{i_0}}(\tilde f)-\tilde f(x)|<\varepsilon$; recall the definition of $M_{Q_{i_0}}(\tilde f)$. Then, consider a point $x'\in \partial Q_{i_0}$ near $x$ that is ``accessible" by a curve $\gamma\in \mathcal G_{\hat f}$, such that $|\tilde f(x)- \tilde f(x')|<\varepsilon$; this can be done by the definition of $\tilde f$ on ``non-accessible" points. Now, Lemma \ref{harmonic:3-curves gamma_r}(a) yields a circular arc $\gamma_r$ around $x'$ with a small $r>0$ (see also Figure \ref{harmonic:fig:crosscut}) such that:
\begin{enumerate}[\upshape (1)]
\item $\gamma_r\cap Q_{i_0}=\emptyset$ and $\gamma_r$ has its endpoints on $\partial Q_{i_0}$, so that $\gamma_r$ defines a crosscut separating $x'$ from $\infty$ in $\R^2 \setminus Q_{i_0}$,
\item $\gamma_r$ avoids the set $\gamma\cap S$ and intersects a peripheral disk $Q_{i_1},i_1\neq i_0$, that is also intersected by $\gamma$,
\item $\gamma_r \in \mathcal G$,
\item $\sum_{i:Q_i\cap \gamma_r\neq \emptyset} \osc_{Q_i}(\tilde f)<\varepsilon$.
\end{enumerate}
Let $y\in \partial Q_{i_0}\cap  \gamma_r $ a point ``accessible" by $\gamma_r$, i.e., an endpoint of $\gamma_r$. Then $\tilde f(y)=\tilde g(y)$, because $\gamma_r\in \mathcal G$. Let $\gamma'$ be the subpath of $\gamma$ from $Q_{i_1}$ to $x$. Using the upper gradient inequality of $\tilde f$, which holds along $\gamma'\in \mathcal G_{\hat f}$ and $\gamma_r\in \mathcal G\subset \mathcal G_f \subset \mathcal G_{\hat f}$, we have 
\begin{align*}
|\tilde f(x')-\tilde f(y)| \leq \sum_{i: Q_i\cap \gamma'\neq \emptyset} \osc_{Q_i}(\tilde f)+ \sum_{i:Q_i\cap \gamma_r\neq \emptyset} \osc_{Q_i}(\tilde f)\leq \sum_{i: Q_i\cap \gamma'\neq \emptyset} \osc_{Q_i}(\tilde f) +\varepsilon.
\end{align*}
If $Q_{i_1}$ is sufficiently close to $x$ (and thus $r>0$ is chosen to be smaller), then the last sum can be made less than $\varepsilon$. Putting all the estimates together we obtain
\begin{align*}
|M_{Q_{i_0}}(\tilde f)-\tilde g(y)|<4\varepsilon.
\end{align*}
This shows that $M_{Q_{i_0}}(\tilde f)\leq M_{Q_{i_0}}(\tilde g)$. Interchanging the roles of $\tilde f$ and $\tilde g$ we obtain the equality. The equality $m_{Q_{i_0}}(\tilde f)= m_{Q_{i_0}}(\tilde g)$ is proved by using the same argument.

For the converse, note that for all points $x$ that are ``accessible" by paths $\gamma\in \mathcal G_{\hat g}$ we have
\begin{align*}
\tilde g(x)= \liminf_{\substack{Q_i\to x\\ Q_i\cap \gamma\neq \emptyset}} M_{Q_i}(\tilde g),
\end{align*}
and an analogous statement is true for $\tilde f$; see Lemma \ref{harmonic:3-Normalized Property} and its proof. Hence, for all points $x$ ``accessible" by paths $\gamma\in \mathcal G_0\coloneqq \mathcal G_f\cap \mathcal G_g\subset \mathcal G_{\hat f} \cap \mathcal G_{\hat g}$ we have
\begin{align*}
\tilde g(x)= \liminf_{\substack{Q_i\to x\\ Q_i\cap \gamma\neq \emptyset}} M_{Q_i}(\tilde g)=\liminf_{\substack{Q_i\to x\\ Q_i\cap \gamma\neq \emptyset}} M_{Q_i}(\tilde f)=\tilde f(x).
\end{align*}
On the other hand, for such points we also have $\tilde f(x)=f(x)$ and $\tilde g(x)=g(x)$ by Lemma \ref{harmonic:Normalized main lemma}(ii). The conclusion follows, if one notes that the curve family $\mathcal G_0$ contains almost every path in $\Omega$, by the subadditivity of modulus.
\end{proof}

Hence, we can identify functions $f,g\in \mathcal W_{w,\loc}^{1,2}(S)$ $(\mathcal  W_{s,\loc}^{1,2}(S))$ whenever their normalized versions $\tilde f, \tilde g$ yield the same sequences $\{M_{Q_i}(\tilde f)\}_{i\in \N}, \{\osc_{Q_i}(\tilde f) \}_{i\in \N}$. The identification allows us to regard $\mathcal  W_{w,\loc}^{1,2}(S)$, $\mathcal  W_{s,\loc}^{1,2}(S)$ as subsets of a space of sequences, the (non-linear) correspondence being
\begin{align*}
g\mapsto ( \{M_{Q_i}(\tilde g)\diam(Q_i)\}_{i\in \N}, \{\osc_{Q_i}(\tilde g) \}_{i\in \N} ).
\end{align*}
These sequences are locally square-summable in the sense of \eqref{harmonic:3-Def Sobolev spaces L2} and \eqref{harmonic:3-Def Sobolev spaces L2gradient}. If $g$ was originally in one of the non-local Sobolev spaces instead, then we could identify $g$ with an element of $\ell^2(\N)\times \ell^2(\N)$. 

However, we will not use this identification in the next sections, and the Sobolev functions that we use are not necessarily normalized, unless otherwise stated.

\subsection{Examples}\label{harmonic:examples}
We give some examples of functions lying in the Sobolev spaces $\mathcal W_{w,\loc}^{1,2}(S)$ and $\mathcal W_{s,\loc}^{1,2}(S)$.

\begin{example}\label{harmonic:3-Example-Lipschitz}
Let $f\colon\Omega\to \R$ be a locally Lipschitz function, i.e., for every compact set $K\subset \Omega$ there exists a constant $L>0$ such that $|f(x)-f(y)|\leq L|x-y|$ for $x,y\in K$. Then $f\big|_{S} \in \mathcal W_{s,\loc}^{1,2}(S) \subset \mathcal W_{w,\loc}^{1,2}(S)$. In particular, this is true if $f\colon\Omega\to \R$ is smooth.

To see this, consider a compact exhaustion $\{K_n\}_{n\in \N}$, $K_n\subset K_{n+1}$ of $\Omega$, and an increasing sequence of Lipschitz constants $L_n$ for $f\big|_{K_n}$. Define $\rho(Q_i)=L_n\diam(Q_i)$ where $n\in \N$ is the smallest integer such that $Q_i\subset K_n$. If $B\subset \subset \Omega$, then $f$ is bounded on $B$, thus
\begin{align*}
\sum_{i\in I_B} M_{Q_i}(f)^2 \diam(Q_i)^2<\infty,
\end{align*}
by Corollary \ref{harmonic:Fatness corollary}. Also, $\osc_{Q_i}(f)\leq \rho(Q_i)$ by the Lipschitz condition, and if $B\subset K_N$, then 
\begin{align*}
\sum_{i\in I_B} \rho(Q_i)^2 \leq L_N^2 \sum_{i\in I_B}\diam(Q_i)^2 <\infty.
\end{align*}

Finally, let $\gamma$ be a curve in $\Omega$ with $\mathcal H^1(\gamma\cap S)=0$, and  $x,y\in \gamma\cap S$. We wish to show that 
\begin{align*}
|f(x)-f(y)|\leq \sum_{i:Q_i\cap \gamma \neq \emptyset}\rho(Q_i).
\end{align*}
If suffices to prove this for a closed subpath of $\gamma$ that connects $x$ and $y$, which we still denote by $\gamma$. This will imply that $\{\rho(Q_i)\}_{i\in \N}$ is a strong upper gradient for $f$, and thus, so is $\{\osc_{Q_i}(f)\}_{i\in \N}$; see Remark \ref{harmonic:3-Definition Remark rho-osc}.

For fixed $\varepsilon>0$ we cover the compact set $\gamma\cap S$ with finitely many balls $B_j$ of radius $r_j$ such that $\sum_{j}r_j<\varepsilon$. Furthermore, we may assume that $B_j\subset \subset \Omega$ for all $j$. Then there are at most finitely many peripheral disks that intersect $\gamma$ and are not covered by $\bigcup_j B_j$. Indeed, note that the closure of each of these peripheral disks must intersect both $\partial (\bigcup_{j}B_j)$ and $\gamma\cap S$. On the other hand, $\partial (\bigcup_{j}B_j)$ and $\gamma\cap S$ have positive distance, hence the peripheral disks whose closure intersects both of them have diameters bounded below. By Lemma \ref{harmonic:Fatness consequence} we conclude that these peripheral disks have to be finitely many. 

We let $A_k$, $k\in \{1,\dots,M\}$, be the joint collection of the balls $B_j$ and of the finitely many peripheral disks $Q_i$ not covered by $\bigcup_j B_j$. In other words, for each $k=1,\dots,M$ we have $A_k=B_j$ for some $j$, or $A_k$ is one of these peripheral disks, and the sets $A_k$ are distinct, i.e., we do not include a ball or a peripheral disk twice. Note that $x$ and $y$ lie in some balls, so after reordering we suppose that $x\in A_1$ and $y\in A_M$. The union $\bigcup_k A_k$ contains the curve $\gamma$, and thus it contains a connected chain of sets $A_k$, connecting $x$ to $y$. We consider a minimal sub-collection of $\{A_k\}_k$ that connects $x$ and $y$ (there are only finitely many sub-collections), and we still denote it in the same way. Then, using the minimality, we can order the sets $A_k$ in such a way, that $x\in A_1$, $y\in A_M$, and  $A_k\cap A_{l}\neq \emptyset$ if and only if $l=k\pm 1$. 

Now, we choose points $x_{k+1}\in A_k\cap A_{k+1}$ for $k=1,\dots,M-1$, and set $x_1=x\in A_1$ and $x_{M+1}=y\in A_M$. Suppose that $\bigcup_k A_k \subset K_N$ for some $N\in \N$. If $A_k$ is a ball $B_j$, then $|f(x_k)-f(x_{k+1})| \leq 2L_Nr_j$, and if $A_k$ is a peripheral disk $Q_i$, then $|f(x_k)-f(x_{k+1})|\leq \rho(Q_i)$; see definition of $\rho(Q_i)$. Hence,

\begin{align*}
|f(x)-f(y)|&\leq \sum_{k=1}^{M} |f(x_{k})-f(x_{k+1})| \\
&\leq 2L_N\sum_{j}r_j +\sum_{i:Q_i\cap \gamma\neq \emptyset} \rho(Q_i)\\
&\leq 2L_N\varepsilon +\sum_{i:Q_i\cap \gamma\neq \emptyset} \rho(Q_i).
\end{align*}
Letting $\varepsilon \to 0$ shows that $\{\rho(Q_i)\}_{i\in \N}$ is a strong upper gradient for $f$, as desired.
\end{example}

\begin{example}\label{harmonic:3-Example Sobolev Homeo}
Let $f\colon\Omega \to f(\Omega)\subset\C$ be a homeomorphism that lies in the classical space $W^{1,2}_{\loc}(\Omega)$. Then $f\big|_{S}\in \mathcal W_{s,\loc}^{1,2}(S)$ in the sense that this holds for the real and imaginary parts of $f\big|_{S}$. In particular, (locally) quasiconformal maps on $\Omega$ lie in $\mathcal W_{s,\loc}^{1,2}(S)$; see \cite{AstalaIwaniecMartin:quasiconformal} for the definition of a quasiconformal map and basic properties.    

Since $f$ is locally bounded, for any ball $B\subset \subset \Omega$ we have
\begin{align*}
\sum_{i\in I_B} M_{Q_i}(|f|)^2\diam(Q_i)^2 <\infty.
\end{align*}
We will show that $\rho(Q_i)\coloneqq  \diam (f(Q_i))$ is also locally square-summable, and it is a strong upper gradient for $f$, i.e., there exists a path family $\Gamma_0$ in $\Omega$ with $\md_s(\Gamma_0)=0$ such that
\begin{align*}
|f(x)-f(y)|\leq \sum_{i:Q_i\cap \gamma\neq \emptyset}\rho(Q_i)
\end{align*}
for $\gamma\subset \Omega$, $\gamma\notin \Gamma_0$, and $x,y\in \gamma\cap S$. Note that this suffices by Remark \ref{harmonic:3-Definition Remark rho-osc}, and it will imply that $\{\osc_{Q_i}(\re(f))\}_{i\in \N}$ and $\{\osc_{Q_i}(\im(f))\}_{i\in \N}$ are strong upper gradients for $\re(f)$ and $\im(f)$, respectively. Using a compact exhaustion, we see that it suffices to show this for paths $\gamma$ contained in an open set $V\subset \subset \Omega$. 

Let $U$ be a neighborhood of $V$ such that $V\subset\subset U\subset\subset \Omega$, and $U$ contains all peripheral disks that intersect $V$. By a recent theorem of Iwaniec, Kovalev and Onninen \cite[Theorem 1.2]{IwaniecKovalevOnninen:W1papproximation} there exist smooth homeomorphisms $f_n\colon U \to f_n(U)$ such that $f_n\to f$ uniformly on $U$ and in $W^{1,2}(U)$. For each $n\in \N$ the function $f_n$ is locally Lipschitz, so by Example \ref{harmonic:3-Example-Lipschitz} it satisfies 
\begin{align}\label{harmonic:3-Example Sobolev Homeo limit}
|f_n(x)-f_n(y)| \leq \sum_{i:Q_i\cap \gamma\neq \emptyset}\osc_{Q_i}(f_n) = \sum_{i:Q_i\cap \gamma\neq \emptyset} \diam(f_n(Q_i))
\end{align}
for all $\gamma \subset V$ with $\mathcal H^1(\gamma\cap S)=0$ and $x,y\in \gamma\cap S$. We claim that $\{\diam (f_n(Q_i))\}_{i\in I_V}$ converges in $\ell^2$. Then its limit must be the same as the pointwise limit, namely $\{\diam (f(Q_i))\}_{i\in I_V}$, so the latter is also square-summable. Taking limits in \eqref{harmonic:3-Example Sobolev Homeo limit} after passing to a subsequence and applying Fuglede's Lemma \ref{harmonic:Fuglede} we would have the desired
\begin{align*}
|f(x)-f(y)| \leq \sum_{i:Q_i\cap \gamma \neq \emptyset} \diam(f(Q_i))
\end{align*}
for all paths $\gamma \subset V$ outside an exceptional family $\Gamma_0$ with $\md_s(\Gamma_0)=0$, and $x,y\in \gamma\cap S$.

Now, we show our claim. By the quasiballs assumption, there exist balls $B(x_i,r_i)\subset Q_i\subset B(x_i,R_i)$, $i\in \N$, with $R_i/r_i\leq K_0$. Since $\dist(V,\partial U)>0$, there exist only finitely many peripheral disks $Q_i$, $ i\in I_V$, such that $B(x_i,2R_i)$ is not contained in $U$; see Lemma \ref{harmonic:Fatness consequence}. Let $J$ be the family of such indices. Since this is a finite set and $\diam(f_n(Q_i)) \to \diam (f(Q_i)) $ all $i\in J$, it suffices to show that $\{\diam(f_n(Q_i))\}_{i\in I_V\setminus J}$ converges to $\{\diam(f(Q_i))\}_{i\in I_V\setminus J}$ in $\ell^2$.

We fix $i\in I_V\setminus J$. For each $r\in [R_i,2R_i]$ and $x,y\in \partial Q_i \subset B(x_i,r),$ by the maximum principle applied to the homeomorphisms $f_n$ and the fundamental theorem  of calculus we have
\begin{align*}
|f_n(x)-f_n(y)| &\leq \osc_{\partial B(x_i,r)}(f_n) \leq  \int_{\partial B(x_i,r)} |\nabla f_n|\,ds,
\end{align*} 
where $\nabla f_n= ( \nabla \re(f_n),\nabla \im(f_n))$. Integrating over $r\in [R_i,2R_i]$ we obtain
\begin{align*}
|f_n(x)-f_n(y)|\leq C R_i \mint{-}_{B(x_i,2R_i)} |\nabla f_n| 
\end{align*}
for some constant $C>0$. Since $x,y\in \partial Q_i$ were arbitrary, we have
\begin{align}\label{harmonic:3-Example Sobolev Homeo domination}
\diam (f_n(Q_i)) \leq C R_i \mint{-}_{B(x_i,2R_i)} |\nabla f_n| 
\end{align}
for all $i\in I_V\setminus J$.

In the following, for simplicity we write $f$ to mean $f\x_U$, and by assumption $f_n \to f$ in $W^{1,2}(U)$. Using the uncentered maximal function $M(g)(x)= \sup_{x\in B} \mint{-}_B |g|$ to change the center of balls we have
\begin{align*}
\left| R_i \mint{-}_{B(x_i,2R_i)} |\nabla f_n| - R_i \mint{-}_{B(x_i,2R_i)} |\nabla f|\right|&\leq R_i \mint{-}_{B(x_i,2R_i)} |\nabla f_n-\nabla f|\\
&\leq CR_i \mint{-}_{B(x_i,r_i)} M(|\nabla f_n -\nabla f|),
\end{align*}
where the constant $C>0$ depends only on the quasiballs constant $K_0$. Thus,
\begin{align*}
\sum_{i\in I_V\setminus J} &\left| R_i \mint{-}_{B(x_i,2R_i)} |\nabla f_n| - R_i \mint{-}_{B(x_i,2R_i)} |\nabla f|\right|^2 \\
&\leq C' \sum_{i\in I_V\setminus J} R_i^2 \left(\mint{-}_{B(x_i,r_i)} M(|\nabla f_n -\nabla f|)\right)^2\\
&\leq C'\sum_{i\in I_V\setminus J} R_i^2 \mint{-}_{B(x_i,r_i)} M(|\nabla f_n -\nabla f|)^2 \\
&\leq C'' \sum_{i\in I_V \setminus J} \int_{B(x_i,r_i)} M(|\nabla f_n -\nabla f|)^2 \\
& \leq C''\int_{U} M(|\nabla f_n- \nabla f|)^2 \\
&\leq C''' \int_{U} |\nabla f_n-\nabla f|^2.
\end{align*}
This shows that $\{R_i \mint{-}_{B(x_i,2R_i)} |\nabla f_n|\}_{i\in I_V\setminus J}$ converges to $\{R_i \mint{-}_{2B(x_i,R_i)} |\nabla f|\}_{i\in I_V\setminus J}$ in $\ell^2$. Since this sequence dominates $\{\diam(f_n(Q_i))\}_{i\in I_V\setminus J}$ by \eqref{harmonic:3-Example Sobolev Homeo domination}, it follows that $\{\diam(f_n(Q_i))\}_{i\in I_V\setminus J}$ converges in $\ell^2$ as well, as claimed.
\end{example}

\begin{remark}
One could do the preceding proof directly for quasiconformal maps, by approximating them with smooth quasiconformal maps; this is a much more elementary result than the approximation of Sobolev homeomorphisms by smooth homeomorphisms. However, the use of the strong result \cite[Theorem 1.2]{IwaniecKovalevOnninen:W1papproximation} proves a more general result while the proof remains essentially the same.
\end{remark}

\begin{remark}\label{harmonic:Remark:quasiconformal}
The same proof shows that if $\br \Omega$ is compact and $f$ is quasiconformal in a neighborhood of $\Omega$ then $f\big|_{S}\in \mathcal W^{1,2}_{s}(S)$.
\end{remark}

\begin{remark}\label{harmonic:Remark:convergence}
From the above proof we see that if $f_n\in \mathcal W_{s,\loc}^{1,2}(S)$ converges to $f\colon S\to \R$ locally uniformly and for each $V\subset \subset \Omega$ the sequence $\{\osc_{Q_i}(f_n)\}_{i\in I_V}$ converges to $\{\osc_{Q_i}(f)\}_{i\in I_V}$ in $\ell^2$, then $f$ lies in $\mathcal W_{s,\loc}^{1,2}(S)$.
\end{remark}

\subsection{Pullback of Sobolev spaces}\index{Sobolev space!pullback}
Here we study the invariance of Sobolev spaces under quasiconformal maps between relative Sierpi\'nski carpets.

Let $(S,\Omega)$, $(S',\Omega')$ be two relative Sierpi\'nski carpets and let $F\colon \Omega'\to \Omega$ be a locally quasiconformal homeomorphism that maps the peripheral disks $Q_i'$ of $S'$ to the peripheral disks $Q_i=F(Q_i')$ of $S$. We have:

\begin{prop}\label{harmonic:Pullback Sobolev Proposition}If $g \in \mathcal W^{1,2}_{w,\loc}(S)$, then the pullback $g\circ F$ lies in $\mathcal W^{1,2}_{w,\loc}(S')$.
\end{prop}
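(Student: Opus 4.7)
The plan is to transfer the Sobolev structure across $F$ by exploiting the bijective correspondence of peripheral disks. The key preliminary observation is that, since $F$ restricts to a homeomorphism $\overline{Q_i'}\to \overline{Q_i}$ for each $i$, it maps $\partial Q_i'$ bijectively onto $\partial Q_i$, and hence
\[
M_{Q_i'}(g\circ F)=M_{Q_i}(g),\quad m_{Q_i'}(g\circ F)=m_{Q_i}(g),\quad \osc_{Q_i'}(g\circ F)=\osc_{Q_i}(g),
\]
all of which are finite.

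The next step is to verify that $\{\osc_{Q_i'}(g\circ F)\}_{i\in \N}$ is a weak upper gradient for $g\circ F$. Let $\Gamma_0$ denote the exceptional family for $g$, so $\md_w(\Gamma_0)=0$, and set $\Gamma_0'\coloneqq \{\gamma'\subset \Omega' : F\circ \gamma'\in \Gamma_0\}$. For $\gamma'\notin \Gamma_0'$, write $\gamma\coloneqq F\circ\gamma'\notin \Gamma_0$. The bijective correspondence $Q_i'\leftrightarrow Q_i$ yields $Q_i'\cap \gamma'\neq \emptyset\iff Q_i\cap \gamma\neq \emptyset$, and $F$ maps $\gamma'\cap S'$ into $\gamma\cap S$, so the upper gradient inequality for $g$ along $\gamma$ transfers verbatim:
\[
|(g\circ F)(x')-(g\circ F)(y')|\le \sum_{i:Q_i\cap\gamma\neq\emptyset}\osc_{Q_i}(g)=\sum_{i:Q_i'\cap\gamma'\neq\emptyset}\osc_{Q_i'}(g\circ F).
\]
To conclude, I need $\md_w(\Gamma_0')=0$, which by Lemma \ref{harmonic:2-weak modulus zero implies two modulus zero} is equivalent to $\md_2(\Gamma_0')=0$. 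This in turn follows from the standard quasi-invariance of conformal modulus under quasiconformal maps, applied on an exhaustion of $\Omega$ by relatively compact sets on which $F$ is globally quasiconformal.

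Finally, for any $B'\subset\subset \Omega'$, choose $B\subset\subset \Omega$ with $F(\overline{B'})\subset B$, so that $I_{B'}\subset I_B$ under the correspondence $Q_i'\leftrightarrow Q_i$. The gradient $L^2$ condition is immediate:
\[
\sum_{i\in I_{B'}}\osc_{Q_i'}(g\circ F)^2=\sum_{i\in I_{B'}}\osc_{Q_i}(g)^2\le \sum_{i\in I_B}\osc_{Q_i}(g)^2<\infty.
\]
The main obstacle is the weighted $L^2$ condition
\[
\sum_{i\in I_{B'}}M_{Q_i}(g)^2\diam(Q_i')^2<\infty,
\]
since $\diam(Q_i')$ and $\diam(Q_i)$ are not uniformly comparable under a general locally quasiconformal $F$. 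I plan to handle this by invoking the fatness equivalence $\diam(Q_i')^2\asymp \mathcal H^2(Q_i')$ together with the absolute continuity of $F^{-1}$ (which gives $\mathcal H^2(Q_i')=\int_{Q_i}J_{F^{-1}}\,d\mathcal H^2$) to rewrite the sum as $\int_{B}M(g)^2\,J_{F^{-1}}\,d\mathcal H^2$, where $M(g)\coloneqq M_{Q_i}(g)$ on $Q_i$. The local quasisymmetry of $F$, Gehring's higher integrability of $J_{F^{-1}}$, and a Hölder estimate should then allow one to dominate this integral by a multiple of $\sum_{i\in I_B}M_{Q_i}(g)^2\diam(Q_i)^2$, which is finite by hypothesis.
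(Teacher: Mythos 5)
Your core argument coincides with the paper's proof: the identities $M_{Q_i'}(g\circ F)=M_{Q_i}(g)$ and $\osc_{Q_i'}(g\circ F)=\osc_{Q_i}(g)$ coming from the correspondence of peripheral circles, the verbatim transfer of the upper gradient inequality along $\gamma'=F^{-1}(\gamma)$, and the verification that $\Gamma_0'=F^{-1}(\Gamma_0)$ is weakly exceptional by passing through Lemma \ref{harmonic:2-weak modulus zero implies two modulus zero} in both directions and using that locally quasiconformal maps preserve families of conformal modulus zero. This is exactly the route taken in the paper.

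The one place where you go beyond the paper is the weighted summability condition $\sum_{i\in I_{B'}}M_{Q_i}(g)^2\diam(Q_i')^2<\infty$. You are right that this is not automatic from $M_{Q_i'}(g\circ F)=M_{Q_i}(g)$, since $\diam(Q_i')$ and $\diam(Q_i)$ need not be comparable (the paper's proof simply does not discuss this point). However, your proposed fix does not close. After rewriting the sum as $\asymp\int_B M(g)^2\,J_{F^{-1}}\,d\mathcal H^2$ via fatness and the absolute continuity of $F^{-1}$, an application of H\"older against $J_{F^{-1}}\in L^{p'}_{\loc}$ (Gehring) requires $M(g)^2\in L^{p}_{\loc}$ for the conjugate exponent $p>1$; but the hypothesis $\sum_{i\in I_B}M_{Q_i}(g)^2\diam(Q_i)^2<\infty$ only yields $M(g)^2\in L^1_{\loc}$ (again by fatness and the quasiball condition), and a general $g\in\mathcal W^{1,2}_{w,\loc}(S)$ need not be locally bounded. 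So the final "H\"older estimate" is a genuine gap as written. Note that when $g$ is locally bounded --- which is the case in every application of this proposition in the paper, where $g$ is a coordinate function --- the condition is immediate from $\sum_{i\in I_{B'}}\diam(Q_i')^2<\infty$ (Corollary \ref{harmonic:Fatness corollary} applied to $S'$), and no integrability of $J_{F^{-1}}$ is needed.
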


\begin{proof}
Let $g \in \mathcal W^{1,2}_{w,\loc}(S)$, and note that $M_{Q_i'}(g\circ F)= M_{Q_i}(g)$ and $\osc_{Q_i'}(g\circ F)= \osc_{Q_i}(g)$. We only have to show that there exists a path family $\Gamma_0'$ in $\Omega'$ with weak modulus equal to zero such that
\begin{align}\label{harmonic:Pullback Sobolev proof}
|g\circ F(x)-g\circ F(y)|\leq \sum_{i:Q_i'\cap \gamma\neq \emptyset} \osc_{Q_i'}(g\circ F)= \sum_{i:Q_i\cap F(\gamma)\neq \emptyset} \osc_{Q_i}(g)
\end{align}
whenever $\gamma\subset \Omega'$, $\gamma\notin \Gamma_0'$,  and $x,y\in \gamma \cap S$. 

By our assumption on $g$, there exists a path family $\Gamma_0$ in $\Omega$ with $\md_w(\Gamma_0)=0$ such that the upper gradient inequality for $g$ holds along paths $\gamma\notin \Gamma_0$. By the equality in \eqref{harmonic:Pullback Sobolev proof} it suffices to show that for almost every $\gamma$ in $\Omega'$ the image $F(\gamma)$ avoids the exceptional family $\Gamma_0$. Equivalently, we show that the family $\Gamma_0'\coloneqq F^{-1}(\Gamma_0)$ has weak modulus equal to zero. 

Note that $\md_w(\Gamma_0)=0$ implies that $\md_2(\Gamma_0)=0$ by Lemma \ref{harmonic:2-weak modulus zero implies two modulus zero}. Since $F$ is locally quasiconformal, so is $F^{-1}$, and they preserve conformal modulus zero. Therefore, $\md_2(F^{-1}(\Gamma_0))=0$. Again, by Lemma \ref{harmonic:2-weak modulus zero implies two modulus zero} we have $\md_w(\Gamma_0')=0$, as desired. 
\end{proof}

\begin{corollary}\label{harmonic:Pullback Quasisymmetry}
Assume that the peripheral circles $\partial Q_i',\partial Q_i$ of $S',S$, respectively, are uniform quasicircles and let $F\colon S'\to S$ be a local quasisymmetry. Then for any $g\in \mathcal W^{1,2}_{w,\loc}(S)$ we have  $g\circ F\in \mathcal W^{1,2}_{w,\loc}(S')$.
\end{corollary}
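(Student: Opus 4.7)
The plan is to reduce the statement to Proposition \ref{harmonic:Pullback Sobolev Proposition}. To do so, I would extend the local quasisymmetry $F\colon S'\to S$ to a locally quasiconformal homeomorphism $\tilde F\colon \Omega'\to \Omega$ that sends each peripheral disk $Q_i'$ to the peripheral disk $F(\partial Q_i')=\partial Q_i$. Once this extension is in hand, Proposition \ref{harmonic:Pullback Sobolev Proposition} applied to $\tilde F$ yields $g\circ \tilde F\in \mathcal W^{1,2}_{w,\loc}(S')$, and since $\tilde F|_{S'}=F$, this is exactly the conclusion for $g\circ F$ (note that the membership in the Sobolev space only depends on the values of $\tilde F$ on $S'$, via the quantities $M_{Q_i'}(g\circ \tilde F)=M_{Q_i}(g)$ and $\osc_{Q_i'}(g\circ \tilde F)=\osc_{Q_i}(g)$).

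To carry out the extension, I would first note that by the topological characterization of peripheral circles (Whyburn's theorem), the homeomorphism $F$ must send each peripheral circle $\partial Q_i'$ to a peripheral circle $\partial Q_i$. Since the peripheral circles are uniform quasicircles and $F|_{\partial Q_i'}$ is quasisymmetric, by the classical extension theorem for quasisymmetric maps between quasicircles (see, e.g., \cite{AstalaIwaniecMartin:quasiconformal} or \cite{LehtoVirtanen:quasiconformal}), one can extend $F|_{\partial Q_i'}$ to a quasiconformal homeomorphism $\tilde F_i\colon \br Q_i'\to \br Q_i$ whose dilatation is controlled by the quasisymmetry constant of $F|_{\partial Q_i'}$ and by the uniform quasicircle constant. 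Gluing these together, I would define $\tilde F=F$ on $S'$ and $\tilde F=\tilde F_i$ on each $\br Q_i'$.

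The next step is to verify that $\tilde F\colon \Omega'\to \Omega$ is a homeomorphism. Bijectivity is clear; continuity at points of $S'$ follows from the local quasisymmetry of $F$ together with the fact that $\diam(Q_i')\to 0$ and $\diam(Q_i)\to 0$ for peripheral disks accumulating at a given compact set (Lemma \ref{harmonic:Fatness consequence}), which forces $\tilde F(Q_i')$ to shrink to $F(x)$ whenever $Q_i'$ shrinks to $x\in S'$. The same argument applied to $F^{-1}$ gives continuity of the inverse.

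The main obstacle is showing that $\tilde F$ is locally quasiconformal on $\Omega'$. I would proceed in two steps. First, on compact subsets $V\subset\subset \Omega'$, the local quasisymmetry of $F$ yields a uniform quasisymmetry constant for the family $\{F|_{\partial Q_i'}:Q_i'\cap V\neq \emptyset\}$. Combined with the uniform quasicircle assumption, this gives a uniform bound $K(V)$ on the dilatations of the extensions $\tilde F_i$ for $Q_i'\cap V\neq \emptyset$; thus $\tilde F$ is $K(V)$-quasiconformal on the open set $V\setminus S'=\bigcup_{i\in I_V}Q_i'$. Second, since $S'$ is closed in $\Omega'$ and has Lebesgue measure zero by our standing assumptions, a removable-singularity theorem for quasiconformal maps (see \cite[Thm.~35.1]{Vaisala:lectures}) upgrades $\tilde F$ to a genuinely $K(V)$-quasiconformal map on $V$. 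Hence $\tilde F$ is locally quasiconformal on $\Omega'$, and Proposition \ref{harmonic:Pullback Sobolev Proposition} applies to finish the proof.
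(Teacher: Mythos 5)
Your architecture is the same as the paper's: the paper deduces the corollary from Proposition \ref{harmonic:Pullback Sobolev Proposition} together with the extension Lemma \ref{harmonic:Pullback Extension}, whose proof it only sketches by citing \cite[Section 5]{Bonk:uniformization}. Your first steps match that sketch and are fine: $F$ carries peripheral circles to peripheral circles, the Beurling--Ahlfors-type extensions inside the peripheral disks have locally uniformly bounded dilatation by the uniform quasicircle hypothesis and the local uniformity of the quasisymmetry constants, and the glued map is a homeomorphism because the diameters of the peripheral disks (and of their images) shrink to zero on compacta.

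The gap is in your last step. A closed set of Lebesgue measure zero is \emph{not} in general removable for quasiconformal maps: V\"ais\"al\"a's removability theorem \cite[Theorem 35.1]{Vaisala:quasiconformal} requires the exceptional set to be of $\sigma$-finite $(n-1)$-dimensional Hausdorff measure, and there exist non-removable compact plane sets of Hausdorff dimension one (hence of area zero). The relative carpet $S'$ fails the $\sigma$-finiteness hypothesis badly: by Lemma \ref{harmonic:Paths in S^o} it contains uncountably many pairwise disjoint non-trivial continua (nested Jordan curves in $(S')^\circ$ around a fixed peripheral disk), each of positive $\mathcal H^1$-measure, so $\mathcal H^1(S')$ is not $\sigma$-finite. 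Indeed, quasiconformal removability of Sierpi\'nski carpets is a genuinely delicate question and is not what is used here. The correct way to cross the carpet --- and what \cite[Section 5]{Bonk:uniformization} actually does --- is to verify the metric definition of quasiconformality directly at points $x\in S'$, i.e.\ to bound the linear dilatation $H_{\tilde F}(x)$ uniformly, using the uniform quasiball/quasicircle geometry of the peripheral disks together with the quasisymmetry of $F$ on $S'$; quasiconformality on all of $\Omega'$ then follows since the metric dilatation is bounded everywhere. With that step replaced, the reduction to Proposition \ref{harmonic:Pullback Sobolev Proposition} goes through as you describe.
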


See \cite[Chapters 10--11]{Heinonen:metric} for background on quasisymmetric maps. The proof follows immediately from Proposition \ref{harmonic:Pullback Sobolev Proposition} and the following lemma:

\begin{lemma}\label{harmonic:Pullback Extension}
Assume that the peripheral circles $\partial Q_i',\partial Q_i$ of $S',S$, respectively, are uniform quasicircles and let $F\colon S'\to S$ be a local quasisymmetry. Then $F$ extends to a locally quasiconformal map from $\Omega'$ onto $\Omega$.
\end{lemma}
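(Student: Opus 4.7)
The plan is to extend $F$ to each peripheral disk using a classical quasicircle extension theorem, glue the pieces into a global homeomorphism, and finally verify local quasiconformality by showing that the extension is in fact locally quasisymmetric.

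\textbf{Extension across peripheral disks.} For each $i \in \N$ the restriction $F|_{\partial Q_i'}$ is a quasisymmetric homeomorphism onto $\partial Q_i$, and both curves are $K$-quasicircles for a uniform constant $K$. I would invoke the standard fact that a quasisymmetry between the boundaries of two quasidisks extends to a quasiconformal homeomorphism between the quasidisks, with dilatation depending only on the quasicircle constant and the quasisymmetric distortion. This is obtained by quasiconformally uniformizing each quasidisk by the unit disk (such a uniformization exists with dilatation controlled by $K$) and applying the Ahlfors--Beurling extension to the resulting quasisymmetric self-map of $S^1$. This yields a quasiconformal extension $F_i \colon \overline{Q_i'} \to \overline{Q_i}$ of $F|_{\partial Q_i'}$ whose dilatation is locally uniformly bounded, since the local quasisymmetric constant of $F$ and the quasicircle constants are uniform on compact sets.

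\textbf{Gluing.} Define $\tilde F \colon \Omega' \to \Omega$ by $\tilde F|_{S'} = F$ and $\tilde F|_{\overline{Q_i'}} = F_i$. These definitions are compatible on $\partial Q_i'$, so $\tilde F$ is a well-defined bijection. Continuity of $\tilde F$ holds on each open $Q_i'$ and on $S'$ separately; at points of $S'$ where many peripheral circles accumulate, continuity follows from the fact that the diameters of the peripheral disks $Q_i'$, and of their images $Q_i = F(Q_i')$, shrink to zero along any compact collection (Lemma \ref{harmonic:Fatness consequence} applied to both carpets and, for the images, local quasisymmetry of $F$). Hence $\tilde F$ is a homeomorphism of $\Omega'$ onto $\Omega$.

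\textbf{Local quasiconformality via local quasisymmetry.} Since $\Omega'$ and $\Omega$ are planar, it suffices to prove that $\tilde F$ is locally quasisymmetric: a quasisymmetric homeomorphism between planar domains is quasiconformal. Fix $x_0 \in \Omega'$ and a bounded neighborhood $V \subset\subset \Omega'$ of $x_0$ on which the quasisymmetric constant of $F|_{S'\cap V}$, the quasicircle constants, and the dilatation constants $K_i$ of the extensions $F_i$ for $Q_i' \cap V \neq \emptyset$ are all uniformly bounded. Given three points $x,y,z \in V$, I would associate to each point that lies in some peripheral disk $Q_j'$ a nearby boundary point on $\partial Q_j'$; the uniform quasiballs assumption on both $S'$ and $S$ allows this replacement to change distances, and the distances of their images, by a bounded factor. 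After this reduction I can apply the quasisymmetric bound for $F$ on $S'$, and then return to the original points using the uniform dilatation of the $F_j$. This yields a uniform quasisymmetric distortion estimate on $V$.

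\textbf{Main obstacle.} The delicate step is the quasisymmetric comparison in the last paragraph when the three points $x,y,z$ lie in three distinct, possibly very small, peripheral disks close to a boundary point of a fourth peripheral disk. One must carefully invoke the uniform quasiballs and fatness assumptions to pass freely between interior points of peripheral disks and their boundaries without losing the quasisymmetric constants, and invoke the uniformity of the extension dilatations $K_i$ to control how distances are distorted inside each individual peripheral disk. All the geometric hypotheses on the peripheral disks enter precisely here.
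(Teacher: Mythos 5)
Your proposal follows essentially the same route as the paper: extend $F$ inside each peripheral disk by a Beurling--Ahlfors-type quasiconformal extension between quasidisks, glue the pieces into a homeomorphism $\Omega'\to\Omega$, and verify local quasiconformality of the result. The paper likewise only sketches this, deferring both the preliminary fact that a homeomorphism of carpets maps peripheral circles onto peripheral circles (which you assert without justification) and the quantitative verification that the glued map is locally quasiconformal to \cite[Section 5]{Bonk:uniformization} --- the latter being exactly the step you identify as the main obstacle.
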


We only provide a sketch of the proof.

\begin{proof}
The first observation is that since $F$ is a homeomorphism it maps each peripheral circle $\partial Q_i'$ of $S'$ onto a peripheral circle $\partial Q_i$ of $S$; see \cite[Lemma 5.5]{Bonk:uniformization} for an argument. Then one uses the well-known Beurling-Ahlfors extension to obtain a quasiconformal extension $F\colon Q_i'\to Q_i$ inside each peripheral disk. The resulting map $F\colon \Omega'\to \Omega$ will be locally quasiconformal and the proof can be found in \cite[Section 5]{Bonk:uniformization}, where careful quantitative estimates are also shown. In our case we do not need such careful estimates.
\end{proof}

\subsection{Properties of Sobolev spaces}

We record here some properties of Sobolev functions:
\begin{prop}\label{harmonic:3-Properties Sobolev}
The spaces $\mathcal W^{1,2}_{w,\loc}(S)$ and $\mathcal W^{1,2}_{s,\loc}(S)$ are linear. Moreover, if $u,v\in \mathcal W^{1,2}_{w,\loc}(S)$ $(\mathcal W^{1,2}_{s,\loc}(S))$, then  the following functions also lie in the corresponding Sobolev space:
\begin{enumerate}[\upshape (a)]
\item $|u|$, with $\osc_{Q_i}(|u|)\leq \osc_{Q_i}(u)$,\label{harmonic:3-Properties Sobolev-|f|}
\item $u\lor v\coloneqq \max(u,v)$, with $\osc_{Q_i}(u\lor v)\leq \max\{ \osc_{Q_i}(u),\osc_{Q_i}(v)\}$,\label{harmonic:3-Properties Sobolev-max}
\item $u\land v\coloneqq \min(u,v)$, with $\osc_{Q_i}(u\land v)\leq \max\{ \osc_{Q_i}(u),\osc_{Q_i}(v)\}$,\label{harmonic:3-Properties Sobolev-min}
\end{enumerate}
where $i\in \N$. Moreover, 
\begin{enumerate}[\upshape(d)]
\item if $u$ and $v$ are locally bounded in $S$, then $u\cdot v$ lies in the corresponding Sobolev space, with
\begin{align*}
\osc_{Q_i}(u\cdot v) \leq M_{Q_i}(|v|)\osc_{Q_i}(u)  + M_{Q_i}(|u|)\osc_{Q_i} (v).
\end{align*}
for all $i\in \N$.
\end{enumerate} 
Finally, if we set $f=u\lor v$ and $g=u\land v$, then we have the inequality
\begin{align}\label{harmonic:3-Properties Sobolev-inequality}
\osc_{Q_i}(f)^2+\osc_{Q_i}(g)^2\leq \osc_{Q_i}(u)^2+\osc_{Q_i}(v)^2
\end{align}
for all $i\in \N$.
\end{prop}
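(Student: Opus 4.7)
The plan is to reduce each assertion to constructing an explicit locally square-summable upper gradient for the function in question, and then invoke Remark \ref{harmonic:3-Definition Remark rho-osc} to conclude that $\{\osc_{Q_i}(\cdot)\}_{i\in\N}$ itself is an upper gradient, as required by Definition \ref{harmonic:3-Def Sobolev space}. The per-disk oscillation bounds then follow from the same pointwise inequalities used to build the upper gradient. Throughout I would work with paths in $\mathcal G_u \cap \mathcal G_v$, which still contains almost every path by subadditivity of (weak or strong) modulus, so that the upper gradient inequalities for $u$ and $v$ are simultaneously available.

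Linearity is immediate from the triangle inequality, which makes $\{|\alpha|\osc_{Q_i}(u)+|\beta|\osc_{Q_i}(v)\}_{i\in\N}$ an upper gradient for $\alpha u + \beta v$; local square-summability is preserved via $(s+t)^2\leq 2(s^2+t^2)$, and the condition on $M_{Q_i}(|\alpha u+\beta v|)\diam(Q_i)$ is handled the same way. For $|u|$, the inequality $\bigl||u(x)|-|u(y)|\bigr|\leq |u(x)-u(y)|$ shows that any upper gradient of $u$ serves for $|u|$, and applied with $x,y\in\partial Q_i$ gives $\osc_{Q_i}(|u|)\leq \osc_{Q_i}(u)$. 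For $u\lor v$ and $u\land v$, the key pointwise bound $|\max(a,b)-\max(c,d)|\leq \max(|a-c|,|b-d|)$ (and its analogue for $\min$) applied with $x,y\in\partial Q_i$ yields the claimed oscillation bounds, and summing along a good path, with $\max$ bounded by the sum, shows that $\{\osc_{Q_i}(u)+\osc_{Q_i}(v)\}_{i\in\N}$ is a valid upper gradient for both $u\lor v$ and $u\land v$.

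For the product, fix a ball $B\subset\subset \Omega$ and a uniform bound $M$ for $|u|$ and $|v|$ on $\bigcup_{i\in I_B}\partial Q_i$, provided by the local boundedness hypothesis. The identity $(uv)(x)-(uv)(y)=u(x)(v(x)-v(y))+v(y)(u(x)-u(y))$, iterated along a good path, yields the locally square-summable upper gradient $\{M(\osc_{Q_i}(u)+\osc_{Q_i}(v))\}_{i\in\N}$ for $uv$ on $B$; exhausting $\Omega$ by such balls gives $uv\in\mathcal W^{1,2}_{w,\loc}(S)$, and the strong case is identical. The per-disk bound $\osc_{Q_i}(uv)\leq M_{Q_i}(|v|)\osc_{Q_i}(u)+M_{Q_i}(|u|)\osc_{Q_i}(v)$ follows from the same identity with $x,y\in\partial Q_i$ by taking suprema and infima.

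The final inequality is purely algebraic. Setting $a=M_{Q_i}(u)$, $b=m_{Q_i}(u)$, $c=M_{Q_i}(v)$, $d=m_{Q_i}(v)$, one has $M_{Q_i}(f)\leq \max(a,c)$, $m_{Q_i}(f)\geq \max(b,d)$, $M_{Q_i}(g)\leq \min(a,c)$, $m_{Q_i}(g)\geq \min(b,d)$, so it suffices to verify
\[
(\max(a,c)-\max(b,d))^2+(\min(a,c)-\min(b,d))^2\leq (a-b)^2+(c-d)^2.
\]
Expanding and using $\max(a,c)^2+\min(a,c)^2=a^2+c^2$ together with the analogue for $b,d$ reduces this to $ab+cd\leq \max(a,c)\max(b,d)+\min(a,c)\min(b,d)$, which is an equality when $(a,c)$ and $(b,d)$ are ordered the same way and follows from $(a-c)(b-d)\leq 0$ in the opposite case. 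The only real bookkeeping obstacle is arranging a common good family for both $u$ and $v$, handled by intersecting $\mathcal G_u$ with $\mathcal G_v$, while the product statement genuinely requires the local boundedness hypothesis to control the $M_{Q_i}$ factors on each compactly contained ball.
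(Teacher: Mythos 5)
Your proof is correct and follows the paper's overall strategy (build an explicit locally square-summable upper gradient, pass to $\{\osc_{Q_i}(\cdot)\}$ via Remark \ref{harmonic:3-Definition Remark rho-osc}, and read off the per-disk bounds from the same pointwise inequalities), but two steps take a slightly different route. For (b) and (c) the paper reduces to the already-proven parts via the identity $u\lor v=(u+v+|u-v|)/2$ and linearity, whereas you argue directly from $|\max(a,b)-\max(c,d)|\leq\max(|a-c|,|b-d|)$; your route is self-contained and gives the oscillation bound and the upper gradient in one stroke, while the paper's is shorter given what precedes it. For \eqref{harmonic:3-Properties Sobolev-inequality} the paper does an explicit two-case expansion of the squared oscillations; your reduction to $ab+cd\leq\max(a,c)\max(b,d)+\min(a,c)\min(b,d)$ via $\max^2+\min^2=a^2+c^2$ is the same computation packaged as a rearrangement inequality, and is arguably cleaner. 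One minor precision point on (d): the constant $M$ in your candidate upper gradient depends on the ball $B$, so $\{M(\osc_{Q_i}(u)+\osc_{Q_i}(v))\}_{i\in\N}$ is not a single globally defined sequence, and Remark \ref{harmonic:3-Definition Remark rho-osc} as stated cannot be applied verbatim. Either glue the constants with a compact exhaustion as in Example \ref{harmonic:3-Example-Lipschitz}, or, as the paper does, note that the inequality $|u(x)v(x)-u(y)v(y)|\leq M_\gamma\sum(\osc_{Q_i}(u)+\osc_{Q_i}(v))$ holds for every subpath of a fixed good path $\gamma\subset\subset\Omega$, so the splitting argument in the proof of Proposition \ref{harmonic:3-f is Sobolev} applies path by path and the path-dependent constant drops out of the conclusion $\sum_{i:Q_i\cap\gamma\neq\emptyset}\osc_{Q_i}(uv)$. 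This is a wording fix, not a gap.
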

\begin{proof}
To prove that the spaces are linear, we note that if $u,v$ are Sobolev functions and $a,b\in \R$, then $\osc_{Q_i}(au+bv) \leq |a|\osc_{Q_i}(u)+|b|\osc_{Q_i}(v)$, which shows that the upper gradient inequalities of $u$ and $v$ yield an upper gradient inequality  for $au+bv$. The summability conditions \eqref{harmonic:3-Def Sobolev spaces L2} and \eqref{harmonic:3-Def Sobolev spaces L2gradient} in the definition of a Sobolev function  are trivial.

Part \ref{harmonic:3-Properties Sobolev-|f|} follows from the triangle inequality $||u(x)|-|u(y)||\leq |u(x)-u(y)|$, which shows that $|u|$ inherits its upper gradient inequality from $u$.

To show \ref{harmonic:3-Properties Sobolev-max} note that $u\lor v= (u+v+|u-v|)/2 $. Using the linearity of Sobolev spaces and part \ref{harmonic:3-Properties Sobolev-|f|} we obtain that $u\lor v$ also lies in the Sobolev space. To show the inequality, we only need to observe that
\begin{equation}\label{harmonic:3-Properties max/min of max}
\begin{aligned}
M_{Q_i}(u\lor v)&= \max\{M_{Q_i}(u),M_{Q_i}(v)\}\quad \textrm{and}\\
 m_{Q_i}(u\lor v)&\geq \max\{m_{Q_i}(u),m_{Q_i}(v)\}.
\end{aligned} 
\end{equation}
Part \ref{harmonic:3-Properties Sobolev-min} is proved in the exact same way, if one notes that
\begin{equation}\label{harmonic:3-Properties max/min of min}
\begin{aligned}
M_{Q_i}(u\land v)&\leq \min\{M_{Q_i}(u),M_{Q_i}(v)\}\quad \textrm{and}\\
 m_{Q_i}(u\land v)&=\min\{m_{Q_i}(u),m_{Q_i}(v)\}.
\end{aligned}
\end{equation}

For part (d) we note that the oscillation inequality is a straightforward computation. This, together with the local boundedness of $u$ and $v$ show immediately the summability conditions \eqref{harmonic:3-Def Sobolev spaces L2} and \eqref{harmonic:3-Def Sobolev spaces L2gradient}; see also Corollary \ref{harmonic:Fatness corollary}. We only have to show the upper gradient inequality. Suppose that $\gamma \subset\subset  \Omega$ is a path that is good for both $u$ and $v$, and connects $x,y\in S$. Since $\gamma\subset \subset \Omega$, there exists $M>0$ such that $|u|\leq M$, $|v|\leq M$ on $\gamma\cap S$. Then
\begin{align*}
|u(x)v(x) -u(y)v(y)| &\leq M |u(x)-u(y)|+M|v(x)-v(y)|\\
&\leq M\sum_{i: Q_i\cap \gamma \neq \emptyset}( \osc_{Q_i} (u)+\osc_{Q_i}(v)).
\end{align*}
Since this also holds for subpaths of $\gamma$, the proof of Proposition \ref{harmonic:3-f is Sobolev} shows that
\begin{align*}
|u(x)v(x) -u(y)v(y)|\leq \sum_{i:Q_i\cap \gamma \neq \emptyset} \osc_{Q_i}(u\cdot v), 
\end{align*}
as desired; see also Remark \ref{harmonic:3-Definition Remark rho-osc}.

To show inequality \eqref{harmonic:3-Properties Sobolev-inequality} we fix $i\in \N$, and for simplicity drop $Q_i$ from the notations $\osc_{Q_i},M_{Q_i},m_{Q_i}$. We now split into cases, and by symmetry we only have to check two cases. If $M(u)\geq M(v)$ and $m(u)\geq m(v)$, then by \eqref{harmonic:3-Properties max/min of max} and \eqref{harmonic:3-Properties max/min of min} we have
\begin{align*}
\osc(f)^2+\osc(g)^2 \leq (M(u)-m(u))^2 + (M(v)-m(v))^2= \osc(u)^2+\osc(v)^2.
\end{align*}
If $M(u)\geq M(v)$ and $m(u)\leq m(v)$, then using again \eqref{harmonic:3-Properties max/min of max} and \eqref{harmonic:3-Properties max/min of min} we obtain
\begin{align*}
\osc(f)^2+\osc(g)^2 &\leq ( M(u)-m(v))^2+(M(v)-m(u))^2\\
&=\osc(u)^2+\osc(v)^2-2(M(u)-M(v))(m(v)-m(u)).
\end{align*}
The last term in the above expression is non-negative by assumption, thus the expression is bounded by $\osc(u)^2+\osc(v)^2$, as claimed. 
\end{proof}

Next, we include a lemma that allows us to ``patch" together Sobolev functions. For an open set $V\subset \Omega$ recall that $\partial_*V = \partial V\cap S$; see Figure \ref{harmonic:fig:boundary}.

\begin{figure}
  \centering

\definecolor{cqcqcq}{rgb}{0.75,0.75,0.75}
\begin{tikzpicture}[line cap=round,line join=round,>=triangle 45,x=1.0cm,y=1.0cm, scale=.4]

\clip(-2,-4) rectangle (8.5,8);

//big circle
\draw[fill=black!5](3.16,1.92) circle (5.03cm);

//set V
\draw[purple, line width=0.3mm] plot [smooth cycle] coordinates { (5,-1.5) (5,0.5) (3.6,1.2)(2.8,0.8) (2.2, 0.2) (1,0)  (1,-1.5) (2.1,-2.6) (4,-2.5) (4.6,-2.4)};

\draw[fill=white](3.032,3.74) circle (2.33cm);
\draw[fill=white](6.62,2.02) circle (1.26cm);
\draw[fill=white](0.45,0.66) circle (1.48cm);
\draw[fill=white](2.91,-1.023) circle (0.94cm);
\draw[fill=white](4.98,0.64) circle (0.69cm);
\draw[fill=white](4.98,-1.28) circle (0.71cm);
\draw[fill=white](6.06,4.61) circle (0.58cm);
\draw[fill=white](-0.53,3.67) circle (0.56cm);
\draw[fill=white](1.10,-1.65) circle (0.51cm);
\draw[fill=white](3.83,-2.26) circle (0.43cm);
\draw[fill=white](2.23,-2.57) circle (0.35cm);
\draw[fill=white](2.84,0.79) circle (0.37cm);
\draw[fill=white](3.78,0.30) circle (0.30cm);
\draw[fill=white](3.73,1.17) circle (0.22cm);
\draw[fill=white](6.32,0.10) circle (0.41cm);
\draw[fill=white](6.22,-1.02) circle (0.29cm);
\draw[fill=white](5.66,-0.36) circle (0.20cm);
\draw[fill=white](4.19,-0.31) circle (0.17cm);
\draw[fill=white](4.65,-2.39) circle (0.23cm);
\draw[fill=white](2.19,0.27) circle (0.18cm);
\draw[fill=white](7.30,0.51) circle (0.20cm);
\draw[fill=white](7.,-0.5) circle (0.19cm);
\draw[fill=white](-1.29,2.59) circle (0.26cm);
\draw[fill=white](-1.50,1.82) circle (0.26cm);
\draw[fill=white](-0.24,2.68) circle (0.30cm);
\draw[fill=white](0.15,-1.07) circle (0.16cm);
\draw[fill=white](0.071,-1.61) circle (0.17cm);
\draw[fill=white](-0.11,5.17) circle (0.23cm);
\draw[fill=white](0.15,4.49) circle (0.25cm);
\draw[fill=white](0.64,5.70) circle (0.28cm);
\draw[fill=white](1.38,6.22) circle (0.22cm);
\draw[fill=white](2.27,6.45) circle (0.23cm);
\draw[fill=white](3.19,6.52) circle (0.23cm);
\draw[fill=white](4.32,6.31) circle (0.25cm);
\draw[fill=white](5.10,5.83) circle (0.42cm);
\draw[fill=white](6.88,3.83) circle (0.28cm);
\draw[fill=white](5.75,3.66) circle (0.22cm);

\fill[color=pink, opacity=.5] plot [smooth cycle] coordinates { (5,-1.5) (5,0.5) (3.6,1.2)(2.8,0.8) (2.2, 0.2) (1,0)  (1,-1.5) (2.1,-2.6) (4,-2.5) (4.6,-2.4)};
//grid  
			//grid	
\end{tikzpicture}
  \caption{An open set $V$ (pink) intersecting a round carpet $S$, and the set $\partial_*V$ that corresponds to $V$. Here, $\Omega$ has one boundary component, the largest circle. Also, $\br S$ is an actual Sierpi\'nski carpet, as defined in the Introduction.}
  \label{harmonic:fig:boundary}
\end{figure}
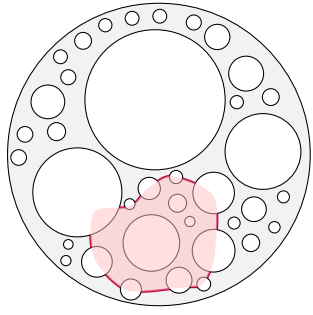

\begin{lemma}\label{harmonic:3-Lemma-removability} 
Let $V\subset \Omega$ be an open set such that $\partial_* V\neq \emptyset$. Let $\phi,\psi \in \mathcal W^{1,2}_{w,\loc}(S)$ $(\mathcal W^{1,2}_{s,\loc}(S))$ such that $\phi=\psi$ on $\partial_{*}V$. Then $h\coloneqq  \phi \x_{S\cap V}+ \psi \x_{S\setminus V}\in \mathcal W^{1,2}_{w,\loc}(S)$ $(\mathcal W^{1,2}_{s,\loc}(S)) $. 

Moreover, $\osc_{Q_i}(h)= \osc_{Q_i}(\phi)$ for $\partial Q_i\subset V$, $\osc_{Q_i}(h)=\osc_{Q_i}(\psi)$ for $\partial Q_i\subset \Omega\setminus \br{V}$, and $\osc_{Q_i}(h)\leq \osc_{Q_i}(\phi)+\osc_{Q_i}(\psi)$ otherwise.
\end{lemma}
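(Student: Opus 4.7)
The plan is to verify the oscillation estimates and the local square-summability of $\{\osc_{Q_i}(h)\}_{i \in \N}$ directly, and then to exhibit a weak (strong) upper gradient for $h$. For the latter I will show that $\{\osc_{Q_i}(\phi) + \osc_{Q_i}(\psi)\}_{i \in \N}$ is a weak (strong) upper gradient for $h$; by Remark~\ref{harmonic:3-Definition Remark rho-osc}, this will automatically promote $\{\osc_{Q_i}(h)\}_{i \in \N}$ to an upper gradient as well, completing the proof that $h \in \mathcal W^{1,2}_{w,\loc}(S)$ (resp.\ $\mathcal W^{1,2}_{s,\loc}(S)$).

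For the oscillation bounds I argue by cases on how $\partial Q_i$ sits with respect to $V$. If $\partial Q_i \subset V$, then $h$ coincides with $\phi$ on $\partial Q_i$, so $\osc_{Q_i}(h) = \osc_{Q_i}(\phi)$; similarly $\osc_{Q_i}(h) = \osc_{Q_i}(\psi)$ when $\partial Q_i \subset \Omega \setminus \br V$. Otherwise the Jordan curve $\partial Q_i \subset S$ meets both $V$ and its complement, so by connectedness it must contain a point $z \in \partial V \cap S = \partial_* V$, where $\phi(z) = \psi(z)$; splitting any difference $|h(a) - h(b)|$ for $a, b \in \partial Q_i$ through such a $z$ yields $\osc_{Q_i}(h) \leq \osc_{Q_i}(\phi) + \osc_{Q_i}(\psi)$. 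Local square-summability of $\{\osc_{Q_i}(h)\}$ is then immediate from the Sobolev hypotheses on $\phi$ and $\psi$.

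For the upper gradient inequality, take a path $\gamma \in \mathcal G_\phi \cap \mathcal G_\psi$ (a family containing almost every path in $\Omega$, by subadditivity of modulus) and $x, y \in \gamma \cap S$. If $x, y$ lie on the same side of $V$, the inequality follows directly from $\phi$'s or $\psi$'s upper gradient. Otherwise I may assume $x \in V$ and $y \in \Omega \setminus \br V$, so $\gamma$ must cross $\partial V$. If $\gamma$ meets $\partial_* V$ at a point $z$, I split at $z$ using $\phi(z) = \psi(z)$ to reduce to two single-side applications. If instead $\gamma \cap \partial_* V = \emptyset$, then every crossing of $\partial V$ by $\gamma$ occurs inside some peripheral disk, since $\partial V \cap \Omega \subset \partial_* V \cup \bigcup_i Q_i$. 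I then iterate: let $Q_{j^*}$ be the first peripheral disk entered by $\gamma$ after the current starting time, with first entry $a$ and last exit $b$. Then $a \in V \cap \partial Q_{j^*}$, and the subpath from the current start to $a$ stays in $V \cap S$ and meets no peripheral disk, so $\phi$'s upper gradient forces $\phi$ to be constant along it; the straddling argument on the Jordan curve $\partial Q_{j^*}$ then gives $|h(a) - h(b)| \leq \osc_{Q_{j^*}}(\phi) + \osc_{Q_{j^*}}(\psi)$. If $b \in \Omega \setminus \br V$, I finish using $\psi$'s upper gradient on the remaining subpath; otherwise $b \in V$ and I recurse. Crucially, $\gamma$ does not return to $Q_{j^*}$ after $b$, so successive iterations use distinct peripheral disks, and summing the contributions yields the required bound.

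The hardest step will be handling a possibly infinite iteration. The square-summability of $\{\osc_{Q_i}(\phi) + \osc_{Q_i}(\psi)\}$ over peripheral disks intersecting $\gamma$ ensures a convergent tail, and combined with the shrinking diameters of the $Q_{j_k^*}$ (Lemma~\ref{harmonic:Fatness consequence}) the starting times $T_k$ must converge to some $T_\infty$ with $\gamma(T_\infty) \in V \cap S$. Using the continuity of $\phi$ along the shrinking subpaths $\gamma|_{[T_k, T_\infty]}$ (a consequence of its upper gradient together with Remark~\ref{harmonic:Remark:summable}), I can pass to the limit and restart the iteration on $\gamma|_{[T_\infty, 1]}$, eventually exhausting $\gamma$ via a supremum argument.
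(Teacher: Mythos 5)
Your oscillation estimates, the easy cases of the upper gradient inequality, and the final appeal to Remark~\ref{harmonic:3-Definition Remark rho-osc} all match the paper's proof. The gap is in the crucial case $x\in S\cap V$, $y\in S\setminus V$ with $\gamma\cap\partial_*V=\emptyset$, where your disk-by-disk iteration rests on a false picture of good paths. First, ``the first peripheral disk entered by $\gamma$ after the current starting time'' is not well defined: since $\mathcal H^1(\gamma\cap S)=0$, Lemma~\ref{harmonic:lemma:paths zero hausdorff} shows that the peripheral disks met by $\gamma$ accumulate at \emph{every} point of $\gamma\cap S$, so there is no first one. Second, and for the same reason, a non-constant subpath of $\gamma$ that ``stays in $V\cap S$ and meets no peripheral disk'' cannot exist (its diameter would be a positive lower bound for $\mathcal H^1(\gamma\cap S)$), so the step where $\phi$ is ``forced to be constant'' between consecutive straddling disks is vacuous. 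Third, even if you repair the bookkeeping, the in-between subpaths must then be handled with $\phi$'s upper gradient, and distinct subpaths of $\gamma$ can meet the \emph{same} peripheral disk, so summing their contributions over infinitely many iterations does not obviously stay below a fixed multiple of $\sum_{i:Q_i\cap\gamma\neq\emptyset}(\osc_{Q_i}(\phi)+\osc_{Q_i}(\psi))$; controlling such multiplicities is exactly what the delicate Lemma~\ref{harmonic:3-Split path} is for, and you have not invoked anything of that kind.

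The paper avoids all of this with a one-step argument you should compare against: set $\widetilde V=V\cup\bigcup_{i:\partial Q_i\subset V}Q_i$, note that $\gamma$ must hit $\partial\widetilde V$ at a point $z$ which (since $\gamma\cap\partial_*\widetilde V=\gamma\cap\partial_*V=\emptyset$) lies in a single peripheral disk $Q_{i_0}$ with $\partial Q_{i_0}\cap\partial V\neq\emptyset$, take the first entry point $x_0\in\partial Q_{i_0}\cap V\cap S$ and any $w\in\partial Q_{i_0}\cap\partial_*V$ where $\phi(w)=\psi(w)$, and chain $x\to x_0\to w\to x_0\to y$. This yields $|h(x)-h(y)|\le 2\bigl(\sum_{i:Q_i\cap\gamma\neq\emptyset}\osc_{Q_i}(\phi)+\sum_{i:Q_i\cap\gamma\neq\emptyset}\osc_{Q_i}(\psi)\bigr)$ with no iteration; a single straddling disk suffices. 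Separately, note that membership in the Sobolev space also requires the summability condition on $M_{Q_i}(h)\diam(Q_i)$, which you do not address; it follows from $M_{Q_i}(|h|)\le M_{Q_i}(|\phi|)+M_{Q_i}(|\psi|)$ together with Remark~\ref{harmonic:3-Definition Remark}.
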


\begin{proof}
We first show the oscillation relations. The first two are trivial and for the last one we note that if $\partial Q_i\not\subset V$ and $\partial Q_i\not\subset \Omega \setminus \br V$, then there exists a point $x\in \partial Q_i\cap \partial V \subset \partial_*V$, so $\phi(x)=\psi(x)$. Let $z,w\in \partial Q_i$ be arbitrary. If $h(z)=\phi(z)$ and $h(w)=\psi(w)$ then
\begin{align*}
|h(z)-h(w)|&\leq |h(z)-h(x)|+|h(x)-h(w)|\\
&= |\phi(z)-\phi(x)|+|\psi(x)-\psi(w)|\\
&\leq \osc_{Q_i}(\phi)+\osc_{Q_i}(\psi).
\end{align*}
The above inequality also holds trivially in case $h(z)=\phi(z)$ and $h(w)=\phi(w)$, or $h(z)=\psi(z)$ and $h(w)=\psi(w)$. This proves the claim. 

The summability condition \eqref{harmonic:3-Sobolev L2 gradient} follows immediately from the oscillation relations, and the summability condition \eqref{harmonic:3-Sobolev L2} follows from the fact that $M_{Q_i}(|h|) \leq  M_{Q_i}(|\phi|)+M_{Q_i}(|\psi|)$; see Remark \ref{harmonic:3-Definition Remark}. It remains to show the upper gradient inequality. By Remark \ref{harmonic:3-Definition Remark rho-osc}, it suffices to prove that there exists a locally square-summable sequence $\{\rho(Q_i)\}_{i\in \N}$, which is a weak (strong) upper gradient for $h$. This will imply that $\{\osc_{Q_i}(h)\}_{i\in \N}$ has the same property.

Let $\gamma$ be a path that is good for both $\phi$ and $\psi$ and joins $x,y\in \gamma\cap S$. We wish to prove that
\begin{align*}
|h(x)-h(y)|\leq \sum_{i:Q_i\cap \gamma\neq \emptyset}\rho(Q_i)
\end{align*}
for  points $x,y\in \gamma\cap S$, where $\rho(Q_i)$ is to be chosen. This will imply that the upper gradient inequality holds along the family of paths which are good for both $\phi$ and $\psi$, and thus for almost every path by the subadditivity of modulus.

If the endpoints $x,y$ of $\gamma$ lie in $S\cap V$, then we have
\begin{align*}
|h(x)-h(y)|\leq \sum_{i:Q_i\cap \gamma\neq \emptyset} \osc_{Q_i}(\phi),
\end{align*}
and if $x,y\in S\setminus V$, then
\begin{align*}
|h(x)-h(y)|\leq \sum_{i:Q_i\cap \gamma\neq \emptyset} \osc_{Q_i}(\psi).
\end{align*}

Now, suppose that $x\in S\cap V$ and $y\in S\setminus V$,  but the path $\gamma$ does \textit{not} intersect $\partial_*V$. This implies that $\gamma$ intersects some peripheral disk $Q_{i_0}$ with $\partial Q_{i_0}\cap \partial V\neq \emptyset$. Indeed, consider the set 
\begin{align*}
\widetilde V= V\cup \left( \bigcup_{i:\partial Q_i \subset V} Q_i\right),
\end{align*}
which has the properties that $\widetilde V\cap S=V\cap S$, and $\partial_{*}\widetilde V=\partial_*V$. Note that $\gamma$ intersects $\partial \widetilde V$ at a point $z$, since it has to exit $\widetilde V$. Furthermore, $z$ cannot lie in $S$ because $\gamma \cap \partial_*V=\gamma\cap \partial_{*}\widetilde V=\emptyset$, but it has to lie in some peripheral disk $Q_{i_0}$. We assume that $z$ is the first point of $\partial \widetilde V$ that $\gamma$ hits as it travels from $x$ to $y$. Let $x_0\in \partial Q_{i_0}\cap \gamma$ be the first entry point of $\gamma$ in $\partial Q_{i_0}$, and note that necessarily $x_0\in \widetilde V \cap S= V\cap S$. Since $\partial Q_{i_0} \not\subset V$ (otherwise $z\in \widetilde V$), we have $\partial Q_{i_0} \cap \partial V\neq \emptyset$, so we fix a point $w \in \partial Q_{i_0} \cap \partial V\subset \partial_*V$. We now have
\begin{align*}
|h(x)-h(y)|&= |\phi(x)-\psi(y)|\\
\notag &\leq |\phi(x)-\phi(x_0)|+ |\phi(x_0)-\phi(w)|\\
\notag &\quad \quad \quad+ |\psi(w)-\psi(x_0)|+ |\psi(x_0)-\psi(y)| \\
\notag &\leq 2\left(\sum_{i:Q_i\cap \gamma\neq \emptyset}\osc_{Q_i}(\phi)+\sum_{i:Q_i\cap \gamma\neq \emptyset} \osc_{Q_i}(\psi) \right),
\end{align*}
where we used that $\phi(w)=\psi(w)$ by the assumption that $\phi=\psi$ on $\partial_*V$.

Finally, we assume that $x\in S\cap V$ and $y\in S\setminus V$ and there exists a point $z\in \gamma\cap \partial_*V$. Here we have the estimate
\begin{align*}
|h(x)-h(y)|&\leq |h(x)-h(z)|+|h(z)-h(y)|\\
\notag&=|\phi(x)-\phi(z)| + |\psi(z)-\psi(y)|\\
\notag &\leq \sum_{i:Q_i\cap \gamma\neq \emptyset}\osc_{Q_i}(\phi)+\sum_{i:Q_i\cap \gamma\neq \emptyset} \osc_{Q_i}(\psi).
\end{align*}

Summarizing, we may choose $\rho(Q_i)=2(\osc_{Q_i}(\phi)+\osc_{Q_i}(\psi))$ for $i\in \N$. This is clearly locally square-summable, since $\{\osc_{Q_i}(\phi)\}_{i\in \N}$ and $\{\osc_{Q_i}(\psi)\}_{i\in \N}$ are.
\end{proof}

\begin{remark}
It is very crucial in the preceding statement that $\phi=\psi$ on $\partial_{*}V$, and we do not merely have $\phi(x)=\psi(x)$ for ``accessible" points $x\in \partial_{*}V$. Indeed, one can construct square Sierpi\'nski carpets for which the conclusion fails, if we use $\phi=0$ and $\psi=1$, and the ``interface" $\partial_{*}V=\partial V\cap S$ is too small to be ``seen" by carpet modulus; this is to say, that the curves passing through $\partial_*V$ have weak (strong) carpet modulus equal to $0$. See also the next lemma.
\end{remark}

For technical reasons, we also need the following modification of the previous lemma:

\begin{lemma}\label{harmonic:Removability3}
Let $V\subset \Omega$ be an open set. Let $\phi,\psi \in \mathcal W^{1,2}_{w,\loc}(S)$ $(\mathcal W^{1,2}_{s,\loc}(S))$ and suppose that there exists a path family $\mathcal G$ in $\Omega$ that contains almost every path, such that $\phi(x) =\psi(x)$ for all points $x\in S$ that are ``accessible" by paths of $\mathcal G$. Then $h\coloneqq  \phi \x_{S\cap V}+ \psi \x_{S\setminus V}\in \mathcal W^{1,2}_{w,\loc}(S)$ $(\mathcal W^{1,2}_{s,\loc}(S)) $. 

Moreover, $\osc_{Q_i}(h)= \osc_{Q_i}(\phi)$ for $\partial Q_i\subset V$, $\osc_{Q_i}(h)=\osc_{Q_i}(\psi)$ for $\partial Q_i\subset \Omega\setminus \br{V}$, and $\osc_{Q_i}(h)\leq \osc_{Q_i}(\phi)+\osc_{Q_i}(\psi)$ otherwise.
\end{lemma}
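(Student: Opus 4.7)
The plan is to mimic the proof of Lemma \ref{harmonic:3-Lemma-removability} almost verbatim, with the single modification that we can no longer use $\phi=\psi$ at prescribed boundary points of $\partial_*V$, but only the weaker hypothesis that $\phi(x)=\psi(x)$ at every point $x\in S$ that is ``accessible'' by some path of $\mathcal G$. The enabling observation is that every peripheral circle $\partial Q_i$ contains an ``accessible'' point by a path of $\mathcal G$: the complement of $\mathcal G$ having weak (strong) modulus zero, together with Lemma \ref{harmonic:Paths joining continua} applied to a short path joining $\partial Q_i$ to a nearby continuum disjoint from $Q_i$, produces a path in $\mathcal G$ having an endpoint on $\partial Q_i$ without entering $Q_i$, and such an endpoint is ``accessible'' in the sense of Section \ref{harmonic:Subsection-Discrete Sobolev}.

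For the oscillation inequalities, the cases $\partial Q_i\subset V$ and $\partial Q_i\subset \Omega\setminus \br V$ are unchanged. In the remaining case, for $z,w\in \partial Q_i$ with $h(z)=\phi(z)$ and $h(w)=\psi(w)$, I would select any ``accessible'' point $x\in \partial Q_i$ by a path of $\mathcal G$, use $\phi(x)=\psi(x)$, and estimate
\begin{align*}
|h(z)-h(w)|&\leq |\phi(z)-\phi(x)|+|\phi(x)-\psi(x)|+|\psi(x)-\psi(w)|\\
&\leq \osc_{Q_i}(\phi)+\osc_{Q_i}(\psi).
\end{align*}
The summability conditions \eqref{harmonic:3-Sobolev L2} and \eqref{harmonic:3-Sobolev L2 gradient} then follow exactly as in Lemma \ref{harmonic:3-Lemma-removability}, using $M_{Q_i}(|h|)\leq M_{Q_i}(|\phi|)+M_{Q_i}(|\psi|)$.

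For the upper gradient inequality, I would restrict attention to paths in $\mathcal G\cap \mathcal G_\phi\cap \mathcal G_\psi$, which still contains almost every path by the subadditivity of modulus, and replicate the case analysis of the old proof for $\gamma$ connecting $x,y\in \gamma\cap S$. The three endpoint configurations (both in $V$, both in $\Omega\setminus V$, and mixed) split in the same way, and only the mixed case needs adjustment. At every step where the original proof invokes $\phi(w)=\psi(w)$ at a point $w\in \partial_*V$ (whether $w$ is the crossing point of $\gamma$ with $\partial_*V$ itself or a chosen point of $\partial Q_{i_0}\cap \partial V$), I would replace $w$ by an ``accessible'' point $w'\in \partial Q_{i_0}$ provided by the preceding paragraph, at which we do have $\phi(w')=\psi(w')$. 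The substitution introduces an extra $\osc_{Q_{i_0}}(\phi)+\osc_{Q_{i_0}}(\psi)$ into the telescoping bound, which is harmlessly absorbed because $\gamma$ meets $Q_{i_0}$, leaving the final estimate $|h(x)-h(y)|\leq 2\bigl(\sum_{i:Q_i\cap\gamma\neq \emptyset}\osc_{Q_i}(\phi)+\sum_{i:Q_i\cap\gamma\neq \emptyset}\osc_{Q_i}(\psi)\bigr)$ intact; then Remark \ref{harmonic:3-Definition Remark rho-osc} upgrades $\{\osc_{Q_i}(h)\}_{i\in \N}$ to a weak (strong) upper gradient as required.

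The main obstacle I anticipate is the mixed-endpoint sub-case in which $\gamma$ crosses $\partial V$ precisely at a point $z$ lying on some peripheral circle $\partial Q_{i_0}$; here $z$ itself is not a priori ``accessible'' by $\gamma$, so $\phi(z)=\psi(z)$ cannot be invoked directly. The ``accessible-point'' substitution on $\partial Q_{i_0}$ outlined above is precisely the device needed to bypass this difficulty, and it does so at the cost of the factor $2$ already built into the old choice of $\rho$.
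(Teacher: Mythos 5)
Your argument is correct, and the oscillation estimate via an ``accessible'' point of $\partial Q_i$ is exactly what the paper does. For the upper gradient inequality, however, the paper takes a shorter route worth comparing with yours: it performs no $V$-versus-$\Omega\setminus V$ case analysis at all. Since $\phi$ and $\psi$ agree at every ``accessible'' point, $h$ coincides with $\phi$ (and with $\psi$) at every point of $\gamma\cap S$ that is ``accessible'' by $\gamma\in\mathcal G\cap\mathcal G_\phi\cap\mathcal G_\psi$ --- in particular at the endpoints $x,y$ when these are ``accessible'' --- so $|h(x)-h(y)|=|\phi(x)-\phi(y)|$ and the upper gradient inequality of $\phi$ transfers verbatim; a non-``accessible'' endpoint $x\in\partial Q_{i_x}$ is handled by passing to the last exit point of $\gamma$ from $Q_{i_x}$, which contributes the extra $\osc_{Q_{i_x}}(\phi)+\osc_{Q_{i_x}}(\psi)$. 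The geometry of $V$ is thus irrelevant here, in contrast to Lemma \ref{harmonic:3-Lemma-removability} where it was essential; this is what the hypothesis ``same values at all accessible points'' buys. Your replication of the crossing-point analysis does go through, but one step should be made explicit: when $\gamma$ meets $\partial_*V$ only at a point $z\in\partial Q_{i_0}$, the extra $\osc_{Q_{i_0}}(\phi)+\osc_{Q_{i_0}}(\psi)$ introduced by your substitution is absorbed into the right-hand side only if $\gamma$ actually enters the open disk $Q_{i_0}$. If it does not, then a one-sided open subcurve of $\gamma$ landing at $z$ avoids $Q_{i_0}$, so $z$ is itself ``accessible'' by $\gamma$ and $\phi(z)=\psi(z)$ holds directly, with no substitution needed. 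Either horn of this dichotomy closes the case, but the dichotomy itself deserves a sentence in your write-up.
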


The statement that $\mathcal G$ contains almost every path is equivalent to saying that its complement has weak (strong) modulus equal to zero. By Lemma \ref{harmonic:3-Identification lemma}, the assumption of the lemma is equivalent to saying that $\phi$ and $\psi$ have the same normalized version. The conclusion is essentially that no matter how one modifies a function within its equivalence class, it still remains in the Sobolev space. 

\begin{proof}
The proof is elementary so we skip some steps. Consider the curve  family $\mathcal G_0$ which contains all curves that are good for both $\phi$ and $\psi$, and are contained in $\mathcal G$. 

For a fixed $i\in \N$ consider points $z,w\in \partial Q_i$. Using Lemma \ref{harmonic:Paths joining continua}, we may find a path $\gamma\in \mathcal G$ and a point $x\in \partial Q_i$ that is ``accessible" from $\gamma$. Therefore, $\phi(x)=\psi(x)$. Now, if $h(z)=\phi(z)$ and $h(w)=\psi(w)$, we have
\begin{align*}
|h(z)-h(w)|\leq |\phi(z)-\phi(x)|+|\psi(x)-\psi(w)|\leq \osc_{Q_i}(\phi)+ \osc_{Q_i}(\psi).
\end{align*}
This shows one of the claimed oscillation inequalities. The other cases are trivial.

For the upper gradient inequality, let $\gamma\in \mathcal G$ be a curve and $x,y\in \gamma\cap S$. If $x$ and $y$ are ``accessible" by $\gamma$, then by assumption $\phi(x)=\psi(x)=h(x)$ and $\phi(y)=\psi(y)=h(y)$, hence
\begin{align*}
|h(x)-h(y)|= |\phi(x)-\phi(y)|\leq \sum_{i:Q_i\cap \gamma \neq \emptyset}\osc_{Q_i}(\phi).
\end{align*}
If $x\in \partial Q_{i_x}$ is ``non-accessible" then we may consider the last exit point of $\gamma$ from $Q_{i_x}$ as it travels from $x$ to $y$, in order to obtain an additional contribution $\osc_{Q_{i_x}}(h)\leq \osc_{Q_{i_x}}(\phi)+ \osc_{Q_{i_x}}(\psi)$ in the above sum. The same comment applies if $y$ is ``non-accessible". Thus, in all cases
\begin{align*}
|h(x)-h(y)|\leq \sum_{i:Q_i\cap \gamma \neq \emptyset}(\osc_{Q_i}(\phi)+\osc_{Q_i}(\psi)).
\end{align*}
This shows that $\rho(Q_i)\coloneqq \osc_{Q_i}(\phi)+\osc_{Q_i}(\psi)$ is an upper gradient of $h$. Using Remark \ref{harmonic:3-Definition Remark rho-osc} we derive the desired conclusion.
\end{proof}

Finally, we need a special instance of Lemma \ref{harmonic:3-Lemma-removability}:

\begin{corollary}\label{harmonic:Removability4}
Let $V\subset \Omega$ be an open set such that $\partial_* V\neq \emptyset$. Let $\psi \in \mathcal W^{1,2}_{w,\loc}(S)$ $(\mathcal W^{1,2}_{s,\loc}(S))$ and $M\in \R$ be such that $\psi\leq M$ on $\partial_{*}V$. Then $h\coloneqq  (\psi \land M) \x_{S\cap V}+ \psi \x_{S\setminus V}\in \mathcal W^{1,2}_{w,\loc}(S)$ $(\mathcal W^{1,2}_{s,\loc}(S)) $. Moreover, $\osc_{Q_i}(h)\leq \osc_{Q_i}(\psi)$ for all $i\in \N$.
\end{corollary}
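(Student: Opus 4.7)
The plan is to derive this as a direct consequence of Lemma \ref{harmonic:3-Lemma-removability} applied with $\phi := \psi \land M$, where the constant function $M$ trivially lies in $\mathcal W^{1,2}_{w,\loc}(S)$ (or the strong analogue) with oscillation zero on every $Q_i$. Proposition \ref{harmonic:3-Properties Sobolev}(c) then gives $\phi \in \mathcal W^{1,2}_{w,\loc}(S)$ with $\osc_{Q_i}(\phi) \leq \max\{\osc_{Q_i}(\psi),0\} = \osc_{Q_i}(\psi)$ for every $i$. The hypothesis $\psi \leq M$ on $\partial_* V$ forces $\phi = \psi \land M = \psi$ on $\partial_* V$, so Lemma \ref{harmonic:3-Lemma-removability} applies and immediately yields $h = \phi\,\x_{S\cap V} + \psi\,\x_{S\setminus V} \in \mathcal W^{1,2}_{w,\loc}(S)$.

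For the oscillation inequality, two of the three cases given by Lemma \ref{harmonic:3-Lemma-removability} work with no extra effort: if $\partial Q_i \subset V$ then $\osc_{Q_i}(h) = \osc_{Q_i}(\phi) \leq \osc_{Q_i}(\psi)$, and if $\partial Q_i \subset \Omega \setminus \br V$ then $\osc_{Q_i}(h) = \osc_{Q_i}(\psi)$. The main (mild) obstacle is the remaining case, where Lemma \ref{harmonic:3-Lemma-removability} only supplies $\osc_{Q_i}(h) \leq \osc_{Q_i}(\phi) + \osc_{Q_i}(\psi) \leq 2\osc_{Q_i}(\psi)$, losing a factor of two.

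To sharpen this, I would argue pointwise on $\partial Q_i$. Since $\partial Q_i$ is connected and meets both $V$ and its complement, there exists a point $x \in \partial Q_i \cap \partial V \subset \partial_* V$; by hypothesis $\psi(x) \leq M$, so in particular $M \geq m_{Q_i}(\psi)$. For an arbitrary $z \in \partial Q_i$, split into cases: if $z \in V$ then
\[
h(z) = \min(\psi(z),M) \in \bigl[\min(m_{Q_i}(\psi),M),\,\min(M_{Q_i}(\psi),M)\bigr] \subset \bigl[m_{Q_i}(\psi),\,M_{Q_i}(\psi)\bigr],
\]
using $M \geq m_{Q_i}(\psi)$; and if $z \in \Omega \setminus V$ then $h(z) = \psi(z)$ already lies in $[m_{Q_i}(\psi),M_{Q_i}(\psi)]$. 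Hence $\osc_{Q_i}(h) \leq M_{Q_i}(\psi) - m_{Q_i}(\psi) = \osc_{Q_i}(\psi)$ in this final case as well, completing the proof.
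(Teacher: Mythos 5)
Your proposal is correct and follows essentially the same route as the paper: both obtain the Sobolev membership by applying Lemma \ref{harmonic:3-Lemma-removability} to $\phi=\psi\land M$ (whose oscillation is controlled by Proposition \ref{harmonic:3-Properties Sobolev}), and both sharpen the oscillation bound in the mixed case by picking $x\in\partial Q_i\cap\partial V\subset\partial_*V$ to get $M\geq m_{Q_i}(\psi)$ and concluding that $h$ stays in $[m_{Q_i}(\psi),M_{Q_i}(\psi)]$ on $\partial Q_i$. Your pointwise interval argument is just a slightly more explicit rendering of the paper's final step.
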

\begin{proof}
The function $\phi\coloneqq \psi\land M$ lies in $\mathcal W^{1,2}_{w,\loc}(S)$ $(\mathcal W^{1,2}_{s,\loc}(S)) $ by Lemma \ref{harmonic:3-Properties Sobolev}\ref{harmonic:3-Properties Sobolev-min}, with $\osc_{Q_i}(\phi)\leq \osc_{Q_i}(\psi)$. Since $\phi=\psi$ on $\partial_*V$, it follows that $h\in \mathcal W^{1,2}_{w,\loc}(S)$ $(\mathcal W^{1,2}_{s,\loc}(S)) $ by Lemma \ref{harmonic:3-Lemma-removability}. It remains to show the oscillation inequality.

We fix $i\in \N$. If $\partial Q_i$ is contained in $V$ or in $S\setminus V$, then there is nothing to show, since $\osc_{Q_i}(\phi)\leq \osc_{Q_i}(\psi)$. Hence, we assume that $\partial Q_i$ intersects $\partial V$, i.e., $\emptyset \neq \partial Q_i\cap \partial V\subset \partial_*V$. Using the assumption that $\psi\leq M$ on $\partial_{*}V$, we see that $m_{Q_i}(\psi) \leq M$. If $\psi \big|_{\partial Q_i\cap V} \leq M$, then there is nothing to show, since $h=\psi$ on $\partial Q_i$. Suppose that there exists $z\in \partial Q_i\cap V$ such that $\psi(z)>M$. Then 
\begin{align*}
m_{Q_i}(\psi)\leq M < M_{Q_i}(\psi).
\end{align*}
This implies that $\osc_{Q_i}(h)\leq \osc_{Q_i}(\psi)$, as desired.
\end{proof}

\section{Carpet-harmonic functions}\label{harmonic:Section Carpet Harmonic}
Throughout the section we fix a relative Sierpi\'nski carpet $(S,\Omega)$ with the standard assumptions.

\subsection{Definition of carpet-harmonic functions}
Let $V\subset \Omega$ be an open set, and $f\in \mathcal W_{w,\loc}^{1,2}(S)$ $(\mathcal W_{s,\loc}^{1,2}(S))$. Define the \textit{(Dirichlet) energy functional}\index{Dirichlet energy} by
\begin{align*}
D_V(f)= \sum_{i\in I_V } \osc_{Q_i}(f)^2\in [0,\infty].
\end{align*}
Using the energy functional we define the notion of a weak (strong) carpet-harmonic function.

\begin{definition}\label{harmonic:4-Def carpet harmonic}
A function $u\in \mathcal W_{w,\loc}^{1,2}(S)$ $(\mathcal W_{s,\loc}^{1,2}(S))$ is weak (strong) carpet-harmonic\index{carpet-harmonic} if for every open set $V\subset  \subset \Omega$ and each $\zeta \in \mathcal W_{w}^{1,2}(S)$ $(\mathcal W_{s}^{1,2}(S))$ with $\zeta\big|_{S\setminus V}\equiv 0$ we have
\begin{align*}
D_V(u)\leq D_V(u+\zeta).
\end{align*}
In other words, $u$ minimizes the energy functional $D_V$ over Sobolev functions with the same boundary values as $u$.
\end{definition}

The functions $u,\zeta$ in the above definition are not assumed to be normalized, in the sense of the discussion in Section \ref{harmonic:Subsection Sobolev spaces}. Later we will see that the normalized version of a carpet-harmonic function has to be continuous; see Theorem \ref{harmonic:4-continuous}.

\begin{example}\label{harmonic:Square carpets coordinate functions}Let $(S,\Omega)$ be a relative Sierpi\'nski carpet\index{Sierpi\'nski carpet!relative!square} such that all peripheral disks $Q_i$ are squares with sides parallel to the coordinate axes; see Figure \ref{harmonic:fig:square}. Then the coordinate functions $u(x,y)=x$ and $v(x,y)=y$ are both weak and strong carpet-harmonic.

Since $u,v$ are Lipschitz, Example \ref{harmonic:3-Example-Lipschitz} implies that they both lie in $\mathcal W^{1,2}_{s,\loc}(S) \subset \mathcal W^{1,2}_{w,\loc}(S)$.

Let $V\subset \subset \Omega$ be an open set and consider the open set $V'=V\cup \bigcup_{i\in I_V}Q_i \supset V$. This set contains all the peripheral disks that it intersects and it is also compactly contained in $\Omega$. Moreover, $V'$ satisfies $I_{V'}=I_V$, and thus $D_{V'}\equiv D_V$. We will show that $D_{V}(v) \leq D_{V}(g)$ for all $g\in \mathcal W_{w}^{1,2}(S)\supset \mathcal W_s^{1,2}(S)$ with $g=v$ outside $V'$. This suffices for harmonicity. Indeed, if $g\in \mathcal W_{w}^{1,2}(S)$ is arbitrary with $g=v$ outside $V$, then $g=v$ outside $V'\supset V$, so $D_{V}(v)\leq D_{V}(g)$, which shows harmonicity.

From now on, we denote $V'$ by $V$ and we will use the property that it contains the peripheral disks that it intersects. Let $g\in \mathcal W_{w}^{1,2}(S)$ with $g=v$ outside $V$. Note that for a.e.\ $x\in \R$ the vertical line $\gamma_x$ passing through $x$ (or rather its subpaths that lie in $\Omega$) is a good path for $g$, by an argument very similar to the proof of Lemma \ref{harmonic:Paths joining continua}. We fix $x$ such that $\gamma_x$ is good and $\gamma_x\cap V\neq \emptyset$. The intersection is an open subset of a line, so it can be written as an (at most) countable union of disjoint open intervals $J_j=\{x\}\times(a_j,b_j)$, $j\in \N$. The points $(x,a_j),(x,b_j)$ must lie in $\partial V$, and therefore, they lie in $\partial V\cap S$, by the assumption on $V$. By the fact that we have $g(x,y)=v(x,y)=y$ for $(x,y)\notin V$, and the upper gradient inequality we obtain
\begin{align*}
b_j-a_j= |g(x,b_j)-g(x,a_j)|\leq \sum_{i: Q_i\cap J_j\neq \emptyset} \osc_{Q_i}(g).
\end{align*}
Summing over all $j$ and noting that a square $Q_i$ cannot intersect two distinct sets $J_j$, we have
\begin{align*}
\mathcal H^1(\gamma_x\cap V) \leq \sum_{\substack{i:Q_i\cap \gamma_x\neq \emptyset \\ i\in I_V}} \osc_{Q_i}(g).
\end{align*}
Integrating over $x$ and using Fubini's theorem in both sides, we have
\begin{align*}
\mathcal H^2(V) \leq \sum_{i\in I_V} \osc_{Q_i}(g) \int_{Q_i\cap \gamma_x\neq \emptyset} \,dx= \sum_{i\in I_V} \osc_{Q_i}(g) \ell(Q_i),
\end{align*}
where $\ell(Q_i)$ is the sidelength of $Q_i$. Now, using the Cauchy-Schwarz inequality and the fact that $\mathcal H^2(V)= \sum_{i\in I_V} \ell(Q_i)^2$ (because $\mathcal H^2(S)=0$), we see that
\begin{align*}
\sum_{i\in I_V} \ell(Q_i)^2 \leq \sum_{i\in I_V} \osc_{Q_i}(g)^2.
\end{align*} 
On the other hand, it is easy to see that
\begin{align*}
D_V(v)= \sum_{i\in I_V} \osc_{Q_i}(v)^2=\sum_{i\in I_V} \ell (Q_i)^2,
\end{align*}
so indeed $v$ has the minimal energy. The computation for $u(x,y)=x$ is analogous.
\end{example}

\begin{figure}
	\centering
	\includegraphics[width=.6\linewidth]{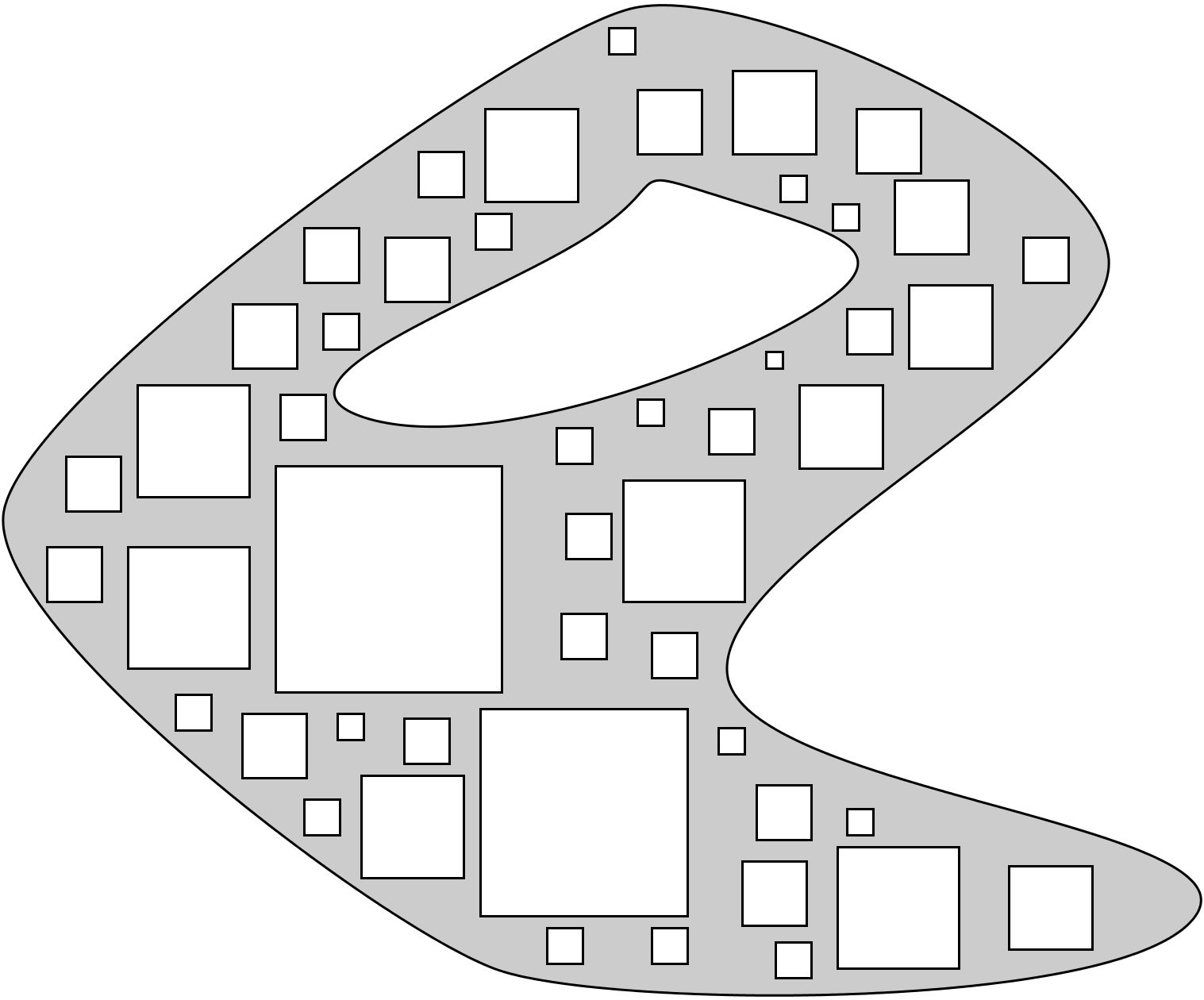}
  \caption{A square relative Sierpi\'nski carpet $(S,\Omega)$. Here $\Omega$ has two boundary components, the curves that are not squares.}
  \label{harmonic:fig:square}
\end{figure}

As we saw in Proposition \ref{harmonic:Pullback Sobolev Proposition}, locally quasiconformal maps preserve Sobolev spaces. Therefore, they must also preserve carpet-harmonic functions:
\begin{prop}\label{harmonic:Pullback of Carpet-harmonic}
Let $(S,\Omega)$, $(S',\Omega')$ be relative Sierpi\'nski carpets, and assume that $F\colon \Omega'\to \Omega$ is a locally quasiconformal map that maps the peripheral disks $Q_i'$ of $S'$ to the peripheral disks $Q_i=F(Q_i')$ of $S$. If $u\colon S\to \widehat \R$ is weak carpet-harmonic, then $u\circ F\colon  S'\to \widehat \R$ is also weak carpet-harmonic.
\end{prop}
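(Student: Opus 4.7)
The strategy is to transfer the energy minimization from $S'$ to $S$ via $F$, using the fact that $F$ is a homeomorphism carrying peripheral disks to peripheral disks bijectively, and then invoke carpet-harmonicity of $u$.

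First, I would record the basic invariance of oscillations. Since $F|_{\partial Q_i'} \colon \partial Q_i' \to \partial Q_i$ is a homeomorphism, for every $i\in \N$
\begin{align*}
M_{Q_i'}(u\circ F)=M_{Q_i}(u),\quad m_{Q_i'}(u\circ F)=m_{Q_i}(u),\quad \osc_{Q_i'}(u\circ F)=\osc_{Q_i}(u).
\end{align*}
By Proposition \ref{harmonic:Pullback Sobolev Proposition}, $u\circ F\in \mathcal W^{1,2}_{w,\loc}(S')$, so $u\circ F$ is a legitimate candidate for being carpet-harmonic.

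Next, given an open set $V'\subset\subset \Omega'$ and a test function $\zeta'\in \mathcal W^{1,2}_w(S')$ with $\zeta'|_{S'\setminus V'}\equiv 0$, set $V:=F(V')$. Since $F\colon \Omega'\to \Omega$ is a homeomorphism, $V$ is open and $\br V\subset F(\br{V'})\subset\subset \Omega$, and moreover $Q_i'\cap V'\neq \emptyset$ if and only if $Q_i\cap V\neq \emptyset$, so $I_{V'}=I_V$ under the bijection $Q_i'\leftrightarrow Q_i$. Define $\zeta:=\zeta'\circ F^{-1}$ on $S$. Applying Proposition \ref{harmonic:Pullback Sobolev Proposition} with $F$ replaced by the locally quasiconformal map $F^{-1}\colon \Omega\to \Omega'$ gives $\zeta\in \mathcal W^{1,2}_{w,\loc}(S)$. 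The condition $\zeta'|_{S'\setminus V'}\equiv 0$ translates to $\zeta|_{S\setminus V}\equiv 0$, so all but the indices $i\in I_V$ contribute trivially to the global Sobolev sums, and the local summability inside $V$ upgrades to $\zeta\in \mathcal W^{1,2}_w(S)$.

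The punchline is now a one-line computation using the oscillation invariance above applied to both $u$ and $u+\zeta$ (noting $u\circ F+\zeta'=(u+\zeta)\circ F$), which gives
\begin{align*}
D_{V'}(u\circ F)=\sum_{i\in I_{V'}}\osc_{Q_i'}(u\circ F)^2=\sum_{i\in I_V}\osc_{Q_i}(u)^2=D_V(u),
\end{align*}
and likewise $D_{V'}(u\circ F+\zeta')=D_V(u+\zeta)$. Since $u$ is weak carpet-harmonic and $\zeta$ vanishes on $S\setminus V$, Definition \ref{harmonic:4-Def carpet harmonic} yields $D_V(u)\leq D_V(u+\zeta)$, hence $D_{V'}(u\circ F)\leq D_{V'}(u\circ F+\zeta')$, which is the required minimization property.

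The only genuine technical point is promoting the local Sobolev regularity of $\zeta$ to global regularity; this uses crucially that $\zeta$ is supported in the compactly contained set $V\subset\subset \Omega$, so that peripheral disks $Q_i$ with $Q_i\cap V=\emptyset$ (which satisfy $\br Q_i\subset \Omega\setminus V$ by the Jordan region property) contribute $\osc_{Q_i}(\zeta)=M_{Q_i}(|\zeta|)=0$, and the remaining sum is finite by Proposition \ref{harmonic:Pullback Sobolev Proposition} applied on a slightly larger compactly contained neighborhood of $V$. Everything else is bookkeeping enabled by the oscillation-preserving nature of the peripheral-disk bijection induced by $F$.
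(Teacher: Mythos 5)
Your proposal is correct and follows essentially the same route as the paper: push the test function forward via $F^{-1}$, use Proposition \ref{harmonic:Pullback Sobolev Proposition} to place it in $\mathcal W^{1,2}_w(S)$, and transfer the Dirichlet energies term by term via the oscillation-preserving bijection of peripheral disks before invoking the harmonicity of $u$. The only difference is that you spell out the local-to-global upgrade for $\zeta$ (using its compact support), a point the paper leaves implicit.
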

\begin{proof}
Let $u\colon S\to \widehat \R$ be a weak carpet-harmonic function. Fix an open set $V'\subset \subset \Omega'$ and a function $\zeta' \in \mathcal W_{w}^{1,2}(S')$ such that $\zeta' \big|_{S'\setminus V'}\equiv 0$. Then $V\coloneqq F(V')$ is compactly contained in $\Omega$, and $\zeta\coloneqq  \zeta' \circ F^{-1} \in \mathcal W^{1,2}_{w}(S)$ with $\zeta\big|_{S\setminus V}\equiv 0$, by Proposition \ref{harmonic:Pullback Sobolev Proposition}. Thus, by the correspondence of the peripheral disks and the harmonicity of $u$ we have
\begin{align*}
D_{V'}(u\circ F+ \zeta')&= D_{V}(u\circ F\circ F^{-1} +\zeta'\circ F^{-1})\\
&= D_V(u+\zeta)\geq D_V(u) =D_{V'}(u\circ F).
\end{align*}  
This shows that $u\circ F$ is weak carpet-harmonic, as desired.
\end{proof}

\begin{corollary}\label{harmonic:Pullback quasisymmetry of carpet-harmonic}
Let $(S,\Omega)$, $(S',\Omega')$ be relative Sierpi\'nski carpets, and let $F\colon S'\to S$ be a local quasisymmetry. Furthermore, we assume that the peripheral circles of $S$ and $S'$ are uniform quasicircles. If $u\colon S\to \widehat \R$ is weak carpet-harmonic, then $u\circ F\colon  S'\to \widehat \R$ is also weak carpet-harmonic.
\end{corollary}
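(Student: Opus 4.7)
The plan is to reduce this corollary to the previous Proposition \ref{harmonic:Pullback of Carpet-harmonic} by producing a locally quasiconformal extension of $F$ to the ambient domains. The geometric hypothesis that the peripheral circles of $S$ and $S'$ are uniform quasicircles is precisely what Lemma \ref{harmonic:Pullback Extension} needs in order to extend a quasisymmetry $F\colon S'\to S$ to a locally quasiconformal homeomorphism $\widetilde F\colon \Omega'\to \Omega$ that sends each peripheral disk $Q_i'$ onto the corresponding peripheral disk $Q_i=F(Q_i')=\widetilde F(Q_i')$.

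First I would invoke Lemma \ref{harmonic:Pullback Extension} to obtain this extension $\widetilde F\colon \Omega'\to \Omega$. By construction, $\widetilde F$ agrees with $F$ on $S'$, so $u\circ F = u\circ \widetilde F$ on $S'$. Second, I would apply Proposition \ref{harmonic:Pullback of Carpet-harmonic} to $\widetilde F$ and the weak carpet-harmonic function $u$. The hypotheses of that proposition are satisfied: $\widetilde F$ is a locally quasiconformal homeomorphism between the ambient open sets that maps peripheral disks of $S'$ bijectively onto peripheral disks of $S$. The conclusion is that $u\circ \widetilde F$ is weak carpet-harmonic on $S'$, and since $u\circ \widetilde F=u\circ F$ on $S'$, we are done.

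The step that might require care is confirming that the extension $\widetilde F$ provided by Lemma \ref{harmonic:Pullback Extension} genuinely satisfies the hypothesis of Proposition \ref{harmonic:Pullback of Carpet-harmonic}. In particular, one should check that every peripheral disk $Q_i'$ of $S'$ is mapped onto a peripheral disk $Q_i$ of $S$, which follows from the topological fact cited in the proof sketch of Lemma \ref{harmonic:Pullback Extension} (a quasisymmetric homeomorphism between Sierpi\'nski carpets permutes peripheral circles, cf.\ \cite[Lemma 5.5]{Bonk:uniformization}), together with the Beurling--Ahlfors type extension inside each peripheral disk. Once this is in place, the corollary follows immediately, with no further computation needed.
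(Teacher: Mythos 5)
Your proposal is correct and is exactly the paper's argument: the corollary is deduced by combining the extension Lemma \ref{harmonic:Pullback Extension} (which uses the uniform quasicircle hypothesis to extend $F$ to a locally quasiconformal homeomorphism of the ambient domains preserving the correspondence of peripheral disks) with Proposition \ref{harmonic:Pullback of Carpet-harmonic}. No gaps.
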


The proof follows immediately from the extension Lemma \ref{harmonic:Pullback Extension} and Proposition \ref{harmonic:Pullback of Carpet-harmonic}.

An interesting corollary of this discussion that relates carpet-harmonic functions to rigidity problems on square carpets is the following:
\begin{corollary}\label{harmonic:Pullback square carpets}
Let $(S,\Omega)$, $(S',\Omega')$ be relative Sierpi\'nski carpets, and let $F\colon S'\to S$ be a local quasisymmetry. We assume that the peripheral circles of $S'$ are uniform quasicirlces and that the peripheral circles of $S$ are squares with sides parallel to the coordinate axes. Then the coordinates $u,v$ of the map $F\coloneqq (u,v)$ are weak carpet-harmonic. 
\end{corollary}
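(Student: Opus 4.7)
The plan is to combine Example \ref{harmonic:Square carpets coordinate functions} with Corollary \ref{harmonic:Pullback quasisymmetry of carpet-harmonic}, so the proof is essentially a one-line application of results already at hand. First I would identify the two coordinate projections $\pi_1(x,y)=x$ and $\pi_2(x,y)=y$, restricted to the square carpet $S$, as canonical examples of carpet-harmonic functions on $S$. Since the peripheral circles of $S$ are squares with sides parallel to the coordinate axes, Example \ref{harmonic:Square carpets coordinate functions} applies verbatim and tells us that $\pi_1\big|_S$ and $\pi_2\big|_S$ are both weak (and in fact strong) carpet-harmonic on $S$.

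Next I would verify that the hypotheses of Corollary \ref{harmonic:Pullback quasisymmetry of carpet-harmonic} are satisfied for the map $F \colon S'\to S$. We are given that $F$ is a local quasisymmetry and that the peripheral circles of $S'$ are uniform quasicircles. The peripheral circles of $S$ are axis-parallel squares, which are uniform $K$-quasicircles for a universal constant $K$. Thus both carpets satisfy the quasicircle assumption needed for the corollary.

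Applying Corollary \ref{harmonic:Pullback quasisymmetry of carpet-harmonic} with the carpet-harmonic function $\pi_1\big|_S$ yields that $\pi_1\circ F = u$ is weak carpet-harmonic on $S'$; applying it with $\pi_2\big|_S$ yields that $\pi_2\circ F = v$ is weak carpet-harmonic on $S'$. This completes the proof.

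There is no real obstacle here; the only thing worth double-checking is the hypothesis matching in the chain Example \ref{harmonic:Square carpets coordinate functions} $\to$ Corollary \ref{harmonic:Pullback quasisymmetry of carpet-harmonic}, namely that squares satisfy all the geometric assumptions (uniform quasiballs, uniformly fat, uniform quasicircles) required for $S$ to be a relative Sierpi\'nski carpet with the standard assumptions to which the pullback corollary applies.
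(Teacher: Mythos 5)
Your proposal matches the paper's own proof exactly: Example \ref{harmonic:Square carpets coordinate functions} gives that the coordinate functions on the square carpet $S$ are weak carpet-harmonic, and Corollary \ref{harmonic:Pullback quasisymmetry of carpet-harmonic} (whose quasicircle hypotheses are satisfied since axis-parallel squares are uniform quasicircles) pulls them back to $u$ and $v$ on $S'$. Nothing further is needed.
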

\begin{proof}
By Example \ref{harmonic:Square carpets coordinate functions} the $x$,$y$-coordinate functions on $S$ are weak carpet-harmonic. Corollary \ref{harmonic:Pullback quasisymmetry of carpet-harmonic} implies that the pullbacks $u,v$ of the coordinates are weak carpet-harmonic.  
\end{proof}

\subsection{Solution to the Dirichlet problem}\label{harmonic:Section Dirichlet}\index{Dirichlet problem}
Let $(S,\Omega)$ be a relative Sierpi\'nski carpet such that $\partial \Omega$ consists of finitely many, non-trivial, and disjoint Jordan curves (recall that these are homeomorphic images of $S^1$). We fix a function $f \in \mathcal W_{w}^{1,2}(S)$ $(\mathcal W_{s}^{1,2}(S))$. Then we can define the boundary values\index{boundary values} of $f$ on points $x\in \partial \Omega$ that are ``accessible" by a good path $\gamma$, using an analog of Lemma \ref{harmonic:3-well-defined}. Namely, we consider a good open path $\gamma\subset \Omega$ such that $\br \gamma\cap \partial \Omega\neq \emptyset$, and for $x\in \br \gamma \cap \partial \Omega$ we define
\begin{align*}
f(x)= \liminf_{\substack{Q_i\to x \\Q_i\cap \gamma\neq \emptyset }} M_{Q_i}(f).
\end{align*}
By a variant of Lemma \ref{harmonic:3-well-defined} this definition does not depend on the path $\gamma$ with $x\in \br \gamma$. For the ``non-accessible" points  $x\in \partial \Omega$ we define $f(x)=\liminf_{y\to x}f(y)$ where $y\in \partial \Omega$ is ``accessible". Note that every point $x\in \partial \Omega$ is the landing point of a (not necessarily good) path $\gamma\subset \Omega$, by our assumptions on $\partial \Omega$. Perturbing $\gamma$ as in Lemma \ref{harmonic:Paths boundary} we obtain a point $y$ near $x$ that is ``accessible" by a good path. Hence, there is a dense set of points in $\partial \Omega$ which are ``accessible" by good paths, and this implies that the boundary values of $f$ are well-defined on all of $\partial \Omega$.

We say that a function $u\in \mathcal W_{w}^{1,2}(S)$ $(\mathcal W_{s}^{1,2}(S))$ \textit{has boundary values equal to $f$} if there exists a path family $\mathcal G_0$ in $\Omega$, whose complement has weak (strong) modulus zero, such that $u(x)=f(x)$ for all points $x\in \partial \Omega$ that are ``accessible" by paths $\gamma\in \mathcal G_0$.

\begin{theorem}\label{harmonic:4-Thm Solution to Dirichlet problem}\index{Dirichlet problem!solution}
Suppose that $\Omega$ is bounded and let $f\in \mathcal W_{w}^{1,2}(S)$ $(\mathcal W_{s}^{1,2}(S))$ be a function with bounded boundary values, i.e., there exists $M>0$ such that $|f(x)|\leq M$ for all $x\in \partial \Omega$. Then there exists a unique function $u \in \mathcal W_{w}^{1,2}(S)$ $(\mathcal W_{s}^{1,2}(S)) $ that minimizes $D_\Omega(g)$ over all $g\in \mathcal W_{w}^{1,2}(S)$ $(\mathcal W_{s}^{1,2}(S))$ with boundary values equal to $f$. The function $u$ is weak (strong) carpet-harmonic.
\end{theorem}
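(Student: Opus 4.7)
The plan is to use the direct method of the calculus of variations, adapted to the discrete Sobolev setting. I will handle the weak case; the strong case is identical up to replacing ``weak'' by ``strong'' throughout.

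First, I would reduce to a uniformly bounded minimizing sequence. The class of admissible competitors is non-empty since $f$ itself has boundary values $f$, and its energy $D_{\Omega}(f)$ is finite. If $g$ is any competitor, then Corollary \ref{harmonic:Removability4} applied twice (to $g$ and to $-g$) shows that the truncation $\tilde g := (g \wedge M) \vee (-M)$ still lies in the Sobolev space, has the same boundary values, and satisfies $\osc_{Q_i}(\tilde g) \leq \osc_{Q_i}(g)$ for all $i$. So the infimum $\alpha$ of $D_{\Omega}$ over competitors is attained along a minimizing sequence $\{u_n\}$ with $|u_n| \leq M$ throughout $S$.

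Second, I would extract a limit at the level of oscillations. The sequences $\{\osc_{Q_i}(u_n)\}_{i\in\mathbb{N}}$ are uniformly bounded in $\ell^2(\mathbb{N})$ with norm-squared approaching $\alpha$. By Banach--Alaoglu they have an $\ell^2$-weak subsequential limit $\{\rho(Q_i)\}$ with $\|\rho\|_2^2 \leq \alpha$. Mazur's lemma then produces convex combinations $\tilde u_n = \sum_{k\geq n} c_{n,k} u_k$ whose oscillation vectors converge strongly in $\ell^2$ to $\rho$ (since by convexity of $t \mapsto t^2$ and the subadditivity $\osc_{Q_i}(\tilde u_n) \leq \sum_k c_{n,k}\osc_{Q_i}(u_k)$, the $\tilde u_n$ remain a bounded minimizing sequence). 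A diagonal argument on the countable set $\{Q_i\}$ then extracts a further subsequence, still denoted $\tilde u_n$, such that $M_{Q_i}(\tilde u_n) \to \hat u(Q_i)$ for every $i$; this is a well-defined discrete function with $|\hat u(Q_i)| \leq M$.

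Third, I would identify the limit as a carpet-harmonic function with the right boundary data. Applying Fuglede's Lemma \ref{harmonic:Fuglede} to the $\ell^2$-convergent sequence $\sum_k c_{n,k}\osc_{Q_i}(u_k) \to \rho(Q_i)$, I can pass to a subsequence such that, for almost every path $\gamma$ in $\Omega$,
\[
\sum_{i:Q_i\cap\gamma\neq\emptyset}\Bigl|\sum_k c_{n,k}\osc_{Q_i}(u_k) - \rho(Q_i)\Bigr| \to 0.
\]
Using \eqref{harmonic:3-Oscillation of g hat}, the discrete functions $\widehat{u_k}(Q_i) := M_{Q_i}(u_k)$ already satisfy the discrete upper gradient inequality with weight $\osc_{Q_i}(u_k)$; taking convex combinations and letting $n \to \infty$ along good paths shows $\hat u \in \widehat{\mathcal W}^{1,2}_w(S)$ with upper gradient $\rho$. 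The construction of Section \ref{harmonic:Subsection-Discrete Sobolev} then yields a function $u \in \mathcal W^{1,2}_w(S)$ with $\osc_{Q_i}(u) \leq \rho(Q_i)$, so $D_\Omega(u) \leq \|\rho\|_2^2 \leq \alpha$ and hence equality. The boundary values are handled similarly: using Lemma \ref{harmonic:Paths boundary} to produce, near each point of $\partial\Omega$, ``good'' paths along which Fuglede convergence forces $u(x) = f(x)$. Harmonicity in the sense of Definition \ref{harmonic:4-Def carpet harmonic} is then immediate: if $\zeta \in \mathcal W^{1,2}_w(S)$ vanishes outside some $V \subset\subset \Omega$, then $u + \zeta$ is a competitor for the Dirichlet problem, so $D_\Omega(u) \leq D_\Omega(u+\zeta)$, and subtracting the oscillations with $i \notin I_V$ (which agree on both sides) gives $D_V(u) \leq D_V(u + \zeta)$.

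Finally, uniqueness will come from strict convexity combined with Proposition \ref{harmonic:3-Properties Sobolev}. If $u, v$ are both minimizers, then $u \vee v$ and $u \wedge v$ are competitors, and inequality \eqref{harmonic:3-Properties Sobolev-inequality} together with the trivial lower bound $D_\Omega \geq \alpha$ forces equality termwise: $\osc_{Q_i}(u\vee v)^2 + \osc_{Q_i}(u\wedge v)^2 = \osc_{Q_i}(u)^2 + \osc_{Q_i}(v)^2$ for every $i$. Inspecting the four cases in the proof of \eqref{harmonic:3-Properties Sobolev-inequality} shows this forces $M_{Q_i}(u) = M_{Q_i}(v)$ and $m_{Q_i}(u) = m_{Q_i}(v)$ for every $i$, which by Lemma \ref{harmonic:3-Identification lemma} means $u$ and $v$ have the same normalized version — the appropriate sense of uniqueness in this framework.

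The main obstacle is Step 3: constructing the Sobolev limit $u$ itself. The difficulty is that the natural compactness lives on the oscillation side (an $\ell^2$ statement) but Sobolev functions are genuine functions on $S$, and one cannot simply pass to pointwise limits because $S$ has measure zero and ordinary Rellich--Kondrachov compactness is unavailable. The workaround is to pass through the discrete Sobolev space $\widehat{\mathcal W}^{1,2}_w(S)$, where pointwise convergence on the countable set $\{Q_i\}$ is cheap, and then use the normalization machinery of Section \ref{harmonic:Subsection-Discrete Sobolev} to re-inflate to an actual function on $S$; Fuglede's lemma ensures no energy is lost.
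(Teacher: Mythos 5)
Your existence argument is essentially the paper's own proof: truncate to a uniformly bounded minimizing sequence, apply Banach--Alaoglu and Mazur's lemma to the oscillation vectors, pass the discrete upper gradient inequality to the limit along good paths via Fuglede's Lemma \ref{harmonic:Fuglede}, and re-inflate the discrete limit to a genuine Sobolev function through the construction of Section \ref{harmonic:Subsection-Discrete Sobolev}. (One small citation issue: Corollary \ref{harmonic:Removability4} requires $\partial_*V\neq\emptyset$ and does not apply with $V=\Omega$; the truncation step is instead Proposition \ref{harmonic:3-Properties Sobolev}\ref{harmonic:3-Properties Sobolev-max},\ref{harmonic:3-Properties Sobolev-min} applied with the constant competitor $M$. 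This is cosmetic.)

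The uniqueness step, however, has a genuine gap. Termwise equality in \eqref{harmonic:3-Properties Sobolev-inequality} does \emph{not} force $M_{Q_i}(u)=M_{Q_i}(v)$ and $m_{Q_i}(u)=m_{Q_i}(v)$. If, say, $u\equiv 1$ and $v\equiv 0$ on some $\partial Q_i$, then all four oscillations vanish and equality holds trivially while the sups and infs disagree. More generally, in the ``nested'' case $M_{Q_i}(u)\geq M_{Q_i}(v)$, $m_{Q_i}(u)\geq m_{Q_i}(v)$ the inequality is an identity carrying no information about the values, and in the remaining case equality only yields $(M_{Q_i}(u)-M_{Q_i}(v))(m_{Q_i}(v)-m_{Q_i}(u))=0$. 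The energy-equality argument pins down only the oscillations, and two Sobolev functions with identical oscillations on every peripheral circle can still differ by a ``locally constant'' discrepancy; some argument must propagate equality from the boundary into the carpet. This is precisely why the paper postpones uniqueness to Theorem \ref{harmonic:Uniqueness}, after continuity and the maximum principle are available: there one first shows $\osc_{Q_i}(u)=\osc_{Q_i}(v)$ for all $i$ via convex combinations, then that $g=u\lor v$ is itself a minimizer with the same oscillations, and finally rules out $u(x_0)<v(x_0)$ by introducing $h=(u\lor\Lambda)\land g$ at an intermediate level $\Lambda$ and using Lemma \ref{harmonic:4-zero oscillation lemma}, the connectivity of the carpet, and the maximum principle to exhibit a peripheral disk inside $\{u<\Lambda<g\}$ on which $h$ is constant but $\osc_{Q_i}(u)>0$, a contradiction. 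The purely algebraic case inspection you propose cannot close the proof.
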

\begin{proof}The uniqueness part will be postponed until we have established several properties of carpet-harmonic functions; see Theorem \ref{harmonic:Uniqueness}. For the existence part, one has to minimize $D_\Omega(g)$ over all $g\in \mathcal W_{w}^{1,2}(S)$ $(\mathcal W_{s}^{1,2}(S))$ with boundary values equal to $f$. We call such functions $g$ \textit{admissible (for the Dirichlet problem)}. It is easy to show that if $g$ minimizes $D_\Omega(g)$ then it is carpet-harmonic. Indeed, for every  $\zeta\in \mathcal W_{w}^{1,2}(S)$ $(\mathcal W_{s}^{1,2}(S))$ that vanishes outside an open set $V\subset \subset \Omega$ we have 
\begin{align*}
D_{\Omega}(g)\leq D_{\Omega}(g+\zeta).
\end{align*}
Note that $\osc_{Q_i}(g+\zeta)=\osc_{Q_i}(g)$ for $i\notin I_V$. Canceling the common terms we obtain $D_V(g)\leq D_V(g+\zeta)$, so $g$ is carpet-harmonic.

Define $D\coloneqq  \inf D_\Omega(g)$ where the infimum is taken over all admissible functions, and is finite since $f$ is admissible. Let $g_n \in \mathcal W_{w}^{1,2}(S)$ $(\mathcal W_{s}^{1,2}(S))$ be a minimizing sequence of admissible functions, i.e., $D_\Omega(g_n) \to D$ as $n\to \infty$. Note that $G_n\coloneqq (g_n \land M)\lor(-M)$ is still a Sobolev function with $D_\Omega(G_n)\leq D_\Omega(g_n)$, by Proposition \ref{harmonic:3-Properties Sobolev}\ref{harmonic:3-Properties Sobolev-max},\ref{harmonic:3-Properties Sobolev-min}, since $M$ is a constant function. Thus, by replacing $g_n$ with $G_n$, we may assume that $|g_n|\leq M$.

Now, we have a minimizing sequence $ g_n$ that satisfies $|M_{Q_i}(g_n)|\leq M$ for all $i\in \N$. In particular, the sequences $\{M_{Q_i}( g_n)\diam(Q_i)\}_{i\in \N}$, $\{\osc_{Q_i}( g_n)\}_{i\in \N}$ are uniformly bounded in $\ell^2(\N)$. Here, it is crucial that $M_{Q_i}( g_n)\diam(Q_i) \leq M\diam(Q_i)$, and that 
\begin{align*}
\sum_{i\in \N} \diam(Q_i)^2 <\infty
\end{align*}
by the boundedness of $\Omega$ and the quasiballs assumption (or by Corollary \ref{harmonic:Fatness corollary}).

By passing to subsequences we may assume that for each $i\in \N$ we have $M_{Q_i}(g_n) \to M_{Q_i}$ and $\osc_{Q_i}( g_n) \to \rho(Q_i)$ for some real numbers $M_{Q_i},\rho(Q_i)$. By Fatou's lemma we have
\begin{align}\label{harmonic:3-Solution to Dirichlet problem Fatou}
\sum_{i\in \N}\rho(Q_i)^2 \leq \liminf_{n\to\infty} \sum_{i\in \N} \osc_{Q_i}(g_n)^2 =D.
\end{align}
If we show that $\rho(Q_i)$ corresponds to the oscillation of an admissible function $g$, then this will be the desired minimizer.

Applying the Banach-Alaoglu theorem, we assume that $\{M_{Q_i}( g_n)\diam(Q_i)\}_{i\in \N}$ and $\{\osc_{Q_i}( g_n)\}_{i\in \N}$ converge weakly in $\ell^2(\N)$ to $\{M_{Q_i}\diam(Q_i)\}_{i\in \N}$ and $\{\rho(Q_i)\}_{i\in \N}$, respectively. Since $\ell^2(\N)\times \ell^2(\N)$ is reflexive, by Mazur's lemma (see e.g.\ \cite[Theorem 2, p.~120]{Yosida:functional}) we have that there exist convex combinations 
\begin{align*}
M_{Q_i}^n\coloneqq  \sum_{j=1}^n \lambda_j^n M_{Q_i}(g_j), \quad \rho^n(Q_i)\coloneqq  \sum_{j=1}^n \lambda_j^n \osc_{Q_i} ( g_j)
\end{align*}
such that the sequences $\{M_{Q_i}^n\diam(Q_i)\}_{i\in \N}$ and $\{\rho^n(Q_i)\}_{i\in \N}$ converge strongly in $\ell^2(\N)$ to $\{M_{Q_i}\diam(Q_i)\}_{i\in \N}$ and $\{\rho(Q_i)\}_{i\in \N}$, respectively. 

We now show that $\{M_{Q_i}\}_{i\in \N}$ defines a discrete Sobolev function in the sense of Definition \ref{harmonic:3-Discrete Sobolev Definition} with upper gradient $\{\rho(Q_i)\}_{i\in \N}$. Consider $\mathcal G$ to be the family of curves that are good curves for the functions $ g_n$, $n\in \N$, and $f$, and also the boundary values of $g_n$ along paths $\gamma\in \mathcal G$ are equal to $f$ for $n\in \N$. Moreover, we assume that the paths of $\mathcal G$ are non-exceptional for Fuglede's Lemma \ref{harmonic:Fuglede}, applied to a subsequence of $\{\rho^n(Q_i)\}_{i\in \N}$, which we still denote in the same way. For peripheral disks $Q_{i_1},Q_{i_2}$ that intersect a curve $\gamma\in \mathcal G$ we have
\begin{align}\label{harmonic:3-Solution to Dirichlet problem Upper gradient M^n}
\begin{aligned}
|M_{Q_{i_1}}^n- M_{Q_{i_2}}^n|&\leq \sum_{j=1}^n \lambda_j^n |M_{Q_{i_1}}( g_j)-M_{Q_{i_2}}( g_j)|\\
&\leq \sum_{j=1}^n \lambda_j^n \sum_{i:Q_i\cap \gamma\neq \emptyset}\osc_{Q_i} (g_j)\\
&= \sum_{i:Q_i\cap \gamma\neq \emptyset} \rho^n(Q_i).
\end{aligned}
\end{align} 
Taking limits we obtain 
\begin{align*}
|M_{Q_{i_1}}-M_{Q_{i_2}}|\leq \sum_{i:Q_i\cap \gamma\neq \emptyset} \rho(Q_i).
\end{align*}
Hence, $\{M_{Q_i}\}_{i\in \N}$ is indeed a discrete Sobolev function having $\{\rho(Q_i)\}_{i\in \N}$ as an upper gradient.

By the discussion in Section \ref{harmonic:Subsection-Discrete Sobolev}, the discrete Sobolev function $\{M_{Q_i}\}_{i\in \N}$ yields a  Sobolev function $g$ with upper gradient $\{\osc_{Q_i}(g)\}_{i\in \N}$ that satisfies $\osc_{Q_i}(g)\leq \rho(Q_i)$ for all $i\in \N$; see Corollary \ref{harmonic:3-Finiteness of f}. Combining this with \eqref{harmonic:3-Solution to Dirichlet problem Fatou}, we see that $D_\Omega(g)\leq D$. If we prove that $g$ is admissible for the Dirichlet problem, then we will have
\begin{align*}
D_\Omega(g)=\sum_{i\in \N} \osc_{Q_i}(g)^2=\sum_{i\in \N}\rho(Q_i)^2=D,
\end{align*}
and in particular $\osc_{Q_i}(g)=\rho(Q_i)$ for all $i\in \N$.

It remains to show that $g$ is admissible for the Dirichlet problem. For this, it suffices to show that there exists a path family $\mathcal G_0$ that contains almost every path, such that for all points $x\in \partial \Omega$ which are ``accessible" by paths in $\mathcal G_0$ we have $g(x)=f(x)$. Let $\mathcal G_0$ be the path family that contains all paths $\gamma \in \mathcal G$ for which
\begin{align*}
\sum_{i:Q_i\cap \gamma\neq \emptyset}\rho(Q_i)<\infty.
\end{align*}
Note that the complement of $\mathcal G_0$ has weak (strong) modulus zero. If $x\in \partial \Omega$ is ``accessible" by a path $\gamma\in \mathcal G_0$, and $Q_{i_1},Q_{i_2}$ intersect $\gamma$, by \eqref{harmonic:3-Solution to Dirichlet problem Upper gradient M^n} we have
\begin{align*}
|M_{Q_{i_1}}^n -M_{Q_{i_2}}^n |\leq \sum_{i:Q_i\cap \gamma\neq \emptyset} \rho^n(Q_i).
\end{align*}
As we let $Q_{i_2} \to x$, the quantity $M_{Q_{i_2}}^n = \sum_{j=1}^n \lambda_j^n M_{Q_{i_2}}( g_j)$ converges to $f(x)$, because each term $M_{Q_{i_2}}( g_j)$ does so (recall that $\gamma$ is non-exceptional for each $g_j$, and they are admissible). Hence, we have
\begin{align*}
|M_{Q_{i_1}}^n -f(x) |\leq \sum_{i:Q_i\cap \gamma\neq \emptyset} \rho^n(Q_i).
\end{align*}
Now we let $n\to \infty$, and using Fuglede's Lemma \ref{harmonic:Fuglede} we obtain
\begin{align*}
|M_{Q_{i_1}}-f(x)|\leq \sum_{i:Q_i\cap \gamma\neq \emptyset} \rho(Q_i).
\end{align*}
We claim that $M_{Q_{i_1}}\to g(x)$ as $Q_{i_1}\to x$ and $Q_{i_1}\cap \gamma \neq \emptyset$. We assume this for the moment, and we have
\begin{align*}
|g(x)-f(x)|\leq \sum_{i:Q_i\cap \gamma\neq \emptyset} \rho(Q_i).
\end{align*}
This is also true for all subpaths of $\gamma$ landing at $x$, so if we shrink $\gamma$ to $x$, we obtain $g(x)=f(x)$, as desired.

To prove our claim, we note that, by the definition of boundary values, $g(x)$ can be approximated by $M_{Q_{i_1}}(g)=\sup_{x\in \partial Q_{i_1}}(g)$, as $Q_{i_1}\to x$ and $Q_{i_1}\cap \gamma \neq \emptyset$. On the other hand, by the last inequality in Corollary \ref{harmonic:3-Finiteness of f}, we have
\begin{align*}
| M_{Q_{i_1}} (g)-M_{Q_{i_1}}| \leq \rho(Q_{i_1}).
\end{align*}
Since $\rho$ is square-summable, as $Q_{i_1}\to x$ we have $\rho(Q_{i_1}) \to 0$. Hence, our claim is proved.
\end{proof}

\begin{remark}\label{harmonic:Dirichlet Normalized}
By the discussion in Section \ref{harmonic:Subsection Sobolev spaces}, there exists a normalized version $\tilde g$ of the solution $g$ to the Dirichlet problem. For the normalized version we have $\osc_{Q_i} (\tilde g)\leq \osc_{Q_i}(g)$ for all $i\in \N$ by Lemma \ref{harmonic:Normalized main lemma}(iii). Hence, $D_\Omega(\tilde g)\leq D_\Omega(g)$. If $\tilde g$ has boundary values equal to $f$, then we will have that $\tilde g$ is admissible, hence $D_{\Omega}(\tilde g)=D_{\Omega}(g)$, and $\tilde g$ is also a solution to the Dirichlet problem. 

However, $\tilde g$ agrees with $g$ at all points which are ``accessible" by paths $\gamma\in \mathcal G_g$ by Lemma \ref{harmonic:Normalized main lemma}(ii). This also holds for ``accessible" boundary points. Hence, indeed $\tilde g$ has boundary values equal to $f$.
\end{remark}

\section{Properties of harmonic functions}\label{harmonic:Section Properties of Harmonic}
To simplify the treatment, we drop the terminology weak/strong for the Sobolev functions and carpet-harmonic functions. We also drop the subscripts $w,s$ for the Sobolev spaces, so e.g., the non-local Sobolev space will be denoted by $\mathcal W^{1,2}(S)$. First we record a lemma that allows us to switch to the normalized version (see Section \ref{harmonic:Subsection Sobolev spaces}) of a harmonic function:

\begin{lemma}\label{harmonic:Switch to normalized}
Let $u:S\to \widehat{\R}$ be a carpet-harmonic function. Then its normalized version $\tilde u$ is also carpet-harmonic.
\end{lemma}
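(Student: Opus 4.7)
My plan is to transfer an admissible perturbation of $\tilde u$ to an admissible perturbation of $u$ and then invoke the harmonicity of $u$. Fix an open set $V \subset\subset \Omega$ and a function $\zeta \in \mathcal W^{1,2}(S)$ with $\zeta|_{S \setminus V} \equiv 0$; the goal is to show $D_V(\tilde u) \leq D_V(\tilde u + \zeta)$. By Lemma \ref{harmonic:Normalized main lemma}(ii), $\tilde u = u$ at every point ``accessible'' by paths in a family of full modulus, so $\tilde u - u$ agrees with the zero function at all such points. Lemma \ref{harmonic:Removability3} applied with $\phi = \tilde u - u$ and $\psi \equiv 0$ therefore yields that $\eta := (\tilde u - u)\chi_V$ belongs to $\mathcal W^{1,2}_{\loc}(S)$. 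Setting $\zeta^* := \zeta + \eta$, the function $\zeta^*$ lies in $\mathcal W^{1,2}_{\loc}(S)$ and satisfies $\zeta^*|_{S \setminus V} \equiv 0$, so it is a valid test function for the harmonicity of $u$. By construction, $u + \zeta^* = \tilde u + \zeta$ on $S \cap V$ and $u + \zeta^* = u$ on $S \setminus V$.

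The harmonicity of $u$ gives $D_V(u) \leq D_V(u + \zeta^*)$, and Lemma \ref{harmonic:Normalized main lemma}(iii) gives $D_V(\tilde u) \leq D_V(u)$, so $D_V(\tilde u) \leq D_V(u + \zeta^*)$. The remaining step is to verify $D_V(u+\zeta^*) \leq D_V(\tilde u + \zeta)$ by comparing oscillations on each peripheral disk $Q_i \in I_V$. For $Q_i$ with $\partial Q_i \subset V$, the two functions coincide on $\partial Q_i$, so the oscillations trivially agree. For the remaining peripheral disks, one first discards those connected components of $V$ that are entirely contained in some peripheral disk (such components meet $S$ trivially, so this affects neither $\zeta$ nor the admissibility and does not decrease $I_V$ in a way that changes $D_V$), reducing to the situation where every $\partial Q_i$, $i \in I_V$, meets $V$. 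The comparison on peripheral disks whose boundaries straddle $\partial V$ is then reduced to the pointwise identities $M_{Q_i}(u) = M_{Q_i}(\tilde u)$ and $m_{Q_i}(u) = m_{Q_i}(\tilde u)$ for every such disk; granted these, the contributions of $u$ on $\partial Q_i \setminus V$ and $\tilde u$ on $\partial Q_i \setminus V$ produce identical suprema and infima, so the two oscillations agree and $D_V(u+\zeta^*) = D_V(\tilde u + \zeta)$.

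The hard part will be establishing these pointwise identities $M_{Q_i}(u) = M_{Q_i}(\tilde u)$ and $m_{Q_i}(u) = m_{Q_i}(\tilde u)$. The key input is the existence, for each peripheral disk $Q_i$, of an auxiliary open set $W$ with $\overline{Q_i} \subset W \subset\subset \Omega$ whose boundary avoids the boundaries of all other peripheral disks in $I_W$; the existence of such $W$ follows from Lemma \ref{harmonic:Fatness consequence}, which bounds the number of peripheral disks of any fixed diameter in a compact set. Applying the harmonicity of $u$ on $W$ with the test $(\tilde u - u)\chi_W$, combined with the oscillation bounds from Lemma \ref{harmonic:Removability3}, forces $\osc_{Q_i}(u) = \osc_{Q_i}(\tilde u)$, and a further refinement using the density of ``accessible'' points in $\partial Q_i$ (from Lemma \ref{harmonic:Paths joining continua}) then yields the desired equalities of the suprema and infima. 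Once these are in hand, the chain of inequalities established above closes and gives $D_V(\tilde u) \leq D_V(\tilde u + \zeta)$, as required.
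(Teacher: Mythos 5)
Your overall strategy---transferring the perturbation $\zeta$ of $\tilde u$ to a perturbation $\zeta^*=\zeta+(\tilde u-u)\x_{S\cap V}$ of $u$ and invoking the harmonicity of $u$---is the same as the paper's, and the reduction $D_V(\tilde u)\leq D_V(u)\leq D_V(u+\zeta^*)$ is fine. The gap is in the final comparison $D_V(u+\zeta^*)\leq D_V(\tilde u+\zeta)$ for peripheral disks $Q_i$ whose boundary straddles $\partial V$. On such a disk, $u+\zeta^*$ equals $\tilde u+\zeta$ on $\partial Q_i\cap V$ but equals $u$ on $\partial Q_i\setminus V$, whereas $\tilde u+\zeta$ equals $\tilde u$ there. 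The identities you aim for, $M_{Q_i}(u)=M_{Q_i}(\tilde u)$ and $m_{Q_i}(u)=m_{Q_i}(\tilde u)$, are suprema and infima over \emph{all} of $\partial Q_i$; they do not control $\sup_{\partial Q_i\setminus V}u$ versus $\sup_{\partial Q_i\setminus V}\tilde u$. Since a carpet-harmonic $u$ may be modified at ``non-accessible'' points to any value in $[m_{Q_i}(u),M_{Q_i}(u)]$ without losing harmonicity, one can have $\sup_{\partial Q_i\setminus V}u=M_{Q_i}(u)$ (attained at a non-accessible point outside $V$) while $\sup_{\partial Q_i\setminus V}\tilde u$ is strictly smaller; choosing $\zeta$ very negative on $\partial Q_i\cap V$ then makes $\osc_{Q_i}(u+\zeta^*)>\osc_{Q_i}(\tilde u+\zeta)$, so the desired inequality genuinely fails termwise and there is no reason for it to hold after summation. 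The paper's fix is different: it enlarges $V$ to a set $W_\varepsilon$ (union of $V$ with tiny neighborhoods of each $Q_i$, $i\in I_V$) so that $\partial Q_i\subset W_\varepsilon$ for every $i\in I_V$, replaces $u$ by $\tilde u$ on all of $S\cap W_\varepsilon$, and then shows that the extra contribution $\sum_{i\in I_{W_\varepsilon}\setminus I_V}(\osc_{Q_i}(\tilde u)+\osc_{Q_i}(u))^2$ tends to $0$ as $\varepsilon\to 0$ because $I_{W_\varepsilon}$ shrinks to $I_V$.

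Two smaller points. First, your ``hard part'' is salvageable and in fact gives a clean proof that $\osc_{Q_i}(u)=\osc_{Q_i}(\tilde u)$ for every $i$: the auxiliary set $W$ with $\br Q_{i_0}\subset W\subset\subset\Omega$ and $\partial W$ disjoint from all peripheral circles exists by Lemma \ref{harmonic:Paths in S^o}(a) (a Jordan curve in $S^\circ$ surrounding $Q_{i_0}$), \emph{not} by Lemma \ref{harmonic:Fatness consequence}, which only controls finitely many large disks and does not prevent $\partial W$ from crossing infinitely many small ones. Second, even granting the resulting global equalities of $M_{Q_i}$ and $m_{Q_i}$, they are simply the wrong tool for the straddling disks, so the argument as written does not close.
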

\begin{proof}
It suffices to prove that for each open set $V\subset \subset \Omega$ and each  function $\zeta\in \mathcal W^{1,2}(S)$ with $\zeta\big|_{S\setminus V}\equiv 0$ we have $D_V(\tilde u)\leq D_V(\tilde u+\zeta)$. We fix such a function $\zeta$. Recall that $\osc_{Q_i}(\tilde u)\leq \osc_{Q_i}(u)$ for all $i\in \N$, by Lemma \ref{harmonic:Normalized main lemma}(iii). Hence, $D_V(\tilde u)\leq D_V(u)$. On the other hand, by Lemma \ref{harmonic:Normalized main lemma}(ii) we have $\tilde u(x)=u(x)$ for all points $x\in S$ that are ``accessible" by a curve family that contains almost every curve. Hence, by Lemma \ref{harmonic:Removability3} and linearity, for each open set $W\subset \Omega$ the function $\eta\coloneqq (\tilde u-u)\x_{S\cap W}+\zeta$ lies in the Sobolev space $\mathcal W^{1,2}(S)$.

First, assume that $\partial Q_i\subset V$ whenever $i\in I_V$. We set $W=V$ and consider the function $\eta$ as above. Then $u+\eta= \tilde u+\zeta$ on $\partial Q_i \subset V$ for $i\in I_V$. Since $\eta$ vanishes outside $V$, by the harmonicity of $u$ we have
\begin{align*}
D_V(u)\leq D_V(u+\eta)=D_V(\tilde u+\zeta).
\end{align*}
Summarizing, $D_V(\tilde u)\leq D_V(\tilde u+\zeta)$, as desired.

Now, we treat the general case. We fix $\varepsilon>0$ and for each $i\in I_V$ we consider a number $\delta_i=\delta_i(\varepsilon)\in (0,\varepsilon)$ such that the open $\delta_i$-neighborhood of $Q_i$ intersects only peripheral disks having smaller diameter than that of $Q_i$; recall from Lemma \ref{harmonic:Fatness consequence} that the diameters of the peripheral disks shrink to $0$ in compact subsets of $\R^2$. We denote by $W_\varepsilon$ the union of $V$ with all these neighborhoods and $\eta$ is defined as before with $W=W_\varepsilon$. Note that $W_\varepsilon$ contains $\partial Q_i$, whenever $i\in I_V$. Therefore, as $\eta$ vanishes outside $W_\varepsilon\supset V$, we have
\begin{align*}
D_V(u)\leq D_{W_\varepsilon}(u) \leq D_{W_\varepsilon}(u+\eta) &=D_{W_\varepsilon}(\tilde u\x_{S\cap W_\varepsilon }+ u\x_{S\setminus W_\varepsilon}+\zeta)\\
&=D_{V}(\tilde u +\zeta) +\sum_{i\in I_{W_\varepsilon}\setminus I_V} \osc_{Q_i}( \tilde u\x_{S\cap W_\varepsilon}+ u\x_{S\setminus W_\varepsilon})^2\\
&\leq D_{V}(\tilde u +\zeta) +\sum_{i\in I_{W_\varepsilon}\setminus I_V} (\osc_{Q_i}(\tilde u) + \osc_{Q_i}(u))^2,
\end{align*}  
where we used the oscillation inequalities from Lemma \ref{harmonic:Removability3}. Since the last sum is finite, it will converge to $0$ and we will have the desired conclusion, provided that the set $I_{W_\varepsilon}$ shrinks to  $I_V$ as $\varepsilon\to 0$.

We argue by contradiction, assuming that there exists $i_0\notin I_V$ that lies in $I_{W_\varepsilon}$ infinitely often as $\varepsilon\to 0$, say along a sequence $\varepsilon_n\to 0$. For each $n\in \N$, there exists $i(n)\in I_V$ such that $Q_{i_0}$  intersects the $\delta_{i(n)}(\varepsilon_n)$-neighborhood of  $Q_{i(n)}$. Note that the set $\{i(n):n\in \N\}$ cannot be finite, since $\delta_{i(n)}(\varepsilon_n)\to 0$ as $n\to \infty$, and $Q_{i_0}$ has positive distance from each individual peripheral disk. Therefore, the set $\{i(n):n\in \N\}$ is infinite, and by passing to a subsequence we may assume that the indices $i(n)$ are distinct. However, $\delta_{i(n)}(\varepsilon_n)$ was chosen so that the $\delta_{i(n)}(\varepsilon_n)$-neighborhood of $Q_{i(n)}$ intersects only smaller peripheral disks than $Q_{i(n)}$. Since $\diam(Q_{i (n)})\to 0$ (by Lemma \ref{harmonic:Fatness consequence} since they all stay near $Q_{i_0}$), it follows that $Q_{i_0}$ cannot intersect these neighborhoods for large $n$. This is a contradiction.    
\end{proof}

Recall also by Remark \ref{harmonic:Dirichlet Normalized} that  the normalized version of the solution to the Dirichlet problem (Theorem \ref{harmonic:4-Thm Solution to Dirichlet problem}) has the same boundary values as the original solution. In what follows, we will always be using normalized versions of carpet-harmonic functions. In particular, by Lemma \ref{harmonic:3-Normalized Property} we may assume that for any $x\in S$ the value of $u(x)$ can be approximated by $M_{Q_i}(u)$ where $Q_i$ is a peripheral disk close to $x$.

\subsection{Continuity and maximum principle}

\begin{lemma}\label{harmonic:4-zero oscillation lemma}
Let $u\colon S\to \widehat \R$ be a normalized Sobolev function, and $V\subset  \Omega$ a connected open set.
\begin{enumerate}[\upshape (a)]
\item If $\osc_{Q_i}(u)=0$ for all $i\in I_V$, then $u$ is constant on $S\cap V$.
\item If $\osc_{Q_i}(u)=0$ for all $Q_i\subset V$, then $u$ is a constant on $S\cap W$, where $W$ is a component of $V\setminus \br{\bigcup_{i\in I_{\partial V}} {Q_i}}$.
\end{enumerate} 
\end{lemma}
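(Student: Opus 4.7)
The plan for part (a) is to show that any two points $x, y \in S \cap V$ satisfy $u(x) = u(y)$, by connecting suitable peripheral disks near $x$ and $y$ via a good path $\gamma \subset V$ along which the upper gradient inequality applies. The right-hand side of that inequality will vanish because every peripheral disk met by $\gamma$ lies in $I_V$ and therefore has zero oscillation. Since specific points of $S$ cannot be directly joined by a good path, I will use the normalization property (Lemma \ref{harmonic:3-Normalized Property}) to approximate $u(x)$ by values $M_{Q_i}(u)$ on nearby peripheral disks, and then connect those disks rather than trying to connect $x$ and $y$ themselves.

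Concretely, I would proceed as follows. First, since $V$ is open, for each $x \in S \cap V$, Lemma \ref{harmonic:3-Normalized Property} supplies a sequence of peripheral disks $Q_{i_n} \subset V$ with $Q_{i_n}\to x$ and $M_{Q_{i_n}}(u) \to u(x)$; similarly I choose $Q_{j_n}\subset V$ with $Q_{j_n}\to y$ and $M_{Q_{j_n}}(u) \to u(y)$. Because $\osc_{Q_{i_n}}(u) = 0 = \osc_{Q_{j_n}}(u)$, the function $u$ is constant on $\partial Q_{i_n}$ with value $M_{Q_{i_n}}(u)$, and constant on $\partial Q_{j_n}$ with value $M_{Q_{j_n}}(u)$. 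Next, using that $V$ is open and connected, hence path-connected, I pick any path $\alpha \subset V$ connecting a point of $Q_{i_n}$ to a point of $Q_{j_n}$. Since $\alpha$ is compact in $V$, $\dist(\alpha, \C \setminus V) > 0$, and applying Lemma \ref{harmonic:Paths joining continua} to the continua $\br Q_{i_n}, \br Q_{j_n}$ with exceptional family taken to be the exceptional family of $u$ produces, for a.e.\ small $s > 0$, a good simple path $\alpha_s$ lying in $V$ and joining $\br Q_{i_n}$ to $\br Q_{j_n}$. Taking the subpath $\gamma$ of $\alpha_s$ between the last exit point $z \in \partial Q_{i_n}$ and the first entry point $w \in \partial Q_{j_n}$, one has $\gamma \subset V$ and $\gamma$ meets no interior of $Q_{i_n}$ or $Q_{j_n}$. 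Applying the upper gradient inequality for $u$ to $\gamma$ gives
\[
|u(z) - u(w)| \leq \sum_{i: Q_i \cap \gamma \neq \emptyset} \osc_{Q_i}(u),
\]
and every index on the right lies in $I_V$ (because $\gamma \subset V$), so the sum vanishes. Hence $M_{Q_{i_n}}(u) = u(z) = u(w) = M_{Q_{j_n}}(u)$, and letting $n \to \infty$ yields $u(x) = u(y)$.

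For part (b), I would observe that any component $W$ of $V \setminus \br{\bigcup_{i \in I_{\partial V}} Q_i}$ is again open and connected, and that any peripheral disk $Q_i$ with $Q_i \cap W \neq \emptyset$ is necessarily contained in $V$. Indeed, if $Q_i \not\subset V$, then by connectedness of $Q_i$ it would meet $\partial V$ and hence belong to $I_{\partial V}$, whence $Q_i \subset \br{\bigcup_{j \in I_{\partial V}} Q_j}$, contradicting $Q_i \cap W \neq \emptyset$. Thus $\osc_{Q_i}(u) = 0$ for every $i \in I_W$, and part (a) applied with $V$ replaced by $W$ yields that $u$ is constant on $S \cap W$.

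The main technical obstacle is constructing the good path $\gamma$ inside $V$ that connects the two prescribed peripheral disks while avoiding the exceptional family of paths for $u$: the construction hinges on Lemma \ref{harmonic:Paths joining continua} to produce a good path at small distance $s$ from an initial compact path $\alpha$ in $V$, and one must simultaneously choose $s$ small enough that the entire distance level set $\alpha_s$ remains in $V$ (which is why the compactness of $\alpha$ in the open set $V$ is essential). A second subtlety worth noting is that $x$ and $y$ themselves may be ``non-accessible'' by good paths, which is precisely why we must approximate $u(x)$ and $u(y)$ through $M_{Q_i}(u)$ values using the normalization property, rather than attempting to read $u(x), u(y)$ directly off a single upper gradient inequality through $x$ and $y$.
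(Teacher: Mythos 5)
Your proposal is correct and follows essentially the same route as the paper: both arguments join nearby peripheral disks by a non-exceptional path inside $V$ (via Lemma \ref{harmonic:Paths joining continua}), use the upper gradient inequality with all oscillations vanishing, and then pass to arbitrary points of $S\cap V$ via the normalization property (Lemma \ref{harmonic:3-Normalized Property}); part (b) is reduced to part (a) in the same way, by checking that peripheral disks meeting $W$ lie in $V$. The only cosmetic difference is that the paper first fixes the common constant on all peripheral circles $\partial Q_i$, $i\in I_V$, and then approximates arbitrary points, whereas you approximate the two points simultaneously and pass to the limit.
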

\begin{proof}
(a) Note that $u$ is constant on (the boundary of) any given peripheral disk that intersects $V$. Thus we may assume that $V$ does not intersect only one peripheral disk. Let $i_1,i_2\in I_V$ be distinct. Since $V$ is path connected, by Lemma \ref{harmonic:Paths joining continua} we can find a non-exceptional path $\gamma\subset V$ joining $Q_{i_1},Q_{i_2}$ such that the upper gradient inequality holds along $\gamma$. If $x\in \partial Q_{i_1}\cap \gamma$ and $y\in \partial Q_{i_2}\cap \gamma$ then
\begin{align*}
|u(x)-u(y)|\leq \sum_{i:Q_i\cap \gamma\neq \emptyset}\osc_{Q_i}(u)=0.
\end{align*}
Thus $u$ is equal to the same constant $c$ on all peripheral circles $\partial Q_i$, $i\in I_V$. If $x\in S\cap V$ is arbitrary, then $u(x)$ can be approximated by $M_{Q_i}(u)$, where $Q_i\cap V\neq \emptyset$, thus $u(x)=c$ also here. 

(b) The set $V\setminus \br{\bigcup_{i\in I_{\partial V}} {Q_i}}$ is open. We fix a component $W$ of this set and observe that if $Q_i\cap W\neq \emptyset$ then $Q_i\subset W$; see the remark below. This implies that $I_W=\{i\in \N: Q_i\subset W\}$. Thus the conclusion follows immediately by an application of part (a) of the lemma.
\end{proof}

\begin{remark}\label{harmonic:Properties of W}
(a) Each component $W$ of $V\setminus \br{\bigcup_{i\in I_{\partial V}} {Q_i}}$ has the property that it contains all peripheral disks that it intersects. Moreover, if $S\cap W\neq \emptyset$ and $\partial_*V\neq \emptyset$, then we have $\br {S\cap W}\cap \partial_*V\neq \emptyset$. 

For the first claim, note that $E\coloneqq \br{\bigcup_{i\in I_{\partial V}} Q_i}$ contains all the peripheral disks that it intersects. This implies that if $Q_i\cap W\neq \emptyset$, then we necessarily have $Q_i\cap E=\emptyset$ and $Q_i\cap \partial V=\emptyset$. Hence, $Q_i\subset V\setminus E$ and $Q_i\subset W$ by the connectedness of $Q_i$.

For the second claim, by Lemma \ref{harmonic:Paths in S^o}, there exists an open path $\gamma\subset S^\circ$ connecting a point $x$ of $S \cap W$ to a point outside $V$, for example to a point of $\partial_*V$. Let $y\in \gamma\cap \partial V$ be the first point of $\partial V$ that $\gamma$ meets, assuming that it  is parametrized to start at $x$. We claim that $y\in \br {S\cap W}$. We consider the (smallest) open subpath of $\gamma$ that connects $x$ to $y$, and we still denote it by $\gamma$. Then $\gamma\subset V$, and also $\gamma\cap \br{\bigcup_{i\in I_{\partial V}} {Q_i}} =\emptyset$. Indeed, if $z\in \gamma\subset S^\circ$ is a limit point of ${\bigcup_{i\in I_{\partial V}} {Q_i}}$, then there exists a  sequence of $Q_i$, $i\in I_{\partial V}$, with diameters shrinking to $0$ and with $Q_i\to z$. This would imply that $z\in \partial V$, a contradiction. Hence, $\gamma \subset V\setminus \br{\bigcup_{i\in I_{\partial V}} {Q_i}} $, and in fact $\gamma\subset S\cap W$, which shows that $y\in \br \gamma\subset \br {S\cap W}$.

(b) The assumption $\partial_*V\neq \emptyset$ in the previous statement holds always, unless $V \supset S$ or $\C\setminus \br V\supset S$. Indeed, if $V$ is an open set and $S\setminus V\neq \emptyset$, $S\cap \br V\neq \emptyset$, then we can connect a point of $S\cap \br V$ to a point of $S\setminus V$ with an open path in $S^\circ$, by Lemma \ref{harmonic:Paths in S^o}. This path necessarily hits $\partial V\cap S=\partial_*V$.
\end{remark}

\begin{theorem}\label{harmonic:4-continuous}
Let $u\colon S\to \widehat \R$ be a carpet-harmonic function. Then $u$ is continuous.
\end{theorem}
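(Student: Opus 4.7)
By Lemma \ref{harmonic:Switch to normalized} we may replace $u$ by its normalized version and hence assume that $u(x)$ is approximated by values $M_{Q_i}(u)$ for peripheral disks $Q_i$ in arbitrarily small neighborhoods of $x$ (Lemma \ref{harmonic:3-Normalized Property}). Fix $x_0 \in S$ and $\varepsilon>0$; the goal is to exhibit a radius $r>0$ such that $\osc_{B(x_0,r)\cap S}(u)\leq 2\varepsilon$. First I would apply Lemma \ref{harmonic:3-curves gamma_r}(a) to the locally square-summable weight $\{\osc_{Q_i}(u)\}_{i\in \N}$ and the point $x_0\in S^\circ$ (the case $x_0\in \partial Q_{i_0}$ is handled analogously via Lemma \ref{harmonic:3-curves gamma_r}(b), using a crosscut as in the proof of Lemma \ref{harmonic:3-well-defined}) to find an arbitrarily small radius $r>0$ such that the circular path $\gamma_r(t)=x_0+re^{it}$ belongs to the good family $\mathcal G_u$ and
\begin{align*}
\sum_{i:Q_i\cap \gamma_r\neq \emptyset}\osc_{Q_i}(u) <\varepsilon.
\end{align*}
Applying the upper gradient inequality to subpaths of $\gamma_r$ shows that $u$ varies by at most $\varepsilon$ on $\gamma_r\cap S$, so there is $c\in \R$ with $c-\varepsilon\leq u\leq c+\varepsilon$ on $\partial_*V$, where $V=B(x_0,r)$.

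Next I would transfer this bound from $\partial_* V$ to $V\cap S$ via a truncation competitor. Set $M=c+\varepsilon$ and define
\begin{align*}
\phi\coloneqq (u\wedge M)\chi_{S\cap V}+ u\chi_{S\setminus V}.
\end{align*}
Since $u\leq M$ on $\partial_*V$, Corollary \ref{harmonic:Removability4} gives $\phi\in \mathcal W^{1,2}(S)$ with $\osc_{Q_i}(\phi)\leq \osc_{Q_i}(u)$ for all $i\in \N$, and the difference $\zeta\coloneqq \phi-u$ vanishes outside $V$. Harmonicity of $u$ yields $D_V(u)\leq D_V(u+\zeta)=D_V(\phi)$, while the oscillation inequality gives the reverse bound; hence $D_V(\phi)=D_V(u)$ and therefore $\osc_{Q_i}(\phi)=\osc_{Q_i}(u)$ for every $i\in I_V$. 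For each such $i$ a case analysis shows: if $u$ took values both strictly greater than $M$ and $\leq M$ on $\partial Q_i$, the truncation would strictly decrease the oscillation, contradicting equality. In particular this rules out all peripheral disks with $\partial Q_i\cap \partial V\neq \emptyset$ (where $\partial V=\gamma_r$ forces a point with $u\leq M$ on $\partial Q_i$), so for these peripheral disks $u\leq M$ on $\partial Q_i$. The only remaining scenario is an \emph{interior bump}: a peripheral disk $Q_i\subset\subset V$ on whose boundary $u$ is identically some constant $u_i>M$.

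The main obstacle is to exclude the interior bump scenario, since there $\osc_{Q_i}(u)=\osc_{Q_i}(\phi)=0$ and the direct truncation does not produce a strict energy gain. I would rule it out by a propagation argument. If $Q_{i_0}$ were such a bump, then at any $x\in \partial Q_{i_0}$ we have $u(x)=u_{i_0}>M$; by Lemma \ref{harmonic:3-Normalized Property} this value is approximated by $M_{Q_j}(u)$ for nearby peripheral disks $Q_j\neq Q_{i_0}$ contained in $V$, so eventually $M_{Q_j}(u)>M$, forcing those $Q_j$ also to be bumps ($u\equiv u_j>M$ on $\partial Q_j$). Iterating, one obtains a chain of interior bump peripheral disks and a connected region of $V\cap S$ on which $u>M$. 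Now consider a good path $\gamma\subset V$ connecting a point $y\in \partial Q_{i_0}$ to a point $z\in \partial_*V$, furnished by Lemma \ref{harmonic:Paths joining continua}, and apply Remark \ref{harmonic:remark:split path} to split it into subpaths avoiding an appropriate finite family of peripheral disks. Along the portion of $\gamma$ that crosses the ``transition zone'' from the bump cluster to the $\{u\leq M\}$ region, any peripheral disk whose boundary meets both regions must fall in the forbidden Case 2b, while pure interior bumps contribute zero to $\sum\osc_{Q_i}(u)$. Applying the upper gradient inequality to the transitional subpath produces $u_{i_0}-u(z)\leq 0$, contradicting $u_{i_0}>M\geq u(z)$.

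Having thus established $u\leq c+\varepsilon$ on $V\cap S$, a completely symmetric argument using the competitor $\phi'\coloneqq (u\vee (c-\varepsilon))\chi_{S\cap V}+u\chi_{S\setminus V}$ (via the dual form of Corollary \ref{harmonic:Removability4}) yields $u\geq c-\varepsilon$ on $V\cap S$. Hence $\osc_{V\cap S}(u)\leq 2\varepsilon$, and in particular $|u(y)-u(x_0)|\leq 2\varepsilon$ for every $y\in V\cap S$. Since $\varepsilon>0$ was arbitrary and $r$ can be chosen arbitrarily small, $u$ is continuous at $x_0$. The principal technical burden is the propagation step for the interior bump: ensuring that the normalized structure of $u$ and the topological structure of the carpet force any region where $u$ exceeds the boundary maximum to reach $\partial V$, where it contradicts the control obtained from the good circular arc $\gamma_r$.
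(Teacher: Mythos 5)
Your overall architecture is the same as the paper's: a good circular arc $\gamma_r$ with $\sum_{i:Q_i\cap\gamma_r\neq\emptyset}\osc_{Q_i}(u)<\varepsilon$ controls $u$ on $\partial_*V$, the truncation competitor $(u\wedge M)\x_{S\cap V}+u\x_{S\setminus V}$ of Corollary \ref{harmonic:Removability4} together with harmonicity forces $\osc_{Q_i}(\phi)=\osc_{Q_i}(u)$ on $I_V$, the only surviving bad configuration is a peripheral disk on which $u$ is constant and $>M$, and that configuration is killed by propagating the value along a good path. So this is not a different route.

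There is, however, a step that does not go through as written: the conclusion ``applying the upper gradient inequality to the transitional subpath produces $u_{i_0}-u(z)\leq 0$.'' The upper gradient inequality only gives $|u(y)-u(z)|\leq\sum_{i:Q_i\cap\gamma\neq\emptyset}\osc_{Q_i}(u)$, and ruling out mixed disks does not make this sum vanish: peripheral disks lying entirely in $\{u\leq M\}$ may have large positive oscillation and are perfectly admissible, yet they are intersected by any path reaching $\partial_*V$. The repair (which is what the paper does) is to terminate the path at the \emph{first} peripheral disk $Q_m$ along $\gamma$ with $\osc_{Q_m}(u)>0$: the subpath up to the first entry point $y'$ of $Q_m$ meets only zero-oscillation disks, so $u(y')=u_{i_0}>M$, whence $M_{Q_m}(u)>M$ \emph{and} $\osc_{Q_m}(u)>0$, contradicting the case you already excluded --- no propagation all the way to $\partial_*V$ is needed. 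Relatedly, your assertion that peripheral disks with $\partial Q_i\cap\partial V\neq\emptyset$ must satisfy $u\leq M$ on $\partial Q_i$ is unjustified: for such a disk the competitor equals $u$ (untruncated) on $\partial Q_i\setminus V$, so if the supremum of $u$ over $\partial Q_i$ is attained outside $V$ the oscillation need not strictly decrease, and Corollary \ref{harmonic:Removability4} only yields $\osc_{Q_i}(\phi)\leq\osc_{Q_i}(u)$ there. This claim is also unnecessary: for continuity at $x_0$ one only needs to control $M_{Q_k}(u)$ for peripheral disks $Q_k\subset V$, since by normalization $u(x_0)$ and $u(y)$ for $y$ near $x_0$ are approximated by such values; this is exactly how the paper closes the argument.
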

\begin{proof}
Let $x\in S^\circ$. If $\osc_{Q_i}(u)=0$ for all $Q_i$ contained in a ball $B(x,r)$ then there exists $r'<r$ such that $\osc_{Q_i}(u)=0$ for all $Q_i$ intersecting the ball $B(x,r')$. This follows from Lemma \ref{harmonic:Fatness consequence} and the fact that no peripheral disk can intersect a ball $B(x,r')$ for arbitrarily small $r'>0$. Applying the previous lemma, we conclude that $u$ is constant in $B(x,r')\cap S$, so it is trivially continuous. 

We assume that arbitrarily close to $x$ there exists some $Q_i$ with $\osc_{Q_i}(u)>0$. Consider a circular arc $\gamma_r$ as in Lemma \ref{harmonic:3-curves gamma_r}(a) with $\sum_{i:Q_{i}\cap \gamma_r\neq \emptyset }\osc_{Q_i}(u)<\varepsilon$ and $B(x,r)\subset \Omega$. Also, let $y\in B(x,r)\cap S$. Since $u$ is normalized, there exist peripheral disks $Q_{i_x},Q_{i_y} \subset B(x,r)$ such that $|u(x)-M_{Q_{i_x}}(u)|<\varepsilon$ and $|u(y)-M_{Q_{i_y}}(u)|<\varepsilon$, so it suffices to show that $|M_{Q_{i_x}}(u)-M_{Q_{i_y}}(u)|$ is small. 

Since $\gamma_r$ is non-exceptional, the upper gradient inequality implies that the number $M\coloneqq \sup_{z\in \partial_*B(x,r)}u(z)= \sup_{z\in S\cap \gamma_r}u(z)$ is finite. We claim that $M_{Q_k}(u) \leq M $, for all $Q_k\subset B(x,r)$. Consider the function $h\coloneqq  u\cdot \x_{S\setminus B(x,r)} + u\land M \cdot \x_{S\cap B(x,r)}$. By Corollary \ref{harmonic:Removability4}, it follows that $h$ is a Sobolev function and $\osc_{Q_i}(h)\leq \osc_{Q_i}(u)$ for all $i\in \N$. Therefore, for the Dirichlet energy we  have $D_{B(x,r)}(h)\leq D_{B(x,r)}(u)$. 

Assume now that there exists some $Q_k\subset B(x,r)$ with $M_{Q_k}(u)>M$. If $\osc_{Q_k}(u)>0$, then it is easy to see that $\osc_{Q_k}(h)< \osc_{Q_k}(u)$, which implies that $D_{B(x,r)}(h)<D_{B(x,r)}(u)$, a contradiction to harmonicity. If $\osc_{Q_k}(u)=0$, then consider a good path $\gamma\subset B(x,r)$, given by Lemma \ref{harmonic:Paths joining continua}, that joins $Q_k$ to some $Q_l\subset B(x,r)$ with $\osc_{Q_l}(u)>0$. Using the upper gradient inequality one can then find a peripheral disk $Q_m\subset B(x,r)$ that intersects $\gamma$, such that $\osc_{Q_m}(u)>0$ and $M_{Q_m}(u)$ is arbitrarily close to $M_{Q_k}(u)$, so, in particular, $M_{Q_m}(u)>M$. By the previous case we obtain a contradiction.

With a similar argument, one shows that $\inf_{z\in \partial_*B(x,r)}u(z)\leq M_{Q_k}(u) $ for all $Q_k\subset B(x,r)$. Therefore by the upper gradient inequality we have 
\begin{align*}
|M_{Q_{i_x}}(u)-M_{Q_{i_y}}(u)| &\leq \sup_{z\in \partial_*B(x,r)}u(z)-\inf_{z\in \partial_*B(x,r)}u(z)\\
&\leq \sum_{i:Q_i\cap \gamma_r\neq \emptyset}\osc_{Q_i}(u)<\varepsilon,
\end{align*}
as desired.
 
Now, we treat the case $x\in \partial Q_{i_0}$ for some $i_0\in \N$. Consider a small ball $B(x,r)$ with $\partial B(x,r)\cap Q_{i_0}\neq \emptyset$. If $\osc_{Q_i}(u)=0$ for all $Q_i$ contained in $B\coloneqq B(x,r)$, then from Lemma \ref{harmonic:4-continuous}(b) for the component $W$ of $B(x,r) \setminus  \br{\bigcup_{i\in I_{\partial B}}{Q_i}}$ that contains $x$ in its boundary we have that $u\big|_{S\cap W}$ is a constant $c$. In fact $\partial Q_{i_0}\cap \partial W$ contains a non-trivial arc $\alpha$ that contains $x$ in its interior. Since $u$ is normalized, Lemma \ref{harmonic:3-Normalized Property} implies that the value of $u(y)$ for $y\in \alpha$ is approximated by $M_{Q_i}(u)=c$, where $Q_i\subset W$. This shows that $u\equiv c$ in a neighborhood of $x$, and thus, $u$ is continuous at $x$. 

Now, we assume that arbitrarily close to $x$ there exists some $Q_i$, $i\neq i_0$, with $\osc_{Q_i}(u)>0$. For a small $\varepsilon>0$ we apply again Lemma \ref{harmonic:3-curves gamma_r}(a) to obtain a circular arc $\gamma_r$ around $x$ such that 
\begin{align*}
\sum_{\substack{i:Q_i\cap \gamma_r\neq \emptyset \\i\neq i_0}}\osc_{Q_i}(u)<\varepsilon.
\end{align*}
As in the proof of  Lemma \ref{harmonic:3-well-defined} (see also Figure \ref{harmonic:fig:crosscut}), there exists a (closed) subarc $\gamma_r'$ of $\gamma_r$ with endpoints on $\partial Q_{i_0}$ such that $\gamma_r'\cap Q_{i_0}=\emptyset$ and $\gamma_r'$ defines a crosscut that separates $x$ from $\infty$ in $\R^2\setminus Q_{i_0}$. We consider an arc $\beta\subset \br Q_{i_0}$ whose endpoints are the endpoints of $\gamma_r'$, but otherwise it is contained in $Q_{i_0}$. Then $\beta\cup  \gamma_r'$ bounds a Jordan region $V$ that contains $x$ in its interior.

With a similar variational argument as in the case $x\in S^\circ$ we will show that for each $Q_k\subset V$ we have
\begin{align*}
\inf_{z\in S\cap \gamma_r'}u(z)\leq M_{Q_k}(u) \leq \sup_{z\in S\cap \gamma_r'}u(z).
\end{align*} 
Then continuity will follow as before, because $\sum_{i:Q_i\cap \gamma_r'\neq \emptyset} \osc(Q_i) <\varepsilon$.

We sketch the proof of the right inequality. Let $M\coloneqq \sup_{z\in S\cap \gamma_r'} u(z)$ (which is finite by the upper gradient inequality for the good path $\gamma_r'$), and consider the function $h=u\cdot \x_{S \setminus V}+ u\land M \cdot \x_{S\cap V}$. By Corollary \ref{harmonic:Removability4} the function $h$ is a Sobolev function which agrees with $u$ outside $V$ and $\osc_{Q_i}(h)\leq \osc_{Q_i}(u)$ for all $i\in \N$. Now, if there exists $Q_k\subset V$ with $M_{Q_k}(u)>M$ we derive a contradiction as in the previous case.
\end{proof}

The continuity implies, in particular, that $|u(x)|<\infty$ for every $x\in S$.

\begin{theorem}[Maximum principle]\label{harmonic:Maximum Principle}\index{maximum principle}
Let $u\colon S\to \R$ be a carpet-harmonic function and $V\subset \subset\Omega$ be an open set. Then 
\begin{align*}
\sup_{x\in S\cap \br{V}} u(x)= \sup_{x\in \partial_* V} u(x)\quad  \textrm{and} \quad \inf_{x\in S\cap \br{V}} u(x)= \inf_{x\in \partial_* V} u(x).
\end{align*}
\end{theorem}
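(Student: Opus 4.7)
The plan is to adapt the variational argument already carried out in the proof of Theorem \ref{harmonic:4-continuous}, which establishes the same statement for balls. First, using Lemma \ref{harmonic:Switch to normalized}, I would replace $u$ by its normalized version, so that by Theorem \ref{harmonic:4-continuous} $u$ is continuous on $S$ and, by Lemma \ref{harmonic:3-Normalized Property}, $u(x)$ can be approximated by $M_{Q_i}(u)$ for peripheral disks $Q_i$ arbitrarily close to $x$. Set $M \coloneqq \sup_{x \in \partial_*V} u(x)$, which is finite by continuity and compactness of $\partial_*V$ (the degenerate case $\partial_*V = \emptyset$ is handled separately via Remark \ref{harmonic:Properties of W}(b)). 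The inequality $\sup_{S \cap \br V} u \geq M$ is immediate; the content of the theorem is the reverse bound.

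The main step is to show $M_{Q_k}(u) \leq M$ for every peripheral disk $Q_k$ with $\br Q_k \subset V$. To this end I would consider the truncation
\[
h \coloneqq u \cdot \x_{S \setminus V} + (u \land M) \cdot \x_{S \cap V},
\]
which by Corollary \ref{harmonic:Removability4} lies in $\mathcal W^{1,2}_{\loc}(S)$ with $\osc_{Q_i}(h) \leq \osc_{Q_i}(u)$ for every $i \in \N$. Since $\zeta \coloneqq h - u$ vanishes outside $V \subset\subset \Omega$, a calculation using that $\{\osc_{Q_i}(u)\}_{i\in I_W}$ is square-summable for any $W\subset\subset\Omega$ shows $\zeta \in \mathcal W^{1,2}(S)$, so the harmonicity of $u$ gives $D_V(u) \leq D_V(u+\zeta) = D_V(h)$. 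Combined with the pointwise oscillation bound this forces $\osc_{Q_i}(h) = \osc_{Q_i}(u)$ for every $i \in I_V$. Now suppose for contradiction $M_{Q_k}(u) > M$ with $\br Q_k \subset V$. If $\osc_{Q_k}(u) > 0$, then $\partial Q_k \subset V$ yields $h = u\land M$ on $\partial Q_k$, and a direct calculation distinguishing whether $m_{Q_k}(u) \leq M$ or $>M$ gives $\osc_{Q_k}(h) < \osc_{Q_k}(u)$ strictly, contradicting the equality. If $\osc_{Q_k}(u) = 0$, then $u \equiv c > M$ on $\partial Q_k$, and I follow the path-propagation of Theorem \ref{harmonic:4-continuous}: using Lemma \ref{harmonic:Paths joining continua} I find a good path $\gamma$ joining $\partial Q_k$ to a peripheral disk with positive oscillation, and the upper gradient inequality applied to a subpath up to the first positive-oscillation disk $Q_m$ along $\gamma$ produces $M_{Q_m}(u)$ arbitrarily close to $c$. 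By keeping $\gamma$ inside a small neighborhood of $\partial Q_k$ and using Lemma \ref{harmonic:Fatness consequence} together with $\dist(\partial Q_k, \partial V) > 0$, I can guarantee $\br Q_m \subset V$, reducing to the previous case.

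Finally, I would promote the disk-wise bound to the desired pointwise one: for $x \in S \cap V$, since $V$ is open, there exist $Q_i$ with $\br Q_i \subset V$ approximating $x$ and satisfying $M_{Q_i}(u) \to u(x)$ by Lemma \ref{harmonic:3-Normalized Property}, whence $u(x) \leq M$; points in $\partial_*V$ satisfy $u \leq M$ by definition. This establishes the supremum statement, and the infimum statement follows by applying the argument to $-u$ (or symmetrically, truncating from below using $u \lor m$ with $m = \inf_{\partial_*V} u$, invoking a symmetric version of Corollary \ref{harmonic:Removability4}). The principal obstacle is the $\osc_{Q_k}(u) = 0$ case, where the truncation is not directly sensitive to $Q_k$ itself; its resolution via path-propagation requires showing that the auxiliary $Q_m$ along $\gamma$ remains inside $V$, for which the fatness of peripheral disks and the positive distance from $\partial Q_k$ to $\partial V$ are essential.
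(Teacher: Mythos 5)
Your overall strategy coincides with the paper's: the same truncation $h = u\cdot\x_{S\setminus V} + (u\land M)\cdot\x_{S\cap V}$ via Corollary \ref{harmonic:Removability4}, the same use of harmonicity to conclude that no peripheral disk with $\partial Q_k\subset V$, positive oscillation, and $M_{Q_k}(u)>M$ can exist (strict drop of $\osc_{Q_k}$ under truncation), the same path-propagation borrowed from Theorem \ref{harmonic:4-continuous} for the zero-oscillation case, and the same reduction of the infimum statement to $-u$.

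There is, however, a genuine gap in your handling of the sub-case $\osc_{Q_k}(u)=0$. You propose to join $\partial Q_k$ to a positive-oscillation disk by a good path kept "inside a small neighborhood of $\partial Q_k$", so that the first positive-oscillation disk $Q_m$ encountered satisfies $\br Q_m\subset V$. But the existence of a positive-oscillation disk in any prescribed neighborhood of $Q_k$ is precisely what may fail: if $\osc_{Q_i}(u)=0$ for every $Q_i$ in a whole connected region around $Q_k$, the value $c=u|_{\partial Q_k}>M$ propagates as a constant and no such $Q_m$ exists nearby; forcing the path to travel farther destroys the guarantee $\br Q_m\subset V$, and if $Q_m$ straddles $\partial V$ the oscillation of $h$ on $Q_m$ need not drop strictly (Corollary \ref{harmonic:Removability4} only yields $\leq$ for boundary-crossing disks, since the maximum of $u$ on $\partial Q_m$ may be attained outside $V$ and survive the truncation). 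The unaddressed case is when $u$ is constant, equal to $c>M$, on an entire component $W$ of $V\setminus\br{\bigcup_{i\in I_{\partial V}}Q_i}$; there the contradiction comes not from the variational argument but from continuity: by Remark \ref{harmonic:Properties of W}(a) one has $\br{S\cap W}\cap\partial_*V\neq\emptyset$ and $W$ contains every peripheral disk it meets, so Lemma \ref{harmonic:4-zero oscillation lemma}(a) together with continuity forces $u=c>M$ at a point of $\partial_*V$, contradicting the definition of $M$. This is exactly how the paper organizes the argument: work inside the component $W$, and either all disks of $W$ have zero oscillation (continuity contradiction as above), or some disk of $W$ has positive oscillation, in which case the path-propagation can be carried out entirely inside $W\subset V$ and your truncation argument closes the proof.
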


Note that if $S\cap \br V\neq \emptyset$, then $\partial_*V \neq \emptyset$. This is because $V\subset \subset \Omega$; recall Remark \ref{harmonic:Properties of W}(b). Hence, all quantities above are defined, or they are simultaneously vacant. Moreover, by the continuity of $u$ all quantities are finite. 

\begin{proof}
We clearly have $\sup_{x\in S\cap \br{V}} u(x)\geq \sup_{x\in \partial_* V} u(x)\eqqcolon M$ because $S\cap \br V\supset \partial_*V$. 

Assume that there exists $x\in S\cap V$ such that $u(x)>M$. Since $S^\circ$ is dense in $S$ (this follows e.g.\ by Lemma \ref{harmonic:Paths in S^o}) and $u$ is continuous, we may assume that there exists $x\in S^\circ \cap V$ such that $u(x)>M$. Note that $x$ cannot lie in $ \br{ \bigcup_{i\in I_{\partial V}} {Q_i}}$ since all peripheral disks contained in a small neighborhood of $x$ have to lie in $V$. Let $W$ be the component of $V\setminus \br{ \bigcup_{i\in I_{\partial V}} {Q_i}}$ that contains $x$. By Remark \ref{harmonic:Properties of W}(a), we have $\br {S\cap W}\cap \partial_* V \neq \emptyset$ and  $W$ contains all peripheral disks that it intersects. 

If $\osc_{Q_i}(u)=0$ for all $Q_i\subset W$, then $u$ is constant in $S\cap W$ by Lemma \ref{harmonic:4-zero oscillation lemma}(a) and by continuity it is also constant and equal to $u(x)$ on $\br {S\cap  W}$. This contradicts the fact that $\br {S\cap W} \cap \partial_*V \neq \emptyset$, and that $u\leq M$ on $\partial_*V$.

Hence, there exists some $Q_i\subset W$ with $\osc_{Q_i}(u)>0$. Arbitrarily close to $x$ we can find a peripheral disk $Q_{i_x}$ with $M_{Q_{i_x}}(u)>M$. Arguing as in the proof of Theorem \ref{harmonic:4-continuous}, we can derive that there exists some $Q_{i_0}\subset W$ with $M_{Q_{i_0}}(u)>M$ and $\osc_{Q_{i_0}}(u)>0$. Consider the variation $h= u\cdot \x_{S\setminus V} + u\land M \cdot \x_{S\cap V}$ and note that $u\leq M$ on $\partial_* V$. By Corollary \ref{harmonic:Removability4} $h$ is a Sobolev function with $\osc_{Q_i}(h) \leq \osc_{Q_i}(u)$ for all $i\in \N$. However $\osc_{Q_{i_0}}(h)<\osc_{Q_{i_0}}(u)$ which contradicts the minimizing property of $u$.

The claim for the infimum follows by looking at $-u$. 
\end{proof}

\subsection{Uniqueness and comparison principle}

Here, we first recover the uniqueness part in Theorem \ref{harmonic:4-Thm Solution to Dirichlet problem}, and then a comparison principle for solutions to the Dirichlet problem. The standing assumption here is that the set $\Omega$ has boundary $\partial \Omega$ that consists of finitely many, non-trivial, and disjoint Jordan curves, so that we can define boundary values of Sobolev functions. 

\begin{theorem}[Uniqueness]\label{harmonic:Uniqueness}\index{Dirichlet problem!uniqueness}
Let $u,v\colon S \to \R$ be solutions to the Dirichlet problem given by Theorem \ref{harmonic:4-Thm Solution to Dirichlet problem} with boundary values equal to $f$ on $\partial \Omega$. Then $u=v$ on $S$.
\end{theorem}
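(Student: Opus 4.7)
I will argue by a convexity / energy minimization argument along the segment joining $u$ and $v$. For $t \in \R$ set $u_t := (1-t)u + tv$. Since $u$ and $v$ share boundary values $f$, the difference $v - u$ has boundary values identically zero on the intersection of their good path families (still of full weak/strong modulus), and by linearity of the Sobolev space (Proposition \ref{harmonic:3-Properties Sobolev}) each $u_t$ lies in $\mathcal W^{1,2}(S)$ with boundary values $f$. Thus every $u_t$ is admissible, and $\psi(t) := D_\Omega(u_t) \geq D := D_\Omega(u) = D_\Omega(v)$ for all $t \in \R$, with $\psi(0) = \psi(1) = D$. For each $i$, $t \mapsto \osc_{Q_i}(u_t)$ is convex and nonnegative (being the difference of the convex $\max_{\partial Q_i} u_t$ and the concave $\min_{\partial Q_i} u_t$, each being a sup/inf of affine functions of $t$), so $\psi$ is a sum of squares of convex nonnegative functions, hence convex. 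Combined with $\psi \geq D$ and $\psi(0) = \psi(1) = D$, this forces $\psi \equiv D$ on $[0,1]$.

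A constant sum of convex nonnegative functions on $[0,1]$ must consist of affine summands (each is convex and, as the constant minus the sum of the other convex functions, also concave). Combined with the seminorm and Jensen inequalities
\[
\osc_{Q_i}(u_t)^2 \le \bigl((1-t)\osc_{Q_i}(u) + t\osc_{Q_i}(v)\bigr)^2 \le (1-t)\osc_{Q_i}(u)^2 + t\osc_{Q_i}(v)^2,
\]
we conclude that both inequalities are equalities for every $i$ and every $t \in [0,1]$. The outer equality forces $\osc_{Q_i}(u) = \osc_{Q_i}(v)$, and the seminorm equality forces $u$ and $v$ to attain their common maximum on $\partial Q_i$ at a common point $y_i^+$ and their common minimum at a common point $y_i^-$.

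To conclude $u \equiv v$, I would reduce to an ordered case using $h := u \vee v$ and $k := u \wedge v$, which by Proposition \ref{harmonic:3-Properties Sobolev} inequality \eqref{harmonic:3-Properties Sobolev-inequality} and admissibility are themselves minimizers with energy $D$, hence carpet-harmonic; it then suffices to show $h = u$. I would apply the maximum principle (Theorem \ref{harmonic:Maximum Principle}) to $u$ and $h$ on subregions $V \subset\subset \Omega$ chosen to contain the peripheral disks they intersect (cf.\ Example \ref{harmonic:Square carpets coordinate functions}), together with the normalization property (Lemma \ref{harmonic:3-Normalized Property}) that recovers $u(x)$ as a limit of $\{M_{Q_i}(u)\}$ for $Q_i\to x$, to force the difference to vanish. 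The main obstacle is precisely this final step: the convexity analysis yields $\osc_{Q_i}(u) = \osc_{Q_i}(v)$ and common extrema on each $\partial Q_i$ but does not preclude $M_{Q_i}(u) \neq M_{Q_i}(v)$, since $D_\Omega$ is convex but not strictly convex (its kernel consists of locally constant functions, by Lemma \ref{harmonic:4-zero oscillation lemma}). It is the maximum principle together with the boundary condition $f$ that pins down the per-disk additive constants, and carrying this through the carpet structure cleanly is the delicate technical point.
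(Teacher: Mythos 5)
Your convexity argument for the first half is correct and essentially equivalent to the paper's: the paper gets $\osc_{Q_i}(u)=\osc_{Q_i}(v)$ by expanding $D_\Omega((1-s)u+sv)$ with the subadditivity of $\osc_{Q_i}$ and Cauchy--Schwarz and observing that all inequalities are forced to be equalities, which is exactly your ``each summand is affine'' argument in different clothing. The paper likewise introduces $g:=u\lor v$ and concludes it is a minimizer with $\osc_{Q_i}(g)=\osc_{Q_i}(u)$ for all $i$. Up to that point you have reproduced the proof.

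The final step, however, is a genuine gap, and you have correctly located but not filled it. Your proposed route---applying the maximum principle to $u$ and $h=u\lor v$ on exhausting subregions together with the normalization property---does not close: the theory is nonlinear, so $u-g$ is neither carpet-harmonic nor known to satisfy any maximum principle, and the comparison principle of the paper (Theorem \ref{harmonic:Comparison Principle}) is itself \emph{derived from} uniqueness, so invoking anything of that flavor is circular. The missing idea in the paper is a truncation at an intermediate level: assuming $u(x_0)<v(x_0)$, pick $\Lambda$ with $u(x_0)<\Lambda<g(x_0)$ and set $h:=(u\lor\Lambda)\land g$. By Proposition \ref{harmonic:3-Properties Sobolev} this is admissible with $\osc_{Q_i}(h)\le\osc_{Q_i}(u)=\osc_{Q_i}(g)$, hence it too is a minimizer and $\osc_{Q_i}(h)=\osc_{Q_i}(u)$ for every $i$. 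But $h\equiv\Lambda$ on the set $Z=\{u<\Lambda<g\}$, which is open in $S$ (by continuity of $u$ and $g$) and nonempty. One then shows, using Lemma \ref{harmonic:4-zero oscillation lemma}, Remark \ref{harmonic:Properties of W}, and continuity up to $\partial_*V$, that $Z$ must contain a peripheral disk $Q_{i_0}$ with $\osc_{Q_{i_0}}(u)>0$ (otherwise $u$ is constant near $x_0$ and one reaches a point $z_0\in\partial V$ with $u(z_0)<\Lambda<g(z_0)$, contradicting $z_0\notin V$). Since $h$ is constant on $S\cap Z$ we get $\osc_{Q_{i_0}}(h)=0\ne\osc_{Q_{i_0}}(u)$, the desired contradiction. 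This truncation is precisely the device that converts ``$u<v$ somewhere'' into ``a minimizer is constant on an open set meeting a disk of positive oscillation,'' which is what your outline is missing.
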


\begin{proof}
Since both $u,v$ are solutions to the Dirichlet problem, it follows that $D\coloneqq D_{\Omega}(u)=D_{\Omega}(v)$. Recall that a function $g$ is admissible for the Dirichlet problem if $g\in \mathcal W^{1,2}(S)$ and $g$ has boundary values equal to $f$. 

For $s\in [0,1]$ the function $(1-s)u+sv$ is admissible, thus by the subadditivity of $\osc_{Q_i}(\cdot)$ (see e.g.\ the proof of Proposition \ref{harmonic:3-Properties Sobolev}) and the Cauchy-Schwarz inequality we have
\begin{align*}
D&\leq D_{\Omega}((1-s)u+sv) \\
&\leq (1-s)^2D_\Omega(u) +2s(1-s)\sum_{i\in \N} \osc_{Q_i}(u)\osc_{Q_i}(v) +s^2 D_\Omega(v)\\
&\leq  (1-s)^2 D+2s(1-s) D^{1/2}D^{1/2}+s^2D\\
&=D.
\end{align*}
Since we have equality, it follows that $\osc_{Q_i}(u)=\osc_{Q_i}(v)$ for all $i\in \N$. 

Consider the function $g=u\lor v$ which is a Sobolev function with $\osc_{Q_i}(g)\leq \osc_{Q_i}(u)$, by Proposition \ref{harmonic:3-Properties Sobolev}\ref{harmonic:3-Properties Sobolev-max}. Also, $g(x)=f(x)$ for all ``accessible" points $x\in \partial \Omega$, thus $g$ is admissible for the Dirichlet problem with boundary data $f$. It follows that
\begin{align*}
D\leq D_\Omega(g)=\sum_{i\in \N} \osc_{Q_i}(g)^2 \leq \sum_{i\in \N} \osc_{Q_i}(u)^2=D.
\end{align*}
This implies that $\osc_{Q_i}(g)=\osc_{Q_i}(u)$ for all $i\in \N$, and $g$ is also carpet-harmonic on $\Omega$, since it minimizes the Dirichlet energy. 

We assume that there exists $x_0\in \Omega$ such that $u(x_0)<v(x_0)$. Then $u(x_0)<g(x_0)$, and there exists a level $\Lambda\in \R$ such that $u(x_0)<\Lambda<g(x_0)$. We define the function
\begin{align*}
h= \begin{cases} g,& g\leq \Lambda \\  \Lambda,& u<\Lambda<g \\ u,& u\geq \Lambda .\end{cases}
\end{align*}
It is immediate to see that $h=(u\lor \Lambda)\land g$. Proposition \ref{harmonic:3-Properties Sobolev}\ref{harmonic:3-Properties Sobolev-max},\ref{harmonic:3-Properties Sobolev-min}  shows that $h$ is a Sobolev function with  $\osc_{Q_i}(h) \leq \osc_{Q_i}(u)=\osc_{Q_i}(g)$ for all $i\in \N$. It is also clear that $h$ is admissible for the Dirichlet problem on $\Omega$ with boundary data equal to $f$. It thus follows that $h$ is also carpet-harmonic and in fact $\osc_{Q_i}(h)=\osc_{Q_i}(u)$ for all $i\in \N$.

If the closure of $\{u<\Lambda <g\}$ relative to $S$ is the entire carpet $S$, then $h\equiv \Lambda$ is constant and $\osc_{Q_i}(h)=\osc_{Q_i}(u)=\osc_{Q_i}(v)=0$. Lemma \ref{harmonic:4-zero oscillation lemma}(a) implies that $u,v$ are constants, but then they cannot have the same boundary values, unless $u\equiv v$. This contradicts the assumption that $u(x_0)<v(x_0)$. Hence, we assume that there exists a point of $S$ that does not lie in $\{u<\Lambda <g\}$.

We will show that there exists a peripheral disk $Q_{i_0} \subset Z\coloneqq \{u<\Lambda<g\}$ with $\osc_{Q_{i_0}}(u)>0$. However, $h$ is constant in $S\cap Z$, thus $\osc_{Q_{i_0}}(h)=0$, which is again a contradiction, because $\osc_{Q_{i_0}}(h)=\osc_{Q_{i_0}}(u)$.

To prove our claim, note first that by the continuity of the carpet-harmonic functions $u,g$ the set $Z$ is the intersection of an open set $V$ in the plane with $S$, and $Z$ is non-empty, since it contains $x_0$. Since $S\setminus V=S\setminus Z\neq \emptyset$ and $S\cap V\neq \emptyset$, we have $\partial_{*}V\neq \emptyset$; see Remark \ref{harmonic:Properties of W}(b). Let $W$ be the component of $V\setminus \br{\bigcup_{i\in I_{\partial V}} {Q_i}}$ that contains $x_0$. Then $\br {S\cap W} \cap \partial_*V\neq \emptyset$, by Remark \ref{harmonic:Properties of W}(a). If $\osc_{Q_i}(u)=0$ for all $Q_i\subset W$, then $u$ is constant in $S\cap W$ by Lemma \ref{harmonic:4-zero oscillation lemma}(a) and by continuity it is also constant on $\br{S\cap W}\cap \partial_*V$. In particular, there exists a point  $z_0\in \partial_*V\subset \partial V$ with $u(z_0)=u(x_0)<\Lambda$ and $g(z_0)>\Lambda$. Since these inequalities hold in a neighborhood of $z_0$ we obtain a contradiction. 
\end{proof}

\begin{theorem}[Comparison principle]\label{harmonic:Comparison Principle}\index{comparison principle}
Assume that $u,v\colon S \to \R$ are solutions to the Dirichlet problem in $\Omega$ with boundary data $\alpha,\beta$, respectively. We assume that $\alpha(x)\geq \beta(x)$ for points $x\in \partial \Omega$ that are ``accessible" by paths $\gamma\in \mathcal G_0$, where $\mathcal G_0$ is a path family that contains almost every path. Then $u\geq v$ in $S$.  
\end{theorem}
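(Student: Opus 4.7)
My plan is to mimic the classical PDE proof of the comparison principle by constructing the maximum and minimum of $u$ and $v$ and exploiting the energy inequality \eqref{harmonic:3-Properties Sobolev-inequality} together with uniqueness.

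First, I would set $w_1 \coloneqq u \lor v$ and $w_2 \coloneqq u \land v$. Proposition \ref{harmonic:3-Properties Sobolev} guarantees that $w_1, w_2 \in \mathcal W^{1,2}(S)$ with the crucial pointwise oscillation inequality
\begin{align*}
\osc_{Q_i}(w_1)^2 + \osc_{Q_i}(w_2)^2 \leq \osc_{Q_i}(u)^2 + \osc_{Q_i}(v)^2 \quad \text{for all } i \in \N.
\end{align*}
Summing over $i \in \N$, this yields
\begin{align*}
D_\Omega(w_1) + D_\Omega(w_2) \leq D_\Omega(u) + D_\Omega(v).
\end{align*}

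Next, I would verify that $w_1$ is admissible for the Dirichlet problem with boundary data $\alpha$ and $w_2$ is admissible with boundary data $\beta$. Take $\mathcal G_0'$ to be the intersection of the path families witnessing the boundary values of $u$, of $v$, and of the family $\mathcal G_0$ in the hypothesis; by subadditivity of modulus, $\mathcal G_0'$ still contains almost every path. At any point $x \in \partial \Omega$ that is ``accessible'' by a path in $\mathcal G_0'$, we have $u(x) = \alpha(x)$, $v(x) = \beta(x)$, and $\alpha(x) \geq \beta(x)$; hence $w_1(x) = \alpha(x)$ and $w_2(x) = \beta(x)$. This is exactly what admissibility requires.

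Then I would invoke the minimizing property of $u$ and $v$ from Theorem \ref{harmonic:4-Thm Solution to Dirichlet problem}: since $w_1$ is admissible with boundary data $\alpha$ and $u$ is the minimizer, $D_\Omega(u) \leq D_\Omega(w_1)$; similarly $D_\Omega(v) \leq D_\Omega(w_2)$. Adding and combining with the previous display forces
\begin{align*}
D_\Omega(u) + D_\Omega(v) \leq D_\Omega(w_1) + D_\Omega(w_2) \leq D_\Omega(u) + D_\Omega(v),
\end{align*}
so equality holds throughout. In particular $D_\Omega(w_1) = D_\Omega(u)$, which means $w_1$ is itself a solution to the Dirichlet problem with boundary data $\alpha$. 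By the uniqueness Theorem \ref{harmonic:Uniqueness}, $w_1 = u$ on $S$, i.e.\ $u \geq v$ pointwise.

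The only delicate point I anticipate is the second step: making sure that the ad hoc notion of ``boundary values'' introduced in Section \ref{harmonic:Section Dirichlet} is preserved under the $\max$/$\min$ operations. The subtlety is that the good paths for $w_1$ need not coincide with those for $u$ or $v$, but by intersecting the three path families (and the family $\mathcal G_0$ from the hypothesis) the required dense set of accessible boundary points on which $w_1 = \alpha$ and $w_2 = \beta$ is still witnessed by a family whose complement has modulus zero, which is all that the definition of boundary values demands.
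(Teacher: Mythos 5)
Your proposal is correct and follows essentially the same route as the paper: both arguments combine the energy inequality \eqref{harmonic:3-Properties Sobolev-inequality} for $u\lor v$ and $u\land v$ with their admissibility for the respective Dirichlet problems and the uniqueness theorem. The only cosmetic difference is that the paper runs the argument by contradiction (deriving strict inequalities $D_\Omega(u)<D_\Omega(u\lor v)$ and $D_\Omega(v)<D_\Omega(u\land v)$ from uniqueness), whereas you derive equality of energies directly and then invoke uniqueness to conclude $u\lor v=u$; your handling of the boundary values via intersecting the relevant path families is also exactly what the paper's definition requires.
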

\begin{proof}
Assume that the conclusion fails, so there exists $x_0\in S$ with $u(x_0)<v(x_0)$. Let $f= u\lor v$ which is a Sobolev function with boundary values $\alpha$ on $\partial \Omega$. Thus, $f$ is admissible for the Dirichlet problem on $\Omega$ with boundary values $\alpha$, so $D_{\Omega}(u)\leq D_\Omega(f)$. By the uniqueness of solutions to the Dirichlet problem in Theorem \ref{harmonic:Uniqueness}, it follows that
\begin{align}\label{harmonic:Comparison principle inequality}
D_\Omega(u)<D_\Omega(f).
\end{align}

Similarly, consider $g\coloneqq  u\land v $ which is admissible for the Dirichlet problem on $\Omega$ with boundary values $\beta$. As before, this implies that $D_\Omega(v)<D_\Omega(g)$. Adding this to \eqref{harmonic:Comparison principle inequality}, we obtain
\begin{align*}
\sum_{i\in \N}(\osc_{Q_i}(u)^2+ \osc_{Q_i}(v)^2) < \sum_{i\in \N} (\osc_{Q_i}(f)^2 +\osc_{Q_i}(g)^2).
\end{align*}
This, however, contradicts \eqref{harmonic:3-Properties Sobolev-inequality} in Proposition \ref{harmonic:3-Properties Sobolev}.
\end{proof}

\subsection{Continuous boundary data}
In this section we continue the treatment of the Dirichlet problem, proving that the solutions are continuous up to the boundary, if the boundary data is continuous. Here we assume, as usual, that the boundary $\partial \Omega$ consists of finitely many, non-trivial, and disjoint Jordan curves.

\begin{theorem}\label{harmonic:Continuous boundary data}
Assume that $u\colon S\to \R$ is the solution to the Dirichlet problem in $\Omega$ with continuous boundary data $f\colon \partial \Omega\to \R$. Then $u$ can be extended continuously to $\partial \Omega$. 
\end{theorem}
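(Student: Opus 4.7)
The plan is to define $u(x_0) \coloneqq f(x_0)$ for each $x_0 \in \partial \Omega$ and show that the resulting extended map is continuous at every boundary point; combined with Theorem \ref{harmonic:4-continuous} this gives the extension claimed in the theorem. Fix $x_0 \in \partial \Omega$ and $\varepsilon > 0$. Using continuity of $f$ on the compact set $\partial \Omega$, I first choose $\delta_0 > 0$ with $|f(z) - f(x_0)| < \varepsilon$ for all $z \in \partial \Omega \cap \overline{B(x_0, \delta_0)}$. Since $\partial \Omega$ is a finite disjoint union of non-trivial Jordan curves, for every sufficiently small $r > 0$ the circle $\partial B(x_0, r)$ meets $\partial \Omega$ in exactly two points, so that $\Omega \cap \partial B(x_0, r)$ contains a single arc $\alpha_r$ whose closure together with a short subarc $\beta_r$ of $\partial \Omega$ through $x_0$ bounds a Jordan subdomain $V_r \subset \Omega$ with $x_0 \in \beta_r \subset \partial V_r$. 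Because $\partial \Omega \cap S = \emptyset$, we get the key topological identity $\partial_* V_r = S \cap \alpha_r$, which will allow the maximum principle to reduce the boundary continuity to control of $u$ on $\alpha_r$.

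The heart of the argument is to select $r$, arbitrarily small, for which $\alpha_r$ is a good path carrying little oscillation of $u$. I apply Lemma \ref{harmonic:3-curves gamma_r}(a) at the boundary point $x_0 \in \partial \Omega$ (which is explicitly allowed by that lemma) with weight $h(Q_i) \coloneqq \osc_{Q_i}(u)$ and with $\Gamma$ equal to the union of the complement of $\mathcal G_u$ (the good paths for $u$) and of the family $\mathcal G_0$ realizing the boundary data of $u$; both complements have modulus zero, and Remark \ref{harmonic:3-remark-subpaths modulus zero} ensures subpaths of $\alpha_r$ remain non-exceptional. Since $u \in \mathcal W^{1,2}(S)$ globally and every peripheral disk is compactly contained in $\Omega$ (hence at positive distance from $x_0$), dominated convergence gives $\sum_{i \in I_{B(x_0,\delta)}} \osc_{Q_i}(u)^2 \to 0$ as $\delta \to 0$, which is exactly the input the proof of Lemma \ref{harmonic:3-curves gamma_r}(a) needs. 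This produces an arbitrarily small $r$ for which $\alpha_r$ is good for $u$, lies in $\mathcal G_0$, and satisfies
\[
\sum_{i : Q_i \cap \alpha_r \neq \emptyset} \osc_{Q_i}(u) < \varepsilon.
\]

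With such an arc in hand, fix any $y \in \alpha_r \cap S$ and an endpoint $z_1 \in \partial \Omega$ of $\alpha_r$. Since $\alpha_r \in \mathcal G_0$, the endpoint $z_1$ is ``accessible'' by $\alpha_r$ and $u(z_1) = f(z_1)$. Applying the upper gradient inequality for $u$ along the subpath of $\alpha_r$ from $y$ to peripheral disks $Q_i$ approaching $z_1$, and passing to the limit as in the proof of Corollary \ref{harmonic:3-Finiteness of f} (using $|M_{Q_i}(u) - u(x)| \le \osc_{Q_i}(u) \to 0$ as $Q_i \to z_1$), yields
\[
|u(y) - f(z_1)| \le \sum_{i : Q_i \cap \alpha_r \neq \emptyset} \osc_{Q_i}(u) < \varepsilon.
\]
Since $z_1 \in \partial \Omega \cap \overline{B(x_0,r)} \subset \partial \Omega \cap \overline{B(x_0,\delta_0)}$, we have $|f(z_1) - f(x_0)| < \varepsilon$, and therefore $|u(y) - f(x_0)| < 2\varepsilon$ for every $y \in \partial_* V_r$. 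The maximum principle (Theorem \ref{harmonic:Maximum Principle}) applied to $V_r$ now gives
\[
f(x_0) - 2\varepsilon \le \inf_{S \cap \overline{V_r}} u \le \sup_{S \cap \overline{V_r}} u \le f(x_0) + 2\varepsilon,
\]
and as $V_r$ contains an $\Omega$-neighborhood of $x_0$, this shows $u(x) \to f(x_0)$ as $S \ni x \to x_0$.

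The main obstacle is the simultaneous realization of all the properties demanded of $\alpha_r$: it must be good for $u$ (so the upper gradient inequality is available), it must lie in $\mathcal G_0$ so that the boundary identity $u(z_1) = f(z_1)$ can be used, its oscillation sum must be smaller than $\varepsilon$, and it must bound a Jordan subdomain of $\Omega$ containing $x_0$ on its boundary. Each condition separately is violated only on a set of $r$'s of measure zero or a finite set, so their conjunction holds on a set of positive measure and, in particular, for arbitrarily small $r$. Once the arc is chosen the rest is an essentially formal combination of the upper gradient inequality, uniform continuity of $f$, and the maximum principle.
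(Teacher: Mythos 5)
Your overall skeleton (a small crosscut $\alpha_r\subset\partial B(x_0,r)$ with small oscillation sum, produced by Lemma \ref{harmonic:3-curves gamma_r}(a), plus uniform continuity of $f$) is the same as the paper's, but there is a genuine gap at the step where you control $u$ on $S\cap V_r$ by its values on $\partial_*V_r$. You invoke Theorem \ref{harmonic:Maximum Principle}, whose hypothesis is $V\subset\subset\Omega$; your region $V_r$ has $x_0\in\partial\Omega$ in its closure, so it is \emph{not} compactly contained in $\Omega$ and the theorem does not apply. This is not a cosmetic issue: the proof of Theorem \ref{harmonic:Maximum Principle} rests on testing the carpet-harmonicity inequality $D_V(u)\le D_V(u+\zeta)$ with a perturbation $\zeta$ supported in a set compactly contained in $\Omega$, and your $\zeta=h-u$ (for the truncation $h$) would not be of that form. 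The paper avoids this by running the variational argument directly: it forms $h=u\cdot\x_{S\setminus W}+(u\land(M+\varepsilon))\cdot\x_{S\cap W}$ via Corollary \ref{harmonic:Removability4}, observes that $h$ is still \emph{admissible for the Dirichlet problem with the same boundary data $f$} (here one needs $f\le M+\varepsilon$ on the boundary arc $\alpha$, which your choice of $\delta_0$ supplies), and derives a contradiction with the \emph{global} minimizing property of the Dirichlet solution if some $Q_k\subset W$ had $M_{Q_k}(u)>M+\varepsilon$ with $\osc_{Q_k}(u)>0$. To close your argument you must either insert this truncation/admissibility argument in place of the citation, or first prove a boundary version of the maximum principle (as the paper does only later, for the free boundary problem, in Theorem \ref{harmonic:Free boundary problem - Maximum Principle}).

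Two smaller points. First, your claim that for all sufficiently small $r$ the circle $\partial B(x_0,r)$ meets $\partial\Omega$ in exactly two points is false for a general Jordan curve; what is true, and all you need, is that some subarc $\gamma_r'$ of $\partial B(x_0,r)\cap\Omega$ is a crosscut which together with a subarc of $\partial\Omega$ bounds a Jordan region $W$ with $x_0\in\partial W$ (and subarcs of a non-exceptional circle remain non-exceptional by Remark \ref{harmonic:3-remark-subpaths modulus zero}, so the oscillation bound survives the passage to $\gamma_r'$). Second, your identification $u(z_1)=f(z_1)$ at an endpoint $z_1$ of the crosscut requires $z_1$ to be ``accessible'' by the chosen arc and the arc to lie in the family $\mathcal G_0$ realizing the boundary data; you do arrange this, and the limit computation via $|M_{Q_i}(u)-f(z_1)|\le\sum\osc_{Q_j}(u)$ is fine, so this part is sound once the arc is chosen correctly.
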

\begin{proof}
The proof is very similar to the proof of Theorem \ref{harmonic:4-continuous}, and uses, in some sense, a maximum principle near the boundary.

Recall that there exists a path family $\mathcal G_0$ that contains almost every path, such that for all $x\in \partial \Omega$ that are ``accessible" by paths $\gamma\in \mathcal G_0$ we have $u(x)=f(x)$. Furthermore, the fact that the boundary $\partial \Omega$ consists of finitely many Jordan curves implies that every $x \in \partial \Omega$ is the landing point of a path $\gamma \subset \Omega$ (not necessarily in $\mathcal G_0$). Perturbing $\gamma$ as in Lemma \ref{harmonic:Paths boundary} we obtain a point $y\in \partial \Omega$ near $x$ that is ``accessible" by a path $\gamma_0\in \mathcal G$. Hence, $u(y)=f(y)$ and this actually holds for a dense set of points in $\partial \Omega$. 

We fix a point $x_0 \in \partial \Omega$ and we wish to show that $u$ can extended at $x_0$ by $u(x_0)=f(x_0)$, so that $u\big|_{S\cup\{x_0\}}$ is continuous. If this is true for each $x_0\in \partial \Omega$, then $u$ will be continuous in $S\cup \partial \Omega$ by the continuity of $f$, as desired. 

If $\osc_{Q_i}(u)=0$ for all $Q_i$ contained in a neighborhood of $x_0$, then by Lemma \ref{harmonic:4-zero oscillation lemma}(b) $u$ is a constant $c$ near $x_0$. In particular, there exists an arc $\alpha\subset \partial \Omega$ with $x_0$ lying in the interior of $\alpha$ such that $u(y)=f(y)=c$  for a dense set of points $y\in \alpha$. This implies that $f(x_0)=c$ by continuity, and hence we may define $u(x_0)=c=f(x_0)$.

Now, suppose that arbitrarily close to $x_0$ we can find $Q_i$ with $\osc_{Q_i}(u)>0$. Consider a ball $B(x_0,r)$, where $r>0$ is so small that $B(x_0,r)$ intersects only one boundary component of $\partial \Omega$. The boundary $\partial B(x_0,r)$ defines a crosscut $\gamma_r'\subset \partial B(x_0,r)$, which bounds a region $W\subset \Omega$, together with a subarc $\alpha$ of $\partial \Omega$, such that $x_0 \in \partial W$; see Figure \ref{harmonic:fig:crosscut}. We fix $\varepsilon>0$ and take an even smaller $r$ so that 
\begin{align}\label{harmonic:Continuity on boundary-f inequality}
\sup_{y\in  \alpha}f(y)- \inf_{y\in \alpha} f(y)<\varepsilon
\end{align}
and $\sum_{i:Q_i\cap \gamma_r'\neq \emptyset}\osc_{Q_i}(u)<\varepsilon$, where the path $\gamma_r'\subset \gamma_r=\partial B(x_0,r)$ is non-excep\-tional, as in Lemma \ref{harmonic:3-curves gamma_r}(a). We wish to show that
\begin{align}\label{harmonic:Continuity on boundary-suffices}
|f(x_0)-u(z)|\leq 2\varepsilon
\end{align}
for all $z\in S\cap W$. This will show that $u$ can be continuously extended at $x_0$ by $u(x_0)=f(x_0)$.

Let $M\coloneqq \sup_{y\in \alpha} f(y)$ and $m\coloneqq \inf_{y\in \alpha}f(y)$. We claim that 
$$m-\varepsilon\leq M_{Q_k}(u)\leq M+\varepsilon$$ 
for all $Q_k\subset W$. This will imply that $|f(x_0)- M_{Q_k}(u)|<2\varepsilon$ by \eqref{harmonic:Continuity on boundary-f inequality}, and thus $|f(x_0)-u(z)|\leq 2\varepsilon$ for all $z\in S\cap W$, as desired; here we used the continuity of $u$ and the fact that near $z$ we can find arbitrarily small peripheral disks, and thus peripheral disks $Q_k\subset W$.

Observe that on the arc $\gamma_r'$ by the upper gradient inequality we have $|u(x)-u(y)|\leq \sum_{i:Q_i\cap \gamma_r'}\osc_{Q_i}(u) <\varepsilon$. Since $\gamma_r'$ is non-exceptional, we have $u(y)=f(y)$ for the endpoints of $\gamma_r'$. Hence
\begin{align*}
m-\varepsilon\leq u(x)\leq M+\varepsilon
\end{align*}
for all $x\in S\cap \gamma_r'=\partial_*W$. Consider the function $h=u \cdot \x_{S\setminus W}+ u\land (M+\varepsilon)\cdot \x_{S\cap W}$ which is a Sobolev function by Corollary \ref{harmonic:Removability4} with $\osc_{Q_i}(h)\leq \osc_{Q_i}(u)$ for all $i\in \N$. Note that $h$ is also admissible for the Dirichlet problem with boundary data $f$. If there exists some $Q_k \subset W$ with $M_{Q_k}(u)>M+\varepsilon$, then we can find actually some $Q_k\subset W$ with $M_{Q_k}(u)>M+\varepsilon$ and $\osc_{Q_k}(u)>0$; see proof of Theorem \ref{harmonic:4-continuous}. Then $\osc_{Q_k}(h) <\osc_{Q_k}(u)$ which contradicts the minimizing property of $u$. The inequality $m-\varepsilon\leq M_{Q_k}(u)$ for all $k\in \N$ is shown in the same way.
\end{proof}

So far we have treated the existence, uniqueness, and the comparison principle for solutions to the Dirichlet problem. A natural question that arises is whether every carpet-harmonic function can be realized at least locally as a solution to a Dirichlet problem, so that we can apply these principles. It turns out that this is the case.

\begin{prop}\label{harmonic:Dirichlet-Carpet}
Let $u\colon S\to \R$ be a carpet-harmonic function, and $V\subset \subset \Omega$ be an open set with the properties:
\begin{enumerate}[\upshape (1)]
\item $\partial V $ consists of finitely many, non-trivial, and disjoint Jordan curves, and
\item if $V\cap  Q_i\neq \emptyset$ then $\br Q_i \subset V$. This, in particular, implies that $\partial V\subset S$, and $(S\cap V,V)$ is a relative Sierpi\'nski carpet.
\end{enumerate}
Then $u$ agrees inside $S\cap V$ with the solution to the Dirichlet problem in $S\cap V$ with boundary values on $\partial V$ equal to $u$. 
\end{prop}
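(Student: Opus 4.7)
The plan is to exhibit $u|_{S\cap V}$ itself as the (unique) solution to the Dirichlet problem in the sub-carpet $(S\cap V, V)$ with boundary data $u|_{\partial V}$, and then invoke the uniqueness furnished by Theorem \ref{harmonic:Uniqueness}. First, condition (2) forces every peripheral disk to lie either in $V$ or in $\Omega\setminus \br V$, so none can cross $\partial V$; hence $\partial V\subset S$ and $\partial V=\partial_*V$. Moreover, $(S\cap V, V)$ inherits the standard assumptions from $(S,\Omega)$: its peripheral disks are exactly $\{Q_i:Q_i\subset V\}$, and $\partial V$ consists of the required Jordan curves by (1).

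The first routine verification is that $u|_{S\cap V}$ is admissible for the Dirichlet problem on $(S\cap V, V)$. Since $V\subset\subset\Omega$ and $u\in \mathcal W^{1,2}_{\loc}(S)$, the $\ell^2$ summability conditions hold immediately. A good path $\gamma\subset V$ for $u$ is a fortiori a good path in $\Omega$ and, by (2), it meets only peripheral disks $Q_i\subset V$, so the upper gradient inequality for $u$ descends to the sub-carpet. For the boundary values, at an accessible point $x\in\partial V$ the sub-carpet boundary value $\liminf_{Q_i\to x,\,Q_i\cap\gamma\neq\emptyset}M_{Q_i}(u)$ equals $u(x)$ by the continuity of $u$ (Theorem \ref{harmonic:4-continuous}) together with Lemma \ref{harmonic:3-Normalized Property}, given that we work with normalized versions.

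The main step, and main obstacle, is to show that $u|_{S\cap V}$ minimizes $D_V$ among admissible functions in the sub-carpet. Given any admissible competitor $v\in \mathcal W^{1,2}(S\cap V)$, the plan is to paste it with $u$ outside $V$ via $w \coloneqq v\cdot\x_{S\cap V}+ u\cdot\x_{S\setminus V}$ and check that $w\in \mathcal W^{1,2}_{\loc}(S)$. Lemma \ref{harmonic:3-Lemma-removability} does not apply directly, because $v$ only agrees with $u$ at accessible points of $\partial V$; the upper gradient inequality for $w$ has to be verified by hand. I would restrict to a curve family of full modulus consisting of paths $\gamma\subset\Omega$ that are good for $u$, whose subpaths inside $V$ are good for $v$, and each of whose crossings with $\partial V$ lands at a point accessible (for $v$) from the adjacent subpath. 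At such a crossing $z\in\partial V\subset S$, the admissibility of $v$ gives $v(z)=u(z)$, so $w$ is consistent across $\partial V$. Decomposing $\gamma$ at these crossings into subpaths lying entirely in $V$ or entirely in $\Omega\setminus V$, applying the upper gradient inequality for $v$ or for $u$ on each piece, and invoking (2) to identify $\osc_{Q_i}(w)$ with $\osc_{Q_i}(v)$ for $Q_i\subset V$ and with $\osc_{Q_i}(u)$ for $Q_i\subset \Omega\setminus V$, yields the desired inequality for $w$.

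Once $w\in \mathcal W^{1,2}_{\loc}(S)$ is in hand, the difference $\zeta\coloneqq w-u$ lies in $\mathcal W^{1,2}(S)$ by the linearity in Proposition \ref{harmonic:3-Properties Sobolev} and vanishes on $S\setminus V$. The carpet-harmonicity of $u$ then gives
\begin{align*}
D_V(u) \leq D_V(u+\zeta)=D_V(w)=D_V(v),
\end{align*}
where the last equality uses $w=v$ on every $\partial Q_i\subset V$. Hence $u|_{S\cap V}$ is a minimizer with the correct boundary values, and so is a solution to the Dirichlet problem on $(S\cap V,V)$. The uniqueness of solutions (Theorem \ref{harmonic:Uniqueness}) then forces $u|_{S\cap V}$ to coincide with the solution guaranteed by Theorem \ref{harmonic:4-Thm Solution to Dirichlet problem}.
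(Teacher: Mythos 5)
Your proposal is correct and follows the same overall strategy as the paper: paste a function defined on the sub-carpet together with $u$ outside $V$, invoke the harmonicity of $u$ to compare energies, and finish with the uniqueness of Theorem \ref{harmonic:Uniqueness}. The one genuine difference is \emph{which} function you paste. The paper pastes only the specific solution $v$ of the sub-carpet Dirichlet problem: since the boundary data $u|_{\partial V}$ is continuous, Theorem \ref{harmonic:Continuous boundary data} gives a continuous extension of $v$ to $S\cap\br V$ agreeing with $u$ \emph{pointwise everywhere} on $\partial V$, and the gluing is then handled by a tailored lemma (Lemma \ref{harmonic:Removability2}) whose hypothesis is exactly this pointwise equality; the inequality $D_V(v)\leq D_V(u)$ comes for free from the minimality of $v$, so only one pasting is ever needed. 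You instead paste an \emph{arbitrary} admissible competitor, which forces you to work with boundary equality only at accessible points and to verify the upper gradient inequality of the pasted function by hand. That verification does go through — it is essentially the argument of Lemma \ref{harmonic:Removability2} combined with the accessible-point bookkeeping of Lemma \ref{harmonic:Removability3}, and you correctly note that one restricts to a full-modulus family of paths whose crossings of $\partial V$ are accessible and whose relevant subpaths are good for both $u$ and $v$ — but be aware that you need only the first entry/last exit points of $\gamma$ relative to $V$ (at most two crossings), not a full decomposition into subpaths, since $u$ is globally defined and its own upper gradient inequality handles the excursions outside $V$. The paper's choice buys a cleaner gluing at the cost of first establishing boundary continuity of $v$; yours avoids Theorem \ref{harmonic:Continuous boundary data} but must prove a slightly stronger gluing statement.
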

\begin{proof}
Let $v\colon S\cap V\to \R$ be the solution to the Dirichlet problem with boundary values equal to $u$, given by Theorem \ref{harmonic:4-Thm Solution to Dirichlet problem}. Since $u$ is continuous, by Theorem \ref{harmonic:Continuous boundary data} we have that $v$ has a continuous extension to $S\cap \br V$ that agrees with $u$ on $\partial V$.

Consider the function $\zeta\coloneqq  (v-u)\cdot \x_{S\cap V}+0\cdot \x_{S\setminus V}$. This is a Sobolev function on $S$, but we cannot apply Lemma \ref{harmonic:3-Lemma-removability} directly, since $v$ is not defined on all of $S$. The fact that $\zeta$ is a Sobolev function on $S$ follows from the following lemma that we prove right after: 
\begin{lemma}\label{harmonic:Removability2}
Let $V \subset\subset  \Omega$ be an open set as above, with $\partial V\subset S$ and $S\cap V$ being a relative Sierpi\'nski carpet. Consider functions $\phi,\psi\colon S\to \R$ such that $\phi\big|_{S\cap V} \in \mathcal W^{1,2}(S\cap V)$, $\psi \in \mathcal W_{\loc}^{1,2}(S)$, and $\phi=\psi $ on $\partial V$. Then $\zeta\coloneqq \phi \cdot \x_{S\cap V} + \psi \cdot \x_{S\setminus V}$ lies in $\mathcal W_{\loc}^{1,2}(S)$.
\end{lemma}
In our case we set $\psi\equiv 0$ and $\phi=(v-u)\cdot \x_{S\cap V}$, which is continuous on $S$. The harmonicity of $u$ implies that 
\begin{align*}
D_{V} (u) \leq D_V(u +\zeta)= D_V(v).
\end{align*}
However, $v$ is a minimizer for the Dirichlet energy in $V$, and this implies that $D_V(u)=D_V(v)$. The uniqueness in Theorem \ref{harmonic:Uniqueness} concludes that $u=v$ on $S\cap V$.
\end{proof}

\begin{proof}[Proof of Lemma \ref{harmonic:Removability2}]
Consider the families of good paths $\mathcal G_{\phi},\mathcal G_{\psi}$ for $\phi,\psi$, respectively. Note that the paths of $\mathcal G_{\phi}$ are contained in $V$. Let $\Gamma_0$ be the paths of $\mathcal G_{\psi}$ that have a subpath in $V$ which is not contained in $\mathcal G_{\phi}$. Then one can show that $\Gamma_0$ has weak (strong) modulus equal to zero with respect to the carpet $S$; see also Remark \ref{harmonic:3-remark-subpaths modulus zero}. We define $\mathcal G$ to be the family of paths in $\mathcal G_\psi$ that do not lie in $\Gamma_0$, and we shall show $\zeta$ has an upper gradient and the upper gradient inequality holds along these paths. 

Let $x,y\in \gamma\cap S$ and $\gamma\in \mathcal G$ be a path that connects them. If $x,y \notin V$, then 
\begin{align*}
|\zeta(x)-\zeta(y)| = |\psi(x)-\psi(y)|\leq \sum_{i:Q_i\cap \gamma\neq \emptyset}\osc_{Q_i}(\psi).
\end{align*}
If $x\in V$ and $y\notin V$, then $\gamma \cap \partial V\neq \emptyset$ and we can consider the point $z$ of first entry of $\gamma$ in $\partial V$, as it travels from $x$ to $y$. The point $z$ is ``accessible" by $\gamma$, so by the definition of the boundary values of $\phi$ we have 
\begin{align*}
\phi(z)= \liminf_{\substack{Q_i\to z\\ Q_i\cap \gamma \neq \emptyset,i\in I_V}} M_{Q_i}(\phi).
\end{align*}
The upper gradient inequality therefore holds up to the boundary $\partial V$ and we have
\begin{align*}
|\phi(x)-\phi(z)| \leq \sum_{\substack{i:Q_i\cap \gamma\neq \emptyset\\i\in I_V}} \osc_{Q_i}(\phi).
\end{align*}
Therefore,
\begin{align*}
|\zeta(x)-\zeta(y)|&\leq |\phi(x)-\phi(z)|+|\psi(z)- \psi(y)|\\
&\leq \sum_{i:Q_i\cap \gamma\neq \emptyset} (\osc_{Q_i}(\phi)\x_{I_V}(i) + \osc_{Q_i}(\psi)).
\end{align*}
The case $x,y\in V$ is treated similarly by considering also the point $w$ of first entry of $\gamma$ in $\partial V$, as it travels from $y$ towards $x$, thus obtaining the bound
\begin{align*}
|\zeta(x)-\zeta(y)|\leq \sum_{i:Q_i\cap \gamma\neq \emptyset} (2\osc_{Q_i}(\phi)\x_{I_V}(i) + \osc_{Q_i}(\psi)).
\end{align*}
This shows that $\{ 2\osc_{Q_i}(\phi)\x_{I_V}(i) + \osc_{Q_i}(\psi) \}_{i\in \N}$ is an upper gradient of $\zeta$, as desired; recall Remark \ref{harmonic:3-Definition Remark rho-osc}.
\end{proof}

\subsection{A free boundary problem}
In this section we mention some results on a different type of a boundary problem, in which boundary data is only present on part of the boundary.  The proofs are almost identical to the case of the Dirichlet problem so we omit them. These results are  used in Chapter \ref{Chapter:Uniformization} to prove a uniformization result for planar Sierpi\'nski carpets.

Let $\Omega\subset \C$ be a quadrilateral, i.e., a Jordan region with four marked ``sides" on $\partial \Omega$. Assume that the sides are closed and they are marked by $\Theta_1,\Theta_2,\Theta_3,\Theta_4$, in a counter-clockwise fashion, where $\Theta_1,\Theta_3$ are opposite sides. Consider a relative Sierpi\'nski carpet $(S,\Omega)$. In fact, in this case $\br{S}$ is an actual Sierpi\'nski carpet, as defined in the Introduction. We consider functions $g\in \mathcal W^{1,2}(S)$ with boundary data $g=0$ on $\Theta_1$ and $g=1$ on $\Theta_3$. Such functions are called \textit{admissible (for the free boundary problem)}.

\begin{theorem}\label{harmonic:Free boundary problem}\index{Dirichlet problem!free boundary problem}\index{free boundary problem}
There exists a unique carpet-harmonic function $u\colon S\to \R$ that minimizes the Dirichlet energy $D_\Omega(g)$ over all admissible functions $g\in \mathcal W^{1,2}(S)$. The function $u$ is has a continuous extension to $\partial \Omega$ (and thus to $\br S$), with $u=0$ on $\Theta_1$ and $u=1$ on $\Theta_3$.
\end{theorem}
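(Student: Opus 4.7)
The proof will closely parallel Theorems 4.6, 4.11, and 4.13, with modifications accounting for the free-boundary nature of the problem, and the continuity at the unconstrained part of $\partial\Omega$ will be the main new difficulty.

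\textbf{Existence of a minimizer.} First I would verify the admissible class is non-empty with finite energy. The McShane–Whitney extension of the function that equals $0$ on $\Theta_1$ and $1$ on $\Theta_3$, which is $L$-Lipschitz on $\Theta_1\cup\Theta_3$ with $L=1/\dist(\Theta_1,\Theta_3)$ (these sides being disjoint closed arcs), yields an $L$-Lipschitz function $F$ on $\br\Omega$ taking the prescribed boundary values. By Example 3.8 together with $\sum_{i\in\N}\diam(Q_i)^2<\infty$ (Corollary 2.13, using boundedness of $\Omega$), $F|_S\in\mathcal W^{1,2}(S)$ is admissible with $D_\Omega(F)<\infty$. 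Let $D=\inf D_\Omega(g)$ and $(g_n)$ be a minimizing sequence. Replacing $g_n$ by $(g_n\land 1)\lor 0$ (which stays admissible with no larger energy by Proposition 3.30) we may assume $0\le g_n\le 1$, so $\{M_{Q_i}(g_n)\diam(Q_i)\}$ and $\{\osc_{Q_i}(g_n)\}$ are bounded in $\ell^2(\N)$. Banach–Alaoglu together with Mazur's lemma produce convex combinations converging strongly in $\ell^2$ to sequences $\{M_{Q_i}\diam(Q_i)\}$ and $\{\rho(Q_i)\}$ with $\sum_i\rho(Q_i)^2\le D$. Exactly as in Theorem 4.6 the discrete upper-gradient inequality passes to the limit, giving a discrete Sobolev function whose continuous version $u\in\mathcal W^{1,2}(S)$ satisfies $\osc_{Q_i}(u)\le\rho(Q_i)$ and hence $D_\Omega(u)\le D$.

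\textbf{Admissibility, carpet-harmonicity, and uniqueness.} Applying Fuglede's Lemma 2.15 along a subsequence of the convex-combination upper gradients, I obtain a curve family whose complement has modulus zero, and along curves in this family landing at accessible points of $\Theta_1$ (respectively $\Theta_3$) the passage to the limit yields $u(x)=0$ (respectively $u(x)=1$). Thus $u$ is admissible, is a minimizer, and $\osc_{Q_i}(u)=\rho(Q_i)$. Carpet-harmonicity is immediate: for any $V\subset\subset\Omega$ and any $\zeta\in\mathcal W^{1,2}(S)$ with $\zeta|_{S\setminus V}\equiv 0$, the condition $V\cap\partial\Omega=\emptyset$ forces $(u+\zeta)|_{\Theta_j}=u|_{\Theta_j}$ for $j=1,3$, so $u+\zeta$ is admissible; the minimizing inequality $D_\Omega(u)\le D_\Omega(u+\zeta)$ then reduces (cancelling terms with $i\notin I_V$) to $D_V(u)\le D_V(u+\zeta)$, as required by Definition 4.1. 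For uniqueness I would follow Theorem 4.11 verbatim: if $u,v$ are two minimizers, the convexity argument $D_\Omega((1-s)u+sv)\ge D$ combined with Cauchy–Schwarz forces $\osc_{Q_i}(u)=\osc_{Q_i}(v)$ for all $i$; then $u\lor v$ is admissible with $\osc_{Q_i}(u\lor v)\le\osc_{Q_i}(u)$, hence is itself a minimizer with equal oscillations. Assuming $u(x_0)<v(x_0)$ for some $x_0$, picking $\Lambda$ strictly between $u(x_0)$ and $(u\lor v)(x_0)$ and forming $h=(u\lor\Lambda)\land(u\lor v)$ produces, by the level-set argument in Theorem 4.11, a peripheral disk $Q_{i_0}\subset\{u<\Lambda<u\lor v\}$ with $\osc_{Q_{i_0}}(u)>0$ yet $\osc_{Q_{i_0}}(h)=0$, contradicting $\osc_{Q_{i_0}}(h)=\osc_{Q_{i_0}}(u)$.

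\textbf{Continuous extension to $\partial\Omega$; the main obstacle.} At points of $\Theta_1\cup\Theta_3$, where the prescribed boundary data is constant (hence continuous), the proof of Theorem 4.13 applies verbatim to give the correct continuous extension. The essential novelty is continuity at a point $x_0\in\Theta_2\cup\Theta_4$ of the \emph{free} boundary, where no boundary data is prescribed. Here I would mimic the scheme of Theorem 4.13 but replace its use of continuity of boundary data by the admissibility of the truncation. Fix $\varepsilon>0$ and choose $r>0$ so small that $B(x_0,r)$ meets only one boundary component of $\partial\Omega$ and is disjoint from $\Theta_1\cup\Theta_3$ (possible since $x_0\notin\Theta_1\cup\Theta_3$ and these are closed). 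By Lemma 2.7(a) pick a crosscut $\gamma_r'\subset\partial B(x_0,r)$ with $\sum_{i:Q_i\cap\gamma_r'\ne\emptyset}\osc_{Q_i}(u)<\varepsilon$, bounding a region $W$ near $x_0$. Setting $M=\sup_{y\in\gamma_r'\cap S}u(y)$, the variation $h=u\cdot\x_{S\setminus W}+(u\land M)\cdot\x_{S\cap W}$ lies in $\mathcal W^{1,2}(S)$ with $\osc_{Q_i}(h)\le\osc_{Q_i}(u)$ by Corollary 3.32, and, crucially, remains admissible because the modification is confined to $W\subset B(x_0,r)$, which does not meet $\Theta_1\cup\Theta_3$. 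The minimizing property of $u$ then forces $M_{Q_k}(u)\le M$ for every $Q_k\subset W$, by the usual contradiction argument: otherwise one produces, as in the proofs of Theorems 4.4 and 4.13, some $Q_{i_0}\subset W$ with $M_{Q_{i_0}}(u)>M$ and $\osc_{Q_{i_0}}(u)>0$, whence $\osc_{Q_{i_0}}(h)<\osc_{Q_{i_0}}(u)$, contradicting the minimality. Symmetrically $m_{Q_k}(u)\ge\inf_{y\in\gamma_r'\cap S}u(y)$, and the upper-gradient inequality along $\gamma_r'$ bounds $\sup_{\gamma_r'\cap S}u-\inf_{\gamma_r'\cap S}u$ by $\varepsilon$, so the oscillation of $u$ on $S\cap W$ is at most $\varepsilon$. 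Letting $r\to 0$ shows $u$ extends continuously at $x_0$. The four corners of $\Omega$ belong to $\Theta_1\cup\Theta_3$ and are handled directly by the Theorem 4.13 argument. The key technical subtlety throughout this step is that the truncation variation must preserve admissibility, which is exactly why we need the strict separation of the opposite sides $\Theta_1,\Theta_3$ — the geometry of the quadrilateral enters here.
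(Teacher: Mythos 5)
Your proposal is correct and matches the paper's intended argument exactly: the paper omits the proof of this theorem, stating only that it is "almost identical" to the Dirichlet-problem results (Theorems \ref{harmonic:4-Thm Solution to Dirichlet problem}, \ref{harmonic:Uniqueness}, \ref{harmonic:Continuous boundary data}), and your adaptation — in particular, treating continuity at the free arcs $\Theta_2,\Theta_4$ by the truncation variation $h=u\cdot\x_{S\setminus W}+(u\land M)\cdot\x_{S\cap W}$, which stays admissible precisely because $W$ avoids $\Theta_1\cup\Theta_3$ — is the right way to fill in the omitted details. The only spot worth tightening is the corner case, where the region $W$ meets both a prescribed and a free arc, so the truncation level should be taken as the larger of the prescribed value and $\sup_{\gamma_r'\cap S}u$ rather than invoking Theorem \ref{harmonic:Continuous boundary data} verbatim; this is a routine merge of your two arguments.
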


Of course, if the class of admissible functions is the weak Sobolev class then $u$ is weak carpet-harmonic, and if the class of admissible functions is the strong Sobolev class, then $u$ is strong carpet-harmonic.

Since there is no boundary data on the interior of the arcs $\Theta_2,\Theta_4$, these arcs can be treated - in the proofs -  as subarcs of peripheral disks, and, in particular, as subsets of the carpet $S$. If $V\subset \C \setminus (\Theta_1\cup \Theta_3)$ is an open set we can define
$$\partial_{\bullet}V\coloneqq  \partial V\cap \br S.$$
This will be the ``boundary", on which the extremal values of the carpet-harmonic function $u$ are attained. Thus, the maximum principle reads as:
\begin{theorem}\label{harmonic:Free boundary problem - Maximum Principle}\index{maximum principle}
If $u\colon  \br S\to \R$ is the solution to the free boundary problem, then for any open set $V\subset \C \setminus (\Theta_1\cup \Theta_3)$ we have
\begin{align*}
\sup_{x\in  \br S\cap \br{V}} u(x)= \sup_{x\in \partial_\bullet V} u(x)\quad  \textrm{and} \quad \inf_{x\in  \br S\cap \br{V}} u(x)= \inf_{x\in \partial_\bullet V} u(x).
\end{align*}
\end{theorem}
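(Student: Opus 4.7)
The plan is to adapt the proof of the interior maximum principle (Theorem \ref{harmonic:Maximum Principle}) to the free-boundary setting, treating the arcs $\Theta_2,\Theta_4$ as if they were subarcs of peripheral disks sitting inside $\br S$. Recall from Theorem \ref{harmonic:Free boundary problem} that $u$ is continuous on $\br S$ and minimizes $D_\Omega(\cdot)$ among admissible functions (those $g\in\mathcal W^{1,2}(S)$ with $g=0$ on $\Theta_1$ and $g=1$ on $\Theta_3$). The substitution $u\mapsto 1-u$ interchanges $\Theta_1,\Theta_3$ and produces the unique solution of the mirrored free-boundary problem on the same quadrilateral, so the infimum statement will follow from the supremum statement applied to $1-u$. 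I will therefore concentrate on proving the supremum identity.

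Set $M\coloneqq \sup_{\partial_\bullet V} u$, which is finite by continuity of $u$ on the compact set $\br S$. The inequality $\sup_{\br S\cap\br V}u\ge M$ is trivial because $\partial_\bullet V\subset \br S\cap \br V$. Suppose for contradiction that the inequality is strict. By continuity, there is some $x_0\in \br S\cap V$ with $u(x_0)>M$; using the density of $S^\circ$ in $\br S$ (from Lemma \ref{harmonic:Paths in S^o}(a) together with the boundary-accessibility argument in the proof of Theorem \ref{harmonic:Continuous boundary data}), combined with the openness of $V$ and continuity of $u$, we may move $x_0$ slightly to arrange $x_0\in S^\circ\cap V$ with $u(x_0)>M$ still.

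The key variation is
\[
h\coloneqq u\cdot \x_{S\setminus V}+(u\wedge M)\cdot \x_{S\cap V}.
\]
Since $\partial_* V=\partial V\cap S\subset \partial_\bullet V$, we have $u\le M$ on $\partial_* V$, so Corollary \ref{harmonic:Removability4} yields $h\in \mathcal W^{1,2}(S)$ with $\osc_{Q_i}(h)\le \osc_{Q_i}(u)$ for every $i\in\N$; and the hypothesis $V\cap(\Theta_1\cup\Theta_3)=\emptyset$ makes $h=u$ on a full neighborhood of $\Theta_1\cup \Theta_3$ inside $\br S$, so $h$ is admissible. It remains to produce a peripheral disk $Q_{i_0}\subset V$ on which the replacement strictly decreases oscillation. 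Following Theorem \ref{harmonic:Maximum Principle}: let $W$ be the component of $V\setminus \br{\bigcup_{i\in I_{\partial V}}Q_i}$ containing $x_0$. If $\osc_{Q_i}(u)=0$ for every $Q_i\subset W$, then Lemma \ref{harmonic:4-zero oscillation lemma}(a) and the analogue of Remark \ref{harmonic:Properties of W}(a) force $u$ to be constant on $S\cap W$; extending by continuity to $\br{S\cap W}\cap \partial_\bullet V\neq\emptyset$ gives $u\equiv u(x_0)>M$ at some point of $\partial_\bullet V$, contradicting the definition of $M$. Hence some $Q_{i_1}\subset W$ has $\osc_{Q_{i_1}}(u)>0$, and a chaining argument along a non-exceptional path given by Lemma \ref{harmonic:Paths joining continua} combined with the upper gradient inequality produces $Q_{i_0}\subset W$ with both $M_{Q_{i_0}}(u)>M$ and $\osc_{Q_{i_0}}(u)>0$. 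Then $\osc_{Q_{i_0}}(h)<\osc_{Q_{i_0}}(u)$, so $D_\Omega(h)<D_\Omega(u)$, contradicting the minimality of $u$.

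The main obstacle is verifying the analogue of Remark \ref{harmonic:Properties of W} here, since $V$ is no longer compactly contained in $\Omega$ and may abut or cross the free arcs $\Theta_2\cup \Theta_4$. Specifically, one needs $\br{S\cap W}\cap \partial_\bullet V\neq\emptyset$, and crossings of $W$ with $\Theta_2\cup \Theta_4$ must be treated on the same footing as crossings with peripheral circles. This is exactly the content of the author's comment following Theorem \ref{harmonic:Free boundary problem}: one regards $\Theta_2,\Theta_4$ as subarcs of ``peripheral-type'' boundaries lying in $\br S$, which forces only cosmetic modifications to the topological Lemma \ref{harmonic:Paths in S^o} and to the circular-arc construction Lemma \ref{harmonic:3-curves gamma_r} in neighborhoods of $\Theta_2\cup \Theta_4$.
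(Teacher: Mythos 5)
Your proposal reconstructs exactly the argument the paper intends: the author omits the proof, stating it is "almost identical" to the interior maximum principle (Theorem \ref{harmonic:Maximum Principle}), with $\Theta_2,\Theta_4$ treated as subarcs of peripheral disks — precisely the variation $h=u\cdot\x_{S\setminus V}+(u\land M)\cdot\x_{S\cap V}$, Corollary \ref{harmonic:Removability4}, and the component/oscillation dichotomy you use, and your observation that $V\cap(\Theta_1\cup\Theta_3)=\emptyset$ preserves admissibility of $h$ is the one genuinely new point needed. The reduction of the infimum to the supremum via $1-u$ (rather than $-u$, which is not admissible here) is the correct adjustment for the free-boundary setting.
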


This is a stronger maximum principle than the one in Theorem \ref{harmonic:Maximum Principle}. It says that the extremal values of $u$ on $\br S\cap \br V$ can be attained at the part of the boundary of $V\cap \Omega$ that is disjoint from the interiors of the ``free" arcs $\Theta_2$ and $\Theta_4$. However, this boundary could still intersect $\Theta_1$ and $\Theta_3$, where extremal values can be attained, and this is the reason that we look at sets $V\subset \C\setminus (\Theta_1\cup \Theta_3)$. See also the maximum principle as stated in \cite[Lemma 4.6]{Rajala:uniformization}. 

As an application, one can show a rigidity-type result for square Sierpi\'nski carpets\index{Sierpi\'nski carpet!square}\index{Sierpi\'nski carpet!rigidity}, which was established in \cite[Theorem 1.4]{BonkMerenkov:rigidity}:

\begin{theorem}\label{harmonic:Rigidity}Let $\Omega=(0,1)\times (0,A)$, $\Omega'=(0,1)\times (0,A')$, and consider relative Sierpi\'nski carpets $(S,\Omega)$,$(S'\Omega')$ such that all peripheral disks of $S,S'$ are squares with sides parallel to the coordinate axes, and the Hausdorff $2$-measure of $S$ and $S'$ is $0$. If $f\colon  \br S \to \br {S'}$ is a quasisymmetry that preserves the sides of $\Omega,\Omega'$ (i.e., $f( \{0\} \times [0,A])= \{0\}\times [0,A']$ etc.) then $A=A'$, $S=S'$, and $f$ is the identity.  
\end{theorem}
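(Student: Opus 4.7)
The plan is to apply the theory of carpet-harmonic functions developed in this chapter, specifically Corollary \ref{harmonic:Pullback square carpets}, together with an explicit identification of the minimizers for two ``dual'' free boundary problems on $\Omega$. Since $f \colon \bar S \to \bar{S'}$ is a quasisymmetry whose source and target have peripheral circles that are squares (hence uniform quasicircles), Corollary \ref{harmonic:Pullback square carpets} yields that the two coordinate functions $f_1, f_2$ of $f = (f_1, f_2)$ are weak carpet-harmonic on $S$. Moreover, since $f$ preserves the sides of $\Omega, \Omega'$ and is continuous on $\bar S$, $f_1$ has boundary values $0$ on $\{0\} \times [0, A]$ and $1$ on $\{1\} \times [0, A]$, while $f_2$ has boundary values $0$ on $[0, 1] \times \{0\}$ and $A'$ on $[0, 1] \times \{A\}$.

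Next, I would identify the minimizers for two instances of the free boundary problem from Theorem \ref{harmonic:Free boundary problem}. Viewing $\Omega$ as a quadrilateral with marked opposite sides $\Theta_1 = [0, 1] \times \{0\}$ and $\Theta_3 = [0, 1] \times \{A\}$, the function $u(x, y) = y/A$ is carpet-harmonic by Example \ref{harmonic:Square carpets coordinate functions}, with $D_\Omega(u) = \sum_i (\ell_i/A)^2 = A/A^2 = 1/A$, where $\ell_i$ is the side length of the peripheral square $Q_i$ and $\sum_i \ell_i^2 = \mathcal H^2(\Omega) = A$ (using $\mathcal H^2(S) = 0$). The vertical-line argument from Example \ref{harmonic:Square carpets coordinate functions} combined with the Cauchy--Schwarz inequality shows that every admissible function has energy at least $1/A$, so $u$ is the unique minimizer guaranteed by Theorem \ref{harmonic:Free boundary problem}. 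Symmetrically, for the quadrilateral $\Omega$ with marked sides $\Theta_4 = \{0\} \times [0, A]$ and $\Theta_2 = \{1\} \times [0, A]$, the function $(x, y) \mapsto x$ is the unique minimizer, with minimum energy $A$.

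Now I would compare the energies of $f_1, f_2$ to these minima. Since $f$ maps each peripheral square $Q_i \subset \Omega$ homeomorphically onto a peripheral square $Q'_{j(i)} \subset \Omega'$ of side length $\ell'_{j(i)}$, and the peripheral squares in both carpets have axis-parallel sides, $\osc_{Q_i}(f_1) = \osc_{Q_i}(f_2) = \ell'_{j(i)}$. Therefore
\[
D_\Omega(f_1) = \sum_i \ell_{j(i)}'^{\,2} = A' \quad \text{and} \quad D_\Omega(f_2/A') = \frac{1}{A'^{\,2}} \sum_j \ell_j'^{\,2} = \frac{1}{A'},
\]
again using $\mathcal H^2(S') = 0$. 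Since $f_1$ is admissible for the second free boundary problem and $f_2/A'$ for the first, the minimization properties give $A' \geq A$ and $1/A' \geq 1/A$, which together force $A = A'$.

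With $A = A'$ in hand, $f_1$ and the function $x$ both achieve the minimum energy $A$ for the second free boundary problem, and $f_2/A'$ and $y/A$ both achieve $1/A$ for the first. The uniqueness statement in Theorem \ref{harmonic:Free boundary problem} therefore gives $f_1(x, y) = x$ and $f_2(x, y) = y$ on $S$, i.e., $f|_S = \id$, and continuity of $f$ on $\bar S$ extends this to $\bar S$; hence $\bar S = f(\bar S) = \bar{S'}$, and reading off the family of peripheral disks yields $S = S'$. I expect the main obstacle to be the bookkeeping needed to verify that the Sobolev-sense boundary values of $f_1$ and $f_2/A'$ on $\partial \Omega$ really do match the naive pointwise values inferred from the side-preserving property of $f$; this is essentially automatic from the H\"older continuity of quasisymmetries together with the framework of Section \ref{harmonic:Section Dirichlet}, but it is the point where the most care is required.
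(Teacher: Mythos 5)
Your proof is correct and follows essentially the same strategy as the paper: identify the explicit minimizers of the free boundary problems on $\Omega$, observe that the coordinate functions of $f$ are admissible Sobolev functions whose energies compute to $A'$ and $1/A'$ via $\osc_{Q_i}(f_k)=\ell'_{j(i)}$, and conclude from the energy comparison together with the uniqueness in Theorem \ref{harmonic:Free boundary problem}. The only (harmless) difference is that you obtain the reverse inequality $A\geq A'$ from the dual free boundary problem on $\Omega$ applied to $f_2/A'$, whereas the paper gets it by running the same one-sided argument for $f^{-1}$ and the free boundary problem on $\Omega'$.
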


We remark that our proof here is simpler than the proof in \cite{BonkMerenkov:rigidity}, and relies  on the uniqueness of Theorem \ref{harmonic:Free boundary problem}; see also Theorem \ref{harmonic:Uniqueness}. In \cite{BonkMerenkov:rigidity}, the authors have to follow several steps, showing first that each square $Q_i$ and its image $Q_i'$ have the same sidelength, then that $Q_i=Q_i'$, and finally that $f$ is the identity. They pursue these steps using the absolute continuity of $f$ and modulus arguments. In our approach, these arguments seem to be incorporated in the properties of Sobolev spaces and in the uniqueness of the minimizer in Theorem \ref{harmonic:Free boundary problem}, which is a powerful statement.

\begin{proof}
Let $u$ be the solution to the free boundary problem on $(S,\Omega)$ with $u=0$ on $\Theta_1\coloneqq \{0\}\times [0,A]$ and $u=1$ on $\Theta_3\coloneqq \{1\}\times [0,A]$. For $y\in [0,A]$ consider the segment $\gamma_y= [0,1]\times \{y\}$ that is parallel to the $x$-axis. As in the proof of Lemma \ref{harmonic:Paths joining continua}, one can see that for a.e.\ $y\in [0,A]$ the path $\gamma_y$ is a good path for $u$, thus by the upper gradient inequality
\begin{align*}
1= |u(0,y)-u(0,1)|\leq \sum_{i:Q_i\cap \gamma_y\neq \emptyset} \osc_{Q_i}(u).
\end{align*}
Integrating over $y\in  [0,A]$ and applying the Cauchy-Schwarz inequality we obtain
\begin{align*}
A &\leq \sum_{i\in \N} \osc_{Q_i}(u) \ell(Q_i) \leq \left(\sum_{i\in \N} \osc_{Q_i}(u)^2\right)^{1/2} \left(\sum_{i\in \N} \ell(Q_i)^2 \right)^{1/2}.
\end{align*}
The latter sum is the area of $\Omega$, which is $A$, hence we obtain $A\leq D_\Omega(u)$. On the other hand, the function $g(x,y)=x$ is admissible for the free boundary problem, and it is easy to see that $A=D_\Omega(g)$. Since $u$ is a minimizer it follows that $D_\Omega(g)=D_\Omega(u)$ and by the uniqueness in Theorem \ref{harmonic:Free boundary problem} we have $u(x,y)=g(x,y)=x$ for all $(x,y)\in \br{S}$.

If $f\colon \br S\to \br {S'}$ is a quasisymmetry, then it extends to a quasiconformal homeomorphism $f=(u_0,v_0)\colon \br \Omega\to \br {\Omega'}$ (using $\R^2$ coordinates), by the extension results proved in \cite[Section 5]{Bonk:uniformization}, and in fact, it extends to a global quasiconformal map on $\widehat{\C}$. Example \ref{harmonic:3-Example Sobolev Homeo} and Remark \ref{harmonic:Remark:quasiconformal} show that $f$ restricts to a function in $\mathcal W_s^{1,2}(S)$. Since $f$ preserves the sides of $\Omega$, it follows that $u_0=0$ on $\Theta_1$ and $u_0=1$ on $\Theta_3$. Hence, $u_0$ is admissible for the free boundary problem in $\Omega$, and thus $A=D_\Omega(u)\leq D_\Omega(u_0)$. Note that $\osc_{Q_i}(u_0)= \ell(Q_i')$, where $Q_i'=f(Q_i)$. Hence
\begin{align*}
D_\Omega(u_0) =\sum_{i\in \N} \ell(Q_i')^2 = \mathcal H^2(\Omega')=A'.
\end{align*}
It follows that 
$$A=D_\Omega(u)\leq D_\Omega(u_0)=A'.$$
The same argument applied to $f^{-1}$ and the free boundary problem in $\Omega'$ yields $A'\leq A$. Thus $D_\Omega(u) =D_\Omega(u_0)=A=A'$. The uniqueness in Theorem \ref{harmonic:Free boundary problem} shows that $u(x,y)=u_0(x,y)=x$. 

The same argument applied to the dual free boundary problem $v=0$ on $\Theta_2 \coloneqq [0,1]\times\{0\}$ and $v=A$ on $\Theta_4\coloneqq [0,1]\times \{A\}$ yields $v_0(x,y)=y$.
\end{proof}

\section{The Caccioppoli inequality}\label{harmonic:Section Caccioppoli}\index{Caccioppoli inequality}
In this section, and also in the next, we assume that $(S,\Omega)$ is an arbitrary relative Sierpi\'nski carpet (with the standard assumptions). We still drop the terminology weak/strong, and carpet-harmonic functions are always assumed to be normalized, as in Section \ref{harmonic:Section Properties of Harmonic}. It will be convenient to call \textit{test function} a function $\zeta \in \mathcal W^{1,2}(S)$ that vanishes outside an open set $V\subset\subset \Omega$. 

\begin{theorem}[Caccioppoli inequality]\label{harmonic:Caccioppoli}
Let $\zeta\colon S\to \R$ be a non-negative test function, and $u\colon S\to \R$ be a carpet-harmonic function. Then
\begin{align*}
\sum_{i\in \N} M_{Q_i}(\zeta)^2 \osc_{Q_i}(u)^2 \leq C \sum_{i\in \N} \osc_{Q_i}(\zeta)^2 M_{Q_i}(|u|)^2,
\end{align*} 
where $C>0$ is some universal constant.
\end{theorem}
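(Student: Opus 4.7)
The plan is to adapt the classical Caccioppoli argument, in which one tests the Euler--Lagrange equation for the Dirichlet integral against $\phi = \zeta^2 u$. In our discrete setting there is no actual Euler--Lagrange equation (since $\osc_{Q_i}$ is neither smooth nor quadratic in $u$), so instead I would plug the competitor $v \coloneqq u - \zeta^2 u = (1-\zeta^2) u$ directly into the minimization inequality and extract the Caccioppoli estimate via a sharp one-shot comparison.

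First I would verify the setup. Since $\zeta$ is a (bounded) non-negative test function, supported in some $V \subset\subset \Omega$, and $u$ is continuous on $\supp\zeta$, Proposition \ref{harmonic:3-Properties Sobolev}(d) gives $\zeta^2 u \in \mathcal W^{1,2}(S)$ with $\zeta^2 u \equiv 0$ on $S\setminus V$, so $v$ is a Sobolev function agreeing with $u$ on $S\setminus V$. Both sides of the asserted inequality are homogeneous of degree $2$ in $\zeta$, so by rescaling I may assume $\|\zeta\|_\infty \leq 1$. The carpet-harmonicity of $u$ then gives $D_V(u) \leq D_V(v)$.

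The main computation would be to bound $\osc_{Q_i}(v)^2$ from above for $i\in I_V$. For $x,y \in \partial Q_i$, one decomposes
\[
v(x)-v(y) = (u(x)-u(y))\Bigl(1 - \tfrac{\zeta(x)^2+\zeta(y)^2}{2}\Bigr) - \tfrac{u(x)+u(y)}{2}(\zeta(x)^2 - \zeta(y)^2).
\]
Writing $P = 1 - (\zeta(x)^2+\zeta(y)^2)/2 \geq 0$ and applying the elementary weighted inequality $(a-b)^2 \leq (1+\varepsilon)a^2 + (1+1/\varepsilon)b^2$ with the specific choice $\varepsilon = (1-P)/P$ (the borderline $P=0$ forces $\zeta(x)=\zeta(y)=1$, so $v(x)=v(y)=0$ and the bound is trivial), together with $(\zeta(x)+\zeta(y))^2 \leq 2(\zeta(x)^2+\zeta(y)^2)$, should yield
\[
(v(x)-v(y))^2 \leq (u(x)-u(y))^2\, P + (u(x)+u(y))^2 (\zeta(x)-\zeta(y))^2.
\]
Taking the supremum over $x,y\in \partial Q_i$, and controlling each non-negative summand separately, then produces
\[
\osc_{Q_i}(v)^2 \leq \osc_{Q_i}(u)^2 \bigl(1 - m_{Q_i}(\zeta)^2\bigr) + 4 M_{Q_i}(|u|)^2 \osc_{Q_i}(\zeta)^2.
\]

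The final step is to sum over $i$ and substitute into $D_V(u)\leq D_V(v)$; the common term $\sum_i \osc_{Q_i}(u)^2$ cancels, leaving
\[
\sum_{i\in\N} m_{Q_i}(\zeta)^2 \osc_{Q_i}(u)^2 \leq 4\sum_{i\in\N} M_{Q_i}(|u|)^2 \osc_{Q_i}(\zeta)^2.
\]
To upgrade $m_{Q_i}(\zeta)^2$ to the desired $M_{Q_i}(\zeta)^2$, I would use $M_{Q_i}(\zeta)^2 \leq 2 m_{Q_i}(\zeta)^2 + 2 \osc_{Q_i}(\zeta)^2$ (from $M = m + \osc$ and $(a+b)^2 \leq 2a^2+2b^2$); the extra term $\sum \osc_{Q_i}(\zeta)^2 \osc_{Q_i}(u)^2$ that this produces is absorbed on the right via $\osc_{Q_i}(u)^2 \leq 4 M_{Q_i}(|u|)^2$, yielding the Caccioppoli inequality with, say, $C = 16$. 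The hard part will be the $\varepsilon$-inequality step: because $\osc_{Q_i}$ is neither quadratic nor smooth in $u$, there is no direct Euler--Lagrange equation, and the specific choice $\varepsilon=(1-P)/P$ is precisely what makes the leading term in $\osc_{Q_i}(v)^2$ acquire the factor $(1 - \tfrac12(\zeta(x)^2+\zeta(y)^2))$, producing after summation the crucial negative-sign cancellation against $D_V(u)$ that drives the whole argument.
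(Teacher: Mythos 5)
Your argument is correct, but it takes a genuinely different route from the paper. The paper also tests minimality against a competitor built from $\zeta^2 u$, but it uses the \emph{infinitesimal} variation $h = u - \varepsilon\zeta^2 u$: it estimates $\osc_{Q_i}(h)$ via the product rule $\osc_{Q_i}(f_1f_2)\le M_{Q_i}(|f_2|)\osc_{Q_i}(f_1)+M_{Q_i}(|f_1|)\osc_{Q_i}(f_2)$, divides by $\varepsilon$ and lets $\varepsilon\to0$ to obtain the linearized inequality $\sum_i m_{Q_i}(\zeta^2)\osc_{Q_i}(u)^2 \le \sum_i \osc_{Q_i}(u)M_{Q_i}(|u|)\osc_{Q_i}(\zeta^2)$, and then finishes with Cauchy--Schwarz and an absorption step (constant $36$). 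You instead take $\varepsilon=1$ in one shot, which forces you to prove the sharper \emph{quadratic} pointwise bound $(v(x)-v(y))^2\le (u(x)-u(y))^2\bigl(1-\tfrac12(\zeta(x)^2+\zeta(y)^2)\bigr)+(u(x)+u(y))^2(\zeta(x)-\zeta(y))^2$ via the weighted inequality with $\varepsilon=(1-P)/P$; this is exactly what makes $\sum_i\osc_{Q_i}(u)^2$ cancel identically against $D_V(u)\le D_V(v)$, so no limit, no Cauchy--Schwarz, and no absorption of a cross term are needed (and you get a slightly better constant). What the paper's linearization buys is robustness — it only needs the subadditive product rule for $\osc$, which is why the same template reappears in the proof of Harnack's inequality; what your version buys is a cleaner, purely algebraic derivation. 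Two small points to make explicit: test functions in this paper are not assumed bounded, so the parenthetical ``(bounded)'' must be earned by the same truncation $\zeta\wedge M$, $M\to\infty$, used in the paper (allowed since $\osc_{Q_i}(\zeta\wedge M)\le\osc_{Q_i}(\zeta)$ and $M_{Q_i}(\zeta\wedge M)\uparrow M_{Q_i}(\zeta)$); and the cancellation of $\sum_{i\in I_V}\osc_{Q_i}(u)^2$ is legitimate because $D_V(u)<\infty$ for $V\subset\subset\Omega$.
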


\begin{proof}
We can assume that $\zeta$ is bounded. Indeed, if $\zeta$ is unbounded, then for $M\in \R$ the function $\zeta\land M $ is a bounded Sobolev function. Moreover, we have $\osc_{Q_i}(\zeta \land M) \leq \osc_{Q_i}(\zeta)$ by Proposition \ref{harmonic:3-Properties Sobolev}\ref{harmonic:3-Properties Sobolev-min} and $M_{Q_i}(\zeta\land M)\to M_{Q_i}(\zeta)$ as $M\to\infty$, which show that the desired inequality holds for $\zeta$ if it holds for $\zeta\land M$.

By assumption, $\zeta=0$ outside a set $V\subset \subset \Omega$. For $\varepsilon>0$ consider $\eta\coloneqq \varepsilon \zeta^2$ and $h \coloneqq  u-\eta u$. The function $\eta$ is a Sobolev function by Lemma \ref{harmonic:3-Properties Sobolev}(d), and so is $\eta u$, by the same lemma and the local boundedness of the carpet-harmonic function $u$. Therefore, $h$ is a Sobolev function that is equal to $u$ outside $V$. It follows that $D_{V}(u) \leq D_{V}(h)$, by harmonicity. Now, we will estimate $\osc_{Q_i}(h)$. 

We recall the computational rule from Lemma \ref{harmonic:3-Properties Sobolev}(d), which is similar to the product rule for derivatives: for all $i\in \N$ and all functions $f_1,f_2 \colon S \to \R$ we have
\begin{align}\label{harmonic:Computational rule}
\osc_{Q_i}(f_1f_2) \leq M_{Q_i}(|f_2|)\osc_{Q_i}(f_1)  + M_{Q_i}(|f_1|)\osc_{Q_i} (f_2). 
\end{align}

Using this rule, for fixed $i\in \N$ we have
\begin{align*}
\osc_{Q_i}(h)= \osc_{Q_i}(u(1-\eta)) \leq M_{Q_i}(|1-\eta|)\osc_{Q_i}(u) + M_{Q_i}(|u|) \osc_{Q_i}(1-\eta). 
\end{align*}
Since $\zeta$ is bounded, for small $\varepsilon>0$ we have $m_{Q_i}(\eta)\leq M_{Q_i}(\eta)<1$ for all $i\in \N$. This implies that $M_{Q_i}(|1-\eta|)= 1-m_{Q_i}(\eta)$. Also, we trivially have $\osc_{Q_i}(1-\eta)=\osc_{Q_i}(\eta)$. Therefore, for all sufficiently small $\varepsilon>0$ and for all $i\in \N$
\begin{align*}
\osc_{Q_i}(h) \leq (1- m_{Q_i}(\eta)) \osc_{Q_i}(u) +M_{Q_i}(|u|) \osc_{Q_i}(\eta).
\end{align*}
Combining this with the inequality $D_V(u)\leq D_{V} (h)$ we obtain
\begin{align*}
\sum_{i\in I_V} \osc_{Q_i}(u)^2 & \leq \sum_{i\in I_V} \bigg[ (1-m_{Q_i}(\varepsilon \zeta^2))^2 \osc_{Q_i}(u)^2 + M_{Q_i}(|u|)^2 \osc_{Q_i}(\varepsilon \zeta^2)^2 \\
& \quad \quad \quad \quad +2(1-m_{Q_i}(\varepsilon \zeta^2) )\osc_{Q_i}(u) M_{Q_i}(|u|) \osc_{Q_i}(\varepsilon \zeta^2)  \bigg].
\end{align*}
Noting that $m_{Q_i}(\varepsilon \zeta ^2)= \varepsilon m_{Q_i}(\zeta^2)$, $\osc_{Q_i}(\varepsilon \zeta^2)=\varepsilon \osc_{Q_i}(\zeta^2)$, and doing cancellations yields
\begin{align*}
 0&\leq \sum_{i\in I_V}\bigg[ (-2\varepsilon m_{Q_i}(\zeta^2) +\varepsilon^2 m_{Q_i}(\zeta^2)^2)\osc_{Q_i}(u)^2+ \varepsilon^2 M_{Q_i}(|u|)^2\osc_{Q_i}(\zeta^2)^2\\
 &\quad \quad \quad 2\varepsilon(1-\varepsilon m_{Q_i}(\zeta^2)) \osc_{Q_i}(u) M_{Q_i}(|u|)\osc_{Q_i}(\zeta^2) \bigg].
\end{align*}
Dividing by $\varepsilon>0$ and letting $\varepsilon\to 0$ we obtain
\begin{align}\label{harmonic:Caccioppoli proof inequalities}
\sum_{i\in I_V} m_{Q_i}(\zeta^2)\osc_{Q_i}(u)^2 \leq  \sum_{i\in I_V} \osc_{Q_i}(u)M_{Q_i}(|u|)\osc_{Q_i}(\zeta^2).
\end{align}
Now, we use the inequalities 
\begin{align*}
\osc_{Q_i}(\zeta^2)&\leq 2M_{Q_i}(\zeta)\osc_{Q_i}(\zeta) \quad \textrm{and}\\
m_{Q_i}(\zeta^2)&=M_{Q_i}(\zeta^2)-\osc_{Q_i}(\zeta ^2) \geq M_{Q_i}(\zeta)^2 - 2M_{Q_i}(\zeta) \osc_{Q_i}(\zeta),
\end{align*}
where the first one follows from the computational rule \eqref{harmonic:Computational rule}. Together with \eqref{harmonic:Caccioppoli proof inequalities} they imply that
\begin{align*}
 \sum_{i\in I_V} M_{Q_i}(\zeta)^2\osc_{Q_i}(u)^2 - 2\sum_{i\in I_V}M_{Q_i}&(\zeta)\osc_{Q_i}(\zeta)\osc_{Q_i}(u)^2\\
 &\leq 2\sum_{i\in I_V} \osc_{Q_i}(u)M_{Q_i}(|u|)M_{Q_i}(\zeta)\osc_{Q_i}(\zeta).
\end{align*}
In the second term of the left hand side we first use the inequality $\osc_{Q_i}(u)\leq 2M_{Q_i}(|u|)$, and then apply the Cauchy-Schwarz inequality:
\begin{align*}
\sum_{i\in I_V} M_{Q_i}(\zeta)^2\osc_{Q_i}(u)^2 &\leq 6 \sum_{i\in I_V} \osc_{Q_i}(u)M_{Q_i}(|u|)M_{Q_i}(\zeta)\osc_{Q_i}(\zeta) \\
&\leq 6 \left(\sum_{i\in I_V} M_{Q_i}(\zeta)^2\osc_{Q_i}(u)^2 \right)^{1/2} \cdot \left( \sum_{i\in I_V} \osc_{Q_i}(\zeta)^2M_{Q_i}(|u|)^2 \right)^{1/2}.
\end{align*}
Hence
\begin{align*}
\sum_{i\in I_V} M_{Q_i}(\zeta)^2\osc_{Q_i}(u)^2  \leq 36 \sum_{i\in I_V} \osc_{Q_i}(\zeta)^2M_{Q_i}(|u|)^2.
\end{align*}
Since $\zeta=0$ outside $V$, we can in fact write the summations over $i\in \N$, and this completes the proof.
\end{proof}

We now record an application of the Caccioppoli inequality towards the proof of a weak version of Liouville's theorem:

\begin{theorem}\label{harmonic:Liouville Weak}\index{Liouville's theorem}
Let $(S,\C)$ be a relative Sierpi\'nski carpet, and $u\colon S\to \R$ a carpet-harmonic function such that $|u|$ is bounded. Then $u$ is constant.
\end{theorem}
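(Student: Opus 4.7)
The plan is to apply the Caccioppoli inequality (Theorem \ref{harmonic:Caccioppoli}) with a logarithmic cutoff in the ambient plane $\Omega=\C$; this is the discrete analog of the classical 2-dimensional argument that bounded energy minimizers on $\C$ must be constant. For $R>1$, I would take
\[
\zeta_R(x)\coloneqq \max\!\bigl(0,\min\bigl(1,\,2-\tfrac{\log|x|}{\log R}\bigr)\bigr),
\]
so that $\zeta_R\equiv 1$ on $B(0,R)$, $\zeta_R\equiv 0$ outside $B(0,R^2)$, and on the shell $A(0;R,R^2)$ the function $\zeta_R$ is Lipschitz with \emph{local} Lipschitz constant $1/(|x|\log R)$. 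By Example \ref{harmonic:3-Example-Lipschitz}, together with the fact that $\zeta_R$ is bounded and compactly supported, $\zeta_R$ restricts to a non-negative test function in $\mathcal W^{1,2}(S)$.

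Writing $M\coloneqq \sup_S|u|<\infty$ and noting that $M_{Q_i}(\zeta_R)=1$ for every $Q_i\subset B(0,R)$, Theorem \ref{harmonic:Caccioppoli} yields
\[
\sum_{i:Q_i\subset B(0,R)}\osc_{Q_i}(u)^2 \;\le\; CM^2\sum_{i\in\N}\osc_{Q_i}(\zeta_R)^2,
\]
so everything reduces to showing that the right-hand side vanishes as $R\to\infty$. I would decompose $A(0;R,R^2)$ into dyadic shells $A_k\coloneqq A(0;2^kR,2^{k+1}R)$ for $k=0,1,\dots,\lfloor\log_2 R\rfloor$; for every $Q_i\subset A_k$ the local Lipschitz bound gives $\osc_{Q_i}(\zeta_R)\le \diam(Q_i)/(2^kR\log R)$, and the fatness hypothesis yields $\sum_{Q_i\subset A_k}\diam(Q_i)^2\le K_1^{-1}\mathcal H^2(A_k)\le C(2^kR)^2$. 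Thus each shell contributes at most $C/(\log R)^2$, and summing the $\lfloor\log_2 R\rfloor+1$ shells produces the key estimate $\sum_i\osc_{Q_i}(\zeta_R)^2\le C/\log R$.

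Combining these bounds gives $\sum_{i:Q_i\subset B(0,R)}\osc_{Q_i}(u)^2\le CM^2/\log R\to 0$ as $R\to\infty$. Since every peripheral disk is eventually contained in some $B(0,R)$, it follows that $\osc_{Q_i}(u)=0$ for every $i\in\N$. Replacing $u$ by its normalized version if necessary via Lemma \ref{harmonic:Switch to normalized} and applying Lemma \ref{harmonic:4-zero oscillation lemma}(a) with the connected open set $V=\C$ will then force $u$ to be constant on $S$, as desired.

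The main obstacle will be the shell-by-shell bookkeeping for $\sum_i\osc_{Q_i}(\zeta_R)^2$: naive use of the \emph{global} Lipschitz constant $1/(R\log R)$ combined with the crude fatness bound $\sum_{Q_i\cap B(0,R^2)\neq\emptyset}\diam(Q_i)^2\le CR^4$ only gives $\sum_i\osc_{Q_i}(\zeta_R)^2\le CR^2/(\log R)^2\to\infty$, which is useless. The whole point of the logarithmic cutoff is that $|\nabla\zeta_R(x)|$ decays like $1/|x|$, and one must exploit this pointwise decay dyadically. A minor technical annoyance is handling peripheral disks that straddle shell boundaries: these can be absorbed either by slightly enlarging each $A_k$, or by using Lemma \ref{harmonic:Fatness consequence} to note that only finitely many such disks can have diameter comparable to $2^kR$, with the remaining ones fitting inside a thickened shell without affecting the asymptotics.
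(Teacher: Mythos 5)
Your overall strategy (Caccioppoli inequality plus a logarithmic cutoff whose energy is forced to be small by fatness) is exactly the paper's, but your specific cutoff does not work, and the step you dismiss as "a minor technical annoyance" is in fact where the argument breaks. The problem is not disks straddling the boundary between two adjacent dyadic shells; it is a single peripheral disk whose radial extent spans \emph{many} dyadic scales. If $Q_{i}$ contains boundary points at radius $\approx R$ and at radius $\approx R^{2}$ (e.g.\ a round disk $B((c,0),\rho)$ with $c-\rho\approx R$ and $c+\rho\approx R^{2}$, which is a perfectly admissible uniformly fat quasiball), then $\osc_{Q_i}(\zeta_R)=\tfrac{1}{\log R}\log\frac{b_i}{a_i}\approx 1$, so this one disk already contributes $\approx 1$ to $D_{\C}(\zeta_R)$. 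Your fatness count only shows there are \emph{boundedly many} such large disks per scale — it does not make their contribution small. Worse, a carpet can contain such a disk at every scale: take round peripheral disks $Q_n$ with radial extents essentially equal to $[2^{2^{n}},2^{2^{n+1}}]$ (consecutive closures disjoint, diameters of disks meeting any fixed compact set still tending to zero, so the standard assumptions hold). For every large $R$ the interval $[R,R^{2}]$ is covered, up to a negligible set, by the radial extents of at most three of these disks, so $\sum_{j}\osc_{Q_{n_j}}(\zeta_R)\gtrsim 1$ and hence $D_{\C}(\zeta_R)\geq c>0$ for \emph{all} $R$. Your key estimate $D_{\C}(\zeta_R)\leq C/\log R$ is therefore false in general, and no subsequence $R\to\infty$ rescues it.

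The paper's construction is designed precisely to defeat this. It does not use a fixed logarithm with fixed dyadic shells; it uses $N$ annuli $A_j=A(0;r_j,2r_j)$ whose radii are chosen \emph{adaptively and inductively}, with $r_{j+1}$ taken so large that no peripheral disk meets both $A_j$ and $A_{j+1}$ (possible because each $Q_i$ is bounded and, by Lemma \ref{harmonic:Fatness consequence}, only finitely many disks meeting $\br{B}(0,2r_j)$ have large diameter), with $\zeta$ constant on the transition regions. Two things then save the estimate that your version loses: each disk meets at most one annulus, and its oscillation is bounded by $d_j(Q_i)/(Nr_j)$, where $d_j(Q_i)$ is the radial extent of $Q_i$ \emph{inside} $A_j$ — which is at most the width $r_j$ of the annulus, so even a disk crossing $A_j$ entirely contributes only $1/N^{2}$, and in general $d_j(Q_i)^{2}\leq K\,\mathcal H^{2}(Q_i\cap A_j)$ gives a total of $3\pi K/N^{2}$ per annulus and $3\pi K/N$ overall. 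The moral is that the smallness comes from the number $N$ of annuli, not from the width of a fixed shell $A(0;R,R^{2})$, and the adaptive spacing is essential rather than cosmetic. The remainder of your argument (the Caccioppoli application with $M_{Q_i}(\zeta)=1$ on disks meeting $B(0,R_0)$, letting the small parameter tend to $0$, and concluding via Lemma \ref{harmonic:4-zero oscillation lemma}(a)) matches the paper and is fine once the cutoff is repaired.
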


\begin{proof}
Assume that $|u|\leq M$. We fix a ball $B(0,R_0)$ and we wish to construct a test function $\zeta$ such that $0\leq \zeta\leq 1$, $\zeta=1$ on $B(0,R_0)$, but $D_{\C}(\zeta)$ is arbitrarily small, not depending on $R_0$. Then by the Caccioppoli inequality we will have
\begin{align*}
\sum_{i\in I_{B(0,R_0)}} \osc_{Q_i}(u)^2 &\leq \sum_{i\in \N} M_{Q_i}(\zeta)^2\osc_{Q_i}(u)^2  \leq C \sum_{i\in \N} \osc_{Q_i}(\zeta)^2M_{Q_i}(|u|)^2 \leq CM^2 D_{\C}(\zeta).
\end{align*} 
Since $D_{\C}(\zeta)$ can be arbitrarily small, it follows that $\sum_{i:Q_i\cap B(0,R_0)\neq \emptyset} \osc_{Q_i}(u)^2=0$, and thus $\osc_{Q_i}(u)=0$ for all $Q_i$ that intersect $B(0,R_0)$.  Since $R_0$ is arbitrary we have $\osc_{Q_i}(u)=0$ for all $i\in \N$. Thus, $u$ is constant by Lemma \ref{harmonic:4-zero oscillation lemma}(a), as desired.

Now, we construct the test function $\zeta$ with the desired properties. Essentially, $\zeta$ will be a discrete version of the logarithm. We fix a large integer $N$ which will correspond to the number of annuli around $0$ that we will construct, and $\zeta$ will drop by $1/N$ on each annulus. Define $\zeta=1$ on $B(0,R_0)$, and consider $r_1\coloneqq R_0$, $R_1\coloneqq 2r_1$. In the annulus $A_1\coloneqq A(0;r_1,R_1)$ define $\zeta$ to be a radial function of constant slope $\frac{1}{Nr_1}$, so on the outer boundary of $A_1$ the function $\zeta$ has value $1-1/N$. Then consider $r_2>R_1$ sufficiently large and $R_2\coloneqq 2r_2$, so that no peripheral disk intersects both $A_1$ and $A_2\coloneqq A(0;r_2,R_2)$; recall Lemma \ref{harmonic:Fatness consequence}. In the ``transition" annulus $A(0;R_1,r_2)$ we define $\zeta$ to be constant, equal to $1-1/N$, and on $A_2$ we let $\zeta$ be a radial function with slope $\frac{1}{Nr_2}$. The last annulus will be $A_N=A(0;r_N,R_N)$ and the value of $\zeta$ will be $0$ on the outer boundary of $A_N$. We extend $\zeta$ to be $0$ outside $B(0,R_N)$. Note that $\zeta$ is locally Lipschitz, so its restriction to the carpet $S$ is a Sobolev function, by Example \ref{harmonic:3-Example-Lipschitz}.

We now compute the Dirichlet energy of $\zeta$. Let $d_j(Q_i)\coloneqq  \mathcal H^1( \{s \in [r_j,R_j]: \gamma_s \cap Q_i\neq \emptyset \})$ where $\gamma_s$ is the circle of radius $s$ around $0$. Since the peripheral disks $Q_i$ are fat, there exists a constant $K>0$ such that $d_j(Q_i)^2\leq K \mathcal H^2(Q_i\cap A_j)$; see Remark \ref{harmonic:Remark:Fatness implication}. Also, if $Q_i\cap A_j\neq \emptyset$, then $\osc_{Q_i}(\zeta)\leq d_j(Q_i)\frac{1}{Nr_j}$. By construction, each peripheral disk $Q_i$ can only intersect one annulus $A_j$, and if a peripheral disk $Q_i$ does not intersect any annulus $A_j$, then $\zeta$ is constant on $Q_i$, so $\osc_{Q_i}(\zeta)=0$. Thus
\begin{align*}
D_\C(\zeta)&=\sum_{i\in \N}\osc_{Q_i}(\zeta)^2= \sum_{j=1}^N \sum_{i:Q_i\cap A_j\neq \emptyset} \osc_{Q_i}(\zeta)^2\\
&\leq \frac{1}{N^2}\sum_{j=1}^N  \frac{1}{r_j^2}\sum_{i:Q_i\cap A_j\neq \emptyset}d_j(Q_i)^2\\
&\leq  \frac{K}{N^2}\sum_{j=1}^N  \frac{1}{r_j^2}\sum_{i:Q_i\cap A_j\neq \emptyset} \mathcal H^2(Q_i\cap A_j)\\
&\leq  \frac{K}{N^2}\sum_{j=1}^N  \frac{1}{r_j^2}\mathcal H^2(A_j)\\
&= \frac{\pi K}{N^2}\sum_{j=1}^N  \frac{R_j^2-r_j^2}{r_j^2}=\frac{\pi K}{N^2}\sum_{j=1}^N   \frac{3r_j^2}{r_j^2}  \\
&=\frac{3\pi K}{N},
\end{align*}  
which can be made arbitrarily small if $N$ is large.
\end{proof}

\begin{remark}
Liouville's theorem justifies that we do not define carpet-har\-monic functions on relative carpets in the whole sphere $\widehat \C$, i.e., $\Omega=\widehat \C$, as the carpets studied in \cite{Bonk:uniformization}, because by continuity the carpet-harmonic functions would then be bounded and thus constant. 
\end{remark}

The non-linearity of the theory does not allow us to apply the Caccioppoli inequality of Theorem \ref{harmonic:Caccioppoli} to linear combinations of harmonic functions. We record another version of the Caccioppoli inequality for differences of harmonic functions. This will be very useful in establishing convergence properties of harmonic functions in Section \ref{harmonic:Section Equicontinuity}.

\begin{theorem}\label{harmonic:Caccioppoli2}\index{Caccioppoli inequality!for differences}
Let $\zeta\colon S\to \R$ be a non-negative continuous test function, and $u,v\colon S\to \R$ be carpet-harmonic functions. Then
\begin{align*}
\sum_{i\in \N} m_{Q_i}(\zeta) (\osc_{Q_i}(u)-\osc_{Q_i}(v))^2 \leq C \sum_{i\in \N} \osc_{Q_i}(\zeta) (\osc_{Q_i}(u)+\osc_{Q_i}(v)) M_{Q_i}(|u-v|),
\end{align*} 
where $C>0$ is some universal constant.
\end{theorem}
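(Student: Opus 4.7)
The plan is to adapt the proof of Theorem \ref{harmonic:Caccioppoli} by perturbing $u$ and $v$ simultaneously towards each other along their difference $w\coloneqq u-v$. For small $\varepsilon>0$ I introduce the pair of competitors
\[
h_1 \coloneqq (1-\varepsilon\zeta)u + \varepsilon\zeta v, \qquad h_2 \coloneqq (1-\varepsilon\zeta)v + \varepsilon\zeta u.
\]
Since $\zeta$ is continuous with compact support, $\|\zeta\|_\infty<\infty$, so $\varepsilon\zeta\in[0,1]$ for all sufficiently small $\varepsilon$. The product rule in Proposition \ref{harmonic:3-Properties Sobolev}(d), together with the local boundedness of $u$, $v$, and $\zeta$, shows $h_1,h_2\in \mathcal W^{1,2}(S)$; since $h_1-u$ and $h_2-v$ vanish outside the support $V$ of $\zeta$, the harmonicity of $u$ and $v$ yields the combined inequality $D_V(u)+D_V(v)\le D_V(h_1)+D_V(h_2)$.

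The heart of the argument is a sign-sensitive oscillation bound on each peripheral circle. Writing $\eta\coloneqq\varepsilon\zeta$ and $a_i\coloneqq\osc_{Q_i}(u)$, $b_i\coloneqq\osc_{Q_i}(v)$, the algebraic identity
\[
h_1(x)-h_1(y) = (1-\eta(x))(u(x)-u(y)) + \eta(x)(v(x)-v(y)) + (\eta(x)-\eta(y))(v(y)-u(y))
\]
gives, after taking absolute values and using $0\le\eta\le 1$, the pointwise bound $|h_1(x)-h_1(y)|\le a_i+\eta(x)(b_i-a_i)+\osc_{Q_i}(\eta)\,M_{Q_i}(|w|)$. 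Since $a_i+\eta(x)(b_i-a_i)$ is \emph{linear} in $\eta(x)$, its maximum over $\partial Q_i$ equals $a_i+M_{Q_i}(\eta)(b_i-a_i)$ when $b_i\ge a_i$ and $a_i-m_{Q_i}(\eta)(a_i-b_i)$ when $a_i>b_i$, with an analogous bound for $\osc_{Q_i}(h_2)$ (swap $u$ and $v$). This case-dependent selection between $M_{Q_i}(\eta)$ and $m_{Q_i}(\eta)$ is precisely what will later produce the coefficient $m_{Q_i}(\zeta)$ on the left-hand side of the desired inequality.

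Next, in either of the two cases $b_i\ge a_i$ and $a_i>b_i$, I would square and add these bounds, subtract $a_i^2+b_i^2$, and extract the leading order in $\varepsilon$. A direct computation shows
\[
\osc_{Q_i}(h_1)^2+\osc_{Q_i}(h_2)^2-(a_i^2+b_i^2) \le 2\varepsilon\bigl[-m_{Q_i}(\zeta)(a_i-b_i)^2 + \osc_{Q_i}(\zeta)\,\mathcal C_i + \osc_{Q_i}(\zeta)\,M_{Q_i}(|w|)(a_i+b_i)\bigr]+O(\varepsilon^2),
\]
where the cross term is $\mathcal C_i=a_i(b_i-a_i)\ge 0$ when $b_i\ge a_i$ and $\mathcal C_i=b_i(a_i-b_i)\ge 0$ otherwise. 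Summing over $i$, the combined harmonicity inequality forces the sum of left-hand sides to be non-negative; dividing by $\varepsilon$ and letting $\varepsilon\to 0$ reduces the theorem to
\[
\sum_i m_{Q_i}(\zeta)(a_i-b_i)^2 \le \sum_i \osc_{Q_i}(\zeta)\bigl(\mathcal C_i + M_{Q_i}(|w|)(a_i+b_i)\bigr).
\]

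The main obstacle is that $\mathcal C_i$ does not have the right shape on its own—it is only controlled by $|a_i-b_i|\max(a_i,b_i)$, which cannot be absorbed into the left-hand side since $m_{Q_i}(\zeta)$ may vanish on most peripheral disks. The resolution is the elementary estimate
\[
|a_i-b_i| = |\osc_{Q_i}(u)-\osc_{Q_i}(v)| \le \osc_{Q_i}(u-v) \le 2M_{Q_i}(|u-v|),
\]
coming from the subadditivity of oscillation. This immediately yields $\mathcal C_i\le 2M_{Q_i}(|w|)(a_i+b_i)$, which combined with the remaining term on the right produces the stated inequality with universal constant $C=3$.
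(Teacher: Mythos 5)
Your proof is correct, but it takes a genuinely different route from the paper's. The paper first derives an exact first-variation identity $\sum_{i\in I_V}\osc_{Q_i}(u)(\eta(x_i)-\eta(y_i))=0$ for continuous perturbations $\eta$, where $x_i,y_i\in\partial Q_i$ are sublimits of points realizing $\osc_{Q_i}(u+\varepsilon\eta)$; it then substitutes $\eta=\zeta\cdot(u-v)$, repeats with the roles of $u$ and $v$ exchanged, and adds the two resulting inequalities. You bypass the Euler--Lagrange identity entirely: you test harmonicity directly against the interpolants $h_1,h_2$, and the coefficient $m_{Q_i}(\zeta)$ emerges from the sign-sensitive choice between $M_{Q_i}(\eta)$ and $m_{Q_i}(\eta)$ in the oscillation bound, according to whether $\osc_{Q_i}(v)\ge\osc_{Q_i}(u)$ or not; I checked that in both cases the linear-in-$\varepsilon$ term collapses to $-2\varepsilon m_{Q_i}(\zeta)(\osc_{Q_i}(u)-\osc_{Q_i}(v))^2$ plus the stated cross terms. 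Both arguments close identically via $|\osc_{Q_i}(u)-\osc_{Q_i}(v)|\le\osc_{Q_i}(u-v)\le 2M_{Q_i}(|u-v|)$. What your route buys: no compactness argument on oscillation-attaining points (so continuity of $\zeta$, $u$, $v$ is used only for boundedness on $S\cap\br V$), and only a one-sided perturbation $\varepsilon>0$ is needed; what it costs is the case analysis and slightly heavier bookkeeping. Two points worth making explicit in a full write-up: first, $h_1-u=\varepsilon\zeta(v-u)$ lies in $\mathcal W^{1,2}(S)$ and vanishes outside $V$ by Proposition \ref{harmonic:3-Properties Sobolev}(d) together with the boundedness of $\zeta$ and of $u-v$ on $S\cap\br V$, so it is an admissible competitor; second, the $O(\varepsilon^2)$ remainders are summable over $i\in I_V$ with constants controlled by $\sum_i(\osc_{Q_i}(u)^2+\osc_{Q_i}(v)^2)$ and $\sum_i\osc_{Q_i}(\zeta)^2$, which is what justifies dividing by $\varepsilon$ and passing to the limit in the summed inequality.
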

\begin{proof}
Suppose that $\eta:S\to \R$ is a continuous function and $\osc_{Q_i}(u+\varepsilon \eta)= u(x_\varepsilon)+\varepsilon \eta(x_\varepsilon)- u(y_\varepsilon)-\varepsilon \eta(y_\varepsilon)$ for some $\varepsilon\in \R$ and points $x_\varepsilon,y_\varepsilon \in \partial Q_i$. Then as $\varepsilon \to 0$, the points $x_\varepsilon,y_\varepsilon$ subconverge to points $x,y$, respectively, with $\osc_{Q_i}(u)=u(x)-u(y)$. Here we used the continuity of $u$ and the boundedness of $\eta$ on $\partial Q_i$.

If $\eta$ is a continuous test function supported in $V\subset \subset \Omega$, we have $D_V(u)\leq D_V(u+\varepsilon \eta)$, which implies
\begin{align*}
\sum_{i\in I_V} \osc_{Q_i}(u)^2 &\leq \sum_{i\in I_V} (u(x_{i,\varepsilon})+\varepsilon \eta(x_{i,\varepsilon})- u(y_{i,\varepsilon})-\varepsilon \eta(y_{i,\varepsilon}))^2 \\
&\leq  \sum_{i\in I_V} \osc_{Q_i}(u)^2 +2\varepsilon \sum_{i\in I_V}(u(x_{i,\varepsilon})-u(y_{i,\varepsilon}))( \eta(x_{i,\varepsilon})- \eta(y_{i,\varepsilon}))+ O(\varepsilon^2),
\end{align*}
for some points $x_{i,\varepsilon}, y_{i,\varepsilon}\in \partial Q_i$. As $\varepsilon\to 0$, this yields
\begin{align}\label{harmonic:Caccioppoli2:variation}
\sum_{i\in I_V} \osc_{Q_i}(u) (\eta(x_i)-\eta(y_i))=0,
\end{align}
where $x_i,y_i$ are sublimits of $x_{i,\varepsilon}, y_{i,\varepsilon}$, respectively, and $\osc_{Q_i}(u)=u(x_i)-u(y_i)$.

We use $\eta= \zeta\cdot (u-v)$ in \eqref{harmonic:Caccioppoli2:variation}, where $\zeta$ is supported in $V\subset \subset \Omega$, and we obtain
\begin{align*}
0&=\sum_{i\in I_V} \osc_{Q_i}(u)( \zeta(x_i)(u(x_i)-v(x_i))- \zeta(y_i)(u(y_i)-v(y_i)))\\
&=\sum_{i\in I_V} \osc_{Q_i}(u)\cdot \biggl(\zeta(x_i)(u(x_i)-u(y_i)) -\zeta(x_i)(v(x_i)-v(y_i))\\
&\quad\quad\quad\quad\quad\qquad +(\zeta(x_i)-\zeta(y_i))(u(y_i)-v(y_i)) \biggr).
\end{align*}
Since $\osc_{Q_i}(u)=u(x_i)-u(y_i)$, $\osc_{Q_i}(v)\geq v(x_i)-v(y_i)$, and $\zeta\geq 0$, we have
\begin{align*}
\sum_{i\in I_V} \zeta(x_i) (\osc_{Q_i}(u)^2-\osc_{Q_i}(u)\osc_{Q_i}(v)) \leq \sum_{i\in I_V} \osc_{Q_i}(\zeta)  \osc_{Q_i}(u) M_{Q_i}(|u-v|).
\end{align*}
Interchanging the roles of $u$ and $v$, we obtain points $x_i'\in \partial Q_i$ such that
\begin{align*}
\sum_{i\in I_V} \zeta(x_i') (\osc_{Q_i}(v)^2-\osc_{Q_i}(u)\osc_{Q_i}(v)) \leq \sum_{i\in I_V} \osc_{Q_i}(\zeta)  \osc_{Q_i}(v) M_{Q_i}(|u-v|).
\end{align*}
Adding the two inequalities, we have
\begin{align*}
\sum_{i\in I_V} \zeta(x_i) (\osc_{Q_i}(u)-\osc_{Q_i}(v))^2 &\leq \sum_{i\in I_V} \osc_{Q_i}(\zeta)  (\osc_{Q_i}(u)+\osc_{Q_i}(v)) M_{Q_i}(|u-v|) \\
&\quad \quad + \sum_{i\in I_V} (\zeta(x_i)-\zeta(x_i')) (\osc_{Q_i}(v)^2-\osc_{Q_i}(u)\osc_{Q_i}(v)).
\end{align*}
The conclusion follows, upon observing that $\zeta(x_i)\geq m_{Q_i}(\zeta)$, $|\zeta(x_i)-\zeta(x_i')|\leq \osc_{Q_i}(\zeta)$, and 
\[|\osc_{Q_i}(u)-\osc_{Q_i}(v)|\leq \osc_{Q_i}(u-v)=M_{Q_i}(u-v)-m_{Q_i}(u-v)\leq 2M_{Q_i}(|u-v|).\qedhere\]
\end{proof}

\section{Harnack's inequality and consequences}\label{harmonic:Section Harnack}
\subsection{Harnack's inequality}\index{Harnack's inequality}
In this section the main theorem is:
\begin{theorem}[Harnack's inequality]\label{harmonic:Harnack}
There exists a constant $H>1$ such that: if $u\colon S\to \R$ is a non-negative carpet-harmonic function, then
\begin{align*}
\sup_{z\in S\cap B_0}u(z) \leq H \inf_{z\in S\cap B_0}u(z)
\end{align*}
for all balls $B_0\subset \Omega$ with the property that there exists a ball $B_1\subset \Omega$ such that 
\begin{align*}
\bigcup_{i:Q_i\cap c_oB_0\neq \emptyset}Q_i \subset B_1\subset c_1B_1\subset \subset\Omega,
\end{align*}
where $c_0,c_1>1$ are constants. The constant $H$ depends only on the data of the carpet $S$ and on $c_0,c_1$. The choice of the latter two constants can be arbitrary.  
\end{theorem}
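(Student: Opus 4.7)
The strategy I would adopt is a discrete adaptation of Moser's iteration, in the spirit of the classical proof of Harnack's inequality for elliptic PDE. Since $u \geq 0$ is continuous by Theorem \ref{harmonic:4-continuous}, I may work with $u + \varepsilon$ for $\varepsilon > 0$, prove the inequality with a constant independent of $\varepsilon$, and then let $\varepsilon \to 0$. The plan has three main conceptual steps.

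First, I would derive a \emph{Caccioppoli-type inequality for} $\log u$. Following Moser, I test the energy-minimization property of $u$ against variations of the form $u - s\, \zeta^2/u$, where $\zeta$ is a non-negative continuous test function and $s \to 0$. By the product rule of Proposition \ref{harmonic:3-Properties Sobolev}(d), $\zeta^2/u \in \mathcal{W}^{1,2}(S)$ whenever $u$ is bounded below on the support of $\zeta$. Expanding $D_V(u) \le D_V(u - s\zeta^2/u)$, linearising in $s$, and arguing as in the proof of Theorem \ref{harmonic:Caccioppoli}, I expect to obtain an estimate of the form
\begin{align*}
\sum_{i\in\N} m_{Q_i}(\zeta)^2 \, \frac{\osc_{Q_i}(u)^2}{M_{Q_i}(u)^2} \;\leq\; C \sum_{i\in\N} \osc_{Q_i}(\zeta)^2,
\end{align*}
which controls the Dirichlet energy of $\log u$ on the support of $\zeta$ by the Dirichlet energy of $\zeta$.

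Second, I would exploit the ball hypothesis to pick a concrete cutoff. Using $B_0 \subset B_1$ and $c_1 B_1 \subset\subset \Omega$, I construct a Lipschitz function $\zeta$ equal to $1$ on $B_0 \cup \bigcup_{i \in I_{c_0 B_0}} Q_i$ and $0$ outside $c_1 B_1$, with uniformly controlled energy $\sum_i \osc_{Q_i}(\zeta)^2 \leq C(c_0, c_1, K_0, K_1)$; a telescoping logarithmic interpolant across finitely many annuli, as in the proof of Theorem \ref{harmonic:Liouville Weak}, does the job. Substituting into Step 1 yields a uniform bound on the Dirichlet energy of $\log u$ on the part of $S$ contained in $B_0$ together with all peripheral disks hitting $c_0 B_0$.

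Third, I would convert this energy bound into a pointwise oscillation bound for $\log u$ on $S\cap B_0$. Given $x, y \in S \cap B_0$, I aim to show $|\log u(x) - \log u(y)| \leq C$. This is where I expect the main obstacle: in the classical proof one invokes a Poincaré--Sobolev inequality followed by John--Nirenberg, but neither tool is directly available here. Instead I would combine the upper gradient inequality for $\log u$ along a sufficiently rich family of good paths $\Gamma_{x,y} \subset c_0 B_0$ connecting $x$ to $y$, constructed via Lemmas \ref{harmonic:Paths joining continua}, \ref{harmonic:3-curves gamma_r} and \ref{harmonic:Paths in S^o}, with a Cauchy--Schwarz / modulus-duality argument: averaging the inequality $|\log u(x) - \log u(y)| \leq \sum_{i:Q_i \cap \gamma \neq \emptyset} \osc_{Q_i}(u)/m_{Q_i}(u)$ over $\gamma \in \Gamma_{x,y}$ with weights, and using a lower bound on the strong carpet modulus of $\Gamma_{x,y}$ provided by fatness and quasiball properties, should yield the desired universal bound. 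Executing the modulus lower bound cleanly---that is, producing enough good paths between arbitrary points of $S\cap B_0$ staying in $c_0 B_0$, and justifying that the upper gradient inequality can be averaged over them in view of Fuglede's Lemma \ref{harmonic:Fuglede}---is the hardest part of the argument. Exponentiating gives $H = e^C$, depending only on the data of $S$ and on $c_0, c_1$.
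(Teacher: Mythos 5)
Your Steps 1 and 2 are essentially the paper's: the paper tests against $h = u + \varepsilon\zeta^2/u$ and obtains (Proposition \ref{harmonic:Harnack-Proposition}) the estimate $\sum_i m_{Q_i}(\zeta)^2\,\osc_{Q_i}(u)^2/(M_{Q_i}(u)\,m_{Q_i}(u)) \leq C\,D_\Omega(\zeta)$, and then plugs in a single radial cutoff on the annulus $c_1B_1\setminus B_1$ (Lemma \ref{harmonic:Harnack-Lemma}). One caveat on your version of the estimate: with $M_{Q_i}(u)^2$ in the denominator it does \emph{not} control the Dirichlet energy of $\log u$, since $\osc_{Q_i}(\log u) = \log(M_{Q_i}(u)/m_{Q_i}(u))$ is bounded by $\osc_{Q_i}(u)/(M_{Q_i}(u)m_{Q_i}(u))^{1/2}$ but not by a constant times $\osc_{Q_i}(u)/M_{Q_i}(u)$ (consider $m \ll M$). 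The careful expansion of $\osc_{Q_i}(h)$ produces the mixed product $M_{Q_i}(u)m_{Q_i}(u)$, and that is exactly what is needed.

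The genuine gap is in your Step 3, and the route you sketch there would not go through as stated. A modulus-duality argument requires a uniform \emph{lower} bound on the (strong) carpet modulus of the family of curves joining the two points $x,y\in S\cap B_0$; such Loewner-type lower bounds (Proposition \ref{unif:Modulus-Loewner lower}) are stated for non-trivial continua with bounded relative distance and under the stronger hypotheses of uniform quasicircles and uniform relative separation, neither of which is assumed here, and they degenerate when the continua shrink to points ($\Delta(\{x\},\{y\})=\infty$). The paper sidesteps all of this with a much more elementary device: $v=\log u$ is \emph{monotone} (it satisfies the maximum principle, being an increasing function of $u$), so by Theorem \ref{harmonic:Maximum Principle} the oscillation of $v$ over $S\cap B_0$ is bounded by its oscillation over $\partial_*(sB_0)$ for every $s\in[1,c_0]$; the upper gradient inequality along the circle $\partial(sB_0)$ bounds the latter by $\sum_{i:Q_i\cap\partial(sB_0)\neq\emptyset}\osc_{Q_i}(v)$, and integrating over $s$, applying Cauchy--Schwarz, and using the fatness bound $d(Q_i)^2\leq K\mathcal H^2(Q_i\cap c_0B_0)$ yields $\osc_{S\cap B_0}(v)\leq C\bigl(\sum_{i\in I_{c_0B_0}}\osc_{Q_i}(v)^2\bigr)^{1/2}$ (Gehring's oscillation Lemma \ref{harmonic:Harnack-Oscillation}). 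Combined with Steps 1--2 and $\osc_{Q_i}(\log u)\leq \osc_{Q_i}(u)/(M_{Q_i}(u)m_{Q_i}(u))^{1/2}$, this gives the uniform bound and $H=e^{C'}$. You should replace the modulus-duality step with this monotonicity argument.
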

The assumption in the theorem asserts that the peripheral disks that meet $c_0B_0$ are essentially ``safely" contained in $\Omega$, away from the boundary.

Our treatment of Harnack's inequality is inspired by \cite{Granlund:Harnack}, where Harnack's inequality is proved for $W^{1,n}$-minimizers of certain variational integrals in $\R^n$. The method used there is a purely variational argument, which does not rely on a differential equation or a representation formula for minimizers, and this allows us to apply it in our discrete setting. The proof will be done in several steps. First we show:

\begin{prop}\label{harmonic:Harnack-Proposition}
Let $u\colon S\to \R$ be a positive carpet-harmonic function. Then for any non-negative test function $\zeta\colon S\to \R$ we have
\begin{align*}
\sum_{i\in \N} \frac{m_{Q_i}(\zeta)^2\osc_{Q_i}(u)^2}{M_{Q_i}(u)m_{Q_i}(u)} \leq C D_\Omega(\zeta),
\end{align*} 
where $C>0$ is a universal constant, not depending on $u,\zeta,S$.
\end{prop}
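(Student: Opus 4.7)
The plan is to derive a discrete analogue of the classical logarithmic Caccioppoli estimate by testing the first-order minimality condition for $u$ against the competitor $\varphi=\zeta^2/u$. Since $u$ is positive and continuous by Theorem~\ref{harmonic:4-continuous}, $u$ is bounded below by a positive constant on the compact support $V$ of $\zeta$; together with the product rule for oscillations (Proposition~\ref{harmonic:3-Properties Sobolev}(d)), this shows that $\varphi\coloneqq\zeta^2/u$ is a continuous Sobolev test function supported in $V$ (assuming, as in Theorem~\ref{harmonic:Caccioppoli2} and in the Harnack applications to follow, that $\zeta$ is continuous). Invoking the first-order variation identity derived in the proof of Theorem~\ref{harmonic:Caccioppoli2}, there exist points $x_i,y_i\in\partial Q_i$ with $u(x_i)=M_{Q_i}(u)$, $u(y_i)=m_{Q_i}(u)$, and
\[
\sum_{i\in\N}\osc_{Q_i}(u)\bigl(\varphi(x_i)-\varphi(y_i)\bigr)=0.
\]

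The crucial algebraic step is the asymmetric decomposition
\[
\frac{\zeta(x_i)^2}{M_{Q_i}(u)}-\frac{\zeta(y_i)^2}{m_{Q_i}(u)}=\frac{\zeta(x_i)^2-\zeta(y_i)^2}{M_{Q_i}(u)}-\frac{\zeta(y_i)^2\osc_{Q_i}(u)}{M_{Q_i}(u)\,m_{Q_i}(u)},
\]
which after multiplication by $\osc_{Q_i}(u)$ and summation produces the identity
\[
B\coloneqq\sum_{i\in\N}\frac{\zeta(y_i)^2\osc_{Q_i}(u)^2}{M_{Q_i}(u)\,m_{Q_i}(u)}=\sum_{i\in\N}\frac{(\zeta(x_i)^2-\zeta(y_i)^2)\osc_{Q_i}(u)}{M_{Q_i}(u)}.
\]
Placing $\zeta(y_i)^2$ (rather than $\zeta(x_i)^2$) on the ``squared'' side is essential: it puts $M_{Q_i}(u)$ (not $m_{Q_i}(u)$) in the denominator of the right-hand side, so that the bounded ratio $\osc_{Q_i}(u)/M_{Q_i}(u)\leq 1$ becomes available; the opposite choice would bring in the uncontrolled ratio $\osc_{Q_i}(u)/m_{Q_i}(u)$. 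Since $\zeta(y_i)\geq m_{Q_i}(\zeta)$, the quantity $B$ dominates the target sum of the proposition.

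To close the estimate, I would use the elementary bound $|\zeta(x_i)^2-\zeta(y_i)^2|\leq 2\zeta(y_i)\osc_{Q_i}(\zeta)+\osc_{Q_i}(\zeta)^2$, which follows from $\zeta(x_i)+\zeta(y_i)\leq 2\zeta(y_i)+|\zeta(x_i)-\zeta(y_i)|$ (valid since $\zeta\geq 0$). The $\osc_{Q_i}(\zeta)^2$ contribution is at most $D_\Omega(\zeta)$ thanks to $\osc_{Q_i}(u)/M_{Q_i}(u)\leq 1$, while the cross term is handled by Cauchy--Schwarz in the splitting
\[
\sum_i\frac{\zeta(y_i)\osc_{Q_i}(u)\osc_{Q_i}(\zeta)}{M_{Q_i}(u)}\leq\left(\sum_i\frac{\zeta(y_i)^2\osc_{Q_i}(u)^2}{M_{Q_i}(u)\,m_{Q_i}(u)}\right)^{1/2}\left(\sum_i\osc_{Q_i}(\zeta)^2\,\frac{m_{Q_i}(u)}{M_{Q_i}(u)}\right)^{1/2}\leq\sqrt{B\cdot D_\Omega(\zeta)}.
\]
These bounds combine to $B\leq 2\sqrt{B\cdot D_\Omega(\zeta)}+D_\Omega(\zeta)$, and an AM-GM absorption $2\sqrt{B\cdot D_\Omega(\zeta)}\leq B/2+2D_\Omega(\zeta)$ yields $B\leq 6\,D_\Omega(\zeta)$, completing the proof. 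The hard part, in my view, is precisely the asymmetric decomposition highlighted above; a ``symmetric'' or naive choice (e.g.\ keeping $\zeta(x_i)^2$ paired with $\osc_{Q_i}(u)^2$) leaves the factor $M_{Q_i}(u)/m_{Q_i}(u)$ in the Cauchy--Schwarz remainder, which cannot be absorbed into $D_\Omega(\zeta)$.
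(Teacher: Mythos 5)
Your argument is correct and is, at bottom, the same variational argument as the paper's: both test the minimality of $u$ against the competitor $\zeta^2/u$, both rely on exactly the asymmetric splitting you highlight (in the paper it appears as the pair of bounds $M_{Q_i}(h)\leq M_{Q_i}(u)+\varepsilon M_{Q_i}(\zeta^2)/M_{Q_i}(u)$ and $m_{Q_i}(h)\geq m_{Q_i}(u)+\varepsilon m_{Q_i}(\zeta^2)/m_{Q_i}(u)$ for $h=u+\varepsilon\zeta^2/u$, which after subtraction produces the same $-\varepsilon\,m_{Q_i}(\zeta^2)\osc_{Q_i}(u)/(M_{Q_i}(u)m_{Q_i}(u))$ term and the same $1/M_{Q_i}(u)$ in the remainder), and both close with the identical Cauchy--Schwarz absorption $A\leq 2\sqrt{A\,D}+D$. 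The one genuine difference is the mechanism for extracting the first-order information: you route it through the linearized identity \eqref{harmonic:Caccioppoli2:variation} evaluated at extremal points $x_i,y_i$, whereas the paper estimates $\osc_{Q_i}(u+\varepsilon\zeta^2/u)$ directly using the monotonicity of $x\mapsto x+\varepsilon y^2/x$ and then expands $D_V(u)\leq D_V(h)$. Your route costs you the extra hypothesis that $\zeta$ is continuous (needed so that the extremal points $x_{i,\varepsilon},y_{i,\varepsilon}$ subconverge and the identity of Theorem \ref{harmonic:Caccioppoli2} applies), which the proposition as stated does not assume; the paper's route needs only boundedness of $\zeta$, obtained by truncating $\zeta\land M$. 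Since every application in the paper uses Lipschitz test functions this restriction is harmless in practice, but as a proof of the stated proposition it is a (mild) loss of generality you should either remove---by running the second-order expansion directly, as the paper does---or explicitly acknowledge. A second small point: to make $\zeta^2/u$ a legitimate element of $\mathcal W^{1,2}(S)$ you should either replace $u$ by $u+\delta$ globally (as the paper does) or by $u\lor\delta$, since the upper gradient inequality for $1/u$ involves paths that may leave $V$; your appeal to the lower bound of $u$ on $\br V\cap S$ alone does not quite deliver this.
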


Note that by continuity $u$ is bounded below away from $0$ on each individual peripheral circle $\partial Q_i$, and this shows that $M_{Q_i}(u)\geq m_{Q_i}(u)>0$. Hence, all quantities above make sense.

\begin{proof}
As in the proof of Theorem \ref{harmonic:Caccioppoli}, we may assume that $\zeta$ is bounded. Replacing $u$ with $u+\delta$ for a small $\delta>0$, and noting that $\osc_{Q_i}(u+\delta)=\osc_{Q_i}(u)$, $M_{Q_i}(u+\delta)=M_{Q_i}(u)+\delta$, $m_{Q_i}(u+\delta)=m_{Q_i}(u)+\delta$, we see that it suffices to prove the statement assuming that $u\geq \delta>0$.

Fix a bounded test function $\zeta$, supported in $S\cap V$, where $V\subset \subset \Omega$. For a small $\varepsilon>0$  consider the variation $h= u+ \varepsilon \zeta^2/u $. We remark that $u^{-1}$ is a Sobolev function, since 
\begin{align*}
|u(x)^{-1}-u(y)^{-1}|\leq |u(x)-u(y)|/\delta^2, 
\end{align*}
so $u^{-1}$ inherits its upper gradient inequality from $u$. Moreover, $\zeta^2$ and $\zeta^2u^{-1}$ are  Sobolev functions by Lemma \ref{harmonic:3-Properties Sobolev}(d). Hence, $h$ is a Sobolev function.

Observe that the function $x\mapsto x+\varepsilon y^2/x$ is increasing as long as $x^2>\varepsilon y^2$. Since $\zeta$ is bounded and $u\geq \delta$, it follows that for sufficiently small $\varepsilon>0$ we have
\begin{align*}
M_{Q_i}(h) &\leq M_{Q_i}(u) + \varepsilon \frac{M_{Q_i}(\zeta^2)}{M_{Q_i}(u)}, \quad \textrm{and}\\
m_{Q_i}(h) &\geq m_{Q_i}(u) + \varepsilon \frac{m_{Q_i}(\zeta^2)}{m_{Q_i}(u)}
\end{align*}
for all $i\in \N$. Hence,
\begin{align*}
\osc_{Q_i}(h)&\leq \osc_{Q_i}(u)+ \varepsilon \left( \frac{M_{Q_i}(\zeta^2)}{M_{Q_i}(u)}-\frac{m_{Q_i}(\zeta^2)}{m_{Q_i}(u)}\right)\\
&=\left(1-\varepsilon \frac{m_{Q_i}(\zeta^2)}{M_{Q_i}(u)m_{Q_i}(u)} \right) \osc_{Q_i}(u) +\varepsilon \frac{M_{Q_i}(\zeta)+m_{Q_i}(\zeta)}{M_{Q_i}(u)} \osc_{Q_i}(\zeta),
\end{align*}
where we used the equalities $\osc_{Q_i}(\zeta)=M_{Q_i}(\zeta)-m_{Q_i}(\zeta)$ and $\osc_{Q_i}(\zeta^2)=\osc_{Q_i}(\zeta)\cdot (M_{Q_i}(\zeta)+m_{Q_i}(\zeta))$.

Since $u$ is carpet-harmonic, and $h$ is equal to $u$ outside $V$ we have $D_V(u)\leq D_V(h)$. Working as in the proof of Theorem \ref{harmonic:Caccioppoli}, and letting $\varepsilon\to 0$, we obtain
\begin{align*}
\sum_{i\in I_V} \frac{m_{Q_i}(\zeta^2)\osc_{Q_i}(u)^2}{M_{Q_i}(u)m_{Q_i}(u)} &\leq \sum_{i\in I_V} \osc_{Q_i}(u) \frac{M_{Q_i}(\zeta)+m_{Q_i}(\zeta)}{M_{Q_i}(u)} \osc_{Q_i}(\zeta).
\end{align*}
Writing $M_{Q_i}(\zeta)=m_{Q_i}(\zeta)+\osc_{Q_i}(\zeta)$ and applying the Cauchy-Schwarz inequality we obtain
\begin{align*}
\sum_{i\in I_V} \frac{m_{Q_i}(\zeta^2)\osc_{Q_i}(u)^2}{M_{Q_i}(u)m_{Q_i}(u)}&\leq  \sum_{i\in I_V} \frac{\osc_{Q_i}(u)}{M_{Q_i}(u)} (2m_{Q_i}(\zeta)+\osc_{Q_i}(\zeta))\osc_{Q_i}(\zeta)\\
&\leq 2\left(\sum_{i\in I_V} \frac{m_{Q_i}(\zeta)^2 \osc_{Q_i}(u)^2}{M_{Q_i}(u)^2}\right)^{1/2}D_V(\zeta)^{1/2}\\
&\quad \quad + \sum_{i\in I_V}\frac{\osc_{Q_i}(u)}{M_{Q_i}(u)}\osc_{Q_i}(\zeta)^2.
\end{align*} 
Noting that $M_{Q_i}(u)^2\geq M_{Q_i}(u)m_{Q_i}(u)$, $\osc_{Q_i}(u)=M_{Q_i}(u)-m_{Q_i}(u)\leq M_{Q_i}(u)$, and defining $A=\sum_{i\in I_V} \frac{m_{Q_i}(\zeta )^2\osc_{Q_i}(u)^2}{M_{Q_i}(u)m_{Q_i}(u)}$, we have
\begin{align*}
A\leq 2A^{1/2}D_V(\zeta)^{1/2}+D_V(\zeta).
\end{align*}
If $A\leq D_V(\zeta)$, then there is nothing to show. Otherwise, we have $D_V(\zeta)^{1/2}\leq A^{1/2}$, hence
\begin{align*}
A\leq 2A^{1/2}D_V(\zeta)^{1/2}+A^{1/2}D_V(\zeta)^{1/2},
\end{align*}
which implies that
\begin{align*}
A\leq 9 D_V(\zeta)
\end{align*}
and this concludes the proof.
\end{proof}

This proposition already has a strong Liouville theorem as a corollary:

\begin{corollary}[Liouville's Theorem]\label{harmonic:Liouville-Strong}\index{Liouville's theorem!strong}
Let $(S,\C)$ be a relative Sierpi\'nski carpet, and $u\colon S\to \R$ a carpet-harmonic function that is bounded above or below. Then $u$ is constant.
\end{corollary}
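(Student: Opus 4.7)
The plan is to deduce the strong Liouville theorem directly from Harnack's inequality, mimicking the classical argument for bounded-below harmonic functions on $\R^n$.

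First, I would reduce to the case of a non-negative carpet-harmonic function with zero infimum. If $u$ is bounded above, then $-u$ is carpet-harmonic (since $\osc_{Q_i}(-u)=\osc_{Q_i}(u)$, and replacing a variation $\zeta$ by $-\zeta$ shows $-u$ also minimizes the Dirichlet energy), so it suffices to treat the case in which $u$ is bounded below. Setting $v \coloneqq u - \inf_S u$ yields a non-negative carpet-harmonic function (constants do not affect oscillations, nor admissible variations) with $\inf_S v = 0$. Proving $v \equiv 0$ will give that $u$ is constant.

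Next, I would verify that Harnack's inequality, Theorem \ref{harmonic:Harnack}, applies on every ball $B_0 = B(0,R)\subset \C$. Since $\Omega=\C$, the containment $c_1 B_1 \subset\subset \C$ is automatic for any bounded $B_1$. For the remaining hypothesis I need a ball $B_1$ containing every peripheral disk meeting $c_0 B_0$. By Lemma \ref{harmonic:Fatness consequence}, applied to the compact set $\br{c_0 B_0}$, only finitely many peripheral disks $Q_i$ meeting $c_0 B_0$ have diameter exceeding any fixed $\varepsilon>0$, so
\[
D \coloneqq \sup\{\diam(Q_i):Q_i\cap c_0 B_0\neq \emptyset\}<\infty,
\]
and a concentric ball $B_1$ of sufficiently large radius contains all such $Q_i$. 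Fixing, say, $c_0=c_1=2$, Theorem \ref{harmonic:Harnack} produces a constant $H>1$ depending only on the data of $S$ such that
\[
\sup_{z\in S\cap B(0,R)} v(z)\leq H \inf_{z\in S\cap B(0,R)}v(z) \qquad \text{for every } R>0.
\]

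Finally, I would let $R\to \infty$. Since $\inf_S v=0$, for each $\varepsilon>0$ there exists a point $x\in S$ with $v(x)<\varepsilon/H$. For any fixed $R_0>0$, choose $R\geq R_0$ large enough that $x\in B(0,R)$; then by monotonicity of sup and Harnack's inequality,
\[
\sup_{z\in S\cap B(0,R_0)} v(z)\leq \sup_{z\in S\cap B(0,R)} v(z)\leq H\cdot v(x)<\varepsilon.
\]
As $\varepsilon$ and $R_0$ are arbitrary, $v\equiv 0$ on $S$, so $u$ is constant. I do not expect any serious obstacle: the one step that requires attention is checking that the (slightly technical) geometric hypothesis of Theorem \ref{harmonic:Harnack} holds on every ball in $\C$, which is precisely where the uniform fatness of the peripheral disks (through Lemma \ref{harmonic:Fatness consequence}) enters.
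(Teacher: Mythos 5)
Your argument is correct, but it takes a genuinely different route from the paper. The paper proves this corollary directly from Proposition \ref{harmonic:Harnack-Proposition}, the integrated estimate $\sum_{i\in \N} m_{Q_i}(\zeta)^2\osc_{Q_i}(u)^2/\bigl(M_{Q_i}(u)m_{Q_i}(u)\bigr)\leq C\,D_\Omega(\zeta)$ for positive carpet-harmonic $u$: after reducing to $u>0$ (via $1+u-\inf u$ or $1+\sup u-u$), it inserts the discrete-logarithm test function from the proof of Theorem \ref{harmonic:Liouville Weak}, which equals $1$ on $B(0,R_0)$ and has arbitrarily small energy, and concludes $\osc_{Q_i}(u)=0$ for every $i$, whence $u$ is constant by Lemma \ref{harmonic:4-zero oscillation lemma}. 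This keeps the corollary independent of the full Harnack inequality, whose proof is only completed afterwards (it additionally requires Lemma \ref{harmonic:Harnack-Lemma} and the Gehring-type oscillation Lemma \ref{harmonic:Harnack-Oscillation}). You instead invoke the full Theorem \ref{harmonic:Harnack} and run the classical growing-balls argument; this is legitimate and not circular, since nothing in the proof of Theorem \ref{harmonic:Harnack} uses Liouville, and your verification of its geometric hypothesis via Lemma \ref{harmonic:Fatness consequence} --- together with the observation that $H$ depends only on the data and on the fixed $c_0,c_1$, hence is uniform in $R$ --- is precisely the step that needed care. The trade-off: your proof is the transparent textbook argument but leans on the heaviest theorem of the section, while the paper's version gets by with the lighter Caccioppoli-type proposition at the cost of reusing the ad hoc logarithmic cutoff and arguing at the level of oscillations rather than of function values.
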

\begin{proof}
We first reduce the statement to the case that $u>0$. If $u$ is bounded above, then we can replace it with the carpet-harmonic function $1+\sup_{z\in S}u(z)-u$, and showing that this is constant will imply that $u$ is constant. Similarly, if $u$ is bounded below, then we use $1+u-\inf_{z\in S} u(z)$. Hence, from now on we assume that $u>0$.

We fix a ball $B(0,R_0)$ and consider a test function $\zeta$ such that $0\leq \zeta\leq 1$, $\zeta=1$ on $S\cap B(0,R_0)$, and $D_{\C}(\zeta)<\varepsilon$ where $\varepsilon>0$ can be arbitrarily small. Such a function is constructed in the proof of Theorem \ref{harmonic:Liouville Weak}. Then for all $Q_i\subset B(0,R_0)$ we have $m_{Q_i}(\zeta)=1$. Hence, Proposition \ref{harmonic:Harnack-Proposition} yields
\begin{align*}
\sum_{i:Q_i\subset B(0,R_0)} \frac{\osc_{Q_i}(u)^2}{M_{Q_i}(u)m_{Q_i}(u)} \leq CD_{\C}(\zeta)<C\varepsilon.
\end{align*}
Letting $\varepsilon\to 0$ we obtain $\osc_{Q_i}(u)=0$ for all $Q_i\subset B(0,R_0)$. Since $R_0$ was arbitrary, it follows that $\osc_{Q_i}(u)=0$ for all $i\in \N$, thus $u$ is constant by Lemma \ref{harmonic:4-zero oscillation lemma}(a).
\end{proof}

We continue our preparation for the proof of Harnack's inequality. From Proposition \ref{harmonic:Harnack-Proposition} we derive the next lemma:
\begin{lemma}\label{harmonic:Harnack-Lemma}
Let $u\colon S\to \R$ be a positive carpet-harmonic function. Consider a ball $B_1\subset c_1B_1 \subset\subset \Omega$ for some $c_1>1$. Then 
\begin{align*}
\sum_{i:Q_i\subset B_1} \frac{\osc_{Q_i}(u)^2}{M_{Q_i}(u)m_{Q_i}(u)}\leq C,
\end{align*}
where the constant $C>0$ depends only on the data of the carpet $S$ and on $c_1$, but not on $u,B_1$.
\end{lemma}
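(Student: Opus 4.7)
The plan is to apply Proposition \ref{harmonic:Harnack-Proposition} with a carefully chosen nonnegative cutoff test function $\zeta$ that is identically $1$ on $B_1$, so that $m_{Q_i}(\zeta)=1$ whenever $Q_i\subset B_1$, while having Dirichlet energy bounded only in terms of the data of $S$ and $c_1$. The natural candidate is a radial Lipschitz ``tent'' function that interpolates linearly between $1$ on $B_1$ and $0$ outside $c_1B_1$: writing $B_1=B(x_0,r)$, take
\[
\zeta(x)\coloneqq \min\Bigl\{1,\max\Bigl\{0,\tfrac{c_1 r-|x-x_0|}{(c_1-1)r}\Bigr\}\Bigr\}.
\]
Then $\zeta$ is $L$-Lipschitz with $L=1/((c_1-1)r)$, supported in $c_1B_1\subset\subset\Omega$, nonnegative, and $\zeta\equiv 1$ on $B_1$. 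By Example \ref{harmonic:3-Example-Lipschitz} its restriction to $S$ lies in the Sobolev space, and it is a legitimate test function.

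Next I would estimate $D_\Omega(\zeta)$. Since $\zeta$ is $L$-Lipschitz and supported in $c_1B_1$, any peripheral disk contributing to the energy satisfies $Q_i\cap c_1B_1\neq\emptyset$ and $\osc_{Q_i}(\zeta)\leq L\diam(Q_i)$. Enlarging $c_1B_1$ slightly to a ball $B_1'\subset\subset\Omega$ that contains all such $Q_i$ (this is fine because only finitely many $Q_i$ intersecting $c_1B_1$ can have diameter comparable to $r$, by Lemma \ref{harmonic:Fatness consequence}) and invoking the fatness assumption \eqref{harmonic:1-Fat sets},
\[
D_\Omega(\zeta)\leq L^2\sum_{i:Q_i\cap c_1B_1\neq\emptyset}\diam(Q_i)^2\leq \frac{L^2}{K_1}\sum_{i}\mathcal H^2(Q_i\cap B_1')\leq \frac{L^2}{K_1}\mathcal H^2(B_1')\leq \frac{C_0}{K_1(c_1-1)^2},
\]
where $C_0$ is a universal constant. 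This bound depends only on $K_1$ and $c_1$, not on $r$ or $B_1$.

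Finally, since $Q_i\subset B_1$ implies $\partial Q_i\subset B_1$ and therefore $m_{Q_i}(\zeta)=M_{Q_i}(\zeta)=1$, Proposition \ref{harmonic:Harnack-Proposition} applied to $u$ and $\zeta$ gives
\[
\sum_{i:Q_i\subset B_1}\frac{\osc_{Q_i}(u)^2}{M_{Q_i}(u)m_{Q_i}(u)}\leq \sum_{i\in\N}\frac{m_{Q_i}(\zeta)^2\osc_{Q_i}(u)^2}{M_{Q_i}(u)m_{Q_i}(u)}\leq C\, D_\Omega(\zeta)\leq \frac{CC_0}{K_1(c_1-1)^2},
\]
which is the desired bound with a constant depending only on $K_1$ and $c_1$. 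I do not expect significant obstacles here: the only delicate point is verifying that the energy estimate using fatness is legitimate, which is routine given Corollary \ref{harmonic:Fatness corollary} and Example \ref{harmonic:3-Example-Lipschitz}; the whole argument is a direct ``cutoff function'' application of Proposition \ref{harmonic:Harnack-Proposition}.
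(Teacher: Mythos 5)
Your overall strategy is exactly the paper's: test Proposition \ref{harmonic:Harnack-Proposition} with a radial cutoff equal to $1$ on $B_1$ and $0$ outside $c_1B_1$, and reduce everything to a uniform bound on $D_\Omega(\zeta)$. The application of the proposition is fine. The gap is in the energy estimate. You bound $\osc_{Q_i}(\zeta)\leq L\diam(Q_i)$ with $L=1/((c_1-1)r)$ and then sum $\diam(Q_i)^2$ over all $Q_i$ meeting $c_1B_1$, controlling that sum by $\mathcal H^2(B_1')$ for a ball $B_1'$ containing all such $Q_i$. But a peripheral disk meeting $c_1B_1$ can have diameter vastly larger than $r$ (its size is not controlled by the data), so $B_1'$ may have radius $R'\gg c_1 r$, and your final bound is really $\pi (R')^2/(K_1(c_1-1)^2r^2)$, which is \emph{not} uniform in $B_1$. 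The parenthetical remark that only finitely many large disks meet $c_1B_1$ does not help: finitely many terms of size $L^2\diam(Q_i)^2\gg 1$ each already destroy the claimed constant, and note that for such disks the bound $L\diam(Q_i)$ even exceeds the trivial bound $\osc_{Q_i}(\zeta)\leq 1$.

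The fix (and the paper's argument) is to exploit that $\zeta$ is radial: its oscillation on $\partial Q_i$ is at most the slope times the \emph{radial extent} $d(Q_i)\coloneqq\mathcal H^1(\{s\in[r,c_1r]:\partial B(x_0,s)\cap Q_i\neq\emptyset\})$ of $Q_i$ inside the transition annulus, not times the full diameter. By connectedness of $Q_i$ and Remark \ref{harmonic:Remark:Fatness implication}, fatness gives $d(Q_i)^2\leq K\,\mathcal H^2(Q_i\cap c_1B_1)$, so
\begin{align*}
D_\Omega(\zeta)\leq \frac{1}{(c_1-1)^2r^2}\sum_{i\in I_{A_1}} d(Q_i)^2\leq \frac{K}{(c_1-1)^2r^2}\,\mathcal H^2(c_1B_1)=\frac{K\pi c_1^2}{(c_1-1)^2},
\end{align*}
which is uniform in $B_1$ and also dispenses with the auxiliary ball $B_1'$ entirely. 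With this replacement your argument goes through.
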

\begin{proof}
We apply Proposition \ref{harmonic:Harnack-Proposition} to a test function $\zeta$ defined as follows. We set $\zeta=1$ on $B_1$, $\zeta=0$ outside $c_1B_1$, and $\zeta$ is radial with slope $\frac{1}{(c_1-1)r}$ on the annulus $A_1\coloneqq c_1B_1\setminus \br B_1 $, where $r$ is the radius of $B_1$. Then $\zeta$ is Lipschitz so by Example \ref{harmonic:3-Example-Lipschitz} it restricts to a Sobolev function. We only have to show that $D_\Omega(\zeta)$ is bounded by a constant depending only on $c_1$ and the data of $S$. Our computation is very similar to the proof of Theorem \ref{harmonic:Liouville Weak}. We introduce the notation $d(Q_i)=\mathcal H^1(\{s\in [r,c_1r]:\gamma_s\cap Q_i\neq \emptyset\})$ and note that $d(Q_i)^2 \leq K\mathcal H^2(Q_i\cap c_1B_1)$ for $i\in \N$ by the fatness condition and Remark \ref{harmonic:Remark:Fatness implication}. Then
\begin{align*}
D_{\Omega}(\zeta)&= \sum_{i\in I_{A_1}} \osc_{Q_i}(\zeta)^2 \leq \frac{1}{(c_1-1)^2r^2} \sum_{i\in I_{A_1}} d(Q_i)^2\\
&\leq \frac{K}{(c_1-1)^2r^2} \mathcal H^2(c_1B_1)= \frac{ K c_1^2\pi}{(c_1-1)^2},
\end{align*}
as claimed. Note that this constant blows up as $c_1\to 1$.
\end{proof}

Next we prove a version of Gehring's oscillation lemma (see e.g.\ \cite[Lemma 3.5.1, p.~65]{AstalaIwaniecMartin:quasiconformal}). A function $v\colon S\to \R$ is said to be \textit{monotone}\index{monotone function} if it satisfies the maximum and minimum principles as in the statement of Theorem \ref{harmonic:Maximum Principle}.

\begin{lemma}\label{harmonic:Harnack-Oscillation}\index{Gehring's oscillation lemma}
Let $v\colon S\to \R$ be a continuous monotone function on the relative Sierpi\'nski carpet $(S,\Omega)$, lying in the Sobolev space $\mathcal W^{1,2}_{\loc}(S)$. Consider a ball $B_0\subset \Omega$ with $B_0\subset c_0B_0\subset \subset \Omega$, where $c_0>1$. Then
\begin{align*}
\sup_{z\in S\cap B_0}v(z)-\inf_{z\in S\cap B_0}v(z) \leq C \left(\sum_{i\in I_{c_0B_0}} \osc_{Q_i}(v)^2 \right)^{1/2},
\end{align*}
where the constant $C>0$ depends only on the data of the carpet $S$ and on $c_0$, but not on $v,B_0$.
\end{lemma}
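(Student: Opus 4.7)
\medskip

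The plan is to reduce the oscillation of $v$ on $B_0$ to the oscillation of $v$ along a concentric circle $\gamma_r = \partial B(x_0, r)$ with $r \in (r_0, c_0 r_0)$, where $x_0$ is the center of $B_0$ and $r_0$ its radius, and then to integrate over $r$ and apply Cauchy--Schwarz together with the fatness of the peripheral disks. If $S\cap B_0=\emptyset$ there is nothing to prove, so I will assume otherwise; moreover I may assume there is a point of $S$ outside $B(x_0, c_0 r_0)$ (the opposite case can be handled by enlarging the family of circles or treated directly, since $S$ is then bounded).

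First I would show that for a.e.\ $r \in (r_0, c_0 r_0)$ the circle $\gamma_r$ is a good path for $v$, i.e.\ the upper gradient inequality for $v$ holds along $\gamma_r$ and along all its subarcs. This follows from the standard co-area argument used in the proof of Lemma \ref{harmonic:Paths joining continua}, applied to the $1$-Lipschitz function $\psi(z) = |z - x_0|$: the exceptional family for $v$ has conformal modulus zero by Lemma \ref{harmonic:2-weak modulus zero implies two modulus zero}, and therefore meets the level sets of $\psi$ in a set of $r$-measure zero. Fix such a good radius $r$ and set $V_r = B(x_0, r)$, so that $V_r \subset\subset \Omega$ and $\partial_* V_r \subset S \cap \gamma_r$. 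By the assumption that $S$ extends past $V_r$ and by Remark \ref{harmonic:Properties of W}(b) the set $\partial_* V_r$ is non-empty. The monotonicity of $v$, applied to $V_r$, gives
\[
\sup_{z\in S\cap B_0} v(z) \;\leq\; \sup_{z \in S\cap \br{V_r}} v(z) \;=\; \sup_{z\in \partial_*V_r} v(z) \;\leq\; \sup_{z \in S \cap \gamma_r} v(z),
\]
and similarly for the infimum. Since $v$ is continuous and $S\cap \gamma_r$ is compact, the sup and inf are attained at points $x_r, y_r \in S\cap \gamma_r$; applying the upper gradient inequality along the subarc of $\gamma_r$ connecting them yields
\[
\sup_{S\cap B_0} v - \inf_{S\cap B_0} v \;\leq\; v(x_r)-v(y_r) \;\leq\; A(r) \;\coloneqq \sum_{i:\, Q_i\cap \gamma_r \neq\emptyset} \osc_{Q_i}(v).
\]

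Next I would integrate this inequality over $r \in [r_0, c_0 r_0]$. Setting $\Theta \coloneqq \sup_{S\cap B_0} v - \inf_{S\cap B_0} v$,
\[
(c_0-1)r_0 \cdot \Theta \;\leq\; \int_{r_0}^{c_0 r_0} A(r)\, dr \;=\; \sum_{i\in I_{c_0 B_0}} \osc_{Q_i}(v) \cdot d(Q_i),
\]
where $d(Q_i) \coloneqq \mathcal H^1(\{r\in[r_0,c_0 r_0] : \gamma_r\cap Q_i\neq \emptyset\})$. The fatness condition, via Remark \ref{harmonic:Remark:Fatness implication}, gives $d(Q_i)^2 \leq K\, \mathcal H^2(Q_i\cap c_0 B_0)$ for a constant $K$ depending only on $K_1$, and the disjointness of the $Q_i$ yields
\[
\sum_{i\in I_{c_0 B_0}} d(Q_i)^2 \;\leq\; K\, \mathcal H^2(c_0 B_0) \;=\; K\pi (c_0 r_0)^2.
\]
Applying the Cauchy--Schwarz inequality,
\[
(c_0-1)r_0 \cdot \Theta \;\leq\; \Bigl(\sum_{i\in I_{c_0 B_0}} \osc_{Q_i}(v)^2\Bigr)^{1/2} \cdot \sqrt{K\pi}\,c_0 r_0,
\]
which yields the asserted inequality with $C = \sqrt{K\pi}\, c_0/(c_0-1)$, depending only on the data and on $c_0$.

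The main technical point is ensuring that the monotonicity reduction to $\partial_* V_r$ is valid for a suitable circle; this is a matter of applying the co-area trick to produce good $r$ simultaneously avoiding the exceptional family of $v$ and satisfying the appropriate non-degeneracy (which may fail only on a null set). Once that is in place the calculation is a classical Gehring-type argument, with the fatness of the peripheral disks playing the role ordinarily played by the Euclidean area estimate.
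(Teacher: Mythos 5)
Your proposal is correct and follows essentially the same route as the paper: monotonicity reduces the oscillation on $B_0$ to the oscillation on $\partial_*(sB_0)$, the upper gradient inequality along a.e.\ circle controls that by $\sum_{i:Q_i\cap\partial(sB_0)\neq\emptyset}\osc_{Q_i}(v)$, and integrating in the radius followed by Cauchy--Schwarz and the fatness estimate $d(Q_i)^2\leq K\mathcal H^2(Q_i\cap c_0B_0)$ gives the bound with the same constant $\sqrt{K\pi}\,c_0/(c_0-1)$. The extra caveat about $S$ extending past $c_0B_0$ is unnecessary, since $V\subset\subset\Omega$ already forces $\partial_*V\neq\emptyset$ by Remark \ref{harmonic:Properties of W}(b), but it does no harm.
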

\begin{proof}
For any $x,y\in S\cap B_0$, by monotonicity we have
\begin{align}\label{harmonic:Gehring1}
v(x)-v(y) &\leq \sup_{z\in \partial_*(sB_0)}v(z)- \inf_{z\in \partial_*(sB_0)}v(z)
\end{align}
for all $s\in [1,c_0]$. For a.e.\ $s\in [1,c_0]$ the upper gradient inequality for $v$ yields
\begin{align}\label{harmonic:Gehring2}
\sup_{z\in \partial_*(sB_0)}v(z)- \inf_{z\in \partial_*(sB_0)}v(z) \leq \sum_{i:Q_i\cap \partial(sB_0)\neq \emptyset} \osc_{Q_i}(v).
\end{align}
Here we used the fact that the circular path $\partial(sB_0)$ is non-exceptional for a.e.\ $s\in [1,c_0]$, which follows from the proof of Lemma \ref{harmonic:Paths joining continua}. If we write $B_0=B(x_0,r)$, then \eqref{harmonic:Gehring1} and \eqref{harmonic:Gehring2} imply that
\begin{align*}
\sup_{z\in S\cap B_0}v(z)-\inf_{z\in S\cap B_0}v(z) \leq \sum_{i:Q_i\cap \partial B(x_0,s)\neq \emptyset} \osc_{Q_i}(v)
\end{align*}
for a.e.\ $s\in [r,c_0r]$. We now integrate over $s\in [r,c_0r]$ so we obtain
\begin{align*}
r(c_0-1)\left(\sup_{z\in S\cap B_0}v(z)-\inf_{z\in S\cap B_0}v(z)\right) &\leq \int_{r}^{c_0r} \sum_{i:Q_i\cap \partial B(x_0,s)\neq \emptyset} \osc_{Q_i}(v) \,ds\\
&= \sum_{i\in \N} \osc_{Q_i}(v) \int_{r}^{c_0r} \x_{Q_i\cap \partial B(x_0,s)} \,ds\\
&\leq \sum_{i: Q_i\cap c_0B_0\neq \emptyset} \osc_{Q_i}(v) d(Q_i),
\end{align*}
where $d(Q_i)=\mathcal H^1(\{s\in [r,c_0r]: \gamma_s\cap Q_i\neq \emptyset\})$ and $\gamma_s$ is a circular path around $x_0$ with radius $s$. As usual, by the fatness of the peripheral disks (see Remark \ref{harmonic:Remark:Fatness implication}) there exists a uniform constant $K$ such that $d(Q_i)^2\leq K\mathcal H^2(Q_i\cap c_0B_0)$ for all $i\in \N$. Now, applying Cauchy-Schwarz we obtain
\begin{align*}
\sup_{z\in S\cap B_0}v(z)-\inf_{z\in S\cap B_0}v(z) &\leq \frac{1}{r(c_0-1)} \left(\sum_{i\in I_{c_0B_0}} \osc_{Q_i}(v)^2 \right)^{1/2} \left(\sum_{i\in I_{c_0B_0}} d(Q_i)^2 \right)^{1/2}\\
&\leq \frac{1}{r(c_0-1)} \left(\sum_{i\in I_{c_0B_0}} \osc_{Q_i}(v)^2 \right)^{1/2} \mathcal (K\mathcal H^2(c_0B_0))^{1/2}\\
&\leq \frac{K^{1/2}\pi^{1/2}c_0}{c_0-1}\left(\sum_{i\in I_{c_0B_0}} \osc_{Q_i}(v)^2 \right)^{1/2}.
\end{align*}
This completes the proof.
\end{proof}

Finally we proceed with the proof of Harnack's inequality.
\begin{proof}[Proof of Theorem \ref{harmonic:Harnack}]
Replacing $u$ with $u+\delta$ for a small $\delta>0$ and noting that the conclusion of the theorem persists if we let $\delta\to 0$, we may assume that $u\geq \delta>0$.

The function $v\coloneqq  \log u$ is a continuous monotone function, since $u$ has these properties and $\log$ is an increasing function. Also, $v$ lies in the Sobolev space $\mathcal W^{1,2}_{\loc}(S)$, since
\begin{align*}
|v|&\leq \max\{ |u|, |\log \delta|\}, \quad \textrm{and}\\
|v(x)-v(y)|&= \left| \log \frac{u(x)}{u(y)} \right| \leq |u(x)-u(y)|/\delta.
\end{align*} 
The latter inequality shows that $v$ inherits its upper gradient inequality from $u$. In fact, we have
\begin{align}\label{harmonic:Harnack-logu inequality}
\osc_{Q_i}(v)=\osc_{Q_i}(\log u) \leq  \frac{\osc_{Q_i}(u)}{M_{Q_i}(u)^{1/2}m_{Q_i}(u)^{1/2}},
\end{align}
as one can see from the elementary inequality $\log(a/b)\leq \frac{a-b}{(ab)^{1/2}}$ for $a\geq b>0$.

Now, applying Lemma \ref{harmonic:Harnack-Oscillation}, and then \eqref{harmonic:Harnack-logu inequality} and Lemma \ref{harmonic:Harnack-Lemma} one has
\begin{align*}
\sup_{z\in S\cap B_0}v(z)-\inf_{z\in S\cap B_0}v(z) &\leq C \left(\sum_{i\in I_{c_0B_0}} \osc_{Q_i}(v)^2 \right)^{1/2}\\
&\leq C \left(\sum_{i:Q_i\subset B_1} \osc_{Q_i}(v)^2 \right)^{1/2}\\
&\leq C \left(\sum_{i:Q_i\subset B_1} \frac{\osc_{Q_i}(u)^2}{M_{Q_i}(u)m_{Q_i}(u)} \right)^{1/2}\\
&\leq C'.
\end{align*}
Here we used the assumption that 
\begin{align*}
\bigcup_{i\in I_{c_0B_0}}Q_i \subset  B_1 \subset c_1B_1\subset \subset \Omega.
\end{align*}
Therefore
\begin{align*}
\log\left( \frac{\sup_{z\in S\cap B_0}u(z)}{\inf_{z\in S\cap B_0}u(z)}\right)=\sup_{z\in S\cap B_0}v(z)-\inf_{z\in S\cap B_0}v(z)\leq C',
\end{align*}
thus
\begin{align*}
\sup_{z\in S\cap B_0}u(z)\leq e^{C'}\inf_{z\in S\cap B_0}u(z).
\end{align*}
The constant $e^{C'}$ depends only on the data of the carpet $S$ and on $c_0,c_1$.
\end{proof}

We record an application of the oscillation Lemma \ref{harmonic:Harnack-Oscillation}.
\begin{corollary}\label{harmonic:Liouville corollary}
Let $(S,\C)$ be a relative Sierpi\'nski carpet, and $u\colon S\to \R$ a carpet-harmonic function with finite energy, i.e., $D_\C (u)<\infty$. Then $u$ is constant.
\end{corollary}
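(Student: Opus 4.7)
The plan is to derive a uniform oscillation bound on all balls and then invoke the strong Liouville theorem (Corollary \ref{harmonic:Liouville-Strong}). First I would observe that $u$ qualifies as a monotone continuous Sobolev function: continuity comes from Theorem \ref{harmonic:4-continuous}, membership in $\mathcal W^{1,2}_{\loc}(S)$ is immediate from the definition of carpet-harmonicity, and monotonicity (i.e.\ the max/min principle from the definition in Section \ref{harmonic:Section Harnack}) is exactly Theorem \ref{harmonic:Maximum Principle}, which applies here because $\Omega=\C$ makes every ball $B_0$ satisfy $B_0\subset\subset\Omega$.

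Next I would apply Gehring's oscillation lemma (Lemma \ref{harmonic:Harnack-Oscillation}) to $u$ on the ball $B_0=B(0,R)$ with, say, the fixed choice $c_0=2$. Since $\Omega=\C$, the hypothesis $c_0 B_0\subset\subset\Omega$ is automatic for every $R>0$, and the lemma yields
\begin{align*}
\sup_{z\in S\cap B(0,R)} u(z)-\inf_{z\in S\cap B(0,R)} u(z) \leq C\left(\sum_{i\in I_{2B_0}}\osc_{Q_i}(u)^2\right)^{1/2}\leq C\, D_{\C}(u)^{1/2},
\end{align*}
where the constant $C$ depends only on the data of the carpet and on $c_0=2$, and in particular is independent of $R$. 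This is the crucial point: inspection of the proof of Lemma \ref{harmonic:Harnack-Oscillation} confirms that the constant does not depend on the radius $r$ of $B_0$ (it is $K^{1/2}\pi^{1/2}c_0/(c_0-1)$ times geometric constants).

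Letting $R\to\infty$, we conclude that $u$ is bounded on all of $S$ by a finite quantity depending only on $D_{\C}(u)<\infty$ and on the data of the carpet. Therefore the strong Liouville theorem (Corollary \ref{harmonic:Liouville-Strong}) applies and forces $u$ to be constant, which completes the proof.

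I do not anticipate a real obstacle here; the only subtlety is checking that the constant in Lemma \ref{harmonic:Harnack-Oscillation} is genuinely independent of the radius of $B_0$, so that the oscillation bound survives the passage $R\to\infty$ and delivers boundedness. Once boundedness is in hand, the conclusion is an immediate citation of Corollary \ref{harmonic:Liouville-Strong}.
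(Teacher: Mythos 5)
Your proposal is correct and is essentially identical to the paper's own argument: apply Lemma \ref{harmonic:Harnack-Oscillation} on arbitrary balls with a radius-independent constant to get $\osc_{S\cap B_0}(u)\leq C\,D_\C(u)^{1/2}$, conclude boundedness, and invoke Corollary \ref{harmonic:Liouville-Strong}. Your extra checks (continuity, monotonicity via Theorem \ref{harmonic:Maximum Principle}, and the form of the constant) are all valid and merely make explicit what the paper leaves implicit.
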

\begin{proof}
By Lemma \ref{harmonic:Harnack-Oscillation}, for any ball $B_0\subset \C$ we have 
\begin{align*}
\sup_{z\in S\cap B_0}u(z)-\inf_{z\in S\cap B_0}u(z)\leq C \left( \sum_{i\in I_{2B_0}} \osc_{Q_i}(u)^2 \right)^{1/2}\leq C D_\C(u)^{1/2}.
\end{align*}
The ball $B_0$ is arbitrary, so it follows that $u$ is bounded, and therefore it is constant by Liouville's Theorem \ref{harmonic:Liouville-Strong}.
\end{proof}

\subsection{Strong maximum principle}
Using Harnack's inequality we prove a strong maximum principle:
\begin{theorem}\label{harmonic:Strong maximum principle}\index{maximum principle!strong}
Let $u\colon S\to \R$ be a carpet-harmonic function. Assume that $u$ attains a maximum or a minimum at a point $x_0\in S$. Then $u$ is constant.
\end{theorem}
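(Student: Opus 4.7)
The plan is to establish the strong maximum principle via a classical open-closed argument, using Harnack's inequality (Theorem \ref{harmonic:Harnack}) as the engine. By replacing $u$ with $-u$ if necessary, we may assume that $u$ attains its maximum $M$ at $x_0\in S$. The function $-u$ is carpet-harmonic because $\osc_{Q_i}(-u)=\osc_{Q_i}(u)$ and $D_V$ depends only on these oscillations; similarly, the shifted function $v\coloneqq M-u$ satisfies $\osc_{Q_i}(v)=\osc_{Q_i}(u)$ and $M_{Q_i}(|v|)\leq|M|+M_{Q_i}(|u|)$, so $v$ lies in the same Sobolev space as $u$, and the substitution $\zeta\mapsto-\zeta$ in Definition \ref{harmonic:4-Def carpet harmonic} shows that $v$ minimizes $D_V$ against compactly supported perturbations. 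Hence $v$ is a non-negative carpet-harmonic function with $v(x_0)=0$.

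Define $E\coloneqq\{x\in S : v(x)=0\}$. Then $E$ is non-empty and is relatively closed in $S$ by continuity of $v$ (Theorem \ref{harmonic:4-continuous}). The crucial step is to show that $E$ is relatively open in $S$. Fix $x\in E$. I would first choose a ball $B_0=B(x,r)$ with $r$ small enough that Harnack's inequality applies, i.e.\ so that there exists a concentric ball $B_1$ with
\begin{align*}
\bigcup_{i:Q_i\cap c_0B_0\neq\emptyset} Q_i\ \subset\ B_1\ \subset\ c_1 B_1\ \subset\subset\ \Omega
\end{align*}
for fixed constants $c_0,c_1>1$. If $x\in S^\circ$, then Lemma \ref{harmonic:Fatness consequence} provides only finitely many peripheral disks of diameter bounded below meeting a neighborhood of $x$, and each such disk has positive distance from $x$, so for small $r$ the ball $c_0B_0$ meets only peripheral disks of arbitrarily small diameter, which fit inside a slightly larger ball $B_1$. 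If instead $x\in\partial Q_{i_0}$, then $Q_{i_0}$ is compactly contained in $\Omega$ by the definition of a relative Sierpi\'nski carpet, so we may inflate $B_1$ to contain $\br Q_{i_0}$ while shrinking $r$ to exclude from $c_0B_0$ every other peripheral disk of non-negligible diameter.

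With such a $B_0$ in hand, applying Harnack's inequality to the non-negative carpet-harmonic function $v$ on $B_0$ yields
\begin{align*}
\sup_{z\in S\cap B_0}v(z)\leq H\inf_{z\in S\cap B_0}v(z).
\end{align*}
Since $x\in S\cap B_0$ and $v(x)=0$, the infimum equals $0$, and therefore $v\equiv 0$ on the relative neighborhood $S\cap B_0$ of $x$ in $S$. This shows that $E$ is open in $S$. Finally, $S$ is path-connected by Lemma \ref{harmonic:Paths in S^o}(b), hence connected, so $E=S$ and $u\equiv M$ is constant. The main technical obstacle in the proof is the geometric bookkeeping in the construction of the ball $B_0$ when $x$ lies on a peripheral circle, but this reduces to routine applications of Lemma \ref{harmonic:Fatness consequence} together with the compact containment of each peripheral disk in $\Omega$; once the ball is selected, Harnack's inequality does all the work.
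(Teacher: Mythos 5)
Your reduction to a non-negative carpet-harmonic function $v=M-u$ vanishing at $x_0$, and your treatment of points $x\in E\cap S^\circ$ via Harnack's inequality, are sound and coincide with the paper's argument (the paper phrases the propagation as a Harnack chain along a path in $S^\circ$ plus density of $S^\circ$, rather than an open--closed argument, but these are equivalent). The gap is in the case $x\in\partial Q_{i_0}$. The hypothesis of Theorem \ref{harmonic:Harnack} requires a Euclidean ball $B_1\subset\Omega$ with $c_1B_1\subset\subset\Omega$ that contains \emph{every} peripheral disk meeting $c_0B_0$ --- in particular $Q_{i_0}$ itself, since any ball centered at $x\in\partial Q_{i_0}$ meets $Q_{i_0}$. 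Compact containment of $Q_{i_0}$ in $\Omega$ does not produce such a ball: a ball containing $Q_{i_0}$ has radius at least $\tfrac{1}{2}\diam(Q_{i_0})$, and nothing prevents $\dist(Q_{i_0},\partial\Omega)$ from being much smaller than $\diam(Q_{i_0})$. For instance, if $\Omega$ is a long thin rectangle and $Q_{i_0}$ is an elongated ellipse stretched along it (still a $K_0$-quasiball for a moderate $K_0$), then no Euclidean ball containing $Q_{i_0}$ fits inside $\Omega$ at all, so the ``inflate $B_1$'' step cannot be carried out and Harnack's inequality is simply not applicable at such $x$. Your open set $E$ therefore need not be open at points lying on peripheral circles, and the connectedness argument breaks.

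The paper handles this case by a different device: given $x_0\in\partial Q_{i_0}$, Lemma \ref{harmonic:Paths in S^o}(a) provides a Jordan curve $\gamma\subset S^\circ$ enclosing $Q_{i_0}$ in a Jordan region $V\subset\subset\Omega$; the (weak) maximum principle, Theorem \ref{harmonic:Maximum Principle}, applied to $V$ gives a point $x_0'\in\gamma=\partial_*V$ with $u(x_0')=\sup_{S\cap\br V}u=u(x_0)$, and since $x_0'\in S^\circ$ one is reduced to the interior case already settled. If you insert this transfer step (and correspondingly run your open--closed argument on $E\cap S^\circ$ inside the path-connected set $S^\circ$, concluding on all of $S$ by density and continuity), your proof is complete.
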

\begin{proof}Using $-u$ instead of $u$ if necessary, we assume that $x_0$ is a point of maximum. First assume that $x_0\in S^\circ$, i.e., $x_0$ does not lie on any peripheral circle. Let $v\coloneqq u(x_0)-u$ which is a non-negative carpet-harmonic function. Then using Lemma \ref{harmonic:Fatness consequence} one can find small balls $B_0,B_1$ centered at $x_0$ such that 
\begin{align*}
\bigcup_{i:Q_i\cap 2B_0 \neq \emptyset}Q_i\subset B_1\subset 2B_1 \subset \subset \Omega.
\end{align*}
Applying Harnack's inequality inside $B_0$ we obtain
\begin{align*}
\sup_{z\in S\cap B_0}v(z)\leq C\min_{z\in S\cap B_0}v(z)=0.
\end{align*}
Thus, $u(z)=u(x_0)$ for $z\in S\cap B_0$.

Now, if $y_0\in S^\circ $ is arbitrary, then by Lemma \ref{harmonic:Paths in S^o} one can find a path $\gamma \subset S^\circ$ that connects $y_0$ to $x_0$. For each point $y\in \gamma$ there exists a small ball $B_y\subset \Omega$ where Harnack's inequality can be applied. By compactness, there are finitely many balls $B_{y_i}$, $i=1,\dots,N$, that cover the path $\gamma$ and form a \textit{Harnack-chain}: $B_{y_i}$ intersects $B_{y_{i+1}}$ and Harnack's inequality can be applied to each ball $B_{y_i}$. The argument in the previous paragraph yields that $u$ is constant on each ball, thus $u(y_0)=u(x_0)$. Since $S^\circ $ is dense in $S$, by continuity it follows that $u$ is constant.

Now, we treat the case that $x_0\in \partial Q_{i_0}$ for some $i_0\in \N$. Then by Lemma \ref{harmonic:Paths in S^o} there exists Jordan curve $\gamma\subset S^\circ$ ``surrounding" a Jordan region $V$ containing $Q_{i_0}$. By the maximum principle (Theorem \ref{harmonic:Maximum Principle}), there exists a point $x_0'\in \gamma =\partial_*V$ such that $u(x_0')=\sup_{z\in S\cap \br{V}}u(z) =u(x_0)$. Since $x_0'\in S^\circ$, it follows that $u$ is constant by the previous case. 
\end{proof}

\section{Equicontinuity and convergence}\label{harmonic:Section Equicontinuity}\index{equicontinuity}
Finally, we establish the local equicontinuity of carpet-harmonic functions and ensure that limits of harmonic functions are harmonic. In all statements the underlying relative Sierpi\'nski carpet is $(S,\Omega)$.

\begin{theorem}\label{harmonic:Equicontinuity}
Let $V\subset \subset U\subset\subset \Omega$ be open sets and $M>0$ be a constant. For each $\varepsilon>0$ there exists $\delta>0$ such that if $u$ is a carpet-harmonic function with $D_U(u) \leq M$, then for all points $x,y\in S\cap V$ with $|x-y|<\delta$ we have $|u(x)-u(y)|<\varepsilon$. The value of $\delta$ depends only on the carpet $S$ and on $\varepsilon,M$, but not on $u$.
\end{theorem}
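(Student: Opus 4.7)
The plan is to argue by contradiction, adapting the compactness scheme from the existence part of Theorem \ref{harmonic:4-Thm Solution to Dirichlet problem} together with the continuity result for carpet-harmonic functions (Theorem \ref{harmonic:4-continuous}).

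Suppose the theorem fails. Then there exist $\varepsilon_0>0$, carpet-harmonic functions $u_n$ on $\Omega$ with $D_U(u_n)\le M$, and points $x_n,y_n\in S\cap V$ with $|x_n-y_n|\to 0$ yet $|u_n(x_n)-u_n(y_n)|\ge \varepsilon_0$. Passing to a subsequence, $x_n,y_n\to x_\infty \in \overline V$. Fix a base point $z_0\in S\cap V$ and replace each $u_n$ by $u_n-u_n(z_0)$; carpet-harmonicity and the energy bound are preserved. Choose an intermediate open set $W$ with $V\subset\subset W\subset\subset U$. The Maximum Principle (Theorem \ref{harmonic:Maximum Principle}) ensures that each $u_n$ is monotone, so Gehring's oscillation Lemma \ref{harmonic:Harnack-Oscillation} yields a constant $C_1=C_1(W,U,M,S)$ with $|u_n|\le C_1$ on $S\cap W$ uniformly in $n$. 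Using Corollary \ref{harmonic:Fatness corollary} for $\{M_{Q_i}(u_n)\diam(Q_i)\}_{i\in I_W}$ and the energy bound for $\{\osc_{Q_i}(u_n)\}_{i\in I_W}$, both sequences are uniformly bounded in $\ell^2(I_W)$.

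Now extract a limit exactly as in the proof of Theorem \ref{harmonic:4-Thm Solution to Dirichlet problem}: Banach--Alaoglu combined with Mazur's lemma produces convex combinations $v_n=\sum_j\lambda_j^n u_j$ such that $M_{Q_i}^n:=\sum_j\lambda_j^n M_{Q_i}(u_j)$ and $\rho^n(Q_i):=\sum_j\lambda_j^n\osc_{Q_i}(u_j)$ converge strongly in $\ell^2(I_W)$ to sequences $\{M_{Q_i}\}$ and $\{\rho(Q_i)\}$. Fuglede's Lemma \ref{harmonic:Fuglede} applied along non-exceptional paths, together with the upper gradient inequalities for the $u_j$'s, identifies $\{M_{Q_i}\}$ as a discrete Sobolev function with upper gradient $\{\rho(Q_i)\}$, and Section \ref{harmonic:Subsection-Discrete Sobolev} then produces a Sobolev function $u_\infty\in\mathcal W^{1,2}_{\loc}(W)$ with $\osc_{Q_i}(u_\infty)\le\rho(Q_i)$. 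Semicontinuity of the Dirichlet energy, together with the harmonicity of each $u_j$ and the convexity estimate $D_{W'}(v_n)\le \sum_j\lambda_j^n D_{W'}(u_j)$ (which follows from the triangle inequality for oscillations and Cauchy--Schwarz), implies that $u_\infty$ is itself carpet-harmonic; by Theorem \ref{harmonic:4-continuous}, $u_\infty$ is continuous at $x_\infty$.

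The main obstacle is extracting the contradiction, which requires two steps. First, the strong $\ell^2$-convergence must be upgraded to uniform convergence $v_n\to u_\infty$ on $S\cap V$: this follows by applying Gehring's Lemma \ref{harmonic:Harnack-Oscillation} to the difference $v_n-u_\infty$ on balls compactly contained in $W$, using the strong $\ell^2$-convergence of oscillations (which forces $\osc_{Q_i}(v_n-u_\infty)\to 0$ in $\ell^2(I_W)$) together with $v_n(z_0)\to u_\infty(z_0)$ from the normalization. Second --- and this is the genuinely delicate point --- the lower bound $|u_n(x_n)-u_n(y_n)|\ge\varepsilon_0$ must descend to $|v_n(x_n)-v_n(y_n)|$. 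Since Mazur's lemma does not permit an arbitrary prescription of the weights $\lambda_j^n$, one passes to a further subsequence and uses a diagonal selection among the available Mazur choices to guarantee that the bulk of the Mazur mass for $v_n$ sits on $u_n$ itself, while the contributions from each earlier $u_j$ are made arbitrarily small by exploiting the continuity of that individual $u_j$ at $x_\infty$. Once $|v_n(x_n)-v_n(y_n)|\ge\varepsilon_0/2$ holds along a subsequence, uniform convergence of $v_n$ to $u_\infty$ and continuity of $u_\infty$ at $x_\infty$ force $v_n(x_n),v_n(y_n)\to u_\infty(x_\infty)$, yielding the contradiction.
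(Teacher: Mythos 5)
Your argument has a genuine gap at the step you yourself flag as delicate: transferring the lower bound $|u_n(x_n)-u_n(y_n)|\geq\varepsilon_0$ to the Mazur combinations $v_n=\sum_{j\leq n}\lambda_j^n u_j$. Mazur's lemma gives no control whatsoever over the weights $\lambda_j^n$; in particular you cannot arrange that ``the bulk of the Mazur mass for $v_n$ sits on $u_n$ itself'' --- if the sequence converges weakly but not strongly, any convex combinations that concentrate on a single term fail to converge strongly, which is the whole reason averaging is needed. Moreover, the proposed remedy for the remaining terms is circular: for $j$ close to $n$ the quantity $|u_j(x_n)-u_j(y_n)|$ is not controlled by the continuity of the individual $u_j$ at $x_\infty$ (indeed for $j=n$ it is at least $\varepsilon_0$ by hypothesis), and controlling it uniformly in $j$ is precisely the equicontinuity statement being proved. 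A related circularity lurks in the passage from strong $\ell^2$-convergence of oscillations to locally uniform convergence $v_n\to u_\infty$ and then to a contradiction at $x_\infty$: what is needed there is uniform smallness of the oscillation of the $u_n$ themselves on small balls, which again is the theorem.

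The paper's proof avoids compactness entirely and is purely quantitative, and you already have all its ingredients in hand. Around a point $x_0\in S^\circ\cap\br V$ one chooses $N$ nested annuli $A_k=A(x_0;r_k,2r_k)\subset\subset U$ intersecting pairwise disjoint sets of peripheral disks (possible by Lemma \ref{harmonic:Fatness consequence}); the computation inside Lemma \ref{harmonic:Harnack-Oscillation}, applied to each annulus separately, gives $|u(z)-u(w)|^2\leq C\sum_{i\in I_{A_k}}\osc_{Q_i}(u)^2$ for all $z,w$ in the innermost ball. Summing over $k=1,\dots,N$ and using disjointness yields $N|u(z)-u(w)|^2\leq C\,D_U(u)\leq CM$, so $|u(z)-u(w)|\leq C'\sqrt{M/N}<\varepsilon$ once $N$ is large, with $N$ independent of $u$. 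For $x_0\in\partial Q_{i_0}$ one runs the same estimate on the crosscut components of the annuli, discarding the single index $i_0$ via the maximum principle, and compactness of $S\cap\br V$ finishes the proof. This direct route is what gives $\delta$ depending only on $\varepsilon$, $M$, and the data of $S$.
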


Usually, equicontinuity for minimizers in potential theory follows from the local H\"older continuity of the energy minimizers. However, in our setting we were not able to establish the H\"older continuity, mainly due to the lack of self-similarity of the carpets. 

\begin{proof}
By compactness, it suffices to show that for each $\varepsilon>0$, each point $x_0\in S\cap \br V$ has a neighborhood $V_0$ such that for all $z,w\in S\cap V_0$ we have $|u(z)-u(w)|<\varepsilon$, whenever $u$ is a carpet-harmonic function with $D_U(u)\leq M$. The proof is based on the arguments we used in Theorem \ref{harmonic:Liouville Weak} and Lemma \ref{harmonic:Harnack-Oscillation}.

Suppose first that $x_0\in S^\circ$, and let $N\in \N$ be a large number to be chosen. For each $k\in \{1,\dots,N\}$ consider an annulus $A_k\coloneqq A(x_0;r_k,2r_k)\subset \subset U$ such that the annuli are nested, all of them surrounding $B(x_0,r_N)$, and they intersect disjoint sets of peripheral disks. Let $V_0=B(x_0,r_N)$ and note that for $z,w \in S\cap V_0$ and $k\in \{1,\dots,N\}$ we have
\begin{align*}
|u(z)-u(w)|^2\leq C \sum_{i\in I_{A_k}} \osc_{Q_i}(u)^2 
\end{align*}
by the computations in the proof of Lemma \ref{harmonic:Harnack-Oscillation}, where $C>0$ depends only on the data. Summing over $k$, we obtain
\begin{align*}
N|u(z)-u(w)|^2 \leq C\sum_{i\in I_U} \osc_{Q_i}(u)^2 =CD_U(u)\leq CM.
\end{align*}
Hence, $|u(z)-u(w)|\leq C'\sqrt{M}/\sqrt{N}$, which can be made smaller than $\varepsilon$, if $N$ is sufficiently large, independent of $u$.

If $x_0\in \partial Q_{i_0}$ for some $i_0\in \N$, we have to modify the argument as usual. We consider again the annuli $A_k$, all of which intersect $Q_{i_0}$, but otherwise they intersect disjoint sets of peripheral disks. We set $V_0$ to be the component of $B(x_0,r_N) \setminus \br Q_{i_0}$ containing $x_0$ in its boundary. This component is bounded by a subarc $\gamma$ of $\partial B(x_0,r_N)$, which defines a crosscut separating $x_0$ from $\infty$ in $\R^2\setminus Q_{i_0}$. The arc $\gamma$ has its endpoints on $\partial Q_{i_0}$. We consider an open Jordan arc $\alpha\subset Q_{i_0}$, having the same endpoints as $\gamma$. Then $\gamma\cup \alpha$ bounds a Jordan region $V_1\supset V_0$. 

For $z,w\in S\cap V_1$ the maximum principle in Theorem \ref{harmonic:Maximum Principle} and the upper gradient inequality yield 
\begin{align*}
|u(z)-u(w)|\leq \sum_{\substack{i:Q_i\cap \partial B(x_0,sr_k)\\i\neq i_0}}\osc_{Q_i}(u)
\end{align*}
for all $k\in \{1,\dots,N\}$ and a.e.\ $s\in (1,2)$. Using this in the proof of Lemma \ref{harmonic:Harnack-Oscillation} we obtain the exact same inequality as in the conclusion, without the term corresponding to $i=i_0$. Now, the proof continues as in the case $x_0\in S$.
\end{proof}

\begin{theorem}\label{harmonic:Convergence}
Suppose that $u_n$, $n\in \N$, is a sequence of carpet-harmonic functions converging locally uniformly to a function $u:S\to \R$. Then $u$ is carpet-harmonic.
\end{theorem}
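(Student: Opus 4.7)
The strategy is to bootstrap from the local uniform convergence of $u_n\to u$ to $\ell^2$-convergence of the oscillation sequences $\{\osc_{Q_i}(u_n)\}$, and then pass to the limit in the minimizing inequality. We may assume that each $u_n$ is normalized, by Lemma \ref{harmonic:Switch to normalized}, and then $u$ is continuous as a locally uniform limit of continuous functions (Theorem \ref{harmonic:4-continuous}).

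The first step is to obtain a uniform local energy bound. Fix nested open sets $V\subset\subset W\subset\subset W'\subset\subset \Omega$. By local uniform convergence, $M\coloneqq \sup_n\sup_{S\cap W'}|u_n|<\infty$. Pick a nonnegative test function $\zeta$ that is $\equiv 1$ on $W$, is supported in $W'$, and is Lipschitz (e.g.\ a cutoff as in Example \ref{harmonic:3-Example-Lipschitz}); then $D_\Omega(\zeta)\le C(W,W')$. The Caccioppoli inequality (Theorem \ref{harmonic:Caccioppoli}) applied to each $u_n$ gives
\[
D_W(u_n)\;\le\;\sum_i M_{Q_i}(\zeta)^2\osc_{Q_i}(u_n)^2 \;\le\; C\sum_i\osc_{Q_i}(\zeta)^2 M_{Q_i}(|u_n|)^2\;\le\; CM^2 D_\Omega(\zeta),
\]
so $D_W(u_n)\le C(V,W,W',M)$ uniformly in $n$.

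The heart of the proof is showing that $\{\osc_{Q_i}(u_n)\}_{i\in I_V}$ is Cauchy in $\ell^2$. For this I will apply the Caccioppoli inequality for differences (Theorem \ref{harmonic:Caccioppoli2}) with the same cutoff $\zeta$ to the pair $u_n,u_m$:
\[
\sum_{i\in I_V}(\osc_{Q_i}(u_n)-\osc_{Q_i}(u_m))^2
\;\le\; C\sum_i \osc_{Q_i}(\zeta)(\osc_{Q_i}(u_n)+\osc_{Q_i}(u_m))\,M_{Q_i}(|u_n-u_m|).
\]
Applying Cauchy--Schwarz and absorbing $\osc_{Q_i}(\zeta)\in\ell^2$ on one side, the right-hand side is bounded by
\[
C\Big(D_W(u_n)+D_W(u_m)\Big)^{1/2}\|u_n-u_m\|_{L^\infty(W')}\,D_\Omega(\zeta)^{1/2},
\]
which tends to $0$ thanks to the uniform energy bound and the local uniform convergence. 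Hence $\{\osc_{Q_i}(u_n)\}_{i\in I_V}$ is Cauchy in $\ell^2$; since the limit pointwise (for each fixed $i$, using uniform convergence on $\partial Q_i$) is $\osc_{Q_i}(u)$, the $\ell^2$-limit is in fact $\{\osc_{Q_i}(u)\}_{i\in I_V}$. This is the main technical step.

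With this $\ell^2$-convergence in hand, Remark \ref{harmonic:Remark:convergence} yields $u\in\mathcal W^{1,2}_{\loc}(S)$ (in either the strong or weak sense, as inherited from the $u_n$); the upper gradient inequality for $u$ follows from Fuglede's Lemma \ref{harmonic:Fuglede} applied to a subsequence of $\{\osc_{Q_i}(u_n)\}$ together with the pointwise upper gradient inequalities for $u_n$ and the uniform convergence $u_n\to u$. Finally, to verify the minimizing property, fix any open $V\subset\subset\Omega$ and any test function $\zeta\in\mathcal W^{1,2}(S)$ vanishing on $S\setminus V$. By harmonicity of $u_n$, $D_V(u_n)\le D_V(u_n+\zeta)$. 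The bound $\osc_{Q_i}(u_n+\zeta)^2\le 2\osc_{Q_i}(u_n)^2+2\osc_{Q_i}(\zeta)^2$ together with pointwise convergence $\osc_{Q_i}(u_n+\zeta)\to\osc_{Q_i}(u+\zeta)$ (from the estimate $|\osc_{Q_i}(f)-\osc_{Q_i}(g)|\le 2\|f-g\|_{L^\infty(\partial Q_i)}$) and the $\ell^1$-convergence of the dominating sequence $\{\osc_{Q_i}(u_n)^2\}$ allows the generalized dominated convergence theorem to give $D_V(u_n+\zeta)\to D_V(u+\zeta)$, and similarly $D_V(u_n)\to D_V(u)$. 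Passing to the limit in the inequality produces $D_V(u)\le D_V(u+\zeta)$, which is the desired harmonicity of $u$.
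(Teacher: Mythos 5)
Your proposal is correct and follows essentially the same route as the paper: the Caccioppoli inequality for a uniform local energy bound, the Caccioppoli inequality for differences (with Cauchy--Schwarz) to get $\ell^2$-convergence of $\{\osc_{Q_i}(u_n)\}_{i\in I_V}$ to $\{\osc_{Q_i}(u)\}_{i\in I_V}$, Remark \ref{harmonic:Remark:convergence} for membership of $u$ in the Sobolev space, and a dominated-convergence passage to the limit in $D_V(u_n)\le D_V(u_n+\zeta)$. You merely spell out a few steps (the choice of cutoff and the absorption of $\osc_{Q_i}(\zeta)$) that the paper leaves implicit.
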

\begin{proof}
We fix an open set $V\subset \subset \Omega$. Then $u_n$ converges uniformly to $u$ in $V$, so in particular, $u_n$ is uniformly bounded in $V$. By the Caccioppoli inequality in Theorem \ref{harmonic:Caccioppoli} we obtain that $D_V(u_n)$ is uniformly bounded in $n\in \N$. The Caccioppoli inequality ``for differences" in Theorem \ref{harmonic:Caccioppoli2} implies that $\sum_{i\in I_V}(\osc_{Q_i}(u_n)-\osc_{Q_i}(u_m))^2$ is uniformly small for sufficiently large $n$ and $m$. This shows that the tails of the sum
\begin{align*}
D_V(u_n)=\sum_{i\in I_V} \osc_{Q_i}(u_n)^2
\end{align*}
are small, uniformly in $n$. Using the uniform convergence one can show that $\osc_{Q_i}(u_n)\to \osc_{Q_i}(u)$ for each $i\in \N$. These imply that $\{\osc_{Q_i}(u_n)\}_{i\in V}$ converges to $\{\osc_{Q_i}(u)\}_{i\in I_V}$ in $\ell^2$. Hence, $u$ lies in $\mathcal W^{1,2}_{\loc}(S)$; recall Remark \ref{harmonic:Remark:convergence}. 

Moreover, $D_{V}(u_n+\zeta)\to D_V(u+\zeta)$ for each test function $\zeta\in \mathcal W^{1,2}(S)$ vanishing outside $V$. Indeed, it is straightforward from uniform convergence to see that $\osc_{Q_i}(u_n+\zeta)\to \osc_{Q_i}(u+\zeta)$ for each $i\in \N$. Moreover, using the inequality $\osc_{Q_i}(u_n+\zeta)\leq \osc_{Q_i}(u_n)+\osc_{Q_i}(\zeta)$ (see the proof of Proposition \ref{harmonic:3-Properties Sobolev}) and the convergence of  $\{\osc_{Q_i}(u_n)\}_{i\in V}$ in $\ell^2$ one sees that the tails of the sum 
\begin{align*}
\sum_{i\in I_V} \osc_{Q_i}(u_n+\zeta)^2
\end{align*}
are small, uniformly in $n$. 

Finally, the above imply that the inequality $D_V(u_n)\leq D_V(u_n+\zeta)$ from harmonicity passes to the limit, to yield $D_V(u)\leq D_V(u+\zeta)$. This shows that $u$ is carpet-harmonic, as desired.
\end{proof}

\begin{corollary}\label{harmonic:Arzela}
Let $u_n$, $n\in \N$, be a sequence of carpet-harmonic functions that are locally uniformly bounded. Then there exists a subsequence of $u_n$ that converges locally uniformly to a carpet-harmonic function $u:S\to \R$.
\end{corollary}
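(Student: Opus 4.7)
The plan is a standard Arzelà–Ascoli argument, combined with the Caccioppoli inequality to upgrade local boundedness to a local Dirichlet energy bound, and with Theorem~\ref{harmonic:Equicontinuity} and Theorem~\ref{harmonic:Convergence} to close the loop.

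First, I would exhaust $\Omega$ by a nested sequence of open sets $V_k\subset\subset U_k \subset\subset V_{k+1}$ with $\bigcup_k V_k=\Omega$. Fix $k$ and let $M_k>0$ be a uniform bound for $|u_n|$ on $S\cap U_k$, which exists by the local uniform boundedness hypothesis. To apply Theorem~\ref{harmonic:Equicontinuity} on $V_k\subset\subset U_k$, I need a uniform bound on the Dirichlet energies $D_{U_k}(u_n)$. For this, choose a Lipschitz cutoff $\zeta_k\colon \R^2\to[0,1]$ with $\zeta_k\equiv 1$ on $U_k$ and compactly supported in $V_{k+1}$; by Example~\ref{harmonic:3-Example-Lipschitz}, its restriction to $S$ is a Sobolev test function with a bound on $D_{V_{k+1}}(\zeta_k)$ depending only on $k$. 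Applying the Caccioppoli inequality (Theorem~\ref{harmonic:Caccioppoli}) to the carpet-harmonic function $u_n$ and the test function $\zeta_k$, together with $|u_n|\le M_{k+1}$ on the support of $\zeta_k$, yields
\begin{align*}
D_{U_k}(u_n) \leq \sum_{i\in\N} M_{Q_i}(\zeta_k)^2 \osc_{Q_i}(u_n)^2 \leq C\, M_{k+1}^2\, D_{V_{k+1}}(\zeta_k),
\end{align*}
which is a bound $N_k$ independent of $n$.

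With the uniform energy bound $D_{U_k}(u_n)\leq N_k$ in hand, Theorem~\ref{harmonic:Equicontinuity} applied with $V=V_k$, $U=U_k$, $M=N_k$ shows that $\{u_n\big|_{S\cap V_k}\}$ is equicontinuous. The hypothesis gives uniform boundedness on $S\cap V_k$, and since $S$ is a closed subset of $\Omega$, the set $S\cap\overline{V_k}$ is compact. By the Arzelà–Ascoli theorem there is a subsequence converging uniformly on $S\cap V_k$. A standard diagonal extraction over $k$ produces a single subsequence $u_{n_j}$ that converges locally uniformly on $S$ to some function $u\colon S\to\R$.

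Finally, I would invoke Theorem~\ref{harmonic:Convergence}, which states that a locally uniform limit of carpet-harmonic functions is carpet-harmonic, to conclude that $u$ is carpet-harmonic. The main (and essentially only) nontrivial step is the passage from local uniform boundedness to a local uniform Dirichlet energy bound via Caccioppoli, since Theorem~\ref{harmonic:Equicontinuity} is phrased in terms of the energy, not the sup-norm; the rest is routine once this bound is secured.
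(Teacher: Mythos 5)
Your proposal is correct and follows essentially the same route as the paper: Caccioppoli to convert local uniform boundedness into a local uniform Dirichlet energy bound, then Theorem \ref{harmonic:Equicontinuity} for equicontinuity, Arzelà--Ascoli for the subsequence, and Theorem \ref{harmonic:Convergence} to see the limit is carpet-harmonic. Your explicit cutoff construction just fills in a detail the paper leaves implicit.
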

\begin{proof}
By the Caccioppoli inequality in Theorem \ref{harmonic:Caccioppoli}, it follows that $D_{V}(u_n)$ is uniformly bounded in $n$, for each $V\subset \subset \Omega$. Theorem \ref{harmonic:Equicontinuity} implies that $\{u_n\}_{n\in \N}$ is a locally equicontinuous family. Since the functions $u_n$ are locally uniformly bounded, by the Arzel\`{a}-Ascoli theorem we conclude that they subconverge locally uniformly to a function $u:S\to \R$. This function has to be carpet-harmonic by Theorem \ref{harmonic:Convergence}.
\end{proof}

\chapter{Uniformization of Sierpi\'nski carpets by square carpets}\label{Chapter:Uniformization}

\section{Introduction}\label{unif:Section Introduction}
In this chapter we prove a uniformization result for planar Sierpi\'nski carpets by square Sierpi\'nski carpets, by minimizing some kind of energy. For the convenience of the reader, we include here some definitions, some of which are also given in Chapter \ref{Chapter:Harmonic}. We will also point out, whenever necessary, any discrepancies in the notation between the two chapters. However, for the most part, this chapter is independent of Chapter \ref{Chapter:Harmonic} and we will only use certain results from there that we quote again here. 

Before proceeding to the results, we mention some important discrepancies in the notation between the two chapters that the reader should be aware of: 
\begin{enumerate}
\item A Sierpi\'nski carpet is denoted here by $S=\br \Omega \setminus \bigcup_{i\in \N}Q_i$, in contrast to Chapter \ref{Chapter:Harmonic}, where the letter $S$ was used to denote a relative Sierpi\'nski carpet $S= \Omega \setminus \bigcup_{i\in \N}Q_i$. 
\item $\partial_{*}$ has slightly different meaning; compare its definition in Section \ref{harmonic:1-Basic Assumptions} to the remarks after Theorem \ref{unif:Solution to free boundary problem}.
\item \textit{Carpet modulus} here is going to be a variant of the strong carpet modulus defined in Section \ref{harmonic:2-Carpet modulus}. Here, we also include the unbounded peripheral disk in the sums, whenever we have a path family in $\C$ and not necessarily in $\Omega$.
\item We will define a notion of modulus called \textit{weak carpet modulus}, which is slightly different than the  weak carpet modulus defined in Section \ref{harmonic:2-Carpet modulus}.
\end{enumerate}

\subsection{Results}
In what follows, all the distances are in the Euclidean metric of $\C$.

A {planar Sierpi\'nski carpet}\index{Sierpi\'nski carpet} $S\subset \C$ is constructed by removing from a Jordan region $\Omega \subset \C$ a countable collection $\{Q_i\}_{i\in \N}$ of open Jordan regions, compactly contained in $\Omega$, with disjoint closures, such that $\diam(Q_i)\to 0$ as $i\to \infty$ and such that $S\coloneqq \br \Omega \setminus \bigcup_{i\in \N} Q_i$ has empty interior. The condition $\diam(Q_i)\to 0$ is equivalent to saying that $S$ is locally connected. According to a fundamental result of Whyburn \cite{Whyburn:theorem} all Sierpi\'nski carpets are homeomorphic to each other. We remark that $S$ is a closed set here, in contrast to Chapter \ref{Chapter:Harmonic}, where $S\subset \Omega$; if we used the notation of Chapter \ref{Chapter:Harmonic}, our carpet here would correspond to $\br S$.   

The Jordan regions $Q_i$, $i\in \N$, are called the \textit{(inner) peripheral disks}\index{peripheral disk!inner} of the carpet $S$, and $Q_0\coloneqq\C\setminus \br \Omega$ is the \textit{outer peripheral disk}\index{peripheral disk!outer}. The Jordan curves $\partial Q_i$, $i\in \N\cup \{0\}$, are the \textit{peripheral circles}\index{peripheral circle} of the carpet. Again, we distinguish between the \textit{inner} peripheral circles\index{peripheral circle!inner} $\partial Q_i$, $i\in \N$, and the outer peripheral circle\index{peripheral circle!outer} $\partial Q_0$. A \textit{square Sierpi\'nski carpet}\index{Sierpi\'nski carpet!square} is a Sierpi\'nski carpet for which $\Omega$ is a rectangle and all peripheral disks $Q_i$, $i\in \N$, are squares such that the sides of $\partial Q_i$, $i\in \N\cup \{0\}$, are parallel to the coordinate axes of $\R^2$.

We say that the peripheral disks $Q_i$ are \textit{uniform quasiballs}\index{quasiball}, if there exists a constant $K_0\geq 1$ such that for each $Q_i$, $i\in \N$, there exist concentric balls 
\begin{align}\label{unif:Quasi-balls}
B(x,r) \subset Q_i \subset B(x,R),
\end{align}
with $R/r\leq K_0$. In this case, we also say that the peripheral disks are $K_0$-quasiballs. We say that the peripheral disks are \textit{uniformly fat sets}\index{fat set} if there exists a constant $K_1>0$ such that for every $Q_i$, $i\in \N$, and for every ball $B(x,r)$ centered at some $x\in Q_i$ with $r<\diam (Q_i)$ we have 
\begin{align}\label{unif:Fat sets}
\mathcal H^2( B(x,r)\cap Q_i) \geq K_1 r^2,
\end{align}
where by $\mathcal H^n$ we denote the $n$-dimensional Hausdorff measure, normalized so that it agrees with the $n$-dimensional Lebesgue measure, whenever $n\in \N$. In this case, the peripheral disks are $K_1$-fat sets. A Jordan curve $J\subset \C$ is a \textit{$K_2$-quasicircle}\index{quasicircle} for some $K_2>0$, if for any two points $x,y\in J$ there exists an arc $\gamma \subset J$ with endpoints $x,y$ such that 
\begin{align}\label{unif:Quasicircle}
|x-y|\leq K_2 \diam (\gamma).
\end{align}
Note that if the peripheral circles $\partial Q_i$ are uniform quasicircles (i.e., $K_2$-quasicircles with the same constant $K_2$), then they are both uniform quasiballs and uniformly fat sets, quantitatively. The first claim is proved in \cite[Proposition 4.3]{Bonk:uniformization} and the second in \cite[Corollary 2.3]{Schramm:transboundary}, where the notion of a fat set appeared for the first time in the study of conformal maps. It is clear that for a square carpet the inner peripheral circles, also called peripheral squares, are uniform quasicircles.

In order to describe our main result we need to introduce a  notion of quasiconformality, suitable for the carpet setting. For this purpose, we introduce \textit{carpet modulus with respect to the carpet} $S$. Let $\Gamma$ be a family of paths in $\C$. A sequence of non-negative numbers $\{\lambda(Q_i)\}_{i\in \N\cup \{0\}}$ is admissible for the \textit{carpet modulus}\index{modulus!carpet modulus} $\md(\Gamma)$ if 
\begin{align}\label{unif:Carpet modulus strong}
\sum_{i:Q_i\cap \gamma\neq \emptyset} \lambda(Q_i)\geq 1
\end{align}
for all $\gamma\in \Gamma$ with $\mathcal H^1(\gamma\cap S)=0$. Then $\md(\Gamma)\coloneqq \inf_\lambda \sum_{i\in \N\cup \{0\}} \lambda(Q_i)^2$ where the infimum is taken over all admissible weights $\lambda$. Because of technical difficulties we also consider a very similar notion of modulus denoted by $\br{\md}(\Gamma)$, which is called \textit{weak carpet modulus}\index{modulus!weak carpet modulus}; see Definition \ref{unif:Carpet-modulus weak}. The notation $\md(\Gamma)$ and $\br{\md}(\Gamma)$ does not incorporate the underlying carpet $S$, but this will be implicitly understood. We now state one of the main results.

\begin{theorem}\label{unif:Main theorem}\index{quasiconformal!discrete}
Let $S\subset \br \Omega$ be a planar Sierpi\'nski carpet of  area zero whose peripheral disks $\{Q_i\}_{i\in \N}$ are uniform quasiballs and uniformly fat sets, and whose outer peripheral circle is $\partial Q_0=\partial \Omega$. Then there exists $D>0$ and a homeomorphism $f\colon  \C \to \C$ such that $f(\br \Omega)=[0,1]\times [0,D]$, and $\mathcal R\coloneqq f(S)\subset [0,1]\times [0,D]$ is a square Sierpi\'nski carpet with inner peripheral squares $\{S_i\}_{i\in \N}$ and outer peripheral circle $\partial S_0\coloneqq \partial ([0,1]\times [0,D])$. Furthermore, for any disjoint, non-trivial continua $E,F\subset S$ and for the family $\Gamma$ of paths in $\C$  that join them we have
\begin{align*}
\br{\md}(\Gamma) \leq \md(f(\Gamma)) \quad \textrm{and} \quad \br{\md}(f(\Gamma)) \leq \md(\Gamma).
\end{align*}
\end{theorem}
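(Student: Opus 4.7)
The plan is to realize $f$ as the pair $(u,v)$, where $u$ solves a discrete free-boundary Dirichlet problem on $S$ and $v$ is a ``harmonic conjugate'' built by adapting the construction of Rajala \cite{Rajala:uniformization} to the discrete carpet setting. First I would view $\br\Omega$ as a topological quadrilateral by marking four points on $\partial\Omega = \partial Q_0$ defining closed opposite sides $\Theta_1, \Theta_3$ and complementary sides $\Theta_2,\Theta_4$. By Theorem \ref{harmonic:Free boundary problem} there is a unique continuous carpet-harmonic function $u\colon \br S \to [0,1]$ with $u|_{\Theta_1}=0$ and $u|_{\Theta_3}=1$, minimizing $D_\Omega$. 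I set $D := D_\Omega(u) = \sum_{i\in \N\cup\{0\}} \osc_{Q_i}(u)^2$. Combining the comparison principle (Theorem \ref{harmonic:Comparison Principle}), the strong maximum principle (Theorem \ref{harmonic:Strong maximum principle}), Harnack's inequality (Theorem \ref{harmonic:Harnack}), and equicontinuity (Theorem \ref{harmonic:Equicontinuity}), I obtain enough regularity for the next steps: the level set $u^{-1}(t)$ for each $t\in(0,1)$ is a non-degenerate continuum separating $\Theta_2$ from $\Theta_4$, and no interior ``plateau'' can form.

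Next I construct $v\colon \br S \to [0,D]$ so that on every inner peripheral circle $\partial Q_i$ the image $(u,v)(\partial Q_i)$ is an axis-parallel square of side $\osc_{Q_i}(u)$, while $v|_{\Theta_2}=0$ and $v|_{\Theta_4}=D$. Following Rajala's harmonic-conjugate scheme, I first specify $v$ as piecewise constant on the two extremal arcs of each $\partial Q_i$ (the subarcs on which $u$ attains its maximum and minimum), and also on $\Theta_2\cup\Theta_4$. I then extend $v$ across each regular level $u^{-1}(t)$ by an accumulated sum of $\osc_{Q_i}(u)$ taken over the peripheral disks crossed along $u^{-1}(t)$, plus a continuous contribution from $S\cap u^{-1}(t)$. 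Consistency --- that on each level set $u^{-1}(t)$ the total $v$-increment equals $D$ --- follows from the identity $D = D_\Omega(u)$ combined with a Fubini/coarea-type computation over the level sets of $u$, using the path-existence results of Section \ref{harmonic:Section Existence of paths} to discard a null family of pathological level sets.

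The main obstacle is proving that $f=(u,v)$ is actually a homeomorphism from $\br S$ onto a square Sierpi\'nski carpet $\mathcal R\subset[0,1]\times[0,D]$, and then extending it to all of $\C$. Injectivity reduces to showing that $v$ is strictly monotone along each level set of $u$, which must be extracted from the energy-minimizing property via the strong maximum principle (a ``Rajala-type'' argument: equality of $v$ at two points on the same level set would provide a competitor with strictly smaller energy, contradicting Theorem \ref{harmonic:Free boundary problem}). Surjectivity and the square-carpet structure amount to verifying that each $f(\partial Q_i)$ is an axis-parallel square of side $\osc_{Q_i}(u)$; this in turn forces $\sum_{i\in\N}\osc_{Q_i}(u)^2 = D$, so the squares fill out the rectangle up to the carpet $\mathcal R$ of area zero. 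Once this is established on $\br S$, the extension to $\C$ is done by Beurling--Ahlfors type extensions inside each $Q_i$ to the corresponding $S_i$ and inside $Q_0$ to $\C\setminus((0,1)\times(0,D))$; it is here that the uniform quasiball and fatness hypotheses enter, in the spirit of \cite[Section 5]{Bonk:uniformization}.

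Finally, the modulus comparisons are obtained by transporting admissible weights along the canonical bijection $Q_i\leftrightarrow S_i$ induced by $f$. Given an admissible weight $\{\lambda(S_i)\}_{i\in\N\cup\{0\}}$ for $\md(f(\Gamma))$, I set $\lambda(Q_i):=\lambda(S_i)$; this preserves the $\ell^2$-norm. For $\gamma\in\Gamma$, the image $f(\gamma)$ intersects precisely the squares $S_i$ whose preimages $Q_i$ are crossed by $\gamma$, so the sum-admissibility condition transfers verbatim. The only subtle point is that admissibility for $\md(f(\Gamma))$ only constrains paths satisfying $\mathcal H^1(f(\gamma)\cap\mathcal R)=0$, while to conclude admissibility for $\br\md(\Gamma)$ we need the sum inequality for paths $\gamma\in\Gamma$ lying outside a slightly different exceptional class; this is precisely what the weak carpet modulus $\br\md$ (Definition \ref{unif:Carpet-modulus weak}) is engineered to handle. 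The reverse inequality $\br\md(f(\Gamma))\leq\md(\Gamma)$ follows symmetrically by transporting admissible weights via $f^{-1}$.
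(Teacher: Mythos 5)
Your proposal reproduces the paper's overall architecture ($u$ from the free boundary problem, $v$ by accumulating $\osc_{Q_i}(u)$ along level sets, $f=(u,v)$, then transport of admissible weights), but the steps that actually carry the proof are missing or replaced by arguments that would not go through. The most serious gap is injectivity and non-degeneracy. You reduce injectivity to strict monotonicity of $v$ along level sets and propose to get this from an energy competitor; but equality of $v$ at two points of a level set does not produce an admissible competitor with smaller Dirichlet energy, and monotonicity along the good level sets $\alpha_t$, $t\in\mathcal T$, does not settle injectivity on the exceptional level sets $t\notin\mathcal T$. Moreover, you never rule out $\osc_{Q_i}(u)=0$, in which case $f$ would collapse $\partial Q_i$ to a point and $\mathcal R$ would not be a carpet; this non-vanishing is genuinely hard (it is not known in general for non-linear potential theory) and in the paper both it and injectivity rest on the light-map lemma (Lemma \ref{unif:Injectivity-light}), proved by a quantitative carpet-modulus estimate over nested annuli around a point of $\mathcal R$, combined with the upper gradient inequality for $f$ (Proposition \ref{unif:Upper gradient for F} and Corollary \ref{unif:Upper gradient-projection}). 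Nothing in your sketch supplies this mechanism. Relatedly, your $v$ carries a ``continuous contribution from $S\cap u^{-1}(t)$''; the point of Lemma \ref{unif:Level sets-Hausdorff measure} is that $\mathcal H^1(u^{-1}(t))=0$ for a.e.\ $t$ (this is where fatness is used essentially), so the increment of $v$ along a level set is entirely carried by the peripheral disks — otherwise the squares would not tile $[0,1]\times[0,D]$ and $\osc_{Q_i}(v)$ would not equal $\osc_{Q_i}(u)$.

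The modulus comparison also has a gap. Transporting $\lambda(S_i)$ to $\lambda(Q_i)$ preserves the $\ell^2$-norm, but admissibility does not ``transfer verbatim'': the weight on the image side is only tested against paths $\gamma'$ with $\mathcal H^1(\gamma'\cap\mathcal R)=0$, and for $\gamma\in\Gamma$ with $\mathcal H^1(\gamma\cap S)=0$ there is no reason that $\mathcal H^1(f(\gamma)\cap\mathcal R)=0$, since $f$ is merely a homeomorphism. The paper bridges this with Lemma \ref{unif:Regularity-Path hausdorff measure zero}: for $\md_2$-a.e.\ $\gamma$ one can build a replacement path $\tilde\gamma$ joining the same endpoints, meeting no more peripheral disks, with $\mathcal H^1(f(\tilde\gamma)\cap\mathcal R)=0$ — and this construction again needs the upper gradient inequality for $f$. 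The reverse inequality does not ``follow symmetrically'': one must separately establish an upper gradient inequality for $f^{-1}$ (Lemmas \ref{unif:Regularity-G paths beta_s}--\ref{unif:Regularity-G a.e.gamma}), which requires its own length and monotonicity estimates. Finally, the quasiball and fatness hypotheses do not enter through a Beurling--Ahlfors extension (the peripheral circles are not assumed to be quasicircles, and the extension inside the $Q_i$ can be an arbitrary Sch\"onflies homeomorphism); they are used throughout the level-set analysis, the existence of good circular paths, the test-function construction, and the modulus estimates.
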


\begin{figure}
	\centering
	\begin{tikzpicture}
	\begin{scope}[scale=.8]
	\clip (-1,-2) rectangle (14,2);
	\input{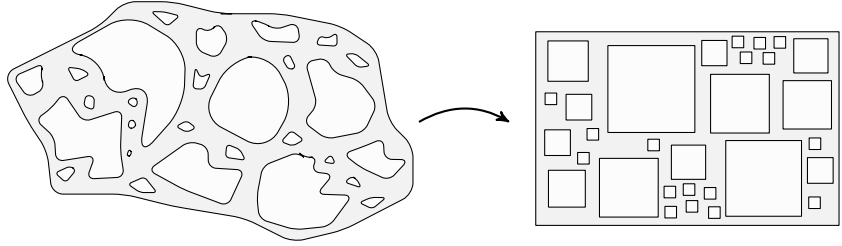}	
	\end{scope}
	\end{tikzpicture}
	\caption{Illustration of the uniformizing map in Theorem \ref{unif:Main theorem}.}
\end{figure}
This map is highly non-unique, as will be clear from the construction, since by making some choices one can obtain such maps onto different square carpets. The modulus inequalities in the conclusion say that $f$ is a ``quasiconformal" map in a discrete sense, and in the sense of the so-called geometric definition of quasiconformality, which is also employed in \cite{Rajala:uniformization}.

If the geometric assumptions on the peripheral disks $Q_i$ of $S$ are strengthened, then we obtain a stronger version of the previous theorem with improved regularity for the map $f$. In particular, we consider the following geometric condition:
we say that the peripheral circles $\{\partial Q_i\}_{i\in \N\cup \{0\}}$ of the carpet $S$ are \textit{uniformly relatively separated}\index{uniform relative separation}, if there exists a constant $K_3>0$ such that
\begin{align}\label{unif:Uniformly relatively separated}
\Delta(\partial Q_i,\partial Q_j)\coloneqq \frac{\dist(\partial Q_i,\partial Q_j)}{\min\{ \diam(\partial Q_i),\diam (\partial Q_j)\}} \geq K_3
\end{align}
for all distinct $i,j\in \N\cup \{0\}$. In this case, we say that the peripheral circles are $K_3$-relatively separated. We now have an improvement of the previous theorem:

\begin{theorem}\label{unif:Main theorem-quasisymmetric}
Let $S$ be a Sierpi\'nski carpet of area zero with peripheral circles $\{\partial Q_i\}_{i\in \N\cup \{0\}}$ that are $K_2$-quasicircles and $K_3$-relatively separated. Then there exists an $\eta$-quasi\-symmetric map $f$ from $S$ onto a square Sierpi\'nski carpet $\mathcal R$ such that the distortion function $\eta$ depends only on $K_2$ and $K_3$. Furthermore, the map $f$ maps the outer peripheral circle of $S$ onto the outer peripheral circle of $\mathcal R$.
\end{theorem}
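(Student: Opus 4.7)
The plan is to apply Theorem \ref{unif:Main theorem} (the discrete quasiconformal version) and then upgrade the resulting modulus inequalities to genuine quasisymmetric control using the uniform relative separation hypothesis. First, $K_2$-quasicircles are automatically $K_0$-quasiballs and $K_1$-fat sets with constants depending only on $K_2$ (by \cite[Proposition 4.3]{Bonk:uniformization} and \cite[Corollary 2.3]{Schramm:transboundary}), so Theorem \ref{unif:Main theorem} applies and yields a homeomorphism $f\colon \C\to\C$ mapping $S$ to a square carpet $\mathcal R$, satisfying the two-sided weak/strong carpet-modulus estimates for path families joining disjoint non-trivial continua in $S$.

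The key upgrade step is to pass from carpet modulus control to conformal modulus control. Under uniform relative separation, carpet modulus and classical conformal modulus are comparable on path families joining disjoint non-trivial continua in $S$: fatness of each peripheral disk lets us produce a competing conformal-modulus density by spreading $\lambda(Q_i)$ uniformly over a thick portion of $Q_i$, while the relative separation prevents peripheral disks from clustering and keeps the $L^2$-cost comparable to $\sum \lambda(Q_i)^2$ via an overlap/maximal function argument in the spirit of Lemma \ref{harmonic:Bojarski}. Comparable estimates are essentially contained in \cite{Schramm:transboundary,BonkMerenkov:rigidity}. Applying this on both the $S$ side and the $\mathcal R$ side (the peripheral squares in $\mathcal R$ being automatically uniform quasicircles), the carpet-modulus conclusion of Theorem \ref{unif:Main theorem} is promoted to a two-sided quasi-invariance of conformal modulus for such path families under $f$.

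Next, we invoke a standard modulus-to-quasisymmetry principle. Both $S$ and $\mathcal R$ are Ahlfors $2$-regular (the peripheral disks are uniformly fat and occupy a definite proportion of every carpet-centered ball up to scale) and are linearly locally connected; the latter is a consequence of the uniform quasicircle and uniform relative separation conditions, which hold for $S$ by hypothesis and for $\mathcal R$ once we have checked its squares are uniformly relatively separated. On such carpets, a homeomorphism that quasi-preserves the conformal modulus of all path families joining disjoint non-trivial continua is quasisymmetric with distortion depending only on the data, by the Heinonen--Koskela characterization (see \cite{HeinonenKoskelaShanmugalingamTyson:Sobolev}) as adapted to the carpet setting in \cite{Bonk:uniformization}. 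This gives the desired quasisymmetry of $f|_S$, and the preservation of the outer peripheral circle is built into the construction in Theorem \ref{unif:Main theorem} (the uniformizing map sends $\partial\Omega$ to $\partial([0,1]\times[0,D])$).

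The main obstacle I anticipate is establishing that the target square carpet $\mathcal R$ inherits a uniform relative separation estimate (and hence the LLC and Ahlfors $2$-regularity we used above) with constants depending only on $K_2$ and $K_3$. This has to be extracted from the two-sided modulus inequality for $f$: a pair of peripheral squares in $\mathcal R$ with very small relative distance would carry a path family of large conformal (hence large carpet) modulus whose $f^{-1}$-image is trapped between two peripheral circles of $S$ whose relative separation is bounded below, forcing the pulled-back modulus to be bounded above, a contradiction. Proposition \ref{unif:Proposition Equivalence of square carpet} shows this step is not only expected but essentially necessary, and I would prove it by a direct carpet-modulus computation of ``capacity between two peripheral disks'' on both sides.
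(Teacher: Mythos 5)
Your starting point (run the construction of Theorem \ref{unif:Main theorem} and upgrade the modulus inequalities to quasisymmetry) is the same as the paper's, and your remark about the outer peripheral circle is correct. But the central step of your argument does not work as stated. You claim that $S$ and $\mathcal R$ are Ahlfors $2$-regular and then invoke the Heinonen--Koskela ``quasi-preservation of modulus implies quasisymmetry'' principle for metric measure spaces. Both carpets have \emph{area zero} by hypothesis, so neither is Ahlfors $2$-regular (the peripheral disks whose fatness you appeal to are not part of the carpet), and a Sierpi\'nski carpet with the Euclidean metric is not a Loewner space; this is precisely why the transboundary/carpet modulus machinery exists. Relatedly, your proposed ``upgrade'' from carpet modulus to conformal modulus of connecting families is never established (the paper only proves the soft implication $\md(\Gamma)=0\Rightarrow\md_2(\Gamma)=0$ in Lemma \ref{unif:Zero modulus lemma}), and it is not needed: the Loewner-type two-sided bounds are available directly for carpet modulus (Propositions \ref{unif:Modulus-Loewner lower} and \ref{unif:Modulus-Loewner upper}).

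The second issue is the one you flag yourself: you need quantitative control on $\mathcal R$ (separation of the squares, LLC constants) before your criterion could even be applied, and you propose to prove uniform relative separation of the squares directly. The paper avoids this entirely by two devices you are missing. First, it normalizes the marked arcs $\Theta_1,\Theta_3$ using the quasicircle property of $\partial\Omega$ and shows, via $\br{\md}(f(\Gamma))=\md(f(\Gamma))=D$ for the family joining the vertical sides together with Propositions \ref{unif:Regularity-F modulus} and \ref{unif:Regularity-F-1 modulus}, that the height $D$ is bounded above and below in terms of $K_2,K_3$ only. Second, it uses Proposition \ref{unif:Modulus-Remove squares}, an upper carpet-modulus bound valid for an \emph{arbitrary} square carpet after discarding at most $N(D)$ peripheral squares, paired with the fact that the lower bound of Proposition \ref{unif:Modulus-Loewner lower} on the $S$-side is stable under removing finitely many peripheral disks (this is exactly why Theorem \ref{unif:Main theorem} is formulated for path families avoiding finitely many peripheral disks). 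The three-point weak-quasisymmetry argument then closes by contradiction without ever knowing that the squares of $\mathcal R$ are uniformly relatively separated; that separation is only a consequence, a posteriori, via Proposition \ref{unif:Proposition Equivalence of square carpet}. As written, your proof has a genuine gap at the modulus-to-quasisymmetry step and an unproven (and unnecessary) comparability claim; it would need to be restructured along these lines to be correct.
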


This theorem, together with extension results from \cite[Section 5]{Bonk:uniformization} yield immediately:

\begin{corollary}\label{unif:Main corollary}
Let $S$ be a Sierpi\'nski carpet of  area zero with peripheral circles $\{\partial Q_i\}_{i\in \N\cup \{0\}}$ that are $K_2$-quasicircles and $K_3$-relatively separated. Then there exists a $K$-quasi\-conformal map $f\colon  \widehat\C\to \widehat \C$ such that $\mathcal R\coloneqq f(S)$ is a square Sierpi\'nski carpet, where $K$ depends only on $K_2$ and $K_3$.
\end{corollary}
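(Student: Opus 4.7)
My plan is to deduce the corollary from Theorem \ref{unif:Main theorem-quasisymmetric} by extending the quasisymmetric map $f_0\colon S\to \mathcal R$ across every peripheral disk. More precisely, Theorem \ref{unif:Main theorem-quasisymmetric} supplies an $\eta$-quasisymmetric homeomorphism $f_0\colon S\to \mathcal R$ with $\eta$ depending only on $K_2$ and $K_3$, sending the outer peripheral circle of $S$ onto the outer peripheral circle of $\mathcal R$. Because $f_0$ is a homeomorphism of carpets, it induces a bijection between the peripheral circles of $S$ and those of $\mathcal R$, as in the argument of \cite[Lemma 5.5]{Bonk:uniformization}; in particular, the outer circle maps to the outer circle and each inner peripheral circle $\partial Q_i$ is sent to a peripheral square $\partial S_i$.

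The next step is to extend $f_0$ inside each peripheral disk. Since $\partial Q_i$ is a $K_2$-quasicircle and $\partial S_i$ is a square (in particular a quasicircle with a universal constant), the restriction $f_0\big|_{\partial Q_i}\to \partial S_i$ is an $\eta$-quasisymmetry between uniform quasicircles, and one can apply a Beurling--Ahlfors-type extension (as already invoked in Lemma \ref{harmonic:Pullback Extension}) to obtain a $K$-quasiconformal extension $f_i\colon Q_i\to S_i$ with $K$ depending only on $K_2$ and the distortion $\eta$, i.e.\ only on $K_2$ and $K_3$. The same construction applied to the unbounded peripheral disks $Q_0$ and $S_0$ (viewed on the sphere) gives a quasiconformal extension across the outer circle. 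Gluing these extensions to $f_0$ produces a single homeomorphism $f\colon \widehat \C\to \widehat\C$ with uniformly bounded dilatation on each peripheral disk; this is precisely the setup treated quantitatively in \cite[Section 5]{Bonk:uniformization}, and the dilatation constant $K$ produced depends only on $K_2$ and $K_3$.

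It remains to upgrade ``quasiconformal on each peripheral disk'' to ``globally quasiconformal on $\widehat\C$.'' The standard way is to apply a removability statement for the carpet $S=\widehat\C\setminus \bigcup_{i} Q_i$: by assumption $\mathcal H^2(S)=0$, and the uniform quasicircle plus uniform relative separation hypothesis implies the distortion function of $f$ is uniformly bounded on $\widehat\C\setminus S$ and that the map is an ACL homeomorphism across $S$ (this is exactly the argument carried out in \cite[Section~5]{Bonk:uniformization} to establish global quasiconformality of the extension; the measure-zero assumption on $S$ makes the exceptional set negligible for the analytic definition of quasiconformality). Consequently, $f$ is $K$-quasiconformal on $\widehat\C$, and $f(S)=\mathcal R$ is the desired square Sierpi\'nski carpet.

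The main obstacle I anticipate is not the Beurling--Ahlfors extension in any single disk, which is standard, but rather keeping the dilatation uniform across all the extensions and confirming that the glued map is genuinely quasiconformal across the carpet itself. Both points are handled by the quantitative estimates of \cite[Section~5]{Bonk:uniformization}, which rely critically on both the uniform quasicircle hypothesis (to control each local extension) and the uniform relative separation hypothesis (to prevent the local dilatations from interacting badly where peripheral disks come close together); the area-zero assumption on $S$ is what allows the exceptional set to be discarded in verifying the analytic definition of quasiconformality.
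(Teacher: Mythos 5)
Your proposal is correct and follows essentially the same route as the paper: the paper deduces the corollary directly from Theorem \ref{unif:Main theorem-quasisymmetric} combined with the quantitative extension results of \cite[Section 5]{Bonk:uniformization}, which is exactly the Beurling--Ahlfors-per-disk extension and gluing argument you describe. Your identification of where the uniform quasicircle, uniform relative separation, and area-zero hypotheses enter matches the role they play in Bonk's extension machinery, so nothing further is needed.
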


Theorem \ref{unif:Main theorem-quasisymmetric} and its corollary should be compared to the result of Bonk:

\begin{theorem}[Corollary 1.2, \cite{Bonk:uniformization}]\label{unif:Theorem Bonk}
Let $S$ be a Sierpi\'nski carpet of  area zero whose peripheral circles $\{\partial Q_i\}_{i\in \N\cup \{0\}}$ are $K_2$-quasicircles and they are $K_3$-relatively separated. Then $S$ can be mapped to a round Sierpi\'nski carpet $T\subset \C$ with peripheral circles $\{\partial C_i\}_{i\in \N\cup\{0\}}$ by an $\eta$-quasisymmetric homeomorphism $f\colon S\to T$ that maps the outer peripheral circle $\partial Q_0$ of $S$ to the outer peripheral circle $\partial C_0$ of $T$. The distortion function $\eta$ depends only on $K_2$ and $K_3$, and the map $f$ is unique up to post-composition with M\"obius transformations. 
\end{theorem}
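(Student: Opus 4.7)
The plan is to carry out Bonk's original argument, which uses a finite approximation scheme together with Koebe's theorem on circle domains and a compactness argument for uniformly quasiconformal maps.

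First I would introduce, for each $n\in \N$, the finitely connected domain
\[
\Omega_n \coloneqq \widehat{\C} \setminus \bigcup_{i=0}^{n} \br{Q_i},
\]
obtained by filling in all but the first $n+1$ peripheral disks. Since $\Omega_n$ is a planar domain with exactly $n+2$ complementary components (including $\br{Q_0}$ now playing the role of the unbounded one after a choice of orientation), by Koebe's uniformization theorem for finitely connected planar domains there exists a conformal map $g_n\colon \Omega_n \to \Omega_n'$ onto a circle domain $\Omega_n' \subset \widehat{\C}$, unique up to post-composition by a M\"obius transformation. Normalize the $g_n$ by requiring that $g_n(\partial Q_0)$ is the unit circle, with $g_n$ mapping the outside of $Q_0$ to the outside of the unit disk, and by fixing a third normalization point so the M\"obius ambiguity is removed. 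The first major task is then to extend each $g_n$ to a global homeomorphism $f_n$ of $\widehat{\C}$ and to prove that the family $\{f_n\}$ is uniformly quasiconformal, with distortion depending only on $K_2$ and $K_3$.

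The extensions are produced by quasiconformally filling in each removed disk $\br{Q_i}$ (for $i\leq n$): since $\partial Q_i$ is a $K_2$-quasicircle, the conformal map $g_n\big|_{\partial Q_i}$ onto a round circle extends across $\br{Q_i}$ as a $K$-quasiconformal homeomorphism with $K=K(K_2)$, via reflection and the Ahlfors–Beurling construction. To see that the resulting global maps $f_n$ are uniformly quasiconformal we need uniform control of the ratio of diameters of the ``conformal image'' disks and the geometry of how they sit relative to one another; this is where the uniform relative separation hypothesis enters, via a transboundary modulus argument of Schramm's type. More precisely, one shows that for any three points $x,y,z\in S$ on distinct peripheral circles (or suitably distributed) the ratio $|f_n(x)-f_n(y)|/|f_n(x)-f_n(z)|$ is controlled by $|x-y|/|x-z|$ with a distortion function depending only on $K_2$ and $K_3$; the key technical input is a two-sided comparison between the Euclidean geometry of $S$ and the transboundary modulus of curve families separating peripheral disks, combined with conformal invariance of the latter for $g_n$. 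This step is the main obstacle, because one must show that the geometric hypotheses on peripheral circles pass through the uniformization in a scale-invariant way.

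Once uniform quasiconformality is established, the normalized family $\{f_n\}$ is a normal family (equicontinuous on $\widehat{\C}$), and I would pass to a subsequential uniform limit $f\colon \widehat{\C}\to \widehat{\C}$, which is itself $K$-quasiconformal by standard compactness results for quasiconformal maps. Since each $g_n$ sends $\partial Q_i$ to a round circle for $i\le n$, the limit $f$ sends each $\partial Q_i$ to a round circle $\partial C_i$, and the uniform relative separation guarantees that the $\br{C_i}$ have pairwise disjoint closures and diameters tending to $0$; hence $T\coloneqq f(S) = \widehat{\C}\setminus \bigcup_{i\in \N\cup\{0\}} C_i$ is a round Sierpi\'nski carpet. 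The restriction $f\big|_S$ is then $\eta$-quasisymmetric with $\eta$ depending only on the prior distortion bounds, because a quasiconformal map between carpets of measure zero with uniform geometric control on peripheral circles is automatically quasisymmetric on the carpets (again using the three-point condition and relative separation).

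Finally, for the uniqueness statement: if $f_1,f_2$ are two such quasisymmetric uniformizations of $S$ onto round carpets, then $f_2\circ f_1^{-1}$ is a quasisymmetric homeomorphism between round carpets that preserves the outer peripheral circle. Its extension to $\widehat{\C}$ (constructed again by reflection/Ahlfors–Beurling across the round circles) is a global quasiconformal map that is conformal on the complement of a measure-zero carpet, hence conformal everywhere, hence a M\"obius transformation. This yields the claimed uniqueness up to M\"obius transformations and completes the proof.
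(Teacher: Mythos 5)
This theorem is not proved in the paper at all: it is quoted verbatim as Corollary~1.2 of \cite{Bonk:uniformization}, and the paper only summarizes Bonk's strategy (Koebe uniformization of the finitely connected domains $\Omega_n$, uniform quasiconformal extensions, a normal-family limit, and transboundary-modulus estimates to get quasisymmetry). Your existence argument reproduces exactly that strategy, and as a sketch it is consistent with the cited proof; the ``main obstacle'' you identify (uniform bounds on the dilatation of the extensions $f_n$, depending only on $K_2$ and $K_3$) is indeed the technical core of Bonk's paper, and you correctly name the tools used there.

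Your uniqueness argument, however, contains a genuine error. You claim that the Beurling--Ahlfors extension of $f_2\circ f_1^{-1}$ across the round disks ``is conformal on the complement of a measure-zero carpet, hence conformal everywhere.'' The complement of the round carpet $T_1$ is the union of the open round disks $C_i$, which has \emph{full} measure, and on each $C_i$ the Beurling--Ahlfors extension is only quasiconformal, not conformal; it would be conformal only if the boundary circle homeomorphism were already the restriction of a M\"obius transformation, which is precisely what one is trying to prove. So the Beltrami coefficient of your extension is supported on a set of full measure and nothing forces it to vanish. The correct argument is the rigidity theorem for Schottky sets (round carpets) of measure zero of Bonk--Kleiner--Merenkov, which is what Bonk invokes: one extends $f_2\circ f_1^{-1}$ \emph{equivariantly} with respect to the groups generated by reflections in the peripheral circles of $T_1$ and $T_2$, shows the resulting global map is quasiconformal, and then uses the invariance of its Beltrami coefficient under the reflection group together with the measure-zero hypothesis (via a Lebesgue density argument on the orbit of the carpet) to conclude that the coefficient vanishes almost everywhere. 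That iterated-reflection step is the missing idea; without it the uniqueness claim does not follow.
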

Here a round Sierpi\'nski carpet\index{Sierpi\'nski carpet!round} is a Sierpi\'nski carpet all of whose peripheral circles are round circles.

In our theorem, as already remarked, we do not have such a strong uniqueness statement. Nevertheless, we can obtain a uniqueness result.
\begin{prop}\label{unif:Proposition Uniqueness}\index{Sierpi\'nski carpet!rigidity}
Assume that $f$ is an orientation-preserving quasisymmetry from a Sierpi\'nski carpet $S$ of measure zero onto a square Sierpi\'nski carpet $\mathcal R$ that maps the outer peripheral circle of $S$ to the outer peripheral circle of $\mathcal R$, which is a rectangle $\partial([0,1]\times [0,D])$. Let $\Theta_1= f^{-1}(\{0\}\times[0,D])$ and $\Theta_3= f^{-1}( \{1\}\times [0,D])$ be the preimages of the two vertical sides of $\partial([0,1]\times [0,D])$. Assume also that $g$ is another orientation-preserving quasisymmetry from $S$ onto some other square carpet $\mathcal R'$, whose outer peripheral circle is a rectangle $\partial([0,1]\times [0,D'])$. If $g$ maps $\Theta_1$ onto $\{0\}\times [0,D']$ and $\Theta_3$ onto $\{1\}\times [0,D']$, then $g=f$ and $\mathcal R'=\mathcal R$.
\end{prop}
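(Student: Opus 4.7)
The plan is to reduce the proposition to the rigidity result for square carpets (Theorem~\ref{harmonic:Rigidity}) by passing to the composition $h \coloneqq g\circ f^{-1}\colon \mathcal R\to \mathcal R'$. Since quasisymmetries compose to quasisymmetries and both $f,g$ are orientation-preserving, $h$ is an orientation-preserving quasisymmetry between two square carpets. If I can verify that $h$ maps each of the four sides of $\partial([0,1]\times[0,D])$ to the corresponding side of $\partial([0,1]\times[0,D'])$, then Theorem~\ref{harmonic:Rigidity} (applied with $\mathcal R, \mathcal R'$ in place of $\br S, \br{S'}$) directly yields $D=D'$, $\mathcal R=\mathcal R'$, and $h=\mathrm{id}$, so $g=f$, as desired.

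Two of the four side-preservation properties hold for free: by the hypothesis, $h(\{0\}\times[0,D])=g(\Theta_1)=\{0\}\times[0,D']$ and $h(\{1\}\times[0,D])=g(\Theta_3)=\{1\}\times[0,D']$. For the horizontal sides, I would denote by $\Theta_2,\Theta_4$ the two closed subarcs of the outer peripheral circle $\partial Q_0$ that, together with $\Theta_1$ and $\Theta_3$, exhaust $\partial Q_0$; these four arcs appear in a definite cyclic order along the Jordan curve $\partial Q_0$. By the extension results in \cite[Section~5]{Bonk:uniformization}, both $f$ and $g$ extend to orientation-preserving homeomorphisms of $\widehat\C$, hence their restrictions to $\partial Q_0$ preserve cyclic order. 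Because both $f$ and $g$ send $\Theta_1$ to the left side and $\Theta_3$ to the right side of their respective target rectangles (in the same cyclic order of the four corners on the target), they are forced to make the same assignment of $\Theta_2,\Theta_4$ to the bottom and top sides, respectively. Consequently $h$ also maps bottom to bottom and top to top, completing the side-preservation argument. To legitimize Theorem~\ref{harmonic:Rigidity} one further needs $\mathcal H^2(\mathcal R)=\mathcal H^2(\mathcal R')=0$; this is immediate from the extension of $f,g$ to quasiconformal homeomorphisms of $\widehat \C$, which preserve sets of area zero.

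The only substantive point in the argument is the cyclic-order/orientation step, and it is essentially a topological observation about orientation-preserving homeomorphisms of Jordan curves sending specified arcs to specified arcs; I expect no significant obstacle there. Everything else is a direct appeal to Theorem~\ref{harmonic:Rigidity}, whose strong uniqueness conclusion (both the identification $\mathcal R=\mathcal R'$ and the identity equation $h=\mathrm{id}$) provides the full conclusion of the proposition in a single step.
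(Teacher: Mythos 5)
Your proposal is correct and is essentially the paper's intended argument: the paper disposes of this proposition in one line by citing Theorem \ref{harmonic:Rigidity} (equivalently \cite[Theorem 1.4]{BonkMerenkov:rigidity}), and your reduction via $h=g\circ f^{-1}$, together with the cyclic-order/orientation bookkeeping for the horizontal sides and the observation that the quasiconformal extension forces $\mathcal H^2(\mathcal R)=\mathcal H^2(\mathcal R')=0$, is exactly the verification needed to invoke that theorem. You have simply written out the details the paper leaves implicit.
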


The proof of this proposition follows from \cite[Theorem 1.4]{BonkMerenkov:rigidity}, or from Theorem \ref{harmonic:Rigidity} in Chapter \ref{Chapter:Harmonic} using the theory of carpet-harmonic functions.

Partial motivation for the current work was the desire to understand necessary conditions for the geometry of the peripheral disks of a carpet in order to obtain a quasisymmetric uniformization result. If one aims to obtain a uniformization result by round or square carpets, then a necessary assumption is that one of uniform quasicircles, since this quality is preserved under quasisymmetries, and both circles and squares share it. Hence, the question is whether the condition of uniform relative separation can be relaxed. However, the next  result implies that the condition is optimal in some sense.

\begin{prop}\label{unif:Proposition Equivalence of square carpet}
A square Sierpi\'nski carpet $\mathcal R$ of  area zero is quasisymmetrically equivalent to a round Sierpi\'nski carpet $T$ if and only if its peripheral circles are uniformly relatively separated. Conversely, a round Sierpi\'nski carpet $T$ of  area zero is quasisymmetrically equivalent to a square Sierpi\'nski carpet $\mathcal R$ if and only if its peripheral circles are uniformly relatively separated.
\end{prop}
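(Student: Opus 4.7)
Both statements in the proposition are dual ``if and only if'' assertions, and the plan is to handle them in parallel by combining the two existing uniformization theorems (Bonk's Theorem~\ref{unif:Theorem Bonk} and Theorem~\ref{unif:Main theorem-quasisymmetric}) with a quasisymmetric invariance lemma for uniform relative separation.

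For the two ``if'' directions, the key observation is that peripheral squares are $\sqrt{2}$-quasicircles and peripheral round circles are $1$-quasicircles, so in both cases the peripheral circles automatically form a family of uniform quasicircles. Hence, when uniform relative separation is additionally assumed: if $\mathcal{R}$ is a square carpet satisfying the hypothesis, Bonk's Theorem~\ref{unif:Theorem Bonk} applies and produces a quasisymmetry from $\mathcal{R}$ onto some round carpet $T$; dually, if $T$ is a round carpet satisfying the hypothesis, Theorem~\ref{unif:Main theorem-quasisymmetric} applies and produces a quasisymmetry from $T$ onto some square carpet $\mathcal{R}$.

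For the ``only if'' directions, the central tool is the following invariance lemma, which I will prove separately: if $f\colon S_1 \to S_2$ is an $\eta$-quasisymmetry between two Sierpi\'nski carpets whose peripheral circles are $K$-quasicircles on both sides and $f$ maps peripheral circles to peripheral circles, then the peripheral circles of $S_1$ are uniformly relatively separated if and only if those of $S_2$ are, with quantitative constants depending only on $\eta$ and $K$. The proof follows from the standard quasisymmetric comparison of diameters and distances together with the quasiball equivalence~\eqref{unif:Quasi-balls} for quasicircles (\cite[Proposition~4.3]{Bonk:uniformization}), which controls both the numerator and the denominator of $\Delta(A,B)$ under $f$ when $A,B$ are uniform quasicircles. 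Applied to the hypothesized QS map $f\colon \mathcal{R}\to T$, in which the peripheral squares of $\mathcal{R}$ and the peripheral round circles of $T$ are both families of uniform quasicircles, the lemma gives that $\mathcal{R}$'s squares are uniformly relatively separated if and only if $T$'s circles are.

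The main obstacle is the closing step of the ``only if'' directions, because the invariance lemma by itself only shows the two separation conditions are equivalent, not that either actually holds. To resolve this, I intend to combine the invariance lemma with a contrapositive argument: assume $\mathcal{R}$ is QS to $T$ but $\mathcal{R}$'s squares fail uniform relative separation; the lemma transfers the failure to $T$'s circles; then the uniqueness portion of Bonk's Theorem~\ref{unif:Theorem Bonk} (the round uniformization is unique up to M\"obius), applied together with the modulus control built into Theorem~\ref{unif:Main theorem} and the extension provided by Lemma~\ref{harmonic:Pullback Extension}, yields a contradiction with the existence of the QS map $f$. Executing this last contradiction cleanly is the most delicate piece of the argument and will require careful interplay between the modulus estimates of Theorem~\ref{unif:Main theorem}, Bonk's rigidity, and the quasiconformal extension to the sphere.
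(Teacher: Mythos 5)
Your ``if'' directions and your invariance lemma are fine and match the first paragraph of the paper's proof. The genuine gap is in the ``only if'' direction, and you have in fact located it yourself: the quasisymmetric invariance of uniform relative separation only transfers the \emph{failure} of the condition from $\mathcal R$ to $T$ (or back); it does not produce a contradiction. Your proposed way of closing this --- invoking the uniqueness part of Theorem \ref{unif:Theorem Bonk} together with the modulus control of Theorem \ref{unif:Main theorem} --- cannot work as stated, because Bonk's uniformization and its rigidity statement \emph{presuppose} uniformly relatively separated quasicircles; once you have assumed that $T$'s circles fail this condition, the theorem simply does not apply to $T$, and the modulus inequalities of Theorem \ref{unif:Main theorem} say nothing about whether a square carpet with badly separated squares can be quasisymmetric to a round carpet. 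So the key step of the proposition is missing.

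What is actually needed (and what the paper does) is a direct geometric obstruction. Take a quasiconformal extension $F\colon\widehat\C\to\widehat\C$ of the hypothesized quasisymmetry (via \cite[Section 5]{Bonk:uniformization}), pick a sequence of pairs of peripheral rectangles $W_n,Z_n$ with $\Delta(W_n,Z_n)\to 0$, rescale by similarities $g_n$ adapted to the near-contact configuration and normalize in the image by M\"obius maps $h_n$, and use compactness of normalized $K$-quasiconformal maps to pass to a limit $f$. The contradiction then comes from the incompatibility of the limiting configurations: if the squares tend to share a segment of a side, $f$ would have to collapse a nondegenerate segment to the single point where the two limit circles meet; if they tend to share a corner, $f$ would have to map a configuration of quasiarcs onto a pair of internally tangent circles, which contains arbitrarily short arcs through the tangency point that are not quasiarcs --- impossible for a global quasisymmetry. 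None of this blow-up argument, which is the substance of the proof, appears in your proposal, so the argument is incomplete.
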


Here, $S\subset \C$ is \textit{quasisymmetrically equivalent} to $T\subset \C$ if there exists a quasisymmetry $f\colon S\to T$.

Bonk's uniformizing map in Theorem \ref{unif:Theorem Bonk} is constructed as a limit of conformal maps of finitely connected domains onto finitely connected circle domains, using Koebe's uniformization theorem. The first finitely connected domains converge in some sense to the carpet $S$ and the latter finitely connected circle domains converge to a round carpet. Then, modulus estimates are used to study the properties of the limiting map, and show that it is a quasisymmetry.

In our approach, we construct an ``extremal" map from the carpet to a square carpet by working directly on the carpet, without employing a limiting argument based on finitely connected domains. This has the advantage that we can control the necessary assumptions on the peripheral circles at each step of the construction. Although we impose the assumptions of uniformly fat, uniform quasiballs for the peripheral disks $\{Q_i\}_{i\in \N}$, one can actually obtain Theorem \ref{unif:Main theorem} even if no assumptions at all are imposed on finitely many peripheral circles. To simplify the treatment we do not pursue this here, but we support our claim by remarking that no assumptions are imposed on the outer peripheral circle $\partial Q_0$, in the statement of Theorem \ref{unif:Main theorem}.

\subsection{Organization of the chapter}
In Section \ref{unif:Section Notation} we introduce some notation, and in Section \ref{unif:Section Preliminaries} we discuss preliminaries on quasisymmetric maps, quasiconformal maps, modulus, and exceptional families of paths. The latter will be used extensively throughout the chapter. 

The proof of the Main Theorem \ref{unif:Main theorem} will be given in Sections \ref{unif:Section the function u}--\ref{unif:Section Regularity}. First, in Section \ref{unif:Section the function u} we introduce the real part $u$ of the uniformizing map $f$. To obtain the map $u$ we use the theory of carpet-harmonic functions, developed in Chapter \ref{Chapter:Harmonic}. In fact, $u$ will be a solution to a certain boundary value problem. In Section \ref{unif:Section Level sets} we study the geometry of the level sets of the function $u$. The maximum principle is used in combination with an upper gradient inequality to deduce that almost every level set of $u$ is the intersection of a curve with the carpet $S$, and in fact this intersection has  Hausdorff $1$-measure zero. The latter is the most technical result of the section.

By ``integrating" the ``gradient" of $u$ along each level set, we define the conjugate function $v$ of $u$ in Section \ref{unif:Section Conjugate}. The proof of continuity and regularity properties of $v$ occupies the section. In Section \ref{unif:Section Definition of F} we define the uniformizing function $f\coloneqq (u,v)$ that maps the carpet $S$ into a rectangle $[0,1]\times [0,D]$  and the peripheral circles into squares. We prove that the squares are disjoint and ``fill up" the entire rectangle $[0,1]\times [0,D]$. In Section \ref{unif:Section Injectivity}  the injectivity of $f$ is established and one of the main lemmas in the section is to show that $f$ cannot ``squeeze" a continuum $E\subset S$ to a single point. Finally, in Section \ref{unif:Section Regularity} we prove regularity properties for $f$ and $f^{-1}$ and, in particular, the properties claimed in the Main Theorem \ref{unif:Main theorem}.

Theorem \ref{unif:Main theorem-quasisymmetric} is proved in Section \ref{unif:Section Quasisymmetric} with the aid of Loewner-type estimates for carpet modulus that are quoted in Section \ref{unif:Section Modulus Estimates} and were proved (in some other form) by Bonk in \cite{Bonk:uniformization}. Proposition \ref{unif:Proposition Equivalence of square carpet} is proved in Section \ref{unif:Section Square Round carpets}. Finally, in Section \ref{unif:Appendix} we construct a ``test function" that will be used frequently in variational arguments against the carpet-harmonic function $u$.

\section{Notation and terminology}\label{unif:Section Notation}
We denote $\widehat \R=\R \cup \{-\infty,+\infty\}$, $\widehat \C= \C \cup \{\infty\}$. A function that attains values in $\widehat \R$ is called an extended function. We use the standard open ball notation $B(x,r)=\{y\in \C: |x-y|<r\}$ and $\br B(x,r)$ is the closed ball. If $B=B(x,r)$ then $cB=B(x,cr)$. Also, $A(x;r,R)$ denotes the annulus $B(x,R)\setminus \br B(x,r)$, for $0<r<R$. All the distances will be in the Euclidean distance of $\C\simeq \R^2$. A point $x$ will denote most of the times a point of $\R^2$ and rarely we will use the notation $(x,y)$ for coordinates of a point in $\R^2$, in which case $x,y\in \R$. Each case will be clear from the context. 

The notation $V\subset \subset \Omega$ means that $\br V$ is compact and is contained in $\Omega$. For a set $K\subset \C$ and $\varepsilon >0$ we use the notation 
$$N_\varepsilon(K)=\{x\in \C: \dist(x,K)<\varepsilon\}$$
for the open $\varepsilon$-neighborhood of $K$. The symbols $\br V$, $\inter(V)$, and $\partial V$ denote the closure, interior and boundary, respectively, of a set $V$ with respect to the planar topology. If the reference space is a different set $U$, then we will write instead \textit{the closure of $V$ rel.\ $U$}, or use subscript notation $\inter_{U}(V)$ etc.

A \textit{path}\index{path} or \textit{curve}\index{curve} $\gamma$ is a continuous function $\gamma \colon I\to \C$, where $I$ is a bounded interval such that $\gamma$ has a continuous extension $\br \gamma:\br I\to \C$, i.e., $\gamma$ has endpoints. An \textit{open path} $\gamma$ is a path with $I=(0,1)$. We will also use the notation $\gamma \subset \C$ for the image of the path as a set. A \textit{subpath} of a path $\gamma:I\to \C$ is the restriction of $\gamma$ to a subinterval of $I$. A \textit{path $\gamma$ joins} two sets $E,F\subset \C$ if $\br \gamma \cap E\neq \emptyset$ and $\br \gamma\cap F\neq \emptyset$. More generally, a connected set $\alpha\subset \C$ joins two sets $E,F\subset \C$ if $\br \alpha \cap E\neq \emptyset$ and $\br \alpha\cap F\neq \emptyset$. A Jordan curve\index{Jordan curve} is a homeomorphic image of the unit circle $S^1$, and a Jordan arc\index{Jordan arc} is a homeomorphic image of $[0,1]$. Jordan curves and Jordan arcs are \textit{simple} curves, i.e., they have no self-intersections. 

We denote by $S^\circ$ the points of the Sierpi\'nski carpet $S$ that do not lie on any peripheral circle $\partial Q_i$ or on $\partial \Omega$. For a set $V$ that intersects a Sierpi\'nski carpet $S$ with (inner) peripheral disks $\{Q_i\}_{i\in \N}$ define $I_V=\{i\in \N: Q_i\cap V\neq \emptyset\}$.

In the proofs we will denote constants by $C,C',C'',\dots$, where the same symbol might  denote a different constant if there is no ambiguity. For visual purposes, the closure of a set $U_1$ is denoted by $\br U_1$, instead of $\br {U_1}$.

\section{Preliminaries}\label{unif:Section Preliminaries}

\subsection{Quasisymmetric and quasiconformal maps}

A map $f\colon X\to Y$ between two metric spaces $(X,d_X)$ and $(Y,d_Y)$ is an \textit{$\eta$-quasisymmetry}\index{quasisymmetry} if it is a homeomorphism and there exists an increasing homeomorphism $\eta\colon (0,\infty)\to (0,\infty)$ such that for all triples of distinct points $x,y,z\in X$ we have
\begin{align*}
\frac{d_Y(f(x),f(y))}{d_Y(f(x),f(z))} \leq \eta \left(\frac{d_X(x,y)}{d_X(x,z)} \right).
\end{align*}
The homeomorphism $\eta$ is also called the distortion function associated to the quasisymmetry $f$. If $X=Y=\C$, then it is immediate to see that the quasisymmetric property of $f\colon \C \to \C$ and the distortion function $\eta$ are not affected by compositions with M\"obius transformations of $\C$ (i.e., homotheties). A Jordan curve $J\subset \C$ is a quasicircle in the sense of \eqref{unif:Quasicircle} if and only if there exists a quasisymmetry $f\colon S^1 \to J$. The quasicircle constant and the distortion function of $f$ are related quantitatively. See \cite[Chapters 10--11]{Heinonen:metric} for background on quasisymmetric maps.  

Let $U,V\subset \C$ be open sets. An orientation-preserving homeomorphism $f\colon U\to V $ is is \textit{$K$-quasiconformal}\index{quasiconformal} for some $K>0$ if $f\in W^{1,2}_{\loc}(U)$ and
\begin{align*}
\|Df(z)\|^2\leq K J_f(z)
\end{align*}   
for a.e.\ $z\in U$. An orientation-preserving homeomorphism $f\colon  \widehat{\C} \to \widehat{\C}$ is $K$-quasiconformal if $f$ is $K$-quasiconformal in local coordinates as a map between planar open sets, using the standard conformal charts of $\widehat{\C}$. We direct the reader to \cite[Section 3]{AstalaIwaniecMartin:quasiconformal} for more background.

\subsection{Modulus}
We recall the definition of \textit{conformal modulus}\index{modulus!conformal modulus} or \textit{$2$-modulus}\index{modulus!$2$-modulus}. Let $\Gamma$ be a path family in $\C$. A non-negative Borel function $\lambda\colon \C \to \R$ is \textit{admissible} for the conformal modulus $\md_2(\Gamma)$ if 
\begin{align*}
\int_\gamma \lambda \, ds\geq 1
\end{align*}
for all locally rectifiable paths $\gamma\in \Gamma$. If a path $\gamma$ is not locally rectifiable, we define $\int_\gamma \lambda \, ds =\infty$, even when $\lambda \equiv 0$. Hence, we may require the above inequality for all $\gamma\in \Gamma$. Then $\md_2(\Gamma)\coloneqq \inf_\lambda \int \lambda^2 \, d\mathcal H^2$, where the infimum is taken over all admissible functions $\lambda$.

Let us mention a connection between conformal modulus and the carpet modulus defined in the introduction. Let $S\subset \br \Omega$ be a carpet with (inner) peripheral disks $\{Q_i\}_{i\in \N}$ and outer peripheral disk $Q_0=\C\setminus \br \Omega$. Suppose that $\mathcal H^2(S)=0$ and the peripheral disks are uniformly fat, uniform quasiballs. Consider a family $\Gamma$ of paths contained in $\C$.
\begin{lemma}\label{unif:Zero modulus lemma}
If $\md(\Gamma)=0$ then $\md_2(\Gamma)=0$.
\end{lemma}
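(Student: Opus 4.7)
The plan is to adapt the proof of Lemma \ref{harmonic:2-weak modulus zero implies two modulus zero}, the only new wrinkle being the unbounded outer peripheral disk $Q_0$, which is included in the admissibility sum for $\md$ but is not a quasiball. First I would split $\Gamma$ into $\Gamma_0\coloneqq\{\gamma\in\Gamma:\mathcal H^1(\gamma\cap S)>0\}$ and its complement. The function $\lambda=\infty\cdot\chi_S$ is admissible for $\md_2(\Gamma_0)$, and $\mathcal H^2(S)=0$ yields $\md_2(\Gamma_0)=0$, so by subadditivity it suffices to show $\md_2(\Gamma\setminus\Gamma_0)=0$. By monotonicity and countable subadditivity, it further suffices to show $\md_2(\Gamma_\delta)=0$ for each fixed $\delta>0$, where $\Gamma_\delta\subset\Gamma\setminus\Gamma_0$ is the subfamily of paths of diameter at least $\delta$. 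Constant paths in $\Gamma$ are excluded by $\md(\Gamma)=0$: at a point of $S$ no peripheral disk is hit and admissibility fails, while at a point of some $Q_{i_0}$ admissibility forces $\rho(Q_{i_0})\geq 1$ for every admissible weight.

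The key new observation is that an admissible weight of small $\ell^2$-norm must assign a small value to $Q_0$. Choose, for each $n\in\N$, an admissible weight $\rho_n$ with $\sum_{i\in\N\cup\{0\}}\rho_n(Q_i)^2<4^{-n-1}$, so in particular $\rho_n(Q_0)<1/2$. Then for every $\gamma\in\Gamma_\delta$,
\[
\sum_{\substack{i\in\N\\ Q_i\cap\gamma\neq\emptyset}}\rho_n(Q_i)\;\geq\;1-\rho_n(Q_0)\;\geq\;\tfrac{1}{2}.
\]
Setting $\rho\coloneqq\sum_n\rho_n$, the $\ell^2$-triangle inequality gives $\sum_{i\in\N}\rho(Q_i)^2<\infty$, while summing the lower bound over $n$ yields $\sum_{i\in\N,\,Q_i\cap\gamma\neq\emptyset}\rho(Q_i)=\infty$ for every $\gamma\in\Gamma_\delta$.

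Next I would use the uniform quasiball assumption \eqref{unif:Quasi-balls} to choose $B(x_i,r_i)\subset Q_i\subset B(x_i,R_i)$ with $R_i/r_i\leq K_0$ for each $i\in\N$, and set
\[
\lambda\;\coloneqq\;\sum_{i\in\N}\frac{\rho(Q_i)}{R_i}\,\chi_{B(x_i,2R_i)}.
\]
If $\gamma\in\Gamma_\delta$ meets $Q_i$ with $4R_i<\delta$, then $\diam(\gamma)\geq\delta>\diam(B(x_i,2R_i))$ forces $\gamma$ to exit $B(x_i,2R_i)$, contributing at least $\rho(Q_i)$ to $\int_\gamma\lambda\,ds$; if $\gamma$ is unbounded this exit condition is automatic for every $i$. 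By Lemma \ref{harmonic:Fatness consequence} combined with $\diam(Q_i)\geq r_i\geq R_i/K_0$, any bounded $\gamma\in\Gamma_\delta$ meets only finitely many $Q_i$ with $4R_i\geq\delta$; deleting this finite sub-sum from the divergent series $\sum_{i:Q_i\cap\gamma\neq\emptyset}\rho(Q_i)$ still leaves an infinite tail, so $\int_\gamma\lambda\,ds=\infty$. Hence every rescaling $\varepsilon\lambda$ with $\varepsilon>0$ is admissible for $\md_2(\Gamma_\delta)$. Since the inscribed balls $B(x_i,r_i)$ are disjoint, Lemma \ref{harmonic:Bojarski} applied with dilation factor $2K_0$ gives
\[
\|\lambda\|_2\;\leq\;C\Bigl\|\sum_{i\in\N}\tfrac{\rho(Q_i)}{R_i}\chi_{B(x_i,r_i)}\Bigr\|_2\;\leq\;C'\Bigl(\sum_{i\in\N}\rho(Q_i)^2\Bigr)^{1/2}\;<\;\infty,
\]
and letting $\varepsilon\to 0$ in $\md_2(\Gamma_\delta)\leq\varepsilon^2\|\lambda\|_2^2$ yields $\md_2(\Gamma_\delta)=0$, hence $\md_2(\Gamma)=0$.

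The only real obstacle, beyond bookkeeping, is the presence of the unbounded outer peripheral disk $Q_0$ in the admissibility sum defining $\md$. It cannot be incorporated into the Bojarski-type function $\lambda$, but the observation that $\rho(Q_0)<1/2$ for any admissible $\rho$ of sufficiently small norm lets us simply ignore $Q_0$ at the cost of a harmless factor of two in the admissibility inequality, after which the classical argument of Chapter~\ref{Chapter:Harmonic} goes through verbatim.
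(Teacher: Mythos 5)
Your proof is correct and follows essentially the same route as the paper, which simply refers back to Lemmas \ref{harmonic:2-weak modulus less than strong} and \ref{harmonic:2-weak modulus zero implies two modulus zero}: you reproduce the splitting off of $\{\gamma:\mathcal H^1(\gamma\cap S)>0\}$, the reduction to $\Gamma_\delta$, and the quasiball--Bojarski construction of the admissible $\lambda$. Your explicit treatment of $Q_0$ (an admissible weight of small $\ell^2$-norm has $\rho(Q_0)<1/2$, so the sum over $i\in\N$ alone is at least $1/2$) is exactly the detail the paper leaves implicit, and it is handled correctly.
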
 
For the proof see Lemma \ref{harmonic:2-weak modulus less than strong} and Lemma \ref{harmonic:2-weak modulus zero implies two modulus zero}. 

If a property (A) holds for all paths $\gamma$ in $\C$ lying outside an exceptional family of $2$-modulus zero, we say that (A) \textit{holds for $\md_2$-a.e.\ $\gamma$}. Furthermore, if a property (A) holds for $\md_2$-a.e.\ path, then $\md_2$-a.e.\ path has the property that all of its subpaths also share property (A). Equivalently, the family of paths having a subpath for which property (A) fails has $2$-modulus zero. The reason is that the family of admissible functions for this curve family contains the admissible functions for the family of curves for which property (A) fails; see also \cite[Theorem 6.4]{Vaisala:quasiconformal}. The same statement is true for the carpet modulus.

A version of the next lemma can be found in \cite[Lemma 2.2]{BonkMerenkov:rigidity} and \cite{Bojarski:inequality}.

\begin{lemma}\label{unif:Bojarski}
Let $\kappa\geq 1$ and $I$ be a countable index set. Suppose that $\{B_i\}_{i\in I}$ is a collection of balls in the plane $\C$, and $a_i$, $i\in I$, are non-negative real numbers. Then there exists a constant $C>0$ depending only on $\kappa$ such that
\begin{align*}
\biggl \| \sum_{i\in I} a_i\x_{\kappa B_i} \biggr\|_2 \leq C \biggl\| \sum_{i\in I} a_i \x_{B_i} \biggr\|_2.
\end{align*}
\end{lemma}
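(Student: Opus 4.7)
The plan is to reduce to the $L^2$-boundedness of the Hardy--Littlewood maximal operator $M$ via duality. By $L^2$-duality it suffices to prove that for every non-negative $g \in L^2(\mathbb{R}^2)$ with $\|g\|_2 = 1$,
\[
\int_{\mathbb{R}^2} g(x) \sum_{i \in I} a_i \chi_{\kappa B_i}(x)\, dx \;\leq\; C \biggl\|\sum_{i \in I} a_i \chi_{B_i}\biggr\|_2,
\]
with $C$ depending only on $\kappa$, since the left-hand side then equals $\bigl\|\sum_i a_i \chi_{\kappa B_i}\bigr\|_2$ after taking the supremum over such $g$.

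The key pointwise estimate I would establish first is: for each ball $B_i$,
\[
\int_{\kappa B_i} g\, dx \;\leq\; \kappa^2 \int_{B_i} M(g)\, dx.
\]
Indeed, for any $x \in B_i$ the definition of $M$ gives $M(g)(x) \geq |\kappa B_i|^{-1} \int_{\kappa B_i} g$, i.e.\ $\int_{\kappa B_i} g \leq \kappa^2 |B_i|\, M(g)(x)$; averaging this inequality over $x \in B_i$ yields the claim.

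Summing against $a_i \geq 0$, swapping sum and integral, and applying Cauchy--Schwarz followed by the Hardy--Littlewood maximal theorem on $L^2(\mathbb{R}^2)$ then gives
\[
\sum_i a_i \int_{\kappa B_i} g \;\leq\; \kappa^2 \int_{\mathbb{R}^2} M(g)(x) \sum_i a_i \chi_{B_i}(x)\, dx \;\leq\; \kappa^2 \|M(g)\|_2 \,\biggl\|\sum_i a_i \chi_{B_i}\biggr\|_2 \;\leq\; C \biggl\|\sum_i a_i \chi_{B_i}\biggr\|_2,
\]
since $\|g\|_2 = 1$. Taking the supremum over $g$ finishes the proof. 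There is no real obstacle here beyond invoking the maximal inequality; the only mild point is ensuring that the sums converge, which one can justify by truncation (work with finite subsets $I' \subset I$ and pass to the limit using monotone convergence, since the constant $C = C(\kappa)$ does not depend on $I'$).
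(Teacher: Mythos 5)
Your argument is correct and complete: the pointwise estimate $\int_{\kappa B_i} g \leq \kappa^2 \int_{B_i} M(g)$, Tonelli to swap sum and integral, Cauchy--Schwarz, and the $L^2$-boundedness of the maximal operator give exactly the claimed bound with $C=\kappa^2\|M\|_{L^2\to L^2}$. The paper does not prove this lemma itself but cites it to Bojarski and to Bonk--Merenkov, where the standard proof is precisely this maximal-function duality argument, so your route coincides with the referenced one.
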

Here $\|\cdot\|_2$ denotes the $L^2$-norm with respect to planar Lebesgue measure.

\subsection{Existence of paths}\label{unif:Section Existence of paths}
Here we mention some results that provide us with paths that avoid exceptional path families. These paths will be very useful in the proof of injectivity of the uniformizing function $f$ in Section \ref{unif:Section Injectivity}, and also in the proof of the regularity of $f^{-1}$ in Section \ref{unif:Section Regularity}. A proof of the next proposition can be found in \cite[Theorem 3]{Brown:distancesets}.

\begin{prop}\label{unif:Paths-distance function}
Let $\beta \subset \C$ be a path that joins two non-trivial continua $E,F \subset \C$. Consider the distance function $\psi(x)=\dist(x,\beta )$. Then there exists $\delta>0$ such that for a.e.\ $s\in (0,\delta)$ there exists a simple path $\beta_s\subset \psi^{-1}(s)$ joining $E$ and $F$.  
\end{prop}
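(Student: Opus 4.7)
The plan is to take $\beta_s$ to be a subarc of the ``outer'' Jordan curve component of the distance level set $\psi^{-1}(s)$, and to obtain the measure-zero exceptional set from a Sard-type analysis of $\psi$. First, I would replace $\beta$ by its closure $\br\beta$, which is a continuum. Since $\psi$ is $1$-Lipschitz,
\[
K_s \coloneqq \psi^{-1}([0,s]) = \bigcup_{z\in\br\beta}\br B(z,s)
\]
is a union of closed disks centered on the continuum $\br\beta$, hence itself a compact continuum. Consequently the unbounded component $U_s$ of $\widehat\C\setminus K_s$ is simply connected in $\widehat\C$, and its topological boundary $J_s \coloneqq \partial U_s$ is a continuum contained in $\psi^{-1}(s)$.

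Next I would arrange the endpoint condition. Since $E$ and $F$ are non-trivial continua that are (implicitly) not contained in $\br\beta$, I can choose $\delta>0$ so small that $E\setminus K_\delta$ and $F\setminus K_\delta$ are both nonempty. Any point of $E\setminus K_\delta$ lies in the unbounded component of $\widehat\C\setminus K_\delta$, hence in $U_s$ for every $s\leq\delta$; as $E$ also meets $\br\beta\subset K_s$, the connectedness of $E$ forces $E\cap J_s\neq\emptyset$, and likewise $F\cap J_s\neq\emptyset$. The central step is then to prove that for a.e.\ $s\in(0,\delta)$ the set $J_s$ is a Jordan curve. I would follow the standard scheme for distance functions in the plane: the \emph{medial axis} $M$ of $\br\beta$ (the set of points with more than one nearest point on $\br\beta$) satisfies $\mathcal H^1(\psi(M))=0$, a Sard-type fact derivable from the $C^{1,1}$-semiconcavity of $\psi^2$ off $M$ together with a covering argument; for $s\notin \psi(M)$, every point of $\psi^{-1}(s)$ has a unique nearest point on $\br\beta$, and $\psi$ is locally a Lipschitz submersion with non-vanishing gradient there, so $\psi^{-1}(s)$ is locally a simple arc. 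Combined with the compactness of $\psi^{-1}(s)$, this forces $\psi^{-1}(s)$ to be a finite disjoint union of Jordan curves, with $J_s$ among them.

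Having $J_s$ as a Jordan curve that meets both $E$ and $F$, I would pick points $p\in E\cap J_s$ and $q\in F\cap J_s$ and take $\beta_s$ to be either of the two subarcs of $J_s$ with endpoints $p$ and $q$. Being a subarc of a Jordan curve, $\beta_s$ is simple, it is contained in $\psi^{-1}(s)$, and it joins $E$ to $F$, as required. The main obstacle is the Sard-type regularity in the middle step: carefully controlling the medial axis so as to conclude that $\psi^{-1}(s)$ is a finite disjoint union of Jordan curves for almost every $s$. This is precisely the content of Brown's theorem \cite{Brown:distancesets}, so in practice I would invoke that result rather than reproving it.
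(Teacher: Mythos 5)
The paper offers no proof of this proposition at all: it is imported verbatim from Brown's paper (the citation \cite[Theorem 3]{Brown:distancesets} immediately before the statement), so your closing remark that you would ``invoke that result rather than reproving it'' is in fact exactly what the paper does. The issue is with the self-contained sketch you give before that remark, which contains a genuine gap.

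The gap is the assertion that any point of $E\setminus K_\delta$ lies in the \emph{unbounded} component of $\widehat\C\setminus K_\delta$, hence in $U_s$ for $s\le\delta$. A point outside $K_s$ may perfectly well sit in a \emph{bounded} complementary component of $K_s$, in which case $E$ meets the boundary of that bounded component, not $J_s=\partial U_s$. Concretely, let $\br\beta$ be the unit circle together with the segment from $(1,0)$ to $(3,0)$ (the image of a legitimate, non-simple path), let $E=[0,1]\times\{0\}$ and $F=[3,4]\times\{0\}$. For small $s$ the level set $\psi^{-1}(s)$ is the disjoint union of the circle of radius $1-s$ and an outer Jordan curve; $E$ meets only the inner circle and $F$ meets only the outer curve, so \emph{no} connected subset of $\psi^{-1}(s)$ joins $E$ to $F$. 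This shows that the strategy ``take a subarc of the outer Jordan curve $J_s$'' cannot succeed without first controlling which complementary components of $K_s$ the sets $E$ and $F$ enter --- and that this is not merely a defect of your write-up but a genuine constraint on the statement itself (some implicit hypothesis, e.g.\ that $E$ and $F$ are accessible from a common complementary component, is being used). The Sard/medial-axis part of your sketch (that $\psi^{-1}(s)$ is a $1$-manifold for a.e.\ $s$) is the part that is standard; the part you treat as automatic --- that a \emph{single} component of $\psi^{-1}(s)$ reaches both $E$ and $F$ --- is precisely the nontrivial content one gets from Brown's theorem, which is why the paper quotes it rather than proving it.
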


For the next lemma we assume that the carpet $S$ and $\Omega$ are as in the assumptions of Theorem \ref{unif:Main theorem}, so in particular, the peripheral disks of $S$ are uniformly fat, uniform quasiballs.

\begin{lemma}\label{unif:Paths joining continua}
Suppose $\beta\subset \Omega$ is a path joining two non-trivial continua $E,F\subset \br \Omega$. Consider the distance function $\psi(x)=\dist(x,\beta)$ and let $\Gamma$ be a given family in $\Omega$ of carpet modulus or conformal modulus equal to zero. Then, there exists $\delta>0$ such that for a.e.\ $s\in (0,\delta)$ there exists a simple open path $\beta_s\subset \psi^{-1}(s)$ that lies in $\Omega$, joins the continua $E$ and $F$, and lies outside the family $\Gamma$.
\end{lemma}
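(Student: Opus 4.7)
The plan is to reduce everything to the case of conformal modulus via Lemma \ref{unif:Zero modulus lemma}, and then run a co-area/Cauchy--Schwarz argument on the level sets of the $1$-Lipschitz function $\psi$. This is the same strategy used in Lemma \ref{harmonic:Paths joining continua} of the previous chapter, and I would mirror that proof.

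First, I would note that if $\md(\Gamma)=0$ then $\md_2(\Gamma)=0$ by Lemma \ref{unif:Zero modulus lemma}, so from the outset we may assume $\md_2(\Gamma)=0$. Next, since $\beta\subset\Omega$ and $\br\beta$ is compact, there exists some $\delta_0>0$ with $N_{\delta_0}(\beta)\subset\subset\Omega$. Applying Proposition \ref{unif:Paths-distance function} to $\beta$ (viewed as joining the continua $E,F$) yields a $\delta_1>0$ such that for a.e.\ $s\in(0,\delta_1)$ there is a simple path $\beta_s\subset\psi^{-1}(s)$ joining $E$ to $F$. Taking $\delta=\min\{\delta_0,\delta_1\}$ ensures in addition that $\beta_s\subset N_{\delta_0}(\beta)\subset\Omega$, so $\beta_s$ is automatically a path in $\Omega$.

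The heart of the argument is to exclude the exceptional family. Fix any $\varepsilon>0$ and choose a Borel admissible function $\lambda\colon\C\to[0,\infty]$ for $\md_2(\Gamma)$ with $\|\lambda\|_2<\varepsilon$. Let
\[
J=\{s\in(0,\delta):\beta_s\in\Gamma\},\qquad J'=\Bigl\{s\in(0,\delta):\int_{\psi^{-1}(s)}\lambda\,d\mathcal H^1\geq 1\Bigr\}.
\]
By admissibility of $\lambda$ we have $J\subset J'$, and $J'$ is measurable because $\psi$ is $1$-Lipschitz and Proposition \ref{harmonic:Coarea} makes the function $s\mapsto\int_{\psi^{-1}(s)}\lambda\,d\mathcal H^1$ measurable. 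Combining the co-area formula with the Cauchy--Schwarz inequality,
\[
\mathcal H^1(J)\leq\mathcal H^1(J')\leq\int_{J'}\!\!\int_{\psi^{-1}(s)}\lambda\,d\mathcal H^1\,ds\leq C\int_{N_{\delta}(\beta)}\lambda\,d\mathcal H^2\leq C\,\mathcal H^2(N_\delta(\beta))^{1/2}\|\lambda\|_2<C'\varepsilon.
\]
Letting $\varepsilon\to 0$ gives $\mathcal H^1(J)=0$, so for a.e.\ $s\in(0,\delta)$ the path $\beta_s$ itself lies outside $\Gamma$.

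The only mildly subtle point is the measurability of $J'$ (and the applicability of the co-area formula for the $1$-Lipschitz function $\psi$), but this is exactly what Proposition \ref{harmonic:Coarea} provides, so no new obstacle appears. The rest is bookkeeping: the argument is a direct transcription of the proof of Lemma \ref{harmonic:Paths joining continua}, with the only change being that the outer peripheral disk $Q_0$ is now allowed in the carpet modulus, which does not affect the reduction $\md(\Gamma)=0\Rightarrow\md_2(\Gamma)=0$.
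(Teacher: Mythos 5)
Your co-area/Cauchy--Schwarz argument is fine as far as it goes, but it only reproduces the interior case, and the step ``since $\beta\subset\Omega$ and $\br\beta$ is compact, there exists $\delta_0>0$ with $N_{\delta_0}(\beta)\subset\subset\Omega$'' is false in exactly the situation this lemma was generalized to cover. The hypothesis allows $E,F\subset\br\Omega$, not $E,F\subset\subset\Omega$; if, say, $E$ is an arc of $\partial\Omega$ (as happens repeatedly in Chapter 3, where $E$ or $F$ is one of the boundary arcs $\Theta_j$), then a path $\beta\subset\Omega$ joining $E$ to $F$ must have $\br\beta\cap\partial\Omega\neq\emptyset$, and no neighborhood $N_{\delta_0}(\beta)$ is compactly contained in $\Omega$. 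Consequently the simple paths $\beta_s\subset\psi^{-1}(s)$ produced by Proposition \ref{unif:Paths-distance function} need not lie in $\Omega$ at all --- they can pass through $Q_0=\C\setminus\br\Omega$ --- and your claim that they are ``automatically paths in $\Omega$'' breaks down. This is why the conclusion speaks of a simple \emph{open} path: the paths must be truncated at their first/last encounter with $\partial\Omega$, and one must check that the truncated piece still lands on $E$ and on $F$. That is the content of Lemma \ref{harmonic:Paths boundary}, and the paper's proof explicitly combines Lemma \ref{harmonic:Paths joining continua} (your argument) with Lemma \ref{harmonic:Paths boundary} (the boundary truncation) to cover both cases.

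Two smaller remarks. First, the modulus estimate itself survives the boundary case: $N_\delta(\beta)$ is still bounded, so $\mathcal H^2(N_\delta(\beta))^{1/2}\|\lambda\|_2<C'\varepsilon$ holds without the compact-containment assumption; what fails is only the topological claim that $\beta_s\subset\Omega$. Second, after truncating $\beta_s$ to a subpath you need that subpath to avoid $\Gamma$, not merely $\beta_s$ itself; this is harmless with your setup, since any subpath of $\psi^{-1}(s)$ lying in $\Gamma$ already forces $\int_{\psi^{-1}(s)}\lambda\,d\mathcal H^1\geq1$ and hence $s\in J'$, but it is worth stating, since non-exceptionality of a path does not in general pass to its subpaths.
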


For a proof see Lemma \ref{harmonic:Paths joining continua} and Lemma \ref{harmonic:Paths boundary}. We also include some topological facts.

\begin{lemma}[Lemma \ref{harmonic:Paths in S^o}]\label{unif:Paths in S^o}
Let $S\subset \C$ be a Sierpi\'nski carpet.
\begin{enumerate}[\upshape(a)]
\item For any $x,y\in S$ there exists an open path $\gamma \subset S^\circ$ that joins $x$ and $y$. Moreover, for each $r>0$, if $y$ is sufficiently close to $x$, the path $\gamma$ can be taken so that $\gamma \subset B(x,r)$.
\item For any two peripheral disks there exists a Jordan curve $\gamma\subset S^\circ$ that separates them. Moreover, $\gamma$ can be taken to be arbitrarily close to one of them.
\end{enumerate}
\end{lemma}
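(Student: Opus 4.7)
The proof plan is to reduce both statements to the topology of $S^2$ minus a countable set of points, by means of Moore's decomposition theorem (the version already recorded as Theorem \ref{harmonic:Decomposition}). Since $\diam(Q_i)\to 0$ and the closures $\br Q_i$ are pairwise disjoint, we may apply that theorem to the collection $\{Q_i\}_{i\in \N\cup\{0\}}$ with $U=\widehat{\C}$, obtaining a continuous surjection $F\colon S^2\to S^2$ which collapses each $\br Q_i$ to a distinct point $p_i$ and is a homeomorphism on the complement of $\bigcup_i \br Q_i$. The set $F(S)=S^2\setminus\{p_i\}$ is thus a sphere with countably many punctures, in which we can move around freely and which is locally path connected at every point of $F(S^\circ)$.

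For part (a), I would split into three cases according to where $x,y$ lie. If both $x,y\in S^\circ$, then $F(x),F(y)\in S^2\setminus\{p_i\}$ are distinct, so they can be joined by an open path $\tilde\alpha\subset S^2$; after a small perturbation (using that the $p_i$ are countable and $S^2\setminus\{p_i\}$ is a dense open set) we may take $\tilde\alpha\subset S^2\setminus\{p_i\}$, and then $\gamma\coloneqq F^{-1}(\tilde\alpha)\subset S^\circ$ is the required open path. The local version — that for $y$ close enough to $x$ we can keep $\gamma\subset B(x,r)$ — follows because $F$ is continuous and the preimage of a small neighborhood of $F(x)$ in $S^2\setminus\{p_i\}$ is a small neighborhood of $x$ in $S$. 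If $x\in\partial Q_{i_0}$ and $y\in S^\circ$, I would redo Moore's theorem keeping $\br Q_{i_0}$ uncollapsed (apply Theorem \ref{harmonic:Decomposition} to $\{Q_i\}_{i\ne i_0}$ with a suitable $U$), and then join $\tilde x=F(x)\in\partial F(Q_{i_0})$ to $\tilde y=F(y)$ by an open path in $\widehat{\C}\setminus\br{F(Q_{i_0})}$ avoiding the remaining punctures; after noting that by a further homeomorphism of $S^2$ we may assume $F(Q_{i_0})$ is a round disk, such a path is routine. The case $x\in\partial Q_{i_0}$, $y\in\partial Q_{i_1}$ is reduced to the previous two by first joining $x$ (and $y$) to nearby points of $S^\circ$.

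For part (b), fix two peripheral disks $\br Q_{i_0},\br Q_{i_1}$ and apply Moore's theorem to all peripheral disks, getting points $p_{i_0}\ne p_{i_1}$ in $S^2$. Choose a round Jordan curve $\tilde\gamma\subset S^2$ that separates $p_{i_0}$ from $p_{i_1}$ and avoids the countably many points $\{p_i\}$; such a curve exists because the radii can be chosen from an uncountable one-parameter family, only countably many of which can hit one of the $p_i$. Its preimage $\gamma=F^{-1}(\tilde\gamma)$ is a Jordan curve contained in $S^\circ$ which separates $\br Q_{i_0}$ from $\br Q_{i_1}$; taking $\tilde\gamma$ arbitrarily close to $p_{i_0}$ yields $\gamma$ arbitrarily close to $\br Q_{i_0}$ by continuity of $F$.

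I expect the main subtlety to be the \emph{quantitative localization} statements (the ``sufficiently close'' clause in (a) and the ``arbitrarily close'' clause in (b)), since $F^{-1}$ need not be uniformly continuous. The key observation resolving this is that for any $x\in S$ and any planar neighborhood $V$ of $x$, the image $F(V)$ contains a neighborhood of $F(x)$ in $S^2$ (because $F$ is closed and surjective), so we can choose $\tilde\alpha$ inside that neighborhood and its preimage will automatically lie in $V$. Everything else is bookkeeping about modifying Moore's theorem to spare finitely many peripheral disks from collapse, which is handled as in the proof of Lemma \ref{harmonic:Paths in S^o}.
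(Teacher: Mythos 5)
Your proposal is correct and follows essentially the same route as the paper's proof of Lemma \ref{harmonic:Paths in S^o}: apply the decomposition theorem (Theorem \ref{harmonic:Decomposition}) to collapse the peripheral disks, perturb paths and round circles in the punctured sphere off the countable set of collapse points, and lift, handling an endpoint on a peripheral circle exactly as the paper does by sparing that disk from collapse. The only cosmetic differences are that you also collapse the outer disk $Q_0$ (harmless, since its spherical diameter need not be small and the fibers over non-collapse points are still singletons, so $S^\circ$ is recovered as the preimage of the punctured sphere) and that your localization argument via closedness of $F$ is equivalent to the paper's continuity-plus-injectivity argument.
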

In other words, the conclusion of the first part is that $x,y\in \br \gamma$, but $\gamma$ does not intersect any peripheral circle $\partial Q_i$, $i\in \N\cup \{0\}$. As a corollary of the second part of the lemma, we obtain:
\begin{corollary}\label{unif:cor:paths in S^o}
Let $S\subset \C$ be a Sierpi\'nski carpet and $\gamma\subset \C$ be a path that connects two distinct peripheral disks of $S$. Then $\gamma$ has to intersect $S^\circ$. 
\end{corollary}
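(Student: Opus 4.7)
The statement is a direct topological consequence of Lemma \ref{unif:Paths in S^o}(b) together with a connectedness argument. My plan is as follows. Let $Q_i$ and $Q_j$ denote the two distinct peripheral disks that $\gamma$ connects, so by the definition of ``joining'' we have $\br\gamma\cap Q_i\neq\emptyset$ and $\br\gamma\cap Q_j\neq\emptyset$. By Lemma \ref{unif:Paths in S^o}(b), I can choose a Jordan curve $\gamma_0\subset S^\circ$ that separates $Q_i$ from $Q_j$. Since $\gamma_0\subset S^\circ$ is disjoint from every peripheral circle, and the $Q_k$'s are pairwise disjoint from the peripheral circles of other $Q_l$'s, the connected sets $Q_i$ and $Q_j$ are each entirely contained in one (and distinct) component of $\C\setminus\gamma_0$; call these components $U_i$ and $U_j$.

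Next I would verify that the image $\gamma(I)$ itself (not merely its closure $\br\gamma(\br I)$) meets both $U_i$ and $U_j$. Consider a parameter $t_i\in\br I$ with $\br\gamma(t_i)\in Q_i$. If $t_i\in I$, there is nothing to do. Otherwise $t_i$ is an endpoint of $\br I$, but since $Q_i$ is open and $\br\gamma$ is continuous, for parameters $t\in I$ sufficiently close to $t_i$ we have $\gamma(t)\in Q_i\subset U_i$. The same argument applied to $t_j$ shows that $\gamma(I)$ meets $U_j$ as well. Since $\gamma(I)$ is the continuous image of a connected interval and meets both $U_i$ and $U_j$, it must intersect $\gamma_0$. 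As $\gamma_0\subset S^\circ$, we conclude that $\gamma\cap S^\circ\neq\emptyset$.

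There is no substantive obstacle here; the only subtle point is ensuring that the crossing of the separating Jordan curve $\gamma_0$ happens at an actual value of the open parametrization $\gamma$ rather than only at a boundary limit of $\br\gamma$, and this is handled by the openness of the peripheral disks.
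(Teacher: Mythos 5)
Your proof is correct and follows exactly the route the paper intends: the paper states this corollary as an immediate consequence of Lemma \ref{unif:Paths in S^o}(b) without writing out the details, and your argument (separating Jordan curve in $S^\circ$, connectedness of $\gamma(I)$, plus the openness of the $Q_i$ to handle the open parametrization) is precisely the omitted verification.
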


We finish the section with a technical lemma:

\begin{lemma}\label{unif:lemma:paths zero hausdorff}
Let $S\subset \C$ be a Sierpi\'nski carpet and $\gamma \subset \C$ be a non-constant path with $\mathcal H^1(\gamma\cap S)=0$. 
\begin{enumerate}[\upshape(a)]
\item If $x\in \gamma\cap S^\circ$, then arbitrarily close to $x$ we can find peripheral disks $Q_i$ with $Q_i\cap \gamma\neq \emptyset$. 
\item If $\gamma$ is an open path that does not intersect a peripheral disk $ Q_{i_0}$, $i_0\in \N\cup \{0\}$, and $x\in \br \gamma\cap \partial Q_{i_0}$, then arbitrarily close to $x$ we can find peripheral disks $Q_i$, $i\neq i_0$, with $Q_i\cap \gamma\neq \emptyset$. 
\end{enumerate}
\end{lemma}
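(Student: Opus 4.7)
The plan is to prove both parts simultaneously by contradiction, using only that $\gamma$ is a path (hence a connected set coming from a continuous map on an interval) and the $\mathcal H^1$-hypothesis. Suppose the conclusion of (a) or (b) fails at the point $x$. Then there exists $\varepsilon>0$ such that no peripheral disk $Q_i$ (with $i\neq i_0$ in case (b)) meeting the ball $B(x,\varepsilon)$ intersects $\gamma$. Since $\gamma$ is non-constant, by shrinking $\varepsilon$ if necessary we may further assume that $\gamma$ is not contained in $B(x,\varepsilon)$.

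Next I would observe that every point $y\in \gamma\cap B(x,\varepsilon)$ must lie in $S$: such a $y$ either lies in $S$ or in some peripheral disk $Q_i$; in case (a) the latter is excluded by the contradiction assumption, and in case (b) it is excluded since $\gamma\cap Q_{i_0}=\emptyset$ by hypothesis and no other $Q_i$ meeting $B(x,\varepsilon)$ intersects $\gamma$ by the contradiction assumption. Hence $\gamma\cap B(x,\varepsilon)\subset S$.

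The key step is then to produce a subcontinuum $\beta\subset \gamma\cap B(x,\varepsilon)$ with $\diam(\beta)\geq \varepsilon/2$. Parametrize the path (using $\br\gamma\colon \br I\to \C$ in case (b)), and pick $t_0\in \br I$ with $\br\gamma(t_0)=x$ (in case (a) we can take $t_0\in I$) and some $t_1\in I$ with $|\gamma(t_1)-x|>\varepsilon/2$, which exists because $\gamma\not\subset B(x,\varepsilon)$. By continuity, let $t^*$ between $t_0$ and $t_1$ be the value closest to $t_0$ for which $|\br\gamma(t^*)-x|=\varepsilon/2$. Setting $\beta\coloneqq \br\gamma([t_0,t^*])$ (or the analogous reversed interval), we obtain a continuum contained in $\br B(x,\varepsilon/2)\subset B(x,\varepsilon)$ with $\diam(\beta)\geq \varepsilon/2$, and by the previous paragraph $\beta\subset S$. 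Since $\beta$ is a non-trivial continuum, a standard inequality gives $\mathcal H^1(\beta)\geq \diam(\beta)\geq \varepsilon/2$, hence $\mathcal H^1(\gamma\cap S)\geq \varepsilon/2>0$, contradicting the hypothesis.

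The only mildly delicate point will be handling case (b), where $x$ might only lie in $\br\gamma\setminus \gamma$. In that case the value $t_0\in \{0,1\}$ is an endpoint of the closed extension, and I must take care to extract the continuum $\beta$ from the image of $\br\gamma$ rather than $\gamma$ itself, and to verify that $\beta\setminus\{x\}\subset \gamma\cap B(x,\varepsilon)$, which still suffices because $\beta$ is connected of diameter $\geq \varepsilon/2$ and differs from $\gamma\cap B(x,\varepsilon)$ by at most the single point $x$, so $\mathcal H^1(\gamma\cap S)\geq \mathcal H^1(\beta\setminus\{x\})\geq \diam(\beta)>0$. Apart from this bookkeeping the argument is entirely elementary.
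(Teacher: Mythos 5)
Your proof is correct and follows the same route as the paper's: assume failure, observe that $\gamma\cap B(x,\varepsilon)$ must then lie in $S$, extract a subcontinuum of diameter at least $\varepsilon/2$ inside that ball, and contradict $\mathcal H^1(\gamma\cap S)=0$ via the inequality $\mathcal H^1(\beta)\geq\diam(\beta)$ for continua. The extra bookkeeping you supply for the endpoint in case (b) is a correct elaboration of a detail the paper leaves implicit.
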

The proof is the same as the proof of Lemma \ref{harmonic:lemma:paths zero hausdorff}, which contains the analogous statement for relative Sierpi\'nski carpets.

\section{The function \texorpdfstring{$u$}{u}}\label{unif:Section the function u}
From this section until Section \ref{unif:Section Regularity} the standing assumptions are that we are given a carpet $S\subset \br \Omega$ of  area zero with peripheral disks $\{Q_i\}_{i\in \N}$ that are uniformly fat, uniform quasiballs, and with outer peripheral circle $\partial Q_0=\partial \Omega$. This is precisely the setup in Chapter \ref{Chapter:Harmonic}, where the theory of carpet-harmonic functions is developed. We will use this theory in order to define the real part $u$ of the uniformizing function $f$.

\subsection{Background on carpet-harmonic functions}
Here we include some definitions and background on carpet-harmonic functions. More details can be found in Chapter \ref{Chapter:Harmonic}. 

\begin{definition}\label{unif:Definition Upper gradient}
Let $g\colon S\cap \Omega\to \widehat \R$ be an extended function. We say that the sequence of non-negative weights $\{\lambda(Q_i)\}_{i\in \N}$ is an \textit{upper gradient}\index{upper gradient} for $g$ if there exists an exceptional family $\Gamma_0$ of paths in ${\Omega}$ with $\md(\Gamma_0)=0$ such that for all paths $\gamma \subset \Omega$ with $\gamma \notin \Gamma_0$ and $x,y\in \gamma\cap S$ we have $g(x),g(y)\neq \pm\infty$ and
\begin{align*}
|g(x)-g(y)|\leq \sum_{i:Q_i\cap \gamma\neq \emptyset}\lambda (Q_i).
\end{align*}
\end{definition}

Note that this definition differs from the classical definition of upper gradients in metric spaces, treated, for example, in \cite{HeinonenKoskelaShanmugalingamTyson:Sobolev}. Here, the presence of ambient space is important, since ``most" of the paths do not lie in the carpet $S$, but meet infinitely many peripheral disks $Q_i$. We also remark that our notation here differs slightly from Definition \ref{harmonic:3-Def Sobolev space}, where a \textit{relative carpet} $S$ does not contain $\partial \Omega$, and functions are defined on $S$.  Since here $S$ contains $\partial \Omega$, we write $S\cap \Omega$ here as the domain of $g$.

For a function $g\colon S\cap \Omega \to \widehat \R$ and a peripheral disk $Q_i$, $i\in \N$, we define 
\begin{align*}
M_{Q_i}(g)&\coloneqq \sup_{x\in \partial Q_i}g(x),\\
m_{Q_i}(g)&\coloneqq \inf_{x\in \partial Q_i}g(x), \textrm{ and}\\
\osc_{Q_i}(g)&\coloneqq  M_{Q_i}(g)-m_{Q_i}(g)
\end{align*}
whenever the latter makes sense. Note here that we do \textit{not} define the above quantities for the outer peripheral circle $\partial Q_0 =\partial \Omega$, which is regarded as the ``boundary" of the carpet.

\begin{definition}\label{unif:Definition Sobolev function}\index{Sobolev space}
Let $g\colon S\cap \Omega \to \widehat \R$ be an extended function. We say that $g$ lies in the \textit{Sobolev space} $\mathcal W^{1,2}_{\loc}(S)$ if for every ball $B\subset\subset \Omega$ we have 
\begin{align}
\label{unif:Definition Sobolev L2 integrability}
&\sum_{i\in I_B} M_{Q_i}(g)^2 \diam(Q_i)^2<\infty,\\
\label{unif:Definition Sobolev L2 gradient}
&\sum_{i\in I_B } \osc_{Q_i}(g)^2 <\infty,
\end{align}
and $\{\osc_{Q_i}(g)\}_{i\in \N}$ is an upper gradient for $g$. If the the above conditions hold for the full sums over $i\in \N$ then we say that $g$ lies in the Sobolev space $\mathcal W^{1,2}(S)$.
\end{definition}

Recall here that $I_B= \{i\in \N: Q_i\cap B\neq \emptyset\}$. Part of the definition is that $\osc_{Q_i}(g)$ is defined for all $i\in \N$, and in particular $M_{Q_i}(g),m_{Q_i}(g)$ are finite. The space $\mathcal W^{1,2}(S)$ contains Lipschitz functions on $S$, and also coordinate functions of restrictions on $S$ of quasiconformal maps $g\colon  \C \to \C$; see Section \ref{harmonic:examples}.

For a set $V\subset \Omega$ and $g\in \mathcal W^{1,2}_{\loc}(S)$ define the \textit{Dirichlet energy functional}\index{Dirichlet energy}
\begin{align*}
D_V(g)= \sum_{i\in I_V} \osc_{Q_i}(g)^2 \in [0,\infty].
\end{align*}
We remark here that the outer peripheral disk $Q_0$ is never used in Dirichlet energy calculations, and the summations are always over subsets of $\{Q_i\}_{i\in \N}$. If $V=\Omega$ we will often omit the subscript and write $D(g)$ instead of $D_\Omega(g)$.

\begin{definition}\label{unif:Definition Carpet harmonic}
A function $u\in \mathcal W^{1,2}_{\loc} (S)$ is \textit{carpet-harmonic}\index{carpet-harmonic} if for every open set $V \subset \subset \Omega$ and every $\zeta\in \mathcal W^{1,2}(S)$ that is supported on $V$ we have
\begin{align*}
D_V(u)\leq D_V(u+\zeta).
\end{align*}
\end{definition}

For each $g\in \mathcal W^{1,2}(S)$ there exists a family of \textit{good paths}\index{good paths} $\mathcal G$ in ${\Omega}$ that contains almost every path (i.e., the paths of $\Omega$ that do not lie in $\mathcal G$ have carpet modulus equal to zero) with the following properties
\begin{enumerate}[\upshape(1)]
\item $\mathcal H^1(\gamma\cap S)=0$,
\item $\sum_{i:Q_i\cap \gamma\neq \emptyset} \osc_{Q_i}(g)<\infty$, and
\item the upper gradient inequality as in Definition \ref{unif:Definition Upper gradient} holds along every subpath of $\gamma$.
\end{enumerate}
A point $x\in S$ is \textit{``accessible" by a path}\index{``accessible'' point} $\gamma_0\in \mathcal G$ if there exists an open subpath $\gamma$ of $\gamma_0$ with $x\in \br{\gamma}$ and $\gamma$ does not meet the peripheral disk $Q_{i_0}$ whenever $x\in \partial Q_{i_0}$, $i_0\in \N$; see Figure \ref{harmonic:fig:accessible}. Note that $x$ can lie on $\partial \Omega$. See Section \ref{harmonic:Subsection-Discrete Sobolev} for a more detailed discussion on good paths and ``accessible" points.  


Finally we require a lemma that allows the ``gluing" of Sobolev functions and will be useful for variational arguments; see Proposition \ref{harmonic:3-Properties Sobolev} and Lemma \ref{harmonic:3-Lemma-removability} for the proof.
\begin{lemma}\label{unif:Gluing lemma}
If $\phi,\psi \in \mathcal W^{1,2}(S)$ and $a,b\in \R$, then the following functions also lie in the Sobolev space $\mathcal W^{1,2}(S)$:
\begin{enumerate}[\upshape(a)]
\item $a\phi+b\psi$, with $\osc_{Q_i}(a\phi+b\psi)\leq |a|\osc_{Q_i}(\phi)+|b|\osc_{Q_i}(\psi)$,
\item $|\phi|$, with $\osc_{Q_i}(|\phi|)\leq \osc_{Q_i}(\phi)$,
\item $\phi \lor \psi \coloneqq  \max(\phi,\psi)$, with $\osc_{Q_i}(\phi\lor \psi) \leq \max \{ \osc_{Q_i}(\phi),\osc_{Q_i}(\psi)\}$,
\item $\phi \land \psi\coloneqq \min(\phi,\psi)$, with $\osc_{Q_i}(\phi\land \psi) \leq \max \{ \osc_{Q_i}(\phi),\osc_{Q_i}(\psi)\}$, and
\item $\phi\cdot \psi$, provided that $\phi$ and $\psi$ are bounded.
\end{enumerate}
Furthermore, if $V\subset \Omega$ is an open set with $ S\cap\Omega\cap \partial V \neq \emptyset$, and $\phi=\psi$ on $S\cap\Omega\cap \partial V$, then $\phi\x_{S\cap V} + \psi\x_{S\setminus V} \in \mathcal W^{1,2}(S)$.
\end{lemma}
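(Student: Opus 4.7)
The strategy is to transcribe the corresponding results from Chapter \ref{Chapter:Harmonic}---specifically Proposition \ref{harmonic:3-Properties Sobolev} for parts (a)--(e) and Lemma \ref{harmonic:3-Lemma-removability} for the gluing statement---into the present setting. The only formal change is that here $S$ is a closed carpet containing $\partial \Omega$ and Sobolev functions live on $S \cap \Omega$, whereas in Chapter \ref{Chapter:Harmonic} the relative carpet excluded $\partial \Omega$. However, the inner peripheral disks $\{Q_i\}_{i\in \mathbb N}$ are unchanged and all three quantities $M_{Q_i},m_{Q_i},\osc_{Q_i}$ depend only on values on $\partial Q_i\subset S\cap\Omega$, so the definitions and the summability conditions \eqref{unif:Definition Sobolev L2 integrability}--\eqref{unif:Definition Sobolev L2 gradient} match verbatim.

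First I would verify the pointwise oscillation inequalities on each $\partial Q_i$. For (a) this is just the triangle inequality for $\sup-\inf$; for (b) it comes from $||\phi(x)|-|\phi(y)||\leq |\phi(x)-\phi(y)|$; for (c) and (d) one uses $M_{Q_i}(\phi\lor\psi)=\max\{M_{Q_i}(\phi),M_{Q_i}(\psi)\}$ together with $m_{Q_i}(\phi\lor\psi)\geq\max\{m_{Q_i}(\phi),m_{Q_i}(\psi)\}$ (and dually for $\land$); for (e), the pointwise product rule gives
\[
\osc_{Q_i}(\phi\psi)\leq M_{Q_i}(|\psi|)\osc_{Q_i}(\phi)+M_{Q_i}(|\phi|)\osc_{Q_i}(\psi),
\]
which is finite by the boundedness hypothesis. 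These inequalities, combined with $M_{Q_i}(|f|)\leq |M_{Q_i}(f)|+\osc_{Q_i}(f)$ and Corollary \ref{harmonic:Fatness corollary} (so that $\sum_{i\in I_B}\diam(Q_i)^2<\infty$), imply both summability conditions in Definition \ref{unif:Definition Sobolev function}.

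Next, for the upper gradient inequality I would pick good path families $\mathcal G_\phi,\mathcal G_\psi$ for $\phi,\psi$, set $\mathcal G=\mathcal G_\phi\cap\mathcal G_\psi$, and note that its complement has carpet modulus zero by subadditivity. For any $\gamma\in\mathcal G$ and $x,y\in\gamma\cap S\cap\Omega$, adding the upper gradient inequalities of $\phi$ and $\psi$ and then invoking the corresponding pointwise oscillation bound gives the desired inequality for each of (a)--(e). The fact that $\{\osc_{Q_i}\}$ (rather than some other weight) is itself the upper gradient follows from the principle, established in the proof of Proposition \ref{harmonic:3-f is Sobolev}, that any locally square-summable upper gradient can be replaced by the oscillation sequence.

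Finally, for the gluing claim I would follow Lemma \ref{harmonic:3-Lemma-removability}. The oscillation bound $\osc_{Q_i}(h)\leq\osc_{Q_i}(\phi)+\osc_{Q_i}(\psi)$ (with equality to one or the other in the trivial cases $\partial Q_i\subset V$ or $\partial Q_i\subset \Omega\setminus\br V$) follows by picking a common reference point in $\partial Q_i\cap\partial V\subset S\cap\Omega\cap\partial V$ where $\phi=\psi$. For the upper gradient along a good path $\gamma$ joining $x\in S\cap V$ to $y\in S\setminus V$: if $\gamma$ crosses $S\cap\Omega\cap\partial V$ at some $z$ we split there and use $\phi(z)=\psi(z)$; if $\gamma$ avoids this set, a topological argument (enlarging $V$ to $\widetilde V=V\cup\bigcup_{\partial Q_i\subset V}Q_i$, which has the same carpet boundary) forces $\gamma$ to cross some $Q_{i_0}$ with $\partial Q_{i_0}\cap\partial V\neq\emptyset$, and we route through a point $w\in\partial Q_{i_0}\cap\partial V$ where $\phi=\psi$. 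Both cases produce the same bound with gradient $\rho(Q_i)=2(\osc_{Q_i}(\phi)+\osc_{Q_i}(\psi))$, which is locally square-summable, and the reduction to $\{\osc_{Q_i}(h)\}$ proceeds as before. The only delicate step is this last topological argument---the remainder of the proof is bookkeeping.
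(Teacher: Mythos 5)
Your proposal is correct and matches the paper's approach exactly: the paper proves this lemma simply by citing Proposition \ref{harmonic:3-Properties Sobolev} and Lemma \ref{harmonic:3-Lemma-removability} from Chapter \ref{Chapter:Harmonic}, and your transcription of those arguments (pointwise oscillation bounds, intersection of good path families, Remark \ref{harmonic:3-Definition Remark rho-osc} to pass to the oscillation gradient, and the $\widetilde V$ topological step for the gluing) is precisely what those results contain. The only cosmetic deviation is that for (c)--(d) the paper establishes Sobolev membership via the identity $\phi\lor\psi=(\phi+\psi+|\phi-\psi|)/2$ rather than by adding upper gradient inequalities directly, but both routes appear in the paper and are equivalent.
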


\subsection{The free boundary problem}\label{unif:Section:Free boundary problem}\index{Dirichlet problem!free boundary problem}\index{free boundary problem}
We mark four points on $\partial \Omega$ that determine a quadrilateral, i.e., a homeomorphic image of a rectangle, with closed sides $\Theta_1,\dots, \Theta_4$, enumerated in a counter-clockwise fashion. Here $\Theta_1$ and $\Theta_3$ are opposite sides. 

Consider a function $g\in \mathcal W^{1,2}(S)$. Recall from Definition \ref{unif:Definition Sobolev function} that $g$ is only defined in $S\cap \Omega$ and not in $\partial \Omega$. However, one can always define  boundary values of $g$ on $\partial \Omega$; see Section \ref{harmonic:Section Dirichlet} for more details. In this chapter, all functions $g\in \mathcal W^{1,2}(S)$ that we are going to use will actually be continuous up to $\partial \Omega$, so their boundary values are unambiguously defined and we do not need to resort to the theory of Chapter \ref{Chapter:Harmonic}. If $g(x)=0$ for all points $x \in \Theta_1$ and also $g(x)=1$ for all  points $x\in \Theta_3$, we say that $g$ is \textit{admissible (for the free boundary problem)}.

\begin{theorem}[Theorem \ref{harmonic:Free boundary problem}]\label{unif:Solution to free boundary problem}
There exists a unique carpet-harmonic function $u\colon S\to \R$ that minimizes the Dirichlet energy $D_\Omega(g)$ over all admissible functions $g\in \mathcal W^{1,2}(S)$. The function $u$ is continuous up to the boundary $\partial \Omega$ and has boundary values $u=0$ on $\Theta_1$ and $u=1$ on $\Theta_3$.
\end{theorem}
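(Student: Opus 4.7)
The plan is to run the same three-step template that was used for the Dirichlet problem in Theorem \ref{harmonic:4-Thm Solution to Dirichlet problem}, Theorem \ref{harmonic:Continuous boundary data}, and Theorem \ref{harmonic:Uniqueness}, adapted to the fact that admissibility here only constrains the two opposite arcs $\Theta_1,\Theta_3$ and leaves $\Theta_2,\Theta_4$ free. The class $\mathcal A$ of admissible functions is non-empty and convex, and it is closed under the operations $g\mapsto (g\vee 0)\wedge 1$, $(g_1,g_2)\mapsto g_1\vee g_2$, $(g_1,g_2)\mapsto g_1\wedge g_2$, by Lemma \ref{unif:Gluing lemma}, because the boundary conditions $g=0$ on $\Theta_1$ and $g=1$ on $\Theta_3$ are preserved. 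This stability is all that was used in the Dirichlet case.

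For existence, I would take a minimizing sequence $g_n\in \mathcal A$ for $D_\Omega$, replace each $g_n$ by $(g_n\vee 0)\wedge 1$ (which is still admissible and has no larger energy by Proposition \ref{harmonic:3-Properties Sobolev}), so $0\leq g_n\leq 1$. Then $\{M_{Q_i}(g_n)\diam(Q_i)\}_{i\in\mathbb N}$ and $\{\osc_{Q_i}(g_n)\}_{i\in\mathbb N}$ are uniformly bounded in $\ell^2(\mathbb N)$ since $\Omega$ is bounded and the peripheral disks satisfy $\sum_i\diam(Q_i)^2<\infty$ by Corollary \ref{harmonic:Fatness corollary}. By Banach--Alaoglu, Mazur's lemma, and Fuglede's Lemma \ref{harmonic:Fuglede}, following verbatim the argument in Theorem \ref{harmonic:4-Thm Solution to Dirichlet problem}, I obtain convex combinations that converge strongly in $\ell^2$ to a candidate pair $(\{M_{Q_i}\}_{i\in\mathbb N},\{\rho(Q_i)\}_{i\in\mathbb N})$ defining a discrete Sobolev function with upper gradient $\rho$, and hence (by the construction in Section \ref{harmonic:Subsection-Discrete Sobolev}) a function $u\in\mathcal W^{1,2}(S)$ with $\osc_{Q_i}(u)\leq\rho(Q_i)$ and $D_\Omega(u)\leq \liminf D_\Omega(g_n)$. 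Admissibility of $u$ on $\Theta_1$ and $\Theta_3$ is verified exactly as in Theorem \ref{harmonic:4-Thm Solution to Dirichlet problem}: for non-exceptional good paths $\gamma$ landing at a point $x\in\Theta_1\cup\Theta_3$, the quantities $M_{Q_i}^n\to f(x)$ as $Q_i\to x$ and Fuglede's lemma transfers this to the limit, giving $u(x)=0$ or $u(x)=1$. Minimality of $D_\Omega(u)$ over $\mathcal A$ is automatic, and carpet-harmonicity follows because any test function $\zeta\in\mathcal W^{1,2}(S)$ supported in $V\subset\subset\Omega$ gives $u+\zeta\in\mathcal A$, so $D_V(u)\leq D_V(u+\zeta)$ after cancelling the identical tail $\sum_{i\notin I_V}\osc_{Q_i}(\cdot)^2$.

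For continuity up to $\partial\Omega$, there are three cases. In the interior of $S$ and on points of $\partial\Omega$ lying in the (open) arcs $\Theta_1^\circ$ and $\Theta_3^\circ$ I invoke Theorem \ref{harmonic:4-continuous} and the boundary-continuity argument of Theorem \ref{harmonic:Continuous boundary data}, which use only the maximum principle and the circular-arc/good-path construction of Lemma \ref{harmonic:3-curves gamma_r}. On $\Theta_2^\circ\cup\Theta_4^\circ$ there is no imposed boundary value; here the argument is in fact cleaner, because the stronger maximum principle of Theorem \ref{harmonic:Free boundary problem - Maximum Principle} (whose proof mimics Theorem \ref{harmonic:Maximum Principle} once one checks that open sets $V\subset\mathbb C\setminus(\Theta_1\cup\Theta_3)$ may be variationally perturbed by Corollary \ref{harmonic:Removability4} without touching the admissibility constraint) lets me treat points of $\Theta_2^\circ\cup\Theta_4^\circ$ on the same footing as points of $S^\circ$: the circular-crosscut argument of Theorem \ref{harmonic:4-continuous} goes through verbatim, with the arc $\alpha\subset\partial Q_{i_0}$ replaced by a subarc of $\Theta_2$ or $\Theta_4$. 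The remaining points are the four corners; continuity there follows from the two adjacent cases together with the boundedness $0\leq u\leq 1$ and a sandwich argument between the admissible data on $\Theta_1$ or $\Theta_3$ and the free arc. The main technical obstacle, as in Chapter \ref{Chapter:Harmonic}, is justifying that competitors of the form $h=u\x_{S\setminus V}+(u\wedge M)\x_{S\cap V}$ remain admissible when $V$ abuts $\Theta_2$ or $\Theta_4$; this is handled because $V$ is chosen disjoint from $\Theta_1\cup\Theta_3$, so admissibility is untouched, and Corollary \ref{harmonic:Removability4} supplies the oscillation bound.

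For uniqueness, I would copy the proof of Theorem \ref{harmonic:Uniqueness} without change. If $u_1,u_2$ are two minimizers, then $(1-s)u_1+su_2\in\mathcal A$ for $s\in[0,1]$, and expanding $D_\Omega((1-s)u_1+su_2)$ with Cauchy--Schwarz forces $\osc_{Q_i}(u_1)=\osc_{Q_i}(u_2)$ for every $i$. Then $u_1\vee u_2$ and $u_1\wedge u_2$ are admissible with energy $\leq D_\Omega(u_1)=D_\Omega(u_2)$ by Proposition \ref{harmonic:3-Properties Sobolev}, so they too are minimizers. A truncation at an intermediate level $\Lambda$, exactly as in Theorem \ref{harmonic:Uniqueness}, contradicts harmonicity on the open set $\{u_1<\Lambda<u_1\vee u_2\}$ unless that set is empty; by symmetry this yields $u_1=u_2$. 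I expect this uniqueness step to be painless once existence and continuity are in hand; the genuinely delicate point of the whole theorem is the continuity on the free arcs $\Theta_2^\circ\cup\Theta_4^\circ$ and at the four corner points, which relies crucially on the stronger maximum principle available precisely because no boundary data is prescribed there.
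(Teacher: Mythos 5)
Your proposal is correct and follows exactly the route the paper intends: the paper omits the proof of this theorem with the remark that it is ``almost identical to the case of the Dirichlet problem,'' treating the free arcs $\Theta_2,\Theta_4$ as subsets of the carpet so that the arguments of Theorems \ref{harmonic:4-Thm Solution to Dirichlet problem}, \ref{harmonic:Continuous boundary data}, and \ref{harmonic:Uniqueness} carry over, which is precisely your three-step adaptation. The one point you flag as delicate (continuity on $\Theta_2^\circ\cup\Theta_4^\circ$ and at the corners via the stronger maximum principle) is indeed the only place where the Dirichlet-problem template needs modification, and you handle it the way the paper indicates.
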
 

For an open set $V\subset \C\setminus (\Theta_1\cup\Theta_3)$ define
\begin{align*}
\partial_* V\coloneqq  \partial V \cap S.
\end{align*}
The open arcs $\Theta_2,\Theta_4\subset \partial \Omega$ are not considered as boundary arcs for the free boundary problem since there is no boundary data present on them. With this in mind, we now state the maximum principle for the minimizer $u$:

\begin{theorem}[Theorem \ref{harmonic:Free boundary problem - Maximum Principle}]\label{unif:Maximum principle}\index{maximum principle}
Let $V$ be an open set with $V\subset \C\setminus (\Theta_1\cup\Theta_3)$. Then 
\begin{align*}
\sup_{x\in S\cap \br V} u(x)= \sup_{x\in \partial_* V}u(x) \quad \textrm{and} \quad \inf_{x\in S\cap \br V} u(x)= \inf_{x\in \partial_* V}u(x).
\end{align*}
\end{theorem}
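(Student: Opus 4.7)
The plan is to adapt the proof of the standard interior maximum principle (Theorem \ref{harmonic:Maximum Principle}) to this setting, using that the "free'' arcs $\Theta_2,\Theta_4$ carry no prescribed data and therefore impose no constraint on admissible variations. By symmetry (considering $1-u$, which minimises the dual free boundary problem), it suffices to treat the supremum statement. Continuity of $u$ on $\br S$ from Theorem \ref{unif:Solution to free boundary problem} guarantees finiteness of all quantities; if $\partial_* V=\emptyset$ then $S\cap\br V=\emptyset$ as well (any point of $S\cap \br V$ can be joined to a point outside $\br V$ by a path in $S^\circ$ via Lemma \ref{unif:Paths in S^o}, which must then hit $\partial_* V$), so the identity is vacuous. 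Hence assume $\partial_* V\neq\emptyset$ and set $M\coloneqq \sup_{x\in \partial_* V} u(x)$.

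Argue by contradiction: suppose there exists $x_0\in S\cap \br V$ with $u(x_0)>M$. By continuity and density of $S^\circ$ in $S$, we may choose such an $x_0$ lying in $S^\circ\cap V$. The candidate competitor is the truncation
\[
h\coloneqq u\,\chi_{S\setminus V}+(u\wedge M)\,\chi_{S\cap V}.
\]
Since $u\leq M$ on $\partial_* V$, Corollary \ref{harmonic:Removability4} (applied on the relative carpet of Chapter \ref{Chapter:Harmonic}) gives $h\in \mathcal W^{1,2}(S)$ with $\osc_{Q_i}(h)\leq \osc_{Q_i}(u)$ for every $i\in\N$. Crucially, because $V\subset \C\setminus(\Theta_1\cup\Theta_3)$, the truncation leaves $u$ untouched on $\Theta_1\cup\Theta_3$, so $h=0$ on $\Theta_1$ and $h=1$ on $\Theta_3$. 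Thus $h$ is admissible for the free boundary problem, and minimality of $u$ forces $D_\Omega(u)\leq D_\Omega(h)$.

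The contradiction comes from exhibiting a peripheral disk on which the truncation strictly decreases the oscillation. First, since $u(x_0)>M$ and $u$ is normalized (its values are limits of $M_{Q_k}(u)$ along nearby peripheral disks), there is some $Q_k\subset V$ with $M_{Q_k}(u)>M$. If $\osc_{Q_k}(u)>0$ then already $\osc_{Q_k}(h)<\osc_{Q_k}(u)$ because truncation at a level strictly between $m_{Q_k}(u)$ and $M_{Q_k}(u)$ cuts the maximum. Otherwise $\osc_{Q_k}(u)=0$; then by Lemma \ref{harmonic:4-zero oscillation lemma}(a) applied to the component of $V\setminus \br{\bigcup_{i\in I_{\partial V}}Q_i}$ containing $Q_k$, $u$ would be constant on this component if \emph{all} peripheral disks there had zero oscillation --- but then continuity would propagate the value $M_{Q_k}(u)>M$ to $\partial_* V$, contradicting the definition of $M$. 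Hence there exists $Q_\ell\subset V$ with $\osc_{Q_\ell}(u)>0$, and by combining the Harnack-chain-type reasoning used in the proof of Theorem \ref{harmonic:4-continuous} with the upper gradient inequality along a good path in $\mathcal G$ joining $Q_k$ to $Q_\ell$ (provided by Lemma \ref{unif:Paths joining continua}), we produce some $Q_m\subset V$ with simultaneously $M_{Q_m}(u)>M$ and $\osc_{Q_m}(u)>0$.

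For this disk $Q_m$ the truncation strictly reduces the oscillation, yielding $D_\Omega(h)<D_\Omega(u)$ and contradicting the minimality in Theorem \ref{unif:Solution to free boundary problem}. This completes the supremum statement, and the infimum statement follows by the same argument applied to $1-u$, which solves the free boundary problem with data $0$ on $\Theta_3$ and $1$ on $\Theta_1$. The main obstacle I anticipate is the last step: carefully producing a peripheral disk in $V$ that simultaneously exceeds $M$ and has positive oscillation, since this requires combining the zero-oscillation propagation of Lemma \ref{harmonic:4-zero oscillation lemma} with a continuity-based transport argument back to $\partial_* V$, and must be carried out without invoking any hypothetical values on the free arcs $\Theta_2,\Theta_4$.
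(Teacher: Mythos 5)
Your proof is correct and follows essentially the same route the paper intends: the paper omits the proof of Theorem \ref{harmonic:Free boundary problem - Maximum Principle}, stating it is "almost identical" to Theorem \ref{harmonic:Maximum Principle}, and your argument is exactly that adaptation --- the truncation $h=u\,\x_{S\setminus V}+(u\land M)\,\x_{S\cap V}$ via Corollary \ref{harmonic:Removability4}, the production of a disk $Q_m\subset V$ with $M_{Q_m}(u)>M$ and $\osc_{Q_m}(u)>0$ via Lemma \ref{harmonic:4-zero oscillation lemma}(a) and the good-path argument from Theorem \ref{harmonic:4-continuous}, and the strict energy drop contradicting minimality. The two genuinely "free-boundary" points you add (admissibility of $h$ because $V$ avoids $\Theta_1\cup\Theta_3$, and the reduction of the infimum case via $1-u$) are precisely the adjustments the paper's remark anticipates.
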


The standard maximum principle would state that the extremal values of $u$ on an open set $V$ are attained on $\partial V$. Our stronger statement states that the extremal values could be attained at the part of $\partial V$ that is disjoint from the interiors of the ``free" arcs $\Theta_2$ and $\Theta_4$. However, extremal values could still be attained at $\Theta_1$ or $\Theta_3$, and this is the reason that we look at sets $V\subset \C\setminus (\Theta_1\cup \Theta_3)$.

Next, we consider a variant of Lemma \ref{harmonic:Harnack-Oscillation}, whose proof follows immediately from an application of the upper gradient inequality, together with the maximum principle.

\begin{lemma}\label{unif:Maximum principle circular arcs}
Consider a ball $B(x,r)\subset \Omega$, with $B(x,cr)\subset\Omega$ for some $c>1$. Then for a.e.\ $s\in [1,c]$ we have
\begin{align*}
\diam (u(B(x,r)\cap S)) \leq \diam (u(B(x,sr)\cap S))\leq \sum_{i: Q_i\cap \partial B(x,sr)\neq \emptyset} \osc_{Q_i}(u).
\end{align*}
\end{lemma}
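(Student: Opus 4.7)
The plan is to prove the two inequalities separately. The left inequality is immediate: since $s \geq 1$ we have $B(x,r) \subset B(x,sr)$, hence $u(B(x,r)\cap S) \subset u(B(x,sr)\cap S)$, and diameters of sets are monotone under inclusion.

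For the right inequality, I would first observe that $B(x,sr) \subset B(x,cr) \subset \Omega$, so in particular $B(x,sr) \subset \C \setminus (\Theta_1 \cup \Theta_3)$ since $\Theta_1,\Theta_3 \subset \partial \Omega$. This places us in the setting of the maximum principle (Theorem \ref{unif:Maximum principle}), which gives
\[
\sup_{y\in S\cap \br{B(x,sr)}} u(y) = \sup_{y \in \partial_* B(x,sr)} u(y), \qquad \inf_{y\in S\cap \br{B(x,sr)}} u(y) = \inf_{y \in \partial_* B(x,sr)} u(y).
\]
Consequently
\[
\diam(u(B(x,sr)\cap S)) \leq \sup_{y \in \partial_* B(x,sr)} u(y) - \inf_{y \in \partial_* B(x,sr)} u(y),
\]
so it suffices to bound the right hand side by the claimed sum of oscillations over peripheral disks meeting $\partial B(x,sr)$.

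Next I would restrict to good values of $s$. Applying Lemma \ref{unif:Paths joining continua} (for instance to a diameter of $B(x,cr)$, with $\psi(z)=|z-x|$) together with the exceptional path family of the carpet-harmonic function $u$ (see the discussion of good paths in Section \ref{unif:Section the function u} and the coarea-type argument in the proof of Lemma \ref{harmonic:Paths joining continua}), we conclude that for a.e.\ $s \in [1,c]$ the circular path $\partial B(x,sr)$ is a good path for $u$; in particular $\mathcal H^1(\partial B(x,sr) \cap S)=0$ and the upper gradient inequality for $u$ holds along it. For any two points $y_1,y_2 \in \partial_* B(x,sr) = \partial B(x,sr) \cap S$ the upper gradient inequality then gives
\[
|u(y_1)-u(y_2)| \leq \sum_{i:Q_i \cap \partial B(x,sr)\neq \emptyset} \osc_{Q_i}(u),
\]
and taking the supremum over such $y_1,y_2$ yields the desired bound on $\sup-\inf$.

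The argument is essentially routine given the machinery already built up; the only delicate point is ensuring that $\partial B(x,sr)$ is a good path for $u$ for a.e.\ $s$. This is precisely the kind of application Lemma \ref{unif:Paths joining continua} is designed for, and I do not foresee any real obstacle beyond being careful that the exceptional $s$-set (where the circular arc either meets $S$ in positive $\mathcal H^1$-measure or falls in the exceptional modulus-zero family for $u$) has Lebesgue measure zero in $[1,c]$.
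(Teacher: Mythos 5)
Your proof is correct and is exactly the argument the paper intends: the paper only remarks that the lemma "follows immediately from an application of the upper gradient inequality, together with the maximum principle," and your combination of Theorem \ref{unif:Maximum principle} with the a.e.\ goodness of the circular arcs $\partial B(x,sr)$ (via the coarea argument of Lemma \ref{harmonic:Paths joining continua}) is precisely that, mirroring inequalities \eqref{harmonic:Gehring1}--\eqref{harmonic:Gehring2} in the proof of Lemma \ref{harmonic:Harnack-Oscillation}.
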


The function $u$ will be the real part of the uniformizing function $f$. It will be very convenient to have a continuous extension of $u$ to $\br \Omega$ that satisfies the maximum principle:

\begin{prop}\label{unif:Maximum principle for tilde u}
There exists a continuous extension $\tilde u\colon  \br\Omega\to \R$ of $u$ such that for every open set $V\subset \C\setminus (\Theta_1\cup\Theta_3)$ we have
\begin{align*}
\sup_{x\in \br V\cap \br \Omega} \tilde u(x)= \sup_{x\in \partial V \cap \br \Omega}\tilde u(x) \quad \textrm{and} \quad \inf_{x\in \br V\cap \br \Omega} \tilde u(x)= \inf_{x\in \partial  V \cap \br \Omega}\tilde u(x).
\end{align*}
In fact, $\tilde u$ can be taken to be harmonic in the classical sense inside each peripheral disk $Q_i$, $i\in \N$.
\end{prop}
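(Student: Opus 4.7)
The plan is to extend $u$ by solving the classical Dirichlet problem inside each peripheral disk with the boundary data furnished by $u|_{\partial Q_i}$. More precisely, fix $i\in\N$. Since $Q_i$ is a Jordan region, the Carath\'eodory extension of the Riemann map gives a homeomorphism of $\br Q_i$ onto the closed unit disk. Because $u|_{\partial Q_i}$ is continuous (Theorem \ref{unif:Solution to free boundary problem}), the pullback of $u$ to the unit circle is continuous, and its Poisson integral produces a function harmonic on the open disk and continuous up to the boundary. Pushing it back to $\br Q_i$, we obtain a function $h_i\colon \br Q_i\to\R$ which is harmonic in $Q_i$, continuous on $\br Q_i$, agrees with $u$ on $\partial Q_i$, and by the classical maximum principle satisfies $m_{Q_i}(u)\le h_i \le M_{Q_i}(u)$. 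Define
\[
\tilde u(x)\coloneqq
\begin{cases} u(x), & x\in S\cap\br\Omega,\\ h_i(x), & x\in Q_i,\ i\in\N. \end{cases}
\]

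The next step is to verify continuity of $\tilde u$ on $\br\Omega$. Inside each $Q_i$ this is immediate, and on $S\cap\Omega$ it follows from Theorem \ref{unif:Solution to free boundary problem}. The only non-trivial case is continuity at a point $x_0\in S\cap\br\Omega$ when approached by a sequence from the interiors of the peripheral disks. If $x_n\to x_0$ with $x_n\in Q_i$ for one fixed $i$, then $x_0\in\partial Q_i$ is a regular boundary point of the Jordan domain $Q_i$, so $h_i(x_n)\to h_i(x_0)=u(x_0)$. If instead $x_n\in Q_{j_n}$ with indices $j_n$ varying and $x_n\to x_0\in S$, then $\diam Q_{j_n}\to 0$ (by local continuity of the decomposition, since the $Q_j$ have vanishing diameters in compact sets and $x_0$ is fixed). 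Choosing any $y_n\in\partial Q_{j_n}$ we have $y_n\to x_0$, hence $u(y_n)\to u(x_0)$ by continuity of $u$ on $S$; combining this with $|\tilde u(x_n)-u(y_n)|\le \osc_{Q_{j_n}}(u)\to 0$ (a consequence of the uniform continuity of $u$ in a neighborhood of $x_0$) gives $\tilde u(x_n)\to u(x_0)$. This establishes continuity on all of $\br\Omega$.

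Finally, fix an open set $V\subset\C\setminus(\Theta_1\cup\Theta_3)$ and set $M=\sup_{x\in\partial V\cap\br\Omega}\tilde u(x)$. By compactness and continuity, $\tilde u$ attains its maximum on $\br V\cap\br\Omega$ at some $x_0$; assume for contradiction that $\tilde u(x_0)>M$, so that $x_0\in V\cap\br\Omega$. If $x_0$ lies in some $Q_i$, let $U$ be the connected component of $V\cap Q_i$ containing $x_0$. The classical strong maximum principle applied to the harmonic function $h_i$ on $U$ forces $\tilde u\equiv \tilde u(x_0)$ on $U$. Since $\partial U\subset\partial V\cup\partial Q_i$, if $\partial U$ meets $\partial V$ at a point $y$, continuity gives $\tilde u(y)=\tilde u(x_0)>M$, a contradiction; otherwise $\partial U\subset\partial Q_i$, which (since $Q_i$ is connected) forces $U=Q_i$, and then any point of $\partial Q_i\subset S$ yields either a contradiction with $M$ (if it lies in $\partial V$) or a point $x_1\in S\cap V\cap\br\Omega$ with $\tilde u(x_1)=\tilde u(x_0)>M$. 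This reduces us to the case $x_0\in S\cap V\cap\br\Omega$, in which Theorem \ref{unif:Maximum principle} applied directly to $V$ yields
\[
\tilde u(x_0)=u(x_0)\le \sup_{x\in S\cap\br V}u(x)=\sup_{x\in\partial V\cap S}u(x)\le M,
\]
again a contradiction. The analogous argument with $-u$ handles the infimum.

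\textbf{Main obstacle.} The delicate point is the continuity of $\tilde u$ at $x_0\in S\cap\partial Q_i$ when the approaching sequence enters infinitely many \emph{different} peripheral disks; controlling the oscillation of $u$ on the small disks $Q_{j_n}$ clustering at $x_0$ requires that $u$ be genuinely continuous on the entire carpet $S$ (not merely on $S^\circ$), which is precisely the content of Theorem \ref{unif:Solution to free boundary problem}.
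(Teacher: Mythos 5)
Your proposal is correct and follows essentially the same route as the paper: Poisson extension inside each peripheral disk, continuity at carpet points via the shrinking diameters of the $Q_{j_n}$ together with $\osc_{Q_{j_n}}(u)\to 0$, and reduction of the maximum principle to Theorem \ref{unif:Maximum principle} by locating a point of $S\cap V$ with value exceeding $M$ via the classical maximum principle inside a peripheral disk. The only (harmless) cosmetic differences are that the paper deduces $\osc_{Q_{j_n}}(u)\to 0$ from $\sum_i\osc_{Q_i}(u)^2<\infty$ rather than from local uniform continuity, and uses the weak maximum principle on $Q_{i_0}\cap V$ directly instead of your strong-maximum-principle component argument.
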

\begin{proof}
For each peripheral disk $Q_i$, $i\in \N$, we consider the Poisson extension $\tilde u\colon \br Q_i \to \R$ of $u$. This is obtained by mapping conformally the Jordan region $Q_i$ to the unit disk $\D$ and taking the Poisson extension there. The function $\tilde u$ is harmonic in $Q_i$ and continuous up to the boundary $\partial Q_i$. Furthermore, 
\begin{align}\label{unif:Maximum principle for tilde u- oscillation}
\diam( \tilde u(Q_i)) = \diam(\tilde u(\partial Q_i)) =\osc_{Q_i}(u),
\end{align}
where the latter is defined after Definition \ref{unif:Definition Upper gradient}.

To show that the extension $\tilde u\colon  \br \Omega \to \R$ is continuous, we argue by contradiction and suppose that there exists a sequence $\{x_n\}_{n\in \N} \subset \br \Omega $ with $x_n\to x\in \br{\Omega}$, but $|\tilde u(x_n)- \tilde u(x)|\geq \varepsilon $ for some $\varepsilon>0$ and all $n\in \N$. If $x_n\in S $ for infinitely many $n$, then we obtain a contradiction, by the continuity of $u$ in $S$. If $x_n$ lies in some peripheral disk $\br Q_{i_0} $ for infinitely many $n$ then we also get a contradiction, by the continuity of $\tilde u$ on $\br Q_{i_0}$. We, thus, assume that $x_n \in \br Q_{i_n} $ where $Q_{i_n}$ are distinct peripheral disks, so we necessarily have $x\in S$. Let $y_n\in \partial Q_{i_n}$. By the local connectedness of $S$ we have $\diam(Q_{i_n})\to 0$, thus $y_n\to x$ and $u(y_n)\to u(x)$. Since $\sum_{i\in \N} \osc_{Q_i}(u)^2<\infty$ it follows that $\osc_{Q_{i_n}}(u) \to 0$. Combining these with \eqref{unif:Maximum principle for tilde u- oscillation} we obtain
\begin{align*}
\varepsilon\leq  |\tilde u(x_n)-\tilde u(x)|&\leq |\tilde u(x_n)-\tilde u(y_n)| + |u(y_n)-u(x)|\leq \osc_{Q_{i_n}}(u) + |u(y_n)-u(x)|
\end{align*}
Letting $n\to\infty$ yields, again, a contradiction.

Finally, we check the maximum principle. Trivially, we have 
$$\sup_{x\in \br V\cap \br \Omega}\tilde u(x)\geq \sup_{x\in \partial V\cap \br \Omega } \tilde u(x) \eqqcolon M,$$
so it suffices to show the reverse inequality. If there exists $z\in V\cap \br \Omega$ with $\tilde u(z)>M$, then we claim that  there actually exists $w\in S\cap V$ with $u(w)=\tilde u(w)>M$. We assume this for the moment. Since $\partial V \cap \br \Omega \supset \partial_*V$, we have
\begin{align*}
\sup_{x\in S\cap \br{V}} u(x) \geq u(w) >M=\sup_{x\in \partial V\cap \br \Omega }\tilde u(x) \geq \sup_{x\in \partial_*V} \tilde u(x)=  \sup_{x\in \partial_*V} u(x),
\end{align*}
which contradicts the maximum principle in Theorem \ref{unif:Maximum principle}. The statement for the infimum is proved similarly.

We now prove our claim. If $z\in S$ then we set $w=z$ and there is nothing to show, so we assume that $z\in Q_{i_0}$ for some $i_0\in \N$. The maximum principle of the harmonic function $\tilde u\big|_{Q_{i_0}}$ implies that there exists 
$$w\in \partial (Q_{i_0}\cap V) \subset (\partial Q_{i_0}\cap V) \cup \partial V \subset (S\cap V)\cup \partial V$$
with $\tilde u(w)>M$; here it is crucial that $Q_{i_0}$ and $V$ are open sets. However, we cannot have $w\in \partial V$, since $\tilde u(w)> M=\sup_{x\in \partial V\cap \br \Omega} \tilde u(x)$. It follows that $w\in S\cap V$, as desired.
\end{proof}

\section{The level sets of \texorpdfstring{$u$}{u}}\label{unif:Section Level sets}\index{level sets}
We study the level sets of $u$ and of its extension $\tilde u$. One of our goals is to show that for a.e.\ $t\in [0,1]$ the level set $\tilde u^{-1}(t)$ is a simple curve that joins $\Theta_2$ to $\Theta_4$. Using these curves we will define the ``conjugate"  function $v$ of $u$ in the next section.

For $0\leq s<t\leq 1$ define 
\begin{align*}
\quad A_{s,t}= \tilde u^{-1}((s,t))
\end{align*}
and for $0\leq t\leq 1$ define $\alpha_t=\tilde u^{-1}(t)$.
For these level sets we have:

\begin{prop}\label{unif:Level sets}
For all $0\leq s<t\leq 1$ the sets $\alpha_t$ and $A_{s,t}$ are connected, simply connected and they join the sides $\Theta_2$ and $\Theta_4$ of the quadrilateral $\Omega$. Furthermore, the intersections of $\alpha_t$, $ A_{s,t}$ with $\Theta_2,\Theta_4$ are all connected. Finally, $\alpha_t$ does not separate the plane and $\br A_{s,t}$ does not separate the plane if $\alpha_t$ and $\alpha_s$ have empty interior.
\end{prop}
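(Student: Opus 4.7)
The proof combines the strong maximum principle of Proposition \ref{unif:Maximum principle for tilde u} with topological facts about the closed disk $\br\Omega$ and the variational characterization of $u$, in the spirit of the analogous results for plane uniformization in \cite{Rajala:uniformization}. First I would establish that for every $s\in[0,1)$ the sub-level set $\{\tilde u\le s\}$ is a connected continuum containing $\Theta_1$, and analogously $\{\tilde u\ge t\}$ is connected and contains $\Theta_3$ for $t\in(0,1]$; the same will be needed for the open sets $\{\tilde u<t\}$ and $\{\tilde u>s\}$. Suppose $K$ is a component of $\{\tilde u\le s\}$ disjoint from $\Theta_1$; since $\tilde u<1$ on $K$, also $K\cap\Theta_3=\emptyset$. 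If $K$ does not touch both free arcs $\Theta_2$ and $\Theta_4$, the quasi-component characterization of components in compact Hausdorff spaces provides a clopen $W\supset K$ in $\{\tilde u\le s\}$; a thickening $V\subset\C\setminus(\Theta_1\cup\Theta_3)$ of $W$ with $\br V\cap(\{\tilde u\le s\}\setminus W)=\emptyset$ has $\tilde u>s$ on $\partial V\cap\br\Omega$, and Proposition \ref{unif:Maximum principle for tilde u} gives $\inf_{\br V\cap\br\Omega}\tilde u>s$, contradicting $K\subset\{\tilde u\le s\}$. The remaining possibility, in which $K$ is a cross-cut joining $\Theta_2$ to $\Theta_4$ and thus separating $\Theta_1$ from $\Theta_3$, is the main obstacle: here a variational modification of $u$ on the $\Theta_3$-side of $K$ (replacing $u$ by a function lifted to $s$ on the interface) produces an admissible competitor of strictly smaller Dirichlet energy, contradicting the minimality in Theorem \ref{unif:Solution to free boundary problem}. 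Analogous arguments treat $\{\tilde u\ge t\}$, $\{\tilde u<t\}$, and $\{\tilde u>s\}$.

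From these connectedness statements I deduce the structural properties of $A_{s,t}$ and $\alpha_t$. Every component of $A_{s,t}$ must have boundary meeting both $\{\tilde u\le s\}$ and $\{\tilde u\ge t\}$, for otherwise the maximum principle applied to that component would force $\sup\tilde u\le s$ or $\inf\tilde u\ge t$ there, contradicting $s<\tilde u<t$. Together with the fact that $\br\Omega$ is a disk and the two sub/super-level continua are disjoint and attached to opposite boundary arcs $\Theta_1,\Theta_3$, this forces $A_{s,t}$ to be connected and to join $\Theta_2$ to $\Theta_4$. Its simple connectedness then follows because $\widehat\C\setminus A_{s,t}$ is the union of the three connected pieces $\{\infty\}\cup Q_0$, $\{\tilde u\le s\}$, and $\{\tilde u\ge t\}$, each sharing limit points with $Q_0$ via $\partial\Omega$. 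Applying the same scheme to $\{\tilde u<t\}$ and $\{\tilde u>t\}$ yields the connectedness of $\alpha_t=\{\tilde u\le t\}\cap\{\tilde u\ge t\}$ and its joining of $\Theta_2$ to $\Theta_4$.

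For the intersections with the free arcs, suppose $A_{s,t}\cap\Theta_2$ is disconnected and take two sub-arcs $I_1,I_2\subset A_{s,t}\cap\Theta_2$ separated on $\Theta_2$ by a sub-arc $I_0$ with $\tilde u\le s$ or $\tilde u\ge t$ on $I_0$. Say $\tilde u\le s$ on $I_0$. The connectedness of $\{\tilde u\le s\}$ forces the existence of a continuum in $\br\Omega$ inside $\{\tilde u\le s\}$ connecting $\Theta_1$ to $I_0$; together with the sub-arc of $\partial\Omega$ from the $\Theta_1$-$\Theta_2$ corner through $I_1$ to $I_0$, this continuum bounds a region of $\br\Omega$ that separates $I_1$ from $I_2$, and any $A_{s,t}$-path joining $I_1$ to $I_2$ (which exists by the previous step) would have to cross the continuum, entering $\{\tilde u\le s\}$ and contradicting $A_{s,t}\cap\{\tilde u\le s\}=\emptyset$. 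The argument for $\alpha_t\cap\Theta_2$, and for both sets intersected with $\Theta_4$, is analogous.

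Finally, for the non-separation statements, if $B$ is a bounded component of $\C\setminus\alpha_t$ then $B\cap Q_0=\emptyset$ because $Q_0$ is unbounded, so $B\subset\br\Omega$; since $B$ is $\C$-open and no point of $\partial\Omega$ is $\C$-interior to a subset of $\br\Omega$, in fact $B\subset\Omega\subset\C\setminus(\Theta_1\cup\Theta_3)$ with $\partial B\subset\alpha_t$. Proposition \ref{unif:Maximum principle for tilde u} then forces $\tilde u\equiv t$ on $\br B$, so $\br B\subset\alpha_t$, contradicting that $B$ is a non-empty component of the complement. The argument for $\br{A_{s,t}}$ is identical except that $\partial B\subset\alpha_s\cup\alpha_t$ and the connectedness of $B$ force $\tilde u$ to be identically $s$ or identically $t$ on $\br B$; the hypothesis that $\alpha_s,\alpha_t$ have empty interior then yields the contradiction. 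Throughout, the principal obstacle is the cross-cut case of the first stage, where the maximum principle alone does not exclude an obstructing cross-cut and one must exploit the variational minimality of $u$ to construct an energy-decreasing modification.
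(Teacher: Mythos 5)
Your argument is correct in outline but takes a genuinely different route from the paper's. The paper never touches sub-level or super-level sets: it applies the maximum principle (Theorem \ref{unif:Maximum principle for tilde u}) directly to the components of $A_{s,t}$ and to the regions they bound — a component whose relative boundary lies in $\Omega$ carries $\tilde u\equiv s$ or $\equiv t$ there and is therefore constant; a loop in a component bounds a region on whose boundary $s<\tilde u<t$; two components would trap a region between them, again contradicting the maximum principle. Your route instead first establishes connectedness of $\{\tilde u\le s\}$, $\{\tilde u\ge t\}$, $\{\tilde u>s\}$, $\{\tilde u<t\}$ and then deduces the proposition by disk topology. You correctly identify that the weak maximum principle alone cannot exclude a ``cross-cut'' component of $\{\tilde u\le s\}$ joining $\Theta_2$ to $\Theta_4$ (a one-dimensional profile with a dip already satisfies the weak maximum principle while having a disconnected sub-level set), so your route genuinely needs the variational minimality of $u$ — an ingredient the paper's proof of this proposition does not use at all. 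What your approach buys is the stronger structural statement that all four level-related sets are connected; what it costs is the extra variational step and the unicoherence bookkeeping.

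Two steps need repair or expansion. First, the claim that truncation produces a competitor of \emph{strictly} smaller Dirichlet energy is not justified and, as stated, cannot be: if $u$ happens to satisfy $u\le s$ throughout the $\Theta_1$-side of $K$, the truncated competitor coincides with $u$ and no oscillation decreases. The correct completion is to observe that the competitor (say $u\land s$ on the $\Theta_1$-side, glued via Lemma \ref{unif:Gluing lemma}, which requires checking that $\tilde u\equiv s$ on the interface $\partial_{\br\Omega}W_1\subset K$) is admissible with $D\le D(u)$, hence \emph{equals} $u$ by the uniqueness in Theorem \ref{unif:Solution to free boundary problem}; then $u\le s$ on the whole $\Theta_1$-side, so $K\cup\br{W_1}$ is a connected subset of $\{\tilde u\le s\}$ containing $\Theta_1$, contradicting that $K$ is a component disjoint from $\Theta_1$. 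Second, the inference ``this forces $A_{s,t}$ to be connected'' from the fact that every component's closure meets both $\{\tilde u\le s\}$ and $\{\tilde u\ge t\}$ is a real jump: two disjoint continua attached to opposite boundary arcs can in general have a disconnected complement. The clean way to close it is unicoherence of the closed disk (the analogue of Lemma \ref{unif:Unicoherence}, which the paper itself uses later in Lemma \ref{unif:Injectivity}): writing $\br\Omega=C_1\cup C_2$ with $C_1=\br{V_1}\cup E\cup F$ and $C_2=\br{A_{s,t}\setminus V_1}\cup E\cup F$ gives $C_1\cap C_2=E\cup F$, which is disconnected if a second component exists. With these two points written out, your proof goes through; your treatment of the boundary-arc intersections and of the non-separation claims is sound and close in spirit to the paper's.
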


This is proved in the same way as \cite[Lemma 6.3]{Rajala:uniformization}, but we include a sketch of it here for the sake of completeness.

\begin{proof}
We prove the statement for the set $A\coloneqq  A_{s,t}$, which is rel.\ open in $\br{\Omega}$. The claims for $\alpha_t$ are proved very similarly, observing also that $\alpha_t= \bigcap_{h>0} A_{t-h,t+h}$ for $0<t<1$.

We first show that each component $V$ of $A$ is simply connected. If there exists a simple loop $\gamma\subset V$ that is not null-homotopic in $V$, then $\gamma$ bounds a region $W$ that is not contained in $ A$. However, $\partial W\cap \br{\Omega}=\gamma$ is contained in $ A$, so we have $s<\tilde u(x)<t$ for all $x\in \partial W\cap \br \Omega$. The maximum principle in Theorem \ref{unif:Maximum principle for tilde u} implies that this also holds in $W$, a contradiction.

Let $V$ be a component of $A$. Then $V$ has to intersect at least one of the sides $\Theta_2$ and $\Theta_4$. Indeed, if this was not the case, then on the connected set $\partial V \subset \Omega$ we would either have $\tilde u\equiv s$ or $\tilde u \equiv t$. The maximum principle in Theorem \ref{unif:Maximum principle for tilde u} implies that $\tilde u$ is a constant on $V$, equal to either $s$ or $t$, but this clearly contradicts the fact that $V\subset A=\tilde u^{-1}((s,t))$. Without loss of generality we assume that $V \cap \Theta_2\neq \emptyset$.

The intersection $V\cap \Theta_2$ must be a connected set. Indeed, if this failed, then we would be able to find a simple arc $\gamma \subset V$ that connects two distinct components of $V\cap \Theta_2$. Since $\tilde u \in (s,t)$ on $\gamma$, it follows again by the maximum principle that the same is true in the region bounded by $\gamma$ and $\Theta_2$. This is a contradiction. Note that here the maximum principle is applied to an open set $W\subset \C \setminus (\Theta_1\cup \Theta_3)$ bounded by the concatenation of $\gamma$ with an arc $\beta\subset \C\setminus \br \Omega$ that connects the endpoints of $\gamma$.

Our next claim is that $V$ intersects $\Theta_4$. We argue by contradiction, so suppose that the boundary of $V$ rel.\ $\br \Omega$ consists of a single component $Y$. On $Y$ we must have $\tilde u\equiv s$ or $\tilde u\equiv t$ and only one of them is possible by the connectedness of $Y$. In either case, $\tilde u$ would have to be constant in $V$ by the maximum principle, and this is a contradiction as in the previous paragraph.

Suppose now there exist two distinct components $V_1,V_2 \subset A$. Since both of them separate the sides $\Theta_1$ and $\Theta_3$, there exists some $x\in \br\Omega \setminus A$ ``between" $V_1$ and $V_3$, i.e., $A$ separates $x$ from both $\Theta_1$ and $\Theta_3$. The maximum principle applied to the region containing $x$, bounded by parts of the boundaries of $V_1$ and $V_3$, is again contradicted. 

For our final claim, suppose that $\C\setminus \br A$ has a bounded component $V$. Note that $V$ is simply connected by the connectedness of $\br A$, and hence the boundary $\partial V$ of $V$ is connected. The set $\partial V$ cannot contain an arc of $\partial \Omega$. Indeed, otherwise we would be able to connect $V$ with the unbounded component of $\C\setminus \br A$ outside $\br A$, a contradiction. Hence,  $\partial V\cap \partial \Omega$ is a totally disconnected set, and each point of $\partial V\cap \partial \Omega$ can be approximated by points in $\partial V\cap \Omega$. On each component of $\partial V \setminus \partial \Omega \subset \partial A\cap \Omega$ we necessarily have $\tilde u\equiv s$ or $\tilde u\equiv t$. By continuity it follows that on each point of $\partial V\cap \partial \Omega$ the function $\tilde u$ has the value $s$ or $t$. Since $\partial V$ is connected, we have $\tilde u\equiv s$ or $\tilde u\equiv  t$ on $\partial V$. The maximum principle implies that $V\subset \alpha_s$ or $V\subset \alpha_t$, but this contradicts the assumption that the level sets $\alpha_s$ and $\alpha_t$ have empty interior.  
\end{proof}

Next, we show:

\begin{theorem}\label{unif:Level sets-sums equal mass}
For a.e.\ $t\in (0,1)$ we have
\begin{align*}
\sum_{i:Q_i\cap \alpha_t\neq \emptyset} \osc_{Q_i}(u) =  D  (u) =\sum_{i\in \N}\osc_{Q_i}(u)^2.
\end{align*}
\end{theorem}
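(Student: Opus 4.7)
The equality $D(u) = \sum_{i\in\N} \osc_{Q_i}(u)^2$ is immediate from the definition of the Dirichlet energy, so the content is to show that
$$N(t) \;:=\; \sum_{i : Q_i \cap \alpha_t \neq \emptyset} \osc_{Q_i}(u)$$
equals $D(u)$ for a.e.\ $t \in (0,1)$. I would proceed in three steps.

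First, I compute $\int_0^1 N(t)\, dt$. By Proposition \ref{unif:Maximum principle for tilde u}, $\tilde u$ is harmonic in each $Q_i$ with $\tilde u(\br Q_i) = [m_{Q_i}(u), M_{Q_i}(u)]$, and by the open mapping property of non-constant harmonic functions $\tilde u(Q_i) = (m_{Q_i}(u), M_{Q_i}(u))$ whenever $\osc_{Q_i}(u) > 0$. Either way, $\{t \in (0,1) : Q_i \cap \alpha_t \neq \emptyset\}$ is an interval of Lebesgue measure $\osc_{Q_i}(u)$, and Fubini yields $\int_0^1 N(t)\, dt = \sum_{i\in\N} \osc_{Q_i}(u)^2 = D(u)$.

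The core step is to show that the averages of $N$ over any two subintervals of $(0,1)$ coincide. Fix intervals $(s_1, s_2), (s_3, s_4) \subset (0,1)$; let $\Phi_{a,b}(r) := \max(0, \min(1, (r-a)/(b-a)))$, set $F := \Phi_{s_1,s_2} - \Phi_{s_3,s_4}$, and put $\eta := F \circ u$. Since $F$ is Lipschitz, $\eta$ lies in $\mathcal W^{1,2}(S)$ with $\osc_{Q_i}(\eta) \leq \|F'\|_\infty \osc_{Q_i}(u)$; and because $F(0) = F(1) = 0$, the function $u + \varepsilon \eta$ is admissible for the free boundary problem for all small $\varepsilon$. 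The crucial observation is that for $|\varepsilon| < 1/\|F'\|_\infty$ the map $G_\varepsilon(r) := r + \varepsilon F(r)$ is strictly increasing on $\R$, so $u + \varepsilon \eta = G_\varepsilon \circ u$ and
$$\osc_{Q_i}(u + \varepsilon \eta) = G_\varepsilon(M_{Q_i}(u)) - G_\varepsilon(m_{Q_i}(u)) = \osc_{Q_i}(u) + \varepsilon\, c_i$$
exactly, where $c_i := F(M_{Q_i}(u)) - F(m_{Q_i}(u))$. Since $|c_i| \leq \|F'\|_\infty \osc_{Q_i}(u)$, the expansion
$$D(u + \varepsilon \eta) = D(u) + 2\varepsilon \sum_{i\in\N} \osc_{Q_i}(u)\, c_i + \varepsilon^2 \sum_{i\in\N} c_i^2$$
has absolutely convergent sums, and minimality of $u$ applied for both signs of $\varepsilon$ forces $\sum_i \osc_{Q_i}(u)\, c_i = 0$. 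Writing $c_i = |I_i \cap (s_1,s_2)|/(s_2-s_1) - |I_i \cap (s_3,s_4)|/(s_4-s_3)$ with $I_i := [m_{Q_i}(u), M_{Q_i}(u)]$ and swapping summation and integration yields
$$\frac{1}{s_2-s_1} \int_{s_1}^{s_2} N(r)\, dr = \frac{1}{s_4-s_3} \int_{s_3}^{s_4} N(r)\, dr.$$

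Since this holds for all subintervals of $(0,1)$, the Lebesgue differentiation theorem forces $N$ to be constant a.e.\ on $(0,1)$, and the first step identifies the constant as $D(u)$. The main obstacle is the exact linearization in the second step: it relies crucially on the choice of $\eta$ factoring through $u$, which ensures $\eta$ takes the same value at every extremal point of $u$ on each peripheral circle $\partial Q_i$, sidestepping the non-smoothness of the oscillation functional when $u$ has non-unique extrema on some $\partial Q_i$ and making the expansion of $\osc_{Q_i}(u + \varepsilon \eta)$ exact rather than merely first-order.
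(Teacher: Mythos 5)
Your proof is correct, but it takes a genuinely different route from the paper's. The paper establishes the two inequalities separately (Propositions \ref{unif:Level sets-sums geq} and \ref{unif:Level sets-sums leq}), each by a variational argument localized at a fixed level $t$: it perturbs $u$ by a ramp concentrated on the band $\{t-h<u<t+h\}$ (resp.\ by a piecewise-linear reparametrization that flattens that band), splits the indices into families $F_h$ and $N_h$ according to how $Q_i$ meets the strip $A_{t-h,t+h}$, and lets $h\to 0$; the error term $\frac{1}{2h}\sum_{i\in N_h}\osc_{Q_i}(u)^2$ is then killed for a.e.\ $t$ by Lemma \ref{unif:Level sets-Measure Derivative lemma}, i.e.\ by the fact that the pushforward measure has no absolutely continuous part. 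You replace this pointwise-in-$t$ limiting procedure by a single integrated first-variation identity: since your perturbation is $G_\varepsilon\circ u$ with $G_\varepsilon$ strictly increasing, $\osc_{Q_i}(u+\varepsilon\eta)$ is \emph{exactly} affine in $\varepsilon$, the vanishing of the first variation requires no error analysis, and Fubini together with Lebesgue differentiation converts the equality of interval averages of $N$ into $N=D(u)$ a.e. This is cleaner and avoids Lemma \ref{unif:Level sets-Measure Derivative lemma} altogether; what it does not provide is the localized machinery (the sets $F_h$, $N_h$ and the measure-derivative lemma) that the paper reuses almost verbatim later, e.g.\ in Lemma \ref{unif:Conjugate-Infimum not needed} and throughout Section \ref{unif:Section Conjugate}, where split level curves must be analyzed and the global identity alone does not suffice. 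One small point to spell out: the identification $\{t:Q_i\cap\alpha_t\neq\emptyset\}=(m_{Q_i}(u),M_{Q_i}(u))$ needs slightly more than the open mapping theorem — namely openness and connectedness of $\tilde u(Q_i)$ combined with $\br{\tilde u(Q_i)}\supseteq \tilde u(\br Q_i)=[m_{Q_i}(u),M_{Q_i}(u)]$ and the maximum principle — but this is routine.
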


The proof will follow from Propositions \ref{unif:Level sets-sums geq} and \ref{unif:Level sets-sums leq} below.

\begin{prop}\label{unif:Level sets-sums geq}
For a.e.\ $t\in (0,1)$ we have
\begin{align*}
\sum_{i:Q_i\cap \alpha_t\neq \emptyset} \osc_{Q_i}(u) \geq  D  (u).
\end{align*}
\end{prop}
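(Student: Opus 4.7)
The plan is to perform a variational argument with a one-parameter family of admissible competitors obtained by ``stretching'' $u$ around the level $t$, and to extract the desired inequality from the first-order condition at the minimizer.

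For each $t\in(0,1)$ and $r\in[0,1]$, define
\begin{align*}
v_{t,r}\coloneqq r\cdot \min\!\Bigl(1,\tfrac{u}{t}\Bigr) \;+\; (1-r)\cdot \max\!\Bigl(0,\tfrac{u-t}{1-t}\Bigr).
\end{align*}
Equivalently, $v_{t,r}=\phi_{t,r}\circ u$ where $\phi_{t,r}\colon[0,1]\to[0,1]$ is the piecewise-linear function sending $[0,t]$ linearly onto $[0,r]$ and $[t,1]$ linearly onto $[r,1]$. By Lemma~\ref{unif:Gluing lemma}, $v_{t,r}\in\mathcal{W}^{1,2}(S)$, and clearly $v_{t,r}=0$ on $\Theta_1$ and $v_{t,r}=1$ on $\Theta_3$, so $v_{t,r}$ is admissible for the free boundary problem. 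Note $v_{t,t}=u$.

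Fix $t$ and partition $\N$ as $A_1=\{i:M_{Q_i}(u)\le t\}$, $A_2=\{i:m_{Q_i}(u)\ge t\}$, and $A_3=\{i:m_{Q_i}(u)<t<M_{Q_i}(u)\}$; by Proposition~\ref{unif:Maximum principle for tilde u}, $\{i:Q_i\cap\alpha_t\ne\emptyset\}=\{i:t\in[m_{Q_i}(u),M_{Q_i}(u)]\}$, which agrees with $A_3$ up to the measure-zero-in-$t$ boundary cases. Since $u(\partial Q_i)=[m_{Q_i}(u),M_{Q_i}(u)]$ by continuity and connectedness, one computes explicitly: $\osc_{Q_i}(v_{t,r})=(r/t)\osc_{Q_i}(u)$ on $A_1$, $\osc_{Q_i}(v_{t,r})=((1-r)/(1-t))\osc_{Q_i}(u)$ on $A_2$, and $\osc_{Q_i}(v_{t,r})=(r/t)a_i+((1-r)/(1-t))b_i$ on $A_3$, where $a_i=t-m_{Q_i}(u)$, $b_i=M_{Q_i}(u)-t$, so $a_i+b_i=\osc_{Q_i}(u)$. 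Thus $G(r)\coloneqq D(v_{t,r})$ is a convex quadratic polynomial in $r$.

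By minimality of $u$ (Theorem~\ref{unif:Solution to free boundary problem}), $D(u)\le G(r)$ for all $r$, with equality at $r=t$. Hence $G'(t)=0$, and a direct computation collapses to
\begin{align*}
\frac{1}{t}\biggl(\sum_{i\in A_1}\osc_{Q_i}(u)^2+\sum_{i\in A_3}\osc_{Q_i}(u)\,a_i\biggr)
=\frac{1}{1-t}\biggl(\sum_{i\in A_2}\osc_{Q_i}(u)^2+\sum_{i\in A_3}\osc_{Q_i}(u)\,b_i\biggr).
\end{align*}
The two numerators $L(t)$ and $R(t)$ satisfy $L(t)+R(t)=D(u)$ (since $a_i+b_i=\osc_{Q_i}(u)$), so this forces $L(t)=tD(u)$ for every $t\in(0,1)$.

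Finally, write $L(t)=\sum_i\ell_i(t)$, where $\ell_i$ is the absolutely continuous, piecewise-linear function $\ell_i(t)=\osc_{Q_i}(u)\cdot\min\bigl(\osc_{Q_i}(u),(t-m_{Q_i}(u))_+\bigr)$, with $\ell_i'(t)=\osc_{Q_i}(u)\,\chi_{(m_{Q_i}(u),M_{Q_i}(u))}(t)$ a.e. Since $\sum_i\osc_{Q_i}(u)^2=D(u)<\infty$, monotone convergence gives $L(t)=\int_0^t f(\tau)\,d\tau$ with $f(\tau)=\sum_{i:Q_i\cap\alpha_\tau\neq\emptyset}\osc_{Q_i}(u)$. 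The identity $L(t)=tD(u)$ then yields $f(\tau)=D(u)$ for a.e.\ $\tau\in(0,1)$, which is the desired inequality (in fact, equality).

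The only subtle point is verifying that $v_{t,r}$ genuinely lies in $\mathcal{W}^{1,2}(S)$ with the stated oscillations on each $\partial Q_i$; this follows from the expression above and Lemma~\ref{unif:Gluing lemma}, since composition with a Lipschitz function preserves the upper-gradient property and controls oscillations linearly. Everything else is algebra.
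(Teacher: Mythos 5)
Your argument is correct, and it takes a genuinely different route from the paper's. The paper fixes a single ``step'' competitor $g$ supported on the strip $A_{t-h,t+h}$, uses convexity of the energy along the segment $(1-s)u+sg$ to get $D(u)\le\sum_i\osc_{Q_i}(u)\osc_{Q_i}(g)$, and must then control the error term $\tfrac{1}{2h}\sum_{i\in N_h}\osc_{Q_i}(u)^2$ as $h\to0$ via a measure-differentiation lemma (Lemma \ref{unif:Level sets-Measure Derivative lemma}, exploiting that the pushforward $u_*\mu$ is purely atomic); the reverse inequality requires a second, separate competitor (Proposition \ref{unif:Level sets-sums leq}). You instead reparametrize the range of $u$ by $\phi_{t,r}$, so that $G(r)=D(v_{t,r})$ is an exact convex quadratic in $r$ minimized at the interior point $r=t$; the stationarity condition collapses, after the cancellation $(1-t)L(t)=tR(t)$ with $L+R=D(u)$, to the clean identity $L(t)=tD(u)$ valid for \emph{every} $t\in(0,1)$, and the a.e.\ statement then falls out of Lebesgue differentiation of the absolutely continuous function $L(t)=\int_0^t f$. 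This avoids the index sets $F_h,N_h$, the limit $h\to0$, and Lemma \ref{unif:Level sets-Measure Derivative lemma} entirely, and it delivers the full equality of Theorem \ref{unif:Level sets-sums equal mass} in one stroke rather than as two opposite inequalities. The identification of $\{i:Q_i\cap\alpha_t\ne\emptyset\}$ with $\{i:m_{Q_i}(u)<t<M_{Q_i}(u)\}$ is in fact exact for all $t$ once one notes that $\tilde u(Q_i)=(m_{Q_i}(u),M_{Q_i}(u))$ for the harmonic extension when $\osc_{Q_i}(u)>0$ (open mapping plus continuity up to $\partial Q_i$), so even your hedge about countably many exceptional levels is unnecessary. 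The one thing the paper's construction buys that yours does not is reusability: the strip $A_{t-h,t+h}$ and the sets $F_h,N_h$ are recycled repeatedly in Section \ref{unif:Section Conjugate} to define and control the conjugate function, so the machinery is not wasted there, whereas your argument is the more economical proof of this particular proposition.
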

\begin{proof}
Let $t\in (0,1)$ be a value that is not the maximum or minimum value of $u$ on any $\partial Q_i$, $i\in \N$. There are countably many such values that we exclude. Then $Q_i\cap \alpha_{t\pm h}\neq \emptyset$ for all small $h>0$, whenever $Q_i\cap \alpha_t\neq \emptyset$. Indeed, if $x\in Q_i\cap \alpha_t$, and $\tilde u$ is non-constant in $Q_i$, then by harmonicity $\tilde u$ must attain, near $x$, values larger than $t$ and smaller than $t$. We fix a small $h>0$ and define $F_h$ to be the family of indices $i\in \N$ such that $Q_i\cap \alpha_{t+ h}\neq \emptyset$ and $Q_i\cap \alpha_{t- h}\neq \emptyset$. Note that $F_h$ is contained in $\{i\in \N: Q_i\cap \alpha_t\neq \emptyset\}$ by the connectedness of $Q_i$ and the continuity of $u$. In fact, by our previous remark, $F_h$ increases to $\{i\in \N: Q_i\cap \alpha_t\neq \emptyset\}$ as $h\to 0$. Hence, we have 
\begin{align}\label{unif:Level sets-sums F_h}
\sum_{i\in F_h} \osc_{Q_i}(u) \to \sum_{i:Q_i\cap \alpha_t\neq \emptyset} \osc_{Q_i}(u)
\end{align} 
as $h\to 0$. Also, define $N_h= \{i\in \N : Q_i\cap A_{t-h,t+h}\neq \emptyset\}\setminus F_h$.

Now, consider the function
\begin{align*}
g(x)= \begin{cases}
0, & u(x)\leq t-h \\
\frac{u(x)-(t-h)}{2h}, & t-h<u(x)< t+h\\
1, & u(x)\geq t+h .
\end{cases}
\end{align*}
The function $g$ lies in the Sobolev space $\mathcal W^{1,2}(S)$ as follows from Lemma \ref{unif:Gluing lemma}, and furthermore we have
\begin{align*}
\osc_{Q_i}(g) \begin{cases}
=1, & i\in F_h\\
\leq \osc_{Q_i}(u)/2h , & i\in N_h\\
=0, & i\notin F_h\cup N_h.
\end{cases}
\end{align*}
Since $g=0$ on $\Theta_1$ and $g=1$ on $\Theta_3$, the function $g$ is admissible for the free boundary problem. 

Hence, for all $s\in [0,1]$ the function $(1-s)u+sg$ is also admissible for the free boundary problem, so $D (u)\leq D ((1-s)u+sg)$ by the harmonicity of $u$. Lemma \ref{unif:Gluing lemma}(a) implies that 
\begin{align*}
D (u) &\leq \sum_{i\in \N}(( 1-s) \osc_{Q_i}(u)+s\osc_{Q_i}(g))^2\\
&= (1-s)^2D (u)+ 2(1-s)s \sum_{i\in \N} \osc_{Q_i}(u)\osc_{Q_i}(g) +s^2 D (g).
\end{align*}
This simplifies to
\begin{align*}
D (u) \leq (1-s) \sum_{i\in \N} \osc_{Q_i}(u)\osc_{Q_i}(g) +\frac{s}{2}(D (u)+D (g)).
\end{align*}
Letting $s\to 0$, we obtain 
\begin{align}\label{unif:Level sets-Optimization in s}
D (u) \leq \sum_{i\in \N} \osc_{Q_i}(u)\osc_{Q_i}(g),
\end{align}
thus,
\begin{align}\label{unif:Level sets-Inequality with N_h}
D (u) \leq \sum_{i\in F_h} \osc_{Q_i}(u) +\frac{1}{2h} \sum_{i\in N_h} \osc_{Q_i}(u)^2.
\end{align}
By \eqref{unif:Level sets-sums F_h}, it suffices to prove that $\frac{1}{2h} \sum_{i\in N_h} \osc_{Q_i}(u)^2\to 0$ as $h\to 0$, for a.e.\ $t\in [0,1]$. This will follow from the next lemma.

\begin{lemma}\label{unif:Level sets-Measure Derivative lemma}
Let $\{h(Q_i)\}_{i\in \N}$ be a sequence of non-negative numbers that is summable, i.e., $\sum_{i\in \N}h(Q_i)<\infty$. For each $i\in \N$ consider points $p_i,q_i\in \partial Q_i$ such that $u(p_i)= m_{Q_i}(u)=\min_{\partial Q_i}u$ and $u(q_i)= M_{Q_i}(u)=\max_{\partial Q_i}u$. Define a measure $\mu$ on $\C$ by 
\begin{align*}
\mu= \sum_{i\in \N} h(Q_i) ( \delta_{p_i}+\delta_{q_i}) ,
\end{align*}
where $\delta_x$ is a Dirac mass at $x$. Then for the pushforward measure $\lambda\coloneqq  u_*\mu$ on $\R$ we have
\begin{align*}
\lim_{h\to 0} \frac{\lambda((t-h,t+h))}{2h} =\lim_{h\to 0} \frac{1}{2h} \sum_{i\in \N}h(Q_i)(\x_{p_i \in A_{t-h,t+h}}+ \x_{q_i \in A_{t-h,t+h}})=0
\end{align*}
for a.e.\ $t\in [0,1]$.
\end{lemma}
The proof of the lemma is an immediate consequence of the fact that the measure $\lambda$ has no absolutely continuous part; see \cite[Theorem 3.22, p.~99]{Folland:real}. We now explain how to use the lemma in order to derive that the term $\frac{1}{2h} \sum_{i\in N_h} \osc_{Q_i}(u)^2$ in \eqref{unif:Level sets-Inequality with N_h} converges to $0$ as $h\to 0$, for a.e.\ $t\in [0,1]$.

We first refine our choice of $t\in (0,1)$ such that the conclusion of  Lemma \ref{unif:Level sets-Measure Derivative lemma} is true for $h(Q_i)\coloneqq \osc_{Q_i}(u)^2$. Recall that initially we only excluded countably many values of $t$. If $i\in N_h$ then we have $u(p_i) \in (t-2h,t+2h)$ or $u(q_i)\in (t-2h,t+2h)$. To see this first note that $\partial Q_i\cap A_{t-h,t+h}\neq \emptyset$ in this case by the connectedness of $A_{t-h,t+h}$ from Proposition \ref{unif:Level sets}. Thus $M_{Q_i}(u)=u(q_i) \geq t-h$. Since $i\notin F_h$, without loss of generality assume that $Q_i\cap \alpha_{t+h}=\emptyset$. By continuity, the maximum of $u$ on $\partial Q_i$ cannot exceed $t+h$, so $u(q_i)\in [t-h,t+h]\subset (t-2h,t+2h)$. Therefore, $\sum_{i\in N_h} \osc_{Q_i}(u)^2 \leq \lambda ((t-2h,t+2h))$ and this completes the proof.
\end{proof}

\begin{prop}\label{unif:Level sets-sums leq}
For a.e.\ $t\in (0,1)$ we have
\begin{align*}
\sum_{i:Q_i\cap \alpha_t\neq \emptyset} \osc_{Q_i}(u) \leq  D  (u).
\end{align*}
\end{prop}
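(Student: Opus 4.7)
The plan is to obtain the inequality by an integration argument in $t$, together with Proposition \ref{unif:Level sets-sums geq}. The key observation is that for each peripheral disk $Q_i$, the set of levels $t$ for which $Q_i$ intersects the level set $\alpha_t$ is small, in a way that is controlled by $\osc_{Q_i}(u)$.

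More precisely, for each $i \in \N$, set $T_i \coloneqq \{t \in (0,1) : Q_i \cap \alpha_t \neq \emptyset\}$. By construction, $t \in T_i$ iff there exists $x \in Q_i$ with $\tilde u(x) = t$, i.e., $T_i = \tilde u(Q_i) \cap (0,1)$. Since $\tilde u$ is harmonic inside $Q_i$ with continuous boundary values $u|_{\partial Q_i}$, the maximum principle gives $\tilde u(\br Q_i) \subset [m_{Q_i}(u), M_{Q_i}(u)]$, hence $T_i \subset [m_{Q_i}(u), M_{Q_i}(u)] \cap (0,1)$. Moreover $T_i$ is the continuous image of the connected open set $Q_i$, so it is an interval and in particular Lebesgue measurable, with
$$\mathcal H^1(T_i) \leq \osc_{Q_i}(u).$$

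Now consider $\phi(t) \coloneqq \sum_{i \in \N} \osc_{Q_i}(u)\, \x_{T_i}(t)$, which is a countable sum of non-negative measurable functions and hence measurable. Tonelli's theorem yields
$$\int_0^1 \phi(t)\, dt = \sum_{i \in \N} \osc_{Q_i}(u)\cdot \mathcal H^1(T_i) \leq \sum_{i \in \N} \osc_{Q_i}(u)^2 = D(u).$$
On the other hand, Proposition \ref{unif:Level sets-sums geq} asserts that $\phi(t) \geq D(u)$ for a.e.\ $t \in (0,1)$, so $\int_0^1 \phi \geq D(u)$ as well. Therefore the two are equal, which forces $\phi(t) = D(u)$ for a.e.\ $t$, and in particular $\phi(t) \leq D(u)$ for a.e.\ $t \in (0,1)$, which is exactly the claimed inequality.

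No serious obstacle is anticipated: the only subtlety is verifying the identification $T_i = \tilde u(Q_i) \cap (0,1)$ and its containment in $[m_{Q_i}(u), M_{Q_i}(u)]$, both of which are immediate from the harmonic extension constructed in Proposition \ref{unif:Maximum principle for tilde u} and the maximum principle for harmonic functions on $Q_i$. A minor comment: the ``almost every $t$'' exceptional set here is the union of the one from Proposition \ref{unif:Level sets-sums geq} and the measure-zero set on which $\phi > D(u)$, both of which are Lebesgue null.
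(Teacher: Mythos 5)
Your proof is correct, but it is genuinely different from the one in the paper. The paper establishes the upper bound by a second variational argument, dual to the one used for Proposition \ref{unif:Level sets-sums geq}: it builds a competitor $g$ equal to $(1+\delta)u$ (plus constants) outside the strip $A_{t-h,t+h}$ and to $u+c_1$ inside, normalizes it to be admissible for the free boundary problem, extracts the first-order optimality condition, and then lets $h\to 0$ using Lemma \ref{unif:Level sets-Measure Derivative lemma}. You instead observe that the sets $T_i=\tilde u(Q_i)\cap(0,1)$ are intervals of length at most $\osc_{Q_i}(u)$ (by the maximum principle for the harmonic extension on each $Q_i$), so Tonelli gives the budget $\int_0^1 \phi \le \sum_i \osc_{Q_i}(u)^2=D(u)$, and the already-proved pointwise lower bound $\phi\ge D(u)$ a.e.\ then forces $\phi=D(u)$ a.e. This is shorter and in fact delivers Theorem \ref{unif:Level sets-sums equal mass} in one stroke; it makes transparent that the upper bound is a measure-theoretic consequence of the lower bound together with the coarea-type estimate $\mathcal H^1(\tilde u(Q_i))\le\osc_{Q_i}(u)$ (note $D(u)<\infty$ is needed, and holds, to pass from $\int(\phi-D(u))\le 0$ and $\phi-D(u)\ge 0$ to $\phi=D(u)$ a.e.). What the paper's route buys is logical independence from Proposition \ref{unif:Level sets-sums geq} and a slightly more explicit description of the exceptional set (countably many levels plus the bad set of the measure-derivative lemma), but neither is needed for the statement as formulated.
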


\begin{proof}
Again we choose a $t\in (0,1)$ that is not a maximum or minimum value of $u$ on any $\partial Q_i$, $i\in \N$.

We fix $\delta,h>0$ and we define a Sobolev function $g\in \mathcal W^{1,2}(S)$ with $g=0$ on $\Theta_1$ and $g=1+\delta(1-2h)$ on $\Theta_3$ as follows:
\begin{align*}
g(x)= \begin{cases}
(1+\delta)u(x), & u(x)\leq t-h\\
u(x)+c_1, &  t-h<u(x)<t+h\\
(1+\delta)u(x)+c_2, & u(x)\geq t+h
\end{cases}
\end{align*}
where the constants $c_1,c_2$ are chosen so that $g$ is continuous. It easy to see that $c_1= \delta(t-h)$ and $c_2=-2\delta h$. Note that $g$ can be written as
\begin{align}\label{unif:Level sets-g definition}
g= \left[((1+\delta)u)\land (u+c_1)\right] \lor ((1+\delta)u+c_2),
\end{align}
which shows that $g\in \mathcal W^{1,2}(S)$, according to Lemma \ref{unif:Gluing lemma}. Consider the index sets $F_h=\{i\in \N: Q_i\cap \alpha_{t+ h}\neq \emptyset \textrm{ and } Q_i\cap \alpha_{t- h}\neq \emptyset\}$ and $N_h=\{i\in \N : Q_i \not\subset A_{t-h,t+h} \}\setminus F_h$. Observe that 
\begin{align*}
\osc_{Q_i}(g) \begin{cases}
\leq (1+\delta)\osc_{Q_i}(u) -2\delta h, & i\in F_h\\
\leq (1+\delta)\osc_{Q_i}(u), & i\in N_h \\
=\osc_{Q_i}(u), & i\notin F_h\cup N_h.
\end{cases}
\end{align*}
Indeed, for the first inequality note that for $i\in F_h$, the maximum of $g$ on $\partial Q_i$ has to be attained at a point $x$ with $u(x)\geq t+h$, and the minimum is attained at a point $y$ with $u(y)\leq t-h$. Hence, $g(x)-g(y) =(1+\delta)(u(x)-u(y)) -2\delta h \leq (1+\delta)\osc_{Q_i}(u)-2\delta h$. The second inequality holds for all $i\in \N$ and is a crude estimate, based on \eqref{unif:Level sets-g definition} and Lemma \ref{unif:Gluing lemma}; here it is crucial that $\delta>0$ so that $\osc_{Q_i}(u)\leq (1+\delta)\osc_{Q_i}(u)$. The third equality is immediate, since $t-h\leq u\leq t+h$ and thus $g=u+c_1$ on $\partial Q_i$, whenever $i\notin F_h\cup N_h$.

The function $g/(1+\delta(1-2h))$ is admissible for the free boundary problem, and testing the minimizing property of $u$ against $g/(1+\delta(1-2h))$ as in the proof of Proposition \ref{unif:Level sets-sums geq} (see \eqref{unif:Level sets-Optimization in s}) we obtain
\begin{align*}
D (u) \leq \frac{1}{1+\delta(1-2h)} \sum_{i\in \N} \osc_{Q_i}(u) \osc_{Q_i}(g).
\end{align*}
This implies that
\begin{align*}
D (u) \leq \frac{1}{1+\delta(1-2h)}&\bigg( (1+\delta)\sum_{i\in F_h} \osc_{Q_i}(u)^2- 2\delta h \sum_{i\in F_h}\osc_{Q_i}(u) \\
&\quad \quad +(1+\delta)\sum_{i\in N_h}\osc_{Q_i}(u)^2 + \sum_{i\notin F_h\cup N_h} \osc_{Q_i}(u)^2 \bigg).
\end{align*}
Manipulating the expression yields 
\begin{align}\label{unif:Level sets-sums leq inequality}
\sum_{i\in F_h} \osc_{Q_i}(u) +\frac{1}{2h} \sum_{i\notin F_h\cup N_h} \osc_{Q_i}(u)^2 \leq D (u).
\end{align}
By the choice of $t$ we have 
$$\sum_{i\in F_h} \osc_{Q_i}(u)\to \sum_{i:Q_i\cap \alpha_t\neq \emptyset}\osc_{Q_i}(u)$$
as $h\to 0$. Moreover, if $i\notin F_h \cup N_h$, then $\br Q_i\subset A_{t-2h, t+2h}$, so 
\begin{align*}
\sum_{i\notin F_h\cup N_h} \osc_{Q_i}(u)^2\leq \lambda((t-2h,t+2h)),
\end{align*}
where $\lambda$ is as in Lemma \ref{unif:Level sets-Measure Derivative lemma} and $h(Q_i)\coloneqq \osc_{Q_i}(u)^2$. By the lemma, it follows that 
\begin{align*}
\frac{1}{2h} \sum_{i\notin F_h\cup N_h} \osc_{Q_i}(u)^2  \to 0
\end{align*}
as $h\to 0$ for a.e.\ $t\in [0,1]$. This, together with \eqref{unif:Level sets-sums leq inequality} yields the conclusion.
\end{proof}

Another important topological property of the level sets of $u$ is the following:

\begin{lemma}\label{unif:Level sets-at most two points of intersection}
For a.e.\ $t\in [0,1]$ and for all $i\in \N$ the intersection $u^{-1}(t)\cap \partial Q_i$ contains at most two points. Furthermore, for a.e.\ $t\in [0,1]$ the intersection $u^{-1}(t)\cap \partial \Omega$ contains exactly two points, one in $\Theta_2$ and one in $\Theta_4$.
\end{lemma}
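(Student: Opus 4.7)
For the boundary claim (Part 2), I would proceed directly from Proposition~\ref{unif:Level sets}, which asserts that $\alpha_t\cap \Theta_j$ is connected for every $t\in [0,1]$ and $j\in\{2,4\}$. Combined with the continuity of $\tilde u$ on $\br\Omega$ (Proposition~\ref{unif:Maximum principle for tilde u}), the restriction $\tilde u|_{\Theta_j}$ is a continuous real-valued function on the Jordan arc $\Theta_j$ whose point-preimages are all connected, hence by a standard topological fact monotone. The boundary values $\tilde u=0$ on $\Theta_1$ and $\tilde u=1$ on $\Theta_3$ force this monotone map to be a surjection onto $[0,1]$. Outside the countable set of levels on which $\tilde u|_{\Theta_j}$ is constant on a nondegenerate subarc, $\tilde u^{-1}(t)\cap \Theta_j$ is a single point, and by the intermediate value theorem it is in fact exactly one point; since $u$ takes only the values $0$ and $1$ on $\Theta_1\cup\Theta_3$, for a.e. $t\in[0,1]$ the intersection $u^{-1}(t)\cap \partial\Omega$ consists of exactly two points, one in $\Theta_2$ and one in $\Theta_4$.

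For the peripheral-disk claim (Part 1), my plan is first to reduce it to a total variation identity and then to establish that identity variationally. Applying the Banach indicatrix theorem to $u|_{\partial Q_i}$ on the Jordan curve $\partial Q_i$ yields $\int_{\R}\#(u^{-1}(t)\cap \partial Q_i)\,dt = V_{\partial Q_i}(u)$, where $V$ denotes total variation. The intermediate value theorem on the two arcs of $\partial Q_i$ joining a minimum point to a maximum point gives $\#(u^{-1}(t)\cap \partial Q_i)\geq 2$ on $(m_{Q_i}(u),M_{Q_i}(u))$, so $V_{\partial Q_i}(u)\geq 2\osc_{Q_i}(u)$, with equality if and only if $\#(u^{-1}(t)\cap \partial Q_i)\leq 2$ for a.e. $t$. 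Thus Part 1 reduces to proving the identity $V_{\partial Q_i}(u)=2\osc_{Q_i}(u)$ for every $i\in\N$. I would argue this by contradiction using the uniqueness of the free boundary minimizer (Theorem~\ref{unif:Solution to free boundary problem}). Assuming $V_{\partial Q_{i_0}}(u)>2\osc_{Q_{i_0}}(u)$ for some $i_0$, the trace $u|_{\partial Q_{i_0}}$ admits a strict interior local extremum, say a local maximum $p$ with value $c\in(m_{Q_{i_0}}(u),M_{Q_{i_0}}(u))$, so for small $\epsilon>0$ there is a subarc $\sigma\subset\partial Q_{i_0}$ through $p$ with $u=c-\epsilon$ at its endpoints and $u>c-\epsilon$ on its interior. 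The competitor is $g=(u\land(c-\epsilon))\x_{S\cap U}+u\x_{S\setminus U}$, where $U\subset \Omega\setminus(\Theta_1\cup\Theta_3)$ is an open set containing $\sigma$ with $u\leq c-\epsilon$ on $\partial_*U$; Corollary~\ref{harmonic:Removability4} then places $g$ in $\mathcal{W}^{1,2}(S)$ with $\osc_{Q_j}(g)\leq\osc_{Q_j}(u)$ for all $j$, and the cap forces $\osc_{Q_j}(g)=0<\osc_{Q_j}(u)$ for every $Q_j$ with $\br Q_j\subset U$, $m_{Q_j}(u)\geq c-\epsilon$, and $u$ non-constant on $\partial Q_j$. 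By continuity of $u$ near $p$, the density of peripheral disks there, and Lemma~\ref{harmonic:4-zero oscillation lemma} (which rules out $u$ being locally constant near $p$), at least one such $Q_j$ exists, yielding $D(g)<D(u)$ and contradicting the minimizing property of $u$.

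The main obstacle is the construction of the open set $U$ satisfying the boundary bound $u\leq c-\epsilon$ on $\partial_*U$. A natural candidate is a component of $\{\tilde u>c-\epsilon\}\cap V$ near $p$ for a suitable ball $V$ around $\sigma$, but one must control $\partial V\cap S$ so that the boundary bound holds there as well. This requires a delicate choice of $V$ informed by the continuity of $\tilde u$ on $\br\Omega$ and by the local geometry of the peripheral disks near $\partial Q_{i_0}$, and is the technical heart of the argument.
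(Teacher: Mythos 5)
Your argument for the boundary statement is essentially the paper's: connectedness of $\alpha_t\cap\Theta_j$ from Proposition \ref{unif:Level sets} plus exclusion of the countably many levels at which the restriction is constant on a nondegenerate subarc (the paper packages the latter as Lemma \ref{unif:Sard}). For the peripheral circles you take a genuinely different route, and the reduction is correct and appealing: by the Banach indicatrix theorem and the intermediate value theorem the claim is equivalent to the identity $V_{\partial Q_i}(u)=2\osc_{Q_i}(u)$. One repair is needed already at the level of the reduction: $V_{\partial Q_{i_0}}(u)>2\osc_{Q_{i_0}}(u)$ does not produce a local extremum with value in the \emph{open} interval $(m_{Q_{i_0}}(u),M_{Q_{i_0}}(u))$ --- the trace may attain its global maximum on two separated subarcs, in which case every local extremal value equals $m_{Q_{i_0}}(u)$ or $M_{Q_{i_0}}(u)$ --- so the competitor must also handle capping near one of several plateaux at the extreme value. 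That is patchable.

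The genuine gap is the one you flag yourself: the construction of $U$. It is not a removable technicality; it is where the content of the lemma lives, and the natural candidates fail for a structural reason. The super-level set $\{\tilde u>c-\epsilon\}$ is connected (it equals $A_{c-\epsilon,1}\cup\tilde u^{-1}(1)$; $A_{c-\epsilon,1}$ is connected by Proposition \ref{unif:Level sets}, and a component contained in $\tilde u^{-1}(1)$ would be clopen in $\br\Omega$), and it contains $\Theta_3$. Hence the component of $\{\tilde u>c-\epsilon\}$ containing $\sigma^\circ$ is the whole super-level set, so $U$ cannot be taken to be such a component (admissibility on $\Theta_3$ fails), and any proper open $U\supset\sigma^\circ$ necessarily has boundary points inside $\{\tilde u>c-\epsilon\}$; note $U$ must leave $\br Q_{i_0}$ near $p$, since capping on $S\cap Q_{i_0}=\emptyset$ does nothing. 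Unless all of those boundary points can be routed through peripheral disks, some lie in $S$ with $u>c-\epsilon$, and the hypothesis of Corollary \ref{harmonic:Removability4} fails; deciding where the super-level set is thin enough to be crossed inside peripheral disks is precisely the structure of the level set $\tilde u^{-1}(c-\epsilon)$ that the lemma is meant to establish, so the argument is circular as it stands. The paper avoids super-level sets entirely: after discarding the countably many local extremal values, it takes minimal continua $C_2,C_4\subset\tilde u^{-1}(t)$ joining $\Theta_2,\Theta_4$ to $\partial Q_i$, shows each meets $\partial Q_i$ in a single point, and applies the maximum principle to the components of $\br\Omega\setminus C$ for the separating continuum $C$ built from $C_2\cup C_4$ and an arc through $Q_i$ --- there the boundary condition $u\equiv t$ on $\partial_*V$ is automatic because the bounding continuum lies in the level set itself. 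I would prove your total-variation identity (or the lemma directly) by that level-set argument rather than by a capping competitor.
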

\begin{proof}
The proof is based on the following elementary lemma, which is the 1-dimensional version of Sard's theorem: 
\begin{lemma}\label{unif:Sard}
Let $f\colon \R \to \R$ be an arbitrary function. Then the set of local maximum and local minimum \textit{values} of $f$ is at most countable.
\end{lemma}
We will use the lemma now and provide a proof right afterwards.

Note that $\partial \Omega ,\partial Q_i \simeq \R/\Z$. We consider a level $t\in (0,1)$ that is not a local maximum or local minimum value of $u$ on $\partial \Omega$ and on any peripheral circle $\partial Q_i$, $i\in \N$. This implies that for each point $x\in u^{-1}(t)\cap \partial Q_i$ there exist, arbitrarily close to $x$, points $x_+,x_- \in \partial Q_i$ with $u(x_+)>t$ and $u(x_-)<t$. 

By Proposition \ref{unif:Level sets}, $u^{-1}(t)$ intersects $\Theta_2$ at a connected set. Since $t$ is not a local maximum or local minimum value of $u$ on $\partial \Omega$, it follows that $u^{-1}(t)\cap \Theta_2$ cannot be an arc, so it has to be a point. Similarly, $u^{-1}(t)\cap \Theta_4$ is a singleton. Furthermore, with the same reasoning, for each $i\in \N$ the intersection $u^{-1}(t)\cap \partial Q_i$ is a totally disconnected set. Let $\partial Q_i$, $i\in \N$, be an arbitrary peripheral circle, intersected by $u^{-1}(t)$. We now split the proof in two parts.

\textbf{Step 1:} There exist continua $C_2,C_4\subset \tilde u^{-1}(t)$ that connect $\Theta_2,\Theta_4$ to $\partial Q_i$, respectively, with $C_2\cap \partial Q_i=\{x_2\}$ and  $C_4\cap \partial Q_i=\{x_4\}$ for some points $x_2,x_4\in \partial Q_i$. We provide details on how to obtain these continua.

Let $C_2\subset \tilde u^{-1}(t)$ be a minimal continuum that connects $\partial Q_i$ to $\Theta_2$, and $C_4\subset \tilde u^{-1}(t)$ be a minimal continuum that connects $\partial Q_i$ to $\Theta_4$. Here, a continuum $C$ joining two sets $E$ and $F$ is minimal if any compact proper subset of $C$ is either disconnected or it does not connect $E$ and $F$. The existence of the continua $C_2$ and $C_4$ follows from Zorn's lemma, because the intersection of a chain of continua connecting two compact sets is again a continuum connecting these compact sets; see \cite[Theorem 28.2]{Willard:topology} and the proof of \cite[Theorem 28.4]{Willard:topology}. We next show that $C_2$ intersects $\partial Q_i$ at a single point $x_2$ and $C_4$ intersects $\partial Q_i$ at a single point $x_4$. 

First note that $C_2\cap Q_i=\emptyset$. Otherwise, $C_2\setminus Q_i$ has to be disconnected by the minimality of $C_2$, so there exists a compact component $W$ of $C_2\setminus Q_i$ that intersects $\Theta_2$. By the minimality of $C_2$, $W$ cannot intersect $\partial Q_i$, so it has a positive distance from it. The component $W$ is the intersection of all rel.\ clopen subsets of $C_2\setminus Q_i$ that contain it; see \cite[Corollary 1.34]{Burckel:complex} or \cite[p.~304]{Remmert:complex}. Let $U\supset W$ be such a clopen set, very close to $W$, so that $U\cap \partial Q_i=\emptyset$. Then $C_2= U \cup (C_2\setminus U)$, where $U$ and $C_2\setminus U$ are non-empty and rel.\ closed in $C_2$. This contradicts the connectedness of $C_2$, and completes the proof that $C_2\cap Q_i=\emptyset$.

Now, assume that $C_2\cap \partial Q_{i}$ contains two points, $x$ and $y$. We connect these points by a simple path $\gamma \subset  Q_i$. Then, we claim that $\br \Omega \setminus (C_2\cup \gamma)$ has a component $V\subset \Omega$ such that $\partial V \subset C_2\cup \gamma$, and such that $\br V$ contains an arc $\beta\subset \partial Q_i$ between $x$ and $y$. Assume the claim for the moment. On $\partial_* V=\partial V\cap S$ we have $u\equiv t$, so by the maximum principle in Theorem \ref{unif:Maximum principle} we obtain that $u\equiv t$ on $\br V\cap S$. However, this implies that $u\equiv t$ on $\beta$ and this contradicts the fact that $u^{-1}(t)\cap \partial Q_i$ is totally disconnected. Thus, indeed $C_2\cap \partial Q_i\subset u^{-1}(t)$ contains precisely one point, $x_2$. The same is true for $C_4\cap \partial Q_i =\{x_4\}$.

Now we prove the claim.  Note that $\gamma$ separates $Q_i$ in two open ``pieces" $A_i$, $i=1,2$. Each of these two pieces lies in a component $V_{i}$, $i=1,2$, of $\br \Omega \setminus (C_2\cup \gamma)$, respectively. Note that $\Theta_1,\Theta_3$ are disjoint from $C_2\cup \gamma$, since $C_2\subset \tilde u^{-1}(t)$ and  $t\neq 0,1$. Hence, $\Theta_1$ and $\Theta_3$ lie in components of $\br \Omega \setminus (C_2\cup \gamma)$. We claim that one of the components $V_1$ and $V_2$, say $V_1$, contains neither $\Theta_1$ nor $\Theta_3$. This will be the desired component $V$ with the claimed properties in the previous paragraph. In particular, the arc $\beta$ is an arc contained in $\partial A_1\cap \partial Q_i$.

To prove our latter claim, we first observe that the components of the set $\br \Omega\setminus (C_2\cup \gamma)$, which is rel.\ open in $\br \Omega$, are pathwise connected and rel.\ open in $\br \Omega$. This is because $\br \Omega$ is a Jordan region and thus it is locally pathwise connected; see \cite[Theorem 27.5 and Theorem 27.9]{Willard:topology}. The components $V_1$ and $V_2$ that contain $A_1$ and $A_2$, respectively, are  necessarily distinct. Otherwise, there is a path in $\br \Omega \setminus (C_2\cup \gamma)$ that connects a point  $a_1\in A_1$ to a point $a_2\in A_2$. Concatenating this path with a path inside $Q_i$ that connects $a_1$ to $a_2$ would provide a loop in $\br \Omega\setminus C_2$ that separates $C_2$, a contradiction to the connectedness of $C_2$; for the construction of that loop it is crucial that $C_2\cap Q_i=\emptyset$. Suppose now that $V_1$ contains $\Theta_1$ and $V_2$ contains $\Theta_3$. Then we can similarly construct path from $\Theta_1$ to $\Theta_3$ passing through $Q_i$ that disconnects $C_2$, a contradiction.

\textbf{Step 2:} The points $x_2$ and $x_4$ are the only points lying in $u^{-1}(t)\cap \partial Q_i$.

Now, we show that there can be no third point in $ u^{-1}(t)\cap \partial Q_i$. In case $x_2\neq x_4$, we join the points $x_2,x_4$ with an arc inside $Q_i$ and we obtain a continuum $C$ that separates $\Theta_1$ and $\Theta_3$, and intersects $\partial Q_i$ in  two points. If $x_2=x_4$ we just let $C=C_2\cup C_4$. The set $\br \Omega \setminus C$ has at least two components, one containing $\Theta_1$ and one containing $\Theta_3$. If $V$ is one of the components of $\br \Omega\setminus C$, then we have $u\equiv t$ on $\partial_*V$, which implies that $u\geq t$ or $u\leq t$ on $V\cap S$ by the maximum principle in Theorem \ref{unif:Maximum principle}.

Assume that there exists another point $x\in \partial Q_i\cap u^{-1}(t)$, $x\neq x_2,x_4$. The point $x$ lies on an open arc $\beta \subset \partial Q_{i}$ with endpoints $x_2,x_4$. This arc lies in one of the components of $\br\Omega \setminus C$, so assume it lies in a component $V$ on which $u\leq t$. However, by the choice of $t$, arbitrarily close to $x$ we can find a point $x_+\in \beta$ with $u(x_+)>t$, a contradiction.
\end{proof}

\begin{proof}[Proof of Lemma \ref{unif:Sard}]
The set of local maximum values of $f\colon \R \to \R$ is the set 
\begin{align*}
E&=\{ y\in \R : \textrm{there exist $x\in \R$ and $\varepsilon>0$ such that} \\
&\quad \quad \quad \textrm{$y=f(x)$ and $f(z)\leq y$ for all $|z-x|<\varepsilon$} \}.
\end{align*}
We will show that this set is at most countable. The claim for the local minimum values follows by looking at $-f$. Note that $E=\bigcup_{n=1}^\infty E_n$, where 
\begin{align*}
E_n&=\{ y\in \R : \textrm{there exists $x\in \R$  such that} \\
&\quad \quad \quad \textrm{$y=f(x)$ and $f(z)\leq y$ for all $|z-x|<1/n$} \}.
\end{align*}
Hence, it suffices to show that $E_n$ is at most countable for each $n\in \N$. For each $y\in E_n$ there exists $x\in \R$ such that $f(x)=y$, and there exists an interval $I=(x-1/n,x+1/n)$ such that $f(z)\leq y$ for all $z\in I$. If $y_1,y_2\in E_n$ are distinct with $y_1=f(x_1)$, $y_2=f(x_2)$, and $I_1,I_2$ are the corresponding intervals, then $\frac{1}{2}I_1\coloneqq (x_1-1/2n, x_1+1/2n)$ and  $\frac{1}{2}I_2\coloneqq (x_2-1/2n, x_2+1/2n)$ are necessarily disjoint intervals. This implies that $E_n$ is in one-to-one correspondence with a family of disjoint open subintervals of $\R$, and hence $E_n$ is at most countable.
\end{proof}

The next corollary is immediate:

\begin{corollary}\label{unif:Level sets-exactly two points}
For each peripheral disk $Q_i$, $i\in \N$, and for a.e.\ level $t\in [m_{Q_i},M_{Q_i}]$ the intersection $u^{-1}(t)\cap \partial Q_i$ contains precisely two points.
\end{corollary}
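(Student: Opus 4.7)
The plan is to derive this corollary directly from Lemma \ref{unif:Level sets-at most two points of intersection} combined with an elementary topological observation about the circle $\partial Q_i$. The previous lemma already provides that for a.e.\ $t\in[0,1]$, and in particular for a.e.\ $t\in[m_{Q_i},M_{Q_i}]$, we have $\#(u^{-1}(t)\cap\partial Q_i)\leq 2$. So the only work is to establish the matching lower bound: for all $t$ in the open interval $(m_{Q_i},M_{Q_i})$, the intersection $u^{-1}(t)\cap\partial Q_i$ contains at least two points.

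The key observation will be the following. Fix $t\in(m_{Q_i},M_{Q_i})$ and suppose towards a contradiction that $u^{-1}(t)\cap\partial Q_i=\{x\}$ is a singleton. Since $\partial Q_i$ is a Jordan curve, $\partial Q_i\setminus\{x\}$ is connected, and $u-t$ is continuous and nowhere zero on it. Hence the image $(u-t)(\partial Q_i\setminus\{x\})$ is a connected subset of $\R\setminus\{0\}$, so it lies entirely in $(0,\infty)$ or entirely in $(-\infty,0)$. In the first case, $u\geq t$ on all of $\partial Q_i$, forcing $m_{Q_i}(u)\geq t$; in the second case, $u\leq t$ on $\partial Q_i$, forcing $M_{Q_i}(u)\leq t$. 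Either conclusion contradicts $t\in(m_{Q_i},M_{Q_i})$.

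Consequently $\#(u^{-1}(t)\cap\partial Q_i)\geq 2$ for every $t\in(m_{Q_i},M_{Q_i})$, and combining this with the a.e.\ upper bound from Lemma \ref{unif:Level sets-at most two points of intersection} yields equality for a.e.\ $t\in(m_{Q_i},M_{Q_i})$. Since $\{m_{Q_i},M_{Q_i}\}$ is a null set, the conclusion holds for a.e.\ $t\in[m_{Q_i},M_{Q_i}]$, as claimed. There is no real obstacle here beyond making the connectedness argument precise; the corollary is essentially a bookkeeping step that packages the topological consequence of the previous lemma in the form needed later for the construction of the conjugate function $v$.
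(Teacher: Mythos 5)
Your proof is correct and follows essentially the same route as the paper: the a.e.\ upper bound comes from Lemma \ref{unif:Level sets-at most two points of intersection}, and the lower bound is elementary circle topology (the paper applies the intermediate value theorem to the two arcs of $\partial Q_i$ joining a point where $u$ attains $m_{Q_i}(u)$ to a point where it attains $M_{Q_i}(u)$, which is just the dual of your connectedness argument). The only cosmetic omission is that ruling out a singleton does not by itself give two points --- you should also note that $u^{-1}(t)\cap\partial Q_i\neq\emptyset$ for $t\in(m_{Q_i}(u),M_{Q_i}(u))$, which follows from the same connectedness of $\partial Q_i$ (equivalently, run your sign argument with the empty set in place of $\{x\}$).
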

\begin{proof}
Assume that $m_{Q_i}(u)<M_{Q_i}(u)$ (i.e., $\osc_{Q_i}(u)\neq 0$), and choose a $t\in (m_{Q_i}(u),M_{Q_i}(u))$ so that the conclusion of Lemma \ref{unif:Level sets-at most two points of intersection} is true. Consider two points $p_i,q_i\in \partial Q_i$ such that $u(p_i)=m_{Q_i}(u)$ and $u(q_i)=M_{Q_i}(u)$. Applying the intermediate value theorem on each of the two arcs between the points $p_i,q_i$, it follows that $u^{-1}(t)\cap \partial Q_i$ contains at least two points.
\end{proof}

\begin{remark}It is clear from the proof of Lemma \ref{unif:Level sets-at most two points of intersection} that we only need to exclude at most countably many $t\in [m_{Q_i}(u),M_{Q_i}(u)]$ for the conclusion of Corollary \ref{unif:Level sets-exactly two points}.
\end{remark}

We continue with an absolute continuity lemma. This is the most technical part of the section. We first observe the following consequence of the fatness of the peripheral disks:

\begin{remark}\label{unif:Fatness consequence}
If a peripheral disk $Q_i$, $i\in \N$, intersects two circles $\partial B(x,r)$ and $\partial B(x,R)$ with $0<r<R$, then 
\begin{align*}
\mathcal H^2(Q_i\cap (B(x,R)\setminus B(x,r)))\geq C (R-r)^2,
\end{align*}
where $C>0$ is a constant depending only on the fatness constant of condition \eqref{unif:Fat sets}. To see this, by the connectedness of $Q_i$ there exists a point $y\in Q_i \cap \partial B(x, (r+R)/2)$. Then $B(y, (R-r)/2) \subset B(x,R)\setminus B(x,r)$, so
\begin{align*}
\mathcal H^2(Q_i \cap (B(x,R)\setminus B(x,r)))\geq \mathcal H^2(Q_i\cap B(y,(R-r)/2)) \geq K_1 \frac{(R-r)^2}{4},
\end{align*}
by the fatness condition \eqref{unif:Fat sets}.
\end{remark}

\begin{lemma}\label{unif:Level sets-Hausdorff measure}\index{level sets!Hausdorff $1$-measure of}
For a.e.\ $t\in [0,1]$ we have $\mathcal H^1(u^{-1}(t))=0$.
\end{lemma}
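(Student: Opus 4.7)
The plan is to reduce the statement, via Lemma~\ref{unif:Level sets-at most two points of intersection}, to showing that $\mathcal{H}^1(u^{-1}(t) \cap S^\circ) = 0$ for a.e.\ $t$. That lemma guarantees that for a.e.\ $t \in [0,1]$ the set $u^{-1}(t) \cap \partial Q_i$ has at most two points for every $i \in \N$ and $u^{-1}(t) \cap \partial \Omega$ has exactly two points, so the peripheral-circle contribution to $\mathcal{H}^1(u^{-1}(t))$ is null and one is left to control the portion of the level set inside $S^\circ$.

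My approach to the remaining piece would be to build a continuous extension $\tilde u \colon \br \Omega \to \R$ of $u$ with $\tilde u|_{S \cap \Omega} = u$ that belongs to $W^{1,p}(\Omega)$ for some $p \in (1,2)$. The extension would be constructed disk-by-disk: inside each peripheral disk $Q_i$ one combines a Whitney decomposition of $Q_i$ with a smoothed interpolation of the boundary values $u|_{\partial Q_i}$ (which are continuous by Theorem~\ref{harmonic:4-continuous}) to obtain a per-disk bound
\begin{align*}
\int_{Q_i}|\nabla \tilde u|^p \, dx \leq C_p R_i^{2-p}\, \osc_{Q_i}(u)^p,
\end{align*}
where $R_i$ is the outer radius of $Q_i$ supplied by the quasiball condition. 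Summing in $i$ and applying H\"older's inequality with exponents $2/p$ and $2/(2-p)$ would give
\begin{align*}
\|\nabla \tilde u\|_{L^p(\Omega)}^p \leq C_p \Big(\sum_i R_i^2\Big)^{(2-p)/2} D(u)^{p/2} < \infty,
\end{align*}
since $\sum_i R_i^2 \leq C\, \mathcal{H}^2(\Omega) < \infty$ by the combination of the fatness and quasiball conditions.

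With such an extension in hand, the conclusion follows from a neighborhood-shrinking argument. Since $\mathcal{H}^2(S) = 0$, for every $\varepsilon > 0$ one may choose an open set $V_\varepsilon \supset S$ with $\mathcal{H}^2(V_\varepsilon) < \varepsilon$. Because $u^{-1}(t) \cap S^\circ \subset \tilde u^{-1}(t) \cap V_\varepsilon$, the classical co-area inequality for Sobolev functions combined with H\"older's inequality yields
\begin{align*}
\int_0^1 \mathcal{H}^1(u^{-1}(t) \cap S^\circ) \, dt \leq \int_{V_\varepsilon} |\nabla \tilde u| \, dx \leq \mathcal{H}^2(V_\varepsilon)^{1-1/p} \|\nabla \tilde u\|_{L^p(\Omega)} \leq C_p\, \varepsilon^{1-1/p},
\end{align*}
and letting $\varepsilon \to 0$ forces $\mathcal{H}^1(u^{-1}(t) \cap S^\circ) = 0$ for a.e.\ $t$. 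The main obstacle I anticipate is securing the $W^{1,p}$-extension step for some $p > 1$: the crude pointwise bound $|\nabla \tilde u| \leq C \osc_{Q_i}(u)/\dist(\cdot, \partial Q_i)$ is $L^p$-integrable only for $p < 1$, so a finer interpolation exploiting the uniform continuity of $u|_{\partial Q_i}$, or Hardy--Littlewood type estimates for the Poisson extension on the quasiball $Q_i$, is required in order to push the integrability above the co-area threshold $p = 1$.
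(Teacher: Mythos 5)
Your first step --- reducing to $\mathcal H^1(u^{-1}(t)\cap S^\circ)=0$ via Lemma \ref{unif:Level sets-at most two points of intersection} --- is correct and is also how the paper begins. The gap is in the extension step, and it is not a technical obstacle that finer interpolation can remove: it is an obstruction at the level of trace spaces. The only information available about $u\big|_{\partial Q_i}$ is that it is continuous with oscillation $\osc_{Q_i}(u)$; for $p>1$ the trace space of $W^{1,p}(Q_i)$ is a Besov space $B^{1-1/p}_{p,p}(\partial Q_i)$, and a merely continuous boundary function need not lie in it, so \emph{no} extension satisfying $\int_{Q_i}|\nabla\tilde u|^p\leq C_pR_i^{2-p}\osc_{Q_i}(u)^p$ exists in general. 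Even the weakest usable case $p=1$ fails for the Poisson extension: by Rudin's example there are bounded holomorphic functions on $\D$ whose derivative is not area-integrable, so ``Hardy--Littlewood type estimates'' cannot rescue the harmonic extension. A Gagliardo-type $W^{1,1}$ extension with $\int_{Q_i}|\nabla\tilde u|\lesssim R_i\osc_{Q_i}(u)$ would suffice (one could then replace H\"older by absolute continuity of the integral), but it requires $u\big|_{\partial Q_i}\in L^1(\partial Q_i,\mathcal H^1)$, and the peripheral circles here are only fat quasiball boundaries --- they need not be rectifiable. Finally, even granting per-disk extensions, you must glue them into a single Sobolev function on $\Omega$ across the measure-zero set $S$; this is a removability problem, and measure-zero Sierpi\'nski carpets are in general \emph{not} removable for Sobolev spaces, so continuity of the glued function plus disk-wise regularity does not yield $\tilde u\in W^{1,1}(\Omega)$ and hence does not license the co-area inequality.

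The paper avoids extensions altogether. It covers $\Omega$ minus the finitely many large peripheral disks by balls $B_j$ of radius $r_j<\varepsilon$ with $\tfrac15 B_j$ disjoint, and bounds $\diam(u(B_j\cap S))$ using the discrete upper gradient inequality on circles $\partial(sB_j)$ together with the maximum principle (Lemma \ref{unif:Maximum principle circular arcs}). The key difficulty --- a single large peripheral disk dominating the sum $\sum_{i:Q_i\cap\partial(sB_j)\neq\emptyset}\osc_{Q_i}(u)$ --- is handled by a good-ball/bad-ball dichotomy: for ``good'' balls the large disks' contribution is absorbed (their number is bounded by fatness), giving $r_j\diam(u(B_j\cap S))\lesssim\sum_{Q_i\subset 11B_j}\osc_{Q_i}(u)\diam(Q_i)$, which is summed via the uncentered maximal function of $h=\sum_i(\osc_{Q_i}(u)/\diam(Q_i))\x_{Q_i}\in L^2$ and killed using $\mathcal H^2(S)=0$; the ``bad'' balls are replaced in the cover by slightly enlarged peripheral disks $\widetilde Q_i$ with $\diam(u(\widetilde Q_i\cap S))\lesssim\osc_{Q_i}(u)$, whose contribution is controlled by Cauchy--Schwarz. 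If you want to salvage your strategy, you would need to prove the carpet-adapted substitutes for the co-area inequality directly from the upper gradient inequality --- which is essentially what the paper's covering argument does.
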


The proof is very technical so we provide first a rough sketch of the argument. For a fixed $\varepsilon>0$ we will find an effective cover (up to a small set) of $S= \bigcup_{t\in [0,1]} u^{-1}(t)$ by balls $B_j$ of radius $r_j<\varepsilon$. Then for each $t$ the quantity $\mathcal H^1_\varepsilon ( u^{-1}(t))$ is bounded by $\sum 2r_j$, where the sum is over the balls intersecting $u^{-1}(t)$. We wish to show that $\mathcal H^1_\varepsilon ( u^{-1}(t))$ converges to $0$ as $\varepsilon\to 0$ for a.e.\ $t\in [0,1]$. One way of proving this is by integrating $\sum 2r_j$ over $t\in[0,1]$, and showing that the integral converges to $0$. 

Upon integrating, one obtains an expression of the form 
\begin{align*}
\sum_j r_j \diam(u(B_j\cap S)), 
\end{align*}
so we wish to find good bounds for $\diam(u(B_j\cap S))$. Thus, we produce bounds using the upper gradient inequality, in combination with the maximum principle (see Lemma \ref{unif:Maximum principle circular arcs}):
\begin{align*}
\diam(u(B_j\cap S))\leq \sum_{i:Q_i\cap \partial B_j\neq \emptyset} \osc_{Q_i}(u).
\end{align*}
This is where technicalities arise, because the right hand side is not a good enough bound for all balls $B_j$.

The bound turns out to be good, in case the ball $B_j$ intersects only ``small" peripheral disks $Q_i$ of diameter $\lesssim r_j$, or in case the ``large" peripheral disks that are possibly intersected by $B_j$ do not have serious contribution to the upper gradient inequality and can be essentially ignored:
 \begin{align*}
\diam(u(B_j\cap S))\lesssim \sum_{\substack{i:Q_i\cap \partial B_j\neq \emptyset\\ Q_i\,\,\,\textrm{``small"}}} \osc_{Q_i}(u).
\end{align*}
We call ``good" the balls $B_j$ satisfying the above.

However, there is a ``bad" subcollection of the balls $B_j$ for which the above estimate fails. Namely, these are the balls that intersect some relatively large peripheral disk $Q_i$, but the latter also has a serious contribution to the upper gradient inequality and cannot be ignored. We amend this by essentially discarding these ``bad" balls $B_j$ from our effective cover of the set $\bigcup_{t\in [0,1]} u^{-1}(t)$, and then replacing each of them (in the cover) with a corresponding ``large" peripheral disk $Q_i$ (after enlarging it slightly so that we still obtain a cover). 

Then, $\mathcal H^1_\varepsilon ( u^{-1}(t))$ is bounded by 
\begin{align*}
\sum 2r_j + \sum \diam(Q_i),
\end{align*}  
where the first sum is over the ``good" balls intersecting $u^{-1}(t)$ and the second sum is over the ``large" peripheral disks $Q_i$ corresponding to ``bad" balls that intersect $u^{-1}(t)$. One now integrates over $t\in [0,1]$ as before, and treats separately the terms corresponding to the ``good" and ``bad" balls. We proceed with the details.

\begin{proof}
By Lemma \ref{unif:Level sets-at most two points of intersection}, for a.e.\ $t\in (0,1)$ the level set $u^{-1}(t)$ intersects $\partial \Omega$ and $\partial Q_i$ in at most two points, for all $i\in \N$. Hence, it suffices to show that for a.e.\ $t\in (0,1)$ the set $u^{-1}(t)\cap S^\circ$ has $\mathcal H^1$-measure equal to zero. Recall that $S^\circ$ contains the points of the carpet not lying on any peripheral circle.

For a fixed $\varepsilon >0$ consider the finite set $E=\{i\in \N: \diam(Q_i)>\varepsilon \}$. We cover $\Omega \setminus \bigcup_{i\in E} \br Q_i$ by balls $B_j$ of radius $r_j<\varepsilon$ such that $2B_j \subset \Omega \setminus \bigcup_{i\in E} \br Q_i$ and such that $\frac{1}{5}B_j$ are disjoint. The existence of this collection of balls is justified by a basic covering lemma; see e.g.\ \cite[Theorem 1.2]{Heinonen:metric}.

Let $J$ be the family of indices $j$ such that for each $s\in [1,2]$ we have 
\begin{align}\label{unif:Level sets-Hausdorff-Q_{i_j's}}
\diam (u(sB_j\cap S))\geq k\osc_{Q_i}(u)
\end{align}
for all peripheral disks $Q_i$ with $\diam(Q_i)>8r_j$ that intersect $\partial(sB_j)$, where $k\geq 1$ is a constant to be determined. It follows from Remark \ref{unif:Fatness consequence} that for each $j\in J$ there can be at most $N_0$ such peripheral disks $Q_i$, where $N_0$ depends only on the fatness constant. Indeed, each such $Q_i$ must intersect both $\partial (2B_j)$ and $\partial(4B_j)$, so it follows that 
\begin{align*}
\mathcal H^2( Q_i\cap 4B_j)\geq \mathcal H^2(Q_i\cap (4B_j\setminus 2B_j)) \geq Cr_j^2
\end{align*} 
for a uniform constant $C>0$, depending only on the fatness constant. Comparing the area of $4B_j$ with $\mathcal H^2(Q_i\cap 4B_j)$ we arrive at the conclusion.

We fix a ball $B_j$, $j\in J$, and peripheral disks $Q_{i_1},\dots,Q_{i_N}$ as above, where $N\leq N_0$. These are the peripheral disks with diameter bigger than $8r_j$, each of which intersects $\partial (sB_j)$ for some $s\in [1,2]$ and satisfies \eqref{unif:Level sets-Hausdorff-Q_{i_j's}}. Our goal is to show that there exists a uniform constant $C>0$, such that for a.e.\ $s\in (1,2)$ we have
\begin{align}\label{unif:Level sets-Hausdorff measure Basic inequality}
\diam(u(sB_j\cap S))\leq  C\sum_{\substack{i:Q_i\cap \partial (sB_j)\neq \emptyset\\ i\neq i_1,\dots,i_N}} \osc_{Q_i}(u),
\end{align}
provided that we choose $k$ suitably, depending only on the data. In other words, the contribution of $\osc_{Q_{i_1}}(u),\dots,\osc_{Q_{i_N}}(u)$ in the upper gradient inequality is negligible. We fix $s\in (1,2)$ such that the conclusion of Lemma \ref{unif:Maximum principle circular arcs} holds, i.e., 
\begin{align*}
\diam ( u(sB_j\cap S)) \leq \sum_{{i:Q_i\cap \partial (sB_j)\neq \emptyset}} \osc_{Q_i}(u).
\end{align*}
If none of $Q_{i_1},\dots,Q_{i_N}$ intersects $\partial (sB_j)$, then \eqref{unif:Level sets-Hausdorff measure Basic inequality} follows immediately, so we assume that this is not the case. After reordering, suppose that $Q_{i_1},\dots,Q_{i_M}$, $M\leq N$, are the peripheral disks intersecting $\partial (sB_j)$, among $Q_{i_1},\dots,Q_{i_N}$. We have
\begin{align*}
\diam ( u(sB_j\cap S)) &\leq \sum_{\substack{i:Q_i\cap \partial (sB_j)\neq \emptyset\\ i\neq i_1,\dots,i_M}} \osc_{Q_i}(u) + \sum_{l=1}^M \osc_{Q_{i_l}}(u)\\
&\leq  \sum_{\substack{i:Q_i\cap \partial (sB_j)\neq \emptyset\\ i\neq i_1,\dots,i_M}} \osc_{Q_i}(u) +\frac{M}{k} \diam ( u(sB_j\cap S)).
\end{align*}
We consider $k\coloneqq 2N_0\geq 2N \geq 2M$. Then 
\begin{align*}
\diam ( u(sB_j\cap S)) &\leq 2\sum_{\substack{i:Q_i\cap \partial (sB_j)\neq \emptyset\\ i\neq i_1,\dots,i_M}} \osc_{Q_i}(u)
\end{align*}
and this completes the proof of \eqref{unif:Level sets-Hausdorff measure Basic inequality}.

If we write $B_j=B(x_j,r_j)$, then \eqref{unif:Level sets-Hausdorff measure Basic inequality} implies that
\begin{align*}
\diam(u(B_j\cap S)) \leq \diam(u(sB_j\cap S))\leq  C\sum_{\substack{i:Q_i\cap \partial B(x_j,s)\neq \emptyset\\ i\neq i_1,\dots,i_N}} \osc_{Q_i}(u)
\end{align*} 
for a.e.\ $s\in (r_j,2r_j)$. Integrating over $s\in (r_j,2r_j)$ and applying Fubini's theorem yields
\begin{align*}
r_j\diam(u(B_j\cap S)) &\leq C\sum_{\substack{i:Q_i\cap 2B_j\neq \emptyset \\ i\neq i_1,\dots,i_N} } \osc_{Q_i}(u) \int_{r_j}^{2r_j} \x_{Q_i\cap \partial  B(x_j,s)} \,  ds \\
&\leq C \sum_{\substack{i:Q_i\cap 2B_j\neq \emptyset \\ i\neq i_1,\dots,i_N} } \osc_{Q_i}(u) \diam(Q_i).
\end{align*}
We note that if $Q_i\cap 2B_j\neq \emptyset$ and $i\neq i_1,\dots,i_N$, then $\diam (Q_i) \leq 8r_j$ (by the definition of $i_1,\dots,i_N$), so $Q_i\subset 11B_j$. Therefore,
\begin{align}\label{unif:Level sets-Hausdorff measure-Good bound}
r_j\diam(u(B_j\cap S)) \leq C \sum_{i:Q_i\subset 11B_j } \osc_{Q_i}(u)\diam(Q_i).
\end{align}
For each $j\in J$ now consider the smallest interval $I_j$ containing $u(B_j\cap S)$, and define $g_\varepsilon(t)=\sum_{j\in J} 2r_j\x_{I_j}(t)$, $t\in [0,1]$.

On the other hand, for each $j\notin J$ there exists $s=s_j\in [1,2]$ and there exists a peripheral disk $Q_i$ that intersects $\partial(sB_j)$ with $\diam(Q_i)>8r_j$, but $\diam (u(sB_j\cap S))< k\osc_{Q_i}(u)$. Note that some $Q_i$ might correspond to multiple balls $B_j$, $j\notin J$. Consider the family $\{Q_i\}_{i\in I}$ of all peripheral disks that correspond to balls $B_j$, $j\notin J$, and let for each $i\in I$
\begin{align*}
\widetilde Q_i &\coloneqq  \br{Q}_i\cup \bigcup \{s_jB_j: Q_i\cap \partial (s_jB_j)\neq \emptyset, \, \diam(Q_i)>8r_j,\\ &\qquad\qquad\qquad \textrm{and}\, \diam(u(s_jB_j\cap S)) <k\osc_{Q_i}(u)\}.
\end{align*}
It is easy to see that for every $\eta>0$ there exist $s_{j_1}B_{j_1},s_{j_2}B_{j_2} \subset \widetilde Q_i$ such that 
\begin{align*}
 \diam (u(\widetilde Q_i\cap S)) &\leq \diam(u(\partial Q_i))+\diam(u(s_{j_1}B_{j_1}\cap S))\\
 &\quad\quad +\diam(u(s_{j_2}B_{j_2}\cap S))+\eta\\
 &\leq \osc_{Q_i}(u) +2k\osc_{Q_i}(u) +\eta.
\end{align*} 
Hence, 
\begin{align}\label{unif:Level sets-Hausdorff measure-Bad bound}
\diam (u(\widetilde Q_i\cap S))\leq C \osc_{Q_i}(u)
\end{align}
for all $i\in I$, where $C=1+2k$ and depends only on the data. Also, observe that $\diam(\widetilde Q_i)< 2\diam(Q_i)$, since $\diam(Q_i)>8r_j$ whenever $s_jB_j\subset \widetilde Q_i$ and $s_j\leq 2$. For each $i\in I$ consider the smallest interval $I_i$ that contains $u(\widetilde Q_i\cap S)$, and define $b_\varepsilon(t) = \sum_{i\in I} 2\diam(Q_i) \x_{I_i}(t)$, $t\in [0,1]$. We remark that $I\cap E=\emptyset$ since the balls $s_jB_j\subset 2B_j$ do not intersect peripheral disks $Q_i$, $i\in E$.

For each $t\in [0,1]$ the set $u^{-1}(t)\cap S^\circ$ is covered by the balls $B_j$, $j\in J$, and the sets $\widetilde Q_i$, $i\in I$. Since $r_j<\varepsilon$ for $j\in J$ and $\diam(Q_i)<\varepsilon$ for $i\in I$, we have 
\begin{align*}
\mathcal H^1_{\varepsilon}(u^{-1}(t)\cap S^\circ) \leq g_\varepsilon(t) +b_\varepsilon(t).
\end{align*}
It suffices to show that $g_\varepsilon(t) \to 0$ and $b_\varepsilon(t) \to 0$ for a.e.\ $t\in[0,1]$, along some sequence of $\varepsilon \to 0$.

Note first that by \eqref{unif:Level sets-Hausdorff measure-Bad bound} we have
\begin{align*}
\int_0^1 b_\varepsilon(t) \, dt &= 2\sum_{i\in I} \diam (Q_i) \diam(u(\widetilde{Q}_i\cap S))\\
&\leq 2C\sum_{i\in \N \setminus E} \diam(Q_i) \osc_{Q_i}(u)\\
&\leq 2C\left(\sum_{i\in \N \setminus E} \diam(Q_i)^2 \right)^{1/2}\left( \sum_{i\in \N \setminus E} \osc_{Q_i}(u)^2 \right)^{1/2}.
\end{align*}
The first sum is finite by the quasiballs assumption \eqref{unif:Quasi-balls}, and the second is also finite since $u\in \mathcal W^{1,2}(S)$. As $\varepsilon \to 0$, the set $E$ increases to $\N$, hence the sums converge to zero. This shows that $b_\varepsilon\to 0$ in $L^1$. In particular $b_\varepsilon(t) \to 0$ a.e., along a subsequence. 

Finally, we show the same conclusion for $g_\varepsilon(t)$. By \eqref{unif:Level sets-Hausdorff measure-Good bound} we have
\begin{align}\label{unif:Level sets-Hausdorff dimension-Good bound 1}
\int_0^1 g_\varepsilon(t) \, dt= \sum_{j\in J} 2r_j \diam(u(B_j\cap S)) \leq  2C \sum_{j\in J} \sum_{i:Q_i\subset 11B_j} \osc_{Q_i}(u) \diam(Q_i).
\end{align}
We define $h(x)=\sum_{i\in \N} (\osc_{Q_i}(u)/\diam(Q_i) )\cdot \x_{Q_i}(x)$. By the quasiballs assumption we have $\diam(Q_i)^2 \simeq \mathcal H^2(Q_i)$ for all $i\in \N$, hence the right hand side in \eqref{unif:Level sets-Hausdorff dimension-Good bound 1} can be bounded up to a constant by
\begin{align*}
\sum_{j\in J} \int_{11B_j} h(x) \, d\mathcal H^2(x).
\end{align*}
If $Mh$ denotes the uncentered Hardy-Littlewood maximal function of $h$, the above is bounded, up to a constant by 
\begin{align*}
\sum_{j\in J} \int_{\frac{1}{5}B_j}Mh(x) \, d\mathcal H^2(x) &= \int_{\bigcup_{j\in J} \frac{1}{5}B_j} Mh(x) \, d\mathcal H^2(x) \\
&\leq \int_{\Omega\setminus \bigcup_{i\in E}  Q_i} Mh(x) \, d\mathcal H^2(x),
\end{align*}
where we used the fact that the balls $\frac{1}{5}B_j$ are disjoint. We wish to show that the latter converges to $0$ as $\varepsilon\to 0$. Then, we will indeed have $\int_0^1 g_\varepsilon(t) \, dt \to 0$ as $\varepsilon\to 0$, as desired.

Since $\mathcal H^2(S)=0$, it follows that $\mathcal H^2(\Omega\setminus \bigcup_{i\in E}  Q_i) \to 0$ as $\varepsilon \to 0$. Note now that $h\in L^2(\Omega)$, with
\begin{align*}
\int_{\Omega} h^2(x)d\mathcal H^2(x) \simeq \sum_{i\in \N} \osc_{Q_i}(u)^2 =D(u)<\infty.
\end{align*}
By the $L^2$-maximal inequality we have $Mh \in L^2(\Omega) \subset L^1(\Omega)$, and this implies that
\begin{align*}
\int_{\Omega\setminus \bigcup_{i\in E} Q_i} Mh(x) \, d\mathcal H^2(x) \to 0
\end{align*} 
as $\varepsilon\to 0$. 
\end{proof}

\begin{remark}The fatness of the peripheral disks was crucial in the preceding argument, and it would be interesting if one could relax the assumption of fatness to e.g.\ a H\"older domain (see \cite{SmithStegenga:HolderPoincare} for definition) assumption on the peripheral disks.
\end{remark}

Next, we wish to show that for a.e.\ $t\in [0,1]$ the level set $\alpha_t =\tilde u^{-1}(t)$ is a simple curve that joins $\Theta_2$ and $\Theta_4$. We include a topological lemma.

\begin{lemma}\label{unif:Level sets-Topological Lemma}
Let $C\subset \R^2$ be a locally connected continuum with the following property: it is a minimal continuum that connects two distinct points $a,b\in \R^2$. Then $C$ can be parametrized by a simple curve $\gamma$.
\end{lemma}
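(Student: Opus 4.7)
The plan is to reduce the statement to a classical arcwise-connectivity theorem for locally connected continua. Specifically, the Mazurkiewicz--Moore theorem (a consequence of the Hahn--Mazurkiewicz theorem; see \cite{Willard:topology}) asserts that in a locally connected metric continuum any two points can be joined by a Jordan arc contained in the continuum. I would simply invoke this and then use minimality of $C$ to promote the resulting arc to all of $C$.

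First, I would apply the Mazurkiewicz--Moore theorem to the locally connected continuum $C$ and the two distinct points $a,b\in C$, obtaining a Jordan arc $\gamma\subset C$ with endpoints $a$ and $b$. Since $\gamma$ is homeomorphic to $[0,1]$, it is in particular a compact, connected set that connects $a$ and $b$.

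Next, I would invoke the minimality hypothesis on $C$, in the sense recalled in Step 1 of the proof of Lemma \ref{unif:Level sets-at most two points of intersection}: any compact proper subset of $C$ is either disconnected or fails to connect $a$ and $b$. Since $\gamma$ is a compact, connected subset of $C$ that \emph{does} connect $a$ and $b$, the only remaining possibility is $\gamma = C$. Consequently $C$ is itself a Jordan arc, and the homeomorphism $[0,1]\to\gamma = C$ furnishes the desired simple-curve parametrization.

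I do not foresee any serious obstacle: the only nontrivial ingredient is the classical arcwise-connectivity result, which is off the shelf, and the rest is a one-line application of the minimality assumption. If one wished to keep the argument self-contained, the main work would be sketching the proof of arcwise connectivity (e.g., via a Cauchy sequence of increasingly fine ``$\varepsilon$-chains'' in $C$ joining $a$ to $b$, made possible by uniform local connectedness of the compact set $C$), but citing the standard theorem should suffice here.
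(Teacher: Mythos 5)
Your proof is correct and follows essentially the same route as the paper: both cite the classical arcwise-connectivity of Peano continua (the paper references \cite[Theorem 31.2]{Willard:topology}) to produce a Jordan arc $\gamma\subset C$ from $a$ to $b$, and then use the minimality of $C$ to conclude $C=\gamma$. No gaps.
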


The minimality of $C$ is equivalent to saying that any compact proper subset of $C$ is either disconnected, or it does not contain one of the points $a,b$.

\begin{proof}
It is a well-known fact that a connected, locally connected compact metric space (also known as a \textit{Peano space}) is \textit{arcwise connected}, i.e., any two points in the space can be joined by a homeomorphic image of the unit interval; see \cite[Theorem 31.2, p.~219]{Willard:topology}. In our case, there exists a homeomorphic copy $\gamma \subset C$ of the unit interval that connects $a$ and $b$. The minimality of $C$ implies that $C=\gamma$.
\end{proof}

Next, we show that for a.e.\ $t\in(0,1)$ the level set $\alpha_t=\tilde u^{-1}(t) $ satisfies the assumptions of the previous lemma. 

\begin{lemma}\label{unif:Level sets-curves}\index{level sets!topology of}
For a.e.\ $t\in (0,1)$ the level set $\alpha_t$ is a simple curve that connects $\Theta_2$ to $\Theta_4$. Moreover, if $\alpha_t\cap \br{Q}_i\neq \emptyset$ for some $i\in \N$, then $\alpha_t\cap \br Q_i$ is precisely an arc with two distinct endpoints on $\partial Q_{i}$.  
\end{lemma}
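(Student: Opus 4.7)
The plan is to restrict attention to a full-measure set of $t \in (0,1)$ satisfying all the nice regularity properties established in this section, and then combine a harmonic-level-set analysis inside each peripheral disk with a topological argument on the whole carpet. Specifically, I would first exclude a null set of values so that, for the remaining $t$, all of the following hold: $\mathcal{H}^1(u^{-1}(t)) = 0$ (Lemma \ref{unif:Level sets-Hausdorff measure}); $u^{-1}(t) \cap \partial Q_i$ has at most two points for every $i \in \N$ and $u^{-1}(t) \cap \partial\Omega$ consists of exactly one point $a \in \Theta_2$ and one point $b \in \Theta_4$ (Lemma \ref{unif:Level sets-at most two points of intersection}); the conclusions of Proposition \ref{unif:Level sets} and Theorem \ref{unif:Level sets-sums equal mass} apply to $\alpha_t$ and to $\alpha_{t \pm h}$ for a sequence $h \to 0$; and $t$ is not a local extreme value of $\tilde u$ on any $\partial Q_i$. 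Countably many forbidden values are discarded.

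Next, for such a generic $t$ I would analyze $\alpha_t \cap \br Q_i$ for each $i$ with $\alpha_t \cap \br Q_i \neq \emptyset$. Since $\tilde u$ is harmonic in $Q_i$ with continuous boundary values and the two-point property forces $\tilde u - t$ to have exactly two sign changes on $\partial Q_i$, a classical Morse-theoretic argument for harmonic functions (critical points are isolated, regular level sets are 1-manifolds, and each component must terminate at one of the available boundary points) yields that $\tilde u^{-1}(t) \cap \br Q_i$ is a single simple arc from $p_i$ to $q_i$, where $\{p_i,q_i\} = u^{-1}(t) \cap \partial Q_i$. Corollary \ref{unif:Level sets-exactly two points} guarantees that $p_i \neq q_i$ for the admissible $t$. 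This proves the second assertion of the lemma and identifies the contribution of $\alpha_t$ inside each peripheral disk.

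For the global structure, I would invoke the topological Lemma \ref{unif:Level sets-Topological Lemma}: it suffices to show $\alpha_t$ is a locally connected minimal continuum joining $a$ and $b$. Connectedness, simple connectedness, and non-separation of the plane are all given by Proposition \ref{unif:Level sets}. Local connectedness at points $x \in \alpha_t$ lying in some $Q_i$ is immediate from the previous paragraph. At points $x \in \alpha_t \cap S$, I would use the circular-arc technique of Lemma \ref{unif:Maximum principle circular arcs}: for a.e.\ small $r > 0$ the circle $\partial B(x,r)$ is non-exceptional, so
\[
\osc_{S \cap B(x,r)}(u) \;\leq\; \sum_{i: Q_i \cap \partial B(x,r) \neq \emptyset} \osc_{Q_i}(u),
\]
and since $\{\osc_{Q_i}(u)\}$ is square-summable one can choose $r \to 0$ along which this sum is also $\to 0$. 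Extending to $\tilde u$ via Proposition \ref{unif:Maximum principle for tilde u}, the only component of $\br\Omega \cap B(x,r) \cap \alpha_t$ containing $x$ contains all of $\alpha_t \cap B(x,r/2)$, since any other component would force $\tilde u \equiv t$ on its boundary and contradict the oscillation bound. This yields local connectedness.

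Finally, minimality: if $C \subsetneq \alpha_t$ were a proper subcontinuum connecting $a$ and $b$, pick $x \in \alpha_t \setminus C$. If $x$ lies in some $Q_i$, the arc structure from the second paragraph shows $x$ lies in the interior of the arc $\alpha_t \cap \br Q_i$, and removing it disconnects that arc; since $C$ is connected and misses $x$, $C \cap \br Q_i$ cannot contain both $p_i$ and $q_i$, while the global route of $\alpha_t$ through $Q_i$ is precisely that arc, producing a contradiction with the connectedness of $C$. If $x \in \alpha_t \cap S$, the circular-arc estimate lets me enclose $x$ in arbitrarily small topological disks whose boundaries meet $\alpha_t$ in an arbitrarily small neighborhood (in $\alpha_t$) of a single cross-section, again exhibiting $x$ as a cut point of $\alpha_t$. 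Lemma \ref{unif:Level sets-Topological Lemma} then identifies $\alpha_t$ with a simple curve. The main obstacle will be the local-connectedness step at points of $\alpha_t \cap S$ where infinitely many peripheral disks accumulate, and the delicate handling of $\tilde u$ (rather than $u$) to combine boundary and interior information cleanly; the quantitative control afforded by square-summability of the oscillations together with the strong maximum principle in Proposition \ref{unif:Maximum principle for tilde u} is what makes this tractable.
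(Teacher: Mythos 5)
Your overall skeleton (restrict to a generic set of levels $t$, establish the arc structure inside each $\br Q_i$, then verify minimality and local connectedness so that Lemma \ref{unif:Level sets-Topological Lemma} applies) matches the paper's, and your harmonic-level-set analysis inside each $Q_i$ is a legitimate alternative route to the last assertion (the paper instead deduces the arc structure \emph{after} knowing $\alpha_t$ is a simple curve, via Corollary \ref{unif:Level sets-exactly two points} and the maximum principle). However, the two steps you yourself flag as the crux — local connectedness and minimality at points of $\alpha_t\cap S$ — are not correctly justified, and these are precisely where the real work lies.

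For local connectedness: the claim that a component of $B(x,r)\cap\alpha_t$ other than the one containing $x$ ``would force $\tilde u\equiv t$ on its boundary and contradict the oscillation bound'' does not work. Every point of $\alpha_t$ already satisfies $\tilde u=t$, so there is nothing to contradict, and a small oscillation of $u$ on $S\cap B(x,r)$ (which only says $u$ is close to $t$ there) is perfectly consistent with $\alpha_t\cap B(x,r)$ having infinitely many components of diameter comparable to $r$ — this topologist's-sine-curve scenario is exactly what must be excluded. The paper excludes it by a different mechanism: if local connectedness fails, one extracts (via Lemma \ref{unif:Locally connected lemma}) infinitely many components of uniformly large diameter, passes to a Hausdorff limit continuum $C\subset\alpha_t$ with $\diam(C)\geq\varepsilon$, and then rules out both possibilities for $C$: if $C$ meets some $Q_i$, one uses that $t$ is not a critical value of $\tilde u$ on $Q_i$ and a local harmonic conjugate to make $(\tilde u,\tilde v)$ conformal near $C$, contradicting rectifiability of the preimage of a segment; otherwise $C\subset S$, so $\varepsilon\leq\mathcal H^1(C)\leq\mathcal H^1(u^{-1}(t))=0$. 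Neither ingredient appears in your argument. Similarly, for minimality at $x\in\alpha_t\cap S\setminus C$: asserting that small circles around $x$ ``meet $\alpha_t$ in an arbitrarily small neighborhood of a single cross-section'' is not what Lemma \ref{unif:Maximum principle circular arcs} gives, and even granting that $x$ is a cut point of $\alpha_t$ you would still need to show that $x$ separates $a$ from $b$ \emph{in} $\alpha_t$ to contradict the existence of $C$. The paper's route is to use $\mathcal H^1(u^{-1}(t))=0$ to find, arbitrarily close to $x$, a point $y\in\alpha_t\cap\br Q_i\setminus C$, and then use the choice of $t$ as a non-local-extreme value together with the maximum principle on the components of $\br\Omega\setminus C$ (on each of which $\tilde u\geq t$ or $\tilde u\leq t$) to reach a contradiction. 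You should replace your two heuristic steps with arguments of this kind.
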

\begin{proof}
By Lemma \ref{unif:Level sets}, for each $t\in (0,1)$ the level set $\alpha_t$ is simply connected and connects $\Theta_2$ to $\Theta_4$. We choose a $t\in(0,1)$ that is not a local maximum or local minimum value of $u$ on $\partial \Omega$ or on any peripheral circle $\partial Q_i$, $i\in \N$. Note that this implies that $\tilde u$ is non-constant in $Q_i$ whenever $\alpha_t\cap Q_i\neq \emptyset$. There are countably many such values $t$ that we exclude; see proof of Lemma \ref{unif:Level sets-at most two points of intersection}.  Restricting our choice even further, we assume that $t$ is not a critical value of $\tilde u$ on any $Q_i$, $i\in \N$; a non-constant planar harmonic function has at most countably many critical values. Finally, suppose that we also have $\mathcal H^1(u^{-1}(t))=0$, which holds for a.e.\ $t\in (0,1)$ by Lemma \ref{unif:Level sets-Hausdorff measure}.

Using an argument similar the proof of Lemma \ref{unif:Level sets-at most two points of intersection} we show that $\alpha_t$ is a minimal continuum connecting $\Theta_2$ to $\Theta_4$. Consider a minimal continuum $C\subset \alpha_t$ joining $\Theta_2$ and $\Theta_4$. The maximum principle from Theorem \ref{unif:Maximum principle for tilde u} implies that on each of the components of $\br\Omega \setminus C$ we have $\tilde u\geq t$ or $\tilde u\leq t$. If $C\neq \alpha_t$ then there exists a point $x\in \alpha_t\setminus C$, lying in the interior (rel.\ $\br \Omega$) of one of these components. If $x\in \br Q_i$ for some $i\in \N$ or $x\in \partial \Omega$, by the choice of $t$, arbitrarily close to $x$ we can find points $x_+,x_- \in \br\Omega$ with $\tilde u(x_+)>t$ and $\tilde u(x_-)<t$. This is a contradiction. 

If $x\in S^\circ$, then we claim that arbitrarily close to $x$ there exists a point $y\in  \alpha_t \cap \br Q_i\setminus C$ for some $i\in \N$. In this case we are led to a contradiction by the previous paragraph. Now we prove our claim; see also Lemma \ref{unif:lemma:paths zero hausdorff}. If it failed, then there would exist a small ball $B(x,\varepsilon)$ such that all points $y\in \alpha_t\cap B(x,\varepsilon)$ lie in $S^\circ$. Since $\alpha_t$ is connected and it exits $B(x,\varepsilon)$, there exists a continuum $\beta\subset \alpha_t \cap B(x,\varepsilon) \cap S^\circ$ with $\diam(\beta)\geq \varepsilon/2$. Then $\mathcal H^1(u^{-1}(t))=\mathcal H^1(\alpha_t\cap S) \geq \diam(\beta)>0$, a contradiction to the choice of $t$.

Next, we show that $\alpha_t$ is locally connected. We will use the following lemma that we prove later:
\begin{lemma}\label{unif:Locally connected lemma}
Suppose that a compact connected metric space $X$ is not locally connected. Then there exists an open set $U$ with infinitely many connected components having diameters bounded below.  
\end{lemma}
Assume that $\alpha_t$ is not locally connected. Then there exists an open set $U$ and $\varepsilon>0$ such that $U\cap \alpha_t$ contains infinitely many components $C_n$, $n\in \N$, of diameter at least $\varepsilon$. By passing to a subsequence, we may assume that the continua $\br C_n$ converge to a continuum $C$ in the Hausdorff sense, with $\diam (C)\geq \varepsilon$. By the continuity of $\tilde u$, it follows that $C\subset \alpha_t$. We claim that $C\subset S$, so $C\subset u^{-1}(t)$. Assuming the claim, we obtain a contradiction, since $\varepsilon\leq \mathcal H^1(C) \leq \mathcal H^{1}(u^{-1}(t))=0$.

If $C \cap  Q_i \neq \emptyset$, then by shrinking $C$ and $C_n$ we may assume that $C,C_n\subset\subset Q_i$ for all $n\in \N$. By our choice, $t$ is not a critical value of $\tilde u$ on $Q_i$. Finding a local harmonic conjugate $\tilde v$, we see that the pair $G\coloneqq (\tilde u, \tilde v)$ yields a conformal map on a neighborhood of $C$. Thus for infinitely many $n\in \N$ the continua $C_n$ have large diameter and lie on the preimage under $G$ of a vertical line segment. This contradicts, e.g., the rectifiability of the preimage of this segment.

Our last assertion in the lemma follows from the fact that $\alpha_t$ is a simple curve and the maximum principle in Theorem \ref{unif:Maximum principle}. Indeed, by Corollary \ref{unif:Level sets-exactly two points}, for a.e.\ $t\in [m_{Q_i}(u),M_{Q_i}(u)]$ the intersection $\alpha_t\cap \partial Q_{i}$ contains precisely two points $x,y$. If $\alpha_t\cap Q_{i}= \emptyset$ then $\alpha_t$ connects $x,y$ ``externally", and there exists a region $V\subset \Omega$ bounded by $\alpha_t$ and an arc inside $Q_i$ joining $x,y$. However, by the maximum principle $u$ has to be constant in $V\cap S$. Then $V\cap S \subset \alpha_t$, which contradicts the fact that $\alpha_t$ is a simple curve. Therefore, $\alpha_t\cap Q_i \neq \emptyset$, and since $\alpha_t$ is a simple curve, the conclusion follows.  
\end{proof}

\begin{proof}[Proof of Lemma \ref{unif:Locally connected lemma}]
We will use the fact that a space $X$ is locally connected if and only if each component of each open set is open; see \cite[Theorem 27.9]{Willard:topology}.

Suppose that $X$ is not locally connected. Then there exists an open set $U$ and a component $C_0$ of $U$ that is not open. This implies that there exists a point $x\in C_0$ and $\varepsilon>0$ such that for each $\delta<\varepsilon$ we have $B(x,\delta)\subset \br B(x,\varepsilon) \subset U$ and $B(x,\delta)$ intersects a component $C_\delta$ of $U$, distinct from $C_0$. We claim that the component $C_\delta$ has to meet $\partial B(x,\varepsilon)$, and in particular $\diam(C_\delta)\geq  \varepsilon-\delta$. Repeating the argument for a sequence of $\delta\to 0$ yields eventually distinct components $C_\delta$ and leads to the conclusion.

Now we prove our claim. Suppose for the sake of contradiction that $C_\delta$ does not intersect $\partial B(x,\varepsilon)$. Then $C_\delta$ is compact, because it is closed in $U$ (as a component of $U$) and all of its limit points (in $X$) are contained in $\br B(x,\varepsilon) \subset U$.  Since $X$ is a continuum, there exists a continuum $K\subset X$ that connects $C_\delta$ to $\partial B(x,\varepsilon)$ with $K\subset \br B(x,\varepsilon)$; see also the proof of Lemma \ref{unif:Level sets-at most two points of intersection}. In particular, $K\cup C_\delta$ is a connected subset of $U$, which contradicts the fact that $C_\delta$ is a component of $U$.
\end{proof}

\section{The conjugate function \texorpdfstring{$v$}{v}}\label{unif:Section Conjugate}\index{conjugate function}
In order to define the conjugate function $v\colon  S \to \R$ we introduce some notation. For $i\in \N$ let $\rho(Q_i)\coloneqq  \osc_{Q_i}(u)$ and for a path $\gamma\subset \br \Omega$ define 
\begin{align*}
\ell_\rho(\gamma)= \sum_{i:Q_i\cap \gamma \neq \emptyset} \rho(Q_i).
\end{align*}
We would like to emphasize that we have \textit{not} defined $\osc_{Q_0}(u)$ (where $Q_0=\C\setminus \br \Omega$), and also terms corresponding to $i=0$ are not included in the summations, as above. We will first define a coarse version $\hat v $ of the conjugate function  which is only defined on the set $\{Q_i\}_{i\in \N}$ of peripheral disks, and then we will define the conjugate function $v$ by taking limits of $\hat v$.

Let $\mathcal T$ be the family of $t\in (0,1)$ for which the conclusions of Theorem \ref{unif:Level sets-sums equal mass}, Lemma \ref{unif:Level sets-at most two points of intersection}, Corollary \ref{unif:Level sets-exactly two points}, Lemma \ref{unif:Level sets-Hausdorff measure} and Lemma \ref{unif:Level sets-curves} hold. Furthermore, we assume that Lemma \ref{unif:Level sets-Measure Derivative lemma} can be applied for the sequence $h(Q_i)=\rho(Q_i)^2$. Finally, we assume that each $t\in \mathcal T$ is not a local extremal value of $u$ on $\partial \Omega$ or on any peripheral circle $\partial Q_i$, $i\in \N$. This, in particular, implies that $\rho(Q_i)>0$ whenever $\alpha_t\cap Q_i\neq \emptyset$, since otherwise $u$ is constant on $\partial Q_i$. It also implies that if $\alpha_t$ intersects $Q_i$, $i\in \N$, then for all sufficiently small $h>0$ the level set $\alpha_{t\pm h}$ intersects $Q_i$.

If $\rho(Q_i)>0$, then for $t\in  \mathcal T\cap [m_{Q_i}(u), M_{Q_i}(u)]$ the path $\alpha_t=\tilde u^{-1}(t)$ intersects $\partial Q_i$ at two points. We parametrize $\alpha_t\colon [0,1]\to \br\Omega$ so that $\alpha_t(0)\in \Theta_2$ and $\alpha_t(1) \in \Theta_4$. We consider an open subpath $\alpha_t^i$ of $\alpha_t$, terminated at the first entry point of $\alpha_t$ in $\partial Q_i$. This is to say that $\br{\alpha ^i_t}$ starts at $\Theta_2$ and terminates at $\partial Q_i$, while $\alpha_t^i\cap \partial Q_i=\emptyset$. We then define
\begin{align*}
\hat v(Q_i) = \inf \{\ell_\rho( \alpha_t^i) :t\in \mathcal T\cap [m_{Q_i}(u),M_{Q_i}(u)] \}.
\end{align*}  
Note that we define $\hat v(Q_i)$ whenever $\rho(Q_i)>0$. By Theorem \ref{unif:Level sets-sums equal mass} we have $0\leq \hat v(Q_i) \leq D (u)$ for all $i\in \N$. In fact, the infimum is not needed:

\begin{lemma}\label{unif:Conjugate-Infimum not needed}
Fix a peripheral disk $Q_{i_0}$, $i_0\in \N$. If $s,t \in \mathcal T\cap [m_{Q_{i_0}}(u),M_{Q_{i_0}}(u)]$, then $\ell_\rho(\alpha_s^{i_0})=\ell_\rho(\alpha_t^{i_0})$. In particular, we have $\hat v(Q_{i_0})= \ell_{\rho}(\alpha_s^{i_0})$.
\end{lemma}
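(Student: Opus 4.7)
Plan.

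Without loss of generality assume $s<t$. First I reduce the identity using Theorem~\ref{unif:Level sets-sums equal mass} and the topology of level curves. By Lemma~\ref{unif:Level sets-curves}, for each $j\neq i_0$ the intersection $\alpha_s\cap\br Q_j$ is a single arc (when non-empty); since $\br Q_j\cap\br Q_{i_0}=\emptyset$, this arc must lie entirely before or entirely after the middle piece $\alpha_s\cap\br Q_{i_0}$. Writing $\beta_s^{i_0}$ for the portion of $\alpha_s$ from the last exit of $\partial Q_{i_0}$ to $\Theta_4$, I obtain the disjoint decomposition $\ell_\rho(\alpha_s)=\ell_\rho(\alpha_s^{i_0})+\rho(Q_{i_0})+\ell_\rho(\beta_s^{i_0})$, and the same for $t$. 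Combined with the identity $\ell_\rho(\alpha_s)=\ell_\rho(\alpha_t)=D(u)$ from Theorem~\ref{unif:Level sets-sums equal mass}, this yields
\begin{equation*}
\ell_\rho(\alpha_s^{i_0})+\ell_\rho(\beta_s^{i_0})=\ell_\rho(\alpha_t^{i_0})+\ell_\rho(\beta_t^{i_0}),
\end{equation*}
so it remains to prove $\ell_\rho(\alpha_s^{i_0})=\ell_\rho(\alpha_t^{i_0})$.

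Next, I localize to the ``$\Theta_2$-side'' of $Q_{i_0}$. Using Proposition~\ref{unif:Level sets}, Corollary~\ref{unif:Level sets-exactly two points}, and Lemma~\ref{unif:Level sets-curves}, the disjoint simple open arcs $\alpha_s^{i_0}$ and $\alpha_t^{i_0}$, together with a subarc $\tau\subset\Theta_2$ joining $\alpha_s(0)$ to $\alpha_t(0)$ and a subarc $\sigma\subset\partial Q_{i_0}$ joining their endpoints on the $\Theta_2$-side of $Q_{i_0}$, bound a Jordan region $R_1\subset\br\Omega$. I check that $\partial R_1\subset\tilde u^{-1}([s,t])$: on $\alpha_s^{i_0},\alpha_t^{i_0}$ the values are $s,t$, and Lemma~\ref{unif:Level sets-at most two points of intersection} with the choice $s,t\in\mathcal T$ forces $\tilde u$ to take values strictly in $(s,t)$ on the open arcs $\tau$ and $\sigma$. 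Theorem~\ref{unif:Maximum principle} then gives $s\leq u\leq t$ throughout $R_1\cap S$. Moreover, for any truncation $g=(u\wedge a)\vee b$ (with $s\leq b\leq a\leq t$) cut off outside $R_1$ by setting $g=u$ there, Lemma~\ref{unif:Gluing lemma} produces an admissible competitor for the free boundary problem on $\Omega$; the global minimality of $u$ from Theorem~\ref{unif:Solution to free boundary problem} then collapses to the local minimality $D_{R_1}(u)\leq D_{R_1}(g)$.

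The decisive step is then an adaptation of the arguments of Propositions~\ref{unif:Level sets-sums geq} and \ref{unif:Level sets-sums leq} to this localized free boundary problem on the quadrilateral $R_1$ (with ``Dirichlet'' sides $\alpha_s^{i_0},\alpha_t^{i_0}$ and ``free'' sides $\tau,\sigma$). The test functions used there, together with their affine perturbations, are constructed from $u$ via composition with piecewise linear maps; restricting them to $R_1$ and extending by $u$ elsewhere keeps them in $\mathcal W^{1,2}(S)$ and admissible on $\Omega$. Since level sets of $u|_{R_1}$ at a generic level $r\in(s,t)$ are exactly the arcs $\alpha_r^{i_0}=\alpha_r\cap R_1$, the same Fubini-plus-Cauchy-Schwarz bookkeeping as in Propositions~\ref{unif:Level sets-sums geq} and \ref{unif:Level sets-sums leq}, combined with the measure-derivative Lemma~\ref{unif:Level sets-Measure Derivative lemma} applied to $\{\rho(Q_j)^2\}_{j\in I_{R_1}}$, produces for a.e.\ $r\in(s,t)$ the identity
\[
\ell_\rho(\alpha_r^{i_0})=\frac{D_{R_1}(u)}{t-s}.
\]
The right-hand side is independent of $r$, so $r\mapsto\ell_\rho(\alpha_r^{i_0})$ is almost-everywhere constant on $(s,t)$. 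Squeezing $s$ and $t$ arbitrarily close to any given pair of levels in $\mathcal T\cap[m_{Q_{i_0}},M_{Q_{i_0}}]$ and using the reduction established in the first paragraph propagates this constancy to all $s,t\in\mathcal T\cap[m_{Q_{i_0}},M_{Q_{i_0}}]$, proving the lemma.

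The main obstacle will be the last paragraph: one must verify that the proofs of Propositions~\ref{unif:Level sets-sums geq} and \ref{unif:Level sets-sums leq} genuinely transfer to the quadrilateral $R_1$, which has \emph{four} boundary arcs of mixed type, with $\sigma$ lying on the peripheral circle $\partial Q_{i_0}$ rather than on $\partial\Omega$. In particular, one must track that the peripheral disk $Q_{i_0}$ itself never contributes to the relevant oscillation sums (since $\rho(Q_{i_0})$ appears only through the middle arc $\alpha_r\cap\br Q_{i_0}$, which is disjoint from $R_1^\circ$) and that the index-set manipulations $F_h,N_h$ of those propositions, now defined relative to $R_1$, remain well-behaved near the peripheral-circle boundary $\sigma$.
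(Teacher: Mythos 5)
Your opening decomposition $\ell_\rho(\alpha_s)=\ell_\rho(\alpha_s^{i_0})+\rho(Q_{i_0})+\ell_\rho(\beta_s^{i_0})=D(u)$ is correct and is also the paper's starting point, but note it is not a reduction: after cancelling you are left with exactly the original claim. The decisive step of your plan --- localizing to the quadrilateral $R_1$ and rerunning Propositions~\ref{unif:Level sets-sums geq} and \ref{unif:Level sets-sums leq} there --- has a genuine gap that I do not think is merely bookkeeping. First, the competitors in those propositions must be admissible for the \emph{global} free boundary problem ($g=0$ on $\Theta_1$, $g=1$ on $\Theta_3$), and a function that ramps only across the truncated arcs $\alpha_r^{i_0}$, $r\in(s,t)$, cannot be extended to such a competitor by constants or by $u$ outside $R_1$, because $\alpha_r^{i_0}$ alone does not separate $\Theta_1$ from $\Theta_3$: one must also prescribe a transition on the $\Theta_4$-side of $Q_{i_0}$, and this is precisely where the content of the lemma lives. (Your gluing step also only works for the trivial truncation $b=s$, $a=t$, since $(u\wedge a)\vee b\neq u$ on $\alpha_s^{i_0}$ otherwise.) Second, $(S\cap R_1,R_1)$ is not a relative Sierpi\'nski carpet --- peripheral disks met by $\alpha_s$ or $\alpha_t$ straddle $\partial R_1$ --- and for such disks the global weight $\rho(Q_j)=\osc_{Q_j}(u)$ records oscillation of $u$ outside $[s,t]$, so the localized machinery would at best produce sums of restricted oscillations, not $\ell_\rho(\alpha_r^{i_0})$; in particular the claimed value $D_{R_1}(u)/(t-s)$ does not match the coarea identity $\int_s^t\ell_\rho(\alpha_r^{i_0})\,dr=\sum_j\rho(Q_j)\,d_j$ with $d_j$ the restricted level span. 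Finally, even granting a.e.\ constancy of $r\mapsto\ell_\rho(\alpha_r^{i_0})$ on $(s,t)$, your ``squeezing'' step needs continuity of this function at the two given levels $s,t\in\mathcal T$ to upgrade from ``a.e.\ $r$'' to ``all $s,t\in\mathcal T\cap[m_{Q_{i_0}}(u),M_{Q_{i_0}}(u)]$'', and no such argument is supplied.

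For comparison, the paper avoids all of this with a single crossing competitor: it takes the front piece $\gamma_s=\alpha_s^{i_0}$ (from $\Theta_2$ to $Q_{i_0}$) and the back piece $\tilde\gamma_t\subset\alpha_t$ (from $Q_{i_0}$ to $\Theta_4$), encloses each in a thin strip $\Omega_{s,h}\subset\br A_{s-h,s+h}$ resp.\ $\widetilde\Omega_{t,h}\subset\br A_{t-h,t+h}$, and builds an admissible function ramping from $0$ to $1$ across $\Omega_{s,h}\cup Q_{i_0}\cup\widetilde\Omega_{t,h}$. Testing minimality of $u$ and letting $h\to0$ (via Lemma~\ref{unif:Level sets-Measure Derivative lemma}) gives $\ell_\rho(\gamma_s)+\rho(Q_{i_0})+\ell_\rho(\tilde\gamma_t)\geq D(u)=\ell_\rho(\alpha_t^{i_0})+\rho(Q_{i_0})+\ell_\rho(\tilde\gamma_t)$, hence $\ell_\rho(\alpha_s^{i_0})\geq\ell_\rho(\alpha_t^{i_0})$, and symmetry in $s,t$ finishes. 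If you want to salvage your approach, the essential missing idea is exactly this mixed concatenation of the $\Theta_2$-side of one level curve with the $\Theta_4$-side of the other through $Q_{i_0}$.
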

\begin{proof}
Suppose that $s\neq t$. For simplicity, denote $\gamma_s=\alpha_s^{i_0}$ and define $\tilde \gamma_t$ to be the smallest open subpath of $\alpha_t$ that connects $\partial Q_{i_0}$ to $\Theta_4$. By Lemma \ref{unif:Level sets-curves} the curves $\alpha_t^{i_0}$ and $\tilde \gamma_t$ meet disjoint sets of peripheral disks. Thus, by Theorem  \ref{unif:Level sets-sums equal mass}
\begin{align}\label{unif:Conjugate-Infimum-ell}
\ell_\rho(\alpha_t^{i_0})+ \rho(Q_{i_0})+ \ell_{\rho}(\tilde \gamma_t)= \ell_\rho(\alpha_t) =D(u).
\end{align}

For small $h>0$ so that $s\pm h\in \mathcal T$, each of the disjoint curves $\alpha_{s+h}, \alpha_{s-h}$ intersects $\partial Q_{i_0}$ at two points. The strip $\br A_{s-h,s+h}$ is a (closed) Jordan region bounded by the curves $\alpha_{s-h},\alpha_{s+h}$, and subarcs of $\Theta_2$ and $\Theta_4$; see Proposition \ref{unif:Level sets} and Lemma \ref{unif:Level sets-curves}. Also, this Jordan region contains $\alpha_s\supset \gamma_s$. Since $\partial Q_{i_0}$ meets both boundary curves $\alpha_{s-h}$ and $\alpha_{s+h}$, it follows that $\br A_{s-h,s+h} \setminus Q_{i_0}$ has two components, which are (closed) Jordan regions, one intersecting $\Theta_2$ and one intersecting $\Theta_4$. Let $\Omega_{s,h}\subset \br A_{s-h,s+h}\setminus Q_{i_0}$ be the (closed) Jordan region intersecting $\Theta_2$, so $\gamma_s\subset \Omega_{s,h}$. In a completely analogous way we define a closed Jordan region $\widetilde \Omega_{t,h}$ that intersects $\Theta_4$ and contains $\tilde \gamma_t$. Here, we first need to refine our choice of $h>0$ so that we also have $t\pm h\in \mathcal T$. Finally, since $s\neq t$, if $h>0$ is chosen to be sufficiently small, we may have that the strips $\br A_{s-h,s+h}$ and $\br A_{t-h,t+h}$ are disjoint, and hence, so are the regions $\Omega_{s,h}$ and $\widetilde \Omega_{t,h}$.

Following the notation in the proof of Theorem \ref{unif:Level sets-sums geq}, we define $F_{s,h}$ to be the family of indices $i\in \N\setminus \{i_0\}$ such that $Q_i\cap \alpha_{s- h}\neq \emptyset$, $Q_i\cap \alpha_{s+ h}\neq \emptyset$, and $Q_i\cap \gamma_s\neq \emptyset$. In an analogous way we define $F_{t,h}$ that corresponds to $\tilde \gamma_t$. Then as $h\to 0 $ we have
\begin{align}\label{unif:Conjugate-Infimum-F sets}
\sum_{i\in F_{s,h}}\rho(Q_i) \to \sum_{i:Q_i\cap \gamma_s \neq \emptyset}\rho(Q_i) \quad \textrm{and} \quad \sum_{i\in F_{t,h}}\rho(Q_i) \to \sum_{i:Q_i\cap \tilde\gamma_t \neq \emptyset}\rho(Q_i).
\end{align} 
Also, we define $N_h=\{i\in \N: Q_i\cap (A_{s-h,s+h}\cup A_{t-h,t+h}) \neq \emptyset \} \setminus (F_{s,h}\cup F_{t,h} \cup \{i_0\})$. 

The set $\br\Omega \setminus (\Omega_{s,h} \cup \widetilde \Omega_{t,h} \cup Q_{i_0})$ has precisely a component $V_1$ containing $\Theta_1$ and a component $V_3$ containing $\Theta_3$. Now, consider the function
\begin{align*}
g(x)=
\begin{cases}
0, & x\in  {V_1}\cap S\\
\frac{u(x)-(s-h)}{2h}, & x\in \Omega_{s,h}\cap S\\
\frac{u(x)-(t-h)}{2h}, & x\in \widetilde \Omega_{t,h}\cap S\\
1, & x\in V_3\cap S.
\end{cases}
\end{align*}
Using Lemma \ref{unif:Gluing lemma}, one can show that $g\in \mathcal W^{1,2}(S)$. We provide some details. Let $U_s=\frac{u(x)-(s-h)}{2h} \lor 0$, and consider a bump function $\phi_s:\C\to [0,1]$ that is identically equal to $1$ on $\Omega_{s,h}$, and vanishes on $\widetilde\Omega_{t,h}$. Similarly consider $U_t$ and $\phi_t$, which is a bump function equal to $1$ on $\widetilde \Omega_{t,h}$ but vanishes on $\Omega_{s,h}$. Then $U_s\phi_s+U_t\phi_t$ lies in $\mathcal W^{1,2}(S)$. We now have
\begin{align*}
g(x)=\begin{cases}
U_s(x)\phi_s(x)+U_t(x)\phi(x), & x\in S\setminus V_3 \\
1, & x\in V_3\cap S.
\end{cases}
\end{align*} 
On $\partial V_3\cap S$ the two alternatives agree, so by Lemma \ref{unif:Gluing lemma} we conclude that $g\in \mathcal W^{1,2}(S)$.

Also, we have
\begin{align*}
\osc_{Q_i}(g) 
\begin{cases}
=1, & i\in F_{s,h}\cup F_{t,h}\cup \{i_0\}\\
\leq \rho(Q_i)/2h, & i\in N_h\\
=0, & i\notin F_{s,h}\cup F_{t,h}\cup \{i_0\}\cup N_h.
\end{cases}
\end{align*}  
We only need to justify the middle inequality. If $Q_i$, $i\in N_h$, intersects only one of $\Omega_{s,h}, \widetilde\Omega_{t,h}$, then it is clear by the definition of $g$. If $Q_i$ intersects both $\Omega_{s,h}$ and  $\widetilde\Omega_{t,h}$ then $Q_i$ has to meet either $V_1$ or $V_3$. Suppose that it meets $V_1$. Then the minimum of $g$ on $\partial Q_i$ is $0$, and the maximum is attained at a point of $\Omega_{s,h}$ or $\widetilde \Omega_{t,h}$. Suppose that the maximum is attained at a point of $\Omega_{s,h}$. It follows that both the minimum and the maximum  of $g$ on $\partial Q_i$ are attained in $\Omega_{s,h}$, and the definition of $g$ implies that the oscillation is bounded by $\rho(Q_i)/2h$, as desired. The other cases yield the same conclusion. 

Since every convex combination of $u$ and $g$ is admissible for the free boundary problem, it follows that 
\begin{align*}
D(u)\leq \sum_{i\in \N} \rho(Q_i) \osc_{Q_i}(g).
\end{align*}
See also the proof of Proposition \ref{unif:Level sets-sums geq} and \eqref{unif:Level sets-Optimization in s}. Thus,
\begin{align*}
D(u)\leq \sum_{i\in F_{s,h}}\rho(Q_i) +\sum_{i\in F_{t,h}}\rho(Q_i) +\rho(Q_{i_0}) +\frac{1}{2h} \sum_{i\in N_h} \rho(Q_i)^2.
\end{align*}
Letting $h\to 0$ and using \eqref{unif:Conjugate-Infimum-F sets}, together with Lemma \ref{unif:Level sets-Measure Derivative lemma}, we obtain
\begin{align*}
D(u)\leq \ell_{\rho}(\gamma_s) +\ell_{\rho}(\tilde \gamma_t)+\rho(Q_{i_0})= \ell_{\rho}(\alpha_s^{i_0})+\ell_{\rho}(\tilde \gamma_t)+\rho(Q_{i_0}).
\end{align*}
By \eqref{unif:Conjugate-Infimum-ell} we obtain $\ell_{\rho}(\alpha_t^{i_0}) \leq \ell_{\rho}(\alpha_s^{i_0})$. The roles of $s,t$ is symmetric, so the conclusion follows.
\end{proof}

If a point $x\in S$ has arbitrarily small neighborhoods that contain some $Q_i$ with $\rho(Q_i)>0$, then we define\index{conjugate function!definition of}
\begin{align}\label{unif:Conjugate-Definition}
v(x)= \liminf_{Q_i\to x,x\notin Q_i} \hat v(Q_i),
\end{align}
where in the limit we are only using peripheral disks for which $\rho(Q_i)>0$, since for the other peripheral disks $\hat v$ is not defined. Observe that $0\leq v(x)\leq D(u)<\infty$. If $x \in \alpha_t\cap S$ for some $t\in \mathcal T$, then we can approximate $x$ by peripheral disks $Q_i$ that intersect $\alpha_t$; this follows from Lemma \ref{unif:lemma:paths zero hausdorff} because $\mathcal H^1(\alpha_t\cap S)=0$ by Lemma \ref{unif:Level sets-Hausdorff measure}. All these peripheral disks have $\rho(Q_i)>0$ and thus $v(x)$ can be defined by the preceding formula. 

If $v(x_0)$ cannot be defined, this means that there exists a neighborhood of $x_0$ that \textit{contains} only peripheral disks with $\rho(Q_i)=0$; note that $x_0$ could lie on some peripheral circle $\partial Q_{i_0}$ with $\rho(Q_{i_0})>0$. The continuity of $u$ and the upper gradient inequality imply that $u$ is constant in this neighborhood (see Lemma \ref{harmonic:4-zero oscillation lemma}), and in particular it takes some value $t_0$, where $t_0\notin \mathcal T$. We define $U\coloneqq \inter_{\br \Omega}(\alpha_{t_0})$ (i.e., the interior of $\alpha_{t_0}$ rel.\ $\br \Omega$) and observe that if $x_0$ does not lie on a peripheral circle $\partial Q_{i_0}$ with $\rho(Q_{i_0})>0$, then $x_0\in U$. In particular, this is true if $x_0\in S^\circ$. If $x_0$ lies on some peripheral circle with $\rho(Q_{i_0})>0$, then $x_0 \in \br V$, where $V$ is a component of $U$. The set $V$ is rel.\ open in $\br \Omega$ (see Theorem \cite[Theorem 27.9]{Willard:topology}) and if it intersects a peripheral disk $Q_i$, then $\rho(Q_i)=0$ and $Q_i\subset V$. Indeed, if $V\cap Q_i\neq \emptyset$ then the (classical) harmonic function $\tilde u\big|_{Q_i}$ is constant, equal to $t_0$, since it is constant on an open set. Another observation is that $V$ is pathwise connected. This is because it is a component of the open subset $U$ of $\br \Omega$, and $\br \Omega$ is locally pathwise connected; see \cite[Theorem 27.5 and Theorem 27.9]{Willard:topology}. In fact, $V\cap \Omega$ is pathwise connected, because any path in $V$ that connects two points of $V\cap \Omega$ is homotopic to a path in $V\cap \Omega$ that connects the same points.

The following lemma allows us to define $v$ on all of $S$. 

\begin{lemma}\label{unif:Conjugate-Define when rho=0}
Let $V$ be a component of $\inter_{\br\Omega}(\alpha_{t_0})$ for some $t_0\notin \mathcal T$. Then the function $v$ has the same value on the points $x \in \partial_{\br \Omega} V\cap S$ for which the formula \eqref{unif:Conjugate-Definition} is applicable, and there exists at least one such point.
\end{lemma}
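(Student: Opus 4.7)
The plan splits the proof into three steps: (i) show $\partial_{\br\Omega}V\cap S\ne\emptyset$; (ii) verify that the formula \eqref{unif:Conjugate-Definition} is applicable at a point of $\partial_{\br\Omega}V\cap S^\circ$; and (iii) establish $v(x_1)=v(x_2)$ for any two applicable $x_1,x_2\in\partial_{\br\Omega}V\cap S$. The combination of (i) and (ii) yields the existence claim, and (iii) yields the constancy claim.

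For (i)--(ii): since $V\subset\alpha_{t_0}$ with $t_0\in(0,1)$, $V$ avoids $\Theta_1\cup\Theta_3$ and is a proper rel.\ open subset of $\br\Omega$. Any path in $\br\Omega$ from a point of $V$ to $\Theta_1$ has a first exit point $z\in\partial_{\br\Omega}V$, and $z\in S$: indeed, if $z$ lay in the interior of some peripheral disk $Q_i$, then $Q_i\cap V\ne\emptyset$ and the identity principle for the harmonic extension $\tilde u$ would force $\tilde u\equiv t_0$ on $Q_i$, giving $Q_i\subset U=\inter_{\br\Omega}(\alpha_{t_0})$ and hence $Q_i\subset V$ by connectedness---contradicting $z\in\partial V$. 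A generic perturbation of the path further arranges $z\in S^\circ$. To verify applicability at such a $z$, suppose for contradiction that a ball $B(z,r)$ contains only peripheral disks with $\rho=0$. Because $z\in S^\circ$, Lemma \ref{harmonic:Fatness consequence} combined with a standard fatness/area bound ensures that, for $r$ small enough, every peripheral disk that \emph{meets} $B(z,r/2)$ is already contained in $B(z,r)$ and hence has $\rho=0$. Lemma \ref{harmonic:4-zero oscillation lemma}(a) then forces $u\equiv t_0$ on $S\cap B(z,r/2)$, and harmonic extension through each such peripheral disk gives $\tilde u\equiv t_0$ on $B(z,r/2)\cap\br\Omega$, so this neighborhood lies in $U$. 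Since $V$ is closed in $U$ as a connected component of an open set and $z\in\br V$, we conclude $z\in V$---contradicting $z\in\partial V$.

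Step (iii) is the substantive content and mirrors the variational argument of Lemma \ref{unif:Conjugate-Infimum not needed}. Given applicable $x_1,x_2\in\partial_{\br\Omega}V\cap S$, I would pick peripheral disks $Q_n^{(j)}\to x_j$ with $\rho(Q_n^{(j)})>0$ realizing $v(x_j)$ as a liminf, and levels $t_n^{(j)}\in\mathcal T\cap[m_{Q_n^{(j)}}(u),M_{Q_n^{(j)}}(u)]$ so that Lemma \ref{unif:Conjugate-Infimum not needed} yields $\hat v(Q_n^{(j)})=\ell_\rho(\gamma_n^{(j)})$ with $\gamma_n^{(j)}=\alpha_{t_n^{(j)}}^{Q_n^{(j)}}$; writing $\widetilde\gamma_n^{(j)}$ for the dual terminal piece of $\alpha_{t_n^{(j)}}$ from $\partial Q_n^{(j)}$ to $\Theta_4$, Theorem \ref{unif:Level sets-sums equal mass} gives $\ell_\rho(\gamma_n^{(j)})+\rho(Q_n^{(j)})+\ell_\rho(\widetilde\gamma_n^{(j)})=D(u)$. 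I would then replay the construction of Lemma \ref{unif:Conjugate-Infimum not needed}, replacing the single peripheral disk $Q_{i_0}$ there by a ``bridge'' $\beta\subset V\cap\Omega$ joining $\partial Q_n^{(1)}$ to $\partial Q_n^{(2)}$, obtained from the pathwise connectedness of $V\cap\Omega$ together with the density of $V$ near the points $x_j\in\br V$; the composite barrier $\gamma_n^{(1)}\cup\br Q_n^{(1)}\cup\beta\cup\br Q_n^{(2)}\cup\widetilde\gamma_n^{(2)}$ separates $\Theta_1$ from $\Theta_3$ in $\br\Omega$. One builds a Sobolev cutoff $g$ vanishing on the $\Theta_1$-side of this barrier, equal to $1$ on the $\Theta_3$-side, and interpolating linearly in thin strips $A_{t_n^{(j)}-h,t_n^{(j)}+h}$ transverse to $\gamma_n^{(1)}$ and $\widetilde\gamma_n^{(2)}$; crucially, every peripheral disk met by $\beta$ lies in $V$ and has $\rho=0$, so the bridge contributes nothing to the Dirichlet sum of $g$. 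Testing $g$ against the minimality of $u$ and letting $h\to 0$---with Lemma \ref{unif:Level sets-Measure Derivative lemma} absorbing the cross-terms $\tfrac{1}{2h}\sum_{i\in N_h}\rho(Q_i)^2$---yields
\[ D(u)\le \ell_\rho(\gamma_n^{(1)})+\rho(Q_n^{(1)})+\ell_\rho(\widetilde\gamma_n^{(2)})+o(1). \]
Combining with the length identity above and the fact that $\rho(Q_n^{(j)})\to 0$ (since $\{\rho(Q_i)\}\in\ell^2$), this forces $\hat v(Q_n^{(2)})-\hat v(Q_n^{(1)})\le o(1)$; the symmetric estimate reverses the inequality, and so $v(x_1)=v(x_2)$.

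The main obstacle will be arranging the cutoff $g$ when the two transition strips $A_{t_n^{(j)}-h,t_n^{(j)}+h}$ overlap: in Lemma \ref{unif:Conjugate-Infimum not needed} the disjointness was guaranteed by $s\ne t$, but here both $t_n^{(1)}$ and $t_n^{(2)}$ tend to $t_0$ so this is no longer automatic. One resolution is to select the approximating peripheral disks so that $t_n^{(1)}<t_0<t_n^{(2)}$, placing the two strips on opposite sides of $\alpha_{t_0}$; an alternative is to run the Dirichlet estimate one strip at a time, exploiting the interior of $V$ as an ``insulating corridor'' of vanishing $\rho$-content through which mass can be transported at no Dirichlet cost. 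The structural reason the comparison across $\partial V$ succeeds is precisely that the peripheral disks paving the interior of $V$ all have oscillation zero.
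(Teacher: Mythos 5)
Your steps (i)--(ii) and the overall skeleton of (iii) match the paper's argument: the existence claim is obtained exactly as you describe, and the constancy claim does come from a variational comparison of $\ell_\rho(\gamma_s)+\ell_\rho(\tilde\gamma_t)$ with $D(u)$ using a separating barrier whose middle piece is a bridge $\tau\subset V\cap\Omega$ that costs nothing because $\rho\equiv 0$ on the peripheral disks it meets. However, step (iii) has a concrete gap: you never actually construct the admissible competitor $g$. Your barrier is $\gamma_n^{(1)}\cup\br Q_n^{(1)}\cup\beta\cup\br Q_n^{(2)}\cup\widetilde\gamma_n^{(2)}$ with $\beta\subset V\cap\Omega$, but $Q_n^{(j)}$ has $\rho(Q_n^{(j)})>0$, hence $Q_n^{(j)}\not\subset V$ and is in general at positive distance from $V$; the bridge cannot reach $\partial Q_n^{(j)}$ while staying in $V$, and the connecting segments near $x_j$ cross peripheral disks with $\rho>0$ on which your cutoff must oscillate. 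The paper solders these junctions with the low-energy ``discrete logarithm'' test function $\zeta$ of Lemma \ref{unif:Zeta lemma} ($\zeta\equiv 1$ on small balls around $x_1,x_2$, $D(\zeta)<\eta$), so that the extra cross term $\sum_i\rho(Q_i)\osc_{Q_i}(\zeta)\le D(u)^{1/2}\eta^{1/2}$ is negligible. You supply nothing playing this role, and without it the inequality $D(u)\le\ell_\rho(\gamma_n^{(1)})+\rho(Q_n^{(1)})+\ell_\rho(\widetilde\gamma_n^{(2)})+o(1)$ does not follow.

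A second omission: the applicable points of $\partial_{\br\Omega}V\cap S$ need not lie in $S^\circ$; they can lie on a peripheral circle $\partial Q_{i_0}$ with $\rho(Q_{i_0})>0$, or on $\partial\Omega$. In those cases the soldering device near that point unavoidably has oscillation $1$ on $Q_{i_0}$ (so the error term contains $\rho(Q_{i_0})\cdot 1$, which does \emph{not} tend to zero), respectively fails to vanish on $\partial\Omega$. The paper handles these by separate case analysis (arranging the separating curve so that either $Q_{i_0}$ lies in the set where $g\equiv 1$ or $g\equiv 0$, or ``flipping'' $g\mapsto 1-g$ when the boundary is involved); your plan does not address them. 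Finally, the ``main obstacle'' you flag --- overlapping transition strips --- is a non-issue: since $[m_{Q_n^{(1)}}(u),M_{Q_n^{(1)}}(u)]$ and $[m_{Q_n^{(2)}}(u),M_{Q_n^{(2)}}(u)]$ are non-degenerate intervals and $\mathcal T$ has full measure, one simply chooses $s\neq t$ and then $h$ small, exactly as in Lemma \ref{unif:Conjugate-Infimum not needed}; your first proposed fix ($t_n^{(1)}<t_0<t_n^{(2)}$) is not always available, since both $x_j$ may be approachable only from one side of $\alpha_{t_0}$.
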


Here $\partial_{\br \Omega} V$ is the boundary of $V$ rel.\ $\br \Omega$. Hence, the lemma allows us to define $v$ to be constant on $\br V\cap S$.

For this proof and other proofs in this section we will use repeatedly Lemma \ref{unif:Zeta lemma} from Section \ref{unif:Appendix}.

\begin{proof}
We will split the proof in four cases. We give  details in the proof of Case 0 below, and for the rest of the cases we will describe the variational argument that has to be used, skipping some of the details.

\textbf{Case 0:} $V$ contains only one peripheral disk $Q_{i_0}$, hence $V=Q_{i_0}$. The latter implication holds since $V$ is rel.\ open in $\br \Omega$ and if it contained points outside $Q_{i_0}$ it would also contain other peripheral disks. By the discussion preceding the statement of the lemma, it follows that on every point of $\partial Q_{i_0}$ the formula \eqref{unif:Conjugate-Definition} is applicable. Also, $\tilde u \equiv t_0$ on $Q_{i_0}$, thus $\rho(Q_{i_0})=0$. By a variational argument we now show that $v$ is constant on $\partial Q_{i_0}$. Let $a,b\in \partial Q_{i_0}$ be arbitrary points, and assume there exists $\varepsilon>0$ such that $v(b)-v(a)\geq 10\varepsilon$. Using Lemma \ref{unif:Zeta lemma}, for every $\eta>0$ we can find a test function $\zeta \in \mathcal W^{1,2}(S)$ that vanishes on $\partial \Omega$ with $0\leq \zeta\leq 1$, such that $\zeta \equiv 1$ on small disjoint balls $B(a,r)\cup B(b,r) \subset \Omega$, and
\begin{align}\label{unif:Conjugate-Case0-Define when rho=0-  zeta}
 D(\zeta)-\osc_{Q_{i_0}}(\zeta)^2<\eta.
\end{align}
Using \eqref{unif:Conjugate-Definition}, we can find peripheral disks $Q_{i_a} \subset B(a,r)$ and $Q_{i_b} \subset B(b,r)$ with $\rho(Q_{i_a})>0$, $\rho(Q_{i_b})>0$ and 
\begin{align}\label{unif:Conjugate-Case0-Define when rho=0-  Q_ia Q_ib}
\hat v(Q_{i_b}) -\hat v(Q_{i_a}) >9\varepsilon.
\end{align}
By Lemma \ref{unif:Conjugate-Infimum not needed}, we can consider $s,t\in \mathcal T$, $s\neq t$, such that for the smallest open subpaths $\gamma_s,\gamma_t$ of $\alpha_s,\alpha_t$, respectively, that connect $\Theta_2$ to $Q_{i_a},Q_{i_b}$, respectively, we have
\begin{align}\label{unif:Conjugate-Case0-Define when rho=0-   ell gamma_a gamma_b}
\hat v(Q_{i_a}) = \ell_{\rho}(\gamma_s) \quad \textrm{and} \quad \hat v(Q_{i_b}) = \ell_{\rho}(\gamma_t).
\end{align}
We also denote by $\tilde \gamma_t$ the smallest open subpath of $\alpha_t$ that connects $Q_{i_b}$ to $\Theta_4$. By Theorem \ref{unif:Level sets-sums equal mass} we have
\begin{align*}
\ell_{\rho}(\gamma_t)+\ell_{\rho}(\tilde \gamma_t) \leq \ell_{\rho}(\alpha_t)=D(u).
\end{align*}
Thus, by \eqref{unif:Conjugate-Case0-Define when rho=0-   ell gamma_a gamma_b}
\begin{align*}
\hat v(Q_{i_a}) -\hat v(Q_{i_b}) \geq \ell_\rho(\gamma_s)+\ell_\rho(\tilde \gamma_t) -D(u)
\end{align*}
We claim that 
\begin{align}\label{unif:Conjugate-Case0-Define when rho=0   claim}
\ell_\rho(\gamma_s)+\ell_\rho(\tilde \gamma_t) \geq D(u)-\varepsilon.
\end{align}
This, together with the previous inequality, contradicts \eqref{unif:Conjugate-Case0-Define when rho=0-  Q_ia Q_ib}.

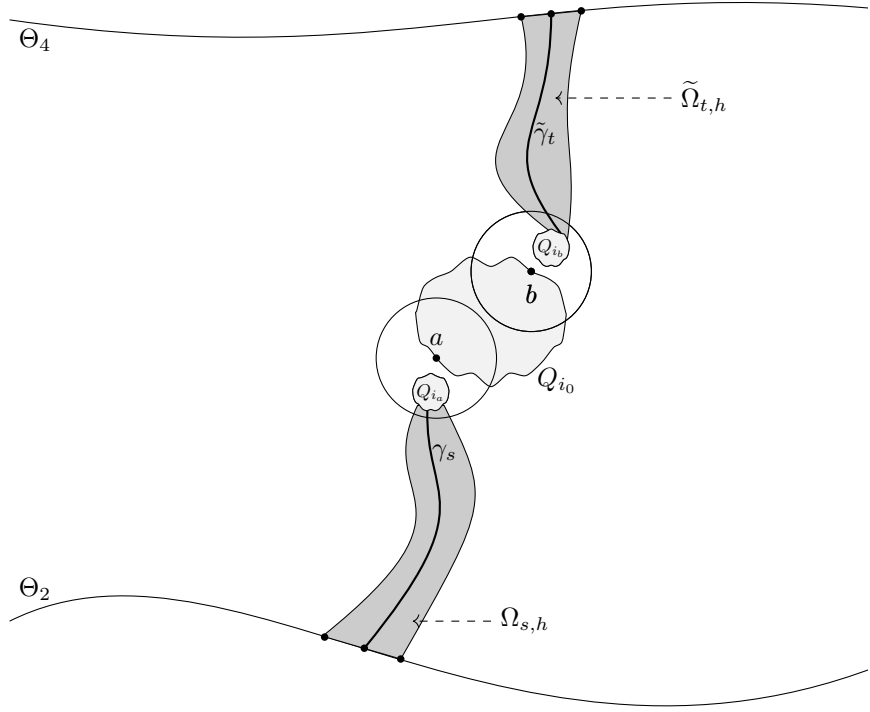
\begin{figure}
	\centering
	\begin{tikzpicture}
	\clip (-6.5,-5.2) rectangle (5,5);

		\begin{scope}[scale=0.8]

			\begin{scope}[shift={(0,0)}, rotate=0,scale=1]

			//grid   

			//draw lines
				\draw (-8,5)..controls (-1,4) and (1,6)..(8,5); //top
				\draw (-8,5) node[anchor=north west] {$\Theta_4$};
			
				\draw (-8,-5)..controls (-4,-3) and (1,-9)..(8,-5); //bot       	 
				\draw (-8,-4.8) node[anchor=south west] {$\Theta_2$};

			//draw big peripheral circle	
			
			\def\a{1.25};\def\A{1};
			\def\b{0.05};\def\B{2};
			\def\c{1};\def\C{1};
			\def\d{0.1};\def\D{34};
            	 	\draw [fill=black!5,smooth,domain=0:360] plot 
            		(
            		{\a*cos( \A*\x )+\b*sin(\B*\x)}, 
            		{\c*sin( \C*\x ) + \d*cos(\D*\x )}
            		) ;
            		\draw (1.1,-1) node[] {$Q_{i_0}$};

            //bottom conections

            	//bot peripheral
            	\def\a{0.3};\def\A{1};
				\def\b{0.01};\def\B{2};
				\def\c{0.3};\def\C{1};
				\def\d{0.02};\def\D{40};
				\def\e{-1};\def\f{-1.2};
            	 	\draw [fill=black!5,smooth,domain=0:360] plot 
            		(
            		{\a*cos( \A*\x )+\b*sin(\B*\x)+\e}, 
            		{\c*sin( \C*\x ) + \d*cos(\D*\x )+\f}
            		) ;
            	\draw (\e,\f) node[scale=0.7]{ $Q_{i_a}$};  
           		\def\x{280};

            	//left+right bot connections
            	\def\x{220};
            	\coordinate (OmegasL) at (
            		{\a*cos( \A*\x )+\b*sin(\B*\x)+\e}, 
            		{\c*sin( \C*\x ) + \d*cos(\D*\x )+\f}
            		);
            		
            	\def\x{320};
				\coordinate (OmegasR) at (
            		{\a*cos( \A*\x )+\b*sin(\B*\x)+\e}, 
            		{\c*sin( \C*\x ) + \d*cos(\D*\x )+\f}
            		);	
            		
            	\draw[xshift=0,thin, fill=black!20](OmegasL) ..controls (-2,-3) and (0,-3)..(-2.8,-5.25)-- (-1.5,-5.63)..controls (0,-3) and (0,-3)..(OmegasR)--cycle;
            	\draw[xshift=1,yshift=1] (-2.8,-5.3)
            		 node[circle,fill,inner sep=1pt](a){} ;
            	//mid
				\def\x{280};
            	\draw[xshift=-3,thick] (
            		{\a*cos( \A*\x )+\b*sin(\B*\x)+\e}, 
            		{\c*sin( \C*\x ) + \d*cos(\D*\x )+\f}
            		) ..controls (-1,-3) and (0,-3)..(-2,-5.45);            	
				\draw[xshift=-3] (-2,-5.45)
            		 node[circle,fill,inner sep=1pt](a){} ;
            	//gammas
				\draw (-0.77,-2.2) node[] {$\gamma_s$};  	 
        
				\draw[xshift=-0,yshift=2] (-1.5,-5.7)
            		 node[circle,fill,inner sep=1pt](a){} ;
           		
           		//ball
            	\def\a{1.25};\def\A{1};
				\def\b{0.05};\def\B{2};
				\def\c{1};\def\C{1};
				\def\d{0.1};\def\D{34};
				\def\x{220};
           		//point a
            	 	\draw   
            		(
            		{\a*cos( \A*\x )+\b*sin(\B*\x)}, 
            		{\c*sin( \C*\x ) + \d*cos(\D*\x )}
            		) circle (1cm); 
            		\draw(
            		{\a*cos( \A*\x )+\b*sin(\B*\x)}, 
            		{\c*sin( \C*\x ) + \d*cos(\D*\x )}
            		)
            		 node[circle,fill,inner sep=1pt,label=north:$a$](a){} ;
				         		
           		//bot peripheral
            	\def\a{0.3};\def\A{1};
				\def\b{0.01};\def\B{2};
				\def\c{0.3};\def\C{1};
				\def\d{0.02};\def\D{40};
				\def\e{-1};\def\f{-1.2};
            	 	\draw [fill=black!5,smooth,domain=0:360] plot 
            		(
            		{\a*cos( \A*\x )+\b*sin(\B*\x)+\e}, 
            		{\c*sin( \C*\x ) + \d*cos(\D*\x )+\f}
            		) ;
            	\draw (\e,\f) node[scale=0.7]{ $Q_{i_a}$};  
           		\def\x{280};

				//Omega label
				\draw[->,dashed] (0,-5) node[anchor=west] {$\Omega_{s,h}$} to (-1.3,-5);           		
           			
            //top connections

            `	
            
            	//ball
            	\def\a{1.25};\def\A{1};
				\def\b{0.05};\def\B{2};
				\def\c{1};\def\C{1};
				\def\d{0.1};\def\D{34};
				\def\x{60};
            	 	\draw   
            		(
            		{\a*cos( \A*\x )+\b*sin(\B*\x)}, 
            		{\c*sin( \C*\x ) + \d*cos(\D*\x )}
            		) circle (1cm); 
            		\draw(
            		{\a*cos( \A*\x )+\b*sin(\B*\x)}, 
            		{\c*sin( \C*\x ) + \d*cos(\D*\x )}
            		)
            		 node[circle,fill,inner sep=1pt,label=south:$b$](a){} ;

            	//top peripheral
            	\def\a{0.3};\def\A{1};
				\def\b{0.01};\def\B{2};
				\def\c{0.3};\def\C{1};
				\def\d{0.02};\def\D{40};
				\def\e{1};\def\f{1.2};
            	 	\draw [fill=black!5,smooth,domain=0:360] plot 
            		(
            		{\a*cos( \A*\x )+\b*sin(\B*\x)+\e}, 
            		{\c*sin( \C*\x ) + \d*cos(\D*\x )+\f}
            		) ;
				\draw (\e,\f) node[scale=0.7]{ $Q_{i_b}$};

            	\def\x{60};
		

            	//left+right
            	\def\x{100};
            	\coordinate (OmegatL) at (
            		{\a*cos( \A*\x )+\b*sin(\B*\x)+\e}, 
            		{\c*sin( \C*\x ) + \d*cos(\D*\x )+\f}
            		);
            	\def\x{30};
            	\coordinate (OmegatR) at (
            		{\a*cos( \A*\x )+\b*sin(\B*\x)+\e}, 
            		{\c*sin( \C*\x ) + \d*cos(\D*\x )+\f}
            		);

            	\draw[xshift=0,yshift=0,thin,fill=black!20] (OmegatL) ..controls (-1,3) and (1,3)..(0.5,5.05)--(1.5,5.15) ..controls (1,3) and (1.5,3).. (OmegatR)--cycle;
            	\draw[yshift=0] (0.5,5.05)
            		 node[circle,fill,inner sep=1pt](a){} ;

				
				\draw[xshift=-0,yshift=0] (1.5,5.15)
            		 node[circle,fill,inner sep=1pt](a){} ;
           		//mid
				\def\x{60};
            	\draw[xshift=-0,thick] (
            		{\a*cos( \A*\x )+\b*sin(\B*\x)+\e}, 
            		{\c*sin( \C*\x ) + \d*cos(\D*\x )+\f}
            		) ..controls (0,3) and (1,3)..(1,5.1);            	
				\draw[xshift=-0] (1,5.1)
            		 node[circle,fill,inner sep=1pt](a){} ;
           		
            	//gammat
            	\draw (0.9,3.1) node[] {$\tilde \gamma_t$};
            	//top peripheral
            	\def\a{0.3};\def\A{1};
				\def\b{0.01};\def\B{2};
				\def\c{0.3};\def\C{1};
				\def\d{0.02};\def\D{40};
				\def\e{1};\def\f{1.2};
            	 	\draw [fill=black!5,smooth,domain=0:360] plot 
            		(
            		{\a*cos( \A*\x )+\b*sin(\B*\x)+\e}, 
            		{\c*sin( \C*\x ) + \d*cos(\D*\x )+\f}
            		) ;
				\draw (\e,\f) node[scale=0.7]{ $Q_{i_b}$};  
            	//ball
            	\def\a{1.25};\def\A{1};
				\def\b{0.05};\def\B{2};
				\def\c{1};\def\C{1};
				\def\d{0.1};\def\D{34};
				\def\x{60};
            	 	\draw   
            		(
            		{\a*cos( \A*\x )+\b*sin(\B*\x)}, 
            		{\c*sin( \C*\x ) + \d*cos(\D*\x )}
            		) circle (1cm); 
            		\draw(
            		{\a*cos( \A*\x )+\b*sin(\B*\x)}, 
            		{\c*sin( \C*\x ) + \d*cos(\D*\x )}
            		)
            		 node[circle,fill,inner sep=1pt,label=south:$b$](a){} ;
			            
            	//Omega label
				\draw[->,dashed] (3,3.7) node[anchor=west] {$\widetilde\Omega_{t,h}$} to (1.07,3.7);
				
            \end{scope}

		\end{scope}
		
	\end{tikzpicture}
	\caption{The curves $\gamma_s$ and $\tilde \gamma_t$, and the corresponding Jordan regions $\Omega_{s,h}$ and $\widetilde\Omega_{t,h}$.}\label{unif:Conjugate-Figure}
\end{figure}

We now focus on proving \eqref{unif:Conjugate-Case0-Define when rho=0   claim}. As in the proof of Lemma \ref{unif:Conjugate-Infimum not needed}, we can consider a small $h>0$ and disjoint closed Jordan regions $\Omega_{s,h}$ containing $\gamma_s$ and $\widetilde \Omega_{t,h}$ containing $\tilde \gamma_t$ such that $\Omega_{s,h}$ connects $Q_{i_a}$ to $\Theta_2$ and $\widetilde \Omega_{t,h}$ connects $Q_{i_b}$ to $\Theta_4$; see Figure \ref{unif:Conjugate-Figure}. Define $F_{s,h}$ to be the set of indices $i\in \N$ such that $Q_i\cap \gamma_{s} \neq \emptyset $, $Q_{i}\cap \alpha_{s- h}\neq \emptyset$, and $Q_{i}\cap \alpha_{s+ h}\neq \emptyset$. Define similarly $F_{t,h}$ that corresponds to $\widetilde \Omega_{t,h}$, and set $N_h=\{i\in \N: Q_i\cap (A_{s-h,s+h}\cup A_{t-h,t+h}) \neq \emptyset \} \setminus (F_{s,h}\cup F_{t,h} )$.

Now, we define carefully an admissible function that lies in $\mathcal W^{1,2}(S)$. Let $\phi_s\colon \br\Omega \to [0,1]$ be a smooth function that is equal to $1$ on $\Omega_{s,h}$ but vanishes outside $\Omega_{s,2h}\cup Q_{i_a}$ (choose a smaller $h>0$ if necessary so that the Jordan region $\Omega_{s,2h}$ still connects $\Theta_2$ to $Q_{i_0}$). Similarly, define $\phi_t$ to be $1$ on $\widetilde \Omega_{t,h}$, and $0$ outside $\widetilde \Omega_{t,2h}\cup Q_{i_b}$. Then define
\begin{align}\label{unif:Conjugate-U_s}
U_s(x)= \begin{cases}\frac{u(x)-(s-h)}{h}\lor 0 , & u(x)<s\\
\frac{s+h-u(x)}{h} \lor 0, & u(x)\geq s 
\end{cases}= \left(1- \frac{|u(x)-s|}{h}\right) \lor 0
\end{align}
for $x\in S$, and similarly define $U_t(x)$ where $s$ is replaced by $t$ in the previous definition. These functions are supported on the sets $\{s-h<u<s+h\}=A_{s-h,s+h}\cap S$ and $\{t-h<u<t+h\}=A_{t-h,t+h}\cap S$, respectively. Also, they lie in the Sobolev space $\mathcal W^{1,2}(S)$ by Lemma \ref{unif:Gluing lemma}. We consider their truncation $U_s  \phi_s + U_t  \phi_t$, and then take $(U_s  \phi_s + U_t  \phi_t)\lor \zeta$. This function again lies in $\mathcal W^{1,2}(S)$, but it vanishes on $\Theta_1$ and $\Theta_3$, so it is not yet admissible for the free boundary problem. To turn it into an admissible function, we would like to set it equal to $1$ near $\Theta_3$.

Consider the union of the following paths: $\gamma_{s}$ that joins $\Theta_2$ to $Q_{i_a}$, a line segment in $B(a,r)$ connecting the endpoint of $\gamma_{s}$ to $a$, an arc inside $Q_{i_0}$ connecting $a$ to $b$, a line segment in $B(b,r)$ connecting $b$ to the endpoint of $\tilde \gamma_{t}$, and $\tilde \gamma_{t}$, where the latter connects $\Theta_4$ to $Q_{i_b}$. This union separates $\Theta_1$ from $\Theta_3$, so it contains a simple path $\gamma$ that connects $\Theta_2$ to $\Theta_4$ and separates $\Theta_1$ from $\Theta_3$ (see e.g.\ the proof of Lemma \ref{unif:Level sets-Topological Lemma}). Let $W$ be the component of $\br\Omega\setminus \gamma$ that contains $\Theta_3$. We define
\begin{align*}
g(x)= \begin{cases}
(U_s(x)  \phi_s(x) + U_t(x) \phi_t(x))\lor \zeta(x), & x\in  S\setminus   W \\
1, & x\in  W\cap S.
\end{cases}
\end{align*} 
On $\gamma\cap S$ the function  $g$ is equal to $1$ and the two alternatives agree, so by Lemma \ref{unif:Gluing lemma} we conclude that $g\in \mathcal W^{1,2}(S)$ and $g$ is admissible. Furthermore, by construction and Lemma \ref{unif:Gluing lemma} we have
\begin{align*}
\osc_{Q_i}(g)\leq \osc_{Q_i}(\zeta)+
\begin{cases}
1, & i\in F_{s,h}\cup F_{t,h}\cup \{i_0\}\\
\rho(Q_i)/2h, & i\in N_h\\
0, & i\notin F_{s,h}\cup F_{t,h}\cup \{i_0\}\cup N_h.
\end{cases}
\end{align*}
Testing the minimizing property of $u$ against $g$, as usual (see the proof of Proposition \ref{unif:Level sets-sums geq} and \eqref{unif:Level sets-Optimization in s}), we obtain
\begin{align*}
D(u)&\leq \sum_{i\in \N}\rho(Q_i)\osc_{Q_i}(g)\\
&\leq \sum_{i\in \N }\rho(Q_i) \osc_{Q_i}(\zeta)+\sum_{i\in F_{s,h}}\rho(Q_i) +\sum_{i\in F_{t,h}}\rho(Q_i) +\rho(Q_{i_0}) +\frac{1}{2h} \sum_{i\in N_h} \rho(Q_i)^2.
\end{align*}
Letting $h\to 0$ and using Lemma \ref{unif:Level sets-Measure Derivative lemma} we obtain
\begin{align}\label{unif:Conjugate:Case0}
\begin{aligned}
D(u)&\leq \sum_{i\in \N \setminus \{i_0\}}\rho(Q_i) \osc_{Q_i}(\zeta) +\ell_{\rho}(\gamma_s) +\ell_{\rho}(\tilde \gamma_t)+\rho(Q_{i_0})\\
&\leq D(u)^{1/2}(D(\zeta)- \osc_{Q_{i_0}}(\zeta)^2)^{1/2}+\ell_{\rho}(\gamma_s) +\ell_{\rho}(\tilde \gamma_t) +\rho(Q_{i_0})\\
&\leq D(u)^{1/2}\eta^{1/2}+\ell_{\rho}(\gamma_s) +\ell_{\rho}(\tilde \gamma_t) +\rho(Q_{i_0}),
\end{aligned}
\end{align}
where we used \eqref{unif:Conjugate-Case0-Define when rho=0-  zeta}. Note that $\rho(Q_{i_0})=0$. If $\eta$ is chosen to be sufficiently small, depending on $\varepsilon$, then \eqref{unif:Conjugate-Case0-Define when rho=0   claim} is satisfied and our claim is proved. We have completed the proof of Case 0.

\begin{figure}
	\centering
	\input{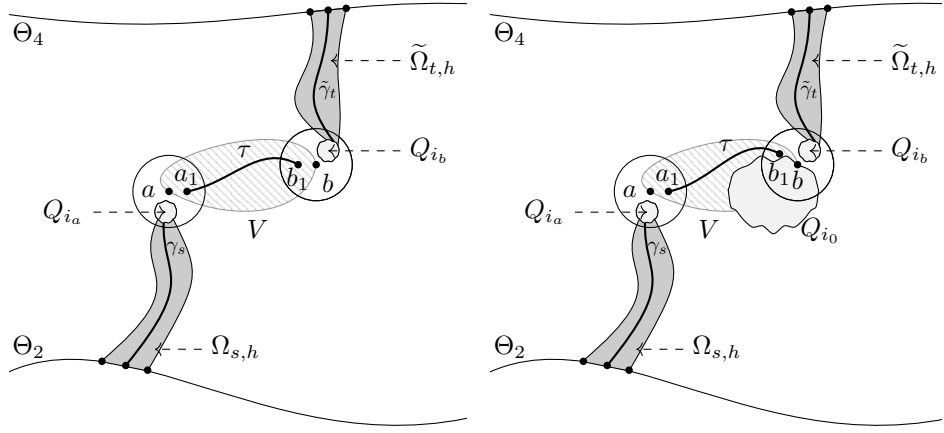}
	\caption{Case 1 (left) and Case 2b (right).}\label{unif:fig:cases}
\end{figure}

Now, we analyze the remaining cases. If $V$ contains more than one peripheral disk, then it also contains a point $x_0\in S^\circ$, as one can see by connecting two peripheral disks of $V$ with a path inside $V$ (recall that $V$ is pathwise connected by the discussion preceding Lemma \ref{unif:Conjugate-Define when rho=0}) and then applying Corollary \ref{unif:cor:paths in S^o}. Using Lemma \ref{unif:Paths in S^o}, we can find a path $\beta\subset S^\circ $ that connects $x_0$ to one of $\Theta_1$ or $\Theta_3$ that is not intersected by $V\subset \alpha_{t_0}$ (recall that $\Theta_1\subset \alpha_0$ and $\Theta_3\subset \alpha_1$). Then $\beta$ has to exit $V$, so it meets $\partial_{\br \Omega}V\cap S^\circ $ at a point $a$. Since $a\in S^\circ,$ it follows that $v(a)$ is defined by \eqref{unif:Conjugate-Definition}. Indeed, by the discussion preceding Lemma \ref{unif:Conjugate-Define when rho=0}, if $a\in S^\circ$ and $v(a)$ cannot be defined by \eqref{unif:Conjugate-Definition}, then we necessarily have $a\in \inter_{\br\Omega}(\alpha_{t_0})$ so $a\notin \partial_{\br \Omega} V$. We claim that for any point $b \in \partial_{\br\Omega} V\cap S$ for which definition \eqref{unif:Conjugate-Definition} applies we have $v(b)=v(a)$. 

\textbf{Case 1:} $b\in S^\circ$; see Figure \ref{unif:fig:cases}. We keep the same notation as in Case 0. Since $a,b\in S^\circ$, by Lemma \ref{unif:Zeta lemma}, for every $\eta>0$ we can find a test function $\zeta$ that vanishes on $\partial \Omega$ with  $0\leq \zeta \leq 1$, such that $\zeta\equiv 1$ on disjoint balls $B(a,r),B(b,r)\subset \Omega$ and $D(\zeta)<\eta$. We choose $Q_{i_a} \subset B(a,r)$ and $Q_{i_b} \subset B(b,r)$ as before, and heading for a contradiction, we wish to show again \eqref{unif:Conjugate-Case0-Define when rho=0   claim} using a variational argument. 

Consider, as in Case 0 the functions $U_s,U_t,\phi_s,\phi_t$ with exactly the same definition, and the function $(U_s\phi_s+U_t\phi_t) \lor \zeta$ (we choose a sufficiently small $h>0$ so that this function vanishes on $\Theta_1$ and $\Theta_3$). Also, consider the union of the following paths: $\gamma_{s}$ that joins $\Theta_2$ to $Q_{i_a}$, a line segment in $B(a,r)$ connecting  the endpoint of $\gamma_{s}$ to a point $a_1 \in V\cap B(a,r)$, a simple path $\tau$ inside the pathwise connected open set $V\cap \Omega$ joining $a_1$ to a point $b_1\in V\cap B(b,r)$, a line segment in $B(b,r)$ joining $b_1$ to the endpoint of $\tilde \gamma_{t}$, and $\tilde \gamma_{t}$. This union contains a simple path $\gamma$ that separates $\Theta_1$ from $\Theta_3$. However, the function $(U_s\phi_s+U_t\phi_t) \lor \zeta$ need not be equal to $1$ on $\gamma\cap \tau \cap S$. We amend this as follows.

Let $\delta>0$ be so small that $N_\delta(\tau)\subset\subset V\cap \Omega$. Consider the Lipschitz function $\psi (x)= \max\{1-\delta^{-1}\dist(x,\tau),0\}$ on $\br \Omega$. Since $\psi\equiv 1$ on $\tau$, it follows that the function $(U_s\phi_s+U_t\phi_t) \lor \zeta \lor \psi $ is equal to $1$ on $\gamma\cap S$. Let $W$ be the component of $\br\Omega\setminus \gamma$ containing $\Theta_3$, and define
\begin{align*}
g(x)= \begin{cases}
(U_s(x)  \phi_s(x) + U_t(x) \phi_t(x))\lor \zeta(x) \lor \psi(x), & x\in  S\setminus   W \\
1, & x\in  W\cap S.
\end{cases}
\end{align*} 
This function lies in $\mathcal W^{1,2}(S)$ by Lemma \ref{unif:Gluing lemma} and it is admissible. By construction, using notation from Case 0 we have
\begin{align*}
\osc_{Q_i}(g)\leq \osc_{Q_i}(\zeta)+
\begin{cases}
1, & i\in F_{s,h}\cup F_{t,h}\\
\rho(Q_i)/2h, & i\in N_h\\
0, & i\notin F_{s,h}\cup F_{t,h}\cup N_h\cup I_V\\
\delta^{-1}{\diam(Q_i)}, & i\in I_V.
\end{cases}
\end{align*}
Recall that $I_V=\{i\in \N: Q_i\cap V\neq \emptyset\}$ and that $Q_i\subset V$ whenever $Q_i\cap V\neq \emptyset$; see remarks before Lemma \ref{unif:Conjugate-Define when rho=0}. Testing the minimizing property of $u$ against $g$ we obtain
\begin{align}\label{unif:Conjugate-Case1-variation argument} 
\begin{aligned}
D(u)&\leq \sum_{i\in \N}\rho(Q_i)\osc_{Q_i}(g)\\
&\leq \sum_{i\in \N }\rho(Q_i) \osc_{Q_i}(\zeta)+\sum_{i\in F_{s,h}}\rho(Q_i) +\sum_{i\in F_{t,h}}\rho(Q_i) \\
&\quad \quad+\frac{1}{2h} \sum_{i\in N_h} \rho(Q_i)^2 + \sum_{i:Q_i\subset V} \delta^{-1}\rho(Q_i)\diam(Q_i).
\end{aligned}
\end{align}
Note that $\rho(Q_i)=0$ for all $Q_i\subset V$, so the last term vanishes identically. By letting $h\to 0$ and choosing a sufficiently small $\eta>0$, we again arrive at \eqref{unif:Conjugate-Case0-Define when rho=0   claim} and this completes the proof.

\textbf{Case 2:} $b\in \partial Q_{i_0}$ for some $i_0\in \N$. Since $b\in \partial Q_{i_0}$ (as in Case 0), the difficulty is that for the test function $\zeta$ (equal to $1$ near $b$) we have $\osc_{Q_{i_0}}(\zeta)=1$. Thus, in the variational argument the term $\sum_{i\in \N} \rho(Q_i) \osc_{Q_i}(\zeta)$ is not small, unless $\rho(Q_{i_0})=0$, which was true in Case 0. We assume here that $\rho(Q_{i_0})>0$, since otherwise the argument is similar, but simpler. Then $Q_{i_0}\cap V=\emptyset$, by the remarks before the statement of Lemma \ref{unif:Conjugate-Define when rho=0}.

As in the other cases, we assume $v(b)-v(a)\geq 10\varepsilon$ and consider a test function $\zeta\in \mathcal W^{1,2}(S)$ that vanishes on $\partial \Omega$ with $\zeta\equiv 1$ on $B(a,r)\cup B(b,r)$, and $D(\zeta)-\osc_{Q_{i_0}}(\zeta)^2<\eta$. Furthermore, by the definition \eqref{unif:Conjugate-Definition}, we may take $Q_{i_a}\subset B(a,r)$ and $Q_{i_b}\subset B(b,r)$ such that $\hat v(Q_{i_a})-\hat v(Q_{i_b})>9\varepsilon$, and consider as in Case 0 the open paths $\gamma_s$ and $\tilde \gamma_t$. Again, we are aiming for \eqref{unif:Conjugate-Case0-Define when rho=0   claim}. We now split into two sub-cases:

\textbf{Case 2a:} $\tilde \gamma_t \cap Q_{i_0}\neq \emptyset$. In this case, we use exactly the same function $g\in \mathcal W^{1,2}(S)$ that we used in Case 1, with the observation that $i_0 \in F_{t,h}$, and $\osc_{Q_{i_0}}(g)\leq 1$. Hence, (for all sufficiently small $h>0$) we obtain the inequality \eqref{unif:Conjugate-Case1-variation argument} with the first sum replaced by a sum over $i\in \N \setminus \{i_0\}$, because the term $i=i_0$ is included in the third sum. The fact that $\sum_{i\in \N \setminus \{i_0\}} \rho(Q_i)\osc_{Q_i}(\zeta)$ can be made arbitrarily small, leads to the conclusion.

\textbf{Case 2b:} $\tilde \gamma_t\cap Q_{i_0}=\emptyset$; see Figure \ref{unif:fig:cases}. In this case, we may assume that $\gamma_s\cap Q_{i_0}=\emptyset$ too, since otherwise we are essentially reduced to the previous case, where the summation index $i=i_0$ is included in the second sum appearing in \eqref{unif:Conjugate-Case1-variation argument}.

We construct a path $\gamma$ that separates $\Theta_1$ from $\Theta_3$ and does not intersect $Q_{i_0}$. Consider the union of the following continua: $\gamma_{s}$, a line segment in $B(a,r)$ joining the endpoint of $\gamma_{s}$ to a point $a_1\in V\cap B(a,r)$, a simple path $\tau \subset V$ connecting $a_1$ to a point $b_1\in V\cap B(b,r)$, a  \textit{simple path $\gamma_b$ in} $B(b,r)\setminus Q_{i_0}$ connecting $b_1$ to the endpoint of $\tilde \gamma_{t}$, and $\tilde \gamma_{t}$. For the existence of $\gamma_b$ note that if $Q_{i_b}$ is sufficiently close to $b$ then it has to lie in the component of $B(b,r)\setminus Q_{i_0}$ that contains $b$ in its boundary; here one uses the local connectedness property of the Jordan curve $\partial Q_{i_0}$. Now we let $\gamma$ be a simple path contained in this union, and separating $\Theta_1$ from $\Theta_3$. Since $Q_{i_0}\cap V=\emptyset$, it follows that $\tau \cap Q_{i_0}=\emptyset$. To ensure that $\gamma\cap Q_{i_0}=\emptyset$, one only has to take a possibly smaller ball $B(a,r)$, so that the line segment we considered there does not meet $Q_{i_0}$; recall that $a\in S^\circ$.

Let $W$ be the component of $\br \Omega \setminus \gamma$ that contains $\Theta_3$, and by construction $Q_{i_0}\subset W$ or $Q_{i_0}\subset \br\Omega\setminus W$. If $Q_{i_0} \subset W$, then we consider the variation $g$ as in Case 1, which is admissible for a sufficiently small $h>0$. Since $g=1$ on $\partial Q_{i_0}$, we have $\osc_{Q_{i_0}}(g)=0$. Hence, we may have \eqref{unif:Conjugate-Case1-variation argument} with the first sum replaced  by $\sum_{i\in \N \setminus \{i_0\}} \rho(Q_i)\osc_{Q_i}(\zeta)$ and this completes the proof.

Assume now that $Q_{i_0} \subset \br\Omega\setminus W$. Consider the function $g_0\coloneqq (U_s\phi_s+U_t\phi_t) \lor \zeta \lor \psi$ as in Case 1, and define $g_1= 1 $ on $S \setminus W$, and $g_1=g_0$ on $S\cap W$. Then $g_2\coloneqq 1-g_1$ is admissible and satisfies $\osc_{Q_{i_0}}(g_2)=0$, which yields the result.

\textbf{Case 3:} $b\in \partial \Omega$. The complication here is that our test function $\zeta$ does not vanish at $\partial \Omega$. It turns out though that we can always construct an admissible function using the procedure in the Case 2b, as we explain below.

We consider a path $\gamma$ as before that separates $\Theta_1$ and $\Theta_3$ such that $\gamma$ contains a simple path $\tau\subset V$ as in Case 2. The function $g_0\coloneqq  (U_s\phi_s+U_t\phi_t) \lor \zeta \lor \psi$ is equal to $1$ on $\gamma\cap S$. Let $W$ be the component of $\br\Omega \setminus \gamma$ that contains $\Theta_3$. If $g_0=1$ on points of $\Theta_3$, then $g_0=0$ on $\Theta_1$ (by choosing a sufficiently small $h>0$ and a $\zeta$ with small support), so we can set $g=1$ on $ S\cap W$ and $g=g_0$ on $S\setminus W$. If $g_0=1$ on points of $\Theta_1$ then we ``flip" the function $g$. We set $g_1=1$ on $S\setminus W$ and $g_1=g_0$ on $S\cap W$. Then $g_2\coloneqq 1-g_1$. 

The energy $D(\zeta)$ can be made arbitrarily small because $b\in \partial \Omega$ (recall Lemma \ref{unif:Zeta lemma}), so $\sum_{i\in \N} \rho(Q_i)\osc_{Q_i}(\zeta)$ can be made arbitrarily small in \eqref{unif:Conjugate-Case1-variation argument}. In either case, running the variational argument with the admissible function $g$ or $g_2$ will yield the conclusion. 
\end{proof}

Now we can define $v$ on all of $S$ as follows. Let $x\in S$. From the analysis preceding the statement of Lemma \ref{unif:Conjugate-Define when rho=0} we see that either $v(x)$ can be defined by the formula \eqref{unif:Conjugate-Definition}, or $x\in \br V\cap S$, where $V$ is a component of $U\coloneqq \inter_{\br \Omega}(\alpha_{t_0})$ for some $t_0\notin \mathcal T$. In the latter case, we define $v(x)$ to be the constant value of $v$ on the points of $\partial_{\br \Omega}V\cap S$ for which \eqref{unif:Conjugate-Definition} is applicable; there exists at least one such point by Lemma \ref{unif:Conjugate-Define when rho=0}.

Following a similar argument to the proof of Lemma \ref{unif:Conjugate-Define when rho=0}, we show that $v$ is continuous on $S$.

\begin{lemma}\label{unif:Conjugate-Continuous on S}\index{conjugate function!continuity of}
The function $v\colon S\to \R$ is continuous.
\end{lemma}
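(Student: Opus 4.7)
The plan is to adapt the variational arguments of Lemma~\ref{unif:Conjugate-Define when rho=0} to pairs of nearby points rather than to pairs of points lying in the closure of a common component of $\inter_{\br\Omega}(\alpha_{t_0})$. Suppose for contradiction that $v$ is discontinuous at some $x\in S$, so that there exist $\varepsilon>0$ and a sequence $x_n\to x$ in $S$ with $|v(x_n)-v(x)|\geq 10\varepsilon$ for all $n$. After passing to a subsequence, we may assume $v(x_n)\geq v(x)+10\varepsilon$; the opposite inequality is handled by the symmetric argument based on paths terminating at $\Theta_4$, since by Theorem~\ref{unif:Level sets-sums equal mass} the ``dual'' function $D(u)-v$ admits an analogous definition. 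Set $a=x$ and $b=x_n$ and apply Lemma~\ref{unif:Zeta lemma} to produce a test function $\zeta$ that vanishes on $\partial\Omega$, satisfies $0\le\zeta\le 1$, equals $1$ on disjoint balls $B(a,r)\cup B(b,r)$, and has $\sum_{i\ne i_0}\rho(Q_i)\osc_{Q_i}(\zeta)^2<\eta$, where $Q_{i_0}$ is a peripheral disk whose boundary contains $a$ or $b$ (when applicable). By \eqref{unif:Conjugate-Definition} select $Q_{i_a}\subset B(a,r)$ and $Q_{i_b}\subset B(b,r)$ with $\rho(Q_{i_a}),\rho(Q_{i_b})>0$ and $\hat v(Q_{i_b})-\hat v(Q_{i_a})>9\varepsilon$, and by Lemma~\ref{unif:Conjugate-Infimum not needed} pick $s,t\in\mathcal T$, $s\ne t$, with $\hat v(Q_{i_a})=\ell_\rho(\gamma_s)$; the complementary subpath $\tilde\gamma_t$ of $\alpha_t$ from $Q_{i_b}$ to $\Theta_4$ then satisfies $\ell_\rho(\tilde\gamma_t)=D(u)-\hat v(Q_{i_b})-\rho(Q_{i_b})$ by Theorem~\ref{unif:Level sets-sums equal mass}.

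The variational construction now follows Cases~1--3 of Lemma~\ref{unif:Conjugate-Define when rho=0}: form the functions $U_s,U_t$ as in \eqref{unif:Conjugate-U_s}, multiply by smooth cutoffs $\phi_s,\phi_t$ supported near $\Omega_{s,h}$ and $\widetilde\Omega_{t,h}$, and splice $(U_s\phi_s+U_t\phi_t)\lor\zeta\lor\psi$ with the constant $1$ on the correct side of a separating curve $\gamma$ built from $\gamma_s$, a short connecting path $\tau\subset\Omega$ joining the endpoint of $\gamma_s$ through $B(a,r)\cup B(b,r)$ to the endpoint of $\tilde\gamma_t$, and $\tilde\gamma_t$. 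The essential new ingredient is the path $\tau$: instead of lying inside a component where $\rho$ vanishes, it is a short path provided by Lemma~\ref{unif:Paths in S^o}(a) inside $S^\circ$ (or suitably routed if $a$ or $b$ lies on a peripheral circle). Setting $\psi(z)=\max\{1-\delta^{-1}\dist(z,\tau),0\}$, so that $\osc_{Q_i}(\psi)\le\delta^{-1}\diam(Q_i)$ on the peripheral disks meeting $N_\delta(\tau)$, and testing the minimality of $u$ against the resulting admissible function, then passing to the limit $h\to 0$ via Lemma~\ref{unif:Level sets-Measure Derivative lemma} as in \eqref{unif:Conjugate:Case0}, yields
\begin{align*}
D(u)\le \eta^{1/2}D(u)^{1/2}+\ell_\rho(\gamma_s)+\ell_\rho(\tilde\gamma_t)+\rho(Q_{i_0})+\sum_{i\ne i_0}\rho(Q_i)\osc_{Q_i}(\psi).
\end{align*}
Substituting the identities above rearranges to $\hat v(Q_{i_b})-\hat v(Q_{i_a})\le o(1)+\rho(Q_{i_0})-\rho(Q_{i_b})$, producing the desired contradiction once the error terms are small enough.

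The main obstacle is to show that the contribution $\sum_{i\ne i_0}\rho(Q_i)\osc_{Q_i}(\psi)$ vanishes as $n\to\infty$. Applying the Cauchy--Schwarz inequality bounds it by
\begin{align*}
\delta^{-1}\Bigl(\sum_{\substack{i:\,Q_i\cap N_\delta(\tau)\neq\emptyset\\ i\ne i_0}}\rho(Q_i)^2\Bigr)^{1/2}\Bigl(\sum_{\substack{i:\,Q_i\cap N_\delta(\tau)\neq\emptyset\\ i\ne i_0}}\diam(Q_i)^2\Bigr)^{1/2};
\end{align*}
the fatness of the peripheral disks (Remark~\ref{unif:Fatness consequence}) controls the second factor by $C(\delta(\ell(\tau)+\delta))^{1/2}$, while the first factor tends to $0$ as $\tau$ shrinks to $x$ and $\delta\to 0$, by the local square-summability of $\{\rho(Q_i)\}_{i\in\N}$. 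A balanced choice such as $\delta_n=\ell(\tau_n)^{1/2}$ (with $\ell(\tau_n)\lesssim|x_n-x|$) drives this bound to $0$. A secondary technical point, addressed exactly as in Cases~2a--3 of Lemma~\ref{unif:Conjugate-Define when rho=0}, arises when $a$ or $b$ lies on a peripheral circle $\partial Q_{i_0}$ with $\rho(Q_{i_0})>0$: the separating curve $\gamma$ must then be routed around $Q_{i_0}$, and if necessary the ``flipping'' trick $g_2:=1-g_1$ is used so that the term $\rho(Q_{i_0})$ does not enter the final variational bound.
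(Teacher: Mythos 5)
Your overall strategy---testing the minimality of $u$ against a spliced admissible function built from $U_s\phi_s+U_t\phi_t$, a test function $\zeta$, and a cutoff around a bridge, and driving the argument toward $\ell_\rho(\gamma_s)+\ell_\rho(\tilde\gamma_t)\geq D(u)-\varepsilon$---is the same variational template the paper uses. But you have missed the one observation that makes the paper's proof short: since $b=x_n$ is \emph{close} to $a=x$, the approximating peripheral disks $Q_{i_a}$ and $Q_{i_b}$ can both be taken inside a \emph{single} small ball $B(x,r)$ on which $\zeta\equiv 1$. Then $\zeta$ alone already joins $\gamma_s$ to $\tilde\gamma_t$ in the splicing, so no bridge $\tau$ and no cutoff $\psi$ are needed, and the extra term $\sum_i\rho(Q_i)\osc_{Q_i}(\psi)$ that occupies the second half of your argument simply does not arise. (Two disjoint balls are needed in Lemma \ref{unif:Conjugate-Define when rho=0} only because $a$ and $b$ are far apart there; for continuity they are not.)

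As written, your estimate of the $\psi$-term has a genuine gap. The bound $\sum_{i:Q_i\cap N_\delta(\tau)\neq\emptyset}\diam(Q_i)^2\leq C\delta(\ell(\tau)+\delta)$ does not follow from fatness: a peripheral disk of diameter comparable to $\diam(S)$ that merely grazes $N_\delta(\tau)$ contributes $\diam(Q_i)^2\approx 1$ to the left-hand side while the right-hand side tends to $0$. For such disks one must instead use $\osc_{Q_i}(\psi)\leq 1$ and count them (fatness shows there are boundedly many of diameter $\geq\delta$ meeting $B(x,Cr)$ when $\delta\asymp r$, each contributing $\rho(Q_i)\to 0$), reserving the Lipschitz bound for the disks of diameter $<\delta$, which are contained in $N_{2\delta}(\tau)$. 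Moreover the assumption $\ell(\tau_n)\lesssim|x_n-x|$ is unjustified: Lemma \ref{unif:Paths in S^o} produces a path in $S^\circ\cap B(x,r)$ with no control on its length (it need not be rectifiable), so one should bound $\mathcal H^2(N_{2\delta}(\tau))$ by the area of $B(x,r+2\delta)$ rather than by $\delta\,\ell(\tau)$. Both defects are repairable, and the remaining reductions---the case where $x_n$ cannot be approximated by disks with $\rho>0$ (which the paper handles by passing to an intermediate point $z_0\in\partial_{\br\Omega}V\cap S^\circ$ with $v(z_0)=v(x_n)$ close to $x$), and the elimination of the non-small term $\rho(Q_{i_0})$ by routing the separating curve around $Q_{i_0}$ or flipping $g\mapsto 1-g$---go through as you indicate. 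Still, the single-ball observation renders the entire $\psi$-analysis unnecessary.
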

\begin{proof}
The proof uses essentially the same variational arguments as in the proof of Lemma \ref{unif:Conjugate-Define when rho=0}, so we skip most of the details. We will also use the same notation as in Lemma \ref{unif:Conjugate-Define when rho=0}. This time we do not need to use the function $\psi$, and we only need to consider a test function $\zeta$ that is supported in one small ball around the point that we wish to show continuity, rather than having two disjoint balls (so the variational arguments here are simpler). By the definition of $v$ we only need to prove continuity for points $x_0$ such that $v(x_0)$ is defined by \eqref{unif:Conjugate-Definition}. Indeed, if $x_0$ cannot be defined by \eqref{unif:Conjugate-Definition}, then $x_0$ has a neighborhood in $S$ where $v$ is constant. This is because this neighborhood is contained in $\br V$, where $V$ is a component of some set $U=\inter_{\br \Omega}(\alpha_{t_0})$ for a level  $t_0\notin \mathcal T$; see also the comments before the statement of Lemma \ref{unif:Conjugate-Define when rho=0}.

For $\eta>0$ consider a function $\zeta\in \mathcal W^{1,2}(S)$ supported around $x_0$, with $D(\zeta)<\eta$ and $\zeta\equiv 1$ in $B(x_0,r)$. If continuity at $x_0$ fails, then there exists $\varepsilon>0$ and a point $y_0\in B(x_0,r)\cap S$ arbitrarily close to $x_0$ such that, say, $v(y_0)-v(x_0)>10\varepsilon$. We claim that there exist peripheral disks $Q_{i_a},Q_{i_b} \subset B(x_0,r)$ arbitrarily close to $x_0$ such that $\rho(Q_{i_a}),\rho(Q_{i_b})>0$ and 
\begin{align}\label{unif:Conjugate-Continuous-assumption}
\hat v(Q_{i_b})-\hat v(Q_{i_a})>9\varepsilon.
\end{align}
Note that by the definition \eqref{unif:Conjugate-Definition} there exists a peripheral disk $Q_{i_a}$ arbitrarily close to $x_0$ with $\rho(Q_{i_a})>0$ and $|v(x_0)-\hat v(Q_{i_a})|<\varepsilon/2$. If $v(y_0)$ can be defined by \eqref{unif:Conjugate-Definition}, then we can find a peripheral disk $Q_{i_b}$ arbitrarily close to $y_0$ (and thus close to $x_0$) such that $|v(y_0)-\hat v(Q_{i_b})|<\varepsilon/2$. We now easily obtain \eqref{unif:Conjugate-Continuous-assumption}. If $y_0$ cannot be defined by \eqref{unif:Conjugate-Definition}, then $y_0\in \br V$, where $V$ is a component of $U\coloneqq \inter_{\br \Omega}(\alpha_{t_0})$ for some $t_0\notin \mathcal T$. Note that $x_0\notin \br V$, otherwise we would have $v(x_0)=v(y_0)$ by Lemma \ref{unif:Conjugate-Define when rho=0}. If $y_0$ is sufficiently close to $x_0$, then by Lemma \ref{unif:Paths in S^o}(a) we may consider a path $\gamma\subset B(x,r)\cap S^\circ$ that connects $x_0$ and $y_0$. Parametrizing $\gamma$ as it runs from $x_0$ to $y_0$, we consider its first entry point $z_0$ in $\partial_{\br \Omega}V\cap S^\circ$. Then we necessarily have that $v(y_0)=v(z_0)$ and $v(z_0)$ can be defined by \eqref{unif:Conjugate-Definition}; see comments before Lemma \ref{unif:Conjugate-Define when rho=0}. Thus, there exists a small $Q_{i_b}$ near $z_0$ such that $|v(z_0)-\hat v(Q_{i_b})|<\varepsilon/2$. If $y_0$ is very close to $x_0$ then $\gamma\ni z_0$ is also very close to $x_0$ (by Lemma \ref{unif:Paths in S^o}) and thus $Q_{i_b}$ is close to $x_0$. Now \eqref{unif:Conjugate-Continuous-assumption} follows from the above and from the assumption that $v(z_0)-v(x_0)=v(y_0)-v(x_0)>10\varepsilon$.

By Lemma \ref{unif:Conjugate-Infimum not needed} we consider $s,t\in \mathcal T$ such that $\hat v(Q_{i_a})=\ell_\rho(\gamma_s)$ and $\hat v(Q_{i_b})=\ell_\rho(\gamma_t)$ where $\gamma_s$ and $\gamma_t$ are the smallest open subpaths of $\alpha_s,\alpha_t$ that connect $\Theta_2$ to $Q_{i_a},Q_{i_b}$, respectively. Also, denote by $\tilde \gamma_t$ the smallest open subpath of $\alpha_t$ that connects $Q_{i_b}$ to $\Theta_4$. Arguing as in the beginning of the proof of Lemma \ref{unif:Conjugate-Define when rho=0}, it suffices to show \eqref{unif:Conjugate-Case0-Define when rho=0   claim} to obtain a contradiction. We will split again in cases, and sketch the variational argument that has to be used. Recall that will use the notation from the proof of Lemma \ref{unif:Conjugate-Define when rho=0}.

\textbf{Case 1:} $x_0\in S^\circ$.  We consider the function $g_0\coloneqq (U_s\phi_s+U_t\phi_t)\lor \zeta$, and a simple path $\gamma$ that separates  $\Theta_1$ from $\Theta_3$, such that $g_0=1$ on $\gamma\cap S$. Let $W$ be the component of $\br \Omega\setminus \gamma$ that contains $\Theta_3$. We define $g=1$ on $S\cap W$ and $g=g_0$ on $S\setminus W$. This yields an admissible function, as long as $h>0$ is sufficiently small so that $g_0$ vanishes on $\Theta_1$. Then by testing the minimizing property of $u$ against $g$ we obtain the conclusion.

\textbf{Case 2:} $x_0\in \partial Q_{i_0}$ for some $i_0\in \N$. 

\textbf{Case 2a:} At least one of the sets $\gamma_s,\tilde \gamma_t$ meets $Q_{i_0}$. This is treated similarly to Case 2a in Lemma \ref{unif:Conjugate-Define when rho=0}. The observation is that we have either $i_0\in F_{s,h}$ or $i_0\in F_{t,h}$.

\textbf{Case 2b:} None of $\gamma_s,\tilde \gamma_t$ meets $Q_{i_0}$. This is similar to the Case 2b in Lemma \ref{unif:Conjugate-Define when rho=0}, since we can construct a simple path $\gamma$ that separates $\Theta_1$ from $\Theta_3$ so that the function $g_0=(U_s\phi_s+U_t\phi_t)\lor \zeta$ is equal to $1$ on $\gamma\cap S$ and vanishes on $\Theta_1$ and $\Theta_3$. 

\textbf{Case 3:} $x_0\in \partial \Omega$. We consider a path $\gamma$ as before such that $g_0=1$ on $\gamma\cap S$, and let $W$ be the component of $\br\Omega \setminus \gamma$ that contains $\Theta_3$. If $g_0=1$ on points of $\Theta_3$, then $g_0=0$ on $\Theta_1$ (by choosing a $\zeta$ with small support), so we can set $g=1$ on $ S\cap W$ and $g=g_0$ on $S\setminus W$. If $g_0=1$ on points of $\Theta_1$ then we ``flip" the function $g$. We set $g_1=1$ on $S\setminus W$ and $g_1=g_0$ on $S\cap W$. Then $g_2\coloneqq 1-g_1$. In either case, running the variational argument with the admissible function $g$ or $g_2$ will yield the conclusion.
\end{proof}

\begin{remark}\label{unif:Conjugate-Remark defintion}
A conclusion of the proof is that
\begin{align}\label{unif:Conjugate-Definition NEW}
v(x)=\lim_{Q_i\to x, x\notin Q_i} \hat v(Q_i)
\end{align}
whenever $x$ can be approximated by $Q_i$ with $\rho(Q_i)>0$. If $Q_i \subset \inter_{\br\Omega}(\alpha_{t_0})$, where $t_0\notin \mathcal T$, 
by Lemma \ref{unif:Conjugate-Define when rho=0} we can define $\hat v(Q_i)$ to be equal to the constant value of $v$ on $\partial_{\br \Omega}V\cap S$, where $V$ is a component of $\inter_{\br\Omega}(\alpha_{t_0})$. Thus, \eqref{unif:Conjugate-Definition NEW} can be used to define $v$ at all points $x\in S$.
\end{remark}

In the proofs of Lemma \ref{unif:Conjugate-Define when rho=0} and Lemma \ref{unif:Conjugate-Continuous on S} we were always heading towards proving \eqref{unif:Conjugate-Case0-Define when rho=0   claim}. We see that the following general statement holds (using the notation of the proof of Lemma \ref{unif:Conjugate-Define when rho=0}):

\begin{lemma}\label{unif:Conjugate:general variation}
\begin{enumerate}[\upshape(a)]
\item Let $x\in S$ and $\varepsilon>0$. Then there exists a small $r>0$ such that the following hold.  Suppose that $Q_{i_a},Q_{i_b}$ are peripheral disks contained in $B(x,r)$ with $\rho(Q_{i_a})>0$, $\rho(Q_{i_b})>0$. Moreover, let $\gamma_s\subset \alpha_s$ be a path that connects $\Theta_2$ to $Q_{i_a}$ and $\tilde \gamma_t \subset \alpha_t$ be a path that connects $\Theta_4$ to $Q_{i_b}$, where $s,t\in \mathcal T$. Then
\begin{align*}
\ell_\rho(\gamma_s)+ \ell_\rho(\tilde \gamma_t)\geq D(u)-\varepsilon.
\end{align*} 
In the particular case that $x\in \Theta_2$, we obtain the stronger conclusion $\ell_\rho(\tilde \gamma_t)\geq D(u)-\varepsilon$ and in case $x\in \Theta_4$ we have $\ell_\rho(\gamma_s)\geq D(u)-\varepsilon$.
\item Let $\varepsilon>0$, $i_0\in \N$, and $x,y\in \partial Q_{i_0}$. Then there exists a small $r>0$ such that the following hold. Suppose that $Q_{i_a}\subset B(x,r)$, $Q_{i_b}\subset B(y,r)$ with $\rho(Q_{i_a})>0$, $\rho(Q_{i_b})>0$. Moreover, let $\gamma_s\subset \alpha_s$ be a path that connects $\Theta_2$ to $Q_{i_a}$ and $\tilde \gamma_t \subset \alpha_t$ be a path that connects $\Theta_4$ to $Q_{i_b}$, where $s,t\in \mathcal T$. Then
\begin{align*}
\ell_\rho(\gamma_s)+\rho(Q_{i_0})+ \ell_\rho(\tilde \gamma_t)\geq D(u)-\varepsilon.
\end{align*}
\end{enumerate}
\end{lemma}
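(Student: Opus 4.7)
My plan is to observe that parts (a) and (b) of this lemma are exactly the quantitative intermediate inequalities that drove the proofs of Lemma \ref{unif:Conjugate-Define when rho=0} and Lemma \ref{unif:Conjugate-Continuous on S}: both of those proofs repeatedly reduced the claim to something of the form ``$\ell_\rho(\gamma_s)+\ell_\rho(\tilde\gamma_t)\geq D(u)-\varepsilon$'', run with a pair of disks $Q_{i_a},Q_{i_b}$ selected inside small balls near the points of interest. The present lemma merely extracts that inequality as a stand-alone statement, valid for any admissible data rather than just for data produced inside such balls. So the strategy is to rerun that variational argument verbatim on the hypothesised curves $\gamma_s,\tilde\gamma_t$ and peripheral disks $Q_{i_a},Q_{i_b}$.

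First I would fix $\eta>0$ and use Lemma \ref{unif:Zeta lemma} from the appendix to produce a test function $\zeta\in\mathcal W^{1,2}(S)$, vanishing on $\partial\Omega$, with $0\leq \zeta\leq 1$, $\zeta\equiv 1$ on $B(x,r)$ in case (a) (resp.\ on two disjoint balls $B(x,r)\cup B(y,r)$ in case (b)), and with $D(\zeta)<\eta$ when $x\in S^\circ\cup\partial\Omega$, or $D(\zeta)-\osc_{Q_{i_0}}(\zeta)^2<\eta$ when $x,y\in\partial Q_{i_0}$. Next, pick $h>0$ small enough that the two strips $\br A_{s-h,s+h}$ and $\br A_{t-h,t+h}$ are disjoint, and build the Jordan regions $\Omega_{s,h}\supset\gamma_s$ and $\widetilde\Omega_{t,h}\supset\tilde\gamma_t$ as in the proof of Lemma \ref{unif:Conjugate-Infimum not needed}. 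Let $U_s,U_t$ be the triangular height cut-offs from \eqref{unif:Conjugate-U_s} and $\phi_s,\phi_t$ smooth bumps equal to $1$ on these strips and supported in slightly larger strips, and set $g_0 \coloneqq (U_s\phi_s+U_t\phi_t)\lor\zeta$. Concatenating $\gamma_s$, an arc inside the ball(s) around $x$ (through $Q_{i_0}$ in case (b)), and $\tilde\gamma_t$ produces a simple path $\gamma$ from $\Theta_2$ to $\Theta_4$ on which $g_0\equiv 1$; letting $W$ be the component of $\br\Omega\setminus\gamma$ containing $\Theta_3$, define $g\coloneqq g_0$ on $S\setminus W$ and $g\coloneqq 1$ on $S\cap W$, replacing $g$ by $1-g$ in the boundary cases where this is the only way to secure admissibility (Case 3 of Lemma \ref{unif:Conjugate-Define when rho=0}). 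Lemma \ref{unif:Gluing lemma} puts $g\in\mathcal W^{1,2}(S)$.

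With $F_{s,h},F_{t,h},N_h$ defined as before, the construction gives $\osc_{Q_i}(g)\leq \osc_{Q_i}(\zeta)+\chi_{F_{s,h}\cup F_{t,h}\cup\{i_0\}_{(\mathrm b)}}(i)+\rho(Q_i)\chi_{N_h}(i)/2h$, where the $\{i_0\}$-term appears only in case (b). Testing the minimizing property of $u$ against convex combinations with $g$ and optimising as in \eqref{unif:Level sets-Optimization in s} yields $D(u)\leq \sum_{i\in\N}\rho(Q_i)\osc_{Q_i}(g)$; splitting the sum, using Cauchy--Schwarz on the $\zeta$-term with the chosen bound on $D(\zeta)$, invoking Lemma \ref{unif:Level sets-Measure Derivative lemma} with $h(Q_i)\coloneqq\rho(Q_i)^2$ to send $(2h)^{-1}\sum_{N_h}\rho(Q_i)^2$ to $0$ along $s,t\in\mathcal T$, and using $\sum_{F_{s,h}}\rho(Q_i)\to \ell_\rho(\gamma_s)$ and the analogous statement for $\tilde\gamma_t$, I obtain in the limit $D(u)\leq D(u)^{1/2}\eta^{1/2}+\ell_\rho(\gamma_s)+\ell_\rho(\tilde\gamma_t)+\rho(Q_{i_0})\chi_{(\mathrm b)}$. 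Letting $\eta\to 0$ finishes both parts. The two special cases at the end of (a) (when $x\in\Theta_2$ or $x\in\Theta_4$) follow by the same scheme with one of the strips collapsed: for $x\in\Theta_2$ take $\zeta$ supported in a neighbourhood of $\Theta_2$ and drop $U_s\phi_s$ from $g_0$, so that $\gamma$ consists of $\tilde\gamma_t$ concatenated with an arc along $\Theta_2$ and $B(x,r)$, which deletes the $\ell_\rho(\gamma_s)$ term from the final bound.

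The main obstacle is case (a) with $x\in\partial Q_{i_0}$ and $\rho(Q_{i_0})>0$, because then $\osc_{Q_{i_0}}(\zeta)=1$ and the disk $Q_{i_0}$ must be cancelled topologically rather than analytically: if one of the two paths already meets $Q_{i_0}$ then $i_0\in F_{s,h}\cup F_{t,h}$ and that unit contribution is absorbed into the $F$-sum (thanks to the sharper bound $D(\zeta)-\osc_{Q_{i_0}}(\zeta)^2<\eta$), whereas if neither meets $Q_{i_0}$ one must construct the separating curve $\gamma$ to avoid $Q_{i_0}$ via the local connectedness of $\partial Q_{i_0}$, and then either keep $g$ or use $1-g$ depending on which side of $\gamma$ the disk $Q_{i_0}$ lies, forcing $\osc_{Q_{i_0}}(g)=0$. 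This is exactly the Case 2a/2b split from Lemma \ref{unif:Conjugate-Define when rho=0}, and it is the only place where the argument requires genuine case analysis beyond the bookkeeping of the previous two paragraphs.
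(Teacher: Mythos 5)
Your proposal is correct and follows essentially the same route as the paper: the author likewise disposes of part (b) by a slight modification of Case 0 of Lemma \ref{unif:Conjugate-Define when rho=0} (cf.\ \eqref{unif:Conjugate:Case0}), of the first claim in (a) by the case analysis in the proof of Lemma \ref{unif:Conjugate-Continuous on S} (including your Case 2a/2b split for $x\in\partial Q_{i_0}$ with $\rho(Q_{i_0})>0$), and of the boundary cases $x\in\Theta_2$ or $x\in\Theta_4$ by dropping one of the strip cut-offs, i.e.\ taking $g_0=(U_t\phi_t)\lor\zeta$ and ``flipping'' to $1-g$ when needed for admissibility. Your write-up simply makes explicit the variational machinery that the paper invokes by reference.
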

\begin{proof}
Part (b) is proved with a slight modification of the proof of Case 0 in Lemma \ref{unif:Conjugate-Define when rho=0}; see \eqref{unif:Conjugate:Case0} and Figure \ref{harmonic:fig:accessible}. The first claim in part (a) of the lemma follows essentially from the arguments in the proof of Lemma \ref{unif:Conjugate-Continuous on S}. We will only justify the latter claim in (a) for $x\in \Theta_2$, since the case $x\in \Theta_4$ is analogous. The notation that we use is the same as in the proof of Lemma \ref{unif:Conjugate-Define when rho=0}. Suppose that $x\in \Theta_2$ and consider a test function $\zeta \in \mathcal W^{1,2}(S)$ with $\zeta\equiv 1$ in a small ball $B(x,r)$ and with small Dirichlet energy $D(\zeta)$. This is provided as usual by Lemma \ref{unif:Zeta lemma}. Consider the function $g_0\coloneqq (U_t\phi_t)\lor \zeta$. There exists a simple path $\gamma\subset \br \Omega$ that separates $\Theta_1$ and $\Theta_3$ with $g_0=1$ on $\gamma\cap S$. We let $W$ be the component of $\br \Omega \setminus \gamma$ that contains $\Theta_3$. If $g_0=1$ on points of $\Theta_3$, then $g_0=0$ on $\Theta_1$ (by choosing a $\zeta$ with a sufficiently small support), so we can set $g=1$ on $ S\cap W$ and $g=g_0$ on $S\setminus W$. If $g_0=1$ on points of $\Theta_1$ then we ``flip" the function $g$. We set $g_1=1$ on $S\setminus W$ and $g_1=g_0$ on $S\cap W$. Then $g_2\coloneqq 1-g_1$. In either case, running the standard variational argument with the admissible function $g$ or $g_2$ will yield the conclusion.
\end{proof}

\begin{lemma}\label{unif:Conjugate-Values on boundaries}
We have $v\equiv 0$ on $\Theta_2$ and $v\equiv D(u)$ on $\Theta_4$. Furthermore, for each $i\in \N$ we have $\osc_{Q_i}(v)=\osc_{Q_i}(u)=\rho(Q_i)$. In fact, 
\begin{align*}
\diam( v(\partial Q_i \cap \alpha_t))=\osc_{\partial Q_i\cap \alpha_t}(v)=\rho(Q_i)
\end{align*}
for all $t\in \mathcal T$ with $Q_i\cap\alpha_t\neq \emptyset$.
\end{lemma}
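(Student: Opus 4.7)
I prove the three assertions of the lemma in order.

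First, I treat the boundary values. For $x\in\Theta_2$ and $\varepsilon>0$, fix a peripheral disk $Q_j$ sufficiently close to $x$ with $\rho(Q_j)>0$, and pick $t\in\mathcal T\cap[m_{Q_j}(u),M_{Q_j}(u)]$; let $\gamma_s\coloneqq\alpha_t^j$ be the initial subpath of $\alpha_t$ from $\Theta_2$ to $\partial Q_j$, and $\tilde\gamma_t$ the terminal subpath from $\partial Q_j$ to $\Theta_4$. The special case of Lemma~\ref{unif:Conjugate:general variation}(a) for $x\in\Theta_2$ gives $\ell_\rho(\tilde\gamma_t)\geq D(u)-\varepsilon$, while Theorem~\ref{unif:Level sets-sums equal mass} combined with the disjointness from Lemma~\ref{unif:Level sets-curves} yields $\ell_\rho(\gamma_s)+\rho(Q_j)+\ell_\rho(\tilde\gamma_t)=\ell_\rho(\alpha_t)=D(u)$. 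Hence $\hat v(Q_j)=\ell_\rho(\gamma_s)\leq\varepsilon$; letting $Q_j\to x$ and invoking Remark~\ref{unif:Conjugate-Remark defintion}, $v(x)=0$. The statement $v\equiv D(u)$ on $\Theta_4$ follows symmetrically.

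Second, fix $i\in\N$ with $\rho(Q_i)>0$ and $t\in\mathcal T$ with $Q_i\cap\alpha_t\neq\emptyset$. By Lemma~\ref{unif:Level sets-curves}, the intersection $\alpha_t\cap\br Q_i$ is a Jordan arc with two endpoints $x_t,y_t\in\partial Q_i$; denote by $\gamma_s\coloneqq\alpha_t^i$ the subpath of $\alpha_t$ from $\Theta_2$ to $x_t$ and by $\tilde\gamma_t$ the subpath from $y_t$ to $\Theta_4$. Using Lemma~\ref{unif:lemma:paths zero hausdorff}(b) (applied to $\gamma_s$ at $x_t$ and to $\tilde\gamma_t$ at $y_t$, both having zero $\mathcal H^1$ intersection with $S$ by Lemma~\ref{unif:Level sets-Hausdorff measure}), I choose sequences of peripheral disks $Q_{j_n}\to x_t$ with $Q_{j_n}\cap\gamma_s\neq\emptyset$ and $Q_{k_n}\to y_t$ with $Q_{k_n}\cap\tilde\gamma_t\neq\emptyset$; each has $\rho>0$ because $t\in\mathcal T$. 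Lemma~\ref{unif:Conjugate-Infimum not needed} gives $\hat v(Q_{j_n})=\ell_\rho(\alpha_t^{j_n})$. Writing $\ell_\rho(\gamma_s)-\ell_\rho(\alpha_t^{j_n})=\sum_{k\in L_n}\rho(Q_k)$ with $L_n\coloneqq\{k:Q_k\cap\gamma_s\neq\emptyset,\ Q_k\cap\alpha_t^{j_n}=\emptyset\}$, I note that each fixed index $k$ lies in $L_n$ for only finitely many $n$, since any $Q_k$ meeting $\gamma_s$ has strictly positive distance from $x_t$ (disjoint closures), while $Q_{j_n}\to x_t$; dominated convergence against the summable envelope $\{\rho(Q_k)\}_{k:Q_k\cap\gamma_s\neq\emptyset}$ then forces $\hat v(Q_{j_n})\to\ell_\rho(\gamma_s)$. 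Analogously, $\alpha_t^{k_n}$ consists of $\gamma_s$, the arc $\alpha_t\cap\br Q_i$, and a shrinking initial portion of $\tilde\gamma_t$, so the same argument (using disjointness from Lemma~\ref{unif:Level sets-curves} so that $\rho(Q_i)$ is counted exactly once) yields $\hat v(Q_{k_n})\to\ell_\rho(\gamma_s)+\rho(Q_i)$. Remark~\ref{unif:Conjugate-Remark defintion} then identifies $v(x_t)=\ell_\rho(\gamma_s)$ and $v(y_t)=\ell_\rho(\gamma_s)+\rho(Q_i)$, so $\osc_{\partial Q_i\cap\alpha_t}(v)=\diam(v(\partial Q_i\cap\alpha_t))=\rho(Q_i)$.

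Third, the preceding step gives $\osc_{Q_i}(v)\geq\rho(Q_i)$. For the reverse inequality (still assuming $\rho(Q_i)>0$), fix $x,y\in\partial Q_i$ and $\varepsilon>0$. Lemma~\ref{unif:Conjugate:general variation}(b) supplies, for peripheral disks $Q_{i_a},Q_{i_b}$ close enough to $x,y$ with $\rho>0$ and corresponding $\gamma_s\subset\alpha_s$, $\tilde\gamma_t\subset\alpha_t$ joining $\Theta_2,\Theta_4$ to $Q_{i_a},Q_{i_b}$,
\[
\ell_\rho(\gamma_s)+\rho(Q_i)+\ell_\rho(\tilde\gamma_t)\geq D(u)-\varepsilon.
\]
Theorem~\ref{unif:Level sets-sums equal mass} and Lemma~\ref{unif:Level sets-curves} applied to $\alpha_t$ through $Q_{i_b}$ give $\hat v(Q_{i_b})=D(u)-\rho(Q_{i_b})-\ell_\rho(\tilde\gamma_t)$, while Lemma~\ref{unif:Conjugate-Infimum not needed} gives $\hat v(Q_{i_a})=\ell_\rho(\gamma_s)$; substituting,
\[
\hat v(Q_{i_b})-\hat v(Q_{i_a})\leq\rho(Q_i)+\varepsilon-\rho(Q_{i_b}).
\]
Letting $Q_{i_a}\to x$, $Q_{i_b}\to y$, noting $\rho(Q_{i_b})\to 0$ by continuity of $u$, and invoking Remark~\ref{unif:Conjugate-Remark defintion}, I conclude $v(y)-v(x)\leq\rho(Q_i)+\varepsilon$; symmetry and $\varepsilon\to 0$ give $\osc_{Q_i}(v)\leq\rho(Q_i)$. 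If instead $\rho(Q_i)=0$, then $\tilde u$ is constant equal to some $t_0$ on $\br Q_i$ and every point of $\partial Q_i$ is a local extremum, hence $t_0\notin\mathcal T$ and $Q_i\subset\inter_{\br\Omega}(\alpha_{t_0})$; Lemma~\ref{unif:Conjugate-Define when rho=0} then forces $v$ to be constant on $\br V\cap S\supset\partial Q_i$, where $V$ is the component of $\inter_{\br\Omega}(\alpha_{t_0})$ containing $Q_i$, so $\osc_{Q_i}(v)=0$. The main technical difficulty is the dominated-convergence argument in the second paragraph computing $v(x_t)$ and $v(y_t)$, where the disjointness of peripheral-disk collections met by $\gamma_s$, $\tilde\gamma_t$, and $\alpha_t\cap\br Q_i$ from Lemma~\ref{unif:Level sets-curves} is essential to isolate the contribution $\rho(Q_i)$.
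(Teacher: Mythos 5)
Your proof is correct and follows essentially the same route as the paper: it rests on the same ingredients (Theorem \ref{unif:Level sets-sums equal mass}, the disjointness structure of $\alpha_t$ from Lemma \ref{unif:Level sets-curves}, Lemma \ref{unif:Conjugate-Infimum not needed}, the variational Lemma \ref{unif:Conjugate:general variation}, and Lemma \ref{unif:Conjugate-Define when rho=0} for the degenerate and non-approximable cases). The only differences are cosmetic — you apply Lemma \ref{unif:Conjugate:general variation}(a) uniformly on $\Theta_2$ instead of treating $t\in\mathcal T$ by a direct tail-of-a-convergent-sum computation, you spell out the limit identifying $v(x_t)$ and $v(y_t)$ that the paper leaves as ``easy to see,'' and you prove the oscillation bound directly rather than by contradiction.
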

Recall here that $\osc_{Q_i}(v)\coloneqq \sup_{\partial Q_{i}}(v)-\inf_{\partial Q_i}(v)$.

\begin{proof}
If $a\in \Theta_2 \cap \alpha_t$ for $t\in \mathcal T$, then $\rho(Q_i)>0$ whenever $Q_i\cap \alpha_t\neq \emptyset$. By Lemma \ref{unif:lemma:paths zero hausdorff}(b) we can find $Q_i\to a$ with $Q_i\cap \alpha_t\neq \emptyset$. Using \eqref{unif:Conjugate-Definition NEW} and Lemma \ref{unif:Conjugate-Infimum not needed} we obtain
\begin{align}\label{unif:Conjugate-Values on boundaries limits}
v(a)= \lim_{Q_i\to a, Q_i\cap \alpha_t\neq \emptyset} \hat v(Q_i) =\lim_{Q_i \to a, Q_i\cap \alpha_t\neq \emptyset} \sum_{j:Q_j\cap \alpha_t^i\neq \emptyset}\rho(Q_j).
\end{align}
This limit is equal to zero, because the sum $\sum_{j:Q_j\cap {\alpha_t}\neq \emptyset}\rho(Q_j)$ is convergent and $\{j:Q_j\cap {\alpha_t^i}\neq \emptyset\}$ ``converges" to the emptyset as $Q_i\to a$, $Q_i\cap \alpha_t\neq \emptyset$; recall that $\alpha_t$ is a path. Hence, $v(a)=0$ in this case. If $t\notin \mathcal T$ and $a\in \Theta_2\cap \alpha_t$, by Lemma \ref{unif:Conjugate-Define when rho=0}, it suffices to show that $v(a)=0$ whenever $a$ can be approximated by $Q_i$ with $\rho(Q_i)>0$. If $v(a)=10\varepsilon >0$ then we can find a small ball $B(a,r)$ and a peripheral disk $Q_{i_a}\subset B(a,r)$ with $\rho(Q_{i_a})>0$ and $\hat v(Q_{i_a}) >9\varepsilon$. Let $\gamma_t,\tilde \gamma_t \subset \alpha_t$ for $t\in \mathcal T$ be paths as in the proof of Lemma \ref{unif:Conjugate-Define when rho=0} that connect $Q_{i_a}$ to $\Theta_2,\Theta_4$, respectively. We have 
\begin{align*}
\hat v(Q_{i_a})+\rho(Q_{i_a})+ \ell_\rho(\tilde \gamma_t)=\ell_\rho(\gamma_t)+\rho(Q_{i_a})+ \ell_\rho(\tilde \gamma_t)=D(u),
\end{align*}
which implies that $\hat v(Q_{i_a}) \leq D(u) -\ell_\rho(\tilde \gamma_t)$. However, by Lemma \ref{unif:Conjugate:general variation}(a) we have 
\begin{align*}
\ell_\rho(\tilde \gamma_t) \geq D(u)-\varepsilon
\end{align*}
and this leads to a contradiction.

Now, if $a\in \Theta_4\cap \alpha_t$ for $t\in \mathcal T$, again \eqref{unif:Conjugate-Definition NEW} and Lemma \ref{unif:Conjugate-Infimum not needed} we have
\begin{align*}
v(a)=\lim_{Q_i \to a, Q_i\cap \alpha_t\neq \emptyset} \hat v(Q_i) = \lim_{Q_i \to a, Q_i\cap \alpha_t\neq \emptyset} \sum_{j:Q_j\cap \alpha_t^i\neq \emptyset}\rho(Q_j)=D(u),
\end{align*}
where the last equality follows from Theorem \ref{unif:Level sets-sums equal mass}. If $t\notin \mathcal T$ and $a\in \Theta_4\cap \alpha_t$ then an application of Lemma \ref{unif:Conjugate:general variation}(a) proves that $v(a)=D(u)$ as before.

Next, if $\rho(Q_{i_0})=0$ then $Q_{i_0}\subset \alpha_t$ for some $t\notin \mathcal T$ and Lemma \ref{unif:Conjugate-Define when rho=0} implies that $v$ is constant on $\partial Q_{i_0}$, so $\osc_{Q_{i_0}}(u)=\osc_{Q_{i_0}}(v)=0$. We assume that $\rho(Q_{i_0})>0$. If $\partial Q_{i_0}\cap \alpha_t\neq \emptyset$ for some $t\in \mathcal T$, then $\partial Q_{i_0} \cap \alpha_t$ contains precisely two points: $x_2$, which is the entry point of $\alpha_t$ into $Q_{i_0}$, and $x_4$, which is the exit point, as $\alpha_t$ travels from $\Theta_2$ to $\Theta_4$. Using \eqref{unif:Conjugate-Definition NEW} and Lemma \ref{unif:Conjugate-Infimum not needed} as in \eqref{unif:Conjugate-Values on boundaries limits}, it is easy to see that $v(x_2)=\hat v(Q_{i_0})$ and $v(x_4)= \hat v(Q_{i_0})+ \rho(Q_{i_0})$. This shows the last part of the lemma.

Now, if $a \in \partial Q_{i_0} \cap \alpha_{t_0}$ for some $t_0\notin \mathcal T$ then we need to show that $\hat v(Q_{i_0})\leq v(a) \leq \hat v(Q_{i_0})+\rho(Q_{i_0})$. This will complete the proof of the statement that $\osc_{Q_{i_0}}(v)=\rho(Q_{i_0})$. Assume that $v(a)< \hat v(Q_{i_0})-10\varepsilon$ for some $\varepsilon>0$, and without loss of generality assume that $v(a)$ is defined as in \eqref{unif:Conjugate-Definition}, thanks to Lemma \ref{unif:Conjugate-Define when rho=0}. Then we can find some $Q_{i_a} \subset  B(a,r)$ (where $r>0$ is arbitrarily small) with $\rho(Q_{i_a})>0$ such that $\hat v(Q_{i_a}) < \hat v(Q_{i_0})-9\varepsilon$. Consider the smallest open path $\gamma_s\subset \alpha_s$ that connects $\Theta_2$ to $Q_{i_a}$ for some $s\in \mathcal T$, so $\hat v(Q_{i_a})=\ell_{\rho}(\gamma_s)$. Also, consider the smallest open path $\tilde \gamma_t \subset \alpha_t$ that connects $Q_{i_0}$ to $\Theta_4$, for some $t\in \mathcal T$ (this exists because $\rho(Q_{i_0})>0$). In particular, $\hat v(Q_{i_0})+\rho(Q_{i_0})+ \ell_\rho(\tilde \gamma_t)=D(u)$. The path $\tilde \gamma_t$ lands at a point $y\in \partial Q_{i_0}$, and we can find peripheral disks $Q_{i_b}$ intersecting $\tilde \gamma_t$ and lying arbitrarily close to $y$, by Lemma \ref{unif:lemma:paths zero hausdorff}(b). 
Then by Lemma \ref{unif:Conjugate:general variation}(b) we have
\begin{align*}
\ell_{\rho}(\gamma_s)+\rho(Q_{i_0})+\ell_{\rho}(\tilde \gamma_t)\geq D(u)-\varepsilon
\end{align*}
and this contradicts as usual the assumption $\hat v(Q_{i_a}) < \hat v(Q_{i_0})-9\varepsilon$.
\end{proof}

Let us record a corollary:

\begin{corollary}\label{unif:Conjugate-Oscillation-Increasing}
If $t\in \mathcal T$, then $v$ is strictly increasing on $S\cap \alpha_t$, in the sense that if $x,y \in S\cap \alpha_t$ and $\alpha_t$ hits $x$ before hitting $y$ as it travels from $\Theta_2$ to $\Theta_4$, then $v(x)<v(y)$.
\end{corollary}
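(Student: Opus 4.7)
The plan is to derive strict monotonicity from an explicit closed-form expression for $v \circ \alpha_t$. To set up, parametrize $\alpha_t \colon [0,1] \to \br \Omega$ so that $\alpha_t(0) \in \Theta_2$ and $\alpha_t(1) \in \Theta_4$. Since $t \in \mathcal T$, Lemma \ref{unif:Level sets-curves} guarantees that $\alpha_t$ is a simple curve whose intersection with each $\br Q_i$ is an arc, so $\alpha_t^{-1}(\br Q_i) = [\tau_i^-, \tau_i^+]$ for uniquely determined entry and exit times $\tau_i^\pm$. Moreover, since $t$ is not a local extremal value of $u$ on any $\partial Q_i$ (built into the definition of $\mathcal T$), we have $\rho(Q_i) = \osc_{Q_i}(u) > 0$ for every $i$ with $Q_i \cap \alpha_t \neq \emptyset$.

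The heart of the argument is the identity
\[
v(\alpha_t(\tau)) = \sum_{j \colon Q_j \cap \alpha_t \neq \emptyset,\ \tau_j^+ \leq \tau} \rho(Q_j) \qquad \text{whenever } \alpha_t(\tau) \in S.
\]
I would verify this in three cases. At an entry point $\alpha_t(\tau_i^-)$, the identity reduces to $v = \hat v(Q_i) = \ell_\rho(\alpha_t^i)$ (Lemma \ref{unif:Conjugate-Values on boundaries} combined with Lemma \ref{unif:Conjugate-Infimum not needed}), using that no $j$ satisfies $\tau_j^+ = \tau_i^-$ because peripheral disks have disjoint closures. At an exit point $\alpha_t(\tau_i^+)$, the identity similarly reduces to $v = \hat v(Q_i) + \rho(Q_i)$. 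For $\alpha_t(\tau) \in S^\circ$, I would invoke the limit formula \eqref{unif:Conjugate-Definition NEW} (Remark \ref{unif:Conjugate-Remark defintion}), approximating $\alpha_t(\tau)$ by peripheral disks $Q_{i_n}$ meeting $\alpha_t$, whose existence is guaranteed by Lemma \ref{unif:lemma:paths zero hausdorff}(a) together with $\mathcal H^1(\alpha_t \cap S) = 0$ (established below).

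With the formula in hand, for $x = \alpha_t(\tau_x)$ and $y = \alpha_t(\tau_y)$ in $S \cap \alpha_t$ with $\tau_x < \tau_y$ we obtain
\[
v(y) - v(x) = \sum_{j \colon \tau_x < \tau_j^+ \leq \tau_y} \rho(Q_j) \geq 0,
\]
and since every $\rho(Q_j)$ in the sum is strictly positive, the question reduces to showing that the index set is non-empty. Assuming emptiness for contradiction, a short bookkeeping argument shows that $\alpha_t((\tau_x, \tau_y)) \cap Q_j = \emptyset$ for every $j$: if $\alpha_t$ entered some $Q_j$ inside $(\tau_x, \tau_y)$, then $\tau_j^+$ would either fall in $(\tau_x, \tau_y]$, contradicting emptiness, or exceed $\tau_y$, placing $y$ inside $Q_j$ and contradicting $y \in S$. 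Additionally, $x$ cannot be the entry point $\alpha_t(\tau_j^-)$ of any $Q_j$, since a right-neighborhood of $\tau_x$ would then map into $Q_j$. Hence $\alpha_t((\tau_x, \tau_y)) \subset S$.

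The main obstacle is the final $\mathcal H^1$-contradiction. As a simple arc whose closure contains both $x$ and $y$, the set $\alpha_t((\tau_x, \tau_y))$ satisfies $\mathcal H^1(\alpha_t((\tau_x, \tau_y))) \geq |y - x| > 0$ by the standard $1$-Lipschitz projection onto the line through $x$ and $y$. On the other hand, I would decompose $\alpha_t \cap S = (\alpha_t \cap S^\circ) \cup \bigcup_i (\alpha_t \cap \partial Q_i) \cup (\alpha_t \cap \partial \Omega)$: the first piece coincides with $u^{-1}(t) \cap S^\circ$ (since $\tilde u = u$ on $S^\circ$) and hence has $\mathcal H^1$-measure zero by $t \in \mathcal T$ (Lemma \ref{unif:Level sets-Hausdorff measure}), while the remaining pieces are countable unions of two-point sets by Lemma \ref{unif:Level sets-at most two points of intersection} and Corollary \ref{unif:Level sets-exactly two points}. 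This forces $\mathcal H^1(\alpha_t \cap S) = 0$, contradicting the positive $\mathcal H^1$-measure of $\alpha_t((\tau_x, \tau_y))$ and yielding the strict inequality $v(x) < v(y)$.
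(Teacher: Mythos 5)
Your proof is correct and rests on the same two ingredients as the paper's: using $\mathcal H^1(\alpha_t\cap S)=0$ (from $t\in\mathcal T$) to produce a peripheral disk $Q_i$ with $\rho(Q_i)>0$ strictly between $x$ and $y$ on $\alpha_t$, and the additive relation $v=\hat v(Q_i)$ at entry points and $v=\hat v(Q_i)+\rho(Q_i)$ at exit points from Lemmas \ref{unif:Conjugate-Infimum not needed} and \ref{unif:Conjugate-Values on boundaries}. The paper simply skips your intermediate closed-form formula and writes the conclusion directly as $v(x)\leq \hat v(Q_i)\leq v(y)-\rho(Q_i)$, so the two arguments are essentially identical.
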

\begin{proof}
Observe that $\rho(Q_i)>0$ for all $Q_i\cap \alpha_t \neq \emptyset$, and that ``between" any two points $x,y\in S\cap \alpha_t$ there exists some peripheral disk $Q_i$ with $Q_i\cap \alpha_t\neq \emptyset$. To make this more precise, denote by $[x,y]$ the arc of $\alpha_t$ from $x$ to $y$. Then  $\mathcal H^1([x,y]\cap S)=0$ by Lemma \ref{unif:Level sets-Hausdorff measure}, so there exists a peripheral disk $Q_i$ with $Q_i\cap [x,y]\neq \emptyset$; see also Lemma \ref{unif:lemma:paths zero hausdorff}. Using Remark \ref{unif:Conjugate-Remark defintion} and taking limits along peripheral disks that intersect $\alpha_t$ we obtain 
\[
v(x)\leq \hat v(Q_i) \leq v(y)-\rho(Q_i) <v(y). \qedhere
\]
\end{proof}

The function $v$ also satisfies a version of an upper gradient inequality. Recall that $u$ satisfies the upper gradient inequality in Definition \ref{unif:Definition Upper gradient}, where we exclude a family of curves $ \Gamma_0$ with $\md(\Gamma_0)=0$, i.e., vanishing carpet modulus.  

\begin{lemma}\label{unif:Conjugate-Defect upper gradient}\index{conjugate function!upper gradient inequality}
There exists a family of paths $\Gamma_0$ with $\md_2(\Gamma_0)=0$ such that for every path $\gamma \subset \Omega$ with $\gamma\notin \Gamma_0$ and for every open subpath $\beta$ of $\gamma$ we have
\begin{align}\label{unif:Conjugate-Defect upper gradient-To prove}
|v(a)-v(b)|\leq \sum_{i:\br{Q}_i\cap \beta\neq \emptyset} \rho(Q_i).
\end{align}
for all $a,b\in \br \beta\cap S$.
\end{lemma}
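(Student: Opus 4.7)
The plan is to run a variational argument against the minimizer $u$, modelled closely on the proofs of Lemma \ref{unif:Conjugate-Define when rho=0} and Lemma \ref{unif:Conjugate-Continuous on S}, but with the path $\beta$ playing the role that a single peripheral disk or an interior-of-level-set component played there. I would first define $\Gamma_0$ as the union of path families with carpet modulus zero for which any of the following fails on some subpath: $\mathcal H^1(\gamma\cap S)=0$; the upper gradient inequality for $u$; and finiteness of $\sum_{i:\br Q_i\cap \gamma'\neq \emptyset}\rho(Q_i)$. By Lemma \ref{unif:Zero modulus lemma} these have $\md_2$-measure zero. Fix a good $\gamma\notin \Gamma_0$, an open subpath $\beta$, and endpoints $a,b\in \br\beta\cap S$; swapping if necessary I may assume $v(b)\geq v(a)$, so it suffices to prove $v(b)-v(a)\leq \sum_{i\in F}\rho(Q_i)$, where $F\coloneqq \{i:\br Q_i\cap \beta\neq \emptyset\}$.

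By Lemma \ref{unif:lemma:paths zero hausdorff} applied along $\beta$ (using $\mathcal H^1(\beta\cap S)=0$), I can choose sequences $Q_{i_a}^{(n)}\to a$ and $Q_{i_b}^{(n)}\to b$ lying in $F$ with $\rho(Q_{i_a}^{(n)}),\rho(Q_{i_b}^{(n)})>0$ and $\hat v(Q_{i_a}^{(n)})\to v(a)$, $\hat v(Q_{i_b}^{(n)})\to v(b)$ by Remark \ref{unif:Conjugate-Remark defintion}. Lemma \ref{unif:Conjugate-Infimum not needed} then provides $s_n,t_n\in \mathcal T$ and the smallest open subpaths $\gamma_{s_n}\subset \alpha_{s_n}$ from $\Theta_2$ to $Q_{i_a}^{(n)}$ and $\tilde\gamma_{t_n}\subset \alpha_{t_n}$ from $Q_{i_b}^{(n)}$ to $\Theta_4$, with $\ell_\rho(\gamma_{s_n})=\hat v(Q_{i_a}^{(n)})$ and $\ell_\rho(\tilde\gamma_{t_n})=D(u)-\hat v(Q_{i_b}^{(n)})-\rho(Q_{i_b}^{(n)})$. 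The concatenation of $\gamma_{s_n}$, short arcs through $Q_{i_a}^{(n)}$ and $Q_{i_b}^{(n)}$, $\beta$, and $\tilde\gamma_{t_n}$ contains a simple curve $\Gamma^*$ from $\Theta_2$ to $\Theta_4$ separating $\Theta_1$ from $\Theta_3$; let $W$ denote the component of $\br\Omega\setminus \Gamma^*$ containing $\Theta_3$.

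For small parameters $h,\delta>0$ I would construct the admissible test function
\begin{align*}
g(x)=\begin{cases}1,& x\in W\cap S,\\ (U_{s_n}\phi_{s_n}+U_{t_n}\phi_{t_n})\lor \psi_\delta(x),& x\in S\setminus W,\end{cases}
\end{align*}
exactly in the style of Case 1 of Lemma \ref{unif:Conjugate-Define when rho=0}: here $U_{s_n},U_{t_n}$ are the hat bumps from \eqref{unif:Conjugate-U_s}, $\phi_{s_n},\phi_{t_n}$ are smooth cutoffs near $\gamma_{s_n},\tilde\gamma_{t_n}$, and $\psi_\delta(x)=\max\{1-\delta^{-1}\dist(x,\br\beta),0\}$ is the Lipschitz bump around $\br\beta$ realizing the role of the path-function $\psi$ from Case 1. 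Feeding $g$ into $D(u)\leq \sum_i \rho(Q_i)\osc_{Q_i}(g)$ and using the index sets $F_{s_n,h},F_{t_n,h},N_h$ in the standard bookkeeping, together with $\osc_{Q_i}(\psi_\delta)\leq 1$ for $i\in F$ and $\osc_{Q_i}(\psi_\delta)\leq \min\{1,\diam(Q_i)/\delta\}$ otherwise, yields after $h\to 0$ (via Lemma \ref{unif:Level sets-Measure Derivative lemma}):
\begin{align*}
D(u)\leq \ell_\rho(\gamma_{s_n})+\ell_\rho(\tilde\gamma_{t_n})+\rho(Q_{i_a}^{(n)})+\rho(Q_{i_b}^{(n)})+\sum_{i\in F}\rho(Q_i)+E(\delta),
\end{align*}
where $E(\delta)$ collects the contributions of peripheral disks in $A_\delta\coloneqq \{i\notin F: Q_i\cap N_\delta(\br\beta)\neq \emptyset\}$. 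Rearranging via $\ell_\rho(\gamma_{s_n})=\hat v(Q_{i_a}^{(n)})$ and $\ell_\rho(\tilde\gamma_{t_n})+\rho(Q_{i_b}^{(n)})=D(u)-\hat v(Q_{i_b}^{(n)})$ this becomes $\hat v(Q_{i_b}^{(n)})-\hat v(Q_{i_a}^{(n)})\leq \sum_F \rho(Q_i)+\rho(Q_{i_a}^{(n)})+E(\delta)$, which on letting $\delta\to 0$ and $n\to\infty$ yields the lemma, since $\rho(Q_{i_a}^{(n)})\to 0$ by $\ell^2$-summability.

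The main obstacle will be to show $E(\delta)\to 0$ as $\delta\to 0$. By Cauchy-Schwarz, $E(\delta)\leq (\sum_{A_\delta}\rho(Q_i)^2)^{1/2}(\sum_{A_\delta}\osc_{Q_i}(\psi_\delta)^2)^{1/2}$; the second factor can be absorbed into a uniform bound using the fatness of the $Q_i$ and a Bojarski-style estimate as in Lemma \ref{unif:Bojarski}, while the first factor tends to zero provided the indices in $A_\delta$ accumulate only on $\br\beta\setminus \beta=\{a,b\}$. The genuinely delicate case is when, for instance, $a\in \partial Q_{i_0}$ with $Q_{i_0}\notin F$: then $Q_{i_0}\in A_\delta$ for all small $\delta$ and contributes $\rho(Q_{i_0})$ to $E(\delta)$ that does not a priori decay. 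To handle this I would modify $\psi_\delta$ by excising a shrinking neighborhood of $Q_{i_0}$ from its support (in the spirit of the crosscut cutoff in Case 2b of Lemma \ref{unif:Conjugate-Define when rho=0}), and choose $Q_{i_a}^{(n)}\in F$ accumulating at $a$ from the correct side, so that the missing portion of the bump is compensated by an additional oscillation of $v$ on $\partial Q_{i_0}$ bounded by $\rho(Q_{i_0})$ (Lemma \ref{unif:Conjugate-Values on boundaries}), and absorbed into the final limiting step.
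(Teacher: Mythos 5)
Your overall architecture matches the paper's: the same reductions (to $\mathcal H^1(\gamma\cap S)=0$, to endpoints in $S^\circ$ via last-exit points and Lemma \ref{unif:lemma:paths zero hausdorff} together with Lemma \ref{unif:Paths in S^o}(b)), the same test function $(U_s\phi_s+U_t\phi_t)\lor\psi_\delta$ built around $\beta$, and the same bookkeeping with $F_{s,h},F_{t,h},N_h$. You also correctly isolate the crux, namely that the error term $E(\delta)=\sum_{i\notin F,\,Q_i\cap N_\delta(\beta)\neq\emptyset}\rho(Q_i)\osc_{Q_i}(\psi_\delta)$ must vanish. But your mechanism for this step fails. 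The Cauchy--Schwarz factor $\sum_{A_\delta}\osc_{Q_i}(\psi_\delta)^2$ is \emph{not} uniformly bounded in $\delta$: already for $\beta$ a unit segment, the strip $N_{2\delta}(\beta)$ has area $\asymp\delta$, and by fatness it can contain $\asymp\delta^{-1}$ peripheral disks of diameter $\asymp\delta$, each contributing $\osc_{Q_i}(\psi_\delta)^2\asymp 1$; so this factor can blow up like $\delta^{-1}$. Since the first factor $\bigl(\sum_{A_\delta}\rho(Q_i)^2\bigr)^{1/2}$ tends to zero with no rate, the product need not vanish, and no Bojarski-type rearrangement rescues a bound that is genuinely divergent.

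The deeper point you miss is that $E(\delta)\to 0$ is simply \emph{false} for an arbitrary path satisfying only the good-path conditions in your first paragraph; it is a property that holds only outside a further exceptional family of conformal modulus zero, so $\Gamma_0$ as you defined it is too small. The paper supplies exactly this via Lemma \ref{unif:Lebesgue differentiation Lemma}: after splitting the error term into disks with $\diam(Q_i)<\delta$ (hence $Q_i\subset N_{2\delta}(\beta)$, where $\osc_{Q_i}(\psi_\delta)\leq\diam(Q_i)/\delta$) and the family $D_\delta(\beta)$ of larger disks, one proves $\delta^{-1}\sum_{Q_i\subset N_{2\delta}(\beta)}\rho(Q_i)\diam(Q_i)\to 0$ by a maximal-function estimate (covering $N_\delta(\beta)$ by balls centered on $\beta$ and integrating $M\Lambda$ over $\gamma\cap S$, which has length zero), and $\sum_{i\in D_\delta(\beta)}\rho(Q_i)\to 0$ by exhibiting an explicit admissible weight of small mass for the family of paths where this fails. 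Both statements require enlarging $\Gamma_0$, and the second also explains why the endpoints must first be pushed into $S^\circ$ (a disk touching $\br\beta$ only at an endpoint would otherwise persist in $D_\delta$) --- which makes your proposed ad hoc excision around $Q_{i_0}$ unnecessary once the reduction is done in the right order.
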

We remark that in the sum we are using peripheral disks whose \textit{closure} intersects $\beta$, in contrast to Definition \ref{unif:Definition Upper gradient}. Also, here we are excluding a path family of conformal modulus zero, instead of carpet modulus. This is only a technicality and does not affect the ideas used in the proof.

\begin{proof}
Consider a path $\gamma \subset  \Omega$ with $\mathcal H^1(\gamma\cap S)=0$. This holds for $\md_2$-a.e.\ $\gamma \subset  \Omega$ because $\mathcal H^2(S)=0$. Let $\beta $ be an open subpath of $\gamma$ and assume that $a,b$ are its endpoints. We wish to show that
\begin{align}\label{unif:Conjugate-Defect upper gradient-To prove-beta}
|v(a)-v(b)|\leq \sum_{i:\br{Q}_i\cap \beta\neq \emptyset} \rho(Q_i).
\end{align}
Recall that $\rho(Q_i)=\osc_{Q_i}(v)$ by Lemma \ref{unif:Conjugate-Values on boundaries}. We suppose that 
$$\sum_{i:\br Q_i\cap {\beta}\neq \emptyset}\rho(Q_i) <\infty,$$
otherwise the statement is trivial. The statement is also trivial if $a$ and $b$ lie on the same peripheral circle $\partial Q_i$ and $\beta$ intersects $\br Q_i$, so we assume that this is not the case. If $a\in \partial Q_{i_a}$ for some $i_a\in \N$, let $a'\in \partial Q_{i_a}$ be the last exit point of $\beta$ from $\partial Q_{i_a}$, assuming that it is parametrized to run from $a$ to $b$. Similarly, consider the point $b'\in \partial Q_{i_b}$ of first entry of $\beta$ in $\partial Q_{i_b}$, in case $b\in \partial Q_{i_b}$. Note that $|v(a)-v(a')|\leq \osc_{Q_{i_a}}(v)$ and $|v(b)-v(b')| \leq \osc_{Q_{i_b}}(v)$, so it suffices to prove the statement for the open subpath of $\beta$ that connects $a'$ and $b'$. This subpath has the property that it does not intersect the peripheral disks that possibly contain $a$ and $b$ on their boundary. For simplicity we denote $a'$ by $a$, $b'$ by $b$ and the subpath by $\beta$.

By Lemma \ref{unif:lemma:paths zero hausdorff} we can find arbitrarily close to $a$ peripheral disks $Q_i$ with $Q_i\cap \beta\neq \emptyset$. Using now Lemma \ref{unif:Paths in S^o}(b), one can see that arbitrarily close to $a$ there exist points $a'\in \beta\cap S^\circ$.  Similarly, arbitrarily close to $b$ there exist points $b'\in \beta\cap S^\circ$. By the continuity of $v$, it suffices to prove the statement for $a',b'$ and the subpath of $\beta$ that connects them instead. 

Summarizing, we have reduced the statement to points $a,b\in S^\circ$ and a subpath $\beta$ of $\gamma$ that connects them. We will prove that using a variational argument, very similar to the one used in Lemma \ref{unif:Conjugate-Define when rho=0}. Since the technical details are similar and we only wish to demonstrate the new idea in this proof we assume that the points $a,b$ can be both approximated by peripheral disks $Q_i$ with $\rho(Q_i)>0$, which corresponds to Case 1 in the proof of Lemma \ref{unif:Conjugate-Define when rho=0}. (If this is not the case, and e.g.\ $a$ lies in a component $V$ of $\inter_{\br\Omega}(\alpha_{t_0})$ for some $t_0\notin \mathcal T$, then one can use a ``bridge" $\tau\subset V$,  to connect $a$ to a point on $\partial_{\br\Omega}V\cap S$ that can be approximated by $Q_i$ with $\rho(Q_i)>0$. Such a bridge $\tau$ was also employed in  Case 1 in the proof of Lemma \ref{unif:Conjugate-Define when rho=0}.)

Let $\ell$ denote the sum in the right hand side of \eqref{unif:Conjugate-Defect upper gradient-To prove-beta}. If the conclusion fails, then there exists $\varepsilon>0$ such that, say, $v(b)-v(a)\geq 10\varepsilon +\ell$. Using Lemma \ref{unif:Zeta lemma}, for a small $\eta>0$ consider a function $\zeta\in \mathcal W^{1,2}(S)$ that vanishes on $\partial \Omega$ with $0\leq \zeta\leq 1$, such that  $\zeta\equiv 1$ on small disjoint balls $B(a,r)\cup B(b,r) \subset \Omega$, and $D(\zeta)<\eta$. Then we can find peripheral disks $Q_{i_a}\subset B(a,r)$, $Q_{i_b}\subset B(b,r)$ with $\rho(Q_{i_a}),\rho(Q_{i_b})>0$ such that
\begin{align}\label{unif:Conjugate-Defect upper gradient- hat v inequality}
\hat v(Q_{i_b})-\hat v(Q_{i_a})>9\varepsilon+\ell.
\end{align}
Consider $s,t\in \mathcal T$ such that for the smallest open subpaths of $\gamma_s,\gamma_t$ of $\alpha_s,\alpha_t$ that connect $\Theta_2$ to $Q_{i_a},Q_{i_b}$, respectively, we have $\hat v(Q_{i_a})=\ell_\rho(\gamma_s)$ and $\hat v(Q_{i_b})=\ell_\rho(\gamma_t)$; see Lemma \ref{unif:Conjugate-Infimum not needed}. If $\tilde \gamma_t$ denotes the smallest open subpath of $\alpha_t$ that connects $\Theta_4$ to $Q_{i_b}$ we have 
\begin{align*}
\hat v(Q_{i_a})-\hat v(Q_{i_b})\geq \ell_{\rho}(\gamma_s)+\ell_\rho(\tilde \gamma_t)-D(u)
\end{align*} 
by Theorem \ref{unif:Level sets-sums equal mass}. Hence, in order to obtain a contradiction to \eqref{unif:Conjugate-Defect upper gradient- hat v inequality}, it suffices to prove
\begin{align}\label{unif:Conjugate-Defect upper gradient- Claim}
\ell_\rho(\gamma_s)+\ell_\rho(\tilde \gamma_t)\geq D(u)-\varepsilon -\ell.
\end{align}

We will construct an admissible function $g$ with the same procedure and notation as in Case 1 of Lemma \ref{unif:Conjugate-Define when rho=0}. Recall the definition of $U_s$ and $U_t$ in \eqref{unif:Conjugate-U_s}. Moreover, $\phi_s$ is a bump function supported in a neighborhood of a strip $\Omega_{s,h}\subset A_{s-h,s+h}$ that connects $\Theta_2$ to $Q_{i_a}$, and $\phi_t$ is a bump function supported in a neighborhood of a strip $\widetilde \Omega_{t,h}\subset A_{t-h,t+h}$ that connects $\Theta_4$ to $Q_{i_b}$. Also, for small $\delta>0$ consider the function $\psi(x)\coloneqq  \max\{ 1-\delta^{-1}\dist(x,\beta),0\}$. Now, we define 
\begin{align*}
g_0= (U_s\phi_s +U_t\phi_t) \lor \zeta\lor \psi
\end{align*} 
on $S$. As before, we can find a simple path on which we have $g_0\equiv 1$ such that it separates $\Theta_1$ from $\Theta_3$. If $W$ is the component that contains $\Theta_3$, we define $g=1$ on $S\cap W$ and $g=g_0$ on $S\setminus W$.

Recall the definitions of the index sets $F_{s,h},F_{t,h}$ and $N_h$. We have 
\begin{align*}
\osc_{Q_i}(g)&\leq \osc_{Q_i}(\zeta)+\osc_{Q_i}(\psi) +
\begin{cases}
1, & i\in F_{s,h}\cup F_{t,h}\\
\rho(Q_i)/2h, & i\in N_h\\
0, & i\notin F_{s,h}\cup F_{t,h}\cup N_h.\\
\end{cases}
\end{align*} 
Since $\psi$ is $(1/\delta)-$Lipschitz, we have 
\begin{align}
\label{unif:Conjugate-Defect upper gradient-osc psi}\osc_{Q_i}(\psi)\leq \min\{ \delta^{-1}\diam(Q_i \cap N_\delta(\beta)),1\}.
\end{align}
Testing the minimizing property of $u$ against $g$ (see also \eqref{unif:Level sets-Optimization in s}) we obtain
\begin{align*}
D(u)&\leq \sum_{i\in \N}\rho(Q_i)\osc_{Q_i}(g)\\
&\leq \sum_{i\in \N}\rho(Q_i) \osc_{Q_i}(\zeta)+ \sum_{i\in F_{s,h}\cup F_{t,h}}\rho(Q_i)+ \frac{1}{2h}\sum_{i\in N_h}\rho(Q_i)^2\\
&\quad \quad + \sum_{i: Q_i\cap N_\delta(\beta)\neq \emptyset} \rho(Q_i)\osc_{Q_i}(\psi)
\end{align*}
Letting $h\to 0$ and choosing a small $\eta$ so that $\sum_{i\in \N}\rho(Q_i) \osc_{Q_i}(\zeta)<\varepsilon$ we obtain
\begin{align*}
D(u)\leq \varepsilon+\ell_\rho(\gamma_s)+\ell_\rho(\tilde \gamma_t)+ \sum_{i: Q_i\cap N_\delta(\beta)\neq \emptyset} \rho(Q_i)\osc_{Q_i}(\psi).
\end{align*}
To prove our claim in \eqref{unif:Conjugate-Defect upper gradient- Claim}, it suffices to show that the limit of the latter term as $\delta\to 0$ stays below $\ell$. Note that we can split this term as
\begin{align*}
\sum_{i: \br Q_i\cap \beta\neq \emptyset} \rho(Q_i)\osc_{Q_i}(\psi)+ \sum_{\substack{i: \br Q_i\cap \beta= \emptyset \\ Q_i\cap N_\delta(\beta)\neq \emptyset}} \rho(Q_i)\osc_{Q_i}(\psi).
\end{align*}
Using \eqref{unif:Conjugate-Defect upper gradient-osc psi} we see that the first term is already bounded by $\ell$, so it suffices to show that the second term converges to $0$ as $\delta\to 0$. Define $D_\delta(\beta)$ to be the family of indices $i\in \N$ such that $\diam(Q_i)\geq \delta$, $Q_i\cap N_\delta(\beta)\neq \emptyset$, and $\br {Q}_i\cap \beta =\emptyset$. Observe that if $Q_i\cap N_\delta(\beta)\neq \emptyset$ and $\diam(Q_i)<\delta$, then $Q_i\subset N_{2\delta}(\beta)$. Hence, using \eqref{unif:Conjugate-Defect upper gradient-osc psi} it suffices to show that 
\begin{align*}
\frac{1}{\delta}\sum_{i:Q_i\subset N_{2\delta}(\beta)} \rho(Q_i)\diam(Q_i) \to 0 \quad \textrm{and}\quad \sum_{i\in D_\delta(\beta)} \rho(Q_i) \to 0
\end{align*}
as $\delta\to 0$. This will follow from the next general lemma. 

\begin{lemma}\label{unif:Lebesgue differentiation Lemma}
Let $\{\lambda(Q_i)\}_{i\in \N}$ be a sequence in $\ell^2(\N)$. Then there exists an exceptional path family $\Gamma$ in $\Omega$ with $\md_2(\Gamma)=0$ such that for all non-constant paths $\gamma\subset \Omega$ with endpoints in $S^\circ$ and $\gamma\notin \Gamma$ we have
\begin{align*}
\frac{1}{\delta}\sum_{i:Q_i\subset N_{\delta}(\gamma)} \lambda (Q_i)\diam(Q_i) \to 0 \quad \textrm{and} \quad \sum_{i\in D_\delta(\gamma)} \lambda (Q_i) \to 0
\end{align*}
as $\delta\to 0$.
\end{lemma}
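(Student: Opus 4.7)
The proof will be a Fuglede-type argument for the 2-modulus, proving the two convergence statements separately using closely related admissible functions built from the quasiball data $B(x_i,r_i)\subset Q_i\subset B(x_i,R_i)$ with $R_i\leq K_0 r_i$.

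First I would discard from the start the family $\Gamma_1$ of paths for which either $\mathcal H^1(\gamma\cap S)>0$ or $\sum_{i:Q_i\cap\gamma\neq\emptyset}\lambda(Q_i)=\infty$; applying Lemma~\ref{harmonic:2-weak modulus less than strong} and Lemma~\ref{harmonic:2-weak modulus zero implies two modulus zero} (to the weights $\lambda/n$) shows $\md_2(\Gamma_1)=0$. The remaining paths then avoid these pathologies.

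For the second limit, I would exploit the pointwise decay $\chi_{D_\delta(\gamma)}(i)\to 0$ as $\delta\to 0$, which is immediate since $\br Q_i\cap\gamma=\emptyset$ forces $\dist(Q_i,\gamma)>0$ so $i\notin D_\delta(\gamma)$ for sufficiently small $\delta$. To apply dominated convergence it suffices to produce, for $\md_2$-a.e. $\gamma$, a finite envelope $\sum_{i\in \widetilde D(\gamma)}\lambda(Q_i)<\infty$, where $\widetilde D(\gamma)=\{i:\br Q_i\cap\gamma=\emptyset,\ \dist(Q_i,\gamma)<\diam(Q_i)\}\supset D_\delta(\gamma)$ for all $\delta$. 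The envelope is controlled by testing against the auxiliary function
\[
H(x)=\sum_i\frac{\lambda(Q_i)}{R_i}\,\chi_{B(x_i,3R_i)\setminus B(x_i,R_i)}(x),
\]
which satisfies $\|H\|_2\lesssim \|\lambda\|_{\ell^2(\N)}$ by Lemma~\ref{unif:Bojarski} applied to the disjoint interior balls $B(x_i,r_i)$. Since $H\in L^2(\Omega)$, $\int_\gamma H\,ds<\infty$ for $\md_2$-a.e. $\gamma$. The geometric input is that for $i\in\widetilde D(\gamma)$ whose surrounding ball $B(x_i,5R_i)$ does not contain the endpoints of $\gamma$, the path $\gamma$ must enter $B(x_i,3R_i)$ (because $\dist(Q_i,\gamma)<2R_i$) while remaining outside $\br Q_i\supset B(x_i,r_i)$, forcing $\mathcal H^1(\gamma\cap(B(x_i,3R_i)\setminus B(x_i,R_i)))\gtrsim R_i$; this contributes $\gtrsim \lambda(Q_i)$ to $\int_\gamma H$. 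Summing over all but finitely many indices (the ones with large $R_i$, finite by Lemma~\ref{harmonic:Fatness consequence}), the envelope is finite.

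For the first limit I would first observe that $Q_i\subset N_\delta(\gamma)$ combined with the quasiball property and an area/width argument forces $\diam(Q_i)\lesssim \delta$, so
\[
\frac{1}{\delta}\sum_{Q_i\subset N_\delta(\gamma)}\lambda(Q_i)\diam(Q_i)\leq C\sum_{i\in S_\delta(\gamma)}\lambda(Q_i),\qquad S_\delta(\gamma):=\{i:Q_i\subset N_\delta(\gamma)\}.
\]
The family $S_\delta(\gamma)$ is monotone decreasing to $\emptyset$ as $\delta\to 0$ (since $\gamma$ has area zero). Since $S_\delta(\gamma)\subset \widetilde D(\gamma)\cup\{i:\br Q_i\cap\gamma\neq\emptyset\}$ and the latter set also has finite $\lambda$-mass for $\md_2$-a.e. $\gamma$ (from Step~1), dominated convergence again gives the conclusion.

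The main obstacle I anticipate is the geometric length estimate $\mathcal H^1(\gamma\cap(B(x_i,3R_i)\setminus B(x_i,R_i)))\gtrsim R_i$ for $i\in\widetilde D(\gamma)$: one must carefully carve out a subfamily of good indices (based on endpoint position and scale) and separately absorb the finitely many bad ones. A related subtlety is justifying the bound $\diam(Q_i)\lesssim\delta$ for $Q_i\subset N_\delta(\gamma)$ when $\gamma$ is not a priori rectifiable; this is handled either by restricting to locally rectifiable $\gamma$ (which covers $\md_2$-a.e. path once $\int_\gamma H<\infty$) or by replacing the width bound with the $L^2$-packing inequality $\sum_{i\in S_\delta(\gamma)}\diam(Q_i)^2\lesssim\mathcal H^2(N_\delta(\gamma))\to 0$.
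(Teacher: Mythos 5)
Your treatment of the second limit is essentially sound and is the same kind of Fuglede-type argument the paper uses (the paper builds an admissible function for the family where $\limsup_\delta\sum_{i\in D_\delta(\gamma)}\lambda(Q_i)\geq\varepsilon_0$ out of the tail of $\lambda$, whereas you build a dominating envelope and invoke dominated convergence; both work). The only repair needed there is cosmetic: with $H$ supported on $B(x_i,3R_i)\setminus B(x_i,R_i)$ the curve can reach a point at distance $<2R_i$ from $Q_i$ without spending length $\gtrsim R_i$ in that annulus, so you should push the support out to, say, $B(x_i,6R_i)\setminus B(x_i,3R_i)$ and use that $\gamma$ must cross this annulus whenever an endpoint lies outside $B(x_i,6R_i)$; the finitely many remaining indices (those with $R_i\gtrsim|a-b|$, finite by Lemma~\ref{harmonic:Fatness consequence}) are absorbed trivially.

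The first limit, however, has a genuine gap. The claim that $Q_i\subset N_\delta(\gamma)$ forces $\diam(Q_i)\lesssim\delta$ is false: a rectifiable curve of length $L$ can be $\delta$-dense in a region containing a peripheral disk of diameter $\gg\delta$ (think of $\gamma$ snaking through $Q_i$ with gaps $<\delta$). The area/width argument you invoke only yields $\pi r_i^2\leq\mathcal H^2(N_\delta(\gamma))\leq\pi\delta^2+2\delta L$, i.e.\ $\diam(Q_i)\lesssim(\delta L)^{1/2}$, and then $\diam(Q_i)/\delta\lesssim(L/\delta)^{1/2}\to\infty$. Consequently neither fallback closes the gap: restricting to rectifiable $\gamma$ does not restore the linear bound, and the packing inequality $\sum_{i\in S_\delta(\gamma)}\diam(Q_i)^2\lesssim\mathcal H^2(N_\delta(\gamma))$ combined with Cauchy--Schwarz gives only $\frac{1}{\delta}\sum\lambda(Q_i)\diam(Q_i)\lesssim(L/\delta)^{1/2}\bigl(\sum_{i\in S_\delta(\gamma)}\lambda(Q_i)^2\bigr)^{1/2}$, where the second factor tends to $0$ with no rate, so the product is not controlled. (There is also a smaller slip: $S_\delta(\gamma)$ is not contained in $\widetilde D(\gamma)\cup\{i:\br Q_i\cap\gamma\neq\emptyset\}$, since a tiny disk with $\diam(Q_i)\leq\dist(Q_i,\gamma)<\delta$ belongs to neither set.) The missing idea is the one the paper uses: cover $N_\delta(\gamma)$ by balls $B_{j,\delta}$ of radius $2\delta$ centered on $\gamma$ with disjoint fifths, set $\Lambda=\sum_i\frac{\lambda(Q_i)}{\diam(Q_i)}\chi_{Q_i}$, and extract the factor $\delta$ from the radius of the covering balls via $\int_{B_{j,\delta}}M\Lambda\,d\mathcal H^2\lesssim\delta^2\inf_{\frac{1}{20}B_{j,\delta}}M\Lambda\lesssim\delta\int_{\gamma\cap\frac{1}{20}B_{j,\delta}}M\Lambda\,d\mathcal H^1$; the resulting bound $C\delta\int_{\gamma\cap\bigcup_jB_{j,\delta}'}M\Lambda\,d\mathcal H^1$ then tends to $o(\delta)$ by dominated convergence, because $M\Lambda\in L^2$ so $\int_\gamma M\Lambda\,d\mathcal H^1<\infty$ for $\md_2$-a.e.\ $\gamma$ and the sets $\gamma\cap\bigcup_jB_{j,\delta}'$ decrease to $\gamma\cap S$, which has $\mathcal H^1$-measure zero.
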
 

Since the family of paths $\gamma$ that have a subpath $\beta$ for which the conclusion of the lemma fails also has conformal modulus zero, this completes the proof of the upper gradient inequality.
\end{proof}

\begin{proof}[Proof of Lemma \ref{unif:Lebesgue differentiation Lemma}]
By the subadditivity of modulus, we can treat each of the claims separately. Fix a curve $\gamma$ with $\mathcal H^1(\gamma\cap S)=0$. The latter holds for $\md_2$-a.e.\ $\gamma\subset \Omega$. 

We can cover $N_\delta(\gamma)$ by balls $B_{j,\delta}$ of radius $2\delta$ centered at $\gamma$ such that $\frac{1}{20} B_{j,\delta}$ are disjoint. To do this, one can cover $\gamma$ by balls of radius $\delta/10$ and extract a disjoint subcollection $\{B_l\}_l$ such that the balls $5B_l$ still cover $\gamma$; see for instance \cite[Theorem 1.2]{Heinonen:metric}. We now define $\{B_{j,\delta}\}_j$ to be the collection of balls $\{20B_l\}_l$, each of which has radius $2\delta$. Then any point $x\in N_\delta(\gamma)$ is $\delta$-far from $\gamma$ and thus $\delta+\delta/2$-far from the center of a ball $5B_l$ (of radius $\delta/2$), which is equal to $\frac{1}{4}B_{j,\delta}$ for some $j$. It follows that $x\in B_{j,\delta}$.

Next, consider the subfamily $\{B_{j,\delta}'\}_j$ of the balls $\{B_{j,\delta}\}_j$ that are not entirely contained in any peripheral disk. We note that $\{B_{j,\delta}'\}_j$ covers $\bigcup_{i:Q_i\subset N_\delta(\gamma)} Q_i$ and $\gamma\cap S$. In fact, as $\delta\to 0$ along a sequence one can construct covers $\{B_{j,\delta}'\}_j$ as above such that $\bigcup_{j}B_{j,\delta}'$ is decreasing. Furthermore, as $\delta\to 0$ we have that 
\begin{align}\label{unif:Lebesgue differentiation lemma-B_j cover}
\gamma\cap (\bigcup_{j}B_{j,\delta}')   \to  \gamma \cap S.
\end{align} 
Indeed, if this failed, then there would exist $i_0\in \N$ and some $x\in Q_{i_0}\cap \gamma$ which belongs to $\bigcup_{j}B_{j,\delta}'$ infinitely often as $\delta\to 0$. This contradicts the construction of the cover $\{B_{j,\delta}'\}$, since $\dist(x,\partial Q_{i_0})>0$, and a ball $B_{j,\delta}'$ that contains $x$ would be entirely contained in $Q_{i_0}$ for small $\delta$, so it would have been discarded during the construction.

Now consider the function $\Lambda(x)=\sum_{i\in \N} \frac{\lambda(Q_i)}{\diam(Q_i)} \x_{Q_i}(x)$, and observe that by the quasiballs assumption \eqref{unif:Quasi-balls} there exists a constant $C>0$ such that
\begin{align}\label{unif:Lebesgue differentiation lemma-Lambda}
\frac{1}{C}\int_{Q_i} \Lambda(x) \, d\mathcal H^2(x)\leq  \lambda(Q_i)\diam(Q_i)\leq C\int_{Q_i} \Lambda(x) \, d\mathcal H^2(x) \
\end{align}
for all $i\in \N$. Using the properties of the cover $\{B_{j,\delta}'\}_j$ and the uncentered maximal function $M\Lambda$ we have:
\begin{align*}
\sum_{i:Q_i\subset N_\delta(\gamma)} \lambda(Q_i)\diam(Q_i)&\leq C \int_{{\bigcup_{i:Q_i\subset N_\delta(\gamma)}}Q_i} \Lambda(x) \, d\mathcal H^2(x)\leq C\int_{\bigcup_j B_{j,\delta}'} \Lambda(x) \, d\mathcal H^2(x)\\
&\leq C\sum_{j} \int_{B_{j,\delta}'}\Lambda(x) \, d\mathcal H^2(x) \leq C' \delta \sum_j \delta\inf_{x\in \frac{1}{20}B_{j,\delta}'} M\Lambda(x)\\
&\leq C'' \delta \sum_j \int_{\gamma \cap (\frac{1}{20}B_{j,\delta}')} M\Lambda(x) \, d\mathcal H^1(x)\\
&=C'' \delta \int_{\gamma \cap (\bigcup_{j}\frac{1}{20}B_{j,\delta}')}    M\Lambda(x) \, d\mathcal H^1(x)\\
&\leq  C'' \delta \int_{\gamma \cap (\bigcup_j B_{j,\delta}')}  M\Lambda(x) \, d\mathcal H^1(x).
\end{align*}
The first part of the lemma will follow, if we show that 
\begin{align}\label{unif:Lebesgue differentiation lemma-DCT}
\int_{\gamma \cap (\bigcup_j B_{j,\delta}')}  M\Lambda(x) \, d\mathcal H^1(x) \to 0
\end{align}
as $\delta\to 0$ for $\md_2$-a.e.\ curve $\gamma$. First observe that $M\Lambda \in L^2(\Omega)$ since $\Lambda \in L^2(\Omega)$ by \eqref{unif:Lebesgue differentiation lemma-Lambda}. Hence, $\int_\gamma M\Lambda(x) \, d\mathcal H^1(x) <\infty$ for $\md_2$-a.e.\ $\gamma\subset \Omega$. By construction, $\gamma \cap (\bigcup_j B_{j,\delta}') $  decreases to $\gamma\cap S$ with $\mathcal H^1(\gamma\cap S)=0$. The dominated convergence theorem now immediately implies \eqref{unif:Lebesgue differentiation lemma-DCT}.

Now, we show the second part of the lemma. Recall that $D_\delta(\gamma)$ contains all indices $i\in \N$ for which $\diam(Q_i)\geq \delta$, $Q_i\cap N_\delta(\gamma)\neq \emptyset$, and $\br Q_i\cap \gamma=\emptyset$. We wish to show that the family of paths for which the conclusion fails has conformal modulus equal to zero. We first remark that this family contains no constant paths, by assumption. By the subadditivity of conformal modulus it suffices to show that for every $d>0$, $\varepsilon_0>0$ the family $\Gamma$ of paths $\gamma$, having endpoints in $S^\circ$, with $\diam(\gamma)\geq d$ and 
\begin{align*}
\limsup_{\delta\to 0}\sum_{i\in D_\delta(\gamma)} \lambda(Q_i)\geq \varepsilon_0,
\end{align*} 
has  conformal modulus zero. Let $\{\lambda_0(Q_i)\}_{i\in \N}$ be a finitely supported sequence with $\lambda_0(Q_i)=\lambda(Q_i)$ or $\lambda_0(Q_i)=0$, for each $i\in \N$. Then 
\begin{align*}
\sum_{i\in D_\delta(\gamma)} \lambda_0(Q_i)\to 0
\end{align*}
as $\delta\to 0$, since $\br Q_i \cap \gamma=\emptyset$ and thus $\dist(\br Q_i,\gamma)>0$ for all $i\in D_\delta(\gamma)$. Here, it is crucial that $\gamma$ has endpoints in $S^\circ$, and the preceding statement would fail if $\gamma$ was an open path and one of its endpoints was on a peripheral circle. Consequently, if $\gamma\in \Gamma$, then 
\begin{align}\label{unif:Lebesgu differentiation lemma-part2}
\limsup_{\delta\to 0}\sum_{i\in D_\delta(\gamma)} h(Q_i)\geq \varepsilon_0,
\end{align}
where $h(Q_i)\coloneqq \lambda(Q_i)-\lambda_0(Q_i)$. 

We will construct an admissible function $\tilde h$ for $\md_2(\Gamma)$ with arbitrarily small mass. By the summability assumption on $\lambda$, for each $\eta>0$ we can find $\lambda_0$ as above such that $\sum_{i\in \N}h(Q_i)^2 <\eta$.  For each $i\in \N$ consider balls $B(x_i,r_i)\subset Q_i\subset B(x_i,R_i)$ as in the quasiballs assumption \eqref{unif:Quasi-balls} with $R_i=\diam(Q_i)$. We define 
$$\tilde h= c_0\sum_{i\in \N} \frac{h(Q_i)}{R_i}\x_{B(x_i,4R_i)} $$
where $c_0$ is a constant to be determined, independent of $\eta$. Note that by Lemma \ref{unif:Bojarski} and the fact that the balls $B(x_i,R_i/K_0)\subset B(x_i,r_i)$ are disjoint we have 
\begin{align*}
\int \tilde h(x)^2 \, d\mathcal H^2(x) \leq C c_0^2\sum_{i\in \N} \frac{h(Q_i)^2}{R_i^2}  R_i^2  \leq Cc_0^2\eta.
\end{align*} 
Since this can be made arbitrarily small, it remains to show that $\tilde h$ is admissible for $\Gamma$.

Fix a curve $\gamma \in \Gamma$, so \eqref{unif:Lebesgu differentiation lemma-part2} holds. Observe that $\max_{i\in D_\delta(\gamma)} \diam(Q_i) \to 0$ as $\delta\to 0$, by the definition of $D_\delta(\gamma)$ and the fact that there are only finitely many peripheral disks with ``large" diameter. Now, let $\delta$ be sufficiently small, so that $8\max_{i\in D_\delta(\gamma)} \diam(Q_i)<d \leq \diam(\gamma)$ and $\sum_{i\in D_\delta(\gamma)}h(Q_i) >\varepsilon_0/2$. If  $i\in D_\delta(\gamma)$ then $R_i=\diam(Q_i)\geq \delta$ and $Q_i\cap N_\delta(\gamma)\neq \emptyset$, thus $B(x_i,2R_i)$ meets $\gamma$. By the choice of $\delta$, $\gamma$ has to exit $B(x_i,4R_i)$. Hence, $\mathcal H^1(\gamma\cap B(x_i,4R_i))\geq 2R_i$, which implies that
\begin{align*}
\int_\gamma \tilde h(x) \, d\mathcal H^1(x) \geq c_0\sum_{i\in D_\delta(\gamma)} \frac{h(Q_i)}{R_i} \mathcal H^1(\gamma\cap B(x_i,4R_i)) \geq c_0\varepsilon_0.
\end{align*}
We choose $c_0=1/\varepsilon_0$ and this completes the proof.
\end{proof}

\begin{remark}\label{unif:Lebesgue differentiation remark}
It is clear from the proof that this general lemma holds for carpets $S$ of  area zero for which the peripheral disks are uniform quasiballs; the fatness assumption was not used here.
\end{remark}

\begin{remark}
If the function $v$ satisfied the same upper gradient inequality as $u$ (see Definition \ref{unif:Definition Upper gradient}) then $x\mapsto v(x)/D(u)$ would be admissible for the free boundary problem with respect to $\Theta_2$ and $\Theta_4$. One then could show that $v/D(u)$ is carpet-harmonic and thus $v$ is carpet-harmonic. We believe that the form of the upper gradient inequality of $v$ depends on the geometry of the peripheral disks and their separation, and without any extra assumptions the harmonicity of $v$ is far from being clear.
\end{remark}

\section{Definition of \texorpdfstring{$f$}{f}}\label{unif:Section Definition of F}\index{uniformizing map!definition}
Let  $D\coloneqq D(u)=\sum_{i\in \N}\rho(Q_i)^2$, and consider the continuous function
$$f\coloneqq (u,v)\colon  S\to [0,1] \times [0,D].$$
The fact that the range of $f$ is $[0,1]\times [0,D]$ is justified by Lemma \ref{unif:Conjugate-Values on boundaries}. The same lemma also implies that $f( \partial \Omega)= \partial ([0,1]\times [0,D])= \partial S_0$, where $S_0\coloneqq \C \setminus [0,1]\times[0,D]$. If $\rho(Q_i)=0$, then $u$ and $v$ are constant on $\partial Q_i$, so $f(\partial Q_i)$ is a singe point, denoted by $\br S_i$ or $\partial S_i$. If $\rho(Q_i)>0$, again by Lemma \ref{unif:Conjugate-Values on boundaries} we have
\begin{align}\label{unif:Definition-F(Q_i) contained}
f(\partial Q_i) \subset [m_{Q_i}(u), M_{Q_i}(u)] \times [ m_{Q_i}(v), M_{Q_i}(v)]\eqqcolon \br S_i,
\end{align}
where $M_{Q_i}(u)-m_{Q_i}(u)= M_{Q_i}(v)-m_{Q_i}(v)=\rho(Q_i)$. Thus, the image of $\partial Q_i$ is contained in a square of sidelength $\rho(Q_i)$. We define $S_i$ to be the open square $(m_{Q_i}(u), M_{Q_i}(u)) \times ( m_{Q_i}(v), M_{Q_i}(v))= \inter(\br S_i)$, or the empty set in case $\rho(Q_i)=0$. If $\rho(Q_i)>0$, then we will call $\br S_i$ a \textit{non-degenerate} square. We claim that these squares have disjoint interiors:

\begin{lemma}\label{unif:Definition-Squares disjoint interior}
The (open) squares $S_i$, $i\in \N$, are disjoint. Furthermore, for each $i\in \N$ we have $S_i\cap f(S)=\emptyset$, and $f(\partial Q_i)\subset \partial S_i$.
\end{lemma}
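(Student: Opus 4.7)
I would tackle (c) first, then (a), and derive (b) as an easy corollary. For (c), suppose $S_i \cap S_j \neq \emptyset$ for distinct $i,j \in \N$, so both coordinate intervals overlap. Pick $t \in \mathcal{T} \cap (m_{Q_i}(u), M_{Q_i}(u)) \cap (m_{Q_j}(u), M_{Q_j}(u))$, nonempty since $\mathcal{T}$ has full measure in $[0,1]$. By Lemma~\ref{unif:Level sets-curves} the level set $\alpha_t$ is a simple curve from $\Theta_2$ to $\Theta_4$ crossing both $\br Q_i$ and $\br Q_j$; assume without loss of generality that it visits $Q_i$ first. Lemma~\ref{unif:Conjugate-Values on boundaries} gives $v(x_4^i(t)) = M_{Q_i}(v)$ and $v(x_2^j(t)) = m_{Q_j}(v)$, while Corollary~\ref{unif:Conjugate-Oscillation-Increasing} yields $v(x_4^i(t)) < v(x_2^j(t))$, so $M_{Q_i}(v) < m_{Q_j}(v)$, contradicting the $v$-overlap.

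Turning to (a), suppose some $x \in S$ has $f(x)=(a,b) \in S_i$, so $\rho(Q_i)>0$. Since $f^{-1}(S_i)$ is open in $S$ and $S^\circ$ is dense in $S$ by Lemma~\ref{unif:Paths in S^o}, I may replace $x$ by a point in $S^\circ$, so $x \notin \br Q_i$. If $v(x)$ must be defined by the extension of Lemma~\ref{unif:Conjugate-Define when rho=0} rather than by the liminf formula \eqref{unif:Conjugate-Definition NEW}, then $u \equiv a$ on a neighborhood of $x$ in $S$; this forces $a \notin \mathcal{T}$ via Lemma~\ref{unif:Level sets-curves} (the simple curve $\alpha_a$ would have empty interior rel.\ $\br\Omega$), and I pass from $x$ to a point $y \in \partial_{\br\Omega} V \cap S$ at which the liminf formula does apply, where $V$ is the appropriate component of $\inter_{\br\Omega}(\alpha_a)$. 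Then $f(y)=(a,b) \in S_i$ by Lemma~\ref{unif:Conjugate-Define when rho=0}. Relabel this point as $x$; note that even if $x \in \partial Q_i$, diameters of approximating disks shrink to zero, so eventually $Q_n \neq Q_i$ in the liminf (appealing to Lemma~\ref{unif:lemma:paths zero hausdorff}(b) to ensure the sequence is nontrivial).

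Now select a peripheral disk $Q_{i_x}$ near $x$, with $Q_{i_x} \neq Q_i$, $\rho(Q_{i_x}) > 0$, $\hat v(Q_{i_x})$ close to $b$, and $[m_{Q_{i_x}}(u), M_{Q_{i_x}}(u)] \subset (m_{Q_i}(u), M_{Q_i}(u))$ (possible by continuity of $u$, since $a$ lies in the open interval). Pick $s \in \mathcal{T} \cap [m_{Q_{i_x}}(u), M_{Q_{i_x}}(u)]$; then $\alpha_s$ crosses both $Q_{i_x}$ and $Q_i$, and by Lemma~\ref{unif:Conjugate-Infimum not needed} and Lemma~\ref{unif:Conjugate-Values on boundaries}, $\hat v(Q_{i_x}) = \ell_\rho(\alpha_s^{i_x})$ while $\hat v(Q_i) = \ell_\rho(\alpha_s^i) = m_{Q_i}(v)$. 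If $\alpha_s$ visits $Q_i$ before $Q_{i_x}$, then $\alpha_s^{i_x}$ contains $\alpha_s^i$ together with the arc of $\alpha_s$ through $\br Q_i$; since disjoint portions of the simple curve $\alpha_s$ meet disjoint families of peripheral disks,
\[
\hat v(Q_{i_x}) = \ell_\rho(\alpha_s^{i_x}) \geq \ell_\rho(\alpha_s^i) + \rho(Q_i) = m_{Q_i}(v) + \rho(Q_i) = M_{Q_i}(v),
\]
contradicting $\hat v(Q_{i_x}) < M_{Q_i}(v)$ (achievable since $b < M_{Q_i}(v)$). If instead $\alpha_s$ visits $Q_{i_x}$ first, then symmetrically $\ell_\rho(\alpha_s^i) \geq \ell_\rho(\alpha_s^{i_x}) + \rho(Q_{i_x}) > \hat v(Q_{i_x})$, so $m_{Q_i}(v) > \hat v(Q_{i_x})$, contradicting $b > m_{Q_i}(v)$ for $Q_{i_x}$ chosen close enough to $x$.

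Finally, (b) drops out: from $\partial Q_i \subset S$ and \eqref{unif:Definition-F(Q_i) contained} we have $f(\partial Q_i) \subset f(S) \cap \br S_i$; by (a) this is contained in $\br S_i \setminus S_i = \partial S_i$. The main technical obstacle is the reduction in the second paragraph: if $v(x)$ is defined by extension, one must verify that the passage from $x$ to the boundary point $y$ preserves the property $f(y) \in S_i$ and that, even when $y \in \partial Q_i$, the liminf over disks $Q_{i_x} \neq Q_i$ still recovers $b$, which is where Lemma~\ref{unif:lemma:paths zero hausdorff}(b) is needed to guarantee nontrivial approximation by peripheral disks other than $Q_i$.
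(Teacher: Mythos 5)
Your argument is correct. The disjointness part is essentially the paper's own proof: pick $t\in\mathcal T$ in the overlap of the $u$-intervals, use that $\alpha_t$ traverses $\br Q_i$ and $\br Q_j$ in order, and combine $v(\text{exit of }Q_i)=M_{Q_i}(v)$, $v(\text{entry of }Q_j)=m_{Q_j}(v)$ (Lemma \ref{unif:Conjugate-Values on boundaries}) with the monotonicity of $v$ along $\alpha_t$ (Corollary \ref{unif:Conjugate-Oscillation-Increasing}) to contradict the $v$-overlap. Where you diverge is the claim $S_i\cap f(S)=\emptyset$. The paper deduces it almost immediately \emph{from} the disjointness you just proved: approximate $x$ by peripheral disks $Q_k$ with $\rho(Q_k)>0$ (using Lemma \ref{unif:lemma:paths zero hausdorff}, and Lemma \ref{unif:Conjugate-Define when rho=0} to reduce the $t\notin\mathcal T$ case to a point admitting such approximation); by continuity of $f$ the non-degenerate squares $\br S_k$ shrink and accumulate at $f(x)\in S_{i_0}$, so eventually $S_k\subset S_{i_0}$, contradicting disjointness. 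You instead run a self-contained computation with $\ell_\rho$ along a level curve $\alpha_s$ crossing both $Q_{i_x}$ and $Q_i$, using Lemma \ref{unif:Conjugate-Infimum not needed} and the containment $\alpha_s^{i}\subset\alpha_s^{i_x}$ (or vice versa) to force $\hat v(Q_{i_x})\geq M_{Q_i}(v)$ or $\hat v(Q_{i_x})\leq m_{Q_i}(v)$, contradicting $m_{Q_i}(v)<b<M_{Q_i}(v)$. This is valid — your reduction to a point where the $\liminf$ formula applies matches the paper's own reduction, and the inequality $\ell_\rho(\alpha_s^{i_x})\geq\ell_\rho(\alpha_s^{i})+\rho(Q_i)$ needs only monotonicity of the sum plus $Q_i\cap\alpha_s^{i_x}\neq\emptyset$, not the slightly overstated "disjoint portions meet disjoint families" — but it re-derives by hand what the disjointness of the squares already encodes; the paper's bootstrap is shorter. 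The deduction $f(\partial Q_i)\subset\partial S_i$ from \eqref{unif:Definition-F(Q_i) contained} and the second claim is the same in both.
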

\begin{proof}
Assume that $\rho(Q_i),\rho(Q_j)>0$ for some $i,j\in \N$, $i\neq j$, and that $S_i\cap S_j\neq \emptyset$. Since the $x$-coordinates of the squares intersect at an interval of positive length, there exists some $t\in \mathcal T$ such that $\br Q_i\cap \alpha_t\neq \emptyset$ and $\br Q_j\cap \alpha_t\neq \emptyset$. Assume that the path $\alpha_t$ meets ordered points $x_1,x_2\in \partial Q_i$ and then $y_1,y_2\in \partial Q_j$ as it travels from $\Theta_2$ to $\Theta_4$. This can be justified using the properties of $\alpha_t$ from Lemma \ref{unif:Level sets-curves}. By Lemma \ref{unif:Conjugate-Values on boundaries} and Corollary \ref{unif:Conjugate-Oscillation-Increasing} we have
\begin{align*}
v(x_1)+\rho(Q_i)=v(x_2)< v(y_1) = v(y_2)-\rho(Q_j).
\end{align*}
This clearly contradicts the assumption that $S_i\cap S_j\neq \emptyset$.

For our second claim, assume that there exists some $x\in S$ with $f(x)\in S_{i_0}$ for some $i_0\in \N$. If $x\in \alpha_t$ for some $t\in \mathcal T$, then $x$ can be approximated by peripheral disks $Q_i$ with $\rho(Q_i)>0$; see Lemma \ref{unif:lemma:paths zero hausdorff} and recall that all peripheral disks intersecting $\alpha_t$ satisfy $\rho(Q_i)>0$. By the continuity of $f$ the diameter $ \diam(S_i)$ is arbitrarily small as $Q_i\to x$, so $S_i\subset S_{i_0}$. This contradicts the first part of the lemma.

If $x\in \alpha_t$ for some $t\notin \mathcal T$ then there exists a point $y \in \alpha_t$ such that $v(x)=v(y)$ and $y$ can be approximated by $Q_i$ with $\rho(Q_i)>0$; see Lemma \ref{unif:Conjugate-Define when rho=0} and the discussion that precedes it. Thus $f(x)=f(y)=(t,v(y))$, and the previous case applies to yield a contradiction.

The final claim follows from \eqref{unif:Definition-F(Q_i) contained} and the previous parts of the lemma.
\end{proof}

In fact, we have the following:

\begin{corollary}\label{unif:Definition-Range-Extension}
For each $i\in \N\cup \{0\}$ we have $f(\partial Q_i)= \partial S_i$. Moreover, 
\begin{align*}
f(S)= [0,1]\times [0,D]\setminus \bigcup_{i\in \N} S_i \eqqcolon \mathcal R, \quad \mathcal H^2(\mathcal R)=0,
\end{align*}
and the intersection of a non-degenerate square $\br S_i$, $i\in \N$, with $\partial S_0= \partial ([0,1]\times [0,D])$ or with another non-degenerate square $\br S_j$, $j\in \N$, $j\neq i$,  is either the empty set or a singleton.  
\end{corollary}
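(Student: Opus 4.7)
The plan is to establish the four assertions in sequence, using the level-set structure developed in Section \ref{unif:Section Level sets} together with the monotonicity of $v$ along level sets of $u$ from Section \ref{unif:Section Conjugate}.

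For the identity $f(\partial Q_i)=\partial S_i$, the case $i=0$ follows directly from the boundary values $u\equiv 0$ on $\Theta_1$, $u\equiv 1$ on $\Theta_3$, $v\equiv 0$ on $\Theta_2$, and $v\equiv D$ on $\Theta_4$ (Lemma \ref{unif:Conjugate-Values on boundaries} together with the free boundary problem setup), combined with continuity of $f$ and connectedness of each $\Theta_j$. The case $i\in\N$ with $\rho(Q_i)=0$ is trivial. For $\rho(Q_i)>0$, I would use Lemma \ref{unif:Conjugate-Values on boundaries}: for every $t\in\mathcal T\cap[m_{Q_i}(u),M_{Q_i}(u)]$ the two points of $\partial Q_i\cap\alpha_t$ map under $f$ to $(t,m_{Q_i}(v))$ and $(t,M_{Q_i}(v))$. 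Since $\mathcal T$ has full measure in $[m_{Q_i}(u),M_{Q_i}(u)]$, continuity of $f$ together with closedness of $f(\partial Q_i)$ shows that both the top and bottom edges of $\partial S_i$ lie in $f(\partial Q_i)$, and passing to limits $t\to m_{Q_i}(u)^+$ and $t\to M_{Q_i}(u)^-$ along $\mathcal T$ produces all four corners; distinctness of the limit points at each extremum of $u$ is forced by $\rho(Q_i)>0$. To obtain the two remaining vertical edges I would view $f|_{\partial Q_i}\colon\partial Q_i\to\partial S_i$ as a continuous map between Jordan curves and compute its topological degree: at a regular value $(t_0,M_{Q_i}(v))$ with $t_0\in\mathcal T\cap(m_{Q_i}(u),M_{Q_i}(u))$, Lemma \ref{unif:Level sets-at most two points of intersection} forces the preimage to be the single point $x_4(t_0)$, from which the degree is $\pm 1$ and $f|_{\partial Q_i}$ is therefore surjective.

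For $f(S)=\mathcal R$, the inclusion $f(S)\subset\mathcal R$ is immediate from Lemma \ref{unif:Definition-Squares disjoint interior}. For the reverse inclusion, fix $(t,v_0)\in\mathcal R$. When $t\in\mathcal T$, Corollary \ref{unif:Conjugate-Oscillation-Increasing} combined with Lemma \ref{unif:Conjugate-Values on boundaries} yields
\[
v(\alpha_t\cap S)=[0,D]\setminus\bigcup_{i:\alpha_t\cap Q_i\neq\emptyset}(m_{Q_i}(v),M_{Q_i}(v)),
\]
and the condition $(t,v_0)\notin S_j$ for every $j$ places $v_0$ in this set, producing a preimage in $\alpha_t\cap S$. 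For $t\notin\mathcal T$ I would approximate $(t,v_0)$ by a sequence $(t_n,v_0^n)\to(t,v_0)$ in $\mathcal R$ with $t_n\in\mathcal T$ (approaching $t$ from the side where the relevant peripheral disks remain inactive), produce preimages $x_n\in S$ by the previous case, and extract a subsequential limit by compactness of $S$ and continuity of $f$. The identity $\mathcal H^2(\mathcal R)=0$ is then immediate from disjointness of the open squares $S_i$ (Lemma \ref{unif:Definition-Squares disjoint interior}) and $\sum_{i\in\N}\rho(Q_i)^2=D(u)=D=\mathcal H^2([0,1]\times[0,D])$.

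For the final assertion, suppose $\br S_i\cap\br S_j$ contains two distinct points for some $i\neq j$ with $\rho(Q_i),\rho(Q_j)>0$. Since both closed squares are axis-aligned with disjoint interiors, they must share a non-degenerate segment of an edge, so (up to swapping coordinates) their $u$-intervals overlap in a non-degenerate interval while their $v$-intervals touch at a common endpoint. Pick $t\in\mathcal T$ in the interior of this overlap: then $\alpha_t$ meets both $Q_i$ and $Q_j$, and because $\br Q_i\cap\br Q_j=\emptyset$ the arc of $\alpha_t\cap S$ between the exit from one peripheral disk and the entry into the other is non-degenerate, which by Corollary \ref{unif:Conjugate-Oscillation-Increasing} forces strict separation of the $v$-intervals and contradicts the touching assumption. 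The case $\br S_i\cap\partial S_0$ is handled by the same strategy using $\dist(\partial Q_i,\partial\Omega)>0$. The main technical obstacle in the whole argument is the degree computation needed for the vertical edges in the identity $f(\partial Q_i)=\partial S_i$, since topological connectedness alone produces only one of the two vertical edges; rigorously justifying that the preimage of a generic top-edge point reduces to a single point and that the local degree of $f|_{\partial Q_i}$ at that point is $\pm 1$ requires Lemma \ref{unif:Level sets-at most two points of intersection} together with a careful orientation argument.
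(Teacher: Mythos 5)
Your argument for the top and bottom edges of $\partial S_i$ (via the entry/exit points of $\alpha_t$ on $\partial Q_i$ for $t\in\mathcal T$, plus density and compactness) is sound, but the step you yourself flag as the main obstacle — recovering the two vertical edges — does not go through as proposed. For a merely continuous map between circles, the fact that some point has a one-point fiber does \emph{not} imply the degree is $\pm 1$: the local degree at that single preimage could be $0$ (the image can touch the point and retreat), so surjectivity does not follow. To make your route work you would need local injectivity or monotonicity of $f|_{\partial Q_i}$ near $x_4(t_0)$, which in the paper is only established later (Lemma \ref{unif:Injectivity-level sets}, Corollary \ref{unif:Injectivity-top-bottom}) using the light-map Lemma \ref{unif:Injectivity-light} — whose proof in turn uses the surjectivity you are trying to prove, so this is a circularity risk. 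The paper avoids the one-dimensional degree issue entirely: it builds a continuous extension $\widetilde f$ of $f$ to $\br\Omega$ with $\widetilde f(Q_j)\subset S_j$ for every non-degenerate $S_j$, observes from Proposition \ref{unif:Level sets} that $\widetilde f|_{\partial\Omega}$ has winding number $+1$ about every point of $(0,1)\times(0,D)$, and then, since $\widetilde f(\br\Omega\setminus Q_i)\subset[0,1]\times[0,D]\setminus S_i$, transports this by homotopy to conclude that $\widetilde f|_{\partial Q_i}$ winds once around every point of $S_i$; this planar winding-number argument is what actually forces $f(\partial Q_i)=\partial S_i$ (and $\widetilde f(Q_i)=S_i$). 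Your subsequent derivation of $f(S)=\mathcal R$ via the image of $v$ on $\alpha_t\cap S$ is a genuinely different (and workable, with the case of points lying on some $\partial S_j$ handled separately through part 1) route from the paper's density-plus-area argument, but it inherits the gap in part 1.

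The final assertion has a second, independent gap: the reduction "up to swapping coordinates." The functions $u$ and $v$ play completely asymmetric roles — $v$ is defined by integrating $\rho$ along the level sets of $u$, and the monotonicity statement of Corollary \ref{unif:Conjugate-Oscillation-Increasing} is only available for $v$ along $\alpha_t=\tilde u^{-1}(t)$; there is no analogous statement for $u$ along level sets of $v$, and the level sets of $v$ are never analyzed. So your argument rules out two squares sharing a horizontal segment (and $\br S_i$ meeting the top/bottom of $\partial S_0$ in a segment), but says nothing about a shared \emph{vertical} segment, where the $u$-intervals touch at a single value $t_0=M_{Q_i}(u)=m_{Q_j}(u)\notin\mathcal T$ and no level set $\alpha_t$, $t\in\mathcal T$, passes through both disks. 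The paper handles this case by a different mechanism: an interior point $z$ of a shared edge cannot be approximated by non-degenerate squares $S_k$ with $k\neq i$ (they would have to be disjoint from both $S_i$ and $S_j$ yet accumulate on the segment), hence $\rho(Q_k)=0$ for all $Q_k$ near $x\in f^{-1}(z)\cap\partial Q_i$, hence $f$ is constant on an arc of $\partial Q_i$ around $x$; since $\partial Q_i$ carries only countably many maximal constancy arcs but the shared segment contains uncountably many points each with a preimage on $\partial Q_i$, one gets a contradiction. Some argument of this kind is needed for the vertical configuration.
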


\begin{proof}
By the preceding lemma we know that $f(\partial Q_i)\subset \partial S_i$ for each $i\in \N$. Consider a continuous extension $\widetilde f\colon  \br\Omega\to [0,1]\times [0,D]$ such that $\widetilde f (Q_i) \subset S_i$, whenever $S_i$ is a non-degenerate square. One way to find such an extension is to consider a Poisson extension $\tilde v $ of $v$ in each peripheral disk as in Lemma \ref{unif:Maximum principle for tilde u}. Then $\tilde v(Q_i) \subset (m_{Q_i}(v),M_{Q_i}(v))$, and also $\tilde u(Q_i) \subset (m_{Q_i}(u),M_{Q_i}(u))$, by the maximum principle. Hence, if we define $\widetilde f\coloneqq (\tilde u,\tilde v)$ we have the desired property $\widetilde f(Q_i)\subset S_i$ whenever $S_i$ is non-degenerate. Combining this with Lemma \ref{unif:Definition-Squares disjoint interior}, we see that for each non-degenerate $S_i$ we have 
\begin{align}\label{unif:Definition:homotopy}
\widetilde f(\br \Omega \setminus Q_i) \subset [0,1]\times [0,D] \setminus S_i.
\end{align}

First we show that $f(\partial \Omega)=f(\partial Q_0)=\partial S_0$. Recall that $u\equiv 0$ on $\Theta_1$ and $u\equiv 1$ on $\Theta_3$, so these sets are mapped into the left and right vertical sides of the rectangle $\partial S_0$, respectively; see Theorem \ref{unif:Solution to free boundary problem}. Also, by Lemma \ref{unif:Conjugate-Values on boundaries}, $v\equiv 0$ on $\Theta_2$ and $v\equiv D$ on $\Theta_4$, so these sets are mapped to the bottom and top sides of $\partial S_0$, respectively. By continuity, we must have $f(\partial Q_0)=\partial S_0$. 

Proposition \ref{unif:Level sets} shows that the functions $f\big|_{\Theta_2},f\big|_{\Theta_4}$ are ``increasing" from $0$ to $1$ if $\Theta_2,\Theta_4$ are parametrized as arcs from $\Theta_1$ to $\Theta_3$. Thus, $\widetilde f \big|_{\partial \Omega}$ winds once around every point of $(0,1)\times(0,D)$. By homotopy, using \eqref{unif:Definition:homotopy}, we see that $\widetilde f \big|_{\partial Q_i}$ winds once around every point of $\inter (S_i)$. Thus $\widetilde f(\partial Q_i)=\partial S_i$, and $\widetilde f(Q_i)=S_i$; see \cite[Chapter II]{RadoReichelderfer:topology}.

Note that the area of $[0,1]\times [0,D]$ is equal to $D=\sum_{i\in \N}\rho(Q_i)^2$, which is the sum of the areas of the squares $S_i$. By Lemma \ref{unif:Definition-Squares disjoint interior}, the squares have disjoint interiors, so in some sense they ``tile" $[0,1]\times [0,D]$, and this already shows that $\mathcal H^2(\mathcal R)=0$. Furthermore, we obtain that the boundaries $\partial S_i=f(\partial Q_i)$ are dense in $[0,1]\times [0,D]\setminus \bigcup_{i\in \N} S_i$. Since the sets $\partial Q_i$, $i\in \N$, are also dense in the carpet $S$ and $f$ is continuous, we obtain $f(S)=\mathcal R$. 

For the last claim, assume that two squares $\br S_i,\br S_j$, $i\neq j$, share part of a vertical side, i.e., there exists a non-degenerate vertical line segment $\tau\coloneqq  \{t\} \times [s_1,s_2]\subset \br S_i \cap \br S_j$. Let $s\in (s_1,s_2)$ and consider, by surjectivity, a point $x\in \partial Q_i$ such that $f(x)=(t,s)\in \tau$. We claim that $x$ cannot be approximated by $Q_k$, $k\neq i$, with $\rho(Q_k)>0$. Indeed, if this was the case, then $f(\partial Q_k) =\partial S_k$ would be non-degenerate distinct squares that approximate the point $f(x)$ by continuity. Obviously, this cannot happen since $f(x)\in \tau$, and $S_i,S_k$ have to be disjoint by Lemma \ref{unif:Definition-Squares disjoint interior}. Hence, $\rho(Q_k)=0$ for all $Q_k$ contained in a neighborhood of $x$. The upper gradient inequality of $u$ along with continuity imply that $u$ is constant in a neighborhood of $x$; see Lemma \ref{harmonic:4-zero oscillation lemma} for a proof. By Lemma \ref{unif:Conjugate-Define when rho=0} and the definition $v$ we conclude that $v$ is also constant in a neighborhood of $x$; see also the comments before Lemma \ref{unif:Conjugate-Define when rho=0}. In particular, $f$ is constant in some arc of $\partial Q_i$ containing $x$. However, there are countably many such subarcs of $\partial Q_i$, and uncountably many preimages $f^{-1}((t,s))$, $s\in (s_1,s_2)$. This is a contradiction. The same argument applies if $\br S_i,\br S_j$ share a horizontal segment or if $\br S_i \cap \partial S_0 \neq \emptyset$.
\end{proof}

Combining the upper gradient inequalities of $u$ and $v$ we obtain:

\begin{prop}\label{unif:Upper gradient for F}
There exists a family of paths $\Gamma_0$ with $\md_2(\Gamma_0)=0$ such that for every path $\gamma \subset  \Omega$ with $\gamma\notin \Gamma_0$ and for every open subpath $\beta$ of $\gamma$ we have
\begin{align*}
|f(x)-f(y)|\leq \sqrt{2}\sum_{i: \br Q_i\cap \beta\neq \emptyset} \rho(Q_i)=\sum_{i: \br Q_i\cap \beta \neq \emptyset} \diam(\br S_i) 
\end{align*}
for all $x,y\in \br \beta\cap S$.
\end{prop}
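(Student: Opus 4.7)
The plan is to combine the two upper gradient inequalities already at our disposal, one for $u$ (from the definition of carpet-harmonic function and Definition \ref{unif:Definition Upper gradient}) and one for $v$ (from Lemma \ref{unif:Conjugate-Defect upper gradient}), and simply use the pointwise estimate
$$|f(x)-f(y)| = \sqrt{(u(x)-u(y))^2+(v(x)-v(y))^2} \leq \sqrt{2}\max\bigl(|u(x)-u(y)|,|v(x)-v(y)|\bigr).$$
Since the squares $\br S_i$ satisfy $\diam(\br S_i)=\sqrt 2\,\rho(Q_i)$, the factor $\sqrt 2$ matches the claimed identity.

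First I would assemble the exceptional family. The family $\Gamma_u$ of carpet modulus zero given by Definition \ref{unif:Definition Upper gradient} for $u$ has $\md_2(\Gamma_u)=0$ by Lemma \ref{unif:Zero modulus lemma}; enlarging it by the paths that have a subpath in $\Gamma_u$ keeps the $2$-modulus zero (as recorded in the discussion right before Lemma \ref{unif:Bojarski}). Let $\Gamma_v$ be the exceptional family provided by Lemma \ref{unif:Conjugate-Defect upper gradient}, which already has $\md_2(\Gamma_v)=0$. Set $\Gamma_0 \coloneqq \Gamma_u\cup \Gamma_v$, so $\md_2(\Gamma_0)=0$.

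Fix $\gamma\notin \Gamma_0$, an open subpath $\beta$ of $\gamma$, and $x,y\in \br\beta\cap S$. From Lemma \ref{unif:Conjugate-Defect upper gradient} applied directly to $\gamma$ and $\beta$,
$$|v(x)-v(y)|\leq \sum_{i:\br Q_i\cap \beta\neq \emptyset}\rho(Q_i).$$
For $u$, I would reduce to the situation of Definition \ref{unif:Definition Upper gradient} by an endpoint adjustment identical to the opening paragraph of the proof of Lemma \ref{unif:Conjugate-Defect upper gradient}: if $x\in \partial Q_{i_x}$, replace $x$ by the last exit point $x'\in \partial Q_{i_x}\cap \beta$ of $\beta$ from $Q_{i_x}$; this costs at most $\osc_{Q_{i_x}}(u)=\rho(Q_{i_x})$, and the index $i_x$ belongs to $\{i:\br Q_i\cap \beta\neq \emptyset\}$. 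Do the symmetric thing at $y$. After this reduction the relevant subpath of $\gamma$ has endpoints in $\gamma\cap S$, so the upper gradient inequality of $u$ applies to give a bound by $\sum_{i:Q_i\cap \gamma'\neq \emptyset}\rho(Q_i)$ on the truncated subpath $\gamma'$; this in turn is bounded by $\sum_{i:\br Q_i\cap \beta\neq \emptyset}\rho(Q_i)$. Combining the endpoint adjustments with the bound on the interior yields
$$|u(x)-u(y)|\leq \sum_{i:\br Q_i\cap \beta\neq \emptyset}\rho(Q_i).$$

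Adding the two estimates through the pointwise $\ell^2$--$\ell^\infty$ inequality above gives the proposition. I do not expect any substantive obstacle here: the only subtle point is ensuring the exceptional path family for $u$ can be taken to have conformal modulus zero (handled by Lemma \ref{unif:Zero modulus lemma}) and that the upper gradient inequality descends to subpaths and to endpoints lying on peripheral circles (handled by the standard subpath remark and by the same endpoint-truncation trick already used in Lemma \ref{unif:Conjugate-Defect upper gradient}).
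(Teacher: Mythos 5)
Your proposal is correct and follows essentially the same route as the paper: the paper's proof likewise quotes Lemma \ref{unif:Conjugate-Defect upper gradient} for $v$, invokes the upper gradient inequality of Definition \ref{unif:Definition Upper gradient} for $u$ together with Lemma \ref{unif:Zero modulus lemma} to pass from carpet modulus zero to conformal modulus zero, and observes that the sum over closed peripheral disks dominates. Your endpoint-truncation step is harmless but not strictly needed, since $x,y\in S$ lie in no open $Q_i$, so every $Q_i$ meeting $\br\beta$ already meets $\beta$.
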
 

Observe that the paths $\beta$ are contained in $\Omega$, so $\br Q_0 \cap \beta=\emptyset$, and we never include the term $\diam(\br S_0)=\diam(\C \setminus (0,1)\times(0,D))=\infty$ in the above summations.

\begin{proof}
This type of upper gradient inequality holds for the function $v$ by Lemma \ref{unif:Conjugate-Defect upper gradient}, so we only have to argue for $u$. Recall that $u\in \mathcal W^{1,2}(S)$ so it satisfies the upper gradient inequality in Definition \ref{unif:Definition Upper gradient} with an exceptional family $\Gamma_0$ that has carpet modulus equal to $0$. Lemma \ref{unif:Zero modulus lemma} now implies that $\md_2(\Gamma_0)=0$. To complete the proof, note that the sum over $\{i:\br Q_i \cap \beta\neq \emptyset\}$ is larger than the sum over $\{i:Q_i\cap \beta\neq \emptyset\}$ (which was used in Definition \ref{unif:Definition Upper gradient}).
\end{proof}

The upper gradient inequality has the next important corollary. 

\begin{corollary}\label{unif:Upper gradient-projection}
There exists a family of paths $\Gamma_0$ with $\md_2(\Gamma_0)=0$ such that the following holds: 

\begin{flushleft}
For every open path $\gamma \subset  \Omega$, $\gamma \notin \Gamma_0$ with endpoints $x,y\in S$, and every Lipschitz map $\pi\colon   \R^2 \to \R$ we have 
$$|\pi(f(x))-\pi(f(y))| \leq \mathcal H^1\biggl( \bigcup_{i: \br Q_i\cap \gamma\neq \emptyset}\pi( S_i ) \biggr) \leq \sum_{i: \br Q_i\cap \gamma \neq \emptyset} \mathcal H^1( \pi(S_i)).$$ 
\end{flushleft}
\end{corollary}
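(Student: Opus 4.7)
The second inequality is immediate from the countable subadditivity of one-dimensional Hausdorff measure, so the content lies in the first. My plan is to take $\Gamma_0$ to be the exceptional family from Proposition~\ref{unif:Upper gradient for F}, enlarged so that each $\gamma\notin\Gamma_0$ additionally satisfies $\mathcal H^1(\gamma\cap S)=0$ (true for $\md_2$-a.e.\ $\gamma$ because $\mathcal H^2(S)=0$) and the conclusions of Lemma~\ref{unif:Lebesgue differentiation Lemma} applied to the weight $\{\rho(Q_i)\}\in\ell^2(\N)$. Fix such a good path $\gamma$ with endpoints $x,y\in S$, and extend $f=(u,v)$ continuously to $\tilde f\colon\br\Omega\to[0,1]\times[0,D]$ with $\tilde f(\br Q_i)\subset\br S_i$ for every $i\in\N$, using Poisson extensions of $u$ and $v$ in each peripheral disk exactly as in Proposition~\ref{unif:Maximum principle for tilde u}.

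Since $\pi\circ\tilde f\circ\gamma\colon[0,1]\to\R$ is continuous, its image is an interval containing both $\pi(f(x))$ and $\pi(f(y))$, so by the intermediate value theorem
\[
|\pi(f(x))-\pi(f(y))|\leq \mathcal H^1(\pi(\tilde f(\gamma))).
\]
Decomposing $\gamma\subset (\gamma\cap S^\circ)\cup\bigcup_{i:\br Q_i\cap\gamma\neq\emptyset}\br Q_i$ and using $\tilde f(\br Q_i)\subset\br S_i$ together with $f(\gamma\cap\partial Q_i)\subset\partial S_i$ yields
\[
\pi(\tilde f(\gamma))\subset \pi(f(\gamma\cap S^\circ))\cup \bigcup_{i:\br Q_i\cap\gamma\neq\emptyset}\pi(\br S_i).
\]
Each $\pi(\br S_i)$ differs from $\pi(S_i)$ by at most two points, so the $\mathcal H^1$-measure of the countable union over $i$ coincides with $\mathcal H^1\bigl(\bigcup_i\pi(S_i)\bigr)$. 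The first inequality of the corollary therefore reduces to showing that $\mathcal H^1(\pi(f(\gamma\cap S^\circ)))=0$.

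The heart of the proof, and what I expect to be the main obstacle, is this last claim: although $\mathcal H^1(\gamma\cap S^\circ)=0$, the map $f$ is only continuous, not Lipschitz, so one must exploit the upper gradient inequality quantitatively. Here is the strategy. Let $L$ denote the Lipschitz constant of $\pi$. Given $\eta>0$, choose a finite index set $J$ with $\sum_{i\notin J,\,\br Q_i\cap\gamma\neq\emptyset}\rho(Q_i)<\eta$, and set $E\coloneqq\bigcup_{i\in J}\br Q_i$, a compact set disjoint from $S^\circ$. Cover $\gamma\cap(S^\circ\setminus N_\delta(E))$ for small $\delta>0$ by finitely many open balls $B_k=B(z_k,r_k)$ centered on $\gamma$, with bounded multiplicity and $B_k\cap E=\emptyset$; on each subpath $\beta_k$ of $\gamma$ inside $B_k$, Proposition~\ref{unif:Upper gradient for F} gives $\diam(\pi(f(\beta_k\cap S)))\le L\sqrt 2\sum_{i\notin J,\,\br Q_i\cap\beta_k\neq\emptyset}\rho(Q_i)$, and bounded overlap plus the choice of $J$ bounds the total by $CL\eta$. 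The leftover set $\gamma\cap S^\circ\cap N_\delta(E)$ is controlled via Lemma~\ref{unif:Lebesgue differentiation Lemma}, which ensures that peripheral disks clustering near $\gamma$ contribute a vanishing $\rho$-mass as $\delta\to 0$. Since $\eta$ is arbitrary, $\mathcal H^1(\pi(f(\gamma\cap S^\circ)))=0$.

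The delicate point is producing the bounded-multiplicity cover that avoids $E$ while controlling the tails coming from peripheral disks that are very close to but not intersected by $\gamma$. Without Lemma~\ref{unif:Lebesgue differentiation Lemma}, one would be unable to rule out a conspiracy of small nearby peripheral disks contributing non-negligibly to the image; it is precisely this lemma, whose proof uses the fatness and quasiball assumptions, that quantifies the absolute continuity of $\pi\circ f$ along generic paths and closes the argument.
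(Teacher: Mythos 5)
Your reduction via the extension $\tilde f$ and the intermediate value theorem is fine, and you have correctly located the difficulty, but the statement you reduce to, $\mathcal H^1(\pi(f(\gamma\cap S^\circ)))=0$, is genuinely stronger than the corollary and your sketch does not establish it. Two concrete problems. First, the covering argument double counts: a ball $B_k$ generally contains infinitely many maximal subpaths of $\gamma$, and a single peripheral disk $Q_i$ with $i\notin J$ can be met by subpaths lying in many different balls $B_k$, so bounded overlap of the balls does not prevent the same weight $\rho(Q_i)$ from being counted arbitrarily many times; the bound $CL\eta$ does not follow. Second, Lemma \ref{unif:Lebesgue differentiation Lemma} controls sums of weights over peripheral disks clustering near $\gamma$ (the sets $N_\delta(\gamma)$ and $D_\delta(\gamma)$); it says nothing about the image under $f$ of the part of $\gamma\cap S^\circ$ lying near the finitely many excluded disks $E=\bigcup_{i\in J}\br Q_i$, which is what you invoke it for. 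More fundamentally, a Luzin-N-type claim of the form ``$f$ maps the $\mathcal H^1$-null set $\gamma\cap S$ to a null set'' is exactly what the remark following the corollary warns is \emph{not} being asserted; even producing a single modified path $\tilde\gamma$ with $\mathcal H^1(f(\tilde\gamma\cap S))=0$ requires the iterative construction of Lemma \ref{unif:Regularity-Path hausdorff measure zero}, and the projected version you need is of comparable difficulty.

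The paper's proof sidesteps this issue entirely. After the same reduction to endpoints $x,y\in S$ whose own peripheral disks are avoided by $\gamma$, it fixes $\varepsilon>0$, chooses a finite set $J$ with $\sum_{i\notin J,\ \br Q_i\cap\gamma\neq\emptyset}\rho(Q_i)<\varepsilon$, and applies the splitting Lemma \ref{harmonic:3-Split path} (in the form of Remark \ref{harmonic:remark:split path}) to produce a finite chain of open subpaths $\gamma_1,\dots,\gamma_m$ meeting pairwise \emph{disjoint} sets of peripheral disks $\br Q_i$, $i\notin J$, in which consecutive subpaths either share an endpoint or have endpoints on a common peripheral circle $\partial Q_{j_k}$. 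The interval from $\pi(f(x))$ to $\pi(f(y))$ is then covered by the intervals $\pi(S_{j_k})$ together with the intervals between $\pi(f(x_k))$ and $\pi(f(y_k))$, and the latter contribute at most $\sqrt{2}L\varepsilon$ by the upper gradient inequality and the disjointness; letting $\varepsilon\to 0$ finishes the proof without ever controlling $f(\gamma\cap S^\circ)$. If you want to repair your route, the disjointness device you are missing is precisely this splitting lemma — a bounded-multiplicity ball cover cannot replace it.
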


\begin{remark}
The corollary does \textit{not} imply that for almost every $\gamma$ we have $\mathcal H^1(f(\gamma\cap S))=0$; cf.\ Lemma \ref{unif:Regularity-Path hausdorff measure zero}. To interpret its meaning, let $\gamma$ be a non-exceptional path joining $x$ and $y$, and $\pi$ be the orthogonal projection to the line passing through $f(x),f(y)$. Then the corollary says that the projection of the squares intersected by $f(\gamma)$ ``covers" the entire distance from $f(x)$ to $f(y)$.
\end{remark}

\begin{remark}\label{unif:Upper gradient-projection remark}
If $\{\lambda(Q_i)\}_{i\in \N}$ is a sequence in $\ell^2(\N)$, it can be proved that the family of paths $\Gamma$ in $\C$ satisfying 
\begin{align*}
\sum_{i:\br Q_i \cap \gamma \neq \emptyset}\lambda(Q_i)=\infty
\end{align*} 
has $2$-modulus zero. If we were using $Q_i$ instead of $\br Q_i$ under the summation, then this would follow immediately from Lemma \ref{unif:Zero modulus lemma}. The proof in our case is in fact exactly the same as the proof of Lemma \ref{unif:Zero modulus lemma}; see the proof of Lemma \ref{harmonic:2-weak modulus zero implies two modulus zero}.
\end{remark}

\begin{proof}
Let $\gamma \subset \Omega$ be a path such that the upper gradient inequality of Proposition \ref{unif:Upper gradient for F} holds for all open subpaths $\beta$ of $\gamma$ and such that $\sum_{i:\br Q_i\cap \gamma\neq \emptyset} \rho(Q_i) <\infty$. The latter holds for all curves outside a family conformal modulus zero, by the preceding remark.

If $x\in \partial Q_{i_x}$ for some $i_x\in \N$, then we let $x'\in \partial Q_{i_x}$ be the point of last exit of $\gamma$ from $Q_{i_x}$, and we similarly consider the point $y'\in \partial Q_{i_y}$ of first entry of $\gamma$ into $Q_{i_y}$ (after $x'$), in case $y\in \partial Q_{i_y}$. Since the differences $|\pi(f(x))-\pi(f(x'))|$ and $|\pi(f(y))-\pi(f(y'))|$ are controlled by $\mathcal H^1( \pi(S_{i_x}))$ and $\mathcal H^1(\pi(S_{i_y}))$, respectively, it suffices to prove the statement with $x$ and $y$ replaced by $x'$ and $y'$, respectively, and with $\gamma$ replaced by its open subpath connecting $x'$ and $y'$.

Hence, from now on we assume that $\gamma$ is an open path with endpoints $x,y$ and we suppose that $\gamma$ does not intersect the (closed) peripheral disks that possibly contain $x$ and $y$ in their boundary. Fix $\varepsilon>0$ and consider a finite index set $J\subset \N$ such that 
\begin{align}\label{unif:Upper gradient-projection-J bound}
\sum_{\substack{i:\br Q_i\cap \gamma\neq \emptyset \\i\notin J }} \rho(Q_i)<\varepsilon.
\end{align}
Assume $\gamma$ is parametrized as it runs from $x$ to $y$. Using the reformulation of Lemma \ref{harmonic:3-Split path} in Remark \ref{harmonic:remark:split path}, we may obtain finitely many open subpaths $\gamma_1,\dots,\gamma_m$ of $\gamma$ with the following properties:
\begin{enumerate}[(1)]
\item the upper gradient inequality of $f$ holds along each path $\gamma_k$, $k=1,\dots,m$,
\item the paths $\gamma_k$ intersect disjoint sets of peripheral disks $\br Q_i$, $i\in \N \setminus J$, and
\item the path $\gamma_1$ starts at $x_1=x$, the path $\gamma_m$ terminates at $y_m=y$, and in general the path $\gamma_k$ has endpoints $x_k,y_k\in S$ such that for $1\leq k\leq m-1$ we either have
\begin{itemize}
\item $y_k=x_{k+1}$, or
\item $y_k,x_{k+1}\in \partial Q_{j_k}$ for some $j_k\in \N$. The peripheral disks $\br Q_{j_k}$ are distinct and they are intersected by $\gamma$. 
\end{itemize}
We denote by $I\subset \{1,\dots,m\}$ the set of indices $k$ for which the second alternative holds. 
\end{enumerate}
The assumption that $\gamma$ does not intersect the (closed) peripheral disks possibly containing $x$ and $y$ in their boundary is essential, otherwise property (2) could fail.

For each $S_{j_k}$, $k\in I$, the image $\pi(S_{j_k})\subset \R$ is an interval. Thus, $\bigcup_{k\in I} \pi(S_{j_k})$ is a union of finitely many intervals that contain the points $\pi(f(y_{k}))$ and $ \pi(f(x_{k+1}))$, $k\in I$, in their closure. Without loss of generality, assume that $\pi (f(x))\leq \pi(f(y))$. The interval $ [\pi(f(x)), \pi(f(y))]$ is covered by the union of the set $\bigcup_{k\in I} \pi(S_{j_k})$ together with the closed intervals between $\pi (f(x_k))$ and $\pi(f(y_k))$, $k=1,\dots,m$. If $L$ is the Lipschitz constant of $\pi$, then we have
\begin{align*}
|\pi(f(x)) -\pi(f(y))|& \leq \mathcal H^1\biggl(\bigcup_{k\in I} \pi(S_{j_k})\biggr) + \sum_{k=1}^m |\pi(f(x_k)) -\pi(f(y_k))|\\
&\leq \mathcal H^1\biggl( \bigcup_{i: \br Q_i\cap \gamma\neq \emptyset}\pi( S_i )\biggr) + L \sum_{k=1}^m |f(x_k)-f(y_k)|\\
&\leq \mathcal H^1\biggl( \bigcup_{i: \br Q_i\cap \gamma\neq \emptyset}\pi( S_i )\biggr) + L \sum_{k=1}^m \sum_{i:\br Q_i\cap \gamma_k\neq \emptyset} \sqrt{2}\rho(Q_i)
\end{align*}
Since the paths $\gamma_k$ intersect disjoint sets of peripheral disks $\br Q_i$, $i\in \N\setminus J$, the latter term is bounded by 
\begin{align*}
\sqrt{2} L \sum_{\substack{i:\br Q_i\cap \gamma\neq \emptyset \\i\notin J }} \rho(Q_i)<\sqrt{2} L\varepsilon,
\end{align*}
where we used \eqref{unif:Upper gradient-projection-J bound}. Letting $\varepsilon\to 0$ finishes the proof.
\end{proof}

\section{Injectivity of \texorpdfstring{$f$}{f}}\label{unif:Section Injectivity}\index{uniformizing map!injectivity}

We will prove that $f$ is injective in two steps. First we show that $f\colon S \to \mathcal R$ is a \textit{light map}\index{light map}, i.e., the preimage of every point $z\in \mathcal R$ contains no non-trivial continua. Then we show that the preimage of every point $z\in \mathcal R$ is actually a single point. 

From now on, $f$ will denote the continuous extension $\widetilde f\colon \br \Omega\to [0,1]\times [0,D]$, as in the proof of  Corollary \ref{unif:Definition-Range-Extension}, which has the property that $\widetilde f(Q_i)=S_i$ whenever $S_i$, $i\in \N$, is a non-degenerate square. Also, recall that the coordinates $\tilde u$ and $\tilde v$ of $f$ are harmonic in the classical sense inside each $Q_i$, $i\in \N$, and that $\alpha_t=\tilde u^{-1}(t)$, $t\in [0,1]$.

\begin{lemma}\label{unif:Injectivity-light}
For every $z\in \mathcal R$ the set $f^{-1}(z)$ contains no non-trivial continua.
\end{lemma}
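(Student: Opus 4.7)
Suppose for contradiction that $E \subset f^{-1}(z)$ is a non-trivial continuum for some $z=(z_1,z_2)\in\mathcal R$. I would begin by eliminating peripheral disks from $E$: we have $E\subset\br\Omega$, so $E\cap Q_0=\emptyset$, and by Corollary~\ref{unif:Definition-Range-Extension} together with Lemma~\ref{unif:Definition-Squares disjoint interior} the extension of $f$ satisfies $f(Q_i)=S_i$ with $S_i\cap\mathcal R=\emptyset$, so $E\cap Q_i=\emptyset$ for every $i\in\N$. Hence $E\subset S$, $E\cap\br Q_i\subset\partial Q_i$ for every $i\in\N$, and $u\equiv z_1$, $v\equiv z_2$ on $E$; in particular $E\subset\alpha_{z_1}\cap S$.

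The main case, $z_1 \in \mathcal T$ (the full-measure ``good'' set of levels introduced just before Lemma~\ref{unif:Conjugate-Infimum not needed}), is handled immediately by Corollary~\ref{unif:Conjugate-Oscillation-Increasing}: since $v$ is strictly increasing along $\alpha_{z_1}\cap S$, it is injective on that set, so $\{v=z_2\}\cap\alpha_{z_1}\cap S$ contains at most one point, contradicting the non-triviality of $E$.

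The exceptional case $z_1\notin\mathcal T$ is more delicate since $\alpha_{z_1}$ need not be a simple curve (for instance, it may contain a ``plateau'' region where $\tilde u$ is locally constant), and Corollary~\ref{unif:Conjugate-Oscillation-Increasing} is therefore unavailable. My approach would be to approximate $z_1$ by sequences $s_n\nearrow z_1$ and $t_n\searrow z_1$ with $s_n,t_n\in\mathcal T$, so that $E$ lies in every open strip $A_{s_n,t_n}=\tilde u^{-1}((s_n,t_n))$, which is connected by Proposition~\ref{unif:Level sets}. For fixed distinct $x,y\in E$, Lemma~\ref{unif:Paths joining continua} provides good open paths $\gamma_n\subset A_{s_n,t_n}$ joining $x$ and $y$ that avoid the exceptional family of Proposition~\ref{unif:Upper gradient for F}. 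Applying Corollary~\ref{unif:Upper gradient-projection} with the vertical projection $\pi_2$, together with the localization of Dirichlet energy on level curves (Theorem~\ref{unif:Level sets-sums equal mass}) and the energy-shedding estimate of Lemma~\ref{unif:Level sets-Measure Derivative lemma}, I would show that the total vertical extent of the squares $S_i$ corresponding to peripheral disks met by $\gamma_n$ tends to $0$ as $n\to\infty$, while the strict monotonicity of $v$ on $\alpha_{s_n}\cap S$ and $\alpha_{t_n}\cap S$ must be used to extract a matching positive lower bound for the relevant variation, yielding the contradiction. The main technical obstacle is precisely this step: transferring the strictly-monotone behaviour of $v$ from the good approximating levels in $\mathcal T$ to the possibly-pathological level $z_1$, through careful modulus and upper-gradient bookkeeping on paths crossing the shrinking strip $A_{s_n,t_n}$.
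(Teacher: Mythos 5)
Your reduction to $E\subset S\cap\alpha_{z_1}$ is fine, and the case $z_1\in\mathcal T$ is indeed immediate from Corollary~\ref{unif:Conjugate-Oscillation-Increasing} (the paper itself uses exactly this observation later, in the proof of Lemma~\ref{unif:Injectivity}, to get injectivity on $\bigcup_{t\in\mathcal T}(\alpha_t\cap S)$). But the whole content of the lemma sits in the exceptional case $z_1\notin\mathcal T$ — note this includes $z_1=0,1$ and every extremal value $m_{Q_i}(u),M_{Q_i}(u)$ — and there your sketch has a genuine gap. First, the claimed decay is false: the peripheral disks met by $\gamma_n\subset A_{s_n,t_n}$ include every $Q_i$ with $m_{Q_i}(u)<z_1<M_{Q_i}(u)$, and these persist for all $n$ with fixed $\rho(Q_i)>0$; by Theorem~\ref{unif:Level sets-sums equal mass} the sum of $\rho(Q_i)$ over disks meeting a nearby good level is the constant $D(u)$, not a quantity tending to $0$. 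Second, and more fundamentally, no contradiction can come out of the upper gradient inequality (or Corollary~\ref{unif:Upper gradient-projection}) applied to a path joining two points $x,y\in E$: since $f(x)=f(y)$, the left-hand side is $0$ and the inequality is vacuous. Your argument never exploits the hypothesis $\diam(E)>0$ quantitatively, so there is no "matching positive lower bound" to be extracted; the monotonicity of $v$ on $\alpha_{s_n}\cap S$ and $\alpha_{t_n}\cap S$ says nothing about $x,y$, which lie on neither curve.

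The paper's proof is a single uniform argument with no case split on $z_1$. It picks a continuum $F\in\{\Theta_1,\Theta_3\}$ disjoint from $f^{-1}(z)$ and uses Proposition~\ref{unif:Paths-distance function}/Lemma~\ref{unif:Paths joining continua} to produce a positive-measure family of disjoint non-exceptional curves $\beta_s$ joining $E$ to $F$ — this is precisely where the non-triviality of $E$ enters. In the image it builds $N$ widely separated annuli $A_j$ centered at $z$, chosen so that (apart from at most two exceptional squares touching $z$) no square meets two of them, and assigns to $Q_i$ the weight $d_j(S_i)/(Nr_j)$, where $d_j(S_i)$ is the radial extent of $S_i\cap A_j$. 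Corollary~\ref{unif:Upper gradient-projection} with the radial projection $w\mapsto|w-z|$ shows each $\beta_s$ picks up weight at least $1/N$ from each annulus, hence total weight at least $1$; integrating over $s$, applying Cauchy--Schwarz, and using fatness of the squares ($d_j(S_i)^2\lesssim\mathcal H^2(S_i\cap A_j)$) yields $\delta^2N^2\leq CN$, a contradiction for large $N$. If you want to complete your proof you will need some version of this "modulus of curves through a point is zero, but the modulus of curves meeting a non-trivial continuum is not" mechanism; the strip-shrinking idea alone does not supply it.
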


The proof will follow from a modulus-type argument. Essentially, the carpet modulus of a family of curves passing through the point $z$ is zero; however, this would not be the case for the curves passing through $f^{-1}(z)$ if the latter contains a continuum. We will use the upper gradient inequality for $f$ in Proposition \ref{unif:Upper gradient for F} and its corollary to compare the modulus in the image and the preimage. In some sense, the map $f$ preserves carpet modulus, and this prevents a curve family of positive modulus from being mapped to a curve family of  modulus zero.

\begin{proof}
Assume that there exists a non-trivial continuum $E \subset f^{-1}(z)$ for some $z\in \mathcal R$.  Note that the preimage $ f^{-1}(z)$ cannot intersect both $\Theta_1$ and $\Theta_3$, so let $F$ be one of the two sets with $E\cap F=\emptyset$. We consider a small $R>0$ such that $ f^{-1}(B(z,R)) \cap F=\emptyset$. Such an $R$ exists because of the following general fact: If $g:X\to Y$ is a continuous map between metric spaces $X,Y$ and $X$ is compact, then for each $z\in Y$ and $\varepsilon>0$ there exists $R>0$ such that $g^{-1}(B(z,R)) \subset N_\varepsilon (g^{-1}(z))$.

By the structure of $\mathcal R$ (see Corollary \ref{unif:Definition-Range-Extension}), the point $z$, among other possibilities, might lie on a square $\partial S_{i_1}$, $i_1\in \N$, or it could be the common vertex of two intersecting squares $\partial S_{i_1},\partial S_{i_2}$, $i_1,i_2\in \N$. We construct closed annuli $A_j$ centered at $z$ in the following way. Let $A_1=\br A(z;r_1,R_1)=\br B(z,R_1)\setminus B(z,r_1)$, where $R_1=R$ and $r_1=R_1/2$. Let $R_2<r_1$ be so small that no square $S_i$ (except possibly for $S_{i_1},S_{i_2}$) intersects both $A_1$ and $A_2\coloneqq \br A(z;r_2,R_2)$, where $r_2=R_2/2$. This can be achieved because the squares near $z$ are arbitrarily small, with the possible exception of $S_{i_1}$ and $S_{i_2}$. We proceed inductively to obtain annuli $A_1,\dots,A_N$, for some fixed large $N$; cf.\ proof of Lemma \ref{unif:Zeta lemma}. We then replace each $A_j$ with $A_j\cap ([0,1]\times [0,D])$.

For $Q_i\cap f^{-1}(A_j)\neq \emptyset$ (equiv.\ $S_i\cap A_j\neq \emptyset$) we set $\lambda(Q_i)=\frac{1}{Nr_j} d_j(S_i)$, where $d_j(S_i)\coloneqq  \mathcal H^1(\{r \in[r_j,R_j]: S_i \cap B(z,r)\neq \emptyset \} )$. In other words, $d_j(S_i)$ is the the ``radial" diameter of the intersection $S_i\cap A_j$. If $S_i\cap A_j =\emptyset$ we set $\lambda(Q_i)=0$. Note that there exists a constant $C>0$ such that
\begin{align}\label{unif:Injectivity-light-d_j}
d_j(S_i)^2 \leq C \mathcal H^2 (S_i \cap A_j)
\end{align}
for all $i\in \N$ and $j=1,\dots,N$. This is true because, for example, the squares are uniformly fat sets; see also Remark \ref{unif:Fatness consequence}.

We now wish to construct a Lipschitz family of non-exceptional open paths in $\Omega$ that connects $E$ to $F$, but avoids the peripheral disk $\br Q_{i_1}$ in case $z\in \partial S_{i_1}$, or avoids the peripheral disks $\br Q_{i_1}$ and $\br Q_{i_2}$ in case $z\in \partial S_{i_1}\cap \partial S_{i_2}$. Here non-exceptional means that the paths, as well as their subpaths, avoid a given path family of $2$-modulus zero. In particular, require that for all open subpaths of these non-exceptional paths  the conclusion of Corollary \ref{unif:Upper gradient-projection} holds.

If none of the aforementioned two scenarios occur (i.e., $z\in \partial S_{i_1}$ or $z\notin \partial S_{i_1}\cap \partial S_{i_2}$), then we consider an open path $\tau$ joining $E$ to $F$. We can make now direct use of Lemma \ref{unif:Paths joining continua} to obtain a small $\delta>0$ and non-exceptional paths $\beta_s$, $s\in (0,\delta)$, that connect $E$ to $F$. If $z \in \partial S_{i_1}$ (i.e., we are in the first of the two scenarios), then we split into two cases. In case $E\setminus \partial Q_{i_1}\neq \emptyset$, we consider a point $x\in E\setminus \partial Q_{i_1}$  and connect it to $F$ with a path $\tau \subset \Omega\setminus \br Q_{i_1}$. Then for a small $\delta>0$ the perturbations $\beta_s$ given by Lemma \ref{unif:Paths joining continua} do not intersect $\br Q_{i_1}$ and have the desired properties. In case $E\subset \partial Q_{i_1}$, $E$ has to contain an arc; we choose $x$ to be an ``interior" point of this arc. We connect $E\ni x$ to $F$ with an open path $\tau \subset \Omega \setminus \br Q_{i_1}$; the latter region is just a topological annulus. Lemma \ref{unif:Paths joining continua} yields non-exceptional paths $\beta_s$, $s\in (0,\delta)$, but this time the paths are not necessarily disjoint from $\br Q_{i_1}$. To amend this, we consider a possibly smaller $\delta>0$ and open subpaths of $\beta_s$ that we still denote by $\beta_s$, which connect $E$ to $F$ without entering $\br Q_{i_1}$. Finally, one has to treat the case $z\in \partial S_{i_1}\cap \partial S_{i_2}$, but this is done exactly as the case $z\in \partial S_{i_1}$.

For the moment, we fix a path $\beta_s$ and we consider subpaths of $\beta_s$ as follows. Assume $\beta_s$ is parametrized as it runs from $F$ to $E$. Let $\gamma_j$ be the open subpath from the point of last entry of $\beta_s$ into $f^{-1}(A_j)$ until the point of first entry into $f^{-1}(B(z,r_j))$, $j=1,\dots,N$. Then $\gamma_j$ intersects only peripheral disks meeting $ f^{-1}(A_j)$. Hence, by construction of the annuli $A_j$, the paths $\gamma_j$ for distinct indices $j$ intersect disjoint sets of peripheral disks. 

Let $\pi \colon \R^2\to \R$ be the ``projection" $w=z+re^{i\theta} \mapsto r$, so $d_j(S_i)= \mathcal H^1(\pi (S_i))$. If the endpoints of $\gamma_j$ lie in $S$, then by Corollary \ref{unif:Upper gradient-projection} we have:
\begin{align}\label{unif:Injectivity-light-lambda gamma_j}
\sum_{i: \br Q_i\cap \gamma_j\neq \emptyset} \lambda(Q_i)= \frac{1}{Nr_j} \sum_{i: \br Q_i\cap \gamma_j\neq \emptyset}d_j(S_i) \geq \frac{1}{Nr_j} r_j =\frac{1}{N}.
\end{align}  
If this is not the case, then $\gamma_j$ enters or exits $ f^{-1}(A_j)$ through some peripheral disks $Q_{k},Q_{l}$. Applying Corollary \ref{unif:Upper gradient-projection} to a subpath of $\gamma_j$ that has its endpoints on $S$, and considering the contribution of $d_j(Q_{k}), d_j(Q_l)$ we also obtain \eqref{unif:Injectivity-light-lambda gamma_j} in this case. 

From now on, to fix our notation, we assume that the exceptional squares $S_{i_1},S_{i_2}$ that wish to exclude actually exist (if not, then one just has to ignore the indices $i_1,i_2$ in what follows). Summing in \eqref{unif:Injectivity-light-lambda gamma_j} over $j$ we obtain
\begin{align*}
1\leq \sum_{\substack{i:\br Q_i\cap \beta_s\neq \emptyset \\i\neq i_1,i_2} } \lambda(Q_i)\leq \sum_{\substack{i:\br Q_i\cap \psi^{-1}(s)\neq \emptyset \\i\neq i_1,i_2} } \lambda(Q_i).
\end{align*}
Here $\psi$ is as in Lemma \ref{unif:Paths joining continua}. Observe that for all $i\in \N$ the functions $s\mapsto \x_{\br Q_i \cap \psi^{-1}(s)}$ are upper semi-continuous, thus measurable. We integrate over $s\in (0,\delta)$ and we obtain using Fubini's theorem:
\begin{align*}
\delta \leq  \sum_{j=1}^N \frac{1}{Nr_j} \sum_{\substack{i:S_i\cap A_j\neq \emptyset \\i\neq i_1,i_2}}d_j(S_i) \int_{0}^\delta \x_{\br Q_i\cap \psi^{-1}(s)}\, ds.
\end{align*}
The fact that $\psi$ is 1-Lipschitz yields  
\begin{align}\label{unif:Injectivity-light-1}
\delta\leq \sum_{j=1}^N \frac{1}{Nr_j} \sum_{\substack{i:S_i\cap A_j\neq \emptyset \\i\neq i_1,i_2}}d_j(S_i) \diam(Q_i).
\end{align}
If we interchange the sums and apply the Cauchy-Schwarz inequality in the right hand side we have:
\begin{align*}
&\frac{1}{N} \sum_{i\in \N\setminus \{i_1,i_2\}} \diam(Q_i) \sum_{j:S_i\cap A_j\neq \emptyset} \frac{d_j(S_i)}{r_j} \\
&\quad\quad \leq  \frac{1}{N} \left( \sum_{i\in \N} \diam(Q_i)^2 \right)^{1/2} \left[ \sum_{i\in \N\setminus \{i_1,i_2\}} \left( \sum_{j:S_i\cap A_j\neq \emptyset} \frac{d_j(S_i)}{r_j}  \right)^2 \right]^{1/2}.
\end{align*}
Observe that the first sum is a finite constant $C^{1/2}$ by the quasiballs assumption \eqref{unif:Quasi-balls}. In the second sum, note that each $S_i$, $i\neq i_1,i_2$, intersects only one annulus $A_j$ so the inner sum actually is a sum over a single term. Combining these observations with \eqref{unif:Injectivity-light-1}, we have
\begin{align*}
\delta^2 N^2 \leq C \sum_{i\in \N\setminus \{i_1,i_2 \}} \left( \sum_{j:S_i\cap A_j\neq \emptyset} \frac{d_j(S_i)^2}{r_j^2}\right) =C \sum_{j=1}^N \frac{1}{r_j^2}\sum_{\substack{i:S_i\cap A_j\neq \emptyset \\i\neq i_1,i_2}} d_j(S_i)^2.
\end{align*}  
By \eqref{unif:Injectivity-light-d_j}, $d_j(S_i)^2\leq C\mathcal H^2(S_i\cap A_j)$. Also, 
$$\sum_{i:S_i\cap A_j \neq \emptyset} \mathcal H^2(S_i\cap A_j) \leq  \mathcal H^2(A_j)=\pi\cdot (R_j^2-r_j^2)= 3\pi r_j^2.$$
Hence,
\begin{align*}
\delta^2 N^2\leq 3\pi C \sum_{j=1}^N \frac{1}{r_j^2} r_j^2 =3\pi C N.
\end{align*}
This is a contradiction as $N\to \infty$.
\end{proof}

We have the following strong conclusion:

\begin{corollary}\label{unif:Injectivity-rho non zero}
For all $i\in \N$ we have $\rho(Q_i)=\osc_{Q_i}(u)=\osc_{Q_i}(v)\neq 0$. In particular, no peripheral circle $\partial Q_i$ is mapped under $f$ to a point, and all squares $S_i$ are non-degenerate.
\end{corollary}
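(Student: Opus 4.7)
The plan is to deduce this immediately from the lightness result just established. Suppose for contradiction that $\rho(Q_{i_0}) = \osc_{Q_{i_0}}(u) = 0$ for some $i_0 \in \N$. By Lemma \ref{unif:Conjugate-Values on boundaries} we have $\osc_{Q_{i_0}}(v) = \rho(Q_{i_0}) = 0$ as well, so both coordinate functions of $f = (u,v)$ are constant on $\partial Q_{i_0}$. Hence $f(\partial Q_{i_0}) = \{z\}$ for a single point $z \in \C$.

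Since $\partial Q_{i_0} \subset S$ and $f(S) = \mathcal R$ by Corollary \ref{unif:Definition-Range-Extension}, we have $z \in \mathcal R$. Then the Jordan curve $\partial Q_{i_0}$ is a non-trivial continuum contained in $f^{-1}(z)$, contradicting Lemma \ref{unif:Injectivity-light}. Therefore $\rho(Q_i) > 0$ for every $i \in \N$, and the equality $\osc_{Q_i}(u) = \osc_{Q_i}(v)$ is again supplied by Lemma \ref{unif:Conjugate-Values on boundaries}.

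For the two final assertions: having $\rho(Q_i) > 0$ means the rectangle $\br S_i = [m_{Q_i}(u),M_{Q_i}(u)] \times [m_{Q_i}(v),M_{Q_i}(v)]$ has positive sidelength $\rho(Q_i)$, so $S_i$ is non-degenerate; and since $f(\partial Q_i) = \partial S_i$ (Corollary \ref{unif:Definition-Range-Extension}) is a non-trivial Jordan curve (an honest square), $f$ does not collapse any $\partial Q_i$ to a point. There is no real obstacle here — the whole argument is just an application of the light-map lemma to the putative collapsed peripheral circle, once one observes that constancy of $u$ on $\partial Q_i$ forces constancy of $v$ on $\partial Q_i$ via the oscillation identity already recorded.
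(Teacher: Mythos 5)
Your argument is correct and is exactly the one the paper intends: the corollary is stated as an immediate consequence of Lemma \ref{unif:Injectivity-light}, using the identity $\osc_{Q_i}(v)=\osc_{Q_i}(u)=\rho(Q_i)$ from Lemma \ref{unif:Conjugate-Values on boundaries} to conclude that $\rho(Q_{i_0})=0$ would collapse the non-trivial continuum $\partial Q_{i_0}$ to a single point of $\mathcal R$, contradicting lightness. No issues.
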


\begin{remark}
The non-vanishing of the ``gradient" of a non-constant carpet-harmonic function is not clear in general. The difficulty in obtaining such a result in non-linear potential theory is also reflected by the fact that in dimension $n\geq 3$ it is not known whether the gradient of a non-constant $p$-harmonic function can vanish on an open set.
\end{remark}

\begin{remark}
In \cite{Schramm:Tiling} Schramm shows that finite triangulations of quadrilaterals can be transformed to square tilings of rectangles by a similar method. However, non-degeneracies cannot be avoided in his case, namely a vertex of the triangulation might correspond to a degenerate square under the correspondence. It is quite a surprise that, if one regards the carpet as an ``infinite triangulation", these degeneracies disappear.
\end{remark}

\begin{corollary}\label{unif:Injectivity:empty interior level sets}
For each $t\in [0,1]$ the level set $\alpha_t=\tilde u^{-1}(t)$ has empty interior.
\end{corollary}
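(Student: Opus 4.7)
The plan is to argue by contradiction and split into two cases based on whether an alleged interior point of $\alpha_t$ lies in some peripheral disk $Q_i$ or not. Suppose $\alpha_t$ had non-empty interior rel.\ $\br\Omega$, giving a non-empty rel.\ open $U \subset \alpha_t$. Since $\Omega$ is a Jordan region, $\partial\Omega$ has empty interior rel.\ $\br\Omega$, so $U \cap \Omega \neq \emptyset$; thus there exists an open Euclidean ball $B(x,r) \subset \Omega$ with $\tilde u \equiv t$ on $B(x,r)$.

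First, I would handle the case $B(x,r) \cap Q_{i_0} \neq \emptyset$ for some $i_0 \in \N$. Recall from Proposition \ref{unif:Maximum principle for tilde u} that $\tilde u$ is harmonic (in the classical sense) inside each peripheral disk $Q_{i_0}$. Since $\tilde u \equiv t$ on the non-empty open set $B(x,r) \cap Q_{i_0}$, the identity principle for planar harmonic functions forces $\tilde u \equiv t$ on all of $Q_{i_0}$, and then on $\br Q_{i_0}$ by continuity. But this means $\osc_{Q_{i_0}}(u) = 0$, directly contradicting Corollary \ref{unif:Injectivity-rho non zero}, which asserts $\rho(Q_i) \neq 0$ for every $i \in \N$.

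The remaining case is $B(x,r) \cap Q_i = \emptyset$ for all $i \in \N$. Then $B(x,r) \subset \br\Omega \setminus \bigcup_{i \in \N} Q_i = S$. This contradicts the defining property of a Sierpi\'nski carpet that $S$ has empty interior (see Section \ref{unif:Section Introduction}). Hence neither case is possible and $\alpha_t$ must have empty interior.

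There is no serious obstacle here; the statement follows immediately from the two inputs above (harmonicity of $\tilde u$ inside peripheral disks and the non-vanishing of $\rho(Q_i)$ from the preceding corollary), together with the trivial fact that Sierpi\'nski carpets have empty interior. The subtlety, if any, is simply to remember to work with the Poisson-extended function $\tilde u$ rather than $u$, so that the identity principle applies inside each $Q_i$.
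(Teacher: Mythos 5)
Your proof is correct and follows essentially the same route as the paper: the paper's own argument also observes that a non-empty interior of $\alpha_t$ forces some $Q_i\cap\alpha_t$ to contain an open set (using that $S$ has empty interior), and then invokes harmonicity of $\tilde u$ on $Q_i$ together with Corollary \ref{unif:Injectivity-rho non zero} to reach the same contradiction. Your explicit case split and the remark about interior rel.\ $\br\Omega$ just make precise what the paper leaves implicit.
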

\begin{proof}
Suppose that $\alpha_t$ has non-empty interior for some $t\in [0,1]$. Then for some peripheral disk $Q_i$, $i\in \N$, the intersection $Q_i\cap \alpha_t$ contains an open set. Since $\tilde u$ is harmonic in $Q_i$ (in the classical sense), it follows that $\tilde u$ is constant in $\br Q_i$, and thus $\rho(Q_i)=0$. This contradicts Corollary \ref{unif:Injectivity-rho non zero}.
\end{proof}

Recall that if a level set $\alpha_t$, $t\in \mathcal T$, intersects a peripheral circle $\partial Q_i$, then it intersects it in precisely two points. Using Lemma \ref{unif:Injectivity-light} we obtain a better description of the level sets $\alpha_t$ also for $t\notin \mathcal T$.

\begin{lemma}\label{unif:Injectivity-level sets}
Fix $i\in \N$. For $t\in \{m_{Q_i}(u) , M_{Q_i}(u) \}$ the level set $\alpha_t$ intersects $\partial Q_i$ in a connected set, i.e., in an arc. Let $\beta_1$ be the arc corresponding to $t=m_{Q_i}(u)$ and $\beta_3$ be the arc corresponding to $t=M_{Q_i}(u)$. For all $t\in (m_{Q_i}(u),M_{Q_i}(u))$ the level set $\alpha_t$ intersects $\partial Q_i$ in exactly two points, each of which is located on one of the two complementary arcs of $\beta_1$ and $\beta_3$ in $\partial Q_i$. 

Furthermore, $\Theta_1=\alpha_0\cap \partial \Omega$ and $\Theta_3=\alpha_1\cap \partial \Omega$. Finally, for all $t\in (0,1)$ the intersection $\alpha_t\cap \partial \Omega$ contains two points, one on $\Theta_2$ and one on $\Theta_4$.
\end{lemma}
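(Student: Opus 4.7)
The strategy is to analyze the structure of $u\big|_{\partial Q_i}$ for each $i\in \N$ using three ingredients: Corollary \ref{unif:Injectivity-rho non zero} (so $\rho(Q_i)>0$), the lightness of $f$ from Lemma \ref{unif:Injectivity-light}, and the precise count of preimages from Corollary \ref{unif:Level sets-exactly two points} combined with the identification of $v$-values on $\alpha_t\cap \partial Q_i$ from Lemma \ref{unif:Conjugate-Values on boundaries}. First I would observe that, since $\tilde u$ is harmonic in $Q_i$ with boundary values $u\big|_{\partial Q_i}$ and $\rho(Q_i)>0$, the strong maximum principle forces $m_{Q_i}(u)<\tilde u(x)<M_{Q_i}(u)$ on $\inter Q_i$, so that $\alpha_{m_{Q_i}(u)}\cap \br Q_i=E_m\coloneqq\{x\in \partial Q_i: u(x)=m_{Q_i}(u)\}$ and likewise $\alpha_{M_{Q_i}(u)}\cap \br Q_i=E_M$.

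The heart of the argument is to show that $u\big|_{\partial Q_i}$ has no local extremum with value in the open interval $(m_{Q_i}(u), M_{Q_i}(u))$. A non-strict local extremum would mean $u\equiv t_0$ on some sub-arc $I\subset \partial Q_i$ with $t_0\in (m_{Q_i}(u), M_{Q_i}(u))$; then $f(I)\subset \partial S_i\cap (\{t_0\}\times \R)$ lies in a two-point set, so the connected $f(I)$ is a single point, contradicting lightness. To rule out a strict local maximum at a point $p$ with $u(p)=t_0$ in this range, I would pick $t_n\in \mathcal T\cap (m_{Q_i}(u), M_{Q_i}(u))$ with $t_n\to t_0^-$ (possible because $\mathcal T$ is cocountable in $(0,1)$), so that by Corollary \ref{unif:Level sets-exactly two points} the set $u^{-1}(t_n)\cap \partial Q_i$ consists of exactly two points $x_2(t_n), x_4(t_n)$ satisfying $v(x_2(t_n))=m_{Q_i}(v)$ and $v(x_4(t_n))=M_{Q_i}(v)$. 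The strict local maximum structure at $p$ supplies at least two preimages of $t_n$ in any fixed neighborhood of $p$ for $n$ large; any further point $q\in u^{-1}(t_0)\cap \partial Q_i\setminus\{p\}$ would supply at least one additional preimage near $q$ (either a transversal crossing of the level $t_0$ or another local extremum separated from $p$ by necessary crossings), pushing the total above two and contradicting the count. Hence $u^{-1}(t_0)\cap \partial Q_i=\{p\}$ and both $x_2(t_n), x_4(t_n)$ converge to $p$; continuity of $v$ then yields $v(p)=m_{Q_i}(v)=M_{Q_i}(v)$, contradicting $\rho(Q_i)>0$. The same argument applied to $-u$ excludes strict local minima.

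Once $u\big|_{\partial Q_i}$ admits no local extremum with interior value, the lemma follows quickly. Any arc in $\partial Q_i\setminus (E_m\cup E_M)$ joining two distinct components of $E_m$ would contain an interior local maximum with value in $(m_{Q_i}(u), M_{Q_i}(u)]$; since such a maximum cannot lie strictly inside, it must equal $M_{Q_i}(u)$ and meet $E_M$, which is excluded by the arc being disjoint from $E_M$. Thus components of $E_m$ and $E_M$ alternate on $\partial Q_i$ and are equal in number, and on each of the resulting complementary arcs $u$ is strictly monotone between $m_{Q_i}(u)$ and $M_{Q_i}(u)$. If there were $k\geq 2$ components of each, the $2k$ complementary arcs would yield $2k\geq 4$ preimages of every $t\in \mathcal T\cap (m_{Q_i}(u), M_{Q_i}(u))$, again contradicting Corollary \ref{unif:Level sets-exactly two points}. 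Hence $\beta_1\coloneqq E_m$ and $\beta_3\coloneqq E_M$ are connected arcs (possibly singletons), $\partial Q_i\setminus (\beta_1\cup \beta_3)$ has two components, and strict monotonicity on each gives the required one-point-per-component description of $\alpha_t\cap \partial Q_i$ for $t\in (m_{Q_i}(u), M_{Q_i}(u))$.

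For the claims about $\partial \Omega$, I would use the same template with $\Theta_2, \Theta_4$ playing the role of complementary arcs: Proposition \ref{unif:Level sets} supplies the connectedness of $\alpha_t\cap \Theta_j$, and a constancy arc in $\Theta_2$ on which $u\equiv t_0$ would, combined with $v\equiv 0$ on $\Theta_2$ from Lemma \ref{unif:Conjugate-Values on boundaries}, map under $f$ to the single point $(t_0, 0)$, contradicting lightness. Therefore $u\big|_{\Theta_2}$ and $u\big|_{\Theta_4}$ are strictly monotone continuous surjections onto $[0,1]$, from which $\Theta_1=\alpha_0\cap \partial \Omega$, $\Theta_3=\alpha_1\cap \partial \Omega$, and the two-point description of $\alpha_t\cap \partial \Omega$ for $t\in (0,1)$ are immediate. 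The main obstacle is the topological bookkeeping used to rule out a strict local extremum at $p$ with value in $(m_{Q_i}(u), M_{Q_i}(u))$: one must argue that no configuration of $u^{-1}(t_0)\cap \partial Q_i$ beyond the single point $\{p\}$ is compatible with the preimage count of two on a dense sequence $t_n\to t_0$, which requires a careful analysis of how isolated zeros of $u-t_0$ on $\partial Q_i$ contribute local preimages of nearby levels.
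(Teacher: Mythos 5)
Your overall strategy is sound and, once the one problematic step is repaired, it is essentially the paper's argument in a different packaging: both proofs ultimately rest on counting preimages of levels $t'\in\mathcal T$ on $\partial Q_i$ against the bound of two from Corollary \ref{unif:Level sets-exactly two points} (and Lemma \ref{unif:Level sets-at most two points of intersection}), and on using the lightness of $f$ from Lemma \ref{unif:Injectivity-light}, via the constancy of $v$ on the relevant arcs from Lemma \ref{unif:Conjugate-Values on boundaries}, to collapse constancy arcs to points. Your reduction of the structure of $u\big|_{\partial Q_i}$ to the non-existence of local extrema with values in $(m_{Q_i}(u),M_{Q_i}(u))$, the alternation/monotonicity bookkeeping that follows, and the treatment of $\partial\Omega$ are all correct.

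The gap is exactly where you flag it: the exclusion of a \emph{strict} local maximum at $p$ with $u(p)=t_0\in(m_{Q_i}(u),M_{Q_i}(u))$. The claim that any further point $q\in u^{-1}(t_0)\cap\partial Q_i\setminus\{p\}$ supplies an additional preimage of the levels $t_n\to t_0^-$ near $q$ is false as stated: near such a $q$ one may have $u\geq t_0$ (for instance $q$ could be a strict local minimum at value $t_0$, or a point of a Cantor-type level set around which $u$ never dips below $t_0$), so $q$ contributes nothing to $u^{-1}(t_n)$ for $t_n<t_0$. Your dichotomy is also lopsided: the alternative branch $u^{-1}(t_0)\cap\partial Q_i=\{p\}$ is already vacuous, because then $u\neq t_0$ on the connected set $\partial Q_i\setminus\{p\}$ while $u<t_0$ there near $p$, forcing $u<t_0$ on all of $\partial Q_i\setminus\{p\}$ and hence $M_{Q_i}(u)=t_0$, contradicting $t_0<M_{Q_i}(u)$ with no appeal to $v$ at all. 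The correct repair is a global count of the kind the paper uses for the extreme levels: pick $w\in\partial Q_i$ with $u(w)=M_{Q_i}(u)>t_0$, and points $z_1,z_2$ with $u(z_j)<t_0$ on either side of $p$ (these exist by strictness of the local maximum). Each of the two arcs of $\partial Q_i\setminus\{p,w\}$ then contains, for every $s\in(\max_j u(z_j),\,t_0)$, at least two preimages of $s$ — one between $p$ and $z_j$ and one between $z_j$ and $w$ — so $u^{-1}(s)\cap\partial Q_i$ has at least four points; choosing such an $s$ in $\mathcal T$ contradicts Corollary \ref{unif:Level sets-exactly two points}. With this substitution (and its mirror image for strict local minima) your proof goes through.
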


\begin{proof}
Since $\osc_{Q_i}(u)>0$ by Corollary \ref{unif:Injectivity-rho non zero}, it follows that $m_{Q_i}(u)< M_{Q_i}(u)$. Suppose that for $t=m_{Q_i}(u)$ the set $\alpha_t\cap\partial Q_i$ has at least two components $E_1,E_2\subset \partial Q_i$. The sets $E_1$ and $E_2$ are closed arcs or points, they are disjoint, and $\partial Q_i \setminus (E_1\cup E_2)$ has two components $F_1,F_2\subset \partial Q_i$, which are open arcs. The function $u$ is non-constant near the endpoints of $F_j$, on which it has the value $t$ for $j=1,2$. In fact for $j=1,2$ we can find disjoint arcs $I_{j,k} \subset F_j$, $k=1,2$, on which $u$ is non-constant and attains the value $t$ at one of the endpoints of $I_{j,k}$, $k=1,2$. By continuity, it follows that $\bigcap_{j,k=1,2}u(I_{j,k})$ contains an  interval of the form $(t,t+\varepsilon)$ for some $\varepsilon>0$. The intermediate value theorem implies that we can find some $t'\in \mathcal T\cap (t,t+\varepsilon)$ such that $\alpha_{t'}\cap I_{j,k}\neq \emptyset$ for $j,k=1,2$.  Hence, $\alpha_{t'} \cap \partial Q_i$ contains more than two points and this contradicts Corollary \ref{unif:Level sets-exactly two points}. 

Let $\beta_1\subset \partial Q_i$ be the closed arc that is the unique preimage in $\partial Q_i$ under $f$ of the left vertical side of $\partial S_i$. The same conclusion holds for $t=M_{Q_i}(u)$, and we consider the corresponding closed arc $\beta_3$.

Let $\beta_2,\beta_4 \subset \partial Q_{i}$ be the complementary closed arcs of the two extremal arcs, numbered in a counter-clockwise fashion, exactly as we numbered the sides of $\partial \Omega$. Since $f(\partial Q_i)=\partial S_i$ (by Corollary \ref{unif:Definition-Range-Extension}), the images of $\beta_2$ and $\beta_4$ are the bottom and top sides of $\partial S_i$; here it is not necessary that $\beta_2$ is mapped to the bottom side and $\beta_4$ is mapped to the top side, but it could be the other way around. It follows that $v$ is constant on each of the arcs $\beta_2,\beta_4$.

By an application of the intermediate value theorem followed by the use of Corollary \ref{unif:Level sets-exactly two points}, one sees that for all $t\in (m_{Q_i}(u),M_{Q_i}(u))$ the intersection $ \alpha_t\cap \partial Q_i$ must have two components, one in $\beta_2$, and one in $\beta_4$. Since $u$ and $v$ are constant on each component, these components must be singletons by Lemma \ref{unif:Injectivity-light}.

The set $\alpha_0\cap \partial \Omega$ contains $\Theta_1$ and is contained in $\partial \Omega$. If there exists a point $x\in \alpha_0\cap \Theta_2\setminus \Theta_1$, then there has to be an entire arc $E\subset \alpha_0\cap \Theta_2$. Otherwise, by the intermediate value theorem, there would exist level sets $\alpha_t$, $t\in \mathcal T$, that intersect $\Theta_2$ in at least two points, a contradiction to Lemma \ref{unif:Level sets-at most two points of intersection}. However, $v$ is constant equal to $0$ on $E$ by Lemma \ref{unif:Conjugate-Values on boundaries}, which implies that the continuum $E$ is mapped to a point under $f$. This contradicts Lemma \ref{unif:Injectivity-light}. The same argument applies with $\Theta_4$ in the place of $\Theta_2$ and yields that $\Theta_1=\alpha_0\cap \partial \Omega$. Similarly, $\Theta_3=\alpha_1\cap \partial \Omega$.

Finally, we already know from Proposition \ref{unif:Level sets} that for all $t\in (0,1)$ the intersection $\alpha_t\cap \partial \Omega$ has two components, one on $\Theta_2$ and one on $\Theta_4$. Again, by Lemma \ref{unif:Injectivity-light} these components have to be singletons.  
\end{proof}

An immediate corollary is the following. We use notation from the proof of the preceding lemma.

\begin{corollary}\label{unif:Injectivity-top-bottom}
For each peripheral disk $Q_i$, $i\in \N$,  the arcs $\beta_2$ and $\beta_4$ are mapped injectively onto the bottom and top sides of $\partial S_i$, respectively. Furthermore, $\Theta_2$ and $\Theta_4$ are mapped injectively onto the bottom and top sides of $[0,1]\times [0,D]$, respectively.
\end{corollary}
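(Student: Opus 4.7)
The plan is to combine the level-set description from Lemma \ref{unif:Injectivity-level sets} with the winding-number observation built into the proof of Corollary \ref{unif:Definition-Range-Extension}. Throughout I write $c_2 = v|_{\beta_2}$ and $c_4 = v|_{\beta_4}$, which are constants by the proof of Lemma \ref{unif:Injectivity-level sets}.

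First I would handle injectivity of $f|_{\beta_2}$, with the argument for $\beta_4$ being identical. The endpoints of $\beta_2$ lie on $\beta_1$ and on $\beta_3$, so $u$ attains the values $m_{Q_i}(u)$ and $M_{Q_i}(u)$ at them. By Lemma \ref{unif:Injectivity-level sets}, for every $t \in (m_{Q_i}(u), M_{Q_i}(u))$ the set $\alpha_t \cap \beta_2$ is a single point. A continuous function on a closed arc that takes the two extreme values at the endpoints and every intermediate value at exactly one interior point must be strictly monotonic (any local extremum in the interior would, by the intermediate value theorem, force some value to be attained twice). Therefore $u|_{\beta_2}$ is a homeomorphism onto $[m_{Q_i}(u), M_{Q_i}(u)]$, and since $v \equiv c_2$ on $\beta_2$, the map $f|_{\beta_2} = (u|_{\beta_2}, c_2)$ is a homeomorphism onto the horizontal segment $[m_{Q_i}(u), M_{Q_i}(u)] \times \{c_2\}$, which is one of the two horizontal sides of $\partial S_i$.

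Next I would identify $\{c_2, c_4\} = \{m_{Q_i}(v), M_{Q_i}(v)\}$ in the correct order. For this I invoke the orientation argument already carried out in the proof of Corollary \ref{unif:Definition-Range-Extension}: the continuous extension $\widetilde f|_{\partial Q_i}$ has winding number $+1$ about every point of $\mathrm{int}(S_i)$, so the cyclic counter-clockwise order of the four arcs $\beta_1, \beta_2, \beta_3, \beta_4$ on $\partial Q_i$ is sent to the counter-clockwise cyclic order of the four sides on $\partial S_i$. Since $\beta_1$ maps to the left side and $\beta_3$ to the right side of $\partial S_i$, the two remaining arcs must map, in counter-clockwise order, to the bottom and then the top. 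This forces $\beta_2 \to \text{bottom}$ and $\beta_4 \to \text{top}$, i.e., $c_2 = m_{Q_i}(v)$ and $c_4 = M_{Q_i}(v)$.

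For the claim about $\Theta_2$ and $\Theta_4$ the argument is a verbatim replay of the previous two paragraphs with the outer peripheral circle in place of $\partial Q_i$: by Lemma \ref{unif:Conjugate-Values on boundaries} we have $v \equiv 0$ on $\Theta_2$ and $v \equiv D$ on $\Theta_4$, while by Lemma \ref{unif:Injectivity-level sets} each $t \in (0,1)$ has a unique preimage under $u$ in each of $\Theta_2$ and $\Theta_4$. The same monotonicity reasoning yields that $u|_{\Theta_2}$ and $u|_{\Theta_4}$ are homeomorphisms onto $[0,1]$, making $f|_{\Theta_2}$ and $f|_{\Theta_4}$ injections onto the bottom and top sides of $[0,1] \times [0,D]$. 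The one delicate point in the whole plan is the orientation step in the second paragraph; but since the required degree/homotopy computation is already performed in Corollary \ref{unif:Definition-Range-Extension}, it is merely a matter of stating its consequence on the cyclic order of the arcs rather than proving anything new.
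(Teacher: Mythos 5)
Your proposal is correct and follows essentially the same route as the paper: injectivity comes from the fact that each level set $\alpha_t$, $t\in(m_{Q_i}(u),M_{Q_i}(u))$, meets $\beta_2$ (resp.\ $\beta_4$) in exactly one point (Lemma \ref{unif:Injectivity-level sets}) together with the constancy of $v$ on these arcs, while the bottom/top identification comes from the winding-number $+1$ computation already done in Corollary \ref{unif:Definition-Range-Extension}. The only difference is the order of the two steps and your slightly more explicit monotonicity phrasing, which changes nothing of substance.
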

\begin{proof}
Recall that the winding number of $f\big|_{\partial \Omega}$ around every point of $(0,1)\times (0,D)$ is $+1$ (see proof of Corollary \ref{unif:Definition-Range-Extension}). Moreover, we have 
$$f(\br \Omega \setminus Q_i)= [0,1]\times [0,D]\setminus S_i,$$
by Corollary \ref{unif:Definition-Range-Extension} and the fact that $f(Q_j)=S_j$ for all $j\in \N$; see the comments before Lemma \ref{unif:Injectivity-light}. It follows by homotopy that the winding number of $f\big|_{\partial Q_i}$ around each point of $S_i$ is $+1$. Hence, the arcs $\beta_2$ and $\beta_4$ of $\partial Q_i$ must be mapped onto the bottom and top sides of $\partial S_i$, respectively.

Regarding the injectivity claim, note that $f$ is injective when restricted to the ``interior" of the arc $\beta_2$, since for each $t\in (m_{Q_i}(u),M_{Q_i}(u))$ the level set $\alpha_t$ intersects this arc at one point, by the previous lemma. By continuity, the endpoint $\beta_1\cap \beta_2$ (strictly speaking, $\beta_1\cap \beta_2$ is a singleton set containing one point) of $\beta_2$ has to be mapped to the bottom left corner of the square $S_i$, and the endpoint $\beta_3\cap \beta_2$ of $\beta_2$ has to be mapped to the bottom right corner of $S_i$. This shows injectivity on all of $\beta_2$. Similarly, one shows the other claims about injectivity, based on the preceding lemma.
\end{proof}

We have completed our preparation to show the injectivity of $f$.

\begin{lemma}\label{unif:Injectivity}
The map $f\colon S \to \mathcal R$ is injective.
\end{lemma}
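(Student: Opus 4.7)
The proof is by contradiction. Suppose $f(x) = f(y) = z$ for some $x \neq y$ in $S$, and set $t = u(x) = u(y)$ and $s = v(x) = v(y)$, so both points lie in $\alpha_t = \tilde u^{-1}(t)$. The strategy is to exploit the strict monotonicity of $v$ along level sets $\alpha_{t'}$ with $t' \in \mathcal T$ (Corollary \ref{unif:Conjugate-Oscillation-Increasing}), and to bootstrap this by approximation and the light map property (Lemma \ref{unif:Injectivity-light}) to handle all remaining configurations.

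\emph{Generic case.} If $t \in \mathcal T$ and $x, y$ do not both lie on the same peripheral circle, then $\alpha_t$ is a simple curve from $\Theta_2$ to $\Theta_4$ by Lemma \ref{unif:Level sets-curves}, and Corollary \ref{unif:Conjugate-Oscillation-Increasing} asserts that $v$ is strictly increasing along $\alpha_t \cap S$. This immediately forces $v(x) \neq v(y)$, a contradiction.

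\emph{Same peripheral circle.} If $x, y \in \partial Q_i$ for some $i \in \N$, then since $f$ is already injective on $\beta_2$ and $\beta_4$ by Corollary \ref{unif:Injectivity-top-bottom}, we may assume $x, y \in \beta_1$ and $t = m_{Q_i}(u)$. Approximate $t$ from above by $t_n \in \mathcal T$. For each $n$, the simple arc $\alpha_{t_n} \cap \br Q_i$ connects an entry point on $\beta_2$ to an exit point on $\beta_4$ and carries $v$ strictly monotonically (Corollary \ref{unif:Conjugate-Oscillation-Increasing}); since $\tilde u$ is harmonic inside $Q_i$ and attains its minimum on $\br Q_i$ exactly on $\beta_1$, these arcs Hausdorff-converge to $\beta_1$. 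Extract points $c_n, d_n$ on these arcs with $c_n \to x$ and $d_n \to y$; the Hausdorff sublimit of the subarc of $\alpha_{t_n}$ joining $c_n$ to $d_n$ is a continuum $\Gamma \subset \beta_1$ containing both $x$ and $y$, on which monotonicity plus $v(c_n), v(d_n) \to s$ forces $v \equiv s$, giving $f(\Gamma) = \{z\}$ and contradicting Lemma \ref{unif:Injectivity-light}. The case $x, y \in \partial \Omega$ is analogous.

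\emph{Exceptional level.} Finally, suppose $t \notin \mathcal T$ and $x, y$ are not on the same peripheral circle. By Lemma \ref{unif:Injectivity-level sets}, $\alpha_t$ differs from a generic level set only on the arcs $\beta_1$ or $\beta_3$ of finitely many peripheral circles where $t$ is a local extremum of $u$. Since $\tilde u$ has no local extrema in $\br\Omega\setminus(\Theta_1\cup \Theta_3)$ (by harmonicity in each $Q_i$ together with the strong maximum principle, Theorem \ref{harmonic:Strong maximum principle}, on $S$), the intermediate value theorem along paths through $x$ and $y$ produces points $x_n, y_n \in \alpha_{t_n} \cap S$ with $x_n \to x$, $y_n \to y$, for suitable $t_n \in \mathcal T$ with $t_n \to t$. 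Passing to a subsequence, $x_n$ precedes $y_n$ along $\alpha_{t_n}$ oriented from $\Theta_2$ to $\Theta_4$, and Corollary \ref{unif:Conjugate-Oscillation-Increasing} gives $v(x_n) < v(y_n)$. Let $\gamma_n$ be the simple sub-arc of $\alpha_{t_n}$ from $x_n$ to $y_n$; by the Blaschke selection theorem, $\gamma_n$ Hausdorff-subconverges to a continuum $\Gamma \subset \alpha_t$ with $x, y \in \Gamma$. For any $p \in \Gamma \cap S$, picking $p_n \in \gamma_n$ with $p_n \to p$ and using $v(p_n) \in [v(x_n), v(y_n)]$ with both bounds tending to $s$ yields $v(p) = s$ by continuity of $v$. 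Thus $f(\Gamma) = \{z\}$, and since $\Gamma$ is a non-trivial continuum (it contains $x \neq y$), this contradicts Lemma \ref{unif:Injectivity-light}.

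\emph{Main obstacle.} The principal technical difficulty is in the exceptional-level step: securing the approximating points $x_n, y_n \in \alpha_{t_n} \cap S$ with the correct convergence to $x$ and $y$, and ensuring that the connecting sub-arcs $\gamma_n$ do not Hausdorff-collapse to a single endpoint. A structurally similar obstacle arises in the same-peripheral-circle case when controlling the collapse of interior level curves onto the boundary arc $\beta_1$. In both, one must leverage both the harmonicity of $\tilde u$ inside the peripheral disks and the strict monotonicity of $v$ on generic level curves in a uniform way along the approximation.
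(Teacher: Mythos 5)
Your high-level strategy (strict monotonicity of $v$ on level sets $\alpha_t$, $t\in\mathcal T$, combined with the light-map property) is the same as the paper's, but two steps in your argument have genuine gaps. The most serious one is in the exceptional-level case: the Hausdorff limit $\Gamma$ of the subarcs $\gamma_n\subset\alpha_{t_n}$ is \emph{not} contained in $S$ --- the level curves $\alpha_{t_n}$ cross the interiors of peripheral disks, and if some fixed $Q_i$ is crossed by infinitely many $\gamma_n$, the limit contains an arc of $\alpha_t\cap\br Q_i$ passing through the interior of $Q_i$. Consequently $\Gamma\cap S$ need not be connected, and Lemma \ref{unif:Injectivity-light} (which concerns continua in $f^{-1}(z)\subset S$) is not contradicted by knowing only that $f(\Gamma\cap S)=\{z\}$. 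The paper's fix is essential here: before taking limits, each connecting arc $\gamma\subset\alpha_t$ is rerouted to a path $\tilde\gamma\subset S$ by replacing every crossing $\gamma\cap Q_i$ with a boundary arc of $\partial Q_i$, and one checks that $f(\tilde\gamma)$ stays in $B(z,3\varepsilon)$ because the detour arcs lie on squares $\partial S_i$ whose horizontal sides meet $B(z,\varepsilon)$. The same defect appears in your same-peripheral-circle case, where in addition Corollary \ref{unif:Conjugate-Oscillation-Increasing} is misapplied: it gives monotonicity of $v$ only on $S\cap\alpha_{t_n}$, whereas your points $c_n,d_n$ lie in the interior of $Q_i$, where $v$ is not defined and the Poisson extension $\tilde v$ has no monotonicity along level curves of $\tilde u$.

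The second gap is the existence of a \emph{common} level $t_n\in\mathcal T$ for which $\alpha_{t_n}$ meets $S$ near both $x$ and $y$. A preimage of $z$ may a priori be approachable through $S$ only by levels $t<t_0$, and another only by levels $t>t_0$, in which case your sequences $x_n,y_n$ never lie on the same $\alpha_{t_n}$. Appealing to the absence of local extrema of $\tilde u$ does not resolve this, because the points with $\tilde u\gtrless t_0$ produced by that argument may all lie inside peripheral disks rather than in $S$. This is exactly what the paper's case distinction and the unicoherence argument (Lemma \ref{unif:Unicoherence}) are for: when $z$ lies on an open vertical side, all preimages are one-sided in the same direction; otherwise $\br{A_{0,t_0}}\cap\br{A_{t_0,1}}$ is a continuum joining $\Theta_2$ to $\Theta_4$ carrying a distinguished preimage of $z$ that is approachable from both sides, which can then be matched with whichever side an arbitrary second preimage is approachable from. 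Without some version of this step your construction of the pairs $(x_n,y_n)$ is unjustified.
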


The proof of this lemma is slightly technical, so we first provide a sketch of the argument. Note that $f$ is already injective, when restricted to level sets $S\cap \alpha_t$ for $t\in \mathcal T$, by Corollary \ref{unif:Conjugate-Oscillation-Increasing}. In fact, $v$ is ``increasing" in some sense in these level sets. Thus, the only possibility is that injectivity fails at some level set $\alpha_{t_0}\cap S$ for $t_0\notin \mathcal T$. Suppose that $z=(t_0,s_0)\in \mathcal R$ has two preimages $x,y\in S\cap \alpha_{t_0}$. Then we show that for $t\in \mathcal T$ near $t_0$ there exist points $a_x=(t,s_x)$, $a_y=(t,s_y)\in S\cap \alpha_t$ near $x,y$, respectively. Using the fact that $v$ is increasing on $\alpha_t$, we show that there exists a path $\gamma \subset S$ connecting $a_x,a_y$ that is mapped into a small neighborhood of $z$ under $f$. In the limit, as $a_x\to x$ and $a_y\to y$ one obtains a continuum $E\subset S$ that connects $x$ and $y$ and is mapped to $z$. This will contradict Lemma \ref{unif:Injectivity-light}.

\begin{proof}
If $t\in \mathcal T$, then $v$ is ``increasing" on $ \alpha_t\cap S$, in the sense of Corollary \ref{unif:Conjugate-Oscillation-Increasing}. Hence, $f$ is injective on the set $U\coloneqq \bigcup_{t\in \mathcal T}(  \alpha_t\cap S)$.

Assume $z=(t_0,s_0)\in \mathcal R $ and $t_0\notin \mathcal T $. Note that every point $x\in f^{-1}(z)$ can be approximated by $Q_i$ with $\rho(Q_i)>0$; see Corollary \ref{unif:Injectivity-rho non zero}. Since $\rho(Q_i)>0$, it follows that there exist levels $t\in \mathcal T$ with $\alpha_t\cap\partial Q_i \neq \emptyset$. Hence every point $x\in f^{-1}(z)$ can be approximated by points $a\in U$, and so $U$ is dense in $S$. We will split in two cases. The ideas are similar but the technical details are slightly different. We recommend that the reader focus on Case 1, at a first reading of the proof. 

\textbf{Case 1:}
Assume first that $z=(0,s_0)$ or $(1,s_0)$ or $z$ lies in an open vertical side of some square $\partial S_{i_0}$, $i_0\in \N$. In any case, either every preimage $x \in f^{-1}(z)$ can be only approximated by points $a\in U$ with $u(a)<t_0$, or every preimage can be only  approximated  by points $a\in U$ with $u(a)>t_0$. Indeed, this is clear if $z$ lies in a vertical side of $[0,1]\times[0,D]$ (i.e., $t_0=0$ or $t_0=1$), so suppose that $z=(t_0,s_0)$ lies in the open left vertical side of a square $S_{i_0}$, and $x$ is a preimage of $z$. If $a_n \in U$ is a sequence converging to $x$, then $f(a_n)\in S$ converges to $z$ by continuity. Since $a_n\in U$ and $t_0\notin \mathcal T$, we necessarily have $u(a_n)\neq t_0$ for each $n\in \N$, and $f(a_n)$ cannot lie on the vertical line passing through $z$. Since $z$ lies in the interior of the left vertical side of $S_{i_0}$ and $f(a_n)\notin S_{i_0}$ for all $n\in \N$, it follows that $f(a_n)$ lies on the ``left" of the vertical line $u=t_0$ for all sufficiently large $n$. Therefore, $u(a_n)<t_0$ for all sufficiently large $n$, as desired. 

Suppose in what follows that every preimage of $z$ can be only approximated by points $a\in U$ with $u(a)<t_0$, and consider a preimage $x \in f^{-1}(z)$. Consider a small ball $B(x,\delta)$ such that $S\cap B(x,\delta)$  contains points $a\in U$, all of which satisfy $u(a)<t_0$. Fix such a point with $u(a)=t<t_0$. The level sets $\alpha_t,\alpha_{t_0}$ define a simply connected region $A_{t,t_0}$ that contains all level sets $\alpha_{t'}$ for $t'\in (t,t_0)$, by Proposition \ref{unif:Level sets}. Since $B(x,\delta)$ meets both $\alpha_t$ and $\alpha_{t_0}$, it follows that $\alpha_{t'}$ meets $B(x,\delta)$ for every $t'\in (t,t_0)$. The level sets $\alpha_{t'}$ cannot meet $\partial Q_{i_0}$, in case $x\in \partial Q_{i_0}$ and $z\in \partial S_{i_0}$; this is because $f(\partial Q_{i_0})=\partial S_{i_0}$. This implies that $\alpha_{t'}$ either does not meet peripheral disks near $x$ or it meets arbitrarily small peripheral disks near $x$. Thus, for all $t<t_0$ sufficiently close to $t_0$ we have that the intersection $S\cap B(x,\delta)\cap \alpha_t$ is non-empty. 

Suppose now that there exist two points $x,y\in f^{-1}(z)$, and consider a small $\delta>0$ such that $\alpha_t$ intersects both sets $S\cap B(x,\delta),S\cap B(y,\delta)$ for some $t<t_0$, $t\in \mathcal T$, sufficiently close to $t_0$. Let $a_x\in S\cap B(x,\delta)\cap \alpha_t$ and $a_y \in S\cap B(y,\delta)\cap\alpha_t$. By the continuity of $f$, the images $ f(a_x)=(t,s_x)$,  $f(a_y)=(t,s_y)$ will lie in a small ball $B(z,\varepsilon)$. Without loss of generality $s_x<s_y$. Consider the path $\gamma \subset \alpha_t$ that connects $a_x$ to $a_y$. The function $v$ is increasing on $\alpha_t$ by Corollary \ref{unif:Conjugate-Oscillation-Increasing}, hence $f(\gamma\cap S)\subset \{t\} \times [s_x,s_y]  \subset B(z,\varepsilon)$. We alter the path $\gamma$ as follows. Whenever $\gamma \cap Q_i\neq \emptyset$ we replace this intersection with an arc $\beta \subset \partial Q_i$ joining the two points of $\gamma \cap \partial Q_i$.  We call the resulting path $\tilde \gamma$ and note that $\tilde \gamma \subset S$. We claim that $f(\tilde\gamma) \subset B(z,3\varepsilon)$.  To see this, note that the image arcs $f(\beta)$ are contained in squares $\partial S_i$ by Corollary \ref{unif:Definition-Range-Extension}, and the top and bottom sides of these squares intersect the ball $B(z,\varepsilon)$, namely at the endpoints of $\beta$ which were also points of $\gamma$. Our claim is proved.

Finally, observe that as $\delta\to 0$ the path $\tilde \gamma $ subconverges to a non-trivial continuum $E\subset S$ containing $x$ and $y$. Then the images $f(\tilde \gamma)$ converge to the point $z$, so by continuity $f(E)=z$. This contradicts Lemma \ref{unif:Injectivity-light}.

\textbf{Case 2:} Assume that $z=(t_0,s_0)$ does not lie on an open vertical side of any square $\partial S_i$, $i\in \N$, or on a vertical side of the rectangle $[0,1]\times [0,D]$ (so $0<t_0<1$). We claim that there exists a ``distinguished" preimage $x$ of $z$ that can be approximated by points $a,b \in [0,1]\times [0,D]$ with $\tilde u(a)<t_0$ and $\tilde u(b)>t_0$.

To prove that, we will show that there exists a continuum $C\subset \alpha_{t_0}$ connecting $\Theta_2$ and $\Theta_4$ such that every point $x\in C$ can be approximated by points $a,b$ with $\tilde u(a)<t_0$ and $\tilde u(b)>t_0$. The function $\tilde v$ is continuous on $C$ and attains all values between $0$ and $D$. Hence, there exists some $x\in C$ with $\tilde v(x)=s_0$. In fact, $x\in C\cap S \cap f^{-1}(z)$, because $f(Q_i)= S_i$ for all $i\in \N$; see comments before Lemma \ref{unif:Injectivity-light}. 

The existence of $C$ will be justified with the following lemma about planar topology, which reflects the \textit{unicoherence} of the plane:
\begin{lemma}[Theorem 5.28a, p.~65, \cite{Wilder:topology}] \label{unif:Unicoherence}
If $A,B$ are planar continua neither of which separates the plane, then $A\cup B$ does not separate the plane if and only if $A\cap B$ is a continuum.
\end{lemma}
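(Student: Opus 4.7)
My plan is to deduce this lemma from Alexander duality combined with the Mayer--Vietoris exact sequence in \v{C}ech cohomology on the two-sphere $S^2$.

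First, I would reduce the statement to $S^2$ by one-point compactification. If $K\subset \R^2$ is compact, then $K$ does not separate $\R^2$ if and only if $S^2\setminus K$ is connected, since the point at infinity lies in the unbounded component of $\R^2\setminus K$ and joins it to nothing else. Next, I would invoke Alexander duality on the sphere: for a compact proper subset $K\subset S^2$, the number of components of $S^2\setminus K$ equals $1+\mathrm{rank}\,\check H^1(K;\Z)$. Under this translation, the hypotheses that $A$ and $B$ are continua not separating the plane become $\check H^0(A)=\check H^0(B)=\Z$ and $\check H^1(A)=\check H^1(B)=0$, and the conclusion becomes an equivalence between $\check H^0(A\cap B)\cong \Z$ and $\check H^1(A\cup B)=0$.

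With this in hand I would apply the \v{C}ech Mayer--Vietoris sequence to the closed pair $(A,B)$ in the paracompact Hausdorff space $S^2$, obtaining
\[
0 \to \check H^0(A\cup B) \to \check H^0(A)\oplus \check H^0(B) \xrightarrow{\phi} \check H^0(A\cap B) \to \check H^1(A\cup B) \to 0,
\]
where the last zero comes from $\check H^1(A)\oplus\check H^1(B)=0$. Under the assumption $A\cap B\neq\emptyset$, the union $A\cup B$ is connected, so $\check H^0(A\cup B)=\Z$; and because $A$ and $B$ are each connected, $\phi$ sends $(a,b)\in\Z^2$ to the class of the constant $a-b$ on each component of $A\cap B$, so its image is the diagonal copy of $\Z$ inside $\check H^0(A\cap B)\cong \Z^n$, where $n$ is the number of components of $A\cap B$. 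Hence $\check H^1(A\cup B)\cong \Z^{n-1}$, which vanishes exactly when $n=1$. Translating back via Alexander duality gives the desired equivalence.

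The main obstacle is the Mayer--Vietoris step itself: compact planar sets can be topologically wild (Sierpi\'nski carpets being a case in point), and singular cohomology is not well behaved on such sets, but \v{C}ech cohomology does satisfy Mayer--Vietoris for closed pairs in a paracompact Hausdorff space, which is precisely the technical input that Wilder's treatment sets up. A small additional subtlety to dispatch is the degenerate case $A\cap B=\emptyset$, where two disjoint continua (e.g., two distinct points) can fail to separate the plane and have non-separating union while their intersection is empty; the stated biconditional is intended under the convention that ``continuum'' means non-empty, so this edge case falls outside the scope of the iff and must be handled with a caveat.
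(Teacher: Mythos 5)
The paper does not prove this lemma at all --- it is quoted verbatim from Wilder's \emph{Topology of Manifolds} and used as a black box in the proof of Lemma \ref{unif:Injectivity} --- so there is no in-paper argument to compare against. Your derivation via Alexander duality plus the \v{C}ech Mayer--Vietoris sequence is correct and is the standard homological route to unicoherence of the plane; it is also in the same spirit as the cited source, which works throughout with \v{C}ech-type invariants. The reduction to $S^2$, the translation of ``does not separate'' into $\check H^1=0$, and the identification of $\check H^1(A\cup B)$ with $\operatorname{coker}\phi$ are all sound, and the Mayer--Vietoris sequence is indeed valid for closed covers of a compact Hausdorff space in \v{C}ech cohomology, which is exactly the technical point that makes the argument work for wild compacta.

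Two small refinements. First, you should not write $\check H^0(A\cap B)\cong\Z^n$: the intersection may have infinitely many components, and $\check H^0$ of a compact Hausdorff space is the group of locally constant $\Z$-valued functions. This does not affect the argument --- the image of $\phi$ is still the constants, and the cokernel vanishes if and only if every locally constant function is constant, i.e.\ if and only if $A\cap B$ is connected (components coincide with quasicomponents for compact Hausdorff spaces) --- but the finite-rank phrasing is an unnecessary restriction. Second, your caveat about $A\cap B=\emptyset$ is well taken: the Mayer--Vietoris computation shows that disjoint non-separating continua always have non-separating union, so the ``only if'' direction genuinely fails under the convention that a continuum is nonempty. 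In the paper's application the intersection $\br V_1\cap\br V_3$ visibly contains the points $\alpha_{t_0}\cap\Theta_2$ and $\alpha_{t_0}\cap\Theta_4$, so the degenerate case does not arise, but a self-contained statement should either assume $A\cap B\neq\emptyset$ or allow the empty continuum.
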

Let $V_1=\tilde u^{-1}((0,t_0))= A_{0,t_0}$ and $V_3=\tilde u^{-1}((t_0,1))= A_{t_0,1}$. Since all level sets $\alpha_t$ have empty interior by Corollary \ref{unif:Injectivity:empty interior level sets}, Proposition \ref{unif:Level sets} implies that the closures $\br V_1,\br V_3$ are continua that do not separate the plane. Moreover, $\br V_1\cup \br V_3=\br \Omega$ so $\br V_1\cup \br V_3$ does not separate the plane. Indeed, a point $w\in \br \Omega \setminus \br V_1\cup \br V_3$ would necessarily satisfy $u(w)=t_0$ and would have a rel.\ open neighborhood $W$ in $\br \Omega\setminus \br V_1\cup \br V_3 $. This would imply that $\alpha_{t_0}$ has non-empty interior, a contradiction. By Lemma \ref{unif:Unicoherence} we conclude that $\br V_1\cap \br V_3\subset \alpha_{t_0}$ is a continuum. Moreover, this continuum has to connect $\alpha_{t_0}\cap \Theta_2$ to the point $\alpha_{t_0}\cap \Theta_4$. This is because $\alpha_{t_0}\cap \Theta_2,\alpha_{t_0}\cap \Theta_4$ are singletons by Lemma \ref{unif:Injectivity-level sets}, and they lie in $\br V_1\cap \br V_3$ since their complement in $\Theta_2,\Theta_4$, respectively, consists of two arcs, one contained in $V_1$ and one in $V_3$. Summarizing, $C\coloneqq \br V_1\cap \br V_3$ is the desired continuum connecting $\Theta_2$ to $\Theta_4$.

Our next claim is that for every small ball $B(x,\delta)$, where $x\in C$ is a preimage of $z$, the set $S\cap B(x,\delta)\cap \alpha_t$ is non-empty for all $t<t_0$ sufficiently close to $t_0$, and for all $t>t_0$ sufficiently close to $t_0$; this will be the crucial property that we need for the ``distinguished" preimage $x$ of $z$. To prove this, we consider three cases: $x\in S^\circ$, $x\in \partial Q_i$ for some $i\in \N$, and $x\in \partial \Omega$.

First, assume that $x\in S^\circ$. We fix a small ball $B(x,\delta)$ and a point $a\in B(x,\delta)$ with $\tilde u(a)=t<t_0$; such a point exists by the properties of $x\in C$. The level sets $\alpha_t,\alpha_{t_0}$ define a simply connected region $A_{t,t_0}$, and all level sets $\alpha_{t'}$ for $t'$ between $t$ and $t_0$ lie in this region and have to intersect $B(x,\delta)$; see Proposition \ref{unif:Level sets}. If $x\in S^\circ $ then $S\cap B(x,\delta)\cap \alpha_{t'}\neq \emptyset$, since the peripheral disks that might be  intersected by sets $\alpha_t'$ for $t'\in (t,t_0)$ are arbitrarily small; see also Case 1. The same holds for level sets $\alpha_t$, $t>t_0$, sufficiently close to $t_0$.

We now prove the claim in case $x\in \partial Q_{i_0}$ for some $i_0\in \N$. Recall by Corollary \ref{unif:Injectivity-top-bottom} that $\partial Q_{i_0}$ is partitioned into four arcs $\beta_1,\dots,\beta_4$, such that $\beta_2, \beta_4$ are mapped injectively onto the top an bottom sides of $\partial S_{i_0}$, respectively, and $\beta_1, \beta_3$ are mapped injectively onto the left and right sides of $\partial Q_i$, respectively, by the Case 1. In particular, $f\big|_{\partial Q_i}$ is injective and $f\big|_{\partial Q_i}^{-1}$ is defined and is continuous on $\partial S_i$. The  only possibility here is that $x\in \beta_2\cup \beta_4$ (recall the assumption of Case 2). If $x$ is an interior point of one of the arcs $\beta_2,\beta_4$, then $\partial Q_{i_0}\cap S\cap B(x,\delta)\cap \alpha_t\neq \emptyset$ for all $t$ near $t_0$ by the continuity of $f\big|_{\partial Q_i}^{-1}$. If $x$ is a ``corner" point lying, e.g., on $\beta_2\cap \beta_1$ then we can approximate $x$ by points $a\in \beta_2$ thus satisfying $u(a)>t_0$, and we can approximate $x$ by points $b\in \beta_1$ with $u(b)=t_0$. In the latter case, $f(b)$ is contained in the left open vertical side of $\partial S_{i_0}$. Arguing as in Case 1, for a small $\delta'>0$ we have $S\cap B(b,\delta')\cap \alpha_t\neq \emptyset$ for all $t<t_0$ sufficiently close to $t_0$. If $b$ is sufficiently close to $x$ then $B(b,\delta')\subset B(x,\delta)$, so $S\cap B(x,\delta)\cap \alpha_t\neq \emptyset$, as desired.

Finally, if $x\in \partial \Omega$, then it has to lie in the interior of the arcs $\Theta_2$ or $\Theta_4$. The map $f$ is injective on these arcs by Corollary \ref{unif:Injectivity-top-bottom}. Hence it is easy to see that $S\cap B(x,\delta)\cap \alpha_t\neq \emptyset$ for all $t$ near $t_0$, and in fact the intersection contains points of $\Theta_2$ or $\Theta_4$.

The preimage $x$ is the ``distinguished" preimage of $z$ that is ``accessible" from both sides $u<t_0$ and $u>t_0$. Now, assume that there exists another preimage $y\in f^{-1}(z)$. Since $U$ is dense in $S$, there exist near $y$ points $a\in \alpha_t\cap S$, $t\in \mathcal T$, with $u(a)<t_0$ or $u(a)>t_0$. Without loss of generality, we assume that arbitrarily close to $y$ we can find such points with $u(a)=t<t_0$. Then for a small $\delta>0$ the intersection $S\cap B(y,\delta)\cap \alpha_t$ is non-empty for $t<t_0$, $t\in \mathcal T$, arbitrarily close to $t_0$. Now, we argue exactly as in the last part of Case 1. Consider a level set $\alpha_t$, $t\in \mathcal T$, that intersects both sets $S\cap B(x,\delta)$ and $S\cap B(y,\delta)$ and let $\gamma \subset \alpha_t$ be a subpath that connects the balls $B(x,\delta),B(y,\delta)$. The image $f(\gamma\cap S)$ is contained in a small ball $B(z,\varepsilon)$. We modify the path $\gamma$ to obtain a path $\tilde \gamma \subset S$ that connects the balls $B(x,\delta),B(y,\delta)$, and such that $f(\tilde \gamma)$ lies in a slightly larger ball $B(z,3\varepsilon)$. As $\delta\to 0$ the path $\tilde \gamma$ subconverges to a continuum in $S$ that connects $x$ and $y$. The images $f(\tilde \gamma)$ converge to $z$, so Lemma \ref{unif:Injectivity-light} is again contradicted.
\end{proof}

\begin{remark}\label{unif:Injectivity:Remark Extension}
Since $f\big|_{\partial Q_i}$ is a homeomorphism and $\tilde u, \tilde v$ are harmonic on $Q_i$, we can use the Rad\'o--Kneser--Choquet theorem \cite[p.~29]{Duren:harmonic}  to conclude that $\widetilde f \big|_{\br{Q}_i}$ is a homeomorphism onto $\br S_i$. Thus, $ f\colon  \br \Omega\to [0,1]\times [0,D]$ is a homeomorphism. Furthermore, the set $\mathcal R=f(S)$ is a Sierpi\'nski carpet, as defined in the Introduction, with $\mathcal H^2(\mathcal R)=0$ and the basic assumptions of quasiballs \eqref{unif:Quasi-balls} and fatness \eqref{unif:Fat sets} are trivially satisfied for the peripheral disks of $\mathcal R$, which are squares. 

In fact, the harmonicity of the extension inside each $Q_i$ is not needed anymore, so one could consider  arbitrary homeomorphic extensions $\widetilde f\colon \br Q_i\to \br S_i$ given, for example, by the Sch\"onflies theorem. Recall that $Q_0=\C\setminus \br \Omega$ and $S_0= \C\setminus [0,1]\times [0,D]$. We also extend $f\big|_{\partial Q_0}$ to a homeomorphism from $\br Q_0$ onto $\br S_0$. Pasting together these homeomorphisms we obtain a homeomorphic extension $\widetilde f\colon \C\to \C$ of $f$; see e.g.\ \cite[Lemma 5.5]{Bonk:uniformization}.
\end{remark}

\section{Regularity of \texorpdfstring{$f$}{f} and \texorpdfstring{$f^{-1}$}{f-1}}\label{unif:Section Regularity}\index{uniformizing map!regularity}
In this section we prove the main result, Theorem \ref{unif:Main theorem}, which will follow from Proposition \ref{unif:Regularity-F modulus} and Proposition \ref{unif:Regularity-F-1 modulus}. We consider an (arbitrary) homeomorphic extension of $f\colon S\to \mathcal R$ to a map $f\colon \C \to \C$, as in Remark \ref{unif:Injectivity:Remark Extension}.  As discussed in this remark, the set $\mathcal R$ is a Sierpi\'nski carpet, as defined in the Introduction.

Recall that by Proposition \ref{unif:Upper gradient for F} we have the upper gradient inequality
\begin{align}\label{unif:Regularity-upper gradient for F}
|f(x)-f(y)|\leq \sqrt{2} \sum_{i:\br Q_i \cap \beta \neq \emptyset} \rho(Q_i) =\sum_{i:\br Q_i\cap \beta\neq \emptyset} \diam(S_i)
\end{align}
for points $x,y\in \br \beta\cap S$ and all open paths $\beta$, which are subpaths of paths $\gamma\subset \Omega$ lying outside an exceptional family of 2-modulus zero. We will use this to show that $f$ preserves, in some sense, carpet modulus. In fact, for technical reasons, we will introduce a slightly different notion of carpet modulus here, which was also used in the statement of Theorem \ref{unif:Main theorem}:

\begin{definition}\label{unif:Carpet-modulus weak}
Let $\Gamma$ be a family of paths in $\C$. A sequence of non-negative numbers $\{\lambda(Q_i)\}_{i\in \N\cup \{0\}}$ is admissible for the \textit{weak carpet modulus}\index{modulus!weak carpet modulus} $\br \md (\Gamma)$ if there exists an exceptional path family $\Gamma_0$ with $\md_2(\Gamma_0)=0$ such that for all $\gamma\in \Gamma\setminus \Gamma_0$ we have
\begin{align*}
\sum_{i:\br Q_i\cap \gamma\neq \emptyset}\lambda(Q_i)\geq 1.
\end{align*}  
We define $\br \md(\Gamma)=\inf_{\lambda}\sum_{i\in \N\cup \{0\}}\lambda(Q_i)^2$ where the infimum is taken over all admissible weights $\lambda$.
\end{definition}

Recall at this point that $Q_0=\C\setminus \br \Omega$, and we \textit{do} include $\lambda(Q_0)$ in the above sums, whenever $\br Q_0\cap \gamma \neq \emptyset$, in contrast to  the previous sections, in which all paths were ``living" in $\Omega$.

First we show a preliminary lemma, in the same spirit as Corollary \ref{unif:Upper gradient-projection}.

\begin{lemma}\label{unif:Regularity-Path hausdorff measure zero}
There exists a path family $\Gamma_1$ in $\C$ with $\md_2(\Gamma_1)=0$ such that for every path $\gamma\subset \C$, $\gamma\notin \Gamma_1$, the following holds:
\begin{flushleft}
For any two points $x,y\in  \br \gamma$, there exists an open path $\tilde \gamma \subset \C$ joining $x$ and $y$, such that $\{i\in \N\cup \{0\}:\br Q_i \cap \tilde \gamma \neq \emptyset\} \subset \{i\in \N\cup \{0\}:\br Q_i\cap \gamma \neq \emptyset\}$ and $\mathcal H^1(f(\tilde \gamma\cap S))=0$. 
\end{flushleft}
\end{lemma}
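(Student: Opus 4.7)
The strategy is to define an exceptional family $\Gamma_1$ of conformal modulus zero, and then, for $\gamma\notin\Gamma_1$, construct $\tilde\gamma$ from a subpath of $\gamma$ by a loop-erasing procedure inside peripheral disks. The resulting path will intersect each peripheral circle in at most two points, and its image under $f$ will be analyzed using the upper gradient inequality (Proposition \ref{unif:Upper gradient for F}) and the Lebesgue differentiation-type Lemma \ref{unif:Lebesgue differentiation Lemma}.

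First, let $\Gamma_1$ be the set of paths $\gamma\subset\C$ for which at least one of the following properties fails: (i) the upper gradient inequality of Proposition \ref{unif:Upper gradient for F} holds for every open subpath of $\gamma$; (ii) $\mathcal{H}^1(\gamma\cap S)=0$, which holds for $\md_2$-a.e.\ $\gamma$ since $\mathcal{H}^2(S)=0$; (iii) $\sum_{i:\br Q_i\cap\gamma\neq\emptyset}\rho(Q_i)<\infty$, which holds for $\md_2$-a.e.\ $\gamma$ by Remark \ref{unif:Upper gradient-projection remark}; (iv) the conclusions of Lemma \ref{unif:Lebesgue differentiation Lemma} hold with $\lambda(Q_i)=\rho(Q_i)$. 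Each of these defines a family of conformal modulus zero, so by subadditivity $\md_2(\Gamma_1)=0$.

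Second, fix $\gamma\notin\Gamma_1$ and $x,y\in\br\gamma$. After possibly restricting, we may take a subpath $\alpha$ of $\gamma$ joining $x$ and $y$. Apply a loop-erasing procedure to $\alpha$: for each peripheral disk $Q_i$ with $\br Q_i\cap\alpha\neq\emptyset$, let $[a_i,b_i]$ be the smallest closed interval with $\alpha^{-1}(\br Q_i)\subset[a_i,b_i]$, and replace $\alpha\big|_{(a_i,b_i)}$ by a simple open arc in $Q_i$ joining $\alpha(a_i)$ and $\alpha(b_i)$. The family of intervals is countable and may exhibit nesting and overlap; process them hierarchically, starting from the intervals of largest length and recursing within the remaining ``gaps'', noting that replacing along a larger interval can eliminate the need to process overlapping smaller ones. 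The resulting open path $\tilde\gamma$ has endpoints $x,y$ and satisfies: (a) $\br Q_i\cap\tilde\gamma=\emptyset$ whenever $\br Q_i\cap\gamma=\emptyset$; (b) for each $i$ with $\br Q_i\cap\tilde\gamma\neq\emptyset$, the set $\tilde\gamma\cap\partial Q_i$ consists of at most two points, namely the entry and exit points of the replacement arc.

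Third, decompose $\tilde\gamma\cap S=(\tilde\gamma\cap S^\circ)\cup\bigcup_i(\tilde\gamma\cap\partial Q_i)$. By (b), the second piece is at most countable, so its image under $f$ has $\mathcal{H}^1$-measure zero. For the first piece, fix $\eta>0$ and a small $\delta>0$; cover the compact set $\tilde\gamma\cap S^\circ$ by finitely many balls $B_j=B(z_j,\delta)$ with $z_j\in\tilde\gamma\cap S^\circ$, chosen so that the $\tfrac{1}{3}B_j$ are disjoint. For each maximal subpath $\beta_j^k$ of $\tilde\gamma$ inside $B_j$ with endpoints in $S$, the upper gradient inequality gives
\[
\diam(f(\beta_j^k\cap S))\leq\sqrt{2}\sum_{i:\br Q_i\cap\beta_j^k\neq\emptyset}\rho(Q_i).
\]
Only finitely many peripheral disks of diameter $\geq\delta$ can meet $\tilde\gamma\cap S^\circ$, and for $\delta$ small these can be kept away from the centers $z_j\in S^\circ$, so their total contribution is negligible; the remaining contributions come from peripheral disks of diameter $<\delta$ contained in $N_{2\delta}(\tilde\gamma)$, which by Lemma \ref{unif:Lebesgue differentiation Lemma} (applied with $\lambda(Q_i)=\rho(Q_i)\in\ell^2$) sum to a quantity tending to zero as $\delta\to 0$. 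This shows $\mathcal{H}^1_\eta(f(\tilde\gamma\cap S^\circ))\to 0$, whence $\mathcal{H}^1(f(\tilde\gamma\cap S^\circ))=0$.

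The main obstacle is the loop-erasing construction in Step~2: ensuring the nested and overlapping family of intervals $\{[a_i,b_i]\}$ can be processed into a single simple open path that preserves continuity and intersects each $\partial Q_i$ in at most two points. Step~3 is also technical, because bookkeeping the number of small peripheral disks a ball $B_j$ can meet uses the fatness and uniform quasiball assumptions via Lemma \ref{unif:Lebesgue differentiation Lemma}.
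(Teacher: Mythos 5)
Your route is genuinely different from the paper's: you modify the path in the \emph{domain} by a single loop-erasing pass over all peripheral disks at once and then estimate $\mathcal H^1(f(\tilde\gamma\cap S))$ by a Vitali-type cover in the domain, whereas the paper builds the new path in the \emph{image}, iteratively replacing the crossings of the squares $S_{j_k}$ by line segments relative to an increasing sequence of finite index sets $J_n$ (via the split-path Lemma \ref{harmonic:3-Split path} and Remark \ref{harmonic:remark:split path}), covering $f(\beta_n\cap S)$ by balls whose radii sum to $\varepsilon/2^{n-3}$, and extracting the final path from the Hausdorff-limit continuum after proving it is locally connected. Unfortunately your version has genuine gaps. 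First, the loop-erasing in Step 2 is exactly the hard combinatorial content, and ``process them hierarchically'' does not establish it: the intervals $[a_i,b_i]$ overlap and nest, each replacement changes the intervals attached to the remaining disks, a replacement performed later can swallow arcs constructed earlier, and the successive modifications are not small in the uniform metric (the erased excursion $\alpha\big|_{(a_i,b_i)}$ may wander far from $Q_i$ before returning), so neither termination nor convergence to a continuous open path is clear. This is precisely why the paper only ever performs finitely many replacements at a time and controls the limit metrically in the image. Relatedly, Proposition \ref{unif:Upper gradient for F} applies to subpaths of $\gamma$, not to subpaths of the modified path $\tilde\gamma$; your bound $\diam(f(\beta_j^k\cap S))\le\sqrt2\sum_{i:\br Q_i\cap\beta_j^k\neq\emptyset}\rho(Q_i)$ requires chaining the inequality over the surviving $\gamma$-pieces of $\beta_j^k$, which in turn requires the (unproven) disjointness output of the loop-erasing.

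Second, the measure estimate in Step 3 does not close. Lemma \ref{unif:Lebesgue differentiation Lemma} controls $\delta^{-1}\sum_{i:Q_i\subset N_\delta}\lambda(Q_i)\diam(Q_i)$ and $\sum_{i\in D_\delta}\lambda(Q_i)$; it does not bound $\sum_{i:Q_i\subset N_{2\delta},\,\diam(Q_i)<\delta}\rho(Q_i)$, which need not even be finite since $\{\rho(Q_i)\}$ is only in $\ell^2$. What would actually control the small-disk contribution is your condition (iii): the tail of the convergent series $\sum_{i:\br Q_i\cap\gamma\neq\emptyset}\rho(Q_i)$ over disks of diameter less than $\delta$ tends to $0$. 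Even granting that, you must bound the multiplicity with which a fixed disk is counted over the subpaths $\beta_j^k$ --- the single arc $\tilde\gamma\cap\br Q_i$ can cross $\partial B_j$ arbitrarily many times, producing arbitrarily many maximal subpaths in $B_j$ that meet $\br Q_i$ --- and you must handle the balls $B_j$ that meet a large peripheral disk: a center $z_j\in\tilde\gamma\cap S^\circ$ can lie arbitrarily close to some $\partial Q_{i_0}$ with $\rho(Q_{i_0})$ not small, so these balls cannot simply be ``kept away''; they have to be cut at the large disk, as in the good/bad ball dichotomy of Lemma \ref{unif:Level sets-Hausdorff measure}. Each of these points may be repairable, but as written the argument is not a proof.
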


The curve $\tilde \gamma$ is not necessarily a subcurve of $\gamma$, but it intersects no more (closed) peripheral disks than $\gamma$ does. 

\begin{remark}An important ingredient in the proof will be the following. Although the upper gradient inequality \eqref{unif:Regularity-upper gradient for F} holds - a priori - only for subpaths of paths $\gamma$ \textit{contained} in $\Omega$, lying outside an exceptional family $\Gamma_0$ with $\md_2(\Gamma_0)$, it turns out that this can be extended to paths in $\C$. Namely, the family of paths in $\C$ that have a subpath $\gamma \subset \Omega$ for which the upper gradient inequality fails, has $2$-modulus zero. This implies that \eqref{unif:Regularity-upper gradient for F} holds for all open subpaths $\beta\subset \Omega$ of paths $\gamma\subset \C$, lying outside an exceptional family $\Gamma_0$ of $2$-modulus zero.
\end{remark}

\begin{proof}
Let $\gamma\subset \C$ be a path such that:
\begin{enumerate}[\upshape(1)]
\item \eqref{unif:Regularity-upper gradient for F} holds along all of its subpaths that are contained in $\Omega$, 
\item $\mathcal H^1(\gamma\cap S)=0$, and
\item $$\sqrt{2}\sum_{\substack{i:\br Q_i\cap \gamma \neq \emptyset \\ i\in \N\setminus \{0\}}} \rho(Q_i)=\sum_{\substack{i:\br S_i\cap f(\gamma)\neq \emptyset\\ i\in \N\setminus \{0\} }}\diam (S_i)<\infty.$$ 
\end{enumerate}
This is true for paths outside an exceptional family $\Gamma_1$ with $\md_2(\Gamma_1)=0$. Indeed, (1) holds for $\md_2$-a.e.\ path by the preceding remark, and (2) holds for $\md_2$-a.e.\ path since $\mathcal H^2(S)=0$. The third statement also holds for all paths outside a family of conformal modulus zero by Remark \ref{unif:Upper gradient-projection remark}. By the subadditivity of conformal modulus, all three conditions are simultaneously met by all paths outside a family of conformal modulus zero, as desired.

Let $x,y\in \br \gamma$. Note that if $x,y$ lie on the same peripheral disk $\br Q_{i_0}$, $i_0\in \N\cup \{0\}$, and $\gamma$ intersects $\br Q_{i_0}$, then there is nothing to show, since we can just connect $x,y$ with an open path  $\tilde \gamma \subset Q_{i_0}$, and this will trivially have the desired properties. Hence, we may assume that this is not the case. By replacing $\gamma$ with an open subpath we may suppose that $x,y$ are the endpoints of $\br \gamma$. Assume that $\gamma$ is parametrized as it runs from $x$ to $y$. If $x\in \br Q_{i_x}$ for some $i_x\in \N\cup \{0\}$ and $\br Q_{i_x}\cap \gamma\neq \emptyset$ then we can replace $x$ with the last exit point $x_0\in \partial Q_{i_x}$ of $\gamma$ from $\br Q_{i_x}$, and obtain similarly a point $y_0\in \partial Q_{i_y}$, which is the first entry point of $\gamma$ (after $x_0$) in $\br Q_{i_y}$, in case $y\in \partial Q_{i_y}$ for some $i_y\in \N\cup \{0\}$ (cf.\ the discussion on ``accessible" points following Definition \ref{unif:Definition Carpet harmonic}). If we can find a path $\tilde \gamma$ joining $x_0,y_0$ with the desired properties, then we can concatenate it with arcs inside $ Q_{i_x}, Q_{i_y}$, and these do not contribute to the Hausdorff $1$-measure of $f(\tilde \gamma\cap S)$, so the conclusion holds for the concatenation. Hence, we assume that $x,y\in \br \gamma\cap S$ and $\gamma$ does not intersect the closures of the peripheral disks that possibly contain $x,y$ in their boundary.

Another reduction we can make is that we can assume that $\gamma\subset \Omega$. Indeed, by the previous paragraph we may assume that none of $x,y$ lies in $Q_{0}$ and that $\gamma$ does not intersect $\br Q_0$ in case $x\in \partial Q_0$ or $y\in \partial Q_0$. If $\gamma$ hits $\br Q_0$ (thus $x,y\in \Omega$), we let $x_0$ be the first entry point of $\gamma$ in $\br Q_{0}$, and $y_0$ be the last exit point from $\br Q_0$. We consider the open subpaths $\gamma_x\subset \Omega$ from $x$ to $x_0$ and $\gamma_y\subset \Omega$ from $y$ to $y_0$. If the statement of the lemma is true for $\gamma_x$ and $\gamma_y$, then there exist paths $\tilde \gamma_x,\tilde \gamma_y$ joining $x$ to $x_0$, $y$ to $y_0$, respectively, such that they do not intersect more closed peripheral disks than $\gamma$ does, and such that their image has length zero in the carpet $\mathcal R$. Concatenating $\tilde \gamma_x,\tilde \gamma_y$ with a path inside $Q_0$ that joins $x_0$ to $y_0$ provides the desired path $\tilde \gamma$. 

Assuming now that $\gamma\subset \Omega$ and $\gamma$ does not intersect the closed peripheral disks possibly containing $x,y$ in their boundaries, we will construct the path $\tilde \gamma$ through some iteration procedure. We fix $\varepsilon>0$, and a finite index set $J_1\subset \N$ such that
\begin{align}\label{unif:Regularity-Path hausdorff-J_1 bound}
\sum_{\substack{i:\br Q_i\cap \gamma\neq \emptyset \\ i\in \N \setminus J_1}} \diam(S_i)<\varepsilon.
\end{align}
Using the reformulation of Lemma \ref{harmonic:3-Split path} in Remark \ref{harmonic:remark:split path}, we may obtain open subpaths $\gamma_1,\dots,\gamma_m \subset \Omega$ of $\gamma$ that intersect disjoint sets of peripheral disks $\br Q_i$, $i\notin J_1$, such that the upper gradient inequality of $f$ holds along each of them and they have the following property: the path $\gamma_1$ starts at $x_1=x$, the path $\gamma_m$ terminates at $y_m=y$ and in general the path $\gamma_k$ has endpoints $x_k,y_k\in S$ such that for $1\leq k\leq m-1$ we either have
\begin{itemize}
\item $y_k=x_{k+1}$, or
\item $y_k,x_{k+1}\in \partial Q_{j_k}$ for some $j_k\in \N$. The peripheral disks $\br Q_{j_k}$   are distinct and they are intersected by $\gamma$. 
\end{itemize}
We denote by $I\subset \{1,\dots,m\}$ the set of indices $k$ for which the second alternative holds. By shrinking the open paths $\gamma_k$ (or even discarding some of them), we may further assume that they do not intersect the peripheral disks that possibly contain their endpoints in their boundary. See also the proof of Corollary \ref{unif:Upper gradient-projection}. 

For each $k\in I$ we consider a line segment $f(\alpha_k)$ inside $S_{j_k}=f(Q_{j_k})$ that connects the endpoints of $f(\gamma_{k})$ and $f(\gamma_{k+1})$.  Concatenating all the paths $f(\alpha_k),f (\gamma_k)$ we obtain a path $f(\beta_1)$ that connects $f(x)$ to $f(y)$ and intersects fewer peripheral disks than $f(\gamma)$. We estimate $\mathcal H^1(f(\beta_1\cap S))$ as follows: for each $k\in \{1,\dots,m\}$ the image $f(\br \gamma_k \cap S)$ can be covered by a ball $B_k$ of radius $r_k\coloneqq  \sum_{i:\br Q_i\cap \gamma_k\neq \emptyset} \diam(S_i)$, because by the upper gradient inequality \eqref{unif:Regularity-upper gradient for F} we have
\begin{align*}
\sup_{z,w\in \br \gamma_k\cap S} |f(z)-f(w)| \leq \sum_{i:\br Q_i\cap \gamma_k\neq \emptyset}\diam(S_i).
\end{align*}
By construction, $\gamma_1,\dots,\gamma_m$ intersect disjoint sets of peripheral disks. Since we have $\mathcal H^1(f(\alpha_k\cap S))=0$ for all $k$ (as $\alpha_k\cap S$ contains at most two points), it follows that
\begin{align*}
\mathcal H^1(f(\beta_1\cap S))\leq  2 \sum_{k=1}^m r_k\leq 2\sum_{k=1}^m\sum_{i:\br Q_i\cap \gamma_k\neq \emptyset}\diam(S_i) <2\varepsilon
\end{align*}
by \eqref{unif:Regularity-Path hausdorff-J_1 bound}. Here it is crucial that the paths $\gamma_k$ intersect disjoint sets of peripheral disks.

Now, we iterate this procedure with $\varepsilon$ replaced by $\varepsilon/2$. We consider a finite index set $J_2\supset J_1$ such that 
\begin{align*}
\sum_{\substack{i:\br Q_i\cap \gamma\neq \emptyset \\ i\in \N \setminus J_2}} \diam(S_i)<\varepsilon/2
\end{align*}
and we modify each of the paths $\gamma_1,\dots,\gamma_m$ from the first step, according to the previous procedure. The paths $\alpha_k$ from the first step remain unchanged, but new arcs $\alpha_l\subset Q_l$ will be added from the second step, such that $\br Q_l\cap \gamma_k\neq \emptyset$ for some $k\in \{1,\dots,m\}$, and thus 
\begin{align*}
\diam(f(\alpha_l))\leq \diam(S_l)\leq \sum_{i:\br Q_i\cap \gamma_k\neq \emptyset} \diam(S_i)=r_k.
\end{align*} 
This implies that $f(\alpha_l)\subset S_l\subset 2B_k$. Hence, the new path $f(\beta_2)$ that we obtain as a result of concatenations connects $f(x)$ to $f(y)$ and stays close to the path $f(\beta_1)$. In fact, we can achieve that $f(\beta_2\cap S)$ is covered by balls $B_l$ of radius $r_l$, such that $2B_l$ is contained in one of the balls $2B_k$, and
\begin{align*}
\mathcal H^1(f(\beta_2\cap S))\leq 2\sum_{l} r_l \leq \varepsilon.
\end{align*}
This can be achieved by noting that $f(\beta_2\cap S)\subset f(\beta_1\cap S)\subset \bigcup_k B_k$, and choosing an even larger set $J_2\supset J_1$, so that 
\begin{align*}
r_l\leq \sum_{\substack{i:\br Q_i\cap \gamma\neq \emptyset \\ i\in \N \setminus J_2}} \diam(S_i)< \min_{k\in \{1,\dots,m\}}r_k/2.
\end{align*}

In the $n$-th step we have a path $f(\beta_n)$ connecting $f(x),f(y)$ such that the set $f(\beta_n \cap S)$ admits a cover by balls $\{ 2B_l^n \}_l$ that is decreasing in $n$, and whose radii sum does not exceed $\varepsilon/2^{n-3}$. Moreover, by construction, if $f(\beta_n)\cap S_i\neq \emptyset$ for a square $i\in \N$, then $f(\beta_m)\cap S_i$ is a fixed line segment for all $m\geq n$.

The curves $f(\beta_n)$ subconverge to a continuum $f(\beta)$ in the Hausdorff sense that connects $f(x)$ to $f(y)$. The set $f(\beta\cap S)$ is covered by $\{2B_{l}^n\}_l$ for all $n$ and thus $\mathcal H^1(f(\beta\cap S))=0$. The set $\beta$ can only intersect peripheral disks that can be approximated by points intersected by the paths $\beta_n$, and thus by the path $\gamma$. Hence, $\beta$ cannot intersect more peripheral disks than $\gamma$ does.

We claim that $f(\beta)$ is locally connected, and thus it contains a path connecting $f(x),f(y)$; see the proof of Lemma \ref{unif:Level sets-Topological Lemma}. The argument is similar to the proof of Lemma \ref{unif:Level sets-curves}. If $f(\beta)$ is not locally connected, there exists an open set $U$ and $\varepsilon>0$ such that $U\cap f(\beta)$ contains infinitely many components $C_n$, $n\in \N$, of diameter at least $\varepsilon$. By passing to a subsequence, we may assume that the continua $\br C_n$ subconverge in the Hausdorff sense to a continuum $C\subset f(\beta)$ with $\diam(C)\geq \varepsilon$. We claim that $C\subset \mathcal R$, hence $\mathcal H^1(C)\leq \mathcal H^1(f(\beta\cap S))=0$, which is a contradiction.

If $C\cap S_{i_0}\neq \emptyset$ for some $i_0\in \N$, then by shrinking $C$ and $C_n$ we may assume that $C,C_n\subset \subset S_{i_0}$ for all $n\in \N$. In particular, $f(\beta)$ has infinite length inside $S_{i_0}$. However, by the construction of the paths $f(\beta_n)$, the intersection $f(\beta)\cap S_{i_0}$ is either empty or it is one line segment. This is a contradiction.  
\end{proof}

Using the lemma we show:

\begin{prop}\label{unif:Regularity-F modulus}\index{quasiconformal!discrete}
Let $\Gamma$ be a family of paths in $\C$ joining two continua $E,F \subset S$, but avoiding finitely many peripheral disks $\br Q_i$, $i\in I_0$, where $I_0$ is a finite (possibly empty) subset of $\N\cup \{0\}$. We have
\begin{align*}
\br \md (\Gamma)\leq  \md ( f(\Gamma))
\end{align*}
\end{prop}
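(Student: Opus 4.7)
The plan is to transfer admissibility directly across $f$: given any weight $\{\mu(S_i)\}_{i\in\N\cup\{0\}}$ admissible for $\md(f(\Gamma))$, I want to show that $\lambda(Q_i)\coloneqq \mu(S_i)$ is admissible for $\br{\md}(\Gamma)$. Since trivially $\sum_i \lambda(Q_i)^2=\sum_i\mu(S_i)^2$, taking the infimum over $\mu$ will then yield $\br{\md}(\Gamma)\leq \md(f(\Gamma))$.

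The technical issue is that a given path $\gamma\in\Gamma$ need not satisfy $\mathcal H^1(f(\gamma)\cap\mathcal R)=0$, so we cannot directly plug $f(\gamma)$ into the admissibility condition defining $\md(f(\Gamma))$. This is precisely what Lemma \ref{unif:Regularity-Path hausdorff measure zero} is designed to fix. The idea is: for $\md_2$-a.e.\ $\gamma\in\Gamma$, pick endpoints $x\in\br\gamma\cap E$ and $y\in\br\gamma\cap F$ (which exist since $\gamma$ joins $E$ and $F$), and apply the lemma to produce an auxiliary open path $\tilde\gamma$ joining $x$ and $y$ with $\mathcal H^1(f(\tilde\gamma\cap S))=0$ and $\{i:\br Q_i\cap\tilde\gamma\neq\emptyset\}\subset\{i:\br Q_i\cap\gamma\neq\emptyset\}$.

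Now I would verify that $\tilde\gamma\in\Gamma$ and that $f(\tilde\gamma)$ is eligible for testing admissibility of $\mu$. For the first point, $\tilde\gamma$ joins $x\in E$ to $y\in F$, both of which lie in $S$, and the inclusion of peripheral disks implies that $\tilde\gamma$ avoids the finitely many $\br Q_i$, $i\in I_0$, that $\gamma$ avoids. For the second point, since $f\colon\C\to\C$ is a homeomorphism with $f^{-1}(\mathcal R)=S$ and $f(Q_i)=S_i$, we get $f(\tilde\gamma)\cap\mathcal R=f(\tilde\gamma\cap S)$, whose $\mathcal H^1$-measure is zero by the lemma. Therefore the admissibility of $\mu$ applies to $f(\tilde\gamma)\in f(\Gamma)$, giving
\[
1\leq \sum_{i\in\N\cup\{0\}:\,S_i\cap f(\tilde\gamma)\neq\emptyset}\mu(S_i) = \sum_{i\in\N\cup\{0\}:\,Q_i\cap\tilde\gamma\neq\emptyset}\mu(S_i) \leq \sum_{i\in\N\cup\{0\}:\,\br Q_i\cap\gamma\neq\emptyset}\lambda(Q_i),
\]
using the homeomorphism property in the first equality and the inclusion from the lemma in the last step. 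This verifies admissibility of $\lambda$ outside the exceptional family $\Gamma_1$ of $\md_2$-measure zero from Lemma \ref{unif:Regularity-Path hausdorff measure zero}, which is allowed in the definition of $\br{\md}$.

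I expect no serious obstacle beyond carefully checking that all the curve-replacement bookkeeping is consistent—in particular, that the endpoints of $\tilde\gamma$ can be chosen in $E$ and $F$ respectively (which is immediate from $\br\gamma\cap E,\br\gamma\cap F\neq\emptyset$), and that the inclusion in the lemma is strong enough to ensure $\tilde\gamma$ inherits the avoidance of the exceptional peripheral disks indexed by $I_0$. The role of the outer disk $Q_0$ deserves a moment of attention, but since both $\br{\md}$ and $\md$ include $i=0$ in their admissibility sums and the term-by-term identification $\lambda(Q_0)=\mu(S_0)$ respects both $\ell^2$-norms, no additional argument is required.
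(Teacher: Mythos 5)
Your proposal is correct and follows essentially the same route as the paper: the paper also pulls back an admissible weight term by term via $\lambda(Q_i)=\lambda'(S_i)$ and uses Lemma \ref{unif:Regularity-Path hausdorff measure zero} to replace a non-exceptional $\gamma\in\Gamma$ by a path $\tilde\gamma\in\Gamma$ with $\mathcal H^1(f(\tilde\gamma\cap S))=0$ intersecting no more peripheral disks, so that admissibility in the image transfers back. The only cosmetic difference is that the paper phrases the final step as a contradiction (showing the bad set $\Gamma_0$ is contained in the exceptional family $\Gamma_1$), whereas you argue directly.
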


Here, of course, the left hand side is weak carpet modulus with respect to the carpet $S$, and the right hand side is carpet modulus with respect to the carpet $\mathcal R$.

\begin{proof}
Consider a weight $\{\lambda'(S_i)\}_{i\in \N\cup \{0\}}$ that is admissible for $\md( f(\Gamma))$, i.e., 
\begin{align}\label{unif:Regularity-F modulus-lambda' admissible}
\sum_{i:S_i\cap \gamma' \neq \emptyset}\lambda'(S_i)\geq 1
\end{align} 
for every path $\gamma'=f(\gamma)\in f(\Gamma)$ with $\mathcal H^1(\gamma'\cap \mathcal R)=0$. Define $\lambda(Q_i)\coloneqq  \lambda'(S_i)$, $i\in \N\cup \{0\}$, and let $\Gamma_0= \{\gamma \in \Gamma: \sum_{i:\br Q_i \cap \gamma\neq \emptyset}\lambda(Q_i) <1\}$. We wish to show that $\md_2(\Gamma_0)=0$. If this is the case, then $\{\lambda(Q_i)\}_{i\in \N}$ is clearly admissible for $\br\md(\Gamma)$, thus
\begin{align*}
\br \md(\Gamma ) \leq \sum_{i\in \N\cup \{0\}} \lambda(Q_i)^2=\sum_{i\in \N\cup \{0\}}\lambda'(S_i)^2.
\end{align*}
Infimizing over $\lambda$ we obtain the desired $\br \md(\Gamma)\leq \md( f(\Gamma)) $.

Now we show our claim. If $\gamma \in \Gamma_0 \setminus \Gamma_1$, where $\Gamma_1$ is as in Lemma \ref{unif:Regularity-Path hausdorff measure zero}, then there exists a path $\tilde \gamma$ given by the lemma that connects points $x\in E$ and $y\in F$. The path $\tilde \gamma$ intersects fewer peripheral disks than $\gamma$, so 
$$\sum_{i:S_i\cap  f(\tilde \gamma)\neq \emptyset} \lambda'(S_i)=\sum_{i:Q_i\cap \tilde \gamma\neq \emptyset} \lambda(Q_i) \leq \sum_{i:\br Q_i \cap \gamma\neq \emptyset}\lambda(Q_i)<1 .$$
Also, $\mathcal H^1(f(\tilde \gamma \cap S))=\mathcal H^1(f(\tilde \gamma)\cap \mathcal R)=0$, $\tilde \gamma$ still joins the continua $E,F$, and avoids $\bigcup_{i\in I_0}Q_{i}$, so $\tilde \gamma\in \Gamma$. This contradicts \eqref{unif:Regularity-F modulus-lambda' admissible}, hence $\Gamma_0\setminus \Gamma_1=\emptyset$, and $\Gamma_0\subset \Gamma_1$. We thus have $\md_2(\Gamma_0) \leq \md_2(\Gamma_1)=0$.
\end{proof}

We also wish to show an analog of Proposition \ref{unif:Regularity-F modulus} for $g\coloneqq  f^{-1}$:
\begin{prop}\label{unif:Regularity-F-1 modulus}\index{quasiconformal!discrete}
Let $\Gamma$ be the family  of paths in $\C$ joining two continua $E,F \subset \mathcal R$, but avoiding finitely many squares $\br S_i$, $i\in I_0$, where $I_0$ is a finite (possibly empty) subset of $\N\cup \{0\}$. We have
\begin{align*}
\br \md (\Gamma)\leq  \md ( g(\Gamma)).
\end{align*}
\end{prop}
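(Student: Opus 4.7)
The proof mirrors that of Proposition \ref{unif:Regularity-F modulus}, with the roles of source and target reversed. Given a weight $\{\lambda(Q_i)\}_{i \in \N \cup \{0\}}$ admissible for the strong modulus $\md(g(\Gamma))$, I would set $\lambda'(S_i) := \lambda(Q_i)$ and show that $\lambda'$ is admissible for $\br\md(\Gamma)$; infimizing over $\lambda$ then gives the asserted inequality. Since $f$ is a homeomorphism with $f(\br Q_i) = \br S_i$, the index sets $\{i : Q_i \cap g(\gamma) \neq \emptyset\}$ and $\{i : S_i \cap \gamma \neq \emptyset\}$ coincide and are contained in $\{i : \br S_i \cap \gamma \neq \emptyset\}$. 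Thus, admissibility reduces to showing that the family $\Gamma_0 := \{\gamma \in \Gamma : \sum_{i : \br S_i \cap \gamma \neq \emptyset} \lambda'(S_i) < 1\}$ satisfies $\md_2(\Gamma_0) = 0$.

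The plan is to prove an analog of Lemma \ref{unif:Regularity-Path hausdorff measure zero} for the inverse map $g$: namely, there exists an exceptional family $\Gamma_1'$ with $\md_2(\Gamma_1') = 0$ such that for every $\gamma \subset \C$ with $\gamma \notin \Gamma_1'$ and any two points $x, y \in \br\gamma$, one can find an open path $\tilde\gamma \subset \C$ joining $x$ and $y$ with $\{i \in \N \cup \{0\}: \br S_i \cap \tilde\gamma \neq \emptyset\} \subset \{i \in \N \cup \{0\}: \br S_i \cap \gamma \neq \emptyset\}$ and $\mathcal H^1(g(\tilde\gamma) \cap S) = 0$. Granting this analog, for $\gamma \in \Gamma_0 \setminus \Gamma_1'$ the path $\tilde\gamma$ still lies in $\Gamma$ (joining $E, F$ and avoiding the excluded squares), so $g(\tilde\gamma) \in g(\Gamma)$ is an eligible path for the strong modulus condition, forcing $\sum_{i : \br S_i \cap \tilde\gamma \neq \emptyset} \lambda'(S_i) \geq 1$, which contradicts the index-set inclusion and $\gamma \in \Gamma_0$. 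Hence $\Gamma_0 \subset \Gamma_1'$ and $\md_2(\Gamma_0) = 0$.

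The main obstacle is this analog of Lemma \ref{unif:Regularity-Path hausdorff measure zero} for $g$, which in turn requires a reverse upper gradient inequality: for $\md_2$-a.e.\ path $\gamma \subset \C$ and every open subpath $\beta$, $|g(x) - g(y)| \leq C \sum_{i : \br S_i \cap \beta \neq \emptyset} \diam(Q_i)$ for all $x, y \in \br\beta \cap \mathcal R$, with a universal constant $C$. Equivalently, the coordinate functions $\re(g), \im(g) : \mathcal R \to \R$ should lie in $\mathcal W^{1,2}(\mathcal R)$ with the sequence $\{\diam(Q_i)\}$ as an upper gradient; the trivial oscillation bound $\osc_{S_i}(\re(g)), \osc_{S_i}(\im(g)) \leq \diam(Q_i)$ holds, and $\{\diam(Q_i)\} \in \ell^2(\N)$ by Corollary \ref{harmonic:Fatness corollary}. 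Proving the upper gradient inequality itself is the crux, and can be approached by running the free boundary and conjugate-function machinery of Sections \ref{unif:Section the function u}--\ref{unif:Section Conjugate} on the square carpet $\mathcal R$ in parallel with the original construction on $S$, exploiting that the peripheral disks of $\mathcal R$ are uniform quasicircles.

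Once the reverse upper gradient inequality is in hand, the iterative construction in the proof of Lemma \ref{unif:Regularity-Path hausdorff measure zero} transfers verbatim with $f$ replaced by $g$, $S$ by $\mathcal R$, and peripheral disks $Q_i$ by squares $S_i$. In particular, portions of $\gamma$ inside each square $\br S_{j_k}$ are replaced by line-segment chords; since $g$ maps $\br S_{j_k}$ homeomorphically to $\br Q_{j_k}$, the $g$-image of such a chord is an arc inside $\br Q_{j_k}$ meeting $S = \br\Omega \setminus \bigcup_i Q_i$ only in its two boundary endpoints on $\partial Q_{j_k}$, hence of $\mathcal H^1$-measure zero. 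Combined with $\mathcal H^1(\gamma \cap \mathcal R) = 0$ for $\md_2$-a.e.\ $\gamma$ and the reverse upper gradient inequality controlling the $g$-images of the remaining pieces, passage to the limit yields $\mathcal H^1(g(\tilde\gamma) \cap S) = 0$, completing the proof of the analog and thereby of the proposition.
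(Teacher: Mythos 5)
Your reduction is exactly the paper's: Proposition \ref{unif:Regularity-F-1 modulus} follows from Proposition \ref{unif:Regularity-F modulus}'s argument verbatim once one has the analog of Lemma \ref{unif:Regularity-Path hausdorff measure zero} for $g$, and that analog in turn needs only that $g$ is a homeomorphism together with a reverse upper gradient inequality $|g(x)-g(y)|\leq \sum_{i:\br S_i\cap \beta\neq\emptyset}\diam(Q_i)$ for $\md_2$-a.e.\ path. Your last paragraph (chords inside squares map to arcs in $\br Q_{j_k}$ meeting $S$ only at two points, so the iteration transfers) is also fine.

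The gap is at the crux you yourself flag: the reverse upper gradient inequality. Your proposal to obtain it by ``running the free boundary and conjugate-function machinery on $\mathcal R$'' does not work as stated and would be circular. The coordinate functions on the square carpet $\mathcal R$ already solve the relevant free boundary problems (Example \ref{harmonic:Square carpets coordinate functions}), so rerunning the construction on $\mathcal R$ produces the identity, not $g$; and to transfer harmonicity or Sobolev regularity through $g$ one would need $g$ to preserve modulus or be (locally) quasisymmetric --- which is precisely what Proposition \ref{unif:Regularity-F-1 modulus} is being used to establish, and which is false in general under the hypotheses of Theorem \ref{unif:Main theorem}. The paper instead proves \eqref{unif:Regularity-G upper gradient} by a direct measure-theoretic argument that leans only on the \emph{forward} map: first, a monotonicity estimate for $f$ (Lemma \ref{unif:Regularity:monotonicity}, a consequence of the upper gradient inequality for $f$ and the maximum-type property of homeomorphisms) gives $\diam(f(B\cap S))\lesssim \sum_{i:\br Q_i\cap\partial(sB)\neq\emptyset}\diam(S_i)$; this feeds the same good-ball/bad-ball covering scheme as in Lemma \ref{unif:Level sets-Hausdorff measure} to show that for a.e.\ level $s$ of the distance function $\psi(\cdot)=\dist(\cdot,\beta)$ one has $\mathcal H^1(g(\psi^{-1}(s))\cap S^\circ)=0$ (Lemma \ref{unif:Regularity-G-psi}), supplemented by a counting argument showing $\psi^{-1}(s)\cap\partial S_i$ is finite for a.e.\ $s$ (Lemma \ref{unif:Regularity-psi finitely many points}). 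The $\mathcal H^1$-nullity yields the upper gradient inequality along the perturbed paths $\beta_s$ by a finite covering chain (Lemma \ref{unif:Regularity-G paths beta_s upper gradient}), and integrating over $s$ together with Lemma \ref{unif:Lebesgue differentiation Lemma} promotes it to $\md_2$-a.e.\ path (Lemma \ref{unif:Regularity-G a.e.gamma}). Without some argument of this kind, your proof is incomplete at its central step.
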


The proof of Proposition \ref{unif:Regularity-F modulus} applies without change, if we can establish an analog of Lemma \ref{unif:Regularity-Path hausdorff measure zero} for $g$. In the proof of the latter we only used the fact that $f$ is a homeomorphism, together with \eqref{unif:Regularity-upper gradient for F}. Hence, in order to obtain Proposition \ref{unif:Regularity-F-1 modulus}, it suffices to show that
\begin{align}\label{unif:Regularity-G upper gradient}
|g(x)-g(y)|\leq \sum_{i: \br S_i\cap \gamma\neq \emptyset} \diam(Q_i)
\end{align}
for all paths $\gamma\subset(0,1)\times(0,D)$ outside an exceptional family of $2$-modulus zero, and points $x,y\in \br \gamma\cap \mathcal R$. We will show this in Lemma \ref{unif:Regularity-G a.e.gamma} after we have established two auxiliary results.

Recall the notation $\beta_s \subset \psi^{-1}(s)\cap ((0,1)\times (0,D))$, $s\in (0,\delta)$, for the perturbations of a given path $\beta$ connecting two non-trivial continua $E,F \subset [0,1]\times [0,D]$, as in Proposition \ref{unif:Paths joining continua}. Here $\psi(x)=\dist(x,\beta)$.
\begin{lemma}\label{unif:Regularity-G paths beta_s}
Fix a path $\beta$ as above. For a.e.\ $s\in (0,\delta)$ we have
\begin{align*}
\mathcal H^1(g(\psi^{-1}(s)\cap \mathcal R))=0.
\end{align*}
\end{lemma}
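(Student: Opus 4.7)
I would first translate the problem through $f$: since $g=f^{-1}$ maps $\mathcal{R}$ to $S$, one has $g(\psi^{-1}(s)\cap \mathcal{R}) = f^{-1}(\psi^{-1}(s))\cap S = h^{-1}(s)\cap S$, where
\[
  h \coloneqq \psi\circ f\colon \br\Omega\to \mathbb{R}.
\]
Thus it suffices to prove that $\mathcal{H}^{1}(h^{-1}(s)\cap S)=0$ for a.e.\ $s\in(0,\delta)$. Combining the 1-Lipschitz property of $\psi$ with the upper gradient inequality \eqref{unif:Regularity-upper gradient for F} of $f$, the sequence $\{\sqrt 2\,\rho(Q_i)\}_{i\in \mathbb{N}}$ is an upper gradient for $h$ with exceptional family of conformal modulus zero; in particular $h$ belongs to a Sobolev-type class on $S$.

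I would next split $\psi^{-1}(s)\cap \mathcal{R}$ into the \emph{peripheral part} $\psi^{-1}(s)\cap\bigcup_{i} \partial S_i$ and the \emph{interior part} $\psi^{-1}(s)\cap \mathcal{R}^{\circ}$. For the peripheral part, restricting $\psi$ to the rectifiable curve $\partial S_i$ and using the coarea formula gives $\int \#(\psi^{-1}(s)\cap \partial S_i)\,ds\le \mathcal{H}^{1}(\partial S_i)<\infty$; hence for a.e.\ $s\in(0,\delta)$ this count is finite for every $i$ simultaneously. The peripheral part of $\psi^{-1}(s)\cap \mathcal{R}$ is therefore countable for a.e.\ $s$, and its image under $g$ is a countable set of $\mathcal{H}^{1}$-measure zero.

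The interior part is where the real work lies. My plan is to imitate the proof of Lemma \ref{harmonic:Fatness corollary}--style covering arguments used in Lemma~\ref{unif:Level sets-Hausdorff measure} for the carpet-harmonic function $u$, but now applied to $h$ with the upper gradient $\{\sqrt 2\,\rho(Q_i)\}$. Concretely: fix $\varepsilon>0$, let $E=\{i:\diam(Q_i)>\varepsilon\}$ (finite), cover the rest of $\Omega$ by balls $B_j$ of radius $r_j<\varepsilon$ with the usual $\frac15 B_j$ disjoint and $2B_j$ avoiding $\br Q_i$ for $i\in E$. Split the balls into ``good'' and ``bad'' via an oscillation dichotomy against a threshold $k\,\rho(Q_i)$ as before, replace bad balls by enlargements $\widetilde Q_i$ of the associated large peripheral disks, and estimate
\[
  r_j\,\diam(h(B_j\cap S))\;\le\;C\sum_{i:Q_i\subset 11 B_j}\sqrt 2\,\rho(Q_i)\diam(Q_i),
\]
followed by an $L^{2}$-maximal-function argument applied to $\Lambda(x)=\sum_{i}(\rho(Q_i)/\diam(Q_i))\,\chi_{Q_i}(x)\in L^{2}(\Omega)$, together with $\sum_{i}\diam(Q_i)^{2}<\infty$ from the quasiballs/fatness assumption and Corollary \ref{harmonic:Fatness corollary}.

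The hard step will be obtaining the analog of Lemma~\ref{unif:Maximum principle circular arcs}, namely a bound of the form $\diam(h(sB_j\cap S))\le \sqrt 2\sum_{i:Q_i\cap\partial(sB_j)\neq\emptyset}\rho(Q_i)$ for a.e.\ $s\in[1,2]$. The function $h$ is \emph{not} carpet-harmonic, so the maximum principle used in that lemma is unavailable directly on $\br\Omega$. I would address this via the following substitute: $\psi$ is 1-Lipschitz on the target rectangle and satisfies the trivial maximum principle on any planar ball there. So I would instead pass to the target side and cover $\psi^{-1}(s)\cap \mathcal{R}^{\circ}$ by round balls $B'_j\subset [0,1]\times[0,D]$; for two points $f(x),f(y)\in sB'_j\cap \mathcal{R}$ connected by a non-exceptional path $\gamma'\subset sB'_j$, the difference $|h(x)-h(y)|=|\psi(f(x))-\psi(f(y))|$ is bounded via the trivial max principle for $\psi$ by $\osc_{\partial(sB'_j)}\psi$, which in turn is bounded by $\sqrt 2\sum_{i:\br Q_i\cap g(\partial(sB'_j))\neq\emptyset}\rho(Q_i)$ once one controls $g(\partial(sB'_j))$ via Lemma~\ref{unif:Regularity-Path hausdorff measure zero} and Corollary \ref{unif:Upper gradient-projection}. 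Running the fatness/maximal-function argument in the target side for a.e.\ radius $s$ (using that the peripheral squares $S_i$ are uniformly fat quasiballs, trivially) would yield an estimate summable in $s$, from which the a.e.\ vanishing of $\mathcal{H}^{1}(h^{-1}(s)\cap S)$ follows after integration in $\varepsilon\to 0$ and Fatou.
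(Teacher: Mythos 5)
Your overall plan is the paper's plan: reduce to $h=\psi\circ f$, dispose of the peripheral part separately, and then run the covering/maximal-function machinery of Lemma \ref{unif:Level sets-Hausdorff measure} on the interior part. Your treatment of the peripheral part via the area formula is a clean alternative to the paper's argument (the paper instead shows directly, by covering $\partial S_i$ with small open sets and using that $\psi$ is $1$-Lipschitz, that the set of $s$ with $\#(\psi^{-1}(s)\cap\partial S_i)\geq N$ has outer measure at most $\tfrac1N\mathcal H^1(\partial S_i)$); both yield countability of $\psi^{-1}(s)\cap\bigcup_i\partial S_i$ for a.e.\ $s$, which suffices.

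The gap is in the step you yourself flag as hard. You correctly observe that the maximum principle used for the carpet-harmonic $u$ is unavailable for $h$, but your substitute does not work. Covering $\psi^{-1}(s)\cap\mathcal R^\circ$ by target-side balls $B_j'$ cannot produce a bound on $\mathcal H^1(g(\psi^{-1}(s)\cap\mathcal R^\circ))$ unless you control $\diam(g(B_j'\cap\mathcal R))$, i.e.\ unless you already have an upper gradient inequality for $g$ --- but that inequality (Lemma \ref{unif:Regularity-G a.e.gamma}) is deduced \emph{from} the present lemma, so this route is circular. Moreover the inequality you write, $\osc_{\partial(sB_j')}\psi\leq\sqrt2\sum_{i:\br Q_i\cap g(\partial(sB_j'))\neq\emptyset}\rho(Q_i)$, is pointing the wrong way: the left side is trivially at most the diameter of the circle since $\psi$ is $1$-Lipschitz, and bounding it by a source-side sum gives no information about the size of the source-side set you are measuring. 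Note also that $h$ itself is \emph{not} monotone ($\psi$ has an interior minimum set along $\beta$), so you cannot simply assert a maximum principle for $h$.

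The correct and elementary fix, which is what the paper isolates as Lemma \ref{unif:Regularity:monotonicity}, is to exploit that $f$ is a homeomorphism: for any $U\subset\subset\C$ one has $\sup_{z,w\in U}|f(z)-f(w)|=\sup_{z,w\in\partial U}|f(z)-f(w)|$. Hence for a source ball $B_j$ and a.e.\ $s\in[1,2]$,
\begin{align*}
\diam\bigl(h(sB_j\cap S)\bigr)\leq\diam\bigl(f(sB_j\cap S)\bigr)\leq\diam\bigl(f(\partial(sB_j))\bigr)\leq\sum_{i:\br Q_i\cap\partial(sB_j)\neq\emptyset}\diam(S_i),
\end{align*}
where the first inequality uses only that $\psi$ is $1$-Lipschitz and the last is the upper gradient inequality \eqref{unif:Regularity-upper gradient for F} along the (non-exceptional for a.e.\ $s$) circle $\partial(sB_j)$, with the usual adjustment when the extremal points lie inside peripheral disks. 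With this estimate in hand, your good/bad-ball dichotomy and the $L^2$-maximal-function argument go through exactly as in Lemma \ref{unif:Level sets-Hausdorff measure}, after noting that $\diam(I_j)=\diam(\psi(f(B_j\cap S)))\leq\diam(f(B_j\cap S))$ when integrating in $s$.
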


We postpone the proof for the moment. Using this we have a preliminary version of \eqref{unif:Regularity-G upper gradient}:

\begin{lemma}\label{unif:Regularity-G paths beta_s upper gradient}
For a.e.\ $s\in (0,\delta)$ and $x,y\in \br \beta_s\cap \mathcal R$ we have
\begin{align*}
|g(x)-g(y)|\leq \sum_{i: \br S_i\cap \beta_s\neq \emptyset} \diam(Q_i).
\end{align*}
\end{lemma}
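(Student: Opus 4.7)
The plan is to fix $s\in(0,\delta)$ for which Lemma \ref{unif:Regularity-G paths beta_s} applies, which occurs for a.e.\ $s$, and for points $x,y\in \br\beta_s\cap\mathcal R$ to consider the closed subarc $\tau$ of $\br\beta_s$ with endpoints $x,y$. Then $K\coloneqq g(\tau)$ is a compact connected subset of $\C$ containing $g(x)$ and $g(y)$, so it suffices to bound $\diam(K)$. The strategy is to produce an open cover of $K$ whose total diameter is controlled by $\sum_{i\in I}\diam(Q_i)$, where $I\coloneqq\{i\in\N:\br S_i\cap\beta_s\neq\emptyset\}$, and then to run the standard chain argument.

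To build the cover, note that $\tau\subset (0,1)\times(0,D)=\mathcal R\cup\bigcup_{i\in\N}S_i$, so setting $I'\coloneqq\{i\in\N:\tau\cap S_i\neq\emptyset\}$ we obtain
\[
g(\tau)=g(\tau\cap\mathcal R)\cup\bigcup_{i\in I'}g(\tau\cap S_i)\subset g(\br\beta_s\cap\mathcal R)\cup\bigcup_{i\in I'}Q_i.
\]
Openness of $S_i$ forces $\beta_s\cap S_i\neq\emptyset$ whenever $\br\beta_s\cap S_i\neq\emptyset$, and therefore $I'\subset I$. Moreover $\br\beta_s\cap\mathcal R$ differs from $\beta_s\cap\mathcal R$ by at most the two endpoints of $\br\beta_s$, so Lemma \ref{unif:Regularity-G paths beta_s} implies $\mathcal H^1(g(\br\beta_s\cap\mathcal R))=0$. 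Hence, given $\varepsilon>0$, I would cover $g(\br\beta_s\cap\mathcal R)$ by open balls $\{B_j\}_j$ with $\sum_j\diam(B_j)<\varepsilon$; together with the open sets $\{Q_i\}_{i\in I'}$, this is an open cover of the compact connected set $K$.

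Finally, by compactness extract a finite subcover, and by connectedness of $K$ observe that all of $K$ lies in a single equivalence class under the relation ``having nonempty intersection'' (otherwise $K$ would decompose into disjoint nonempty relatively open pieces). This yields a chain $V_1,\dots,V_N$ of distinct members of the subcover with $g(x)\in V_1$, $g(y)\in V_N$, and $V_k\cap V_{k+1}\neq\emptyset$; picking a point in each intersection and applying the triangle inequality gives
\[
|g(x)-g(y)|\leq \sum_{k=1}^N\diam(V_k)\leq \sum_j\diam(B_j)+\sum_{i\in I'}\diam(Q_i)\leq \varepsilon+\sum_{i\in I}\diam(Q_i),
\]
and sending $\varepsilon\to 0$ completes the proof. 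I do not anticipate a serious obstacle, since the essential measure-zero input is already provided by Lemma \ref{unif:Regularity-G paths beta_s}; the only delicate point worth verifying carefully is that endpoints of $\br\beta_s$ cannot introduce squares into $I'\setminus I$, which is handled precisely by the openness of each $S_i$.
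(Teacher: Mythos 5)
Your proposal is correct and follows essentially the same route as the paper: cover the measure-zero set $g(\br\beta_s\cap\mathcal R)$ by balls of small total diameter, adjoin the peripheral disks $Q_i=g(S_i)$ for the squares met by the arc, and run a finite chain argument on the resulting open cover of the connected set $g(\tau)$. The paper states this more tersely; your added care about the endpoints of $\br\beta_s$ (via openness of the $S_i$) and the distinctness of the chain members just makes explicit what the paper leaves implicit.
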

\begin{proof}
Consider $s\in (0,\delta)$ such that the conclusion of Lemma \ref{unif:Regularity-G paths beta_s} holds. Without loss of generality, we assume that $\sum_{i: \br S_i\cap \beta_s\neq \emptyset} \diam(Q_i)<\infty$. Let $\varepsilon>0$ and consider a cover of $g(\beta_s\cap \mathcal R) \subset g(\psi^{-1}(s)\cap \mathcal R)$ by finitely many small balls $B_j$ of radius $r_j$ such that $\sum_j r_j<\varepsilon$. The union of $\bigcup_j B_j$ and $\bigcup_{i:\br S_i\cap \beta_s\neq \emptyset}Q_i$ covers $g(\beta_s)$ so we can find a finite subcover. Traveling along this subcover from $g(x)$ to $g(y)$ (as a finite chain) we obtain
\begin{align*}
|g(x)-g(y)|\leq \sum_{j}2r_j +\sum_{i: \br S_i\cap \beta_s\neq \emptyset} \diam(Q_i) \leq 2\varepsilon+\sum_{i: \br S_i\cap \beta_s\neq \emptyset} \diam(Q_i).
\end{align*}
The conclusion follows.
\end{proof}

Finally, we have:

\begin{lemma}\label{unif:Regularity-G a.e.gamma}
There exists an exceptional family of paths $\Gamma_0$ with $\md_2(\Gamma_0)=0$ such that for every path $\gamma\subset (0,1)\times (0,D)$ with $\gamma\notin \Gamma_0$ and every open subpath $\beta$ of $\gamma$ we have
\begin{align}\label{unif:Regularity-G a.e. gamma claim}
|g(x)-g(y)|\leq \sum_{i: \br S_i\cap \beta\neq \emptyset} \diam(Q_i)
\end{align}
for all points $x,y\in \br \beta\cap \mathcal R$.
\end{lemma}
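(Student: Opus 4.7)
Writing $L(\beta) \coloneqq \sum_{i: \br S_i \cap \beta \neq \emptyset} \diam(Q_i)$, the plan is as follows. Let $\widetilde\Gamma$ be the family of open paths $\beta \subset (0,1)\times(0,D)$ with endpoints $x, y \in \br\beta \cap \mathcal R$ for which $|g(x)-g(y)| > L(\beta)$. By Remark \ref{harmonic:3-remark-subpaths modulus zero}, it is enough to show $\md_2(\widetilde\Gamma) = 0$: then $\Gamma_0$ may be taken to be the family of paths containing a subpath in $\widetilde\Gamma$, and for any $\gamma \notin \Gamma_0$ and arbitrary $x, y \in \br\beta \cap \mathcal R$ on a subpath $\beta$, the subpath of $\beta$ from $x$ to $y$ gives the desired inequality. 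A routine reduction via arcs inside a peripheral square, using $g(\br S_i) = \br Q_i$, further allows us to assume the endpoints lie in $\mathcal R^\circ$.

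Suppose for contradiction that $\md_2(\widetilde\Gamma) > 0$. Observe that $\{\diam(Q_i)\}_{i\in\N} \in \ell^2(\N)$, since the quasiball assumption gives $\sum_i \diam(Q_i)^2 \lesssim \sum_i \mathcal H^2(Q_i) \leq \mathcal H^2(\Omega) < \infty$. Applying Lemma \ref{unif:Lebesgue differentiation Lemma} to the square carpet $\mathcal R$ (valid by Remark \ref{unif:Lebesgue differentiation remark}) with weight $\lambda(S_i) = \diam(Q_i)$, together with Remark \ref{unif:Upper gradient-projection remark}, the paths for which $L(\gamma) = \infty$ or for which the two conclusions of Lemma \ref{unif:Lebesgue differentiation Lemma} fail form exceptional families of conformal modulus zero. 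Removing them leaves a subfamily of $\widetilde\Gamma$ with positive $\md_2$; pick one such bad path $\gamma$ with endpoints $p, q \in \mathcal R^\circ$ and $|g(p) - g(q)| > L(\gamma) + \varepsilon$ for some $\varepsilon > 0$.

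Using Lemma \ref{unif:Paths in S^o}(a) applied to $\mathcal R$, choose small non-trivial continua $E \subset \mathcal R \cap B(p,r)$ through $p$ and $F \subset \mathcal R \cap B(q,r)$ through $q$; by the uniform continuity of $g$, shrinking $r$ ensures $|g(z)-g(p)| < \varepsilon/10$ on $E$ and $|g(z)-g(q)| < \varepsilon/10$ on $F$. Since $\gamma$ joins $E$ and $F$, Lemma \ref{unif:Regularity-G paths beta_s upper gradient} yields $\delta > 0$ such that for a.e.\ $s \in (0,\delta)$ there is a simple path $\gamma_s \subset \psi^{-1}(s) \cap (0,1)\times(0,D)$ joining $E$ to $F$ (where $\psi(z) = \dist(z,\gamma)$) with $|g(x_s)-g(y_s)| \leq L(\gamma_s)$ for its endpoints $x_s \in E$, $y_s \in F$. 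The triangle inequality then gives $L(\gamma_s) \geq L(\gamma) + 4\varepsilon/5$ for a.e.\ $s \in (0,\delta)$, hence
\[
\int_0^\delta L(\gamma_s)\, ds \geq \delta\bigl( L(\gamma) + \tfrac{4\varepsilon}{5} \bigr).
\]

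For the matching upper bound, Tonelli's theorem combined with the fact that $\psi$ is 1-Lipschitz shows that the set $\{s \in (0,\delta) : \br S_i \cap \gamma_s \neq \emptyset\}$ is contained in the interval $\psi(\br S_i) \cap (0,\delta)$ of length at most $\min(\diam(S_i), \delta)$, and is empty unless $\br S_i \cap N_\delta(\gamma) \neq \emptyset$. Splitting the sum over $i$ according to whether $\br S_i \cap \gamma \neq \emptyset$ or not, and in the latter case according to whether $\diam(S_i) \geq \delta$ (so $i \in D_\delta(\gamma)$) or $\diam(S_i) < \delta$ (so $S_i \subset N_{2\delta}(\gamma)$), we obtain
\[
\int_0^\delta L(\gamma_s)\, ds \leq \delta L(\gamma) + \delta \sum_{i \in D_\delta(\gamma)} \diam(Q_i) + \sum_{S_i \subset N_{2\delta}(\gamma)} \diam(Q_i)\diam(S_i).
\]
By the two conclusions of Lemma \ref{unif:Lebesgue differentiation Lemma}, each of the last two terms is $o(\delta)$, yielding $\delta L(\gamma) + o(\delta) \geq \delta(L(\gamma) + 4\varepsilon/5)$, a contradiction as $\delta \to 0$. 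The main obstacle is this last Tonelli comparison: controlling the ``excess'' peripheral squares hit by $\gamma_s$ but not by $\gamma$ requires both parts of Lemma \ref{unif:Lebesgue differentiation Lemma}, one for the large squares near $\gamma$ and one for the small squares in the $\delta$-neighborhood.
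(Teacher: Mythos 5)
Your proposal is correct and follows essentially the same route as the paper: reduce to endpoints in $\mathcal R^\circ$, apply Lemma \ref{unif:Regularity-G paths beta_s upper gradient} to the perturbations $\gamma_s$, average over $s$ via Fubini/Tonelli using that $\psi$ is $1$-Lipschitz, and kill the two excess terms (the $D_\delta$ squares and the small squares in $N_{2\delta}$) with the two conclusions of Lemma \ref{unif:Lebesgue differentiation Lemma}. The only difference is presentational — you phrase it as a contradiction between lower and upper bounds on $\int_0^\delta L(\gamma_s)\,ds$, whereas the paper bounds $|g(x)-g(y)|$ directly by the average — but the computations and the lemmas invoked are identical.
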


\begin{proof}
Note that Lemma \ref{unif:Lebesgue differentiation Lemma} is also valid for the carpet $\mathcal R$, since it satisfies the basic assumptions \eqref{unif:Quasi-balls} and \eqref{unif:Fat sets} and has area zero (see Corollary \ref{unif:Definition-Range-Extension}). We fix a non-exceptional path $\gamma\subset (0,1)\times (0,D)$ so that the conclusions of Lemma \ref{unif:Lebesgue differentiation Lemma} hold for all subpaths $\beta$ of $\gamma$ having endpoints in $\mathcal R^\circ$, and $\mathcal H^1(\gamma\cap \mathcal R)=0$.

By continuity and the usual reduction to ``accessible" points, it suffices to prove the main claim \eqref{unif:Regularity-G a.e. gamma claim} for a subpath $\beta$ of $\gamma$ and its endpoints $x,y$, with the further assumption that they lie in $\mathcal R^\circ$ (i.e.\ they do not lie on any square or on the boundary rectangle); see the comments in the beginning of the proof of Lemma \ref{unif:Conjugate-Defect upper gradient}. Consider two non-trivial disjoint continua $E,F \subset \mathcal R^\circ$ such that $x\in E$ and $y\in F$; the existence of such continua is implied by Lemma \ref{unif:Paths in S^o}. Let $\psi(x)=\dist(x,\beta)$ and consider the perturbations $\beta_s \subset \psi^{-1}(s)$ of $\beta$ that connect points $x_s \in E$ to $y_s\in F$, $s\in (0,\delta)$. If $\delta$ is sufficiently small, then $\psi^{-1}(s) \subset (0,1)\times(0,D)$ for all $s\in (0,\delta)$. 

For fixed $\varepsilon>0$ we choose an even smaller $\delta>0$ so that $x_s$ and $y_s$ are close to $x$ and $y$, respectively, and 
\begin{align*}
|g(x)-g(y)|&\leq |g(x_s)-g(y_s)|+\varepsilon \\
&\leq\sum_{i: \br S_i\cap \psi^{-1}(s)\neq \emptyset} \diam(Q_i) +\varepsilon
\end{align*}
for a.e.\ $s\in (0,\delta)$, by the continuity of $g$ and Lemma \ref{unif:Regularity-G paths beta_s upper gradient}. The right hand side is a measurable function of $s$. Thus, averaging over $s\in (0,\delta)$ and using Fubini's theorem we obtain
\begin{align*}
|g(x)-g(y)|&\leq \frac{1}{\delta}\sum_{i\in \N} \diam(Q_i) \int_{0}^\delta \x_{\br S_i \cap\psi^{-1}(s)}\, ds +\varepsilon\\
&\leq \frac{1}{\delta}\sum_{i\in \N} \diam(Q_i) \mathcal H^1(\{ s\in (0,\delta):\br S_i\cap N_s(\beta)\neq \emptyset\})+\varepsilon\\
&\leq \sum_{i: \br{S}_i\cap \beta\neq \emptyset }\diam(Q_i) + \frac{1}{\delta} \sum_{\substack{i:\br S_i\cap \beta=\emptyset\\ S_i\cap N_{\delta}(\beta)\neq \emptyset }} \diam(Q_i)\cdot \min\{\diam(S_i),\delta\} +\varepsilon.
\end{align*}
It suffices to show that the second sum converges to $0$ as $\delta\to 0$. Note that we can bound it by
\begin{align*}
\frac{1}{\delta}\sum_{i: S_i\subset N_{2\delta}(\beta)} \diam(Q_i) \diam(S_i) + \sum_{i\in D_\delta(\beta)}\diam(Q_i),
\end{align*}
where $D_\delta(\beta)$ is the family of indices $i\in \N$ such that $\diam(S_i)\geq \delta$, $S_i\cap N_\delta(\beta)\neq \emptyset$, and $\br S_i\cap \beta=\emptyset$. We are exactly in the setting of Lemma \ref{unif:Lebesgue differentiation Lemma}, so the conclusion follows.
\end{proof}

It remains to prove the very first Lemma \ref{unif:Regularity-G paths beta_s}. We assume that we have a path $\beta$ connecting two continua $E,F\subset [0,1]\times [0,D]$, and its perturbations $\beta_s\subset \psi^{-1}(s)\cap ((0,1)\times(0,D))$, $s\in (0,\delta)$. We split the proof in two parts:

\begin{lemma}\label{unif:Regularity-psi finitely many points}
For a.e.\ $s\in (0,\delta)$ the intersection $\psi^{-1}(s)\cap \partial S_i$ contains finitely many points, for all $i\in \N\cup \{0\}$.
\end{lemma}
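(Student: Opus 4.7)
My plan is to reduce the statement to the one-dimensional area formula (Banach indicatrix) applied to the distance function $\psi$ restricted to each side of each peripheral square. Since $\psi(x)=\dist(x,\beta)$ is $1$-Lipschitz on $\C$, its restriction to any line segment is a $1$-Lipschitz real-valued function on an interval.

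Each boundary $\partial S_i$, $i\in \N$, is a square with sides parallel to the coordinate axes, and $\partial S_0$ is the boundary of the rectangle $[0,1]\times [0,D]$; in either case, $\partial S_i$ is a union of four closed line segments. First I would enumerate all these sides as $\{\ell_{i,k}: i\in \N\cup\{0\},\, k=1,2,3,4\}$, which is a countable collection of rectifiable arcs with $\mathcal H^1(\ell_{i,k})<\infty$ for every $i,k$.

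Next, for each fixed side $\ell=\ell_{i,k}$, I would invoke the one-dimensional area formula for the Lipschitz function $\psi|_\ell$:
\begin{align*}
\int_\R \#\bigl(\psi^{-1}(s)\cap \ell\bigr)\, ds \;=\; \int_\ell |(\psi|_\ell)'|\, d\mathcal H^1 \;\leq\; \mathcal H^1(\ell)<\infty.
\end{align*}
The integrability shows that the counting function $s\mapsto \#(\psi^{-1}(s)\cap \ell)$ is finite for $\mathcal H^1$-a.e.\ $s\in \R$, and in particular for a.e.\ $s\in (0,\delta)$. Let $E_{i,k}\subset \R$ denote the exceptional set of measure zero where this counting function is infinite.

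Since the collection $\{\ell_{i,k}\}$ is countable, the union $E\coloneqq \bigcup_{i,k} E_{i,k}$ has $\mathcal H^1(E)=0$. For any $s\in (0,\delta)\setminus E$, the intersection $\psi^{-1}(s)\cap \ell_{i,k}$ is finite for every $i\in \N\cup\{0\}$ and every $k\in\{1,\dots,4\}$, and hence $\psi^{-1}(s)\cap \partial S_i=\bigcup_k (\psi^{-1}(s)\cap \ell_{i,k})$ is finite for all $i$, as claimed. The only point requiring care is the measurability of the Banach indicatrix $s\mapsto \#(\psi^{-1}(s)\cap\ell)$, but this is a standard fact (it is lower semicontinuous for continuous $\psi|_\ell$) and is also implicit in the area formula recorded in Proposition \ref{harmonic:Coarea}; beyond this routine measurability verification there is no real obstacle.
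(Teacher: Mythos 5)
Your proof is correct, but it takes a different route from the paper's. You exploit the fact that each $\partial S_i$ is a union of four line segments and reduce to the classical one-dimensional Banach indicatrix theorem for the $1$-Lipschitz function $\psi|_\ell$ on each segment, then take a countable union of null exceptional sets. The paper instead runs a direct covering argument valid for \emph{any} compact set $A$ with $\mathcal H^1(A)<\infty$: it shows $N\, m_1^*(\{s: \#(\psi^{-1}(s)\cap A)\geq N\})\leq \mathcal H^1(A)$ by splitting according to the minimal mutual distance $1/n$ of the $N$ points, covering $A$ by sets of diameter $<1/n$, and estimating $\sum_i m_1(\psi(U_i))\leq \sum_i \diam(U_i)$; it then lets $N\to\infty$, taking care that the relevant set is measurable because it is a union of nontrivial closed intervals. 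Your approach buys a shorter argument by using the special polygonal structure of the peripheral squares and an off-the-shelf area formula (indeed, the area formula already recorded in Proposition \ref{harmonic:Coarea} applied to $g=\x_{\partial S_i}$ would give the integrability of the counting function directly, without even splitting into sides); the paper's argument buys generality and self-containedness, which matters since the analogous covering technique reappears elsewhere (e.g.\ Lemma \ref{unif:Level sets-at most two points of intersection} and Lemma \ref{unif:Level sets-Hausdorff measure}). One small inaccuracy in your parenthetical: the Banach indicatrix $s\mapsto\#(\psi^{-1}(s)\cap\ell)$ is \emph{not} lower semicontinuous in general (already for $\psi|_\ell$ the identity, the counting function jumps down at the endpoints of the image); it is, however, Borel measurable as a pointwise supremum of the upper semicontinuous counting functions associated with nested partitions, and this standard fact (or the measurability built into Federer's area formula) is all you need, so the gap is cosmetic rather than substantive.
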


\begin{lemma}\label{unif:Regularity-G-psi}
For a.e.\ $s\in (0,\delta)$ we have 
\begin{align*}
\mathcal H^{1}( g(\psi^{-1}(s)\cap \mathcal R^\circ)) = \mathcal H^1( g( \psi^{-1}(s)) \cap S^\circ) =0.
\end{align*}
\end{lemma}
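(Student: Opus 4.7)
The first equality is immediate from $g(\mathcal R^\circ) = S^\circ$, which holds because $f\colon\br\Omega\to [0,1]\times[0,D]$ is a homeomorphism (Remark \ref{unif:Injectivity:Remark Extension}) mapping each $\partial Q_i$ bijectively onto $\partial S_i$ (Corollary \ref{unif:Injectivity-top-bottom}). The plan for the vanishing of the common value is to transplant the Hausdorff covering argument of Lemma \ref{unif:Level sets-Hausdorff measure} to the square carpet $\mathcal R$, with the inverse map $g$ playing the role previously played by the carpet-harmonic function $u$. Fix $\varepsilon>0$, let $E_\varepsilon=\{i\in\N:\diam(S_i)>\varepsilon\}$ (finite since $\sum_i\diam(S_i)^2<\infty$), and cover $[0,1]\times[0,D]\setminus\bigcup_{i\in E_\varepsilon}\br S_i$ by balls $B_j=B(z_j,r_j)$ with $r_j<\varepsilon$, $2B_j$ still avoiding the large squares, and $\frac{1}{5}B_j$ pairwise disjoint.

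The central step is the diameter estimate
\[
\diam(g(B_j\cap \mathcal R))\;\leq\; C\sum_{\substack{i:\br S_i\cap \partial B(z_j,\rho)\neq \emptyset\\ \diam(S_i)\leq 8 r_j}} \diam(Q_i)
\]
for a.e.\ $\rho\in(r_j,2r_j)$, together with the separate ``bad ball'' treatment of the finitely many squares with $\diam(S_i)>8r_j$ meeting $\partial B(z_j,\rho)$, exactly as the $\widetilde Q_i$ were introduced in Lemma \ref{unif:Level sets-Hausdorff measure}. To obtain the estimate, note first that $g(\br B_j)$ is compact in $\R^2$, so the elementary fact $\diam(K)=\diam(\partial K)$ for compact $K\subset\R^2$ gives $\diam(g(B_j\cap\mathcal R))\leq \diam(g(\br B_j))=\diam(g(\partial B_j))\leq \diam(g(\partial B(z_j,\rho)))$ for any $\rho\geq r_j$. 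Pick $q_1,q_2\in \partial B(z_j,\rho)$ (nearly) attaining the last diameter; if some $q_i$ lies in an open square $S_{k_i}$, replace it by a point $q_i'\in\partial S_{k_i}\cap\partial B(z_j,\rho)$, the error $|g(q_i)-g(q_i')|\leq\diam(Q_{k_i})$ being already present in the sum. For a.e.\ $\rho$ the $g$-image $\gamma_\rho\subset\Omega$ of the circle $\partial B(z_j,\rho)$ (an open subarc of it, to have distinct endpoints) is non-exceptional for Lemma \ref{unif:Upper gradient-projection}; applying that lemma with $\pi$ the orthogonal projection onto the line through $g(q_1'),g(q_2')$ yields the claim, the contribution of the finitely many big squares meeting $\partial B(z_j,\rho)$ being absorbed by the left-hand side exactly as in \eqref{unif:Level sets-Hausdorff measure Basic inequality}.

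Integrating the estimate over $\rho\in(r_j,2r_j)$ and invoking the uniform fatness of the squares in $\mathcal R$ (Remark \ref{unif:Fatness consequence}) gives the analog of \eqref{unif:Level sets-Hausdorff measure-Good bound},
\[
r_j\cdot\diam(g(B_j\cap\mathcal R))\;\leq\; C\sum_{i:S_i\subset 11 B_j}\diam(Q_i)\diam(S_i).
\]
For each $s$, $g(\psi^{-1}(s)\cap\mathcal R^\circ)$ is covered by the $g(B_j\cap\mathcal R)$'s with $B_j\cap\psi^{-1}(s)\neq\emptyset$ together with the bad-ball replacements, yielding $\mathcal H^1_{d_\varepsilon}(g(\psi^{-1}(s)\cap \mathcal R^\circ))\leq \Lambda_\varepsilon(s)+B_\varepsilon(s)$ for some $d_\varepsilon\to 0$. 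Since $\psi$ is $1$-Lipschitz, $|\{s:B_j\cap\psi^{-1}(s)\neq\emptyset\}|\leq 2r_j$, so
\[
\int_0^\delta \Lambda_\varepsilon(s)\,ds\;\leq\; C\sum_j\sum_{i:S_i\subset 11 B_j}\diam(Q_i)\diam(S_i),
\]
which tends to $0$ as $\varepsilon\to 0$ by the Hardy--Littlewood argument at the end of the proof of Lemma \ref{unif:Level sets-Hausdorff measure}, applied to the $L^2(\mathcal R)$ function $h(z)=\sum_i(\diam(Q_i)/\diam(S_i))\chi_{S_i}(z)$ (finite $L^2$-norm by $\sum_i\diam(Q_i)^2<\infty$) together with $\mathcal H^2(\mathcal R)=0$. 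The bad-ball term $\int_0^\delta B_\varepsilon(s)\,ds$ is handled by the Cauchy--Schwarz estimate used to bound $\int b_\varepsilon$ in Lemma \ref{unif:Level sets-Hausdorff measure}. Passing to a subsequence $\varepsilon_n\to 0$, $\Lambda_{\varepsilon_n}(s)+B_{\varepsilon_n}(s)\to 0$ for a.e.\ $s$, giving the desired vanishing.

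The principal obstacle is the diameter estimate in the second paragraph: unlike $u$ in Lemma \ref{unif:Level sets-Hausdorff measure}, the map $g=f^{-1}$ satisfies no upper gradient inequality at this stage of the paper, since Lemma \ref{unif:Regularity-G a.e.gamma} (which provides one) is proved \emph{using} the present lemma. The workaround is to apply Lemma \ref{unif:Upper gradient-projection} to the \emph{pull-backs} $g(\partial B(z_j,\rho))$ of concentric circles, combined with the observation $\diam(K)=\diam(\partial K)$ to pass from boundary control to ball control. Justifying the non-exceptionality of these pull-backs for a.e.\ $\rho$ is the delicate point and requires Proposition \ref{unif:Regularity-F modulus} together with the equivalence of conformal and carpet modulus (Lemma \ref{unif:Zero modulus lemma}): the family of concentric circles has strictly positive conformal modulus in $(0,1)\times(0,D)$, hence cannot be entirely mapped into the conformal modulus zero exceptional family in $\Omega$, and a Fubini-type argument on radii isolates the bad $\rho$'s into a Lebesgue-null set.
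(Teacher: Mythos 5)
Your reduction of the first equality to $g(\mathcal R^\circ)=S^\circ$ is fine, and you correctly identify the danger of circularity: no upper gradient inequality for $g$ is available at this stage. But your workaround does not close the gap. Corollary \ref{unif:Upper gradient-projection}, applied to a non-exceptional path $\gamma\subset\Omega$ with endpoints $x,y\in S$, bounds $|\pi(f(x))-\pi(f(y))|$ by $\sum_{i:\br Q_i\cap\gamma\neq\emptyset}\mathcal H^1(\pi(S_i))$ --- both sides live on the image side and involve the squares $S_i$. Applying it to the pullback $\gamma_\rho=g(\partial B(z_j,\rho))$ with $x=g(q_1')$, $y=g(q_2')$ yields a bound on $|\pi(q_1')-\pi(q_2')|$ (a distance between points of the circle in the image) in terms of $\sum\mathcal H^1(\pi(S_i))$; it gives no control whatsoever of $|g(q_1')-g(q_2')|$, let alone by $\sum\diam(Q_i)$. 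Your ``central step'' $\diam(g(B_j\cap\mathcal R))\le C\sum\diam(Q_i)$ is precisely a reverse upper gradient inequality for $g$ along circles, i.e.\ essentially Lemma \ref{unif:Regularity-G paths beta_s upper gradient}, which the paper derives \emph{from} the present lemma. No tool available at this point delivers it, and the non-exceptionality of the pullback circles (which you flag as the delicate point) is beside the point: even granting it, the inequality you obtain runs in the wrong direction.

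The paper sidesteps this by placing the covering on the domain side: it covers $\Omega\setminus\bigcup_{i\in E_\varepsilon}\br Q_i$ by balls $B_j$ of radius $r_j<\varepsilon$. Since $g(\psi^{-1}(s)\cap\mathcal R^\circ)\subset S^\circ$, these balls (together with the bad-ball replacements $\widetilde Q_i$ on the domain side) already form a cover of the set whose measure is being estimated, so the covering radii enter $\mathcal H^1_\varepsilon$ directly and require no estimate at all. What must be estimated instead is the length of the parameter interval $I_j\supset\psi(f(B_j\cap S))$ of levels $s$ for which $B_j$ contributes, and $\diam(\psi(f(B_j\cap S)))\le\diam(f(B_j\cap S))$ is a \emph{forward} quantity controlled by the monotonicity and upper gradient inequality of $f$ (Lemma \ref{unif:Regularity:monotonicity}), exactly parallel to the role played by $\diam(u(B_j\cap S))$ in Lemma \ref{unif:Level sets-Hausdorff measure}. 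To repair your argument you would need to reorganize it along these lines; as written, the second paragraph of your proposal does not go through.
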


\begin{remark}
One easily recognizes the connection to the proof of Lemma \ref{unif:Level sets-Hausdorff measure}, where we first proved that for a.e.\ $t$ the level set $u^{-1}(t)$ intersects the peripheral circles $\partial Q_i$ in finitely many points (in fact in at most two points), and then treated separately the set $u^{-1}(t)\cap S^\circ$. In fact, Lemma \ref{unif:Level sets-Hausdorff measure} is a particular case of what we are about to show.
\end{remark}

\begin{proof}[Proof of Lemma \ref{unif:Regularity-psi finitely many points}]
The proof is based on the fact that $A\coloneqq \partial S_i$ has finite Hausdorff $1$-measure for all $i\in \N\cup\{0\}$. Let $J\subset (0,\delta)$ be the set of $s$ for which $\psi^{-1}(s)\cap A$ contains at least $N$ points, where $N\in \N$ is fixed. We will show that for the outer $1$-measure $m_1^*(J)$ we have
\begin{align}\label{unif:Regularity-psi finitely-m*}
m_1^*(J)\leq \frac{1}{N}\mathcal H^1(A).
\end{align}
Hence, the set of $s\in (0,\delta)$ for which $\psi^{-1}(s)\cap A$ contains infinitely many points has outer measure that is also bounded by $\frac{1}{N}\mathcal H^1(A)$, for all $N\in \N$. If we let $N\to\infty$ the conclusion will follow.

For $n\in \N$ define $J_n \subset J$ to be the set of $s\in (0,\delta)$ for which the intersection $\psi^{-1}(s)\cap A$ contains $N$ points whose mutual distance is at least $1/n$; recall that $N$ is fixed. It is easy to see that $J= \bigcup_{n\in \N}J_n$ and $J_{n} \subset J_{n+1}$, thus $m_1^*(J)=\lim_{n\to\infty} m_1^*(J_n)$; see \cite[Prop.\ 1.5.12, p.\ 23]{Bogachev:measure}. It suffices to show \eqref{unif:Regularity-psi finitely-m*} for $J_n$ in the place of $J$.

Recall that $\psi(x)=\dist(x,\beta)$ for the given path $\beta$. For fixed $n\in \N$ we cover $A$ by finitely many open sets $\{U_i\}_i$ with $\diam(U_i)<1/n$. If $s\in J_n$ then $\psi^{-1}(s)$ meets at least $N$ distinct sets $U_i$. For all $t$ near $s$ with $t\leq s$, the set $\psi^{-1}(t)$ also intersects at least $N$ distinct sets $U_i$. Indeed, note that for each point $x\in \psi^{-1}(s)$ and for every small ball $B(x,r)$ there exists $t_0<s$ such that $\psi^{-1}(t)\cap B(x,r)\neq \emptyset$ for all $t_0\leq t\leq s$. To see the latter, let $[x,y]$ denote the line segment from $x$ to its closest point $y\in \beta$, so $|x-y|=s$. Arbitrarily close to $x$ we can find points $z\in [x,y]$, with $z\in \psi^{-1}(t_0)$ for some $t_0<s$. Then by the intermediate value theorem applied to $\psi$ on the segment $[z,x]$ the claim follows.

Define $W_s$ to be a non-trivial closed interval of the form $[t_0,s] \subset (0,\delta)$, such that $\psi^{-1}(t)$ intersects at least $N$ distinct sets $U_i$ for all $t\in W_s$. Also, let $W= \bigcup_{s\in J_n} W_s \supset J_n$, and $V_i\coloneqq \psi(U_i)$. Then
\begin{align*}
\sum_{i} \x_{V_i}(t)\geq N
\end{align*}
for all $t\in W$. The set $W$ is measurable as it is a  union of non-trivial closed intervals; this is easy to see in one dimension, but a similar statement is true in $\R^n$ \cite[Theorem 1.1]{BalcerzakKharazishvili:measurable}. Hence, we have
\begin{align*}
Nm_1^*(J_n)&\leq N m_1(W) = N\int \x_W(t) \, dt \leq \int \sum_i \x_{V_i}(t) \, dt\\
&= \sum_i m_1(V_i) \leq \sum_i \diam(\psi(U_i)) \leq \sum_i \diam(U_i)
\end{align*}
since $\psi$ is $1$-Lipschitz. The cover $\{U_i\}$ of $A$ was arbitrary, so taking the infimum over all covers completes the proof.
\end{proof}

An ingredient for the proof of Lemma \ref{unif:Regularity-G-psi} is the following {monotonicity} property of $f$; cf.\ Lemma \ref{unif:Maximum principle circular arcs}.

\begin{lemma}\label{unif:Regularity:monotonicity}\index{monotone function}
Let $x\in S$, $r>0$, and $c>1$ such that $B(x,r)\subset B(x,cr)\subset \Omega$. We have
\begin{align}\label{unif:Regularity-G-psi-Upper gradient}
\diam(f(B(x,r)\cap S))\leq \diam (f(B(x,sr)\cap S))\leq \sum_{i:\br Q_i \cap \partial B(x,sr)\neq \emptyset}\diam(S_i) 
\end{align}
for a.e.\ $s\in [1,c]$.
\end{lemma}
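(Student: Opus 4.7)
The first inequality in \eqref{unif:Regularity-G-psi-Upper gradient} is immediate from the inclusion $B(x,r)\subset B(x,sr)$ for $s\geq 1$, so the plan concerns only the second inequality. The strategy is to combine three ingredients: non-exceptionality of the circles $\gamma_s\coloneqq \partial B(x,sr)$ for almost every $s\in [1,c]$, the fact that $f$ is a homeomorphism so $f(\br B(x,sr))$ is a Jordan region, and the upper gradient inequality of Proposition \ref{unif:Upper gradient for F} applied along $\gamma_s$.

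First I would verify that for a.e.\ $s\in [1,c]$ the circle $\gamma_s\subset \Omega$ lies outside the exceptional family $\Gamma_0$ of Proposition \ref{unif:Upper gradient for F}, so that the upper gradient inequality holds along every open subpath of $\gamma_s$. Since $\md_2(\Gamma_0)=0$, for every $\varepsilon>0$ one can choose an admissible function $\lambda$ for $\md_2(\Gamma_0)$ with $\|\lambda\|_2<\varepsilon$; the co-area formula applied to the $1$-Lipschitz function $y\mapsto |y-x|$ together with Cauchy--Schwarz yields
\begin{align*}
\int_1^c\int_{\gamma_s}\lambda\,d\mathcal H^1\,ds\leq C\|\lambda\|_2\sqrt{\mathcal H^2(B(x,cr))},
\end{align*}
so the set of $s\in [1,c]$ on which $\int_{\gamma_s}\lambda\,d\mathcal H^1\geq 1$ has measure at most $C\varepsilon\sqrt{\mathcal H^2(B(x,cr))}$; letting $\varepsilon\to 0$ along a sequence gives the claim. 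This is precisely the argument of Lemma \ref{unif:Paths joining continua}.

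For such an $s$, and assuming $B(x,sr)\cap S\neq\emptyset$ (else the estimate is trivial), I would use that $f\colon \br\Omega\to [0,1]\times[0,D]$ is a homeomorphism (Remark \ref{unif:Injectivity:Remark Extension}): since $\br B(x,sr)\subset \Omega$ is a closed Jordan region, $f(\br B(x,sr))$ is a compact planar set whose diameter is attained on its boundary $f(\gamma_s)$, hence
\begin{align*}
\diam(f(B(x,sr)\cap S))\leq \diam(f(\br B(x,sr)))=\diam(f(\gamma_s)).
\end{align*}
The diameter of the Jordan curve $f(\gamma_s)$ is then estimated by the coordinate decomposition $\diam(f(\gamma_s))^2\leq \diam(\tilde u(\gamma_s))^2+\diam(\tilde v(\gamma_s))^2$. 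Writing $A\coloneqq \{i\in\N:\br Q_i\cap \gamma_s\neq\emptyset\}$ and noting $\diam(S_i)=\sqrt{2}\rho(Q_i)$, Pythagoras reduces the lemma to showing $\diam(\tilde u(\gamma_s))$ and $\diam(\tilde v(\gamma_s))$ are each bounded by $\sum_{i\in A}\rho(Q_i)$. For $\tilde u$ this follows from Lemma \ref{unif:Maximum principle circular arcs} combined with the maximum principle of Proposition \ref{unif:Maximum principle for tilde u}, which yields $\sup_{\br B(x,sr)}\tilde u=\sup_{\gamma_s}\tilde u$ and analogously for $\inf$; for $\tilde v$ the bound is obtained in parallel, using the upper gradient inequality of Lemma \ref{unif:Conjugate-Defect upper gradient} applied along $\gamma_s$ together with the monotonicity of $v$ on level sets of $u$ (Corollary \ref{unif:Conjugate-Oscillation-Increasing}) to play the role of a maximum principle for $\tilde v$ on $\br B(x,sr)$.

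The main technical obstacle is the book-keeping needed to absorb the $\rho(Q_i)$-scale corrections that arise when the extrema of $\tilde u$ or $\tilde v$ on $\gamma_s$ are attained at points in $Q_i\cap \gamma_s$ rather than on $\gamma_s\cap S$; the strong maximum principle for harmonic functions on each $Q_i$, together with Corollary \ref{unif:Injectivity-rho non zero} forcing $\rho(Q_i)>0$, rules out interior extrema inside any $Q_i$ and allows the reduction to the boundary $\gamma_s\cap \partial Q_i\subset S$. Executing this book-keeping cleanly, so that the two coordinate bounds combine via Pythagoras to give exactly $\sqrt{2}\sum_{i\in A}\rho(Q_i)=\sum_{i\in A}\diam(S_i)$, is the delicate part of the proof.
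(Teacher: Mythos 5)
Your proof is correct and follows essentially the same route as the paper: reduce to the boundary circle via the monotonicity of the homeomorphism $f$, restrict to a.e.\ $s$ so that $\partial B(x,sr)$ is non-exceptional, apply the upper gradient inequality along that circle, and absorb the case where the extremal points lie inside peripheral disks by passing to last-exit/first-entry points of the circle on $\partial Q_{i_z}$, $\partial Q_{i_w}$, at the cost of terms $\diam(S_{i_z})$, $\diam(S_{i_w})$ that are already present in the sum. The only real divergence is your final step: you split $f$ into its coordinates $\tilde u,\tilde v$, bound each oscillation on the circle separately, and recombine via Pythagoras to recover the factor $\sqrt{2}$. The paper instead applies Proposition \ref{unif:Upper gradient for F} directly to $f$, which already packages both coordinates with the factor $\sqrt{2}$ built in; this makes the coordinate decomposition (and your appeals to Lemma \ref{unif:Maximum principle circular arcs}, Corollary \ref{unif:Conjugate-Oscillation-Increasing}, Corollary \ref{unif:Injectivity-rho non zero} and the strong maximum principle inside each $Q_i$) an unnecessary detour rather than an error, since the only maximum principle actually needed is the trivial one for homeomorphisms that places the extremal points on $\partial B(x,sr)$.
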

\begin{proof}
The map $f$ is a homeomorphism so it has the \textit{monotonicity} property that for any set $U\subset \subset \C$ we have 
$$\sup_{z,w\in U} |f(z)-f(w)|= \sup_{z,w\in \partial U}|f(z)-f(w)|.$$
Thus, for each $s\in (1,c)$ we have
\begin{align*}
\diam(f(B(x,r)\cap S))&\leq \diam (f(B(x,sr)\cap S))\\
&\leq \diam(f(B(x,sr)))= |f(z)-f(w)|
\end{align*}
for some points $z,w\in \partial B(x,sr)$. Suppose first that $z,w\in S$. Since for a.e.\ $s\in [1,c]$ the path $\partial B(x,sr)$ is non-exceptional for $2$-modulus (this follows, for example, from a modification of Lemma \ref{unif:Paths joining continua}), by the upper gradient inequality \eqref{unif:Regularity-upper gradient for F} we obtain
$$|f(z)-f(w)|\leq \sum_{i:\br Q_i \cap \partial B(x,sr)\neq \emptyset}\diam(S_i). $$
If $z$ lies in a peripheral disk $\br Q_{i_z}$ with $\br Q_{i_z}\cap \partial B(x,sr)\neq \emptyset$, then we let $z'$ be the last exit point of the path $\partial B(x,sr)$ from $\br Q_{i_z}$ as it travels from $z$  to $w$. Similarly, we consider a point $w'$, in case $w$ lies in a peripheral disk $\br Q_{i_w}$. Observe now that $|f(z)-f(z')|\leq \diam(S_{i_z})$, $|f(w)-f(w')|\leq \diam(S_{i_w})$, and the open subpath of $\partial B(x,sr)$ from $z'$ to $w'$ does not intersect $\br Q_{i_z},\br Q_{i_w}$. The upper gradient inequality applied to this subpath yields the result.
\end{proof}

\begin{proof}[Proof of Lemma \ref{unif:Regularity-G-psi}]
The proof is very similar to the proof of Lemma \ref{unif:Level sets-Hausdorff measure} so we omit most of the details.

For a fixed $\varepsilon>0$ we consider the set $E_\varepsilon=\{i\in \N: \diam(Q_i)>\varepsilon\}$. We cover $\Omega\setminus \bigcup_{i\in E_\varepsilon} \br Q_i$ by balls $B_j$ of radius $r_j<\varepsilon$ such that $2B_j \subset \Omega \setminus\bigcup_{i\in E_\varepsilon} \br Q_i $, and such that $\frac{1}{5}B_j$ are disjoint.

Let $J$ be the family of indices $j$ such that for each $s\in [1,2]$ we have 
$$\diam( f(sB_j\cap S))\geq k\diam (S_i)$$ 
for all peripheral disks $Q_i$ with $\diam(Q_i)>8r_j$ that intersect $\partial (sB_j)$. The constant $k\geq 1$ can be chosen exactly as in the proof of Lemma \ref{unif:Level sets-Hausdorff measure}. Using \eqref{unif:Regularity-G-psi-Upper gradient} for $B_j$ and integrating over $s\in [1,2]$ one obtains
\begin{align}\label{unif:Regularity-G-psi-good}
r_j\diam (f(B_j\cap S))\leq C \sum_{i:Q_i\subset 11B_j} \diam(S_i)\diam(Q_i)
\end{align} 
for all $j\in J$, where $C>0$ is a uniform constant depending only on the data of the carpet $S$ (cf.\ \eqref{unif:Level sets-Hausdorff measure-Good bound}). For each $j\in J$ consider the smallest interval $I_j$ that contains $\psi(f(B_j\cap S))$ and define $g_\varepsilon(t)=\sum_{j\in J} 2r_j\x_{I_j}(t)$, $t\in (0,\delta)$. 

For $j\notin J$ there exists $s=s_j\in [1,2]$ and there exists a peripheral disk $Q_i$ that intersects $\partial(sB_j)$ with $\diam(Q_i)>8r_j$, but $\diam(f(sB_j\cap S))<k\diam(S_i)$. Let $\{Q_i\}_{i\in I}$ denote the family of such peripheral disks. Some $Q_i$, $i\in I$, might intersect multiple balls $B_j$, $j\notin J$. We define
\begin{align*}
\widetilde Q_i &\coloneqq  \br{Q}_i\cup \bigcup \{s_jB_j: Q_i\cap \partial (s_jB_j)\neq \emptyset, \, \diam(Q_i)>8r_j,\\ &\qquad\qquad\qquad \textrm{and}\, \diam(f(s_jB_j\cap S)) <k\osc_{Q_i}(u)\},
\end{align*}
and note that
\begin{align}\label{unif:Regularity-G-psi-bad}
\diam(f(\widetilde Q_i\cap S))\leq C \diam(S_i)
\end{align}
for all $i\in I$, where $C>0$ depends only on the data. Furthermore, $\diam(\widetilde Q_i) < 2\diam(Q_i)$ since $\diam(Q_i)>8r_j$ whenever $s_jB_j\subset \widetilde Q_i$. Now, let $I_i$ be the smallest interval containing $\psi(f(\widetilde Q_i\cap S))$ and define $b_\varepsilon(t)=\sum_{i\in I} 2\diam(Q_i)\x_{I_i}(t)$, $t\in (0,\delta)$.

Observe that for each $t\in (0,\delta)$ the set $g(\psi^{-1}(s)\cap \mathcal R^\circ)$ is covered by the balls $B_j$, $j\in J$, and the sets $\widetilde Q_i$, $i\in I$. Since $r_j<\varepsilon$ for $j\in J$ and $\diam(Q_i)<\varepsilon$ for $i\in I$, we have
\begin{align*}
\mathcal H^1_{\varepsilon}( g(\psi^{-1}(s) \cap \mathcal R^\circ)) \leq g_\varepsilon(s)+b_\varepsilon(s).
\end{align*}
It suffices to show that $g_\varepsilon(s) \to 0 $ and $b_\varepsilon(s) \to 0$ for a.e.\ $s\in (0,\delta)$, along a sequence of $\varepsilon \to 0$.

The function $\psi$ is $1$-Lipschitz, so we have
\begin{align*}
\diam(I_j)&=\diam(\psi(f(B_j\cap S))) \leq \diam (f(B_j\cap S)) \quad \quad \textrm{and}\\
\diam(I_i)&= \diam(\psi(f(\widetilde Q_i\cap S)))\leq \diam(f(\widetilde Q_i\cap S))
\end{align*}
for all $j\in J$ and $i\in I$. These can be estimated above by \eqref{unif:Regularity-G-psi-good} and \eqref{unif:Regularity-G-psi-bad}, respectively. The proof continues exactly as in Lemma \ref{unif:Level sets-Hausdorff measure} by estimating $\int_0^\delta b_\varepsilon (s) \, ds$ and $\int_0^\delta g_\varepsilon(s) \, ds$ and showing that they converge to $0$ as $\varepsilon \to 0$. 
\end{proof}

\section{Carpet modulus estimates}\label{unif:Section Modulus Estimates}\index{Loewner estimates}\index{modulus!Loewner estimates}

In this section we state some modulus estimates, which were proved in \cite[Section 8]{Bonk:uniformization}. The statements there involve some bounds for the \textit{transboundary modulus}, which is a notion of modulus ``between" the classical conformal modulus and the carpet modulus that we are employing in this chapter. Thus, the proofs can be applied with minor changes and we restrict ourselves to mentioning the results. 

We first recall some definitions from the introduction. Consider a Sierpi\'nski carpet $S\subset \br \Omega$ with its peripheral circles $\{\partial Q_i\}_{i\in \N\cup \{0\}}$, where $\partial Q_0=\partial \Omega$. We say that the peripheral circles of the carpet $S$ are  $K_2$-\textit{quasicircles} for some constant $K_2>0$ if $\partial Q_i$ is a $K_2$-quasicircle for all $i\in \N\cup \{0\}$. This is to say  for any two points $x,y\in \partial Q_i$ there exists an arc $\gamma \subset \partial Q_i$ connecting $x$ and $y$ with $\diam(\gamma)\leq K_3 |x-y|$. Furthermore, the peripheral circles of the carpet $S$ are $K_3$-\textit{relatively separated}\index{uniform relative separation} for a constant $K_3>0$ if 
\begin{align*}
\Delta(\partial Q_i, \partial Q_j)\coloneqq \frac{\dist(\partial Q_i, \partial Q_j)}{\min \{ \diam(\partial Q_i),\diam( \partial Q_j)\}} \geq K_3
\end{align*} 
for all $i,j\in \N\cup \{0\}$ with $i\neq j$. Recall  that if the peripheral circles of $S$ are uniform quasicircles then the inner peripheral disks $Q_i$, $i\in \N$, are uniformly fat and uniform quasiballs.

In the following two propositions the \textit{common assumption} is that we have a Sierpi\'nski carpet $S$ contained in a Jordan region $\br \Omega$ such that $\partial Q_0=\partial \Omega \subset S$ is the outer peripheral circle and $\{Q_i\}_{i\in \N}$ are the inner peripheral disks.

\begin{prop}[Prop.\ 8.1,  \cite{Bonk:uniformization}]\label{unif:Modulus-Loewner lower}
Assume that the peripheral circles $\{\partial Q_i\}_{i\in \N \cup \{0\}}$ of $S$ are $K_2$-quasicircles and they are $K_3$-relatively separated, and fix an integer $N\in \N$. Then there exists a non-increasing function $\phi\colon (0,\infty)\to (0,\infty)$ that can be chosen only depending on $K_2,K_3$, and $N$ with the following property: if $E$ and $F$ are arbitrary disjoint continua in $S$, and $I_0\subset \N\cup \{0\}$ is a finite index set with $\#I_0 =N$, then the family of curves $\Gamma$ in $\C$ joining the continua $E$ and $F$, but avoiding the finitely many peripheral disks $\br Q_i$, $i\in I_0$, has carpet modulus satisfying
\begin{align*}
\md (\Gamma) \geq \phi(\Delta(E,F)).
\end{align*} 
The same conclusion is true if we use instead the weak carpet modulus $\br \md(\Gamma)$.
\end{prop}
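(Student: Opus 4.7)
The plan is to reduce the lower bound for the (weak) carpet modulus to the classical Loewner estimate for conformal modulus in the plane. Since the peripheral circles are $K_2$-quasicircles, each inner peripheral disk is a uniform quasiball: there exist balls $B(x_i,r_i)\subset Q_i\subset B(x_i,R_i)$ with $R_i/r_i\leq K_0(K_2)$. Given any weight $\{\lambda(Q_i)\}_{i\in\N\cup\{0\}}$ admissible for $\md(\Gamma)$, I would associate the Borel density
\begin{equation*}
\rho \;=\; \sum_{i\in\N\setminus I_0}\frac{\lambda(Q_i)}{R_i}\,\x_{B(x_i,2R_i)}.
\end{equation*}
By Lemma~\ref{unif:Bojarski} applied to the balls $B(x_i,r_i)\subset Q_i$, together with the fact that the $Q_i$ are pairwise disjoint, one obtains $\|\rho\|_2^2\leq C\sum_i \lambda(Q_i)^2$ with $C=C(K_2)$.

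The next step is to establish that, outside an exceptional curve family of conformal modulus zero, every $\gamma\in\Gamma$ satisfies $\int_\gamma \rho\,ds\geq c_1$ for a constant $c_1=c_1(K_2,K_3,N)>0$. The exceptional family consists of curves with $\mathcal{H}^1(\gamma\cap S)>0$ (which has $\md_2=0$ since $\mathcal{H}^2(S)=0$), together with curves that spend too little length in suitable annular neighborhoods. For a ``good'' curve $\gamma$ and any $Q_i$ with $Q_i\cap\gamma\neq\emptyset$ and $4R_i<\diam(\gamma)$, the curve $\gamma$ must exit $B(x_i,2R_i)$, so $\int_\gamma \x_{B(x_i,2R_i)}ds\geq R_i$ and therefore this peripheral disk contributes at least $\lambda(Q_i)$ to $\int_\gamma\rho\,ds$. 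The ``large'' peripheral disks (those with $4R_i\geq\diam(\gamma)$) are only finitely many for each $\gamma$ by Lemma~\ref{harmonic:Fatness consequence}, and their collective contribution can be dominated by splitting into the case where they carry most of the admissibility mass (treated using the uniform relative separation $K_3$, which prevents them from clustering around $E\cup F$) and the case where they do not, where the ``small'' disks alone already give $\int_\gamma\rho\,ds\geq 1/2$. Combining admissibility of $c_1\rho$ for $\md_2$ of the good subfamily $\Gamma'$ with the $L^2$ bound yields $\md_2(\Gamma')\leq c_1^{-2}\|\rho\|_2^2\leq c_1^{-2}C\sum_i\lambda(Q_i)^2$.

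To close the loop, I would invoke the classical Loewner property of $\R^2$: for disjoint continua $E,F\subset\C$ the family of curves in $\C$ joining them has conformal modulus at least $\psi_0(\Delta(E,F))$ for some universal non-increasing $\psi_0>0$. The excluded disks $\{\overline{Q}_i\}_{i\in I_0}$ are handled by noting that the subfamily of curves meeting any given one of them has $\md_2$ bounded above in terms of $\Delta(E,F)$ and $K_3$; subtracting these $N$ subfamilies from the full Loewner family leaves a family of conformal modulus at least $\phi_0(\Delta(E,F))$ with $\phi_0$ depending only on $K_2,K_3,N$. Taking $\phi=C^{-1}c_1^2\phi_0$ and infimizing over $\lambda$ gives the claimed bound. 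For the weak carpet modulus $\br\md(\Gamma)$, the same argument applies essentially unchanged, since admissibility is only required outside a family of $\md_2$-measure zero, which is absorbed into the exceptional family.

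The main obstacle, as I see it, is executing the admissibility step while keeping the constant $c_1$ under quantitative control when the geometry near $E$ and $F$ is degenerate. Specifically, controlling the contribution of the finitely many ``large'' peripheral disks intersecting $\gamma$, and ensuring that the reduction to curves avoiding the $N$ excluded disks does not deteriorate the Loewner bound, is where the quasicircle and uniform relative separation hypotheses are used most delicately; without $K_3$-separation, a large peripheral disk in $I_0$ could force $\gamma$ to take arbitrarily long detours near $E\cup F$, destroying any uniform lower bound on $\int_\gamma\rho\,ds$.
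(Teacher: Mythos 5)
First, a structural remark: the paper does not actually prove this proposition — it is quoted from Bonk with the remark at the start of Section~\ref{unif:Section Modulus Estimates} that the proofs there apply with minor changes. So the comparison is against Bonk's argument and against the techniques the paper uses elsewhere. Your overall architecture — pass from an admissible discrete weight $\lambda$ to the density $\rho=\sum_i\frac{\lambda(Q_i)}{R_i}\x_{B(x_i,2R_i)}$, control $\|\rho\|_2$ via Lemma~\ref{unif:Bojarski} and the disjointness of the inner balls, and play this off against a Loewner-type lower bound for $\md_2(\Gamma)$ — is the right one, and is exactly the mechanism used in the proof of Lemma~\ref{harmonic:2-weak modulus zero implies two modulus zero}.

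The genuine gap is in the step where you need $\md_2(\Gamma)\geq\phi_0(\Delta(E,F))$ for the family $\Gamma$ of curves joining $E$ and $F$ that \emph{avoid} the $N$ excluded disks $\br Q_i$, $i\in I_0$. You propose to deduce this from the classical Loewner property of $\R^2$ by subtracting the $N$ subfamilies of curves meeting some $\br Q_i$, $i\in I_0$, on the grounds that each such subfamily has small conformal modulus. That claim is false: if $E$ is an arc of $\partial Q_{i_0}$ for some $i_0\in I_0$, or if $Q_{i_0}$ sits between $E$ and $F$ at distance comparable to $\dist(E,F)$, the subfamily of Loewner curves meeting $\br Q_{i_0}$ can carry essentially all of the modulus, and the subadditivity inequality $\md_2(\Gamma)\geq \md_2(\Gamma_{\mathrm{all}})-\sum\md_2(\Gamma_{\mathrm{bad}})$ then gives nothing. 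The quantitative Loewner property of the complement of $N$ uniformly relatively separated uniform quasidisks is the actual content of Bonk's Proposition 8.1; it is obtained by rerouting curves around the excluded quasidisks (using that the complement of a quasidisk is a uniform domain and that the $K_3$-separation leaves room for the detour), not by subtraction. A secondary inaccuracy: the ``large'' peripheral disks are handled not by relative separation but by fatness, which bounds by some $N_1(K_2)$ the number of disks with $\diam(Q_i)\gtrsim\diam(\gamma)$ meeting a given $\gamma$; one then argues per curve that either the small disks contribute at least $1/2$ to the admissibility sum (so $2\rho$ suffices), or some single large disk — or $Q_0$, which your density also omits and which cannot be assigned a ball — satisfies $\lambda\geq\frac{1}{2(N_1+1)}$, in which case $\sum_i\lambda(Q_i)^2\geq\frac{1}{4(N_1+1)^2}$ and the desired bound holds outright.
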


\begin{prop}[Prop.\ 8.4, \cite{Bonk:uniformization}]\label{unif:Modulus-Loewner upper}
Assume that the peripheral circles $\{\partial Q_i\}_{i\in \N \cup \{0\}}$ of $S$  are $K_2$-quasicircles and they are $K_3$-relatively separated. Then there exists a non-increasing function $\phi\colon  (0,\infty)\to (0,\infty)$ that can be chosen only depending on $K_2$ and $K_3$ with the following property: if $E$ and $F$ are disjoint continua in $S$, then the family of curves $\Gamma$ in $\C$ joining the continua $E$ and $F$ has carpet modulus satisfying 
\begin{align*}
\md(\Gamma) \leq \phi(\Delta(E,F)).
\end{align*}
The same conclusion is true if we use instead the weak carpet modulus $\br \md(\Gamma)$.
\end{prop}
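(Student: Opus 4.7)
The plan is to produce an explicit admissible weight $\lambda$ for $\md(\Gamma)$ whose $\ell^2$-mass decays like $1/\log \Delta(E,F)$, mirroring the classical conformal-modulus estimate for an annulus separating two continua. Without loss of generality suppose $r \coloneqq \diam(E) \leq \diam(F)$, write $d \coloneqq \dist(E,F)$, and set $\Delta \coloneqq \Delta(E,F) = d/r$. When $\Delta$ is bounded by a fixed constant (say $\Delta \leq 4$), I would settle the estimate with a trivial bound: take $\lambda(Q_i) = \diam(Q_i)/r$ for peripheral disks meeting a bounded neighborhood $U$ of $E\cup F$, and $0$ otherwise. Admissibility is immediate since $\gamma$ has to cross $U$, and the $\ell^2$-mass is controlled by $\mathcal{H}^2(U)/r^2$, a universal constant depending only on $K_2,K_3$; here the relative-separation hypothesis $K_3$ is what prevents the disks near $E\cup F$ from accumulating uncontrollably.

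For $\Delta$ large, fix $x_0 \in E$ and consider the disjoint dyadic annuli $A_j = A(x_0;\,2^{j}r,\,2^{j+1}r)$ for $j = 0,\dots,N-1$, with $N = \lfloor \log_2 \Delta \rfloor - 2$. Since $E \subset \br B(x_0, r)$ and $F$ lies outside $B(x_0, r+d)$, every $\gamma \in \Gamma$ must cross each $A_j$. Define
\[
\lambda(Q_i) \coloneqq \frac{\diam(Q_i)}{N\cdot 2^{j} r} \quad \text{whenever } Q_i \cap A_j \neq \emptyset,
\]
and $\lambda(Q_0) = 0$; for the finitely many peripheral disks straddling several annuli (bounded in number by the fatness constant via Remark~\ref{unif:Fatness consequence}) assign the maximum of the values above.

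Admissibility is the heart of the argument. Given $\gamma \in \Gamma$ with $\mathcal{H}^1(\gamma \cap S) = 0$, the radial projection $z \mapsto |z - x_0|$ sends $\gamma \cap \br A_j$ onto a connected subset of $[2^j r, 2^{j+1} r]$ of length $\geq 2^j r$. This projection is covered by the images of the sets $Q_i \cap A_j$ with $Q_i \cap \gamma \cap A_j \neq \emptyset$, each of length $\leq \diam(Q_i)$, together with the projection of $\gamma \cap S \cap A_j$ which has $\mathcal{H}^1$-measure zero. Hence $\sum_{i : Q_i \cap \gamma \cap A_j \neq \emptyset} \diam(Q_i) \geq 2^j r$, and summing over $j$ yields $\sum_{i : Q_i \cap \gamma \neq \emptyset} \lambda(Q_i) \geq 1$. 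For the $\ell^2$-mass, the uniform quasiball/fatness condition and disjointness of the $Q_i$ give $\sum_{i : Q_i \cap A_j \neq \emptyset} \diam(Q_i)^2 \lesssim \mathcal{H}^2(\tilde A_j) \lesssim 4^j r^2$, where $\tilde A_j$ is a bounded enlargement of $A_j$ absorbing disks not fully contained in it. Thus $\sum_{i : Q_i \cap A_j \neq \emptyset} \lambda(Q_i)^2 \lesssim 1/N^2$, and summing over the $N$ annuli yields $\sum_i \lambda(Q_i)^2 \lesssim 1/N \lesssim 1/\log \Delta$.

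Taking $\phi(t) = C \min\bigl(1, (\log(2+t))^{-1}\bigr)$ with a constant $C = C(K_2,K_3)$ completes the bound for $\md(\Gamma)$. The statement for $\br\md(\Gamma)$ is then free: any weight admissible for $\md$ satisfies the defining inequality for every $\gamma$ with $\mathcal{H}^1(\gamma \cap S) = 0$, and the complement of that family has $\md_2 = 0$ (by Lemma~\ref{unif:Zero modulus lemma} combined with $\mathcal{H}^2(S) = 0$), so the weight is also admissible for $\br\md$, giving $\br\md(\Gamma) \leq \md(\Gamma)$. The main obstacle I anticipate is the admissibility verification: the radial projection step crucially exploits $\mathcal{H}^1(\gamma \cap S) = 0$, and the technical bookkeeping for peripheral disks that intersect more than one annulus—controlling their number uniformly via the quasiball hypothesis so that they contribute only a bounded correction to the $\ell^2$-sum—is the most delicate piece.
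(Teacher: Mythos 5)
Your dyadic-annulus strategy is the right general approach (it is essentially how the cited source, Bonk's Proposition~8.4, proceeds, and the same covering/fatness mechanism appears throughout this paper, e.g.\ in the proofs of Lemma~\ref{unif:Injectivity-light} and Lemma~\ref{unif:Zeta lemma}), but there is a genuine gap: you set $\lambda(Q_0)=0$, and with that choice your weight is simply not admissible. The carpet modulus in this chapter sums over $i\in\N\cup\{0\}$, and $\Gamma$ consists of curves in $\C$, not in $\Omega$; a curve may leave $\br\Omega$ through $\partial\Omega=\partial Q_0\subset S$ and travel inside the outer peripheral disk $Q_0$. In the extreme case where $E$ and $F$ are two disjoint subarcs of $\partial\Omega$, there is a curve $\gamma$ that meets $S$ only at its two endpoints (so $\mathcal H^1(\gamma\cap S)=0$) and meets no inner peripheral disk at all, only $Q_0$. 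For such $\gamma$ your admissibility sum is $\lambda(Q_0)=0$. Worse, this example shows that \emph{any} admissible weight must have $\lambda(Q_0)\ge 1$, so $\md(\Gamma)\ge 1$ for such pairs no matter how large $\Delta(E,F)$ is; your claimed bound $\phi(t)=C\min(1,(\log(2+t))^{-1})$ is therefore false. The correct repair is to put $\lambda(Q_0)=1$ (which trivially handles every curve entering $Q_0$, while your annulus argument handles curves confined to $\br\Omega$) and accept that $\phi$ levels off at a positive constant as $t\to\infty$. This is exactly what the proposition permits---it asks only for a non-increasing positive $\phi$, in contrast to Proposition~\ref{unif:Modulus-Remove squares}, where decay to $0$ is recovered precisely by allowing finitely many peripheral disks (in particular $Q_0$) to be removed from the curve family, as is also done in the lower bound of Proposition~\ref{unif:Modulus-Loewner lower}.

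Two secondary points, both fixable but not correct as written. First, in the ``trivial'' regime $\Delta\le 4$, admissibility is not ``immediate'': a curve from $E$ to $F$ need only travel distance $d=\dist(E,F)$, which may be much smaller than $r=\diam(E)$, so the weight must be $\diam(Q_i)/d$ (capped at $1$ for large disks) rather than $\diam(Q_i)/r$; and the supporting region should be a $d$-neighbourhood of $E$ only, not of $E\cup F$ (whose area is not controlled by $\Delta$ when $\diam(F)$ is large). The resulting mass is of order $(1+1/\Delta)^2$, which is all one needs; uniform relative separation plays no role here---disjointness plus fatness of the $Q_i$ is what bounds $\sum\diam(Q_i)^2$ by area. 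Second, the peripheral disks meeting more than one annulus are not ``finitely many'': arbitrarily many small disks can straddle a single circle $\partial B(x_0,2^{j}r)$. The clean bookkeeping is to set $\lambda(Q_i)=\frac1N\sum_j d_j(Q_i)/(2^jr)$ with $d_j(Q_i)$ the radial extent of $Q_i$ in $A_j$, apply Cauchy--Schwarz in $j$, and use $d_j(Q_i)^2\lesssim\mathcal H^2(Q_i\cap A_j)$ from Remark~\ref{unif:Fatness consequence} together with disjointness, which yields $\sum_{i\in\N}\lambda(Q_i)^2\lesssim 1/N$ with no case analysis on straddling disks.
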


A  square Sierpi\'nski carpet\index{Sierpi\'nski carpet!square} is by definition a Sierpi\'nski carpet $\mathcal R$ whose inner peripheral disks $\{ S_i\}_{i\in \N}$ are squares, and the outer peripheral circle $\partial S_0$ is a rectangle, where $S_0$ is the unbounded component of $\C\setminus \mathcal R$. 

\begin{prop}[Prop.\ 8.7, \cite{Bonk:uniformization}]\label{unif:Modulus-Remove squares}
Let $\mathcal R \subset [0,1]\times [0,D]$ be a square Sierpi\'nski carpet with inner peripheral squares $\{S_i\}_{i\in \N}$ and outer peripheral rectangle $\partial S_0\coloneqq  \partial ([0,1]\times [0,D])$. There exists a number $N=N(D)\in \N$ and a non-increasing function $\psi\colon (0,\infty)\to (0,\infty)$ with 
\begin{align*}
\lim_{t\to\infty}\psi(t)=0
\end{align*}
that can be chosen only depending on $D$ and satisfies the following: if $E$ and $F$ are arbitrary continua in $\mathcal R$ with $\Delta(E,F)\geq 12$, then there exists a set $I_0 \subset \N\cup \{0\}$ with $\#I_0\leq N$ such that the family of curves $\Gamma$ in $\C$ joining the continua $E$ and $F$, but avoiding the finitely many peripheral disks $\br S_i$, $i\in I_0$, has carpet modulus satisfying
\begin{align*}
\md(\Gamma)\leq \psi(\Delta(E,F)).
\end{align*}
Moreover, if $D\in [D_1,D_2] \subset (0,\infty)$, the number $N$ and the function $\psi$ can be chosen to depend only on $D_1,D_2$.
\end{prop}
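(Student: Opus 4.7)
The strategy exploits the fact that for a square carpet with $\mathcal H^2(\mathcal R)=0$ the peripheral squares tile $[0,1]\times[0,D]$, so one has the global identity $\sum_{i\in\N}\ell(S_i)^2=D$, and hence for any threshold $\varepsilon_0>0$ at most $D/\varepsilon_0^2$ peripheral squares have side $\geq\varepsilon_0$. I would fix such a threshold $\varepsilon_0=\varepsilon_0(D)$ (to be tuned), and set
$$I_0 \;=\; \{0\}\cup\{\,i\in\N : \ell(S_i)\geq \varepsilon_0\,\},\qquad\text{so}\qquad \#I_0 \leq 1+D/\varepsilon_0^2 =: N(D).$$
Every $\gamma\in\Gamma$ then lies inside $(0,1)\times(0,D)$ and only meets squares with $\ell(S_i)<\varepsilon_0$.

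For the modulus bound I would fix a basepoint $p\in E$, let $d=\diam E\leq\diam F$, set $r_0:=10\max(d,\varepsilon_0)$, and consider for $k=0,1,\ldots,K$ the dyadic annuli
$$A_k\;=\;A(p;\,2^kr_0,2^{k+1}r_0),$$
where $K$ is the largest index for which $A_K\cap F=\emptyset$, so $K+1\asymp\log_2(\dist(E,F)/r_0)$. Every $\gamma\in\Gamma$ crosses every $A_k$, and the $1$-Lipschitz function $\psi(x)=|x-p|$, combined with $\mathcal H^1(\gamma\cap\mathcal R)=0$ and $\mathcal H^1(\psi(S_i))\leq\diam(S_i)=\sqrt 2\,\ell(S_i)$, yields for each $k$
$$\sum_{\substack{i\notin I_0\\ S_i\cap\gamma\cap A_k\neq\emptyset}}\!\sqrt 2\,\ell(S_i)\;\geq\;2^kr_0.$$
I would define $\lambda_k(S_i)=\sqrt 2\,\ell(S_i)/(2^kr_0)$ when $S_i\cap A_k\neq\emptyset$ and $i\notin I_0$, and take the averaged weight $\lambda=(K+1)^{-1}\sum_k\lambda_k$, which by the above is admissible for $\md(\Gamma)$.

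The energy estimate then uses that every surviving square has side $\leq\varepsilon_0\leq r_0/10$, so it meets only $O(1)$ annuli and fits in a fixed fattening of any $A_k$ it hits; disjointness forces $\sum_{i\notin I_0,\, S_i\cap A_k\neq\emptyset}\ell(S_i)^2 \lesssim \operatorname{area}(A_k)=3\pi\cdot 4^k r_0^2$, and Cauchy--Schwarz gives $\md(\Gamma)\leq\sum_i\lambda(S_i)^2\lesssim 1/(K+1)$. Whenever $\dist(E,F)\gtrsim\varepsilon_0$ one has $K+1\gtrsim\log\Delta(E,F)$, yielding $\md(\Gamma)\lesssim 1/\log\Delta(E,F)$, which is the desired $\psi$.

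\textbf{The main obstacle} is the degenerate regime where both continua are microscopic ($d<\varepsilon_0$) \emph{and} close ($\dist(E,F)<10\varepsilon_0$), so that $E$ and $F$ both sit inside a region of diameter $O(\varepsilon_0)$. In that regime the annular construction above produces only $O(1)$ usable annuli, while $\Delta(E,F)$ may be arbitrarily large (because $d\to 0$). The fix is to rescale to $r_0'=10d$, at the cost that surviving squares may span small annuli; this is repaired by augmenting $I_0$ with the finitely many additional ``medium-sized'' squares that are adjacent to $E$ and of side $\geq d/100$, say. A disjointness/packing argument—using that any such square has side $\leq 10\varepsilon_0$ and lies in a fixed neighborhood of $E\cup F$—bounds the number of these extra squares by a constant depending only on $D$. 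After removing them, Proposition~\ref{unif:Modulus-Loewner upper} applies to the sub-carpet obtained by filling in the removed squares (whose peripheral circles are squares, hence uniform quasicircles, and are now uniformly relatively separated by a constant depending on $D$), and gives $\md(\Gamma)\lesssim 1/\log\Delta(E,F)$ in this regime as well. Gluing the two regimes produces a single non-increasing $\psi=\psi_D$ with $\psi(t)\to 0$ as $t\to\infty$; uniformity for $D\in[D_1,D_2]$ follows by choosing $\varepsilon_0=\varepsilon_0(D_1,D_2)$.
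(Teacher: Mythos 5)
Your first regime is fine: when $\diam E\geq\varepsilon_0$ every surviving square has side below a fixed fraction of the innermost annulus width, so it meets $O(1)$ annuli, the packing estimate applies, and the averaged weight has energy $\lesssim 1/(K+1)\lesssim 1/\log\Delta(E,F)$. (For what it is worth, the paper itself does not prove this statement; it imports it from Bonk's Proposition~8.7, so the degenerate regime you flag is exactly where the real work lies.) The gap is that neither of your two repairs closes that regime. Take $p\in E$, $d=\diam E<\varepsilon_0$, $R=\dist(E,F)$, and run the annuli at scale $r_0'=10d$, so $K+1\asymp\log_2\Delta(E,F)$. A single peripheral square $S_{i_0}$ of side comparable to $R$ sitting at distance $\asymp\sqrt{Rd}$ from $p$ meets the annuli $A_j$ for essentially all $j$ with $2^jr_0'\in[\sqrt{Rd},R]$, i.e.\ about half of them. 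It escapes your $I_0$: its side is below the global threshold $\varepsilon_0$ whenever $R<\varepsilon_0$, and it is not ``adjacent to $E$'' since $\sqrt{Rd}\gg d$. Consequently the claim that every surviving square meets $O(1)$ annuli is false at scale $r_0'$, and the averaged weight assigns $S_{i_0}$ total mass bounded below by an absolute constant (even after capping each $\lambda_j(S_{i_0})$ at the annulus width; with your literal choice $\lambda_j=\sqrt2\,\ell(S_{i_0})/(2^jr_0')$ the mass is $\asymp\sqrt{\Delta}/\log\Delta$, which is worse). The energy of your test weight therefore does not decay in $\Delta(E,F)$. Nor can you simply enlarge $I_0$ to catch all such squares: disjoint squares that lie within a tenth of their own side of $p$ can occur at $\asymp\log_2(\varepsilon_0/d)$ distinct dyadic scales (a ``telescope'' of geometrically growing squares spiralling into $p$ is perfectly consistent with a square carpet of measure zero in $[0,1]\times[0,D]$), so any removal rule that catches them all forces $\#I_0\to\infty$ as $d\to0$, violating $\#I_0\leq N(D)$.

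The fallback via Proposition~\ref{unif:Modulus-Loewner upper} is also invalid. Filling in finitely many squares does nothing to the mutual relative separation of the infinitely many \emph{remaining} peripheral squares: two surviving squares of side $10^{-100}$ at distance $10^{-200}$ from one another still have relative distance $10^{-100}$, so the modified carpet is not uniformly relatively separated by any constant depending only on $D$, and the hypothesis of Proposition~\ref{unif:Modulus-Loewner upper} fails. Indeed, Proposition~\ref{unif:Modulus-Remove squares} exists precisely because uniform relative separation is unavailable for the square carpet produced by the uniformization; it cannot be derived by reducing to the separated case after deleting finitely many squares. So the degenerate regime --- controlling the intermediate-scale squares that cluster near the smaller continuum relative to their own size, with $\#I_0$ bounded independently of $\Delta(E,F)$ --- remains genuinely open in your argument, and it is the actual content of the cited result.
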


As a last remark, (weak) carpet modulus of a path family $\Gamma$ is always considered with respect to a given carpet, although this is not explicitly manifested in the notation $\md(\Gamma)$. It will be clear from the context what the reference carpet each time is, when we use these estimates in the next section.

\section{Quasisymmetric uniformization}\label{unif:Section Quasisymmetric}\index{quasisymmetry}
In this section we prove Theorem \ref{unif:Main theorem-quasisymmetric}, which we restate for the convenience of the reader. 

\begin{theorem}\label{unif:Quasisymmetric-Main theorem}
Let $S$ be a Sierpi\'nski carpet of  area zero with peripheral circles $\{\partial Q_i\}_{i\in \N\cup \{0\}}$ that are $K_2$-quasicircles and $K_3$-relatively separated. Then there exists an $\eta$-quasi\-symmetric map $f$ from $S$ onto a square Sierpi\'nski carpet $\mathcal R$ such that the distortion function $\eta$ depends only on $K_2$ and $K_3$.
\end{theorem}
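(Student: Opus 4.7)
The plan is to lift the discrete quasiconformality of $f$ from Theorem \ref{unif:Main theorem} to a genuine quasisymmetry by combining its modulus-preserving property with the Loewner-type bounds of Section \ref{unif:Section Modulus Estimates}. It suffices to exhibit a homeomorphism $\eta_0\colon(0,\infty)\to(0,\infty)$ depending only on $K_2$ and $K_3$ such that for all disjoint non-trivial continua $E,F\subset S$ with images $E'=f(E)$, $F'=f(F)\subset\mathcal R$,
\begin{align*}
\eta_0^{-1}(\Delta(E,F))\leq \Delta(E',F')\leq \eta_0(\Delta(E,F)).
\end{align*}
Both $S$ and $\mathcal R$ are doubling and linearly locally connected with constants controlled by $K_2,K_3$, so a standard Bonk--Kleiner-type argument then converts this continuum-level distortion bound into pointwise $\eta$-quasisymmetry.

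A preliminary normalization controls the aspect ratio $D$ of the target rectangle $[0,1]\times[0,D]$, since the constants $N$ and $\psi$ in Proposition \ref{unif:Modulus-Remove squares} depend on $D$. I would choose the four marked points on $\partial\Omega$ so that the arcs $\Theta_1,\dots,\Theta_4$ are uniformly relatively separated with bounds depending only on $K_2,K_3$; for example, by first invoking Bonk's Theorem \ref{unif:Theorem Bonk} to map $S$ quasisymmetrically onto a round carpet in a disk, choosing four equidistributed points on its outer circle, and pulling them back to $\partial\Omega$. Applying Propositions \ref{unif:Modulus-Loewner lower} and \ref{unif:Modulus-Loewner upper} on $S$ to the path families joining the opposite side pairs $(\Theta_1,\Theta_3)$ and $(\Theta_2,\Theta_4)$, together with Theorem \ref{unif:Main theorem} and the elementary moduli $D$ and $1/D$ of these families on the target rectangle, then confines $D$ to an interval $[D_1,D_2]$ with $D_1,D_2$ depending only on $K_2,K_3$.

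For the forward bound, fix $E,F$ and assume $\Delta(E',F')\geq 12$; otherwise the inequality is trivial. Proposition \ref{unif:Modulus-Remove squares} applied to $\mathcal R$ (with $D\in[D_1,D_2]$) supplies a finite index set $I_0\subset\N\cup\{0\}$ of cardinality at most $N$ such that the family $\Gamma'_{I_0}$ of paths in $\C$ joining $E',F'$ and avoiding the squares $\br S_i$, $i\in I_0$, satisfies $\md(\Gamma'_{I_0})\leq \psi(\Delta(E',F'))$. The modulus inequality of Theorem \ref{unif:Main theorem} gives $\br{\md}(\Gamma_{I_0})\leq \md(\Gamma'_{I_0})$ for the preimage family $\Gamma_{I_0}=f^{-1}(\Gamma'_{I_0})$, while Proposition \ref{unif:Modulus-Loewner lower} on $S$ (applicable with the bounded set $I_0$) yields $\phi(\Delta(E,F))\leq\br{\md}(\Gamma_{I_0})$. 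Concatenating, $\phi(\Delta(E,F))\leq\psi(\Delta(E',F'))$, and since $\psi(t)\downarrow 0$ as $t\to\infty$ this yields $\Delta(E',F')\leq \eta_0(\Delta(E,F))$.

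The main obstacle is the reverse inequality. Reading the chain the other way (Proposition \ref{unif:Modulus-Loewner upper} on $S$, then Theorem \ref{unif:Main theorem}, then a Loewner lower bound on $\mathcal R$) requires Proposition \ref{unif:Modulus-Loewner lower} on $\mathcal R$, which in turn requires uniform relative separation of the peripheral squares of $\mathcal R$; this is not automatic, since Corollary \ref{unif:Definition-Range-Extension} a priori permits touching at points. The injectivity of $f$ (Lemma \ref{unif:Injectivity}) at least rules this out: a shared point $z\in\partial S_i\cap\partial S_j$ with $i\neq j$ would force $f^{-1}(z)\in\partial Q_i\cap\partial Q_j=\emptyset$. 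Upgrading mere disjointness to a quantitative lower bound $\Delta(\partial S_i,\partial S_j)\geq \delta(K_2,K_3)>0$ is the technical heart of the proof; my plan is a scaling and compactness argument, in which a hypothetical sequence of pairs of squares with $\Delta\to 0$, renormalized to unit scale and combined with the already-established forward bound applied to small arcs of the preimage peripheral circles (which have $\Delta\geq K_3$), is shown to lead to a contradiction. Once uniform relative separation of $\mathcal R$ is established, Proposition \ref{unif:Modulus-Loewner lower} applies to $\mathcal R$, the reverse chain closes, and the proof is complete.
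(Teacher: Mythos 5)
Your first half — normalizing the aspect ratio $D$ via the moduli of the two side-pair families and identifying them with $D$ and $1/D$ in the image, then running the chain
\begin{align*}
\phi(\Delta(E,F))\;\leq\;\br{\md}(\Gamma_{I_0})\;\leq\;\md(\Gamma'_{I_0})\;\leq\;\psi(\Delta(E',F'))
\end{align*}
using Proposition \ref{unif:Modulus-Loewner lower} on $S$, the modulus inequality for $f$, and Proposition \ref{unif:Modulus-Remove squares} on $\mathcal R$ — is exactly the paper's argument. (The paper chooses the four marked points directly from the quasicircle property of $\partial\Omega$ rather than via Theorem \ref{unif:Theorem Bonk}, but that is cosmetic.)

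The gap is in what you identify as the ``technical heart.'' No reverse inequality, and hence no a priori uniform relative separation of the squares of $\mathcal R$, is needed. The paper proves directly that $f$ is a \emph{weak} quasisymmetry: assuming $|x-y|\leq|x-z|$ but $|f(x)-f(y)|>H|f(x)-f(z)|$, it uses the three-point Lemma \ref{unif:Quasisymmetric-Three point lemma} (with a corner of the rectangle as the auxiliary point) and the LLC properties of $\mathcal R$ from Lemma \ref{unif:Quasisymmetric-LLC} to build continua $E'\ni f(x),f(z)$ and $F'\ni f(y)$ with $\Delta(E',F')\gtrsim H$ while $\Delta(f^{-1}(E'),f^{-1}(F'))\lesssim 1$; the forward chain above then gives $\phi(C'')\leq\psi(HC')$, a contradiction for large $H$. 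Since a weak quasisymmetry between connected doubling spaces is a quasisymmetry (\cite[Theorem 10.19]{Heinonen:metric}), this one-sided bound suffices. Crucially, Proposition \ref{unif:Modulus-Remove squares} is designed precisely to give the needed upper bound on $\md$ in $\mathcal R$ \emph{without} assuming relative separation of the squares, by discarding finitely many of them; that is why the forward direction alone closes the argument. Uniform relative separation of $\mathcal R$ then follows a posteriori from quasisymmetric invariance — it is an output, not an input.

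Moreover, your sketch for establishing $\Delta(\partial S_i,\partial S_j)\geq\delta(K_2,K_3)$ is circular as stated: the forward bound only asserts $\Delta(E',F')\leq\eta_0(\Delta(E,F))$, so knowing that $\Delta(E',F')$ is small for arcs near two nearly-touching squares yields no information; ruling out near-touching requires precisely the reverse implication you are trying to prove. If you insist on your route you would have to supply a genuinely new argument here; the efficient fix is to drop the reverse inequality entirely and argue weak quasisymmetry as above.
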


Before proceeding to the proof we include some lemmas.

\begin{lemma}\label{unif:Quasisymmetric-Three point lemma}
Let $a,b>0$, and $(X,d_X)$ and $(Y,d_Y)$ be metric spaces. Suppose that $x_1,x_2,x_3\in X$ and $y_1,y_2,y_3\in Y$ are points such that
\begin{align*}
d_X(x_i,x_j)\geq a \quad \textrm{and} \quad d_Y(y_i,y_j)\geq b \quad \textrm{for}\quad  i,j=1,2,3,\enskip i\neq j.
\end{align*}
Then for all $x\in X$ and $y\in Y$ there exists an index $l\in \{1,2,3\}$ such that $d_X(x,x_l)\geq a/2$ and $d_Y(y,y_l)\geq b/2$.
\end{lemma}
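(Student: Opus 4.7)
The plan is a straightforward pigeonhole argument based on the triangle inequality. First I would observe that among the three points $x_1,x_2,x_3$, at most one can satisfy $d_X(x,x_i)<a/2$: indeed, if both $d_X(x,x_i)<a/2$ and $d_X(x,x_j)<a/2$ held for some $i\neq j$, then by the triangle inequality we would get $d_X(x_i,x_j)<a$, contradicting the hypothesis. The same reasoning applied in $Y$ shows that at most one index $i\in\{1,2,3\}$ can satisfy $d_Y(y,y_i)<b/2$.

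Thus, the set of indices for which the conclusion fails is the union of two sets, each of size at most one, hence has size at most two. Since $\{1,2,3\}$ has three elements, there remains at least one index $l\in\{1,2,3\}$ for which neither condition of failure holds, i.e.\ $d_X(x,x_l)\geq a/2$ and $d_Y(y,y_l)\geq b/2$, as required.

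I do not expect any serious obstacle: the argument is purely combinatorial and uses only the triangle inequality, with no reference to the carpet structure or to the map $f$. The lemma will presumably be applied later with $(X,d_X)=(S,|\cdot|)$ and $(Y,d_Y)=(\mathcal R,|\cdot|)$, where one has a triple of points whose mutual distances are controlled both in the source carpet and its square-carpet image, in order to run the standard three-point argument for quasisymmetry.
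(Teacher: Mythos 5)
Your proof is correct and follows essentially the same pigeonhole argument as the paper: the triangle inequality shows at most one index can fail the condition in $X$ and at most one in $Y$, leaving at least one good index among the three. The paper merely phrases the counting sequentially (first discard the bad $X$-index, then the bad $Y$-index among the remaining two) rather than as a union of two bad sets, which is an immaterial difference.
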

\begin{proof}
At most one of the points $x_i$ can lie in the ball $B(x,a/2)$, so there are at least two points, say $x_1,x_2$ that have distance at least $a/2$ to $x$. At most one of the points $y_1,y_2$ can lie in $B(y,b/2)$, so one of them, say $y_1$, has to lie outside the ball $B(y,b/2)$. Then the desired statement holds for $l=1$.
\end{proof}

\begin{lemma}\label{unif:Quasisymmetric-LLC}
Let $\mathcal R\subset [0,1]\times [0,D]$ be a square Sierpi\'nski carpet such that $\partial ([0,1]\times [0,D])\subset \mathcal R$ is the outer peripheral circle. Then there exists a constant $C(D)>0$ such that the following two conditions are satisfied:
\begin{enumerate}[\upshape(1)]
\item For all $x,y\in \mathcal R$ there exists a path $\gamma\subset \mathcal R$ connecting $x$ and $y$ with $\diam(\gamma)\leq 2|x-y|$.

\item If $a\in \mathcal R$, $0<r\leq C(D)$, and $x,y\in \mathcal R \setminus B(a,r)$, then there exists a continuum $E$ connecting $x$ and $y$ with $E\subset \mathcal R \setminus B(a,r/2)$.  
\end{enumerate}
In fact, one can take $C(D)= \min \{1,D\}$.
\end{lemma}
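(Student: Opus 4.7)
My plan is to establish both conditions via explicit path constructions that exploit the axis-parallel geometry of the peripheral squares and the fact that the outer rectangle $R\coloneqq \partial([0,1]\times [0,D])$ is contained in $\mathcal R$.

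For part (1), given $x,y\in \mathcal R$, I would modify the axis-parallel "L-shape" path from $x=(x_1,x_2)$ via $(y_1,x_2)$ to $y=(y_1,y_2)$ (choosing the corner $(y_1,x_2)$ or $(x_1,y_2)$ so that it lies in $\mathcal R$, or perturbing it slightly if needed) into a path $\gamma\subset \mathcal R$. The complement of $\mathcal R$ along this L-path decomposes into countably many open chord-segments $J_k\subset S_{i_k}$, and thanks to the axis-parallel structure each such chord necessarily crosses two \emph{opposite} (parallel) sides of the square $S_{i_k}$. Replacing each $J_k$ by the shorter of the two arcs on $\partial S_{i_k}$ joining its endpoints yields $\gamma$. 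For such an opposite-sides chord of length $|J_k|$ in a square of side length $\ell(S_{i_k})=|J_k|$, the shorter arc extends perpendicularly from the chord by at most $\ell(S_{i_k})/2=|J_k|/2$; moreover $|J_k|\le u\coloneqq |x_1-y_1|$ on the horizontal leg and $|J_k|\le v\coloneqq |x_2-y_2|$ on the vertical leg. A direct computation then shows that $\gamma$ is contained in the union of two rectangular bands of dimensions $u\times u$ and $v\times v$ meeting at the corner, whose diameter is at most $\sqrt{(u+v/2)^2+(v+u/2)^2}=\sqrt{(5/4)(u^2+v^2)+2uv}\le (3/2)\,|x-y|$, which is well below the claimed bound $2|x-y|$.

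For part (2), set $C(D)\coloneqq \min\{1,D\}$. Fix $a\in \mathcal R$ and $0<r\le C(D)$, and take $x,y\in \mathcal R\setminus B(a,r)$. Since $r/2\le \min\{1,D\}/2$ is at most half the length of any side of $R$, the convex ball $B(a,r/2)$ cannot cover a full side of $R$; hence $R\cap B(a,r/2)$ is either empty or a single sub-arc of $R$, so $R\setminus B(a,r/2)$ is a connected arc $A$. I would next connect $x$ and $y$ to $A$ by paths in $\mathcal R\setminus B(a,r/2)$. Using part (1), I construct a path $\beta_x\subset \mathcal R$ of diameter at most twice the distance from $x$ to a nearby point of $R$, and similarly $\beta_y$. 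Since $\beta_x$ starts at a point at distance at least $r$ from $a$ and its diameter is controlled by $\dist(x,R)$, a triangle-inequality argument (possibly after subdividing $\beta_x$ through intermediate peripheral circles when $x$ is deep inside the carpet) shows that $\beta_x\subset \mathcal R\setminus B(a,r/2)$; similarly for $\beta_y$. Concatenating $\beta_x$, a sub-arc of $A$ joining the relevant endpoints, and $\beta_y$ then yields the desired continuum $E\subset \mathcal R\setminus B(a,r/2)$ connecting $x$ to $y$.

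The main technical obstacle lies in the L-path construction for part (1): the corner $(y_1,x_2)$ need not belong to $\mathcal R$ --- it may lie in the interior of an open peripheral square --- and the same may happen for $(x_1,y_2)$. To handle this, I would slightly perturb the corner within $\mathcal R$ (for instance, by replacing it with a short detour along the boundary of the offending square to a nearby vertex, chosen in $\mathcal R$), and then verify that the extra diameter incurred is absorbed into the bound $(3/2)|x-y|\le 2|x-y|$. A secondary subtlety in part (2) is ensuring that $\beta_x,\beta_y$ can be routed around $B(a,r/2)$ rather than through it; here the explicit threshold $r\le \min\{1,D\}$ is essential, since it guarantees that $R$ alone already provides enough "detour room" relative to the ball $B(a,r/2)$.
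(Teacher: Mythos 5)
Your part (1) takes a genuinely different route from the paper. The paper simply joins $x$ and $y$ by the straight segment $[x,y]$ and replaces each chord $[x,y]\cap S_i$ (whose endpoints lie on $\partial S_i$ by convexity) with the shorter boundary arc, using the fact that any two points of a square's boundary are joined by an arc of length at most twice their distance; this gives $\length(\gamma)\leq 2|x-y|$, hence the diameter bound, with no corner case at all. Your L-path argument can be made to work — the detour around a square containing the L-corner can be routed through the corner of that square lying in the bounding box of $x$ and $y$, so it stays inside that box and the $(3/2)|x-y|$ margin survives — but it is strictly more delicate than necessary, and the "perturb the corner" step is exactly the part you leave unverified.

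Part (2) has a genuine gap. You connect $x$ to the outer rectangle $R$ by a path $\beta_x$ obtained from part (1), with $\diam(\beta_x)\lesssim \dist(x,R)$, and claim a triangle-inequality argument shows $\beta_x\subset \mathcal R\setminus B(a,r/2)$. This only works when $\dist(x,R)\lesssim r$: if $x$ is deep inside the carpet, $\diam(\beta_x)$ is of order $\dist(x,R)\gg r$, and nothing in part (1) prevents $\beta_x$ from passing straight through $B(a,r/2)$ (the ball can sit between $x$ and its nearest boundary point). The parenthetical "subdividing through intermediate peripheral circles" does not repair this, because the intermediate pieces would still have to be routed around the ball, which is the whole difficulty. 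The missing idea is to choose the \emph{direction} of the connecting segment so as to avoid the ball from the outset: since $x\notin B(a,r)$, at least one axis-parallel ray from $x$ (one pointing "away from $a$" in some coordinate) misses $B(a,r)$ entirely; take $\gamma_x$ to be the segment from $x$ to $R$ along such a ray, and then check that each boundary-arc replacement can also be chosen to miss $B(a,r)$ — if both arcs of $\partial S_i$ joining the endpoints of a chord met the ball, convexity of both $B(a,r)$ and $\br S_i$ would force the ball to meet the chord itself, a contradiction. A secondary error: your claim that $R\cap B(a,r/2)$ is a single sub-arc (so $R\setminus B(a,r/2)$ is connected) is false; a disk of radius $r/2$ can cross two adjacent sides of $R$ without containing the corner, cutting $R$ into two arcs. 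This is fixable: in that case $a$ lies within $r/2$ of both sides, hence the small cut-off arc is contained in $B(a,r)$, so the endpoints of $\gamma_x$ and $\gamma_y$ on $R$ must both lie on the other arc.
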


\begin{proof}
Let $S_i$, $i\in \N$, denote the inner (open) peripheral squares of $\mathcal R$, and $\partial S_0\coloneqq  \partial ([0,1]\times [0,D])$. For the first statement note that for any two points $x,y$ lying on a square $\partial S_i$ there exists an arc $\gamma \subset \partial S_i$ connecting $x,y$ with length at most $2|x-y|$. Now, if $x,y\in \mathcal R$ are arbitrary, we connect them with a line segment $[x,y]$, and then replace each of the segments $[x_i,y_i]\coloneqq [x,y]\cap \br S_i$ with an arc $\gamma_i\subset \partial S_i$ that connects $x_i,y_i$ and has length at most $2|x_i-y_i|$. The resulting path $\gamma\subset \mathcal R$ connects $x,y$ and has length at most $2|x-y|$. 

For the second claim, let $C(D)=  \min \{1,D\}$, so a ball $B(a,r/2)$ with $r\leq C(D)$ cannot intersect two opposite sides of the rectangle $\partial S_0$. If $x \in \mathcal R \setminus B(a,r)$, then we connect $x$ to $\partial S_0$ with a line segment $\gamma_x$ parallel to one of the coordinate axes that does not intersect $B(a,r)$. We replace each of the arcs $\gamma_x\cap S_i$, $i\in \N$, with an arc in $\partial S_i$ that has the same endpoints and does not intersect $B(a,r)$. To see the existence of such an arc, note that if both of the arcs of $\partial S_i$ with the same endpoints as $\gamma_x\cap S_i$ intersected $B(a,r)$, then $B(a,r)$ would also intersect $\gamma_x\cap S_i$, by convexity. This is a contradiction.

We still call the resulting path $\gamma_x$. We do the same for a point $y\in \mathcal R\setminus B(a,r)$ and obtain a path $\gamma_y$. Then one has to concatenate $\gamma_x$ and $\gamma_y$ with a path in $\partial S_0$ that does not intersect $B(a,r/2)$. If $\partial S_0\setminus B(a,r/2)$ has only one component then this can be clearly done. The other case is that $\partial S_0 \setminus B(a,r/2)$ has two components $E$ and $F$, and thus $B(a,r/2)$ intersects two neighboring sides of $\partial S_0$. The distance of $a$ to these two sides is at most $r/2$, so $B(a,r)$ contains one of the components $E,F$, say it contains $F$. This now implies that the endpoints $\gamma_x\cap \partial S_0,\gamma_y\cap \partial S_0$ have to lie on $E$ and can therefore be connected with a subarc of $E$.
\end{proof}

Now we proceed to the proof of the main result. The candidate for the quasisymmetric map $f\colon S\to \mathcal R$ is the map that we constructed in the previous sections. The principle that we will use is that a ``quasiconformal" map (in our case a map that preserves modulus) between a \textit{Loewner space} and a space that is \textit{linearly locally connected}\index{linear local connectedness} (this is essentially implied by Lemma \ref{unif:Quasisymmetric-LLC}) is quasisymmetric; see \cite[Chapter 11]{Heinonen:metric} for background. Certain complications arise since we do not know in advance that the peripheral squares of $\mathcal R$ are uniformly relatively separated, and we bypass this by employing Proposition \ref{unif:Modulus-Remove squares}. 

\begin{proof}[Proof of Theorem \ref{unif:Quasisymmetric-Main theorem}]
We apply the considerations from Section \ref{unif:Section the function u} to Section \ref{unif:Section Regularity}. So we obtain a homeomorphism $f$ from $S$ onto a square carpet $\mathcal R\subset [0,1]\times[0,D]$, with outer peripheral disk $S_0=\C\setminus [0,1]\times [0,D]$. This homeomorphism maps $\partial \Omega$ to $\partial S_0$ and has the regularity as in Section \ref{unif:Section Regularity}: it satisfies the conclusions of Propositions \ref{unif:Regularity-F modulus} and \ref{unif:Regularity-F-1 modulus}; see also the formulation of Theorem \ref{unif:Main theorem}.

We split the proof into two parts. The first part is to show that if we choose the sides $\Theta_1,\Theta_3\subset \partial \Omega$ suitably, then the height $D$ of the rectangle $[0,1]\times [0,D]$ that contains the square carpet $\mathcal R$ is bounded above and below (away from $0$), depending only on $K_2$ and $K_3$. Recall from Section \ref{unif:Section:Free boundary problem} that the choice of $\Theta_1$ and $\Theta_3$ specifies uniquely the function $u$, and thus it specifies the uniformizing function $f=(u,v)$ and the height $D$ of the rectangle that contains the carpet $\mathcal R=f(S)$. The second step is to prove that the map $f\colon S\to \mathcal R$  that satisfies the conclusions of Proposition \ref{unif:Regularity-F modulus} and Proposition \ref{unif:Regularity-F-1 modulus} is a quasisymmetry.

For the first step, note that $\partial Q_0=\partial \Omega$ is a $K_2$-quasicircle, so it is the quasisymmetric image of the unit circle. It follows that we can choose two disjoint arcs $\Theta_1 ,\Theta_3 \subset \partial \Omega$ with endpoints $a_i$, $i=1,\dots,4$, such that 
\begin{align}\label{unif:Quasisymmetric-Step 1-diameter}
\min_{\substack{i\neq j\\ i,j=1,\dots,4}}|a_i-a_j|\geq  C_0\diam(\partial \Omega)= C_0\diam(S),
\end{align}
where $C_0>0$ is a constant depending only on $K_2$. Using this, and again that $\partial Q_0$ is a quasicircle, one can see that 
$$\frac{1}{C'}\leq \Delta(\Theta_1,\Theta_3)\leq C' $$
for some constant $C'>0$ depending only on $K_2$. Hence, if $\Gamma$ denotes the family of paths in $\Omega$ that connect $\Theta_1$ and $\Theta_3$ (this family avoids $\br Q_0$), by Proposition \ref{unif:Modulus-Loewner lower} and Proposition \ref{unif:Modulus-Loewner upper} we have
\begin{align}\label{unif:Quasisymmetric-Step 1}
\md(\Gamma) \leq C'' \quad \textrm{and} \quad \br \md(\Gamma) \geq C'''
\end{align}
for constants $C'',C'''>0$ depending only on $K_2$ and $K_3$. The family $f(\Gamma)$ is the path family in $(0,1)\times(0,D)$ that connects $\{0\}\times [0,D]$ to $\{1\}\times [0,D]$. Combining  \eqref{unif:Quasisymmetric-Step 1} with Proposition \ref{unif:Regularity-F-1 modulus} and Proposition \ref{unif:Regularity-F modulus} we obtain
\begin{align*}
\br \md(f(\Gamma)) \leq C''  \quad \textrm{and} \quad \md(f(\Gamma)) \geq C''',
\end{align*}
where here (weak) carpet modulus is with respect to the carpet $\mathcal R$. Finally, both of the above moduli are equal to $D$, so the conclusion of the first step follows. To see this, consider the discrete weight $\lambda(S_i)\coloneqq \ell(S_i)$, $i\in \N $, where $\ell(S_i)$ is the sidelength of the square $S_i$, and $\lambda(S_0)\coloneqq 0$. It is immediate that $\{\lambda(S_i)\}_{i\in \N\cup \{0\}}$ is admissible for $f(\Gamma)$ with respect to both notions of modulus, since for any path $\gamma \in \Gamma$ with $\mathcal H^1(\gamma\cap \mathcal R)=0$ we have 
\begin{align*}
\sum_{i:Q_i\cap \gamma\neq \emptyset}\lambda(Q_i)\geq 1.
\end{align*} 
Thus, $\br \md(f(\Gamma))$ and $\md(f(\Gamma))$ are both bounded above by
\begin{align*}
 \sum_{i\in \N} \ell(S_i)^2= \mathcal H^2([0,1]\times [0,D]) =D.
\end{align*} 
Conversely, if $\lambda(S_i)$ is an arbitrary weight that is admissible for $\br \md(f(\Gamma))$ , then we may assume that $\lambda(S_0)=0$, since the path family $f(\Gamma)$ does not hit $\br S_0$. Moreover, for the paths $\gamma_t(r)=(r,t)$, $r\in (0,1)$, and for a.e.\ $t\in [0,1]$ we have
\begin{align*}
1\leq \sum_{i:\br S_i \cap \gamma_t\neq \emptyset}\lambda(S_i).
\end{align*}
This is because the paths $\gamma_t(r)$ are non-exceptional for a.e.\ $t\in [0,1]$; see Lemma \ref{unif:Paths joining continua} and its proof in Lemma \ref{harmonic:Paths joining continua}.

Integrating over $t\in[0,1]$ and applying Fubini's theorem yields
\begin{align*}
1&\leq  \sum_{i\in \N} \lambda(S_i) \int_0^1 \x_{\br S_i\cap \gamma_t}\, dt \leq \sum_{i\in \N} \lambda(S_i)\ell(S_i) \leq \left(\sum_{i\in \N }\lambda(S_i)^2\right)^{1/2} \left(\sum_{i\in \N }\ell(S_i)^2\right)^{1/2}\\
&=  \left(\sum_{i\in \N }\lambda(S_i)^2\right)^{1/2} \mathcal H^2([0,1]\times [0,D]) ^{1/2}.
\end{align*}
Hence $\sum_{i\in \N }\lambda(S_i)^2 \geq D$, which shows that $\br \md(f(\Gamma)) \geq D$. The same computation proves the claim for $\md(f(\Gamma))$. Summarizing, under our choice of $\Theta_1$ and $\Theta_3$, the height $D$ of the rectangle $(0,1)\times (0,D)$ is bounded above and below, depending only on $K_2$ and $K_3$.

Now we move to the second step of the main proof. We remark that among the constants that we introduced in the first step only the constant $C_0$ is used again, and all other constants here are new constants. For simplicity we rescale $S$ so that $\diam(S)=1$. This does not affect the constants $K_2,K_3$, or the quasisymmetry distortion function. We show that the map $f\colon S\to \mathcal R$ is a \textit{weak quasisymmetry}, i.e., there exists a constant $H>0$ depending only on $K_2,K_3$ such that for any three points $x,y,z\in S$ with $|x-y|\leq |x-z|$ we have that the images $x'=f(x)$, $y'=f(y)$, $z'=f(z)$ satisfy
\begin{align*}
|x'-y'|\leq H|x'-z'|.
\end{align*}
By a well-known criterion this implies that $f$ is an  $\eta$-quasisymmetry, where $\eta$ depends only on $K_2,K_3$; see \cite[Theorem 10.19]{Heinonen:metric}.

We argue by contradiction, assuming that there exist points $x,y,z\in S$ with $|x-y|\leq |x-z|$, but $|x'-y'|>H|x'-z'|$ for some  large $H>0$. Then the points $x,y,z$ are distinct. By Lemma \ref{unif:Quasisymmetric-LLC}(1), there exists a continuum $E' \subset \mathcal R$ with $\diam(E')\leq 2|x'-z'|$, connecting $x',z'$. 

Recall that the mutual distance of the endpoints $a_i$, $i=1,\dots,4$, of $\Theta_1$ and $\Theta_3$ is at least 
\begin{align*}
\delta\coloneqq \min_{\substack{i\neq j\\ i,j=1,\dots,4}} |a_i-a_j|  \geq C_0
\end{align*}
by \eqref{unif:Quasisymmetric-Step 1-diameter}. Their images $a_i'$, $i=1,\dots,4$, are the vertices of the rectangle $\partial S_0= \partial ([0,1]\times [0,D])$. So their mutual distance is bounded below by $\delta' = \min \{1,D\}$. By Lemma \ref{unif:Quasisymmetric-Three point lemma}, there exists an index $i=1,\dots,4$ such that for $u=a_i$ and $u'=a_i'$ we have
\begin{align*}
|u-y|\geq \delta/2 \quad \textrm{and} \quad |u'-x'|\geq \delta'/2.
\end{align*}
Since $|x'-y'|\leq 2\max\{1,D\}$, it follows that
\begin{align*}
|u'-x'|\geq \frac{\delta'}{2}\geq \frac{1}{4} \frac{\min\{1,D\}}{\max\{1,D\}}|x'-y'|=  \frac{1}{4} \min \{D,1/D\}|x'-y'| \eqqcolon C_1(D) |x'-y'|.
\end{align*}
Hence, $u'\notin B(x',r)$, where $r\coloneqq C_1(D)|x'-y'|$. The fact that $C_1(D)<1$ implies that we also have $y'\notin B(x',r)$. By Lemma \ref{unif:Quasisymmetric-LLC}(2), we can find a continuum $F' \subset \mathcal R \setminus B(x',r/2)$, connecting $u'$ and $y'$. We have
\begin{align*}
\dist(E',F') &\geq \frac{r}{2}- \diam(E')\geq \frac{C_1(D)}{2}|x'-y'| - 2|x'-z'|\\
&\geq \left( H\frac{C_1(D)}{2} - 2\right)|x'-z'| \\
&\geq \frac{HC_1(D)}{4}|x'-z'|
\end{align*}
for $H\geq \frac{8}{C_1(D)}$. Also,
\begin{align*}
\min \{ \diam(E'),\diam(F')\} \leq \diam (E') \leq 2|x'-z'|.
\end{align*}
Therefore,
\begin{align*}
\Delta( E',F')= \frac{\dist(E',F')}{\min \{\diam(E'),\diam(F')\}} \geq \frac{HC_1(D)}{8} \geq  H \cdot C',
\end{align*}
where the constant $C'>0$ depends only on the lower and upper bounds of $D$, and thus only on $K_2$ and $K_3$, by the first step of the proof.

Now we choose an even larger $H$ so that $H C'\geq 12$ and apply Proposition \ref{unif:Modulus-Remove squares}. There exists an index set $I_0\subset \N\cup \{0\}$ with $I_0\leq N$ such that  the family $\Gamma'$ of curves joining $E'$ and $F'$ in $\C$ but avoiding the peripheral disks $\br S_i$, $i\in I_0$, satisfies
\begin{align}\label{unif:Quasisymmetric-Step 2 psi}
\md(\Gamma')\leq  \psi(\Delta(E',F')) \leq  \psi( H C').
\end{align}
The number $N\in \N$ and the function $\psi$ depend only on $D$ and thus, only on $K_2$ and $K_3$.

Define $E\coloneqq f^{-1}(E')$ and $F\coloneqq f^{-1}(F')$. Then $E$ and $F$ are disjoint continua in $S$ containing the sets $\{x,z\}$ and $\{y,u\}$, respectively. We have
\begin{align*}
\diam(F) \geq |y-u|\geq \frac{\delta}{2} \geq \frac{\delta}{2} \diam(E) \geq \frac{C_0}{2}\diam(E),
\end{align*}
since $\diam(E)\leq \diam(S)=1$. Also,
\begin{align*}
\dist(E,F)&\leq |x-y|\leq |x-z|\leq \diam(E)\\
&\leq \max\biggl\{1, \frac{2}{C_0}\biggr\}\cdot \min\{ \diam(E),\diam(F)\}.
\end{align*}
Therefore,
\begin{align*}
\Delta(E,F) \leq \max\biggl\{1, \frac{2}{C_0}\biggr\}\eqqcolon C'',
\end{align*}
where the latter is a uniform constant. The path family $\Gamma \coloneqq f^{-1}(\Gamma')$ connects the continua $E$ and $F$ in $\C$ but avoids the finitely many peripheral disks $\br Q_i$, $i\in I_0$. Since $\# I_0$ is uniformly bounded, depending only on $K_2$ and $K_3$, by Proposition \ref{unif:Modulus-Loewner lower} we have
\begin{align}\label{unif:Quasisymmetric-Step 2 phi}
\br \md(\Gamma) \geq  \phi( \Delta(E,F)) \geq \phi(C'').
\end{align}
Combining \eqref{unif:Quasisymmetric-Step 2 phi} and \eqref{unif:Quasisymmetric-Step 2 psi} with the modulus inequality for $f$ in Proposition \ref{unif:Regularity-F modulus} we obtain
\begin{align*}
\phi(C'')\leq \br \md(\Gamma)  \leq \md( f(\Gamma)) =\md(\Gamma')\leq \psi( HC').
\end{align*}
Since $\lim_{t\to\infty}\psi(t)=0$, if $H$ is sufficiently large depending only on $K_2$ and $K_3$, we obtain a contradiction.
\end{proof}

\section{Equivalence of square and round carpets}\label{unif:Section Square Round carpets}

In this section we prove Proposition \ref{unif:Proposition Equivalence of square carpet}.

\begin{proof}[Proof of Proposition \ref{unif:Proposition Equivalence of square carpet}]
We denote the peripheral circles of a square carpet $\mathcal R$ by $\partial S_i$, $i\in \N \cup \{0\}$, and the ones corresponding to the round carpet $T$ by $\partial C_i$, $i\in \N\cup \{0\}$.

If the peripheral circles of a square carpet $\mathcal R$ are uniformly relatively separated, then by Bonk's result in Theorem \ref{unif:Theorem Bonk}, $\mathcal R$ is quasisymmetrically equivalent to a round carpet $T$. Conversely if the peripheral circles of a round carpet $T$ are uniformly relatively separated, then our main result Theorem \ref{unif:Main theorem-quasisymmetric} implies that $T$ is quasisymmetrically equivalent to a square carpet. We remark here that the property of uniform relative separation is a quasisymmetric invariant.

Now, assume that we are given a quasisymmetry $F$ from a square carpet $\mathcal R$ onto a round carpet $T$, but the peripheral circles $\partial S_i$, $i\in \N\cup \{0\}$, of $\mathcal R$ are not uniformly relatively separated. This implies that there exists a sequence of pairs $W_n,Z_n$ of rectangles (i.e., $ W_n= \partial S_i$ and $ Z_n=\partial  S_j$ for some $i,j\in \N\cup\{0\}$) such that $\Delta(W_n,Z_n) \to 0$ as $n\to\infty$. Note that the quasisymmetry $F$ maps each peripheral circle of $\mathcal R$ onto a peripheral circle of $T$. We split in two cases.

\textbf{Case 1:} $W_n$ and $Z_n$ ``tend" to share a large segment of a side. More precisely, consider the largest parallel  line segments $\alpha_n \subset W_n$ and $\beta_n\subset Z_n$ so that one of them is either a horizontal \textit{or} vertical translation of the other, $d_n\coloneqq \dist(W_n,Z_n)=\dist(\alpha_n,\beta_n)$, and $\ell_n$ is the length of $\alpha_n$ and $\beta_n$. If such segments do not exist, then we set $\alpha_n\in W_n$ and $\beta_n\in Z_n$ to be the nearest vertices of the two rectangles; here we set $\ell_n=0$. We assume in this case that $\lim_{n\to\infty}d_n/\ell_n=0$. 

Consider a $K$-quasiconformal extension of $F$ on $\widehat{\C}$, as in \cite[Section 5]{Bonk:uniformization}. Let $g_n$ be a Euclidean similarity that maps $U\coloneqq \{0\}\times [0,1]$ onto $\beta_n$. Also, let $h_n$ be a M\"obius transformation that maps the disk bounded by the circle $F(Z_n)$ onto $ \br \D$ such that $f_n\coloneqq h_n\circ F\circ g_n$ satisfies $f_n(0)=1$, $f_n(1)=-1$, and $f_n(1/2)=i$ or $f_n(1/2)=-i$ so that $f$ is orientation-preserving. Note that $f_n\colon \widehat \C\to \widehat \C$ is $K$-quasiconformal. By passing to a subsequence, we assume that $f_n$ converges uniformly in the spherical metric of $\widehat \C$ to a $K$-quasiconformal map $f\colon \widehat \C \to \widehat \C$; see \cite[Theorem 5.1, p.~73]{LehtoVirtanen:quasiconformal} for compactness of quasiconformal maps. 

Since $g_n$ is a scaling by $\ell_n$, it follows that the Euclidean distance of $U=g_n^{-1}(\beta_n)$ and $V_n\coloneqq g_n^{-1}(\alpha_n)$ is $d_n/\ell_n$, and in fact $V_n$ is a vertical line segment of length $1$. In what follows, we will use Hausdorff convergence with respect to the spherical metric of $\widehat \C$, and the fact that the Hausdorff convergence is compatible with the uniform convergence of $f_n$; for instance, $V_n$ converges to $U$ in the Hausdorff sense, and thus $f_n(V_n)$ converges to $f(U)$ in the Hausdorff sense. Observe that any Hausdorff limit of the rectangles $g_n^{-1}(Z_n)\cup g_n^{-1}(W_n)\subset \widehat \C$  is not a circle in $\widehat \C$. On the other hand, $f_n(V_n)$ is an arc of a circle, so its Hausdorff limit will be an arc of a circle $C\subset \widehat \C$. The circle $C$ is distinct from $\partial \D$ and they bound disjoint regions; this is justified because $f$ is a homeomorphism and the Hausdorff limits of $g_n^{-1}(Z_n)\cup g_n^{-1}(W_n)$ are not just a single circle. Thus, $C\cap \partial \D$ can contain at most one point. On the other hand, $f(U)=\lim_{n\to\infty} f_n(U)=\lim_{n\to\infty}f_n(V_n) \subset C \cap \partial \D$, which a contradiction, since $f$ is a homeomorphism and cannot map $U$ to a point. 

\textbf{Case 2:} $W_n$ and $Z_n$ ``tend" to share a corner. Using the notation of Case 1, we assume that there exists a constant $c>0$ such that $d_n/\ell_n\geq c$ for infinitely many $n$. We allow the possibility $d_n/\ell_n =\infty$, which occurs whenever $\alpha_n$ and $\beta_n$ are vertices and $\ell_n=0$. The assumption that $\Delta(W_n,Z_n)\to 0$ implies that in this case neither $W_n$ nor $Z_n$ can be the outer peripheral rectangle $\partial S_0$, so they are both squares, for sufficiently large $n$. By passing to a subsequence we assume that the above hold for all $n\in \N$. Also, assume that $Z_n$ is smaller than $W_n$, so if $m_n$ denotes the sidelength of $Z_n$, then $d_n/m_n\to 0$ as $n\to\infty$ by our assumption on the separation of $Z_n$ and $W_n$. Note that we also have $\ell_n/m_n \to 0$ as $n\to\infty$.

Again, we consider a quasiconformal extension $F\colon \widehat{\C}\to \widehat{\C}$. We precompose $F$ with a Euclidean similarity $g_n$ that maps the unit square $U=\partial([0,1]\times [0,1])$ onto the square $Z_n$, and postcompose with a M\"obius transformation $h_n$ that maps the disk bounded by the circle $F(Z_n)$ onto $\br \D$, with suitable normalizations. After passing to a subsequence we may assume that $f_n\coloneqq h_n\circ F\circ g_n$ converges uniformly in the spherical metric to a $K$-quasiconformal map $f\colon \widehat \C \to \widehat \C$. 

Note that  the Euclidean distance of $ U=g_n^{-1}(Z_n)$ and $V_n\coloneqq g_n^{-1}(W_n) $ is $d_n/m_n$, the square $V_n$ is larger than $U$, and the segments $g^{-1}(\alpha_n)$ and $g^{-1}(\beta_n)$ have length $\ell_n/m_n$, which converges to $0$. After passing to a subsequence, it follows that a Hausdorff limit (in $\widehat \C$) of $V_n$ contains two perpendicular segments of Euclidean length at least $1$ that meet at a corner $x$ of $U$, but otherwise disjoint from $U$. Also, observe that all the arcs that lie in the union of these two segments with $\partial U$ are quasiarcs. 

On the other hand, the image $f_n(V_n)$ converges to a circle $C\subset \widehat{\C}$ that meets $\partial \D$ at one point $x'=f(x)$. It is easy to see that there exist arbitrarily small arcs in $C\cup \partial \D$ passing through $x'$ that are not quasiarcs. This leads to a contradiction, since the quasiconformal map $f\colon \widehat \C\to \widehat \C$ is a quasisymmetry and thus must map quasiarcs to quasiarcs. 

As a final remark, if we are given a quasisymmetry $F$ between a square carpet $\mathcal R$ and a round carpet $T$, but the peripheral circles $\partial C_i$, $i\in \N\cup \{0\}$, of $T$ are not uniformly relatively separated, then the peripheral circles $\partial S_i$, $i\in \N\cup\{0\}$, of $\mathcal R$ are also not uniformly relatively separated, so we are reduced to the previous analysis.
\end{proof}

\section{A test function}\label{unif:Appendix}\index{test function}

Here we include a Lemma that is often used in variational arguments in Section \ref{unif:Section Conjugate}. The assumptions here are that we have a carpet $S\subset \br \Omega$ of  area zero with outer peripheral circle $\partial Q_0=\partial \Omega$ and inner peripheral disks $\{Q_i\}_{i\in \N}$ that are uniformly fat, uniform quasiballs.

\begin{lemma}\label{unif:Zeta lemma}
For each $x\in S$, $r>0$, and $\varepsilon>0$ there exists a function $\zeta\in \mathcal W^{1,2}(S)$, supported in $B(x,r)\cap S$, with $0\leq \zeta\leq 1$ and $\zeta\equiv 1$ in some smaller ball $B(x,r')\cap S$, $r'<r$, such that:
\begin{enumerate}[\upshape(a)]
\item If $x\in S^\circ\cup \partial \Omega$ then 
\begin{align*}
D(\zeta)=\sum_{i\in \N} \osc_{Q_i}(\zeta)^2 <\varepsilon.
\end{align*}
\item If $x\in \partial Q_{i_0}$ for some $i_0\in \N$ then
\begin{align*}
D(\zeta)- \osc_{Q_{i_0}}(\zeta)^2= \sum_{i\in \N\setminus \{i_0\}} \osc_{Q_i}(\zeta)^2 <\varepsilon.
\end{align*}
\end{enumerate}
\end{lemma}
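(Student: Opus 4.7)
The plan is to construct $\zeta$ as a discrete ``logarithm,'' imitating the test function built in the proof of Theorem \ref{harmonic:Liouville Weak}. Fix a large integer $N$ to be chosen in terms of $\varepsilon$. I will produce nested annuli $A_k = A(x; r_k, R_k)$ centered at $x$ for $k = 1,\ldots,N$, with $R_1 < r$, $R_k = 2r_k$, and $R_{k+1}$ chosen much smaller than $r_k$. Define $\zeta \equiv 1$ on $B(x, r_N)$, $\zeta \equiv 0$ outside $B(x, R_1)$, and on each annulus $A_k$ let $\zeta$ be the radial function of slope $1/(Nr_k)$, so $\zeta$ drops by exactly $1/N$ across $A_k$; on the ``transition'' annuli $A(x; R_{k+1}, r_k)$ between consecutive $A_k$, $\zeta$ is constant. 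This $\zeta$ is Lipschitz on $\C$, so its restriction to $S \cap \Omega$ lies in $\mathcal{W}^{1,2}(S)$ by Example \ref{harmonic:3-Example-Lipschitz}, it takes values in $[0,1]$, and its support is contained in $B(x,R_1)\cap S \subset B(x,r)\cap S$, with $\zeta \equiv 1$ on $B(x,r')\cap S$ for $r' = r_N$.

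The key choice is that of the radii $r_{k+1}$: I will ensure that no peripheral disk other than (possibly) $Q_{i_0}$ intersects two distinct annuli $A_k$, $A_{k+1}$. By Lemma \ref{harmonic:Fatness consequence}, in any compact neighborhood of $x$ only finitely many peripheral disks have diameter exceeding a given threshold. Thus for each $k$, having fixed $r_k$, I can choose $r_{k+1}$ small enough that every peripheral disk $Q_i$ (with $i\neq i_0$ in case (b)) of diameter at least $r_k - 2r_{k+1}$ fails to meet both $A_k$ and $A_{k+1}$. In case (a) ($x\in S^\circ \cup \partial\Omega$) no peripheral disk contains $x$, so shrinking $r_{k+1}$ alone excludes \emph{every} peripheral disk from bridging two annuli; in case (b) the same argument applies to every $Q_i$ with $i\neq i_0$, while $Q_{i_0}$ is allowed to meet all annuli (and the large oscillation it incurs is explicitly discarded in the claimed bound).

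With this separation, for each peripheral disk $Q_i$ (excluding $i_0$ in case (b)), if $Q_i\cap A_k\neq \emptyset$ for some $k$, then $Q_i$ meets no other annulus and is constant on the transition regions. Hence $\osc_{Q_i}(\zeta)\leq d_k(Q_i)/(Nr_k)$, where $d_k(Q_i) \coloneqq \mathcal{H}^1(\{s\in[r_k,R_k]: Q_i\cap\partial B(x,s)\neq\emptyset\})$; and by Remark \ref{harmonic:Remark:Fatness implication}, $d_k(Q_i)^2\leq K\,\mathcal{H}^2(Q_i\cap A_k)$ for a constant $K$ depending only on the fatness constant. Summing exactly as in the proof of Theorem \ref{harmonic:Liouville Weak},
\[
\sum_{\substack{i\neq i_0\\ Q_i\cap A_k\neq\emptyset}} \osc_{Q_i}(\zeta)^2 \leq \frac{K}{N^2 r_k^2}\sum_i \mathcal{H}^2(Q_i\cap A_k) \leq \frac{K\,\mathcal{H}^2(A_k)}{N^2 r_k^2} \leq \frac{3\pi K}{N^2},
\]
and summing over $k=1,\ldots,N$ yields $D(\zeta)-\osc_{Q_{i_0}}(\zeta)^2 \leq 3\pi K/N$ in case (b), and likewise $D(\zeta)\leq 3\pi K/N$ in case (a). Choosing $N > 3\pi K/\varepsilon$ gives the desired bound.

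The only real obstacle is the bookkeeping needed to isolate $Q_{i_0}$ in case (b) and make sure every other peripheral disk is confined to a single annulus; once that is handled by the finiteness statement in Lemma \ref{harmonic:Fatness consequence}, the construction and the fatness-based energy estimate are essentially the ones already used for Liouville's theorem.
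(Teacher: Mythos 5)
Your construction is exactly the paper's: the same discrete-logarithm test function on $N$ nested annuli with radial slope $1/(Nr_k)$, the same choice of radii so that no peripheral disk other than $Q_{i_0}$ bridges two annuli, and the same fatness estimate $d_k(Q_i)^2\le K\,\mathcal H^2(Q_i\cap A_k)$ leading to the bound $3\pi K/N$. The proof is correct as written.
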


\begin{proof}
The function $\zeta$ will be a discrete version of the logarithm. In fact we will construct a Lipschitz function $\zeta$ defined on all of $\R^2$, and thus it will lie in $\mathcal W^{1,2}(S)$; see comments after Definition \ref{unif:Definition Sobolev function} and also Example \ref{harmonic:3-Example-Lipschitz}.
 
We fix a large integer $N$ which will correspond to the number of annuli around $x$ that we will construct, and $\zeta$ will increase by $1/N$ on each annulus. We set $\zeta=0$ outside $B(x,r)$ and define $R_1\coloneqq r$ and $r_1\coloneqq R_1/2$. In the annulus $A_1\coloneqq A(x;r_1,R_1)$ define $\zeta$ to be a radial function of constant slope $\frac{1}{Nr_1}$, so on the inner boundary of $A_1$ the function $\zeta$ has value $\frac{1}{N}$. Then consider $R_2<r_1$ sufficiently small, and $r_2\coloneqq R_2/2$, so that no peripheral disk intersects both annuli $A_1$ and $A_2\coloneqq A(x;r_2,R_2)$, except possibly for $Q_{i_0}$, in case $x\in \partial Q_{i_0}$; this is possible since the diameters of the peripheral disks converge to $0$. In the ``transition" annulus $A(x;R_2,r_1)$ we define $\zeta$ to be constant, equal to $\frac{1}{N}$, and on $A_2$ we let $\zeta$ be a radial function of slope $\frac{1}{Nr_2}$. We continue constructing annuli $A_{j}=A(x;r_j,R_j)$, $j=1,\dots,N$, and defining the function $\zeta$ in the same way. The last annulus will be $A_N\coloneqq A(x; r_N,R_N)$ and the value of $\zeta$ will be $1$ in the inner boundary of $A_N$. We extend $\zeta$ to be $1$ in the ball $B(x,r_N)$. 

We now compute the Dirichlet energy $D(\zeta)$ of $\zeta$. Assume that $x\in \partial Q_{i_0}$, $i_0\in \N$, since otherwise the details are almost the same, but simpler. For $i\in \N$ and $j\in \{1,\dots,N\}$, let $d_j(Q_i)\coloneqq  \mathcal H^1(\{s\in [r_j,R_j]: \gamma_s\cap Q_i\neq \emptyset \})$, where $\gamma_s$ is the circle of radius $s$ around $x$. Since the peripheral disks $Q_i$, $i\in \N$, are fat, there exists a constant $C>0$ such that $d_j(Q_i)^2\leq C\mathcal H^2(Q_i\cap A_j)$, for all $i\in \N$ and $j\in \{1,\dots,N\}$; see Remark \ref{unif:Fatness consequence}. Also, if $Q_i\cap A_j\neq \emptyset$, $i\neq i_0$, then $\osc_{Q_i}(\zeta) \leq d_j(Q_i) \frac{1}{Nr_j}$. By construction, each peripheral disk $Q_i$, $i\neq i_0$, can only intersect one annulus $A_j$, and if a peripheral disk $Q_i$ does not intersect any annulus $A_j$, then $\zeta$ is constant on $Q_i$, so $\osc_{Q_i}(\zeta)=0$. Combining these observations, we have
\begin{align*}
\sum_{i\in \N\setminus \{i_0\}} \osc_{Q_i}(\zeta)^2&= \sum_{j=1}^N \sum_{\substack{i:Q_i\cap A_j\neq \emptyset\\i\in \N\setminus \{i_0\}}} \osc_{Q_i}(\zeta)^2 \leq \frac{1}{N^2} \sum_{j=1}^N \frac{1}{r_j^2} \sum_{\substack{i:Q_i\cap A_j\neq \emptyset\\ i\in \N\setminus \{i_0\}}} d_j(Q_i)^2 \\
&\leq \frac{C}{N^2}\sum_{j=1}^N \frac{1}{r_j^2} \sum_{\substack{i:Q_i\cap A_j\neq \emptyset \\ i\in \N\setminus \{i_0\}}}\mathcal H^2(Q_i\cap A_j)\\
&\leq \frac{C}{N^2}\sum_{j=1}^N \frac{1}{r_j^2} \mathcal H^2(A_j)= \frac{\pi C}{N^2} \sum_{j=1}^N \frac{4r_j^2-r_j^2}{r_j^2}\\
&=\frac{3\pi C}{N}.
\end{align*}
Making $N$ sufficiently large we can achieve that $\frac{3\pi C}{N}<\varepsilon$, as desired.
\end{proof}

\appendix

\backmatter

\bibliographystyle{amsalpha}
\bibliography{carpets.bbl}

\printindex

\end{document}